\DeclareMathOperator{\colim}{colim}
\DeclareMathOperator{\im}{Im}
\DeclareMathOperator{\coker}{coker}
\DeclareMathOperator{\id}{Id}
\DeclareMathOperator{\ID}{ID}
\DeclareMathOperator{\leftmod}{--mod}
\DeclareMathOperator{\Mod}{-Mod}
\DeclareMathOperator{\Ext}{Ext}
\DeclareMathOperator{\Hom}{Hom}
\DeclareMathOperator{\Weyl}{Weyl-G-Sheaf}
\DeclareMathOperator{\Gspectra}{G-Spectra}
\DeclareMathOperator{\Ho}{Ho}
\DeclareMathOperator{\mackey}{Mackey}
\DeclareMathOperator{\rank}{Rank}
\DeclareMathOperator{\ch}{Ch}
\DeclareMathOperator{\sspan}{Span}
\DeclareMathOperator{\orb}{Orb}
\DeclareMathOperator{\ab}{Ab}
\DeclareMathOperator{\weyl}{Weyl}
\DeclareMathOperator{\gheaf}{G-Sheaf}
\DeclareMathOperator{\cb}{CB}
\DeclareMathOperator{\fun}{Fun}
\DeclareMathOperator{\derive}{Derived}
\DeclareMathOperator{\gset}{-Set}
\newcommand{\Grpf}{\text{Grp}^f}
\DeclareMathOperator{\stabgx}{stab_G}
\newtheorem{theorem}{Theorem}[section]
\newtheorem{proposition}[theorem]{Proposition}
\newtheorem{conjecture}[theorem]{Conjecture}
\newtheorem{lemma}[theorem]{Lemma}
\newtheorem{corollary}[theorem]{Corollary}
\theoremstyle{definition}
\newtheorem{example}[theorem]{Example}
\newtheorem{definition}[theorem]{Definition}
\newtheorem{remark}[theorem]{Remark}
\newtheorem{construction}[theorem]{Construction}
\begin{document}
\linespread{1.25}
\bibliographystyle{alpha}
\begin{titlepage}
\centering
\includegraphics[scale=0.3]{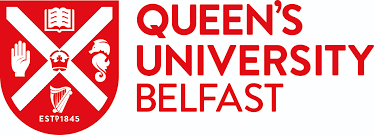}\par
{\Huge Queen's University Belfast\par}
\vspace{1cm}
{\Large School of Mathematics and Physics\par}
\vspace{2cm}
{\Large Doctor of Philosophy in Pure Mathematics\par}
\vspace{1.5cm}
{\Large \bfseries Rational $G$-spectra for profinite $G$\par}
\vspace{1cm}
{\Large \itshape Danny Sugrue MSci\par
\vfill 
supervised by\par 
Dr. David Barnes
\vfill
{\large \today\par}}
\end{titlepage}
\tableofcontents
\chapter*{Acknowledgements}
I would like to thank my supervisor Dr David Barnes for his generous support throughout the course of my PhD. I have learned a lot and have thoroughly enjoyed my experience under his supervision. I would also like to thank my second supervisor Dr Martin Mathieu for helpful conversations we have had, as well as everyone who has participated in teaching me throughout my time at QUB. I would like to express gratitude also to Prof John Greenlees for his generosity in providing valuable feedback during discussions we have had. I wish to also thank my fellow PhD students for their support and comradery which greatly helped me and even made this experience even more special. Finally I would like to thank my family and friends for their support which made everything possible.

\chapter*{Abstract}
In this thesis we will investigate rational $G$-spectra for a profinite group $G$. We will provide an algebraic model for this model category whose injective dimension can be calculated in terms of the Cantor-Bendixson rank of the space of closed subgroups of $G$, denoted $SG$. The algebraic model we consider is chain complexes of Weyl-$G$-sheaves of $\mathbb{Q}$-modules over the space $SG$. The key step in proving that this is an algebraic model for $G$-spectra is in proving that the category of rational $G$-Mackey functors is equivalent to Weyl-$G$-sheaves. In addition to the fact that this sheaf description utilises the topology of $G$ and the closed subgroups of $G$ in a more explicit way than Mackey functors do, we can also calculate the injective dimension. In the final part of the thesis we will see that the injective dimension of the category of Weyl-$G$-sheaves can be calculated in terms of the Cantor-Bendixson rank of $SG$, hence giving the injective dimension of the category of Mackey functors via the earlier equivalence.
\chapter*{Introduction}
\section*{$G$-spaces, $G$-cohomology theories and $G$-spectra} 
If $G$ is a topological group, a $G$-space is a topological space which admits a continuous action of $G$. An interesting area of study is the homotopy theory of spaces with a continuous group action. A highly successful method of studying $G$-spaces is to use equivariant cohomology theories, a generalisation of cohomology theories.

In order to define formally a rational $G$-equivariant cohomology theory we first need to define the representation ring of a compact Lie group $G$ with respect to a $G$-universe $U$, denoted $RO(G,U)$. The following construction and definition is given in May \cite[Definition 13.1.1]{alaska}. This uses the definition of a universe as seen in Definition \ref{Univdefn}. 
\begin{definition}
A \index{$G$-universe} \textbf{$G$-universe} $U$ is a countably infinite direct sum $\underset{n\in\mathbb{N}}{\bigoplus} U^{\prime}$ of a real $G$-inner product space $U^{\prime}$ satisfying the following:
\begin{itemize}
\item the one dimensional trivial $G$-representation is contained in $U^{\prime}$,
\item $U$ is topologised as the union of all finite dimensional $G$-subspaces of $U$ (each with the norm topology),
\item the $G$-action on all finite dimensional $G$-subspaces $V$ of $U$ factors through a compact Lie group quotient of $G$.
\end{itemize}  
\end{definition}
\begin{construction}
If $G$ is a profinite group and $U$ is a $G$-universe, we set $RO(G,U)$ to be the category whose objects are the finite dimensional \\$G$-representations embeddable in $U$ and whose morphisms are the $G$-linear isometric isomorphisms. We say that two such morphisms from $U$ to $V$ are homotopic if their associated based $G$-maps, $S^U\rightarrow S^V$ (between one point compactification), are stably homotopic. The notation $hRO(G,U)$ represents the homotopy category.

For $W$ a finite dimensional $G$-representation, let $\Sigma^W$ denote $S^W\wedge (-)$ and $S^W$ the one point compactification of $W$. We have a functor:
\begin{align*}
\Sigma^W:RO(G,U)\times hG\mathcal{T}&\rightarrow RO(G,U)\times hG\mathcal{T}\\(V,X)&\mapsto \left(V\oplus W,\Sigma^WX\right)
\end{align*}
where $hG\mathcal{T}$ represents the homotopy category of based $G$-spaces.
\end{construction}  
We are now in a position to define\index{$RO(G,U)$-graded cohomology theories} $RO(G,U)$-graded cohomology theories.
\begin{definition}\label{cohomdefn}
An $RO(G,U)$-graded cohomology theory is a functor:
\begin{align*}
E_G^*:hRO(G,U)\times hG\mathcal{T}^{op}&\rightarrow {Ab}\\(V,X)&\mapsto E^V_G(X)
\end{align*}
on objects, where morphisms are assigned similarly. We also have natural isomorphisms $\sigma^W:E^*_G\rightarrow E^*_G\circ \Sigma^W$ written as:
\begin{align*}
\sigma^W:E^V_G(X)\rightarrow E^{V\bigoplus W}_G(\Sigma^WX),
\end{align*}
satisfying the following axioms.
\begin{enumerate}
\item For each representation $V$, the functor $E^V_G$ is exact on cofiber sequences and sends wedges to products. 
\item If $\alpha:W\rightarrow W^{\prime}$ is a morphism in $RO(G,U)$, then the following diagram commutes:
\begin{align*}
\xymatrix{E^V_G(X)\ar[r]^{\sigma^{W}}\ar[d]_{\sigma^{W^{\prime}}}&E^{V\bigoplus W}_G\left(\Sigma^WX\right)\ar[d]^{E_G^{\id\bigoplus\alpha}(\id)}\\E_G^{V\bigoplus W^{\prime}}\left(\Sigma^{W^{\prime}}X\right)\ar[r]_{\left(\Sigma^{\alpha}\id\right)^*}&E_G^{V\bigoplus W^{\prime}}\left(\Sigma^WX\right).}
\end{align*}
\item The map $\sigma^0$ is the identity and the morphisms $\sigma$ are transitive in the sense that the following diagram commutes for each pair of representations $(W,Z)$:
\begin{align*}
\xymatrix{E_G^V(X)\ar[dr]_{\sigma^{W\bigoplus Z}}\ar[rr]^{\sigma^W}&&E_G^{V\bigoplus W}\left(\Sigma^WX\right)\ar[dl]^{\sigma^Z}\\&E_G^{V\bigoplus W\bigoplus Z}\left(\Sigma^{W\bigoplus Z}X\right)}.
\end{align*}
\end{enumerate}
We extend the theory defined above to formal differences, $V\ominus W$, for any pair of representations by defining:
\begin{align*}
E_G^{V\ominus W}(X)=E_G^V\left(\Sigma^WX\right).
\end{align*}
\end{definition}
Notice that the symbol $\ominus$ is used to avoid confusion with either the orthogonal complement or difference of $G$-representations. We view a formal difference $V\ominus W$ as an element of $hRO(G,U)\times hRO(G,U)^{op}$. This means that for every $G$-space $X$, $E^*_G(X)$ defines a functor from this category into the category of abelian groups. 

We say that $V\ominus W$ is equivalent to $V^{\prime}\ominus W^{\prime}$ if there exists a $G$-linear isometric isomorphism:
\begin{align*}
\alpha:V\oplus W^{\prime}\rightarrow V^{\prime}\oplus W.
\end{align*}
We define the representation group for $G$ relative to $U$, denoted $\mathfrak{RO}(G,U)$, by passing to equivalence classes of formal differences.
In particular, for every $G$-space $X$, a cohomology theory defines a functor from $\mathfrak{RO}(G,U)$ into the category of abelian groups. Viewing cohomology theories as being graded in this manner requires us to define a linear map of the form:
\begin{align*}
E_G^V\left(\Sigma^WX\right)\rightarrow E_G^{V^{\prime}}\left(\Sigma^{W^{\prime}}X\right),
\end{align*}
which is given by the unlabelled arrow in the following diagram of isomorphisms:
\begin{align*}
\xymatrix{E_G^V\left(\Sigma^WX\right)\ar[d]\ar[r]^(.4){\sigma^{W^{\prime}}}&E_G^{V\bigoplus W^{\prime}}\left(\Sigma^{W\bigoplus W^{\prime}}X\right)\ar[d]^{E^{\alpha}_G\left(\Sigma^{\tau}\id\right)}\\E_G^{V^{\prime}}\left(\Sigma^{W^{\prime}}X\right)\ar[r]_(.4){\sigma^W}&E_G^{V^{\prime}\bigoplus W}\left(\Sigma^{W^{\prime}\bigoplus W}X\right),}
\end{align*}
where $\tau$ is the transposition isomorphism.

$G$-equivariant cohomology theories are important in algebraic topology since they are a homotopy invariant for topological $G$-spaces. One such example is the Borel construction on a non-equivariant cohomology theory $F^*$ we define a $G$-equivariant cohomology theory $F_G^*$ as follows. If $X$ is any $G$-space we define:
\begin{align*}
F_G^*(X)=F^*(EG_+\underset{G}{\wedge}X),
\end{align*}
where $F^*$ is a reduced cohomology theory.

Other examples include equivariant $K$-theory and equivariant cobordism. We can see from the following example that non-equivariant cohomology theories are insufficient for the study of $G$-spaces. 
\begin{example}
Let $G$ be the cyclic group of order $2$. For each $n\in\mathbb{N}$ we can give $S^n$ the antipodal action and denote it by $S_a^n$. Non-equivariantly, 
\begin{align*}
S^{\infty}_a=\underset{k\in\mathbb{N}}{\colim}S^k_a\simeq \ast
\end{align*}
with maps the equatorial inclusions. In particular for any $n\in\mathbb{N}$,
\begin{align*}
\pi_n\left(S^{\infty}\right)&=\pi_n\left(\underset{k\in\mathbb{N}}{\colim} S^k\right)=\underset{k\in\mathbb{N}}{\colim}\,\pi_n\left(S^{n+k}\right)=0.
\end{align*}
Since $S^{\infty}_a$ has a free action of $G$ (i.e. no fixed points) and is a $G$-CW complex, we can write $S^{\infty}_a=EG$ as:
\begin{align*}
S_a^{\infty}/G=EG/G=BG=\mathbb{R}P^{\infty}.
\end{align*}
Thus a non-equivariant cohomology theory $F^*$ can not distinguish $S^{\infty}$ from $S^0$. On the other hand we have $F^*_G(S^{\infty}_+)\cong F^*(\mathbb{R}P^{\infty}_+)$.
\end{example}
\begin{example}
Let $G$ be the circle group $S^1$ and $S\left(\mathbb{C}^n\right)$ be the unit circle in $\mathbb{C}^n$. In particular $S\left(\mathbb{C}\right)=S^1$. In this case $EG=\underset{n\in \mathbb{N}}{\colim}S\left(\mathbb{C}^n\right)$. The action on $S(\mathbb{C}^n)$ by $S^1$ is given by:
\begin{align*}
\lambda\left(z_1,z_2,\ldots,z_n \right)=\left(\lambda z_1,\lambda z_2,\ldots,\lambda z_n\right)
\end{align*}
which is also free. We know that $EG/G=\mathbb{C}P^{\infty}$ which has non-trivial homotopy groups. On the other hand $EG$ is homotopically trivial. So once again, a non-equivariant cohomology theory $F^*$ can not distinguish $EG_+$ from $S^0$. Explicitly, $F^*(EG_+)\cong F^*(*_+)\cong F^*(S^0)$ and $F^*_G(EG_+)\cong F^*(\mathbb{C}P^{\infty}_+)$.
\end{example}
As can be seen, the definition of $G$-cohomology theories is quite complicated. In order to study $G$-cohomology theories, it would be useful to have a description with some notion of homotopy. This definition comes from $G$-spectra, a generalisation of the concept of a spectrum.

A $G$-spectrum $X$ consists of a based $G$-space $X(V)$ for each finite dimensional $G$-representation embeddable in $U$, with structure maps:
\begin{align*}
\Sigma^W X(V)\to X(V\oplus W). 
\end{align*}
When $G$ is the trivial group we recover the definition of non-equivariant spectra. The formal definition can be found in Chapter \ref{chp3}. Equivariant cohomology theories are related to equivariant spectra via the following construction and theorem.
 
A $G$-spectrum $E$ determines a rational $G$-equivariant cohomology theory $E^*_G$, by:
\begin{align*}
E^{V\ominus W}_G(A)=\left[\Sigma^{\infty}(\Sigma^WA),\Sigma^VE\right]_*
\end{align*}  
where $\Sigma^{\infty}$ is the suspension spectrum on a based $G$-space $A$. We say $E$ represents $E^*_G$. All $G$-cohomology theories come from $G$ in this way.
\begin{theorem}
If $G$ is a compact Lie group, every $RO(G;U)$-graded cohomology theory on based $G$-spaces is represented by an $G$-spectrum indexed on $U$.
\end{theorem}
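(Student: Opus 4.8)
The plan is to reduce the statement to an equivariant Brown representability theorem applied in each fixed representation-degree separately, and then to glue the resulting representing $G$-spaces together into a single $G$-spectrum using the suspension isomorphisms $\sigma^W$. So, fixing a finite-dimensional $G$-representation $V$ embeddable in $U$, I would first look at the single functor $E^V_G(-)\colon hG\mathcal{T}^{op}\to\mathrm{Ab}$. Axiom (1) of Definition \ref{cohomdefn} says precisely that this functor carries cofiber sequences to exact sequences and arbitrary $G$-wedges to products, which are the hypotheses of equivariant Brown representability on the homotopy category of based $G$-CW complexes: that category is generated, under homotopy colimits and suspensions, by the set of orbit cells $G/H_+\wedge S^n$ with $H$ ranging over conjugacy classes of closed subgroups of $G$ --- a genuine \emph{set} because $G$ is compact Lie --- and $n\geq 0$. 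Hence there is a based $G$-space $E_V$ and a natural isomorphism $E^V_G(X)\cong[X,E_V]^G_*$; restricting attention to $G$-CW complexes costs nothing, since a cohomology theory inverts weak equivalences and every based $G$-space has a $G$-CW approximation.

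Next I would promote the family $\{E_V\}_V$ to a $G$-spectrum. Given a second representation $W$, the suspension isomorphism $\sigma^W$ of Definition \ref{cohomdefn} together with the $(\Sigma^W,\Omega^W)$-adjunction gives, for every $G$-CW complex $X$, a chain of natural isomorphisms
\[
[X,E_V]^G_*\;\cong\;E^V_G(X)\;\xrightarrow{\;\sigma^W\;}\;E^{V\oplus W}_G(\Sigma^W X)\;\cong\;[\Sigma^W X,E_{V\oplus W}]^G_*\;\cong\;[X,\Omega^W E_{V\oplus W}]^G_*,
\]
so by the Yoneda lemma there is a weak $G$-equivalence $E_V\xrightarrow{\simeq}\Omega^W E_{V\oplus W}$, equivalently an adjoint structure map $\Sigma^W E_V\to E_{V\oplus W}$. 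Axiom (3) --- that $\sigma^0$ is the identity and that the $\sigma$'s compose transitively --- supplies exactly the coherence needed for these structure maps to assemble into a $G$-prespectrum, in fact an $\Omega$-$G$-spectrum, indexed on $U$, while the commuting square of Axiom (2) shows that $V\mapsto E_V$ is suitably functorial in $RO(G,U)$ so that the indexing isometries are respected. Spectrifying if necessary yields a genuine $G$-spectrum $E$.

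It remains to verify that $E$ represents the theory we began with. For a based $G$-space $A$, unwinding the definition of the cohomology theory represented by $E$ gives
\[
E_G^{V\ominus W}(A)=[\Sigma^{\infty}(\Sigma^W A),\Sigma^V E]_*\;\cong\;[\Sigma^W A,E_V]^G_*\;\cong\;E^V_G(\Sigma^W A),
\]
and by the extension convention $E_G^{V\ominus W}(A)=E_G^V(\Sigma^W A)$ from Definition \ref{cohomdefn} this is exactly the original theory; naturality in both variables, and compatibility with the $\sigma$'s, are inherited from the Brown representability isomorphisms. The step I expect to be the main obstacle is the equivariant Brown representability input itself: one must check that the orbit cells genuinely form a \emph{set} of homotopy generators for the based $G$-equivariant homotopy category --- this is where compactness of $G$ is essential, via the fact that conjugacy classes of closed subgroups of a compact Lie group form a set --- and that Axiom (1) really does encode the half-exactness and \emph{arbitrary}-wedge hypotheses that the theorem requires. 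Once that is in place, assembling the degreewise representing spaces into one coherent spectrum is bookkeeping that Axioms (2) and (3) are designed to make routine.
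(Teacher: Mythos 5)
The paper never proves this statement: it is quoted in the introduction as the classical equivariant Brown representability theorem for compact Lie groups (the background here is May's Alaska notes and Lewis--May--Steinberger), and indeed the thesis explicitly remarks that no profinite analogue exists in the literature. So there is no in-paper argument to compare yours against; what you have written is, in outline, exactly the standard textbook proof: degreewise Brown representability using that the orbits $G/H_+$ for $H$ closed in a compact Lie group form a set of generators, followed by assembling the representing spaces into an $\Omega$-$G$-spectrum via the suspension isomorphisms. That outline is correct, and your identification of where compactness of $G$ enters is the right one.

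The one step you dismiss as ``bookkeeping'' is the genuinely delicate point. Brown representability produces each $E_V$ and each structure map $E_V\to\Omega^W E_{V\oplus W}$ only up to homotopy, and Axiom (3) of Definition \ref{cohomdefn} is an identity of natural transformations of cohomology theories, so it gives compatibility of the chosen structure maps only up to homotopy, not the strict equalities required of a prespectrum indexed on all of $U$. As stated, ``Axiom (3) supplies exactly the coherence needed'' is therefore too quick. The standard repair is to choose an expanding cofinal sequence of indexing representations $V_0\subset V_1\subset\cdots$ exhausting $U$, where one only needs the single structure equivalences $E_{V_n}\to\Omega^{V_{n+1}-V_n}E_{V_{n+1}}$ and no compatibility conditions arise, and then spectrify and extend to a spectrum indexed on $U$; alternatively one invokes a rigidification argument. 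You should also take $E_V$ to have the homotopy type of a $G$-CW complex (or work with weak equivalences detected on $H$-fixed points) so that the Yoneda/Whitehead step producing the weak equivalence $E_V\simeq\Omega^W E_{V\oplus W}$ is legitimate, and note that representability is in degree zero, $E^V_G(X)\cong[X,E_V]^G$, the grading being recovered by varying $V$. With those amendments your sketch is the accepted proof of the quoted theorem.
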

In the non-equivariant setting, the development of spectra made the study of cohomology theories easier. In particular, the suspension spectrum gives a functor from spaces to spectra, so we may study spaces by studying their associated suspension spectra. Equivariantly the same is true, so we now study $G$-spectra.

The homotopy theory of $G$-spectra is extremely complicated, even more so than non-equivariant spectra. One source of difficulty is the torsion which arises in the homotopy groups of spheres, which is incalculable even when $G$ is trivial. It is for this reason that working rationally becomes advantageous, since rationalising removes torsion leaving a good amount of interesting equivariant behaviour.

The formal definition of rational $G$-spectra is given in Chapter \ref{chp3}. For now it is sufficient to think of rational $G$-spectra as a particular model structure on the category of $G$-spectra. It can be checked that rational $G$-spectra correspond to $G$-equivariant cohomology theories which take values in rational vector spaces.

We note here that there is no version of Brown representability for profinite groups in the literature. However, it is expected that an analogous statement holds.  

\section*{Classifying rational $G$-spectra}
In the case when $G$ is a finite group, rational $G$-spectra are well understood. In particular the homotopy classes of maps $[\Sigma^{\infty}G/H_+,\Sigma^{\infty}G/K_+]_*$ in the category of $G$-spectra are concentrated in degree zero. Consequently this means that the category of $G$-spectra are Quillen equivalent to chain complexes of rational $G$-Mackey functors as seen in Greenlees and May, \cite{Tate}.
\begin{theorem}
If $G$ is a finite discrete group then there is an equivalence of categories:
\begin{align*}
\derive\left(\mackey_{\mathbb{Q}}\left(G\right)\right)\cong \Ho\left(\Gspectra_{\mathbb{Q}}\right).
\end{align*}
\end{theorem}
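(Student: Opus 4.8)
\noindent\emph{Proof strategy.} The plan is to realise the equivalence as an instance of Morita (tilting) theory for stable model categories, following the strategy of Schwede--Shipley. I would work with the rational model structure on $\Gspectra$, which is stable, proper and cofibrantly generated, so that $\Ho\left(\Gspectra_{\mathbb{Q}}\right)$ is a compactly generated triangulated category. The first step is to exhibit a set of compact generators: the rationalised suspension spectra $\Sigma^{\infty}G/H_+$, as $H$ runs over the (finite) set of conjugacy classes of subgroups of $G$, form such a set. Compactness is immediate because each $G/H$ is a finite $G$-CW complex, and generation is the equivariant Whitehead theorem: a map of rational $G$-spectra inducing an isomorphism on every $\pi^H_* = \left[\Sigma^{\infty}G/H_+,-\right]^G_*$ is an equivalence.

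Given these generators, the Schwede--Shipley classification theorem produces a zig-zag of Quillen equivalences between $\Gspectra_{\mathbb{Q}}$ and the category of (right) modules over the spectral category $\mathcal{E}$ whose object set is $\{G/H\}$ and whose morphism spectra are the rationalised function $G$-spectra $F_G\left(\Sigma^{\infty}G/H_+,\Sigma^{\infty}G/K_+\right)$, with composition inherited from $\Gspectra_{\mathbb{Q}}$. It then remains to identify $\Ho\left(\mathcal{E}\Mod\right)$ with $\derive\left(\mackey_{\mathbb{Q}}(G)\right)$, and this is where the special features of the rational finite setting are used.

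The key input is the computation of the homotopy of these morphism spectra. It is classical (Segal, tom Dieck) that $\left[\Sigma^{\infty}G/H_+,\Sigma^{\infty}G/K_+\right]^G_n$ is a finitely generated abelian group, free in degree $n=0$ on the set of $H$-orbits of $G/K$ and finite torsion for $n\neq 0$; indeed the tom Dieck splitting expresses $\pi^H_n\left(\Sigma^{\infty}G/K_+\right)$ as a finite sum of stable homotopy groups of suspension spectra of finite discrete spaces. Rationalising therefore kills everything outside degree zero, and in degree zero one recovers precisely the morphism $\mathbb{Q}$-modules of the rational Burnside (double-coset) category of $G$, i.e. the $\mathbb{Q}$-linear category whose additive functors to $\mathbb{Q}$-modules are by definition the objects of $\mackey_{\mathbb{Q}}(G)$. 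Hence $\mathcal{E}$ is a spectral category with homotopy concentrated in degree zero and $\pi_0\mathcal{E}$ the rational Burnside category.

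The main obstacle — and the step on which I would spend most effort — is passing from ``homotopy concentrated in a single degree'' to an honest algebraic model, i.e. a rigidity/formality statement for $\mathcal{E}$. Since the rationalisation makes $\mathcal{E}$ into an $H\mathbb{Q}$-linear spectral category whose homotopy is concentrated in degree zero, it should be intrinsically formal; concretely I would invoke Shipley's Quillen equivalence between $H\mathbb{Q}$-modules and unbounded rational chain complexes, applied enriched over the orbit category, together with the vanishing over $\mathbb{Q}$ of the Hochschild-cohomology groups that would obstruct the formality. This yields a zig-zag of Quillen equivalences $\mathcal{E}\Mod \simeq \ch\left(\mackey_{\mathbb{Q}}(G)\right)$ for the projective model structure. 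Concatenating with the Morita equivalence and passing to homotopy categories gives $\Ho\left(\Gspectra_{\mathbb{Q}}\right)\cong\derive\left(\mackey_{\mathbb{Q}}(G)\right)$. This argument also serves as the template for the profinite case treated later in the thesis, where the indexing category is no longer finite and both the generation step and the formality step require substantially more care.
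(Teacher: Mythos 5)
Your proposal is correct and follows essentially the same route the thesis itself uses in Chapter 3 for the profinite generalisation (which specialises to the finite case stated here, and for which the thesis simply cites Greenlees--May): the orbit spectra $\Sigma^{\infty}G/H_+$ are compact generators whose rational graded morphisms are concentrated in degree zero, Schwede--Shipley tilting then yields a Quillen equivalence with chain complexes of contravariant additive functors on this full subcategory, and identifying that subcategory with the span (Burnside) category shows such functors are exactly rational $G$-Mackey functors. The only cosmetic difference is that you re-derive the formality step by hand via Shipley's $H\mathbb{Q}$-algebraicisation and a Hochschild obstruction argument, whereas the tilting theorem of Schwede--Shipley, as invoked in the thesis, already packages that step.
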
  
In \cite{Tate} there is a classification of $G$-Mackey functors for $G$ a finite discrete group. This is given by the following theorem:
\begin{theorem}
If $G$ is a finite discrete group then there is an equivalence of categories:
\begin{align*}
\mackey_{\mathbb{Q}}\left(G\right)\cong \underset{H\in SG/G}{\bigoplus}\mathbb{Q}\left[N_G(H)/H\right]\Mod
\end{align*}
where $SG/G$ are the conjugacy classes of subgroups of $G$. Consequently we have the following equivalence:
\begin{align*}
\derive\left(\underset{H\in SG/G}{\bigoplus}\mathbb{Q}\left[N_G(H)/H\right]\Mod \right)\cong \Ho\left(\Gspectra_{\mathbb{Q}}\right).
\end{align*}
We say that $A(G)=\underset{H\in SG/G}{\bigoplus}\mathbb{Q}\left[N_G(H)/H\right]\Mod$ is the abelian model for rational $G$-spectra when $G$ is finite and $\ch(A(G))$ is the algebraic model.
\end{theorem}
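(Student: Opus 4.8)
The plan is to present $\mackey_{\mathbb{Q}}(G)$ as the module category over a finite-dimensional $\mathbb{Q}$-algebra, split that algebra using the idempotents of the Burnside ring, and identify the resulting blocks with group algebras of Weyl groups. First I would recall that a rational $G$-Mackey functor is exactly an additive functor from the rationalised Burnside category $\mathbb{Q}\mathcal{B}_G$ — objects the finite $G$-sets, morphism spaces $\mathbb{Q}\otimes A(X\times Y)$, composition given by pullback of spans — to $\mathbb{Q}$-vector spaces. Letting $H$ range over a set of representatives of $SG/G$ and forming the Mackey algebra $\mu_{\mathbb{Q}}(G)=\bigoplus_{H,K}\mathbb{Q}\mathcal{B}_G(G/H,G/K)$, the representable functors $\mathbb{Q}\mathcal{B}_G(-,G/H)$ are small projective generators of $\mackey_{\mathbb{Q}}(G)$, so by Morita theory evaluation gives an equivalence $\mackey_{\mathbb{Q}}(G)\cong\mu_{\mathbb{Q}}(G)\text{-mod}$. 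This reduction is formal.

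Next I would use the Burnside-ring idempotents. Since $G$ is finite, $\mathbb{Q}\otimes A(G)\cong\prod_{(H)\in SG/G}\mathbb{Q}$ splits into primitive orthogonal idempotents $e^G_H$, one for each conjugacy class. The Burnside Mackey functor is the unit for the box product, so $A(G)$ maps into the centre of the endomorphism ring of the identity functor on $\mackey_{\mathbb{Q}}(G)$, and the images of the $e^G_H$ are orthogonal central idempotents. These yield a decomposition $\mackey_{\mathbb{Q}}(G)\cong\prod_{(H)\in SG/G}\mathcal{M}_{(H)}$ into finitely many blocks, where $\mathcal{M}_{(H)}$ is the full subcategory of those $M$ on which $e^G_H$ acts as the identity. (Equivalently one could invoke the classical semisimplicity of $\mu_{\mathbb{Q}}(G)$ — valid because $|G|$ is invertible in $\mathbb{Q}$ — and read off its Wedderburn blocks, but the idempotent route is cleaner and is the one that adapts to the profinite setting pursued later in the thesis.)

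It then remains to identify $\mathcal{M}_{(H)}$ with $\mathbb{Q}[N_G(H)/H]\Mod$. The Weyl group $W_G(H)=N_G(H)/H$ acts on the orbit $G/H$ through $G$-automorphisms and hence acts on $M(G/H)$ for every $M$, so I would build a functor $\mathcal{M}_{(H)}\to\mathbb{Q}[W_G(H)]\Mod$ from this action — concretely via the Brauer-quotient-type construction that discards the images of transfers from proper subgroups — together with a quasi-inverse obtained by inducing a $W_G(H)$-module up to a Mackey functor inside the block, and then check these are mutually inverse. At the level of algebras this amounts to showing that the idempotent cut out by the orbit $G/H$ inside the block algebra $\mu_{\mathbb{Q}}(G)e^G_H$ is a progenerator whose endomorphism ring is $e^G_H\cdot\mathbb{Q}\mathcal{B}_G(G/H,G/H)\cdot e^G_H\cong\mathbb{Q}[W_G(H)]$, so that the block is Morita equivalent to $\mathbb{Q}[W_G(H)]\Mod$; both the endomorphism ring computation and the progenerator property follow from Gluck's explicit formula for $e^G_H$ together with the table of marks. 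Assembling over $SG/G$ gives the first displayed equivalence, and since there are only finitely many conjugacy classes the product is a direct sum. The "consequently" clause is then immediate: one applies $\derive(-)$ to this equivalence of abelian categories and composes with the equivalence $\derive(\mackey_{\mathbb{Q}}(G))\cong\Ho(\Gspectra_{\mathbb{Q}})$ recorded above, while the final sentence is just terminology.

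The step I expect to be the real obstacle is the block identification — in particular pinning down the quasi-inverse and verifying, through the Burnside-ring combinatorics, that multiplication by $e^G_H$ kills every contribution except the Weyl-group algebra at $G/H$. Semisimplicity, the passage to a module category, and the idempotent splitting are all essentially formal once the rational structure $\mathbb{Q}\otimes A(G)\cong\prod_{(H)}\mathbb{Q}$ is in hand; it is the precise bookkeeping of which orbit types survive each idempotent, carried out with the table of marks, that does the genuine work.
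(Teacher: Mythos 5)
Your outline is correct, and it is essentially the argument behind the result as the thesis uses it: the paper does not prove this theorem itself but quotes it from Greenlees--May \cite{Tate}, whose Appendix A proceeds exactly as you do — split $\mackey_{\mathbb{Q}}(G)$ by the orthogonal idempotents of $\mathbb{Q}\otimes A(G)\cong\prod_{(H)}\mathbb{Q}$ acting through the Burnside Mackey functor, and identify the block at $(H)$ with $\mathbb{Q}\left[N_G(H)/H\right]\Mod$ via evaluation at $G/H$ (killing transfers from proper subgroups), with the quasi-inverse and the endomorphism-ring computation $e_H\,\mathbb{Q}\mathcal{B}_G(G/H,G/H)\,e_H\cong\mathbb{Q}\left[W_G(H)\right]$ handled by the double coset formula and the table of marks. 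You have also correctly located where the genuine work lies (the block identification) and correctly reduced the ``consequently'' clause to composing with the derived equivalence $\derive(\mackey_{\mathbb{Q}}(G))\cong\Ho(\Gspectra_{\mathbb{Q}})$, so nothing further is needed beyond filling in those idempotent computations.
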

Since the case of $G$ a finite discrete group is well understood, the next cases to consider are compact topological groups. The main examples of these are the profinite groups and compact Lie groups.

The motivation behind this project is to extend a conjecture by Greenlees \cite{conjecture} for compact Lie groups to profinite groups. The conjecture is given as follows:
\begin{conjecture}
If $G$ is a compact Lie group then there is an abelian category $A(G)$ such that the homotopy category of rational $G$-spectra is equivalent to the derived category of $A(G)$. Furthermore the injective dimension of $A(G)$ is equal to the rank of $G$.
\end{conjecture}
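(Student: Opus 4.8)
Since the final statement is a conjecture of Greenlees rather than an established theorem, I will outline the strategy one would follow, extrapolating from the known low-rank cases ($G$ finite, $G=SO(2)$, $G=O(2)$, $G=SO(3)$, $G$ a torus) and from the profinite case developed in this thesis. The plan is to split the problem into two largely independent tasks: constructing the abelian category $A(G)$ together with a zig-zag of Quillen equivalences to $\Gspectra_{\mathbb{Q}}$, and then computing the injective dimension of $A(G)$ by a filtration argument along the subgroup poset.

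\textbf{Blocks via isotropy separation.} First I would split $\Ho(\Gspectra_{\mathbb{Q}})$ into a product of blocks using the idempotents of the rational Burnside ring of $G$ together with the completion and localisation machinery of Greenlees--May. Conjugacy classes of closed subgroups are partitioned into cotoral-connected families --- recall that $H$ is cotoral in $K$ when $H\trianglelefteq K$ with $K/H$ a torus --- and each family indexes a block. One expects an equivalence $\Ho(\Gspectra_{\mathbb{Q}})\simeq\prod_{\mathcal{F}}\Ho(\mathcal{B}_{\mathcal{F}})$, with a matching decomposition $A(G)=\prod_{\mathcal{F}}A(G)_{\mathcal{F}}$, so that the injective dimension of $A(G)$ is the supremum over $\mathcal{F}$ of the injective dimension of $A(G)_{\mathcal{F}}$.

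\textbf{Model for a single block.} Within a block the subgroups are organised by cotoral inclusion into a poset of depth at most $\rank G$. The model for the block should be a category of sheaves --- equivalently, modules over a diagram of rings --- over the space of subgroups in the family, with stalk at $H$ a module over $H^*(BW_e(H);\mathbb{Q})$, where $W_e(H)$ is the identity component of the Weyl group $N_G(H)/H$ (a compact connected Lie group), together with the residual action of the finite group $\pi_0(N_G(H)/H)$. Because rationalisation makes fixed points under finite groups exact, each stalk reduces to a torus computation, and $H^*(BW_e(H);\mathbb{Q})$ is a polynomial ring on at most $\rank G$ generators. The Quillen equivalence between the block and $\ch$ of this sheaf category would then follow from the now-standard recipe: exhibit a set of small generators, compute their endomorphism ring spectra, prove that these are formal as differential graded algebras over the appropriate polynomial rings, and invoke Schwede--Shipley and Greenlees--Shipley tilting to transport the model structure.

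\textbf{Injective dimension and the main obstacle.} To finish, I would compute the injective dimension of each $A(G)_{\mathcal{F}}$ by induction along the cotoral poset. The homological heart is that $H^*(BT;\mathbb{Q})=\mathbb{Q}[t_1,\dots,t_n]$ has global dimension $n=\dim T$, so the relevant module categories in the top stratum have injective dimension $n$; gluing in the lower strata should add only boundedly many homological degrees, controlled by the number of polynomial generators that appear, and never more than $\rank G$ in total, with equality realised somewhere in the family. The main obstacle is the formality input of the previous step: for a general compact Lie group it is not known that all the relevant endomorphism differential graded algebras are intrinsically formal over $H^*$ of the appropriate classifying spaces, and this is precisely why the conjecture remains open. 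A secondary difficulty, and the point of contrast with the profinite setting of this thesis, is that the space of closed subgroups of a compact Lie group is not a Stone space --- it carries positive-dimensional strata coming from the toral directions --- so controlling the $\Ext$-groups across strata requires simultaneously tracking continuous families and discrete combinatorics and showing that their interaction never pushes the injective dimension beyond $\rank G$.
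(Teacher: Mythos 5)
This statement is Greenlees' conjecture, and the paper does not prove it: it is quoted purely as motivation, with the remark that it is known for tori \cite{conjecture} and for $O(2)$ and $SO(3)$ \cite{BarnesO2,MagSO3}, before the thesis turns to the profinite analogue. So there is no proof in the paper for your proposal to be measured against, and your proposal is not a proof either --- it is a strategy outline, and by your own admission it leaves open exactly the step that makes the conjecture a conjecture. Concretely, two gaps remain. First, the tilting step requires the relevant endomorphism differential graded algebras to be (intrinsically) formal over the rings $H^*(BW_e(H);\mathbb{Q})$, and you state yourself that this is not known for general compact Lie groups; without it the Quillen equivalence between a block and chain complexes in the proposed sheaf category does not follow from Schwede--Shipley/Greenlees--Shipley machinery. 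Second, the injective dimension claim is asserted only heuristically (``gluing in the lower strata should add only boundedly many homological degrees''); in the cases where the conjecture is known, this computation is a substantive theorem in its own right and does not follow formally from the global dimension of $\mathbb{Q}[t_1,\dots,t_n]$ --- one must control $\Ext$ between objects concentrated on different strata, which is precisely where the positive-dimensional structure of the space of closed subgroups enters, as you note in your final paragraph.

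That said, your outline is a fair description of the approach taken in the literature for the established cases (isotropy separation into blocks indexed by cotoral families, stalks built from $H^*(BW_e(H);\mathbb{Q})$-modules with an action of the component group, tilting, and induction along the cotoral poset), and it is consistent with the role this thesis plays: the profinite case handled here avoids both obstacles, since the homotopy classes of maps between orbit spectra are concentrated in degree zero (removing the formality issue) and $SG$ is a profinite space whose Cantor--Bendixson rank replaces the toral strata in the injective dimension calculation of Theorems \ref{Summary} and \ref{GSummary}. If you want to present something provable rather than a roadmap, restrict to the profinite setting as the thesis does, or to one of the known low-rank compact Lie cases.
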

The reference \cite{conjecture} completes the conjecture for tori, and Barnes and K\k{e}dziorek, \cite{BarnesO2} and \cite{MagSO3}, prove it in the cases when $G=O(2)$ and $G=SO(3)$ respectively. In the last two cases a part of the abelian model for $G$-spectra is determined by sheaf categories. This structure appears in the characterisation of the profinite case presented in this thesis. This work will help build up our understanding of how sheaf structures can be used to model the homotopy theory of $G$-spectra more generally. 

Profinite groups are an important class of topological group and they occur naturally in mathematics as Galois groups of Galois field extensions. We know from Fausk \cite[Appendix 1]{Fausk} that a compact topological group is an inverse limit of compact Lie groups. We also know that the finite compact Lie groups are precisely the finite discrete groups. Therefore the characterisation of profinite groups (if we begin with the compact Hausdorff and totally disconnected group definition) as inverse limits of finite discrete groups can be taken as a special case of \cite[Appendix 1]{Fausk}. If we can use the inverse limit structure of a profinite group $G$ to assemble the algebraic model for $G=\underset{i}{\lim}G_i$ from the algebraic models for the $G_i$, then a future possibility is open: to calculate algebraic models for compact topological groups by assembling known algebraic models for compact Lie groups.

When $G=\mathbb{Z}_p$, an algebraic model for rational $G$-spectra is given in Barnes \cite{BarZp}, so the challenge is to try and generalise this for arbitrary profinite groups. The case when $G$ is profinite is more approachable than the general compact Lie group case as the homotopy classes of morphisms of $G$-spectra 
\begin{align*}
[\Sigma^{\infty}G/H_+,\Sigma^{\infty}G/K_+]_*, 
\end{align*}
are concentrated in degree zero when $G$ is profinite, as in the finite case. 

In addition to simplifying the homotopy theory of $G$-spectra, working rationally simplifies the algebra. Specifically, the rational Burnside ring has an easier algebraic structure as there are many more non-trivial multiplicative idempotents than in the integral case.
\section*{Main Results}
We now give a break down of the contents of this thesis.
\subsection*{Rational $G$-spectra for profinite $G$}
In the first chapter of the thesis we look at some of the underlying theory that is needed for the rest of the project including the theory of model categories, sheaf theory and homological algebra. For the second chapter we define the category of rational $G$-Mackey functors when $G$ is profinite. We then define the Burnside ring and give an alternative construction which is very useful in developing the later theory. In particular, we will see that the Burnside ring interacts with any Mackey functor in a way which allows us to attribute a collection of sheaves to any Mackey functor. In the third chapter we will focus on demonstrating that the category of chain complexes of rational $G$-Mackey functors provide an algebraic model for rational $G$-spectra. A key part of this chapter will involve giving an alternative definition of Mackey functors in terms of the full subcategory of the homotopy category of rational $G$-spectra, called the orbit category $\mathfrak{O}_G$. Specifically, we will see that $G$-Mackey functors can be defined as contravariant additive functors of the form:
\begin{align*}
M:\pi_0\left(\mathfrak{O}_G\right)\rightarrow \mathbb{Q}\text{-Mod}.
\end{align*} 
The key piece of information which we use here is that for profinite groups the homotopy classes of morphisms of rational $G$-spectra $[\Sigma^{\infty}G/H_+,\Sigma^{\infty}G/K_+]_*$, are concentrated in degree zero. This alternative definition of a Mackey functor for $G$ has the consequence that homotopy groups of rational $G$-spectra are graded Mackey functors: if $X$ is a $G$-spectrum, then for any open subgroup $H$ of $G$ we define a Mackey functor by:
\begin{align*}
\pi_*(X)(G/H)=\pi^H_*(X)=\left[\Sigma^{\infty}G/H_+,X\right]_*.
\end{align*}
The rest of the thesis will focus on understanding Mackey functors since characterising these will provide an alternative description of rational $G$-spectra, as seen in Theorem \ref{equicohomchar}:
\begin{theorem}
If $G$ is a profinite group then there is an equivalence of categories:
\begin{align*}
\derive\left(\mackey_{\mathbb{Q}}\left(G\right)\right)\cong \Ho\left(\Gspectra_{\mathbb{Q}}\right).
\end{align*}
\end{theorem}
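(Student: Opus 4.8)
The plan is to obtain the equivalence as an instance of the Morita--tilting machinery for stable model categories, following the template of Greenlees--May \cite{Tate} for finite $G$ and the rational comparison techniques of Barnes \cite{BarZp}, with the passage from spectra to chain complexes of $\mathbb{Q}$-modules handled by Shipley's $H\mathbb{Q}$-algebraicization. First I would record that $\Gspectra_{\mathbb{Q}}$ carries a stable, cofibrantly generated model structure for which the set of suspension spectra $\{\Sigma^{\infty}G/H_+\}$, with $H$ ranging over the open subgroups of $G$, is a set of compact generators: since $G$ is profinite the open subgroups form a set, each $G/H$ is a finite discrete $G$-set, each $\Sigma^{\infty}G/H_+$ is small in $\Ho(\Gspectra_{\mathbb{Q}})$, and the functors $\pi^H_*(-)=[\Sigma^{\infty}G/H_+,-]_*$ jointly detect weak equivalences. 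The homotopy theory of rational $G$-spectra is therefore entirely controlled by the full spectral subcategory on these generators, i.e.\ by the endomorphism ringoid $\mathcal{E}$ with mapping spectra $F(\Sigma^{\infty}G/H_+,\Sigma^{\infty}G/K_+)$.

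Second --- and this is where the profinite hypothesis is indispensable --- I would invoke the stated fact that $[\Sigma^{\infty}G/H_+,\Sigma^{\infty}G/K_+]_*$ is concentrated in degree zero for all pairs of open subgroups $H,K$, so that the homotopy of $\mathcal{E}$ is the discrete ringoid $\pi_0(\mathfrak{O}_G)$. Because everything is rational, I would then argue that $\mathcal{E}$ is formal: rationalizing and passing to $H\mathbb{Q}$-modules, Shipley's equivalence between $H\mathbb{Q}$-module spectra and rational chain complexes transports $\mathcal{E}$ to a differential graded $\mathbb{Q}$-ringoid whose homology is concentrated in degree zero, and the higher Hochschild/Toda-bracket obstructions to rigidifying such a DG ringoid to its $H_0$ vanish over $\mathbb{Q}$, exactly as in the finite case of \cite{Tate}. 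Hence $\mathcal{E}$ is quasi-isomorphic, as a spectral (equivalently DG) ringoid, to $\pi_0(\mathfrak{O}_G)$.

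Third, I would apply the tilting theorem in the form used by Barnes \cite{BarZp} (a rational refinement of Schwede--Shipley's classification of stable model categories with a set of compact generators): a stable model category generated by a set of compact objects whose endomorphism ringoid is formal and concentrated in degree zero is Quillen equivalent to $\ch\bigl(\pi_0(\mathfrak{O}_G)\Mod\bigr)$ with its projective model structure. Finally I would identify right $\pi_0(\mathfrak{O}_G)$-modules with $\mackey_{\mathbb{Q}}(G)$ via the functor description of Mackey functors recalled in the introduction, namely contravariant additive functors $\pi_0(\mathfrak{O}_G)\to\mathbb{Q}\text{-Mod}$; taking homotopy categories then yields $\Ho(\Gspectra_{\mathbb{Q}})\cong\Ho\bigl(\ch(\mackey_{\mathbb{Q}}(G))\bigr)=\derive(\mackey_{\mathbb{Q}}(G))$.

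I expect the main obstacle to be the second step: upgrading the degree-zero concentration of the morphism \emph{groups} to formality of the \emph{spectral} endomorphism category, i.e.\ verifying that all higher coherence data is annihilated rationally and doing so uniformly over the (possibly large) indexing set of open subgroups. One must also confirm that the model category of rational $G$-spectra for profinite $G$ genuinely meets the hypotheses (stability, properness, cofibrant generation, a set of compact generators detecting equivalences) demanded by the tilting machinery, and that the $H\mathbb{Q}$-localization together with Shipley's algebraicization functor behaves well in this equivariant, non-finite setting. By contrast, the identification of $\pi_0(\mathfrak{O}_G)$-modules with rational $G$-Mackey functors, and the final passage to derived categories, are essentially formal once the chain of Quillen equivalences is in place.
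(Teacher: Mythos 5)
Your first three steps are essentially the paper's route: rational $G$-spectra form a stable, cofibrantly generated model category with the orbit spectra $\Sigma^{\infty}G/H_+$ ($H$ open) as compact generators, their graded self-maps are concentrated in degree zero, and one then applies the Schwede--Shipley tilting theorem \cite[Theorem 5.1.1]{StableModel} to land in $\ch\left(\fun_{\text{Ab}}\left(\pi_0(\mathfrak{O}^{\mathbb{Q}}_G),\mathbb{Q}\Mod\right)\right)$ (this is Theorem \ref{GSpChar}, via Proposition \ref{tilt} and Lemma \ref{tenscom}). Note that the separate formality argument you flag as the main obstacle is not needed as an extra step: the cited theorem is stated precisely for ``tiltors'' (compact generators with morphisms concentrated in degree zero), so the obstruction-theoretic content you describe is already packaged inside that reference; what you do have to verify is exactly what Proposition \ref{tilt} verifies, namely compactness, generation, and degree-zero concentration, the last using the rational comparison $[G/H_+,G/J_+]_{\mathbb{Q}}\cong[G/H_+,G/J_+]\otimes\mathbb{Q}$.

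The genuine gap is in your final step, which you dismiss as ``essentially formal''. The paper's $\mackey_{\mathbb{Q}}(G)$ is defined by restriction, induction and conjugation maps (Definition \ref{defM1}), equivalently as product-preserving functors on the span category of finite $G$-sets (Theorem \ref{Catdef}); it is \emph{not} defined as contravariant additive functors on $\pi_0(\mathfrak{O}^{\mathbb{Q}}_G)$ --- that description is a theorem the thesis has to prove, not a definition it can quote. Concretely, one must construct the functor $\psi\colon\sspan(\orb(G))\rightarrow\pi_0(\mathfrak{O}^{\mathbb{Q}}_G)$ out of transfer maps (Constructions \ref{transfer} and \ref{top_mack}) and show it is an equivalence of categories (Theorem \ref{orb-span}). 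For profinite $G$ this is genuinely nontrivial: the paper reduces both hom-sets to colimits over the open normal subgroups $N$ of the corresponding data for the finite groups $G/N$ (Lemmas \ref{topmacklem1}, \ref{topmacklem2}, \ref{topmacklem3}), and only then invokes the finite-group identification of \cite[Proposition 5.9.6]{LMSMcC} levelwise, checking compatibility of the comparison squares (Propositions \ref{topMiso} and \ref{psifunctor}); in addition the equivalence of Definitions \ref{defM1} and \ref{defM2} for profinite $G$ needs the pullback decompositions of Lemmas \ref{pullbackorbit} and \ref{lemmaPB}. Without this block of work your chain of equivalences identifies $\Ho(\Gspectra_{\mathbb{Q}})$ with the derived category of $\pi_0(\mathfrak{O}^{\mathbb{Q}}_G)$-modules but never reaches $\derive\left(\mackey_{\mathbb{Q}}(G)\right)$ as the paper defines it, so you should either supply the span-category comparison or explicitly change the definition of Mackey functor and then prove it agrees with the classical one.
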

This will follow from a Quillen equivalence between rational $G$-spectra and chain complexes of rational $G$-Mackey functors, which we will see exists from Schwede and Shipley \cite{StableModel}.
\subsection*{Characterising $G$-Mackey functors}
For the fourth chapter, we will focus on defining $G$-equivariant sheaves of $\mathbb{Q}$-modules over a profinite $G$-space $X$. There are two definitions and we will sketch the equivalence between them. In the situation when $X$ is the space of closed subgroups of $G$, denoted $SG$, we will define the category of Weyl-$G$-sheaves of $\mathbb{Q}$-modules over $SG$. Much of the information on $G$-sheaves is known in the literature, however the assumption that $G$ is a profinite group and $X$ a profinite space allows us to prove some key lemmas regarding the properties of both $G$-sheaves and Weyl-$G$-sheaves of $\mathbb{Q}$-modules. These lemmas will be essential to proving the desired characterisation of Mackey functors. Finally we also look at limits and colimits in the category of $G$-sheaves with particular attention to infinite products. Understanding how to do infinite products will be especially important when calculating the injective dimension of the category. In the fifth chapter we construct a correspondence between Weyl-$G$-sheaves of $\mathbb{Q}$-modules over $SG$ and $G$-Mackey functors in both directions. For chapter six we show that these correspondences give us equivalences of categories. By doing this we show that rational $G$-Spectra are algebraically modelled by chain complexes of Weyl-$G$-sheaves of $\mathbb{Q}$-modules. This is summarised in the following theorem: 
\begin{theorem}
If $G$ is a profinite group then there is an equivalence of categories:
\begin{align*}
\mackey_{\mathbb{Q}}\left(G\right)\cong \Weyl_{\mathbb{Q}}\left(SG\right).
\end{align*}
Consequently we have the following characterisation:
\begin{align*}
\derive \left(\Weyl_{\mathbb{Q}}(SG)\right) \cong \Ho \left(\Gspectra_{\mathbb{Q}}\right).
\end{align*}
\end{theorem}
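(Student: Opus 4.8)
The plan is to establish the equivalence $\mackey_{\mathbb{Q}}(G)\cong\Weyl_{\mathbb{Q}}(SG)$ by constructing functors in both directions and then checking that they are mutually quasi-inverse. The consequence about derived categories is then immediate: an equivalence of abelian categories induces an equivalence of their derived categories, and we compose with the equivalence $\derive(\mackey_{\mathbb{Q}}(G))\cong\Ho(\Gspectra_{\mathbb{Q}})$ already recorded in the excerpt (Theorem \ref{equicohomchar}). So the entire content is the abelian equivalence.

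First I would recall the construction, promised for Chapter 5, that attaches to a rational $G$-Mackey functor $M$ a Weyl-$G$-sheaf over $SG$. The key mechanism flagged in the introduction is the interaction of $M$ with the (rational) Burnside ring: the idempotents of $\mathbb{Q}\otimes A(G)$ let one localise $M$ at each closed subgroup $H$, and the stalk of the associated sheaf at $H\in SG$ should be the resulting $\mathbb{Q}[W_G(H)]$-module (the Weyl group $W_G(H)=N_G(H)/H$ acting because conjugation permutes the data), echoing the finite-group formula $\mackey_{\mathbb{Q}}(G)\cong\bigoplus_{H\in SG/G}\mathbb{Q}[W_G(H)]\Mod$ from the excerpt. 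I would verify this assignment is functorial and lands in Weyl-$G$-sheaves, using the profiniteness of $G$ (hence of $SG$, a profinite space) to guarantee the local triviality / continuity needed for a genuine sheaf — this is exactly where the "key lemmas" on $G$-sheaves and Weyl-$G$-sheaves advertised for Chapter 4 get used. Conversely, from a Weyl-$G$-sheaf $\mathcal{F}$ over $SG$ I would build a Mackey functor by setting $M(G/H)$ to be an appropriate space of ($W_G$-equivariant, compactly-supported-type) sections over a neighbourhood of the orbit of $H$, with restrictions, transfers and conjugations coming from the sheaf structure and the $G$-action; one checks the Mackey-functor axioms (double-coset formula) hold.

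The heart of the proof — and the step I expect to be the main obstacle — is showing these two functors are mutually inverse up to natural isomorphism. One direction, sheaf $\to$ Mackey $\to$ sheaf, should reduce to the statement that a Weyl-$G$-sheaf over the profinite space $SG$ is recovered from its stalks together with the descent/gluing data, which is where one genuinely needs that $SG$ is profinite (so it has a neighbourhood basis of clopen sets, and sheaves are controlled by their behaviour on such a basis). The other direction, Mackey $\to$ sheaf $\to$ Mackey, is the delicate one: one must show that the original Mackey functor $M$ is reassembled from its idempotent pieces, i.e. that the Burnside-ring idempotents give a sufficiently fine decomposition and that no information is lost in passing to stalks. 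The subtlety compared with the finite case is that $SG$ is now infinite, so the decomposition is not a finite direct sum but is genuinely sheaf-theoretic — one has to handle infinite products and the topology on $SG$ carefully (the introduction explicitly signals that understanding infinite products in $G$-sheaves is needed). I would isolate this as a separate lemma: that for every rational $G$-Mackey functor $M$ and every open subgroup $H$, the natural map from $M(G/H)$ to the sections of its associated sheaf over the relevant clopen neighbourhood is an isomorphism, proved by a continuity argument reducing to finite quotients $G_i$ of $G$ (where the finite-group classification applies) and then passing to the (co)limit. Granting that lemma plus its sheaf-side counterpart, the two natural isomorphisms assemble into the claimed equivalence of categories, and the derived-category statement follows formally.
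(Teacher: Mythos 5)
Your overall strategy --- two explicit functors built from the Burnside-ring idempotent action on Mackey functors, checked to be mutually quasi-inverse, with the derived statement following formally from Theorem \ref{equicohomchar} --- is the same as the paper's, and your idea of proving the hard composite (Mackey $\to$ sheaf $\to$ Mackey) by reduction to finite quotients of $G$ is exactly what the paper does, via the inflated Mackey functor of Construction \ref{Mackquot} and Lemma \ref{fixinflate}. However, two of your steps have genuine gaps as stated. First, you cannot ``localise $M$ at each closed subgroup'' by an idempotent: the rational Burnside ring is $C(SG/G,\mathbb{Q})$, so idempotents correspond to clopen subsets of $SG/G$, and a closed non-open subgroup $K$ is not an isolated point, hence has no associated idempotent; moreover $M$ is only defined on \emph{open} subgroups, so there is no value of $M$ at $K$ to localise. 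The paper's stalk at $K$ is instead the colimit $F_K=\underset{J}{\colim}\,M(G/J)_{(K)}$, where $J$ runs over the open subgroups containing $K$ and $M(G/J)_{(K)}$ is the stalk, at the class of $K$, of the sheaf over $SJ/J$ produced by the idempotents (Construction \ref{Sheaf_mack}); even the compatibility of these pieces as $J$ shrinks is nontrivial (Lemma \ref{stalkwelldefined}, which rests on Proposition \ref{restrictprop} and must handle the extra $e_V$ term that is absent in the finite case). Your sketch, which jumps straight to ``the stalk at $H$ is the localised piece'', skips the construction that makes the statement meaningful at non-open closed subgroups, which is precisely where the profinite difficulty lives.

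Second, your claim that the composite sheaf $\to$ Mackey $\to$ sheaf ``reduces to recovering a Weyl-$G$-sheaf from its stalks together with descent/gluing data'' misidentifies the crux. Any sheaf is determined by stalks and gluing; the issue is that $\mackey(F)$ only records the $H$-fixed sections $F(SH)^H$ for $H$ open (not, as you propose, sections over a neighbourhood of the orbit of $H$ --- note $SH$ is a clopen subspace, not such a neighbourhood), so one must show these fixed-point sections already determine every stalk and the topology of $F$. The paper's key input is Proposition \ref{Weylequi}: for a \emph{Weyl}-$G$-sheaf every germ at $K$ is represented by an $NK$-equivariant section over a basic clopen set $O(N,NK)$, which is what makes Lemma \ref{stalk} work; this uses that the stalks are $K$-fixed and fails for general $G$-sheaves, so a generic stalks-plus-gluing argument cannot suffice. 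Finally, after the object-wise identifications in both composites you still need to verify that they commute with restriction, induction and conjugation (Proposition \ref{morphism}; the induction case via Lemma \ref{inductcharac} is the longest computation) and that the isomorphisms are natural --- steps your proposal waves through with ``assemble into the claimed equivalence''.
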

This is given in Theorem \ref{equivalencemain}. The most substantial challenge in proving this theorem comes from constructing the correspondence from Mackey functors to Weyl-$G$-sheaves, which requires careful analysis of the action of Burnside ring idempotents on Mackey functors.

We then show that the inclusion functor from the category of Weyl-$G$-sheaves over $SG$ into the category of $G$-sheaves over $SG$ (as a full subcategory) is a left adjoint. The corresponding right adjoint is explicitly defined using the proceeding equivalence. This adjunction is proven in Proposition \ref{Weyladjunct}. We then take the opportunity to define infinite products of Weyl-$G$-sheaves of $\mathbb{Q}$-modules using both the equivalence and the right adjoint.
\subsection*{Injective dimension}
Since we have constructed the abelian category modelling rational $G$-spectra we can work towards verifying the injective dimension aspect of the Greenlees conjecture. In the seventh chapter we shift our focus in this direction and show that the injective dimension of sheaves of $\mathbb{Q}$-modules over a profinite space $X$ is determined by the Cantor-Bendixson rank of $X$. We do this by constructing an injective resolution of any sheaf by considering the Godement resolution. We calculate $\text{Ext}$ groups by resolving with respect to the skyscraper sheaf of a point of maximal Cantor-Bendixson height, if one exists. We look at some examples before giving a general proof for more general profinite spaces. The following is Theorem \ref{Summary}.
\begin{theorem}
If $X$ is a space with empty perfect hull and finite Cantor-Bendixson rank, say $n$, then the injective dimension of sheaves of $\mathbb{Q}$-modules over $X$ is equal to $n-1$. If $X$ has infinite Cantor-Bendixson rank then the injective dimension of sheaves of $\mathbb{Q}$-modules over $X$ is infinite. 
\end{theorem}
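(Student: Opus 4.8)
The plan is to establish the two estimates separately — that the injective dimension of $\sheaf_{\mathbb Q}(X)$ is at most $n-1$ and at least $n-1$ when $X$ has empty perfect hull and finite Cantor--Bendixson rank $n$ (so $X^{(n)}=\varnothing$ but $X^{(n-1)}\neq\varnothing$) — and then to deduce the infinite-rank assertion from the finite-rank one. For the upper bound I would induct on $n$. When $n=1$ the space $X$ is discrete, hence finite (being profinite), so $\sheaf_{\mathbb Q}(X)$ is a finite product of copies of $\mathbb Q\text{-Mod}$ in which every object is injective. For the inductive step let $i\colon X^{(1)}\hookrightarrow X$ be the closed inclusion, noting that $X^{(1)}$ has rank $n-1$, and for a sheaf $\mathcal F$ form its Godement resolution $C^{\bullet}\mathcal F$, so $C^{0}\mathcal F=\prod_{x\in X}(i_{x})_{*}\mathcal F_{x}$. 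Since $\mathbb Q$ is a field each stalk is an injective $\mathbb Q$-module, each skyscraper $(i_{x})_{*}\mathcal F_{x}$ is an injective sheaf, and an arbitrary product of injective sheaves is injective, so every $C^{p}\mathcal F$ is injective. The key point is that $\mathcal Z^{1}:=\coker(\mathcal F\to C^{0}\mathcal F)$ has vanishing stalk at every isolated point $y$ of $X$ (there $\{y\}$ is open and $\mathcal F_{y}\to(C^{0}\mathcal F)_{y}$ is the identity), so $\mathcal Z^{1}\cong i_{*}\mathcal G$ for a sheaf $\mathcal G$ on $X^{(1)}$; a short computation then shows $C^{p}\mathcal F\cong i_{*}\bigl(C^{p-1}_{X^{(1)}}\mathcal G\bigr)$ for $p\ge 1$ compatibly with the differentials, i.e.\ from degree $1$ on, the Godement resolution of $\mathcal F$ is the image under the exact functor $i_{*}$ of the Godement resolution of $\mathcal G$ over $X^{(1)}$. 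By induction the latter has length $\le n-2$, so $C^{\bullet}\mathcal F$ has length $\le n-1$, which is the bound.

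For the lower bound I would pick a point $x_{0}$ of maximal Cantor--Bendixson height (one exists since $X^{(n-1)}\neq\varnothing$) and compute $\Ext^{\,n-1}_{\sheaf_{\mathbb Q}(X)}\bigl((i_{x_{0}})_{*}\mathbb Q,\mathcal B\bigr)$ for a well-chosen sheaf $\mathcal B$. Applying $\Hom_{X}\bigl((i_{x_{0}})_{*}\mathbb Q,-\bigr)$ to the Godement resolution of $\mathcal B$, and using that $\Hom_{X}\bigl((i_{x_{0}})_{*}\mathbb Q,(i_{x})_{*}M\bigr)$ equals $M$ for $x=x_{0}$ and $0$ otherwise, collapses that resolution to a complex of $\mathbb Q$-modules concentrated in degrees $0,\dots,n-1$ whose cohomology is $\Ext^{*}_{X}\bigl((i_{x_{0}})_{*}\mathbb Q,\mathcal B\bigr)$ (equivalently, this is the local cohomology $H^{*}_{\{x_{0}\}}(X,\mathcal B)$); so it remains to find $\mathcal B$ for which the last differential is not surjective. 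One may also first reduce to a single model space: for any closed inclusion $a\colon Z\hookrightarrow X$ the functor $a_{*}$ is exact, fully faithful and injective-preserving, so $\Ext^{k}_{\sheaf_{\mathbb Q}(X)}(a_{*}\mathcal F,a_{*}\mathcal G)\cong\Ext^{k}_{\sheaf_{\mathbb Q}(Z)}(\mathcal F,\mathcal G)$, and since a profinite space of rank $n$ contains a closed copy of the ordinal space $\omega^{n-1}+1$ (built around any point of $X^{(n-1)}$) it suffices to treat $X=\omega^{n-1}+1$.

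The main obstacle is producing this $\mathcal B$. The guiding mechanism is that for a sheaf of the form $(j_{i})_{!}\mathcal E$, with $j_{i}$ the open inclusion of the discrete stratum $X^{(i)}\setminus X^{(i+1)}$, there is an identification $\Ext^{m}_{X^{(i)}}\bigl((i_{x_{0}})_{*}\mathbb Q,(j_{i})_{!}\mathcal E\bigr)\cong\Ext^{m-1}_{X^{(i+1)}}\bigl((i_{x_{0}})_{*}\mathbb Q,\coker((j_{i})_{!}\mathcal E\to(j_{i})_{*}\mathcal E)\bigr)$ — on a discrete space $(j_{i})_{*}$ has no higher derived functors, so $Ra_{i}^{!}$ of an extension by zero is concentrated in one degree — and that cokernel has nonzero stalk at every point of $X^{(i+1)}$, being an infinite product of stalks of $\mathcal E$ modulo the corresponding direct sum. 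The difficulty is that this cokernel is no longer an extension by zero from a single stratum, so the one-step reductions do not simply compose; moreover the naive global choice $\mathcal B=\mathbb Q_{X}$ is not enough once $n\ge 3$ (there $H^{\,n-1}_{\{x_{0}\}}(X,\mathbb Q_{X})=0$, by a Mittag--Leffler computation of $H^{*}(X\setminus\{x_{0}\},\mathbb Q_{X})$). The technical heart of the proof is therefore the careful bookkeeping — over the Cantor--Bendixson strata — needed to assemble a $\mathcal B$ whose associated $n-1$ connecting maps compose to a non-surjective map in degree $n-1$; in the rank-$2$ case this reduces to the familiar non-surjection $\mathbb Q\hookrightarrow\prod_{\mathbb N}\mathbb Q/\bigoplus_{\mathbb N}\mathbb Q$.

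Finally, for the infinite-rank statement the same closed-subspace monotonicity applies: a profinite space of infinite Cantor--Bendixson rank contains, for every $m$, a closed copy of $\omega^{m}+1$, whose category of $\mathbb Q$-sheaves has injective dimension $m$ by the finite case, so the injective dimension of $\sheaf_{\mathbb Q}(X)$ is at least $m$ for all $m$ and hence infinite. (This is also where the hypothesis ``if one exists'' in the skyscraper argument is handled: such an $X$ may have no point of maximal height, but the monotonicity argument does not need one.)
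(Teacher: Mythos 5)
Your upper bound is correct and is essentially the paper's own mechanism: the Godement terms are injective because skyscrapers with injective stalks are injective and products of injectives are injective (Proposition \ref{Godinj}), and the resolution truncates because $\coker(F\to C^0(F))$ is concentrated on $X^{(1)}$ (Lemma \ref{lem0}); your inductive repackaging through the closed inclusion $X^{(1)}\hookrightarrow X$ is a clean way to say the same thing. Likewise your framework for the lower bound, computing $\Ext^{n-1}\left(\iota_{x_0}(\mathbb{Q}),-\right)$ at a point $x_0$ of maximal height by applying $\Hom\left(\iota_{x_0}(\mathbb{Q}),-\right)$ to the Godement resolution, is exactly the paper's set-up (Lemma \ref{Homologychar}, Theorem \ref{ID_CB}).

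The genuine gap is that for $n\geq 3$ you never produce a sheaf $\mathcal{B}$ with $\Ext^{n-1}\left(\iota_{x_0}(\mathbb{Q}),\mathcal{B}\right)\neq 0$. You dismiss the constant sheaf on Mittag--Leffler grounds, note yourself that the stratum-by-stratum reductions ``do not simply compose'', and then label the remaining construction the technical heart of the proof without carrying it out; as written, the lower bound is only established for $n\leq 2$, and the infinite-rank statement, which you deduce from the finite-rank lower bound for every $m$ (together with an unproved claim that a closed copy of $\omega^{m}+1$ sits inside any profinite space of sufficient rank, whereas the theorem concerns arbitrary spaces and the paper argues directly on $X$), inherits the same gap. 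The paper, by contrast, takes $\mathcal{B}=c\mathbb{Q}$ throughout: Lemma \ref{godementnon0} supplies nonzero germs of $\coker\delta_{k-1}$ at points of every positive height, and Theorem \ref{ID_CB} uses these to exhibit a class in $\coker{\delta_{n-2}}_{x_0}$ not hit by the map $a\mapsto\left(a,\underline{0}\right)_{x_0}$, so no change of coefficient sheaf is needed. Your local-cohomology computation is a serious tension with Proposition \ref{ID2} rather than a settled fact, and it places the burden on you either to locate the error in the paper's stalk-level argument or to construct the replacement $\mathcal{B}$; the proposal does neither, so the theorem is not proved by it.
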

The final case where $X$ has finite Cantor-Bendixson rank and non-empty perfect hull is discussed and it is conjectured that the injective dimension is infinite in this case. 

Finally, the eighth chapter considers what happens when we work with a $G$-action by considering $G$-sheaves of $\mathbb{Q}$-modules and Weyl-$G$-sheaves over $SG$. We do this by constructing a $G$-equivariant analogue of the Godement resolution and proving that this is an injective resolution. Once this has been set up we show that we can extend the argument from the non-equivariant case to the equivariant one. This completes the calculation when $G$ is a profinite group with $SG$ having infinite Cantor-Bendixson rank or finite if $SG$ has empty perfect hull. This is given in Theorem \ref{GSummary}.

In the non-equivariant version of the calculation we used that $\mathbb{Q}$-modules are injective. In order to extend this equivariantly we use Castellano and Weigel \cite[Proposition 3.7]{Castellano} which proves that if $G$ is profinite then every discrete $\mathbb{Q}\left[G\right]$-module is injective. This formally completes a proof in \cite{BarZp} which left out the verification that $\mathbb{Q}\left[\mathbb{Z}_p\right]$-modules are injective.

Since we have the Mackey functor equivalence from Chapter 6 this allows us to characterise the injective dimension of Mackey functors also. Therefore, we have found an abelian category which models rational $G$-Spectra and we can calculate the injective dimension of this category in terms of the Cantor-Bendixson rank.

We summarise with a diagram which illustrates the characterisation of rational $G$-Spectra for $G$ profinite.
\begin{align*}
\xymatrix{\text{Rational $G$-spectra}\ar[d]^{\simeq}\\ \text{Ch}\left(\text{Fun}_{\text{Ab}}\left(\pi_0(\mathfrak{O}^{\mathbb{Q}}_G),\mathbb{Q}\text{-Mod}\right)\right)\ar[d]^{\cong}\\\text{Ch}\left(\text{Mackey}_{\mathbb{Q}}(G)\right)\ar[d]^{\cong}\\ \text{Ch}\left(\text{Weyl-}G\text{-Sheaf}_{\mathbb{Q}}(SG)\right)}
\end{align*}
where $\simeq$ represents a Quillen equivalence and $\cong$ is an equivalence of categories. We can calculate the injective dimension of chain complexes of Weyl-$G$-sheaves of $\mathbb{Q}$-modules over $SG$ in terms of the Cantor-Bendixson rank of $SG$, if the rank is infinite or if it is finite and $SG$ has trivial perfect hull.
\section*{Future Work}
\subsection*{Brown representability}
In this thesis we will see that the category of Weyl-$G$-sheaves of $\mathbb{Q}$-modules over $SG$ algebraically models $G$-spectra. If we can prove that the Brown representability theorem holds when $G$ is profinite, then this work will prove that all $G$-cohomology theories are determined by this algebraic model. 
\subsection*{Injective dimension of sheaves of $R$-modules}
In the calculation of the injective dimension of sheaves we used the fact that every $\mathbb{Q}$-module is injective. In general this argument extends to sheaves of $R$-modules if $R$ is a ring such that every $R\text{-module}$ is injective. We could investigate whether this result holds for sheaves of $R$-modules more generally. Furthermore, the characterisation of the injective dimension of sheaves in terms of the Cantor-Bendixson rank has one remaining case to consider, namely when $X$ is a non-scattered space with finite Cantor-Bendixson rank. Future work could involve resolving this remaining question. 
\subsection*{Equivariant Adams spectral sequence}
One advantage of this work comes from our understanding of the image of a $G$-spectrum $X$ as a Weyl-$G$-sheaf, $\Theta X$. In passing through the various Quillen equivalences we have to fibrantly and cofibrantly replace the images of the relevant Quillen adjoints from \cite[Theorem 5.1.1]{StableModel} in order to construct $\Theta X$, which can be difficult. To aid calculation, we would like to construct a spectral sequence whose input would be maps in the algebraic model and output maps in the homotopy category of $G$-spectra. The injective dimension calculation of Theorem \ref{Summary} will hopefully indicate that this spectral sequence collapses at some stage in good cases. This is a similar method to that used in \cite{BarZp} using the Adams spectral sequence. Another interesting problem to address could be calculating the Galois cohomology in the algebraic model of $\pi_*(X)$, of a given $G$-spectrum $X$.
\subsection*{Change of group functors}
While we have shown that $G$-spectra and chain complexes of Weyl-$G$-Sheaves are Quillen equivalent we have not considered the change of group functors. If $\iota:H\rightarrow G$ is an inclusion of a closed subgroup $H$ into $G$, we have functors:
\begin{align*}
\iota^*:\text{Weyl-} \,G\text{-Sheaf}_{\mathbb{Q}}(SG)&\rightarrow \text{Weyl-}\,H\text{-Sheaf}_{\mathbb{Q}}(SH) \\ \iota^*:G\text{-spectra}&\rightarrow H\text{-spectra}.
\end{align*}
The first extends to a functor on chain complexes. This gives us a functor between the algebraic models for $G$-spectra and that of $H$-spectra. We can then ask if these functors gives us a commuting square on homotopy categories. Similarly by considering $\varepsilon:G\rightarrow G/N$ we have a functor between the algebraic models for $G/N$-spectra and those for $G$-spectra. We can ask if we have a commuting square on homotopy categories in this case also. 

If we consider the group homomorphism $\iota$ above, we have a pair of adjunctions between the categories of $G$-spectra and $H$-spectra given by:
\begin{align*}
(G\underset{H}{\wedge}(-),\iota^*) \text{ and } (\iota^*,F_H\left(G,-\right)),
\end{align*}
where $\iota^*$ is restriction along $\iota$. We could ask if we get an adjunction between the algebraic models and if they are compatible in the following sense. We could ask if the following square of functors commutes: 
\begin{align*}
\xymatrix{G\text{-spectra}\ar@{~>}[r]\ar[d]^L&\ch\left(\text{Weyl-}G\text{-sheaf}\right)\ar[d]^{L^{\prime}}\\H\text{-spectra}\ar@{~>}[r]&\ch\left(\text{Weyl-}H\text{-sheaf}\right)}
\end{align*}
where $L$ and $L^{\prime}$ are the corresponding left adjoints and the horizontal arrows represent zig-zags of Quillen equivalences. We could equivalently ask if the square with the right adjoints commute.

Similarly we could ask a similar question by considering the quotient group homomorphism $\varepsilon$ as above we have a pair of adjunctions between $G$-spectra and $H$-spectra:
\begin{align*}
((-)/N,\varepsilon^*)\text{ and }(\varepsilon^*,(-)^N),
\end{align*}
where $\varepsilon^*$ is restriction along $\varepsilon$, $(-)/N$ is the orbit functor and $(-)^N$ is the fixed point functor. We could ask which of the adjunctions are compatible with the algebraic models.
\subsection*{Model structures}
As a consequence of Proposition \ref{Weyladjunct}, we have a functor from the category of $G$-equivariant sheaves over $SG$ to the full subcategory of Weyl-$G$-sheaves which is right adjoint to the functor which includes Weyl-$G$-sheaves into the category of $G$-equivariant sheaves. This is useful since we have an alternative way of constructing limits in the category of Weyl-$G$-sheaves. This is done by constructing the limit in the category of $G$-equivariant sheaves and then applying the right adjoint functor described. The other method involves using the equivalence to calculate the limit in the category of Mackey functors. Since calculating products of Weyl-$G$-sheaves involves first calculating products in the larger category of $G$-sheaves, it is more convenient to work in the category of $G$-sheaves despite the fact that it is a much larger category.

The adjunction extends to give an adjoint pair between $G$-equivariant sheaves and $G$-Mackey functors since the latter is equivalent to Weyl-$G$-sheaves. This leaves open the possibility of giving a model structure to the category of chain complexes of $G$-equivariant sheaves by utilising the projective model structure on the category of $G$-Mackey functors (or equivalently Weyl-$G$-sheaves). By the above comment about the calculation of categorical products, understanding the homotopy theory of $G$-spectra in terms of $G$-sheaves would be more convenient than doing so in terms of Weyl-$G$-sheaves. This is especially true if we can approach the model category of chain complexes of $G$-sheaves in a simpler and more localised manner. 

In particular if we start with the model category of chain complexes of $G$-sheaves over $SG$, we can apply right Bousfield localisation from Barnes and Roitzheim \cite[Definition 2.3]{BousfieldBarnes}, Hirschhorn \cite[Theorem 5.1.1]{Hirschhorn}. This means that we can choose a particular subcollection of objects $S$ in the category which generate the model structure in the following sense:
\begin{itemize}
\item  The weak equivalence are given to be the collection of maps $f:A\rightarrow B$ which satisfy that
\begin{align*}
f_*:\text{Map}\left(C,A\right)\rightarrow \text{Map}\left(C,B\right)
\end{align*}
is a weak equivalence of simplicial sets for every $C\in S$.
\item The fibrations remain the same as previously.
\end{itemize}
The main goal is to choose a collection of objects $S$ such that the right Bousfield localisation of the model category of chain complexes of $G$-sheaves satisfies the following:
\begin{align*}
S\text{-cell-}\text{Ch}\left(G\text{-sheaf}_{\mathbb{Q}}(SG)\right)\simeq \text{Ch}\left(\text{Weyl-}\,G\text{-sheaf}_{\mathbb{Q}}(SG)\right).
\end{align*}
We have an idea of what the objects of $S$ should be. If $H$ is a closed subgroup of $G$ we let $\left\lbrace U_{NH}\right\rbrace_{N}$ denote the closed-open neighbourhood basis of $H$ indexed by the open normal subgroups $N$. Since the Burnside ring is given by:
\begin{align*}
C(SG/G,\mathbb{Q})=\left[\mathbb{S},\mathbb{S}\right]^G=\left[\hat{f}\mathbb{S},\hat{f}\mathbb{S}\right]^G,
\end{align*}
we have a multiplicative idempotent given by the characteristic function on $U_{NH}$ which is represented by $\overline{e}_{NH}$ in the right hand side ring. If $N\leq N^{\prime}$ are open normal subgroups of $G$ we have a map of spectra:
\begin{align*}
\xymatrix{G/NH_{+}\ar[r]&G/N^{\prime}H}.
\end{align*}
We define:
\begin{align*}
e_HG/H_+=\underset{N\underset{open}{\unlhd}G}{\text{Hocolim}}\,e_{NH}G/NH_{+}
\end{align*}
and
\begin{align*}
S^{\prime}=\left\lbrace e_HG/H_+ \mid  H\underset{\text{closed}}{\leq}G \right\rbrace.
\end{align*}
We let $S$ be the image of the set $S^{\prime}$ under the above correspondence. A potentially useful consequence is that the algebraic model in terms of $G$-sheaves over $X$ is an abelian category which can be defined for any space $X$, whereas Weyl-$G$-sheaves are only defined for $X=SG$.
\subsection*{Monoidal product}
Another consequence of the right Bousfield localisation approach is that by expressing the homotopy theory in this manner, we open the possibility of giving a monoidal algebraic model for $G$-spectra. The category of rational $G$-spectra has a monoidal product given by the smash product, $\wedge$. If we can define a monoidal product $\otimes$ on $G$-sheaves of $\mathbb{Q}$-modules we could ask if there is a new approach which compares these monoidal categories. Explicitly, this question would be addressed by constructing a symmetric monoidal Quillen equivalence. The difficulty here is that the equivalence between rational $G$-spectra and chain complexes of rational Mackey functors is not well suited to monoidal considerations.

\chapter{Basic Definitions}
In this chapter we will explore some of the key concepts required in this thesis. In the first section we will define a model category and examine some of the key properties of this idea. In particular we will see that the model structure is determined by two out of the three classes of morphisms, Lemma \ref{determine}, and how we can compare different model structures. In the second section we will focus on sheaf theory. We will define a sheaf of $\mathbb{Q}$-modules over a space $X$ as well as a sheaf space over $X$, and we will see how these are related, Theorem \ref{nonequivariantsheafequivdefn}. We will also define the limits and colimits in the category of sheaves, Construction \ref{nonequivsheafconstruct}. In the final section we will study homological algebra. We will define an abelian category and an injective resolution, observing that some abelian categories have an injective resolution for every object, Proposition \ref{enoughinject}. We will define the injective dimension of such an abelian category and see how the concept of a right derived functor can used to calculate this, Theorem \ref{extcharacter}. 
\section{Model Categories}
In this section we aim to understand some useful definitions and theory surrounding model categories.
\begin{definition}
A \textbf{model structure} on a category $\mathfrak{C}$, which has all small limits and  small colimits, consists of three classes of morphisms of $\mathfrak{C}$ called weak equivalences, cofibrations $\rightarrowtail$ and fibrations $\twoheadrightarrow$. They satisfy the following axioms:
\begin{enumerate}
\item The two out of three axiom which says that if $f=g\circ h$ and any two out of three of $f,g$ and $h$ are weak equivalences then the third is also.
\item The three classes are closed under retracts, that is if $f$ belongs to one of the three classes and $g$ is a retract of $f$ then $g$ also belongs to that class.
\item Every cofibration has the left lifting property with respect to acyclic fibrations (fibrations which are also weak equivalences). This means that every possible commuting square of the following form has a diagonal lifting
\begin{center}
$\xymatrix{A\ar[r]\ar@{ >->}_{\iota}[d]&X\ar@{->>}^p[d]\\B\ar@{-->}^{\exists}[ur]\ar[r]&Y}$
\end{center}
where $\iota$ is a cofibration and $p$ an acyclic fibration. Similarly we also require that every acyclic cofibration has the left lifting property with respect to fibrations. 
\item We also require that every morphism $f$ can be factored as $f=p\circ \iota$ where $p$ is a fibration and $\iota$ is an acyclic cofibration. Similarly we require the ability to factor $f$ as a composite of a cofibration with an acyclic fibration. We call this the factorisation axiom.
\end{enumerate}
A \index{model category} \textbf{model category} is a category $\mathfrak{C}$ with all small limits and colimits, paired with a model structure.

\end{definition}
\begin{remark}
Every model category has an initial object and a terminal object by definition. This is because the initial object is the colimit of the empty diagram and the terminal object is the limit of the empty diagram and by assumption all small limits and small colimits exist.

If the unique morphism from the initial object into an object $X$ is a cofibration we say that $X$ is a cofibrant object. Dually we have the same definition for fibrant objects.  
\end{remark}
Given any object $X$ in a model category we can use the factorisation axiom on the unique map from the initial object to $X$ to define a cofibrant replacement of $X$ as follows. We have the following triangle using the factorisation axiom to write the unique morphism as the composite of a cofibration and an acyclic fibration. 
\begin{center}
$\xymatrix{\emptyset\ar@{ >->}[d]\ar[r]&X\\QX\ar@{->>}[ur]_{\simeq}}$
\end{center}
We call $QX$ a \index{cofibrant replacement} cofibrant replacement of $X$ and sometimes denote it by $\hat{c}X$. We can also define a fibrant replacement of $X$ denoted $RX$ or $\hat{f}X$ in a similar fashion.

The following lemma will lead to a useful characterisation of the three classes of a model category.
\begin{lemma}\label{retract}
If $f=p\circ\iota$ is such that $f$ has the left lifting property with respect to $p$, then $f$ is a retract of $\iota$. Dually if $f$ has the right lifting property with respect to $\iota$ then $f$ is a retract of $p$. 
\end{lemma}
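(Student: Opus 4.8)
The plan is to run the classical ``retract argument''. First I would fix notation, writing $f\colon A\to B$ together with the given factorisation $A\xrightarrow{\iota}C\xrightarrow{p}B$, so that $p\circ\iota=f$. Under the hypothesis that $f$ has the left lifting property with respect to $p$, the square
\[
\xymatrix{A\ar[r]^{\iota}\ar[d]_{f}&C\ar[d]^{p}\\B\ar[r]_{\id_B}&B}
\]
commutes (precisely because $p\circ\iota=f$), so it admits a diagonal filler $g\colon B\to C$ satisfying $g\circ f=\iota$ and $p\circ g=\id_B$.

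Next I would assemble the retract from the maps just produced. Consider the diagram
\[
\xymatrix{A\ar[r]^{\id_A}\ar[d]_{f}&A\ar[r]^{\id_A}\ar[d]^{\iota}&A\ar[d]^{f}\\B\ar[r]_{g}&C\ar[r]_{p}&B.}
\]
Its left-hand square commutes by $g\circ f=\iota$ and its right-hand square by $p\circ\iota=f$; the top horizontal composite is $\id_A$ and the bottom horizontal composite is $p\circ g=\id_B$. Hence this exhibits $f$ as a retract of $\iota$ in the category of morphisms of $\mathfrak{C}$, which is exactly the assertion.

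For the dual statement I would argue symmetrically. Assuming $f$ has the right lifting property with respect to $\iota$, the square with top edge $\id_A$, left edge $\iota$, right edge $f$ and bottom edge $p$ commutes (again because $p\circ\iota=f$), producing a filler $h\colon C\to A$ with $h\circ\iota=\id_A$ and $f\circ h=p$. Then
\[
\xymatrix{A\ar[r]^{\iota}\ar[d]_{f}&C\ar[r]^{h}\ar[d]^{p}&A\ar[d]^{f}\\B\ar[r]_{\id_B}&B\ar[r]_{\id_B}&B}
\]
has commuting squares, its horizontal composites are the relevant identities, and so it displays $f$ as a retract of $p$.

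There is no real obstacle here; the content is entirely formal and the only thing demanding attention is bookkeeping. The points I would be careful about are: checking that the square one feeds into the lifting hypothesis genuinely commutes, matching the orientation of the lifting property to that square (for the left lifting property $f$ sits on the left and $p$ on the right of the square, whereas for the right lifting property $\iota$ sits on the left and $f$ on the right), and verifying that the two horizontal composites in each retract diagram really collapse to identity morphisms rather than merely to maps that happen to be available. Reversing the roles of the two maps in a lifting square is the easiest slip to make, so that is where I would concentrate the verification.
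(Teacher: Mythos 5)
Your proof is correct and is exactly the standard retract argument that the paper invokes by citing \cite[Lemma 1.1.9]{Hovey}; the lifting squares are oriented correctly and the two retract diagrams commute as claimed. Nothing further is needed.
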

\begin{proof}
See \cite[Lemma 1.1.9]{Hovey}.
\end{proof}
Given a model category $\mathfrak{C}$, any two of the three classes (cofibrations, fibrations and weak equivalences) specify the third.
\begin{lemma}\label{determine}
In a model category $\mathfrak{C}$ the cofibrations are precisely the maps which have the left lifting property with respect to acyclic fibrations. Also the fibrations are precisely the maps which have the right lifting property with respect to acyclic cofibrations. 
\end{lemma}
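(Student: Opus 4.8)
The plan is to prove each characterisation by a standard retract argument, using the factorisation axiom together with Lemma \ref{retract}. I will only spell out the statement about cofibrations; the statement about fibrations is formally dual (apply the cofibration argument in the opposite category, where fibrations become cofibrations and acyclic cofibrations become acyclic fibrations).

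First I would observe that one inclusion is immediate: axiom (3) of the definition of a model structure says precisely that every cofibration has the left lifting property with respect to acyclic fibrations. So it remains to prove the converse, namely that any morphism $f\colon A\to B$ with the left lifting property with respect to all acyclic fibrations is itself a cofibration.

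For this, I would apply the factorisation axiom (4) to $f$ in the form that produces a cofibration followed by an acyclic fibration: write $f=p\circ\iota$ with $\iota$ a cofibration and $p$ an acyclic fibration. By hypothesis $f$ has the left lifting property with respect to $p$, since $p$ is in particular an acyclic fibration. Now Lemma \ref{retract} applies verbatim and tells us that $f$ is a retract of $\iota$. Since $\iota$ is a cofibration and, by axiom (2), the class of cofibrations is closed under retracts, it follows that $f$ is a cofibration, which completes the argument.

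The step I expect to require the most care is bookkeeping rather than mathematics: one must invoke the \emph{correct} half of the factorisation axiom (a cofibration followed by an acyclic fibration, not an acyclic cofibration followed by a fibration), so that the map $p$ appearing in the factorisation is exactly of the type against which $f$ is assumed to lift, allowing Lemma \ref{retract} to be applied. For the dual statement one correspondingly uses the other half of the factorisation axiom. No genuine obstacle arises beyond this; the result is a direct consequence of Lemma \ref{retract} and axioms (2) and (4).
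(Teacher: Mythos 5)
Your argument is correct and is exactly the standard retract argument that the paper relies on: the paper simply notes that the lemma follows from Lemma \ref{retract} and cites Hovey's Lemma 1.1.10, whose proof proceeds precisely as you describe (factor via the cofibration/acyclic-fibration half of the factorisation axiom, lift, apply the retract lemma, and use closure of cofibrations under retracts, with the dual argument for fibrations). No differences of substance.
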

\begin{proof}
This follows on from Lemma \ref{retract}, see \cite[Lemma 1.1.10]{Hovey} for more details.
\end{proof}
\begin{proposition}
The class of weak equivalences is determined by the classes of fibrations and cofibrations.
\end{proposition}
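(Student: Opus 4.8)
The plan is to show that the weak equivalences are precisely those morphisms which can be written as a composite of an acyclic cofibration followed by an acyclic fibration, and then to observe that the classes of acyclic cofibrations and acyclic fibrations depend only on the classes of cofibrations and fibrations. The argument is entirely formal, relying on the factorisation axiom, the two-out-of-three axiom, closure under retracts, and Lemma \ref{retract}.

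First I would pin down two auxiliary characterisations: a morphism is an acyclic fibration if and only if it has the right lifting property with respect to every cofibration, and dually a morphism is an acyclic cofibration if and only if it has the left lifting property with respect to every fibration. One direction of each is immediate from axiom (3) of the model structure. For the converse, suppose $p$ has the right lifting property with respect to all cofibrations; using the factorisation axiom write $p = q\circ\iota$ with $\iota$ a cofibration and $q$ an acyclic fibration. Then $p$ has the right lifting property with respect to $\iota$, so by Lemma \ref{retract} $p$ is a retract of $q$; since both fibrations and weak equivalences are closed under retracts (axiom (2)), $p$ is an acyclic fibration. The dual argument handles acyclic cofibrations. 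In particular these two classes are visibly determined by the cofibrations and the fibrations alone. (This is the content of \cite[Lemma 1.1.10]{Hovey}, so one could alternatively just cite it.)

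Next I would establish the characterisation of weak equivalences. Given a weak equivalence $f$, apply the factorisation axiom to write $f = p\circ\iota$ with $\iota$ a cofibration and $p$ an acyclic fibration; since $f$ and $p$ are weak equivalences, the two-out-of-three axiom forces $\iota$ to be a weak equivalence, hence an acyclic cofibration. Conversely, if $f = p\circ\iota$ with $\iota$ an acyclic cofibration and $p$ an acyclic fibration, then $f$ is a composite of two weak equivalences and so is a weak equivalence, again by two-out-of-three. Therefore a morphism $f$ is a weak equivalence if and only if it factors as an acyclic cofibration followed by an acyclic fibration, and by the previous paragraph this condition refers only to the classes of cofibrations and fibrations; two model structures on $\mathfrak{C}$ with the same cofibrations and fibrations thus have the same weak equivalences.

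The step I would be most careful about is the retract argument identifying the acyclic fibrations with the maps having the right lifting property against all cofibrations, since it is precisely there that closure under retracts of \emph{both} fibrations and weak equivalences is used at once; everything else is a routine application of the two-out-of-three and factorisation axioms.
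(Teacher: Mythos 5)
Your proof is correct and follows essentially the same route as the paper: the paper's own argument is exactly your second paragraph, namely that a map is a weak equivalence if and only if it factors as an acyclic cofibration followed by an acyclic fibration, by the factorisation and two-out-of-three axioms. Your first paragraph, characterising the acyclic (co)fibrations by lifting properties via the retract argument, makes explicit a point the paper leaves to the surrounding lemmas (Lemma \ref{retract} and \cite[Lemma 1.1.10]{Hovey}), so it is a welcome completion rather than a different approach.
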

\begin{proof}
The set of maps of the form $p\circ \iota$ where $p$ is an acyclic fibration and $\iota$ is an acyclic cofibration is the set of weak equivalences. This follows directly from the two out of three axiom and the factorisation axiom.
\end{proof}
Therefore we have shown that any two classes of maps determine the third.
\begin{example}
An example of a model structure on the category of topological spaces is the Hurewicz model structure. This is the model structure where the weak equivalences are the homotopy equivalences, the fibrations are the maps which have the right lifting property with respect to all inclusions $\xymatrix{A\ar@{ >->}[r]&A\times [0,1],}$ and the cofibrations are then determined by the left lifting property.
\end{example}
\begin{example}
Another example of a model category can be observed by looking at the category of simplicial sets. A morphism of simplicial sets is a weak equivalence if its geometric realisation is a homotopy equivalence, the cofibrations are the monomorphisms and the fibrations are the Kan fibrations.

A Kan fibration is a map which has the right lifting property with repsect to all horn inclusions i.e with respect to the inclusions $\Lambda_k^n\rightarrow \Delta^n$ for $1\leq k\leq n$ where $\Lambda_k^n$ is the boundary of $\Delta^n$ minus the $k$th face.
\end{example}
The following lemma is a famous result about model categories called Ken Brown's Lemma\index{Ken Brown's Lemma}.
\begin{lemma}\label{KenBrown}
Let $\mathfrak{C}$ be a model category and $\mathfrak{D}$ be a category which has a class of weak equivalences satisfying the two out of three axiom. If $F:\mathfrak{C}\rightarrow \mathfrak{D}$ is a functor which sends acyclic cofibrations between cofibrant objects to weak equivalences, then $F$ preserves all weak equivalences between cofibrant objects. We also have the dual result for fibrant objects.
\end{lemma}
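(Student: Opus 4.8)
The plan is to use the standard mapping-cylinder factorisation argument. Let $f\colon A\to B$ be a weak equivalence between cofibrant objects of $\mathfrak{C}$. First I would form the coproduct $A\amalg B$ together with the map $(f,\id_B)\colon A\amalg B\to B$, and apply the factorisation axiom to write it as
\begin{align*}
A\amalg B\stackrel{\iota}{\rightarrowtail} C\stackrel{q}{\twoheadrightarrow} B
\end{align*}
with $\iota$ a cofibration and $q$ an acyclic fibration. Writing $i_A\colon A\to C$ and $i_B\colon B\to C$ for the composites of $\iota$ with the canonical inclusions $A\to A\amalg B$ and $B\to A\amalg B$, we get $q\circ i_A=f$ and $q\circ i_B=\id_B$.

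The first step is to verify the cofibrancy bookkeeping. Since $A$ and $B$ are cofibrant, the inclusion $A\to A\amalg B$ is a pushout of the cofibration $\emptyset\to B$, hence a cofibration by the pushout-stability of cofibrations (which follows from the left lifting property characterisation of Lemma \ref{determine}); likewise $B\to A\amalg B$ is a cofibration. Therefore $i_A$ and $i_B$ are cofibrations, and $C$ is cofibrant because $\emptyset\to A\amalg B\to C$ is a composite of cofibrations.

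The second step applies the two-out-of-three axiom in $\mathfrak{C}$ twice: from $q\circ i_B=\id_B$ and the fact that $q$ is a weak equivalence we get that $i_B$ is a weak equivalence, and from $q\circ i_A=f$ and $q$ being a weak equivalence we get that $i_A$ is a weak equivalence. Thus $i_A$ and $i_B$ are acyclic cofibrations between cofibrant objects, so by hypothesis $F(i_A)$ and $F(i_B)$ are weak equivalences in $\mathfrak{D}$. The final step transports this back: since $F(q)\circ F(i_B)=F(\id_B)=\id_{F(B)}$ is a weak equivalence and $F(i_B)$ is a weak equivalence, the two-out-of-three axiom in $\mathfrak{D}$ gives that $F(q)$ is a weak equivalence, whence $F(f)=F(q)\circ F(i_A)$ is a composite of weak equivalences and hence a weak equivalence. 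The dual statement for fibrant objects follows by the mirror-image argument, factoring $(\id_A,f)\colon A\to A\times B$ as an acyclic cofibration followed by a fibration, or equivalently by applying the statement already proven in $\mathfrak{C}^{\mathrm{op}}$.

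I do not anticipate a substantive obstacle; the only points requiring care are keeping track of which category (whether $\mathfrak{C}$ or $\mathfrak{D}$) each invocation of the two-out-of-three axiom takes place in, and the observation that the coproduct inclusions $A\to A\amalg B$ and $B\to A\amalg B$ are cofibrations — without this, $i_A$ and $i_B$ would fail to be maps between cofibrant objects and the hypothesis on $F$ could not be applied.
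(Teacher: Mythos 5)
Your argument is correct and is precisely the standard mapping-cylinder proof of Ken Brown's Lemma from Hovey's Lemma 1.1.12, which the paper cites rather than reproduces, so you and the paper are following the same route. The only implicit assumption worth noting is that identities in $\mathfrak{D}$ are weak equivalences (automatic in Hovey's formulation, where the weak equivalences form a subcategory), which you need when applying two-out-of-three to $F(q)\circ F(i_B)=\id_{F(B)}$.
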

\begin{proof}
See \cite[Lemma 1.1.12]{Hovey}
\end{proof}
We now look at a special type of functor between model categories which is compatible with the model structure.
\begin{definition}
An adjoint pair of functors $F$ and $U$ is called a \textbf{Quillan adjunction}\index{Quillan adjunction} if $F$ preserves cofibrations and acyclic cofibrations between cofibrant objects, and $U$ preserves fibrations and acyclic fibrations between fibrant objects.

Let $\psi$ be the natural isomorphism in the definition of adjunction. Then the adjoint pair of functors above are called a \textbf{Quillen equivalence}\index{Quillen equivalence} if and only if for all cofibrant $X$ in $\mathfrak{C}$ and all fibrant $Y$ in $\mathfrak{D}$ we have that $f:FX\rightarrow Y$ is a weak equivalence if and only if $\psi(f):X\rightarrow UY$ is a weak equivalence.
\end{definition}
\section{Sheaf Theory}
We now define sheaves and presheaves of $\mathbb{Q}$-modules over a topological space $X$. For the next definition, we use the fact that a partially ordered set can be considered a category. The objects are the elements of the set and a morphism from an element $a$ to an element $b$ comes from the relation $a\leq b$.
\begin{definition}
Given a topological space $X$, let $\mathfrak{U}$ be the poset of all open subsets of $X$. Then a \textbf{presheaf}\index{presheaf} of $\mathbb{Q}$-modules over $X$ is a contravariant functor $F:\mathfrak{U}\rightarrow \mathbb{Q}\leftmod$.

A morphism of presheaves is a natural transformation of functors.
\end{definition}
For each $V\subseteq U$ the definition of a presheaf $F$ of $\mathbb{Q}$-modules specifies a restriction map $\varphi_{U,V}:F(U)\rightarrow F(V)$.

\begin{definition}
We define the \textbf{stalk} of a presheaf $F$ at a point $x$ to be the direct limit of $F(U)$ over the direct system of open neighbourhoods $U$ of $x$, i.e. $F_x=\underset{U\backepsilon x}{\colim}\, F(U)$.
\end{definition}
We now define a sheaf of $\mathbb{Q}$-modules over a topological space $X$.
\begin{definition}\label{sheaf}
A \textbf{sheaf}\index{sheaf} of $\mathbb{Q}$-modules over a topological space $X$ is a presheaf $F$ satisfying the following two conditions:
\begin{enumerate}
\item If $\left\lbrace U_{\lambda} \mid \lambda \in \Lambda\right\rbrace$ is an open covering of any open set $U$ and $s_1,s_2 \in F(U)$ with $\varphi_{U,U_{\lambda}}(s_1)=\varphi_{U,U_{\lambda}}(s_2)$ for every $\lambda$, then $s_1=s_2$.
\item Let $\left\lbrace U_{\lambda}\mid {\lambda \in \Lambda}\right\rbrace$ be an open covering of an open set $U$. If a family $(s_{\lambda})_{\lambda\in\Lambda}\in \underset{\lambda\in \Lambda}{\prod}F(U_{\lambda})$ satisfies that for every pair $\mu$ and $\lambda$ we have $\varphi_{U_{\lambda},U_{\mu}\cap U_{\lambda}}(s_{\lambda})=\varphi_{U_{\mu},U_{\mu}\cap U_{\lambda}}(s_{\mu})$, then there is an $s\in F(U)$ such that $\varphi_{U,U_{\lambda}}(s)=s_{\lambda}$.
\end{enumerate}
A morphism of sheaves is a morphism of the underlying presheaves.
\end{definition}
If a presheaf satisfies condition $1$ of the definition then we say that it is a \textbf{monopresheaf}. The following lemma gives an alternative characterisation of a sheaf, as seen in \cite[pp. 15]{Tennison}.
\begin{lemma}
A presheaf $F$ is a \textbf{sheaf} if and only if the following diagram is an equaliser diagram:
\begin{center}
$\xymatrix{F(U)\ar[r]|-h&\underset{\lambda\in\Lambda}{\prod}F(U_{\lambda})\ar@<4pt>[r]|(.4){g}\ar@<-4pt>[r]|(.4){f}&\underset{(\lambda,\mu)\in\Lambda\times\Lambda}{\prod}F(U_{\lambda}\cap U_{\mu})}$,
\end{center}
where $U$ is an open subset and $\left\lbrace U_{\lambda} \mid \lambda \in \Lambda\right\rbrace$ is an open covering of $U$. The map $h$ sends $s$ to $(\varphi_{U,U_{\lambda}}(s))_{\lambda\in\Lambda}$, $f$ is defined by 
\begin{align*}
(s_{\lambda})_{\lambda\in\Lambda}\mapsto(\varphi_{U_{\lambda},U_{\mu}\cap U_{\lambda}}(s_{\lambda}))_{(\lambda,\mu)\in \Lambda\times\Lambda}
\end{align*}
and $g$ is defined by
\begin{align*}
(s_{\lambda})_{\lambda\in\Lambda}\mapsto (\varphi_{U_{\mu},U_{\mu}\cap U_{\lambda}}(s_{\mu}))_{(\lambda,\mu)\in \Lambda\times\Lambda}.
\end{align*}
\end{lemma}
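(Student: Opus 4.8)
The statement to prove is the standard equivalence between the two formulations of a sheaf: that a presheaf $F$ is a sheaf (in the sense of Definition \ref{sheaf}) if and only if, for every open $U$ and every open cover $\{U_\lambda\}_{\lambda\in\Lambda}$ of $U$, the displayed diagram $F(U)\to\prod_\lambda F(U_\lambda)\rightrightarrows\prod_{\lambda,\mu}F(U_\lambda\cap U_\mu)$ is an equaliser. This is an unwinding of definitions, so the plan is to translate the equaliser condition into its two constituent parts and match them against conditions (1) and (2) of Definition \ref{sheaf}.

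The plan is as follows. First I would recall what it means for the diagram to be an equaliser of $\mathbb{Q}$-modules: the map $h$ must be injective, and its image must be exactly the set of tuples $(s_\lambda)_\lambda$ on which $f$ and $g$ agree, i.e. $\{(s_\lambda) : f((s_\lambda)) = g((s_\lambda))\}$. I would also note at the outset that $f((s_\lambda)) = g((s_\lambda))$ unwinds, coordinate by coordinate in the index $(\lambda,\mu)$, to the compatibility condition $\varphi_{U_\lambda, U_\mu\cap U_\lambda}(s_\lambda) = \varphi_{U_\mu, U_\mu\cap U_\lambda}(s_\mu)$ for all $\lambda,\mu$ — which is precisely the hypothesis appearing in condition (2). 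Then the argument splits cleanly: injectivity of $h$ is, verbatim, condition (1) — two sections $s_1,s_2\in F(U)$ with $h(s_1)=h(s_2)$ means $\varphi_{U,U_\lambda}(s_1)=\varphi_{U,U_\lambda}(s_2)$ for all $\lambda$, and condition (1) says this forces $s_1=s_2$; and surjectivity of $h$ onto the equaliser subobject is exactly condition (2) — every compatible family $(s_\lambda)$ is $h(s)$ for some $s\in F(U)$. One small point of care: since $h$ is injective (granted condition (1)), the $s$ produced by condition (2) is automatically unique, so $h$ restricts to a bijection onto the equaliser, giving the universal property.

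For the forward direction, assuming $F$ is a sheaf I would verify the equaliser universal property directly: given any $\mathbb{Q}$-module $M$ and a map $t\colon M\to\prod_\lambda F(U_\lambda)$ with $f\circ t = g\circ t$, I would use that for each $m\in M$ the tuple $t(m)$ is a compatible family, apply condition (2) to get an element of $F(U)$ and condition (1) to see it is unique, and check that the resulting map $M\to F(U)$ is $\mathbb{Q}$-linear (linearity is inherited because the restriction maps $\varphi$ are $\mathbb{Q}$-linear and the assignment is unique). For the converse, assuming the equaliser property, condition (1) follows from injectivity of $h$ (the equaliser map into a product is always monic once we know the diagram is an equaliser — more concretely, $h$ is the equaliser of $f,g$ hence monic) and condition (2) follows because any compatible family lies in the equaliser and hence in the image of $h$.

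The argument has no real obstacle; it is a routine diagram-chase. The only thing requiring minor attention is bookkeeping with the double-index product $\prod_{(\lambda,\mu)\in\Lambda\times\Lambda}F(U_\lambda\cap U_\mu)$: one must be careful that the two parallel maps $f$ and $g$ use the two different symmetric intersections $U_\mu\cap U_\lambda$ consistently (the statement as given already fixes this by indexing over ordered pairs $(\lambda,\mu)$), and that the equaliser condition $f\circ t = g\circ t$ genuinely encodes the full symmetric compatibility across all pairs. Because the paper cites \cite[pp.~15]{Tennison} for this lemma, I would simply reproduce this short verification and cite the reference, rather than belabour it.
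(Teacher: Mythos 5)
Your proof is correct: it is the standard definitional unwinding, with injectivity of $h$ matching the monopresheaf condition and surjectivity onto the agreement set of $f$ and $g$ matching the gluing condition (plus the trivial check, via transitivity of the restriction maps, that $f\circ h=g\circ h$ always holds). The paper itself gives no argument for this lemma beyond citing Tennison, and what you have written is exactly the routine verification that reference (and the paper) intends.
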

\begin{definition}
If $p:X\rightarrow Y$ is a map of spaces, we say that $p$ is a \textbf{local homeomorphism} if each $x\in X$ has an open neighbourhood $U_x$ such that $p_{|_{U_x}}$ is a homeomorphism onto $p(U_x)$.
\end{definition}
We will give an alternative definition of a sheaf in Theorem \ref{nonequivariantsheafequivdefn}. We now work towards explaining this theorem.
\begin{definition}\label{sheafspacedefn}
A \textbf{sheaf space}\index{sheaf space} over a topological space $X$ is a pair $(E,p)$ where:
\begin{enumerate}
\item $E$ is a topological space and $p:E\rightarrow X$ is a continuous local homeomorphism,
\item for every $x\in X$, the stalk $p^{-1}(x)$ has a $\mathbb{Q}$-module structure whose addition and unit extend to a continuous addition of sections and a zero section.
\end{enumerate}
\end{definition}
To explain what we mean by the second point, if $U$ is an open subset of $X$ then we can define the following set:
\begin{align*}
\Gamma \left(U,E\right)=\left\lbrace s:U\rightarrow E\mid s\,\text{ is continuous}\,\text{and}\,p\circ s=\id \right\rbrace
\end{align*}
which we sometimes denote by $E(U)$. Since each $p^{-1}(x)$ is a $\mathbb{Q}$-module, if $s,t\in E(U)$, we can define a function by point-wise addition $s+t$ which may not be continuous in general. We say the $\mathbb{Q}$-module operations are continuous with respect to $E$ if a function of the form $s+t$ is continuous. It follows that sets of the form $E(U)$ have a $\mathbb{Q}$-module structure.

To a presheaf $F$ we can apply a process called \textbf{sheafification}\index{sheafification} to obtain a sheaf from $F$. We first define the sheaf space over the presheaf $F$ whose underlying set is given by $\underset{x\in X}{\coprod}\,F_x$, and a basis for the topology is given by sets of the form $\left\lbrace s_x\mid x \in U\right\rbrace$ for each open set $U$ and $s \in F(U)$. Here $s_x$ represents the equivalence class of $s$ in $F_x$, called the germ of $s$ at $x$. 

Note that as a requirement from the definition of a sheaf space we must have a local homeomorphism from $LF$ to $X$. The projection $p:LF\rightarrow X$ is a local homeomorphism. We denote the sheafification of $F$ by $\Gamma LF$ and this is defined to be the contravariant functor which is defined by:
\begin{align*}
U \mapsto \Gamma LF(U),
\end{align*}
where
\begin{align*}
\Gamma LF(U)=\left\lbrace \alpha:U\rightarrow LF\mid p\circ\alpha=\id,\,\alpha\,\,\text{is continuous}\right\rbrace.
\end{align*}
This is a sheaf as seen in \cite[Definition 2.4.1]{Tennison}. In particular this shows that every sheaf determines a sheaf space.
\begin{theorem}
If $X$ is a topological space then every sheaf of $\mathbb{Q}$-modules over $X$ determines a sheaf space.
\end{theorem}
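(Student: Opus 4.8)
The plan is to unwind the construction of sheafification just presented and observe that it already does the work. Given a sheaf $F$ of $\mathbb{Q}$-modules over $X$, I would produce the candidate sheaf space $(LF, p)$ exactly as in the paragraph above: take the underlying set $\coprod_{x\in X} F_x$, topologise it by declaring the sets $\{s_x \mid x\in U\}$ (for $U$ open in $X$ and $s\in F(U)$) to be a basis, and let $p\colon LF\to X$ send $s_x\mapsto x$. The content of the theorem is then to check the two clauses of Definition \ref{sheafspacedefn}: that $p$ is a continuous local homeomorphism, and that the stalkwise $\mathbb{Q}$-module structure is continuous in the required sense.

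First I would check that the proposed basis really is a basis for a topology: given $s_x$ lying in both $\{s'_y\mid y\in U'\}$ and $\{s''_y\mid y\in U''\}$, the germs $s'_x$ and $s''_x$ agree at $x$, so by definition of the colimit defining the stalk there is an open $W\ni x$ with $W\subseteq U'\cap U''$ on which the restrictions of $s'$ and $s''$ coincide; then $\{s'_y\mid y\in W\}$ is a basic open set between the two. Next, continuity of $p$ is immediate since $p^{-1}(U) = \bigcup_{s\in F(U')} \{s_y\mid y\in U'\cap U\}$ is a union of basic opens; and $p$ is a local homeomorphism because, restricted to a basic open $\{s_y\mid y\in U\}$, the map $p$ is a continuous bijection onto $U$ whose inverse $y\mapsto s_y$ is continuous (the preimage of a basic open $\{t_z\mid z\in V\}$ is the set of $y\in U\cap V$ where $s$ and $t$ have the same germ, which is open by the same colimit argument). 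Here is the one place the hypothesis that $F$ is a sheaf, rather than merely a presheaf, is genuinely needed: to see that $y\mapsto s_y$ is injective on a small enough neighbourhood one uses the monopresheaf axiom (Definition \ref{sheaf}(1)), so that two sections with the same germ everywhere on an open set are equal there and hence give the same local section.

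For the second clause I would note that each stalk $p^{-1}(x) = F_x$ is a filtered colimit of $\mathbb{Q}$-modules and hence a $\mathbb{Q}$-module, and that for sections $s, t\in \Gamma(U, LF)$ the pointwise sum $y\mapsto s(y)+t(y)$ is again continuous: locally $s$ and $t$ are of the form $y\mapsto a_y$ and $y\mapsto b_y$ for $a, b\in F(V)$, so their sum is $y\mapsto (a+b)_y$, which is a section of the basic form; scalar multiplication by $q\in\mathbb{Q}$ is handled identically using $y\mapsto(qa)_y$, and the zero section is $y\mapsto 0_y$. Thus $(LF,p)$ is a sheaf space over $X$.

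The argument is essentially routine; the only subtlety — and the step I would be most careful about — is the verification that $p$ is a \emph{local} homeomorphism, i.e. that the canonical section $y\mapsto s_y$ over $U$ is an open map (equivalently injective after shrinking $U$), since this is precisely where the sheaf axiom enters and where a naive presheaf would fail. Everything else (basis check, continuity of $p$, continuity of the algebraic operations) follows formally from the definition of the stalk as a filtered colimit. I would cite \cite[Definition 2.4.1]{Tennison} or the surrounding discussion for the standard details and keep the proof short.
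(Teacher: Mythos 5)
Your construction is exactly the paper's: the paper's proof simply observes that a sheaf is in particular a presheaf and applies the sheaf-space construction $LF$ already described, so your write-up amounts to filling in the routine verifications that the paper leaves to the reference \cite[Definition 2.4.1]{Tennison}. Those verifications are fine, but one side claim is wrong: the monopresheaf axiom is \emph{not} where the sheaf hypothesis enters, and in fact it is not needed at all. Injectivity of $p$ restricted to a basic open $\left\lbrace s_y\mid y\in U\right\rbrace$ is automatic, since $p(s_y)=y$ and germs at distinct points lie in distinct (disjoint) stalks of $\coprod_{x\in X}F_x$; likewise the inverse $y\mapsto s_y$ is well defined and continuous for any presheaf. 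Indeed the paper defines $LF$ and notes that $p\colon LF\rightarrow X$ is a local homeomorphism for an arbitrary presheaf $F$ — the sheaf condition only becomes relevant later (Lemma \ref{2}), when one asks whether $\Gamma LF$ recovers $F$ itself. So your proof is correct, but the remark that the argument "genuinely needs" $F$ to be a sheaf at the local-homeomorphism step should be deleted.
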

\begin{proof}
If $F$ is a sheaf of $\mathbb{Q}$-modules over $X$ then since every sheaf is a presheaf we can apply the sheafification process to $F$ to construct a sheaf space.  
\end{proof}
We will see that if a sheaf space is determined by a sheaf, then we can recover the original sheaf in Lemma \ref{2}. Sheafification satisfies the following universal property:
\begin{theorem}\label{univthm}
If $F$ is a presheaf and $G$ a sheaf then any morphism of \\presheaves $f:F\rightarrow G$ factors uniquely as follows:
\begin{center}
$\xymatrix{F\ar[d]\ar[r]^f&G\\ \Gamma LF\ar@{-->}[ur]^{\exists !}}$
\end{center}
\end{theorem}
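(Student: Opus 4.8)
The plan is to exhibit the canonical unit morphism $\theta_F\colon F\to\Gamma LF$ and show it is universal among morphisms out of $F$ into sheaves. First I would define $\theta_F$ on a section $s\in F(U)$ by $\theta_F(s)\colon U\to LF$, $x\mapsto s_x$, where $s_x$ is the germ of $s$ at $x$. By the very definition of the topology on $LF$ (a basis is given by the sets $\{s_x\mid x\in U\}$) each $\theta_F(s)$ is a continuous section of the projection $p\colon LF\to X$, so $\theta_F(s)\in\Gamma LF(U)$; compatibility with restriction maps and $\mathbb{Q}$-linearity are immediate from the construction of the stalks as filtered colimits, so $\theta_F$ is a morphism of presheaves.

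Next, given a morphism $f\colon F\to G$ with $G$ a sheaf, I would construct the factoring morphism $\bar f\colon\Gamma LF\to G$. Since $G$ is a sheaf, the unit $\theta_G\colon G\to\Gamma LG$ is an isomorphism (this is the recovery statement, Lemma \ref{2}). The morphism $f$ induces on each stalk a $\mathbb{Q}$-linear map $f_x\colon F_x\to G_x$, and assembling these over $x\in X$ gives a map $Lf\colon LF\to LG$ commuting with the projections to $X$; it is continuous because it carries the basic open $\{s_x\mid x\in U\}$ into the basic open $\{(f_U s)_x\mid x\in U\}$. Hence $\Gamma Lf\colon\Gamma LF\to\Gamma LG$ is a morphism of sheaves, and I set $\bar f=\theta_G^{-1}\circ\Gamma Lf$. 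A germ-level computation then gives $\bar f\circ\theta_F=f$: on $s\in F(U)$ the section $\theta_F(s)$ has germ $s_x$ at $x$, $\Gamma Lf$ sends it to the section with germ $f_x(s_x)=(f_U s)_x$, and applying $\theta_G^{-1}$ returns $f_U(s)$.

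For uniqueness, suppose $g\colon\Gamma LF\to G$ is any morphism of sheaves with $g\circ\theta_F=f=\bar f\circ\theta_F$. The key point is that every $\alpha\in\Gamma LF(U)$ is locally of the form $\theta_F(s)$: by continuity of $\alpha$ and the definition of the topology on $LF$, each $x\in U$ has a neighbourhood $V$ and a section $s\in F(V)$ with $\alpha|_V=\theta_F(s)$. On each such $V$ we then have $g(\alpha)|_V=g(\theta_F(s))=f_V(s)=\bar f(\theta_F(s))=\bar f(\alpha)|_V$; since these $V$ cover $U$ and $G$ is a monopresheaf, $g(\alpha)=\bar f(\alpha)$, so $g=\bar f$.

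I expect the main obstacle to be the bookkeeping around the continuity of $Lf$ and, more substantially, the assertion that every section of $\Gamma LF$ is locally a germ section $\theta_F(s)$, which is exactly what drives the uniqueness argument; this has to be extracted carefully from the definition of the topology on the étale space $LF$ together with the fact that $p$ is a local homeomorphism. Everything else is a routine diagram chase through stalks and an application of the sheaf axioms for $G$.
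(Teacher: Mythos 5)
Your proof is correct: the paper itself gives no in-text argument but defers to \cite[Theorem 2.4.2]{Tennison}, and what you have written is precisely the standard proof that citation points to, built on the same ingredients the paper records (the unit $F\rightarrow \Gamma LF$, the fact from Lemma \ref{2} that this unit is an isomorphism for a sheaf $G$, and the observation that sections of $\Gamma LF$ are locally germ sections $\theta_F(s)$, which drives uniqueness). The only glosses are routine ones — continuity of $\theta_F(s)$ and of $Lf$ both reduce to the openness of the germ-agreement locus — and they do not constitute gaps.
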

\begin{proof}
See \cite[Theorem 2.4.2]{Tennison}.
\end{proof}
The following three results are used to obtain the universality of sheafification. We record them since the results themselves are instructive.
\begin{lemma}\label{2}
If $F$ is a presheaf, then $F$ is a sheaf if and only if $\Theta:F\rightarrow \Gamma LF$ is an isomorphism of sheaves, where:
\begin{align*}
\Theta(U):F(U)&\rightarrow \Gamma LF(U)\\s&\mapsto \hat{s}
\end{align*}
where $\hat{s}(x)=s_x$.
\end{lemma}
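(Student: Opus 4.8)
The plan is to show the two directions of the biconditional separately, using the universal property of sheafification (Theorem \ref{univthm}) and the characterisation of stalks. First I would establish the map $\Theta$ itself: for an open set $U$ and a section $s\in F(U)$, the assignment $x\mapsto s_x$ defines a function $\hat s:U\to LF$, and by the very definition of the basis for the topology on $LF$ (sets of the form $\{t_x\mid x\in V\}$ for $V$ open and $t\in F(V)$) this function is continuous and satisfies $p\circ\hat s=\id$, hence $\hat s\in\Gamma LF(U)$. Compatibility with restriction maps is immediate from the definition of germs, so $\Theta$ is a morphism of presheaves $F\to\Gamma LF$, and in fact both $F$ (if it is a sheaf) and $\Gamma LF$ are sheaves, so it suffices to check whether $\Theta$ is an isomorphism.

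For the ``only if'' direction, suppose $F$ is a sheaf. I would prove $\Theta(U)$ is injective using the monopresheaf condition (condition $1$ of Definition \ref{sheaf}): if $\hat s=\hat t$ then $s_x=t_x$ for all $x\in U$, so for each $x$ there is an open neighbourhood $U_x\subseteq U$ on which $s$ and $t$ restrict to the same element; these $U_x$ cover $U$, so $s=t$. For surjectivity I would take $\alpha\in\Gamma LF(U)$; since $\alpha$ is continuous and the basic opens of $LF$ are $\{t_x\mid x\in V\}$, each point of $U$ has a neighbourhood $U_\lambda$ and a section $s_\lambda\in F(U_\lambda)$ with $\alpha(x)=(s_\lambda)_x$ for $x\in U_\lambda$. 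On overlaps $U_\lambda\cap U_\mu$ the germs of $s_\lambda$ and $s_\mu$ agree at every point, so by the injectivity already established (applied to the sheaf $F$ restricted to $U_\lambda\cap U_\mu$) the restrictions of $s_\lambda$ and $s_\mu$ agree; the gluing condition (condition $2$ of Definition \ref{sheaf}) then produces $s\in F(U)$ with $\Theta(U)(s)=\alpha$.

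For the ``if'' direction, suppose $\Theta:F\to\Gamma LF$ is an isomorphism of sheaves. Then $F$ is isomorphic to $\Gamma LF$, which is a sheaf (as recorded after the construction of sheafification, citing \cite[Definition 2.4.1]{Tennison}), and being a sheaf is preserved under isomorphism of presheaves; hence $F$ is a sheaf. This direction is essentially formal.

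The main obstacle is the surjectivity step in the ``only if'' direction: one must argue carefully that a global continuous section $\alpha$ of $LF\to X$ is locally represented by honest sections of $F$, and that these local representatives are genuinely compatible in $F$ (not merely stalkwise) so that the sheaf gluing axiom applies. This is exactly the place where both sheaf axioms for $F$ are used, and where the precise description of the topology on $LF$ must be invoked. The rest is bookkeeping with germs and restriction maps.
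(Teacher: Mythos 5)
Your proof is correct and complete; the paper itself gives no argument for this lemma but simply cites \cite[Lemma 2.4.3]{Tennison}, and your proof --- injectivity of $\Theta(U)$ from the monopresheaf axiom, surjectivity by using continuity and the basic opens of $LF$ to represent a section locally by germs of genuine sections of $F$ and then gluing, with the converse direction formal since $\Gamma LF$ is a sheaf and sheafness is invariant under presheaf isomorphism --- is exactly the standard argument contained in that reference. No gaps.
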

\begin{proof}
See \cite[Lemma 2.4.3]{Tennison}.
\end{proof}
\begin{proposition}
If $(E,p)$ is a sheaf space over $X$ and $\Gamma E$ is the sheaf of sections over $E$, then $\Gamma E_x=p^{-1}(x)$.
\end{proposition}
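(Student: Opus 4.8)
## Proof proposal

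The statement to prove is that for a sheaf space $(E,p)$ over $X$, the stalk of the sheaf of sections $\Gamma E$ at a point $x$ is canonically the fibre $p^{-1}(x)$. The plan is to construct a natural comparison map $\Gamma E_x \to p^{-1}(x)$ and show it is an isomorphism of $\mathbb{Q}$-modules by exhibiting an inverse, using the defining local homeomorphism property of $p$.

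First I would define the map. Recall $\Gamma E_x = \colim_{U \ni x} \Gamma(U,E)$, so an element is represented by a section $s \colon U \to E$ on some open $U \ni x$, with two sections identified if they agree on a smaller neighbourhood of $x$. Send the germ $s_x$ to $s(x) \in p^{-1}(x)$; this is well-defined since agreeing sections have the same value at $x$, and it is clearly a $\mathbb{Q}$-module homomorphism because the module operations on $p^{-1}(x)$ are computed pointwise (by the second clause of Definition \ref{sheafspacedefn}, addition of sections is continuous and restricts on the fibre to the given module structure), and these are compatible with the colimit maps. Call this map $\operatorname{ev}_x \colon \Gamma E_x \to p^{-1}(x)$.

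Next I would show $\operatorname{ev}_x$ is surjective and injective. For surjectivity, let $e \in p^{-1}(x)$. Since $p$ is a local homeomorphism, there is an open neighbourhood $W$ of $e$ in $E$ such that $p|_W \colon W \to p(W)$ is a homeomorphism onto an open set $p(W) \ni x$; then $(p|_W)^{-1} \colon p(W) \to E$ is a continuous section over the open neighbourhood $p(W)$ of $x$ whose value at $x$ is $e$, so its germ maps to $e$. For injectivity, suppose $s \colon U \to E$ is a section with $s(x) = 0$ in $p^{-1}(x)$; I need to show $s$ agrees with the zero section near $x$. Here I would use that $s$ and the zero section $0_E|_U$ are both continuous sections agreeing at $x$: because $p$ is a local homeomorphism, two sections that agree at a point agree on a neighbourhood of that point (the equaliser of two sections into a space étale over $X$ is open). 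Hence the germ $s_x$ is the germ of the zero section, i.e. $s_x = 0$ in $\Gamma E_x$. Combining, $\operatorname{ev}_x$ is an isomorphism.

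The main obstacle, such as it is, is the injectivity step — specifically justifying cleanly that two continuous sections of an étale map agreeing at a point must agree on a neighbourhood. This is the one place the local-homeomorphism hypothesis does real work beyond bookkeeping: given $s(x) = t(x) = e$, pick $W \ni e$ with $p|_W$ a homeomorphism onto $p(W)$; then $s^{-1}(W) \cap t^{-1}(W)$ is an open neighbourhood of $x$ on which both $s$ and $t$ are forced to equal $(p|_W)^{-1} \circ p$, hence agree. Everything else is a routine unwinding of the colimit defining the stalk and of the pointwise module structure, and the naturality of $\operatorname{ev}_x$ in $E$ is immediate from the construction.
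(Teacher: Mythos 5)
Your proof is correct, and it is essentially the standard argument that the paper defers to: the paper gives no proof of its own, citing \cite[Proposition 2.3.6]{Tennison}, where the same evaluation-map argument (surjectivity via local inverses of the local homeomorphism, injectivity via two sections agreeing at a point agreeing on a neighbourhood) is carried out. Nothing further is needed.
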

\begin{proof}
\cite[See Proposition 2.3.6]{Tennison}.
\end{proof}
We have the following connection between the stalks of a presheaf and the stalks of the sheafification of the presheaf.
\begin{lemma}
For any presheaf $F$ we have an isomorphism $F_x\cong \Gamma LF_x$ for every $x \in X$.
\end{lemma}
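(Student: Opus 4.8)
The plan is to compose two isomorphisms already assembled in the preceding results. First I would invoke the previous Proposition applied to the sheaf space $(LF, p)$: it states that for any sheaf space $(E,p)$ one has $\Gamma E_x = p^{-1}(x)$. Taking $E = LF$ and recalling that $LF$ was constructed with underlying set $\coprod_{x \in X} F_x$ and projection $p$, the fibre $p^{-1}(x)$ is by construction exactly $F_x$. Hence $\Gamma LF_x \cong p^{-1}(x) = F_x$, which is what we want. So the real content is just unwinding the definition of $LF$ to see that its fibre over $x$ is literally the stalk $F_x$, and then quoting the Proposition.

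If one wants a more hands-on argument that does not black-box the Proposition, the key steps would be as follows. A germ in $\Gamma LF_x$ is represented by a continuous section $\alpha : U \to LF$ with $p \circ \alpha = \id$ defined on some open neighbourhood $U$ of $x$; shrinking $U$, by the definition of the basis of $LF$ (sets $\{\, s_y \mid y \in V\,\}$ for $s \in F(V)$) we may assume $\alpha(y) = s_y$ for all $y \in U$ for a single $s \in F(U)$. Sending the germ of $\alpha$ at $x$ to $s_x \in F_x$ is then well-defined (two sections agreeing near $x$ have the same germ at $x$), is a $\mathbb{Q}$-module homomorphism since the operations on $LF$ are defined stalkwise, is surjective because every $s_x$ arises from $\hat s$ in the sense of Lemma \ref{2}, and is injective because if $s_x = 0$ then $s$ restricts to $0$ on some smaller neighbourhood, so $\hat s$ vanishes near $x$ and its germ is zero. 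This gives the isomorphism directly.

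I do not expect any genuine obstacle here: the statement is essentially bookkeeping, and the only point requiring a little care is the standard ``shrink the open set'' manoeuvre that lets one replace an arbitrary continuous section of $LF$ near $x$ by one of the form $\hat s$ for $s \in F(U)$ — this is exactly the reason the basis for $LF$ was chosen as it was. Accordingly I would keep the proof to one or two lines, citing the previous Proposition together with the construction of $LF$, and leave the verification of naturality in $x$ (compatibility with restriction maps on stalks) as an immediate check, since it follows from the fact that all the maps involved are defined germ-by-germ.
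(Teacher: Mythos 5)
Your proposal is correct. The paper itself gives no argument here beyond a citation to Tennison (Lemma 2.4.5), and your reasoning is exactly the standard proof that reference contains: either quote the preceding Proposition $\Gamma E_x = p^{-1}(x)$ with $E = LF$ and observe that $p^{-1}(x) = F_x$ by the very construction of $LF$, or argue directly at the level of germs, using the basis $\left\lbrace s_y \mid y \in V\right\rbrace$ to replace an arbitrary continuous local section of $LF$ near $x$ by one of the form $\hat{s}$ for a single $s \in F(V)$. Both routes are sound, and the shrinking step you flag is indeed the only point needing care; no gap to report.
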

\begin{proof}
See \cite[Lemma 2.4.5]{Tennison}.
\end{proof}
Theorem \ref{univthm} has the following main application:
\begin{theorem}\label{Limsheaf}
Let $U$ be the forgetful functor from the category of sheaves to the category of presheaves. Then the pair of functors $\left(\Gamma L(-),U(-)\right)$ are an adjoint pair.
\end{theorem}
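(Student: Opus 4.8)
The plan is to obtain the adjunction as a direct consequence of the universal property of sheafification recorded in Theorem \ref{univthm}, together with the comparison map of Lemma \ref{2}. Since $\Gamma L(-)$ is to be the left adjoint, the goal is a bijection
\begin{align*}
\Phi \colon \Hom_{\text{Sh}}\bigl(\Gamma L F, G\bigr) \;\xrightarrow{\ \cong\ }\; \Hom_{\text{PSh}}\bigl(F, U G\bigr)
\end{align*}
for every presheaf $F$ and every sheaf $G$, natural in both variables.

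First I would fix, for each presheaf $F$, the canonical morphism of presheaves $\Theta_F \colon F \to U\Gamma L F$ defined on each open $U$ by $s \mapsto \hat s$ with $\hat s(x) = s_x$, exactly as in Lemma \ref{2}; this map exists for every presheaf (not only for sheaves, where it becomes an isomorphism) and will serve as the unit of the adjunction. Note here that the ``canonical map $F \to \Gamma L F$'' implicitly used in the statement of Theorem \ref{univthm} is precisely $\Theta_F$, and that $\Gamma L$ is a functor, its action on a presheaf morphism $\phi\colon F\to F'$ being post-composition of sections into sheaf spaces with the induced map $LF\to LF'$ on germs.

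Next I would define $\Phi(g) = U g \circ \Theta_F$ for a sheaf morphism $g \colon \Gamma L F \to G$, and define the candidate inverse by sending a presheaf morphism $f \colon F \to U G$ to the unique sheaf morphism $\tilde f \colon \Gamma L F \to G$ supplied by Theorem \ref{univthm}, characterised by $U\tilde f \circ \Theta_F = f$. That these are mutually inverse is then immediate: $\Phi(\tilde f) = U\tilde f \circ \Theta_F = f$ by construction; conversely, if $g \colon \Gamma L F \to G$ is any sheaf morphism, then both $g$ and $\widetilde{\Phi(g)}$ are factorisations of $\Phi(g) = Ug \circ \Theta_F$ through $\Theta_F$, so they coincide by the uniqueness clause of Theorem \ref{univthm}. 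Hence $\Phi$ is a bijection.

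Finally I would check naturality. Naturality in $G$ is purely formal: for $\psi \colon G \to G'$ one has $\Phi(\psi \circ g) = U(\psi \circ g)\circ \Theta_F = U\psi \circ \Phi(g)$. Naturality in $F$ reduces to naturality of $\Theta$ itself, i.e. the identity $\Theta_{F'} \circ \phi = U\Gamma L(\phi) \circ \Theta_F$ for a presheaf morphism $\phi\colon F\to F'$, which follows from the functoriality of germ-taking $s\mapsto s_x$ and the definition of $\Gamma L$ on morphisms; from this $\Phi\bigl(g \circ \Gamma L(\phi)\bigr) = \Phi(g)\circ \phi$ drops out. The real content has all been absorbed into Theorem \ref{univthm} and Lemma \ref{2}, so there is no serious obstacle; the only point requiring a little care is the bookkeeping around $\Theta$ — confirming that it is the map appearing in the universal property and that it is natural — which I expect to be the fiddliest, though routine, step.
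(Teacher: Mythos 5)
Your proof is correct and is exactly the route the paper intends: the theorem is presented as the "main application" of the universal property in Theorem \ref{univthm}, and your argument — taking $\Theta_F$ from Lemma \ref{2} as the unit, using existence and uniqueness of the factorisation to get the hom-set bijection, and checking naturality — is the standard way to spell that out.
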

We now define a morphism of sheaf spaces.
\begin{definition}
A morphism of sheaf spaces $f:\left(E,p\right)\rightarrow \left(E^{\prime},p^{\prime}\right)$ is a continuous map of spaces $f:E\rightarrow E^{\prime}$ such that $p=p^{\prime}\circ f$ and that the restriction of $f$ to each stalk is a map of $\mathbb{Q}$-modules.
\end{definition}
\begin{theorem}\label{1}
If $E$ is a sheaf space, then there is an isomorphism of sheaf spaces $E\cong L\Gamma E$.
\end{theorem}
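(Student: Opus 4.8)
The plan is to produce, for a given sheaf space $(E,p)$ over $X$, the counit $\varepsilon_E\colon L\Gamma E\to E$ of the adjunction dual to Lemma \ref{2}, and then to verify directly that it is an isomorphism of sheaf spaces. On underlying sets this is essentially forced: by construction the space $L\Gamma E$ has underlying set $\coprod_{x\in X}(\Gamma E)_x$, the disjoint union of the stalks of the presheaf $\Gamma E$ of sections of $p$, and by the preceding Proposition each stalk $(\Gamma E)_x$ is canonically identified with $p^{-1}(x)$ by sending the germ $s_x$ of a section $s\colon U\to E$ (with $x\in U$) to the value $s(x)$. Assembling these identifications over all $x$ defines $\varepsilon_E$. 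It is well defined because sections agreeing near $x$ have the same value at $x$, it is a bijection exactly by the cited Proposition, it commutes with the projections to $X$, and it is a map of $\mathbb{Q}$-modules on each stalk since the identification of the Proposition is. So the only real content is that $\varepsilon_E$ is a homeomorphism for the sheaf-space topologies.

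For continuity, recall that a basis for the topology of $L\Gamma E$ consists of the sets $\{\, s_y \mid y\in V \,\}$ with $V\subseteq X$ open and $s\in\Gamma E(V)$. Given a germ $s_x\in L\Gamma E$ represented by $s\colon U\to E$ and an open $W\subseteq E$ containing $\varepsilon_E(s_x)=s(x)$, shrink $W$ so that $p|_W$ is a homeomorphism onto an open subset of $X$, and set $V:=s^{-1}(W)\cap U$, an open neighbourhood of $x$. Then $\{\, s_y\mid y\in V\,\}$ is a basic open neighbourhood of $s_x$ whose image under $\varepsilon_E$ is $s(V)\subseteq W$, so $\varepsilon_E$ is continuous. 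For openness it suffices to observe that $\varepsilon_E$ carries the basic open set $\{\, s_y\mid y\in U\,\}$ onto $s(U)$, and $s(U)$ is open in $E$ because a continuous section of a local homeomorphism is an open embedding (a standard fact: locally $s$ agrees with the local inverse of $p$). Hence $\varepsilon_E$ is a continuous open bijection, so a homeomorphism; together with the previous paragraph this makes it an isomorphism of sheaf spaces. (Equivalently, once $\varepsilon_E$ is a continuous bijection over $X$ between spaces étalé over $X$, it is automatically a local homeomorphism, hence a homeomorphism.)

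The only mildly delicate point — the "hard part", such as it is — is the topological bookkeeping in the second paragraph, resting on the fact that continuous sections of local homeomorphisms have open image; everything else is a formal consequence of the Proposition computing $(\Gamma E)_x$. To finish, naturality of $\varepsilon_E$ in $E$ is routine, and together with Lemma \ref{2} it shows that $L$ and $\Gamma$ restrict to an equivalence between the category of sheaf spaces over $X$ and the category of sheaves of $\mathbb{Q}$-modules over $X$, with $\varepsilon_E$ and $\Theta$ as the (co)unit isomorphisms.
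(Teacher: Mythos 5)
Your proof is correct, and since the paper itself only cites \cite[Theorem 2.3.10]{Tennison} for this statement, your argument is essentially the standard one given there: build the evaluation map $L\Gamma E\to E$ from the identification $(\Gamma E)_x\cong p^{-1}(x)$, and check it is a continuous open bijection over $X$ using that continuous sections of a local homeomorphism have open image. Nothing is missing.
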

\begin{proof}
See Theorem \cite[Theorem 2.3.10]{Tennison}.
\end{proof}
Consequently we now have a characterisation of sheaves of $\mathbb{Q}$-modules over a space.
\begin{theorem}\label{nonequivariantsheafequivdefn}
The category of sheaves of $\mathbb{Q}$-modules over $X$ is equivalent to the category of sheaf spaces of $\mathbb{Q}$-modules over $X$.
\end{theorem}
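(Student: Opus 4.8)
The plan is to exhibit a pair of functors between the category of sheaves of $\mathbb{Q}$-modules over $X$ and the category of sheaf spaces of $\mathbb{Q}$-modules over $X$, and then check they are mutually quasi-inverse using the results already collected above. In one direction I would use sheafification: a sheaf $F$ is sent to its associated sheaf space $LF$, where $LF=\underset{x\in X}{\coprod}F_x$ with the germ topology and the projection $p:LF\to X$; on morphisms, a map of sheaves $f:F\to G$ induces compatible maps on all stalks $F_x\to G_x$ and hence a continuous map $LF\to LG$ respecting the projections and linear on stalks, so this is a functor. In the other direction I would use the section functor $\Gamma$: a sheaf space $(E,p)$ is sent to the sheaf $U\mapsto\Gamma(U,E)$ of continuous sections, which is a sheaf by the discussion following Definition \ref{sheafspacedefn}; a morphism of sheaf spaces $f:(E,p)\to(E',p')$ induces $\Gamma(U,E)\to\Gamma(U,E')$ by post-composition, giving a morphism of sheaves, so $\Gamma$ is a functor.

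Next I would verify the two natural isomorphisms witnessing the equivalence. For a sheaf $F$, Lemma \ref{2} gives that the unit map $\Theta:F\to\Gamma LF$ (sending $s$ to the section $x\mapsto s_x$) is an isomorphism of sheaves precisely because $F$ is a sheaf; this handles the composite $\Gamma\circ L$. For a sheaf space $(E,p)$, Theorem \ref{1} gives an isomorphism of sheaf spaces $E\cong L\Gamma E$; this handles the composite $L\circ\Gamma$. Both of these isomorphisms are natural in the relevant object — naturality of $\Theta$ is immediate from its definition via germs, and naturality of the isomorphism $E\cong L\Gamma E$ follows from its construction in \cite[Theorem 2.3.10]{Tennison} — so the two functors form an equivalence of categories.

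I would also remark that the pair $(\Gamma L(-),U(-))$ being adjoint (Theorem \ref{Limsheaf}), together with the stalk computation $\Gamma E_x=p^{-1}(x)$ of the preceding proposition, is really what makes the two round-trips land back where they started: the stalk of $\Gamma E$ at $x$ is the fibre of $E$ over $x$, so $L\Gamma E$ has the same underlying set as $E$, and one only needs to check the topologies and $\mathbb{Q}$-module structures agree, which is exactly the content of Theorem \ref{1}.

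The main obstacle is not really in the logic — once Lemma \ref{2} and Theorem \ref{1} are in hand, the equivalence is essentially formal — but rather in being careful about functoriality and naturality: one must check that $L$ and $\Gamma$ are genuinely functorial (that induced maps on stalks assemble to continuous maps, and that induced maps on sections are morphisms of sheaves), and that $\Theta$ and the isomorphism $E\cong L\Gamma E$ are natural transformations, not merely isomorphisms object by object. Since all the substantive analytic input (continuity of the germ map, local homeomorphism property, the sheaf axioms forcing $\Theta$ to be iso) has been isolated in the cited lemmas, the proof reduces to assembling these into the statement that $L$ and $\Gamma$ are quasi-inverse equivalences.
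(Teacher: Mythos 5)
Your proposal is correct and follows the same route as the paper, whose proof simply invokes Lemma \ref{2} (the unit $\Theta\colon F\to\Gamma LF$ is an isomorphism precisely when $F$ is a sheaf) and Theorem \ref{1} (the counit $E\cong L\Gamma E$ for sheaf spaces); your additional checks of functoriality of $L$ and $\Gamma$ and naturality of the two isomorphisms are just the details the paper leaves implicit.
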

\begin{proof}
This is proved by considering Theorem \ref{1} and Lemma \ref{2} . 
\end{proof}
We shall now take some time to understand the construction of limits and colimits in the category of sheaves of $\mathbb{Q}$-modules. We begin by looking at the category of presheaves.
\begin{construction}
Let $F_i$ be a diagram of presheaves of $\mathbb{Q}$-modules for some indexing diagram $I$. We define the limit to be:
\begin{align*}
\left(\underset{I}{\lim}F_i\right)(U)=\underset{I}{\lim}\left(F_i(U)\right),
\end{align*} 
where $U$ is an open subset of $X$. Here we are simply taking the limit in the category of $\mathbb{Q}$-modules at each $U$. The restriction maps are obtained by applying the universal properties as follows:
\begin{align*}
\xymatrix{F_i(U)\ar[r]^{\rho^i_{U,V}}&F_i(V)\\\underset{I}{\lim}F_i(U)\ar[u]\ar@{-->}[r]_{\exists !}&\underset{I}{\lim}F_i(V)\ar[u]}
\end{align*}  
where $V\subseteq U$ are both open subsets of $X$. Colimits are constructed in a similar level-wise manner.
\end{construction}
The proof that these are the categorical limits and colimits follows easily from the fact that they are built using these constructions in in the category of $\mathbb{Q}$-modules.

In order to understand what form limits and colimits take in the category of sheaves, we use the fact that sheaves are a full subcategory of the category of presheaves. The question we ask is; if we have a diagram of sheaves, do the above colimit and limit constructions provide another sheaf? Answering this question involves using the adjoint functors in Theorem \ref{Limsheaf}.
\begin{construction}\label{nonequivsheafconstruct}
Let $F_i$ be a diagram of sheaves over some indexing diagram $I$. The limit in the category of presheaves is a sheaf, and this is the categorical limit in the category of sheaves. For the colimit construction, we obtain the categorical colimit by taking the sheafification of colimit construction in the category of presheaves. 
\end{construction} 
\begin{proof}
The reason that the given colimit structure works is because left adjoint functors preserve colimits and the sheafification functor is left adjoint to the forgetful functor as in Theorem \ref{Limsheaf}. We quickly verify that the limit construction is a sheaf and this will be sufficient since sheaves are a full subcategory of presheaves. We begin with the monopresheaf condition.

Suppose that $(s_i)_{I},(t_i)_I\in \underset{I}{\lim}F_i(U)$ and $\left\lbrace U_{\lambda}\right\rbrace_{\lambda\in\Lambda}$ is an open covering of $U$ satisfying that $\rho_{U,U_{\lambda}}\left((s_i)_I\right)=\rho_{U,U_{\lambda}}\left((t_i)_I\right)$ for each $\lambda$. Then it follows that for each $i\in I$ we have $\rho^i_{U,U_{\lambda}}\left(s_i\right)=\rho^i_{U,U_{\lambda}}\left(t_i\right)$. Applying the monopresheaf condition for each $F_i$ implies that $s_i=t_i$ for every $i\in I$. This gives the required equality.

For the gluing condition, suppose that we have $(s_i^{\lambda})_I\in \underset{I}{\lim}F_i(U_{\lambda})$ for each $\lambda$ satisfying that restrictions to all intersections agree. For each $i\in I$ we use that $F_i$ satisfies the gluing axiom, so that there exists $s_i$ such that $s_i^{\lambda}=\rho^i_{\lambda}(s_i)$ for each $\lambda$. We finish by showing that $(s_i)_{I}$ is an element of $\underset{I}{\lim}F_i(U)$.

We accomplish this by showing that if $\alpha_{ij}:F_j(U)\rightarrow F_i(U)$ is given by the morphism of sheaves in the diagram, then $\alpha_{ij}(s_j)=s_i$. Consider the following diagram:
\begin{align*}
\xymatrix{F_j(U)\ar[d]^{\rho^j_{\lambda}}\ar[r]^{\alpha}&F_i(U)\ar[d]^{\rho^i_{\lambda}}\\F_j(U_{\lambda})\ar[r]^{\alpha}&F_i(U_{\lambda})}
\end{align*}
We know that $(s_i^{\lambda})_I$ is in the limit for each $\lambda$, hence $\alpha(s_j^{\lambda})=s_i^{\lambda}$. We also know that morphisms of sheaves commute with restriction maps so: 
\begin{align*}
\rho_{\lambda}^i\left(\alpha(s_j)\right)=\alpha\left(\rho^j_{\lambda}(s_j)\right)=\alpha\left(s_j^{\lambda}\right)=s_i^{\lambda}=\rho_{\lambda}^i(s_i),
\end{align*}
and this holds for each $\lambda$. Applying the monopresheaf condition gives $\alpha(s_j)=s_i$ as required.
\end{proof}

An application of this construction shows that the category of presheaves over a topological space $X$ has kernels and cokernels, and these are defined objectwise. This means that if $f:F\rightarrow G$ is a morphism of presheaves then $\ker f$ is a presheaf where $(\ker f)(U)$ is given by $\ker f(U)$, with $f(U)$ being the map $f(U):F(U)\rightarrow G(U)$. The same holds for cokernels of morphisms of presheaves.

In general if $f:F\rightarrow G$ is a morphism of sheaves then the presheaf kernel of $f$ is a sheaf (see \cite[Proposition 3.3.3]{Tennison}). However the presheaf cokernel of a morphism of sheaves need not be a sheaf, so to calculate the sheaf cokernel of a morphism of sheaves we must take the sheafification of the presheaf cokernel. Later on when we construct injective resolutions of a sheaf this will be an issue that we will need to overcome.

The following proposition characterises the monomorphisms, epimorphisms and isomorphisms in the category of sheaves.
\begin{proposition}
A morphism of sheaves $f:F\rightarrow G$ is a monomorphism if and only if $f_x$ is injective for every $x \in X$. Equivalently a morphism $f$ is a monomorphism if and only if $f(U)$ is injective for every $U\subseteq X$ open.

A morphism of sheaves $f$ is an epimorphism of sheaves if and only if $f_x$ is surjective for every $x \in X$. A morphism $f$ is an isomorphism if and only if each $f_x$ is bijective.
\end{proposition}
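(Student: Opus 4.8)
The plan is to reduce every assertion to the functor $F\mapsto F_x$ and to the fact that a sheaf is determined, up to isomorphism, by its family of stalks. First I would record two standard preliminaries. Since $F_x=\colim_{U\ni x}F(U)$ is a filtered colimit of $\mathbb{Q}$-modules, the stalk construction is exact: it sends the objectwise kernel and cokernel of a presheaf map to the kernel and cokernel of the induced stalk map. Secondly, a sheaf $F$ with $F_x=0$ for every $x$ is the zero sheaf: by Lemma \ref{2} we have $F\cong\Gamma LF$, and if all stalks vanish the sheaf space $LF$ reduces to $X$ with its zero section, so $\Gamma LF=0$ (one may also argue directly from the monopresheaf axiom). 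I would also use that a morphism of sheaves is determined by its stalk maps, which is transparent in the sheaf-space picture of Theorem \ref{nonequivariantsheafequivdefn}, since there a morphism is literally a continuous map of total spaces $\coprod_x F_x\to\coprod_x G_x$ over $X$.

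For the monomorphism statement I would work with the kernel. By \cite[Proposition 3.3.3]{Tennison} the presheaf kernel of a morphism of sheaves is already a sheaf, and it is computed objectwise, so $(\ker f)(U)=\ker(f(U))$ and, by exactness of stalks, $(\ker f)_x=\ker(f_x)$. Since the category of sheaves of $\mathbb{Q}$-modules is additive with kernels, $f$ is a monomorphism if and only if $\ker f=0$. By the preliminaries, $\ker f=0$ if and only if $(\ker f)(U)=0$ for every open $U$, that is, $f(U)$ is injective for every $U$; and equally $\ker f=0$ if and only if $(\ker f)_x=0$ for every $x$, that is, $f_x$ is injective for every $x$. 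This settles the first two equivalences together.

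For the epimorphism statement the subtlety is that the presheaf cokernel $C$, with $C(U)=\coker(f(U))$, need not be a sheaf, so the sheaf cokernel is its sheafification $\Gamma LC$ (Construction \ref{nonequivsheafconstruct}). Combining the isomorphism between the stalks of a presheaf and of its sheafification with exactness of stalks gives $(\Gamma LC)_x\cong C_x\cong\coker(f_x)$. I would then show $f$ is an epimorphism if and only if $\Gamma LC=0$: if $\Gamma LC=0$ and $gf=hf$, then $g-h$ factors through the cokernel $\Gamma LC=0$, so $g=h$; conversely, if $\Gamma LC\neq 0$ then some stalk $(\Gamma LC)_{x_0}$ is nonzero, and the canonical map $q\colon G\to\Gamma LC$ is surjective on every stalk (being the composite of the objectwise surjection $G\to C$ with a stalk isomorphism), so $q\neq 0$ while $qf=0=0\circ f$, contradicting that $f$ is epi. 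With $(\Gamma LC)_x\cong\coker(f_x)$ and the zero-stalks criterion this yields: $f$ is an epimorphism if and only if $\coker(f_x)=0$ for all $x$, i.e. $f_x$ is surjective for all $x$.

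For isomorphisms, if $f$ is an isomorphism then applying the stalk functor to $f$ and $f^{-1}$ shows each $f_x$ is an isomorphism. Conversely, suppose each $f_x$ is bijective. By the first part $f$ is a monomorphism, so each $f(U)$ is injective; I would show each $f(U)$ is also surjective by a local-lifting-and-gluing argument: given $t\in G(U)$, surjectivity of the $f_x$ lifts each germ $t_x$ to a germ of $F$, hence to a section $\sigma^{(x)}\in F(W_x)$ over some neighbourhood with $f(W_x)(\sigma^{(x)})=t|_{W_x}$; injectivity of $f$ on sections forces these to agree on overlaps, and the gluing axiom produces $\sigma\in F(U)$ with $f(U)(\sigma)=t$. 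Then $f$ has an objectwise inverse, which is automatically a morphism of sheaves, so $f$ is an isomorphism (alternatively, argue in the sheaf-space picture that a continuous fibrewise-bijective map of sheaf spaces is open, hence a homeomorphism). I expect the main obstacle to be the epimorphism direction — tracking stalks through the sheafification of the presheaf cokernel and phrasing the argument via cokernels rather than presupposing that the sheaf category is abelian — together with the point in the isomorphism case that stalkwise surjectivity does not give surjectivity on sections, which is exactly why the gluing (or sheaf-space) argument is needed.
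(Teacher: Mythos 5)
Your proof is correct, and it is essentially the standard argument: kernel sheaf computed objectwise and stalkwise for the monomorphism equivalences, the sheafified presheaf cokernel with stalk-preservation of sheafification for the epimorphism criterion, and stalkwise lifting plus the gluing and monopresheaf axioms (using injectivity on sections from the first part) for the isomorphism case. The paper itself gives no argument here but simply cites Tennison (Theorems 3.3.5, 3.4.8, 3.4.10), whose proofs follow the same route, so your write-up is a faithful self-contained version of the intended proof rather than a different approach.
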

\begin{proof}
For monomorphisms see \cite[Theorem 3.3.5]{Tennison}, for epimorphisms see \cite[Theorem 3.4.8]{Tennison} and for the isomorphisms see \cite[Theorem 3.4.10]{Tennison}.
\end{proof}
Note that only the characterisation of monomorphisms says anything about the maps $f(U)$ for $U\subseteq X$, which further underlines why we need to be more careful when calculating cokernels of sheaf morphisms.
\section{Injective Resolutions}
In this section we will look at the injective dimension of abelian categories and how to calculate them.
\begin{definition}
An \index{abelian category}\textbf{abelian category} is a category $\mathfrak{C}$ satisfying the following conditions:
\begin{enumerate}
\item For every pair of elements $c_1,c_2 \in \mathfrak{C}$, $\text{Hom}(c_1,c_2)$ has an abelian group and the composition of morphisms is bilinear.
\item $\mathfrak{C}$ has binary coproducts.
\item $\mathfrak{C}$ has a zero object.
\item $\mathfrak{C}$ has kernels and cokernels.
\item Every monomorphism in $\mathfrak{C}$ is a kernel of another morphism, and every epimorphism is a cokernel of another morphism.
\end{enumerate}
\end{definition}
\begin{theorem}
The category of sheaves of $\mathbb{Q}$-modules over $X$ is an abelian category.
\end{theorem}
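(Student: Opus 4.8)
The plan is to verify the five axioms of an abelian category in turn, transferring the $\mathbb{Q}\leftmod$-structure through the sheafification machinery and the stalkwise characterisations established above. For the additive structure, note that a morphism of sheaves $F\to G$ is a natural transformation, and two such may be added componentwise since each $F(U)\to G(U)$ lands in $\mathbb{Q}\leftmod$; the componentwise sum commutes with the restriction maps because these are $\mathbb{Q}$-linear, so it is again a morphism of sheaves. This makes $\Hom(F,G)$ an abelian group, and composition is bilinear because it is bilinear at each open $U$. The presheaf $U\mapsto 0$ is a sheaf and is simultaneously initial and terminal, giving the zero object. For binary coproducts I would observe that the presheaf $U\mapsto F(U)\oplus G(U)$ is already a sheaf, since a finite direct sum of $\mathbb{Q}$-modules is a finite limit and Construction \ref{nonequivsheafconstruct} says the levelwise limit of a diagram of sheaves is a sheaf; with the evident inclusions this object is a coproduct in sheaves, using that it is also the product and that finite products and coproducts of $\mathbb{Q}$-modules agree.

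For kernels and cokernels, the presheaf kernel of a morphism of sheaves is a sheaf (as recorded above, following \cite{Tennison}), and one checks directly that it has the kernel universal property in the category of sheaves. For cokernels I would take the sheafification $\Gamma L(\coker^{\mathrm{pre}}f)$ of the presheaf cokernel; its universal property follows by composing the presheaf-level universal property of $\coker^{\mathrm{pre}}f$ with the universal property of sheafification (Theorem \ref{univthm}), a sheaf being the target in each case.

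The step with real content is the normality axiom: every monomorphism is the kernel of its cokernel and every epimorphism is the cokernel of its kernel. The key inputs are that the stalk functor $F\mapsto F_x$ is exact (it is a filtered colimit of $\mathbb{Q}$-modules), that the stalk of a sheafification agrees with the stalk of the underlying presheaf, and that a morphism of sheaves is a monomorphism, epimorphism, or isomorphism precisely when it is such on every stalk. Given a monomorphism $f:F\to G$ with cokernel map $q:G\to\coker f$, passing to stalks gives $f_x$ injective and $q_x$ the cokernel of $f_x$ in $\mathbb{Q}\leftmod$; since stalks commute with the (levelwise) kernel and with the sheafified cokernel, the canonical comparison $F_x\to(\ker q)_x=\ker(q_x)$ is an isomorphism of $\mathbb{Q}$-modules for every $x$, hence $F\to\ker q$ is an isomorphism of sheaves. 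The argument for epimorphisms is dual, identifying $G\to\coker(\ker f\to F)$ as a stalkwise isomorphism.

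The main obstacle is precisely this last axiom: one must track that the two different constructions — kernels computed levelwise, cokernels computed by sheafification — nonetheless become mutually inverse exact constructions after passing to stalks, and that isomorphisms of sheaves can be detected stalkwise. Once that bookkeeping is in place the remaining axioms are a routine transfer of the module-category structure through the sheafification adjunction of Theorem \ref{Limsheaf}.
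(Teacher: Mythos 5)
Your proof is correct. Note, though, that the paper itself supplies no argument for this statement: it simply cites \cite[Theorem 3.5.5]{Tennison}. What you have written is essentially a reconstruction of that standard textbook proof, assembled from ingredients the chapter has already recorded separately: the presheaf kernel of a sheaf morphism is a sheaf, the cokernel must be sheafified, stalks are filtered colimits of $\mathbb{Q}$-modules (hence exact and unchanged by sheafification), and monomorphisms, epimorphisms and isomorphisms of sheaves are detected stalkwise. Your treatment of the normality axiom via these stalkwise identifications is exactly the point where the real content lies, and it is handled correctly. The one place where the wording is slightly loose is the binary coproduct step: saying the levelwise direct sum ``is also the product and finite products and coproducts of $\mathbb{Q}$-modules agree'' does not by itself give the coproduct universal property in the sheaf category; you should either check that property directly (a compatible pair $F\to H$, $G\to H$ into a sheaf $H$ induces a unique presheaf map out of the levelwise sum, which is then a sheaf map), or invoke the standard fact that in a preadditive category with a zero object a binary product equipped with the canonical injections is a biproduct. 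Either repair is immediate, so this is a presentational gap rather than a mathematical one.
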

\begin{proof}
See \cite[Theorem 3.5.5]{Tennison}.
\end{proof}
We now look at a property of abelian categories called injective dimension. We start off with the definition of an injective object.
\begin{definition}
An \textbf{injective object}\index{injective object} in a category $\mathfrak{C}$ is an object $I$ such that for every monomorphism $f:A\rightarrow B$ and every $g:A\rightarrow I$ there exists $h:B\rightarrow I$ such that the following diagram commutes:
\begin{center}
$\xymatrix{A\ar[r]^g\ar[d]^f &I\\B\ar@{-->}[ur]_{\exists h}}$
\end{center}
A category has enough injectives if every object has a monomorphism into an injective object.
\end{definition}
A useful property of this concept is that arbitrary products of injective objects are injective.
\begin{definition}
In an abelian category $\mathfrak{C}$, an \textbf{injective resolution}\index{injective resolution} for an object $X$ of $\mathfrak{C}$ is an exact sequence
\begin{center}
$\xymatrix{0\ar[r]&X\ar[r]^{\epsilon}&I_0\ar[r]^{f_0}&I_1\ar[r]^{f_1} &I_2\ar[r]^{f_2}&\ldots}$
\end{center} 
where each $I_k$ is an injective object of $\mathfrak{C}$.
\end{definition}
The following is a useful property of abelian categories with enough injectives.
\begin{proposition}\label{enoughinject}
If an abelian category $\mathfrak{C}$ has enough injectives, then every object in $\mathfrak{C}$ has an injective resolution.
\end{proposition}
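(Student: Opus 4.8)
The plan is to construct the injective resolution of a given object $X$ by an inductive ``embed–and–quotient'' procedure, invoking the hypothesis that $\mathfrak{C}$ has enough injectives once at each stage.

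First I would set up the base case. Since $\mathfrak{C}$ has enough injectives, there is a monomorphism $\epsilon:X\rightarrow I_0$ with $I_0$ injective. Writing $C_0=\coker(\epsilon)$ with its canonical epimorphism $\pi_0:I_0\rightarrow C_0$, the hypothesis again supplies a monomorphism $\iota_0:C_0\rightarrow I_1$ into an injective object $I_1$; set $f_0=\iota_0\circ\pi_0:I_0\rightarrow I_1$. Then I would iterate: given $f_{n-1}:I_{n-1}\rightarrow I_n$, form $C_n=\coker(f_{n-1})$ with canonical epimorphism $\pi_n:I_n\rightarrow C_n$, choose a monomorphism $\iota_n:C_n\rightarrow I_{n+1}$ into an injective $I_{n+1}$ (using enough injectives once more), and put $f_n=\iota_n\circ\pi_n$. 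This produces a sequence $0\rightarrow X\xrightarrow{\epsilon} I_0\xrightarrow{f_0} I_1\xrightarrow{f_1} I_2\rightarrow\cdots$ with every $I_k$ injective, so it only remains to verify exactness.

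For exactness I would use two standard facts about abelian categories: that $\ker(m\circ e)=\ker(e)$ whenever $m$ is a monomorphism, and that $\im(g)=\ker(\coker g)$ for any morphism $g$. At the spot $I_0$ this gives $\ker(f_0)=\ker(\pi_0)=\ker(\coker\epsilon)=\im(\epsilon)$; since $\epsilon$ is a monomorphism, $\im(\epsilon)$ is exactly the subobject $\epsilon:X\hookrightarrow I_0$, which yields exactness at $X$ and at $I_0$. For $n\geq 1$, at the spot $I_n$ one similarly obtains $\ker(f_n)=\ker(\pi_n)=\ker(\coker f_{n-1})=\im(f_{n-1})$, i.e. exactness at $I_n$. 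Hence the constructed sequence is an injective resolution of $X$.

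There is no serious obstacle: the argument is essentially bookkeeping together with the basic structure theory of abelian categories. The only point deserving care is the identity $\im(g)=\ker(\coker g)$, together with the compatible factorisation of a morphism through its image — this is precisely where axiom (5) of the definition of an abelian category (every monomorphism is a kernel, every epimorphism a cokernel) is used; everything else is formal.
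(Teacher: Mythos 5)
Your construction is the same as the paper's: embed $X$ into an injective $I_0$ using enough injectives, pass to the cokernel, embed that into $I_1$, and iterate, with $f_n$ factoring through the cokernel. The paper leaves exactness as "easy to verify," whereas you correctly spell it out via $\ker(f_n)=\ker(\coker f_{n-1})=\im(f_{n-1})$, so the proposal is correct and matches the paper's argument.
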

\begin{proof}
Let $X$ be an object in an abelian category $\mathfrak{C}$ which has enough injectives. Then by the definition of enough injectives we have a monomorphism $\varepsilon:X\rightarrow I_0$ where $I_0$ is injective, and hence a short exact sequence of the form:
\begin{center}
$\xymatrix{0\ar[r]&X\ar[r]^{\varepsilon}&I_0\ar[r]^p&\coker\varepsilon\ar[r]&0}$
\end{center}
We again use the definition of enough injectives to obtain a monomorphism \\$\delta^{\prime}_0:\coker\varepsilon\rightarrow I_1$, so we have a diagram of the form:
\begin{center}
$\xymatrix{0\ar[r]&X\ar[r]^{\varepsilon}&I_0\ar[d]\ar@{-->}[r]^{\delta_0}&I_1\\&&\coker\varepsilon\ar[ur]_{\delta_0^{\prime}}}$
\end{center}
where $\delta_0=\delta_0^{\prime}\circ p$. We then build up the rest of the resolution inductively following the same procedure as before. The exactness property is easy to verify.
\end{proof}
\begin{definition}
The \textbf{injective dimension}\index{injective dimension} of an object $X$ (denoted by $\text{ID}(X)$) in an abelian category $\mathfrak{C}$ is the minimum positive integer $n$ (if it exists) such that there is an injective resolution of the form
\begin{center}
$\xymatrix{0\ar[r]&X\ar[r]^{\epsilon}&I_0\ar[r]^{f_0}&I_1\ar[r]^{f_1}&\ldots\ar[r]^{f_{n-1}} &I_n\ar[r]&0}$.
\end{center}
The injective dimension is infinite if no minimum integer exists. See \cite[Definition 4.1.1]{Weibel}.
\end{definition}
\begin{definition}
For an abelian category $\mathfrak{C}$, the injective dimension of $\mathfrak{C}$ is defined to be $\sup \left\lbrace \text{ID}(X)\mid X\in \mathfrak{C}\right\rbrace$. See \cite[Definition 4.1.1, Corollary 10.7.5]{Weibel}.
\end{definition} 
The injective dimension of an abelian category is a measure of the complexity of the category. We now consider some examples.  
\begin{example}
If $R$ is a field then every object in the category of $R$-Modules is injective. Then every module has injective resolution of length zero. It follows that the injective dimension of this category is equal to zero. This holds, for example, for $R=\mathbb{Q}$.
\end{example}
\begin{example}
If $G$ is a profinite group, then the category of discrete $\mathbb{Q}[G]$-modules satisfies that every object is injective. Therefore the injective dimension of this category is also zero. See \cite[Proposition 3.7]{Castellano}.
\end{example}
We will see more substantial examples in Section \ref{Sheafexample}. A morphism of chain complexes $C$ and $D$ over $\mathfrak{C}$ is a level-wise morphism of objects in $\mathfrak{C}$ which commute with differentials. We now make the following useful definition.
\begin{definition}
Let $\mathfrak{C}$ be an abelian category, and let $C$ and $D$ be chain complexes over $\mathfrak{C}$. If $f$ and $g$ are chain maps from $C$ to $D$, we define a \textbf{chain homotopy}\index{chain homotopy} between $f$ and $g$ to be a collection of maps in $\mathfrak{C}$:
\begin{align*}
s_n:C_n\rightarrow D_{n+1}
\end{align*}
such that $f-g=sd+ds$, where $d$ is the differential.
\end{definition}
\begin{construction}\label{Derived}
Let $F:\mathfrak{C}\rightarrow \mathfrak{D}$ be a left exact functor between abelian categories where $\mathfrak{C}$ has enough injectives. We define the right derived functor $RF^i$ for $i\geq 0$ as follows. We can construct an injective resolution $A\rightarrow I_*$ for any object $A$ of $\mathfrak{C}$. Applying the functor $F$ and omitting the first term gives a complex:
\begin{align*}
\xymatrix{0\ar[r]&F(I_0)\ar[r]^d&F(I_1)\ar[r]^d&F(I_2)\ar[r]^d&\ldots\ar[r]^d&F(I_k)\ar[r]^d&F(I_{k+1})\ar[r]^d&\ldots}
\end{align*}
We define $R^iF(A)$ to be $H^i(F(I_*))$.
\end{construction}
\begin{remark}
Since $F$ is left exact it follows that the following sequence is exact:
\begin{align*}
\xymatrix{0\ar[r]&F(A)\ar[r]&F(I_0)\ar[r]&F(I_1)}.
\end{align*}
Therefore $R^0F(A)\cong F(A)$.
\end{remark}
\begin{example}
Let $A$ be any object in an abelian category $\mathfrak{C}$. Then we have a functor $\text{Hom}(A,-)$ which is left exact. We define $\text{Ext}$ group functor by:
\begin{align*}
\text{Ext}^i(A,B)=R^i\text{Hom}(A,-)(B).
\end{align*} 
\end{example}
\begin{theorem}
Let $\mathfrak{C}$ be an abelian category with enough injectives and $A$ be any object of $\mathfrak{C}$. Then injective resolutions for $A$ are unique upto chain homotopy. 
\end{theorem}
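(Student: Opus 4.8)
The plan is to establish the dual of the familiar comparison theorem for projective resolutions. The key technical input is a lifting lemma: given a morphism $f\colon A\to B$ in $\mathfrak{C}$ together with injective resolutions $A\xrightarrow{\varepsilon_A} I_*$ and $B\xrightarrow{\varepsilon_B} J_*$, there is a chain map $f_*\colon I_*\to J_*$ with $f_0\varepsilon_A=\varepsilon_B f$, and any two such chain maps are chain homotopic. Granting this lemma, the theorem becomes formal. Apply the lemma to $\id_A$ with the two given resolutions $A\to I_*$ and $A\to J_*$, and then again with the roles of $I_*$ and $J_*$ interchanged; this produces chain maps $\phi\colon I_*\to J_*$ and $\psi\colon J_*\to I_*$, each lifting $\id_A$. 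Now $\psi\phi$ and $\id_{I_*}$ both lift $\id_A$, so by the uniqueness part of the lemma they are chain homotopic, and likewise $\phi\psi$ is chain homotopic to $\id_{J_*}$. Hence $I_*$ and $J_*$ are chain homotopy equivalent, which is the content of the theorem.

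For the existence half of the lemma I would induct on the degree. In degree $0$ the morphism $\varepsilon_B f\colon A\to J_0$ has to be extended along the monomorphism $\varepsilon_A\colon A\to I_0$, which is possible because $J_0$ is injective; this gives $f_0\colon I_0\to J_0$. Assuming $f_0,\dots,f_n$ have been built compatibly with the differentials, one checks that $d_n^J f_n\colon I_n\to J_{n+1}$ annihilates $\im d_{n-1}^I$ --- using $f_n d_{n-1}^I=d_{n-1}^J f_{n-1}$ together with $d_n^J d_{n-1}^J=0$ --- so it factors through $\coker d_{n-1}^I$. Exactness of $I_*$ at $I_n$ identifies $\coker d_{n-1}^I$ with a subobject of $I_{n+1}$, and injectivity of $J_{n+1}$ then extends the induced map to $f_{n+1}\colon I_{n+1}\to J_{n+1}$ satisfying $f_{n+1}d_n^I=d_n^J f_n$, completing the induction.

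The uniqueness clause is proved by the same mechanism: if $f_*$ and $g_*$ both lift $f$, then $h_*=f_*-g_*$ lifts the zero morphism, and one constructs a contracting homotopy $s_*$ for $h_*$ degree by degree, at each stage observing that the relevant composite vanishes on the appropriate subobject, hence factors through a cokernel that embeds into the next term of $I_*$, whereupon injectivity of the relevant $J_k$ supplies the needed map. The only real obstacle is bookkeeping: all of this takes place in a general abelian category, so the phrases ``annihilates $\im d_{n-1}^I$'' and ``factors through $\coker d_{n-1}^I$'' must be justified through the universal properties of kernels and cokernels and the defining extension property of injective objects, rather than by manipulating elements; alternatively, one invokes the Freyd--Mitchell embedding theorem to reduce to an honest category of modules and chase elements there. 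In either guise the verification is routine.
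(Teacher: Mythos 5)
Your proposal is correct and is essentially the paper's argument: the paper simply cites the comparison theorem (Weibel, Theorem 2.3.7), and what you have written out is precisely the dual comparison theorem for injective resolutions together with the formal deduction of homotopy uniqueness from it. The inductive lifting and homotopy constructions you sketch are the standard proof of that cited result, so nothing is missing beyond the routine diagram bookkeeping you already acknowledge.
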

\begin{proof}
This follows from \cite[Theorem 2.3.7]{Weibel}.
\end{proof}
In particular this means that calculating cohomology of an object is independent of the choice of injective resolution used. It is for this reason that Construction \ref{Derived} is independent of the choice of injective resolution used. The concept of an $\text{Ext}$ group functor has the following useful characterisation.
\begin{theorem}\label{extcharacter}
Let $B$ be an object in an abelian category $\mathfrak{C}$. Then the injective dimension of $B$ is less than or equal to a natural number $n$ if and only if $\Ext^i(A,B)=0$ for every object $A$ and $i> n$. The injective dimension equals $n$ if in addition, there exists some $A$ such that $\Ext^n(A,B)\neq 0$.
\end{theorem}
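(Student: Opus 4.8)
The plan is to prove the two implications of the biconditional separately and then read off the concluding equality clause. Since $\Ext$ is only available through the right derived functors of Construction \ref{Derived}, I work under the standing hypothesis that $\mathfrak{C}$ has enough injectives, and I freely use two standard inputs: the long exact sequence in $\Ext^{\ast}(A,-)$ attached to a short exact sequence in $\mathfrak{C}$, and the fact (recorded above) that injective resolutions are unique up to chain homotopy, so that any one of them computes $\Ext$. The overall shape is the classical dimension-shifting argument (cf. \cite{Weibel}).

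For the easy direction, suppose $\mathrm{ID}(B)\le n$, so there is an injective resolution $0\to B\to I_0\to\cdots\to I_n\to 0$. Computing $\Ext^i(A,B)$ as the cohomology of $\Hom(A,I_{\bullet})$ and noting that $\Hom(A,I_j)=0$ for $j>n$, we immediately obtain $\Ext^i(A,B)=0$ for every $A$ and every $i>n$. The substantive direction is the converse, which I would handle by dimension shifting. Pick any injective resolution $0\to B\to I_0\to I_1\to\cdots$, write $C_0=\coker(B\to I_0)$ and $C_k=\coker(I_{k-1}\to I_k)$ for $k\ge 1$, and record the short exact sequences $0\to B\to I_0\to C_0\to 0$ and $0\to C_{k-1}\to I_k\to C_k\to 0$. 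Since $\Ext^{\ge 1}(A,I_k)=0$, the long exact sequences yield natural isomorphisms $\Ext^i(A,C_k)\cong\Ext^{i+1}(A,C_{k-1})$ for $i\ge 1$, and iterating down to $B$ gives $\Ext^i(A,C_{n-1})\cong\Ext^{i+n}(A,B)$ for $i\ge 1$. The vanishing hypothesis then forces $\Ext^1(A,C_{n-1})=0$ for every object $A$. The key lemma is that this makes $C_{n-1}$ injective: for any monomorphism $C_{n-1}\hookrightarrow N$ with cokernel $Q$, the exact sequence $\Hom(Q,N)\to\Hom(Q,Q)\to\Ext^1(Q,C_{n-1})=0$ shows $\mathrm{id}_Q$ lifts, splitting $0\to C_{n-1}\to N\to Q\to 0$; so every monomorphism out of $C_{n-1}$ splits, and an arbitrary extension problem against a mono $f:A\rightarrowtail B$ with datum $g:A\to C_{n-1}$ is solved by pushing $g$ out along $f$ (the pushout of a monomorphism being a monomorphism in an abelian category) and using the resulting retraction. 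Consequently the truncation $0\to B\to I_0\to\cdots\to I_{n-1}\to C_{n-1}\to 0$ is an injective resolution of length $n$, so $\mathrm{ID}(B)\le n$.

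For the final sentence, assume in addition that $\Ext^n(A_0,B)\ne 0$ for some $A_0$. The converse direction already gives $\mathrm{ID}(B)\le n$; were $\mathrm{ID}(B)\le n-1$, the easy direction applied with $n-1$ in place of $n$ would give $\Ext^n(A_0,B)=0$, a contradiction. Hence $\mathrm{ID}(B)=n$.

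I expect the only real obstacle to be the lemma ``$\Ext^1(-,M)=0$ implies $M$ is injective,'' and in particular the passage from splitting of every short exact sequence out of $M$ to the lifting property against an arbitrary monomorphism; a secondary, purely bookkeeping point is to check that the dimension-shift isomorphisms are natural in $A$ and that the iteration terminates exactly at $\Ext^{i+n}(A,B)$. Everything else is routine manipulation of long exact sequences.
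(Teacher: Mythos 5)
Your proof is correct, and it is exactly the standard dimension-shifting (cosyzygy) argument behind the references the paper itself cites — the paper gives no argument of its own, deferring to \cite[Lemma 4.1.8, Exercise 10.7.2, Corollary 10.7.5]{Weibel}, whose proof is precisely your truncation of an injective resolution at $C_{n-1}$ together with the lemma that $\Ext^1(A,M)=0$ for all $A$ forces $M$ injective. The one point you flag as a possible obstacle is handled soundly: the pushout of a monomorphism is a monomorphism in an abelian category, so the splitting criterion does yield the full lifting property, and the rest is routine bookkeeping as you say.
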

\begin{proof}
See \cite[Corollary 10.7.5, Exercise 10.7.2 and Lemma 4.1.8]{Weibel}.
\end{proof}
This result is formally used in the proof of Theorem \ref{Summary}.
\chapter{Characterising Mackey Functors}
In this chapter we will be interested in the category of rational Mackey functors of profinite groups. For the first section we will be searching for useful chacterisations of Mackey functors. We will also see an inflation functor between Mackey functor categories, Proposition \ref{inflatemackey}. In the second section we will define the Burnside ring of a profinite group and explore the various characterisations of this ring. In particular, we will define the space of closed subgroups of $G$ and see how this determines the Burnside ring, Theorem \ref{burnchar} and Corollary \ref{Burnsidecolim}. In the final section we will see how the Burnside ring interacts with an arbitrary Mackey functor, Proposition \ref{mackeyact}, and consequently how a Mackey functor determines a collection of sheaves, Corollary \ref{MHsheaf}. We will characterise the multiplicative idempotents of the Burnside ring, Proposition \ref{idempotentform}, and show how these interact with the restriction, conjugation and induction maps of a Mackey functor, Proposition \ref{restrictprop} and \ref{inductprop}. 
\section{Mackey Functors over a Profinite Group}
In this section we will look at some equivalent definitions of Mackey functors. For the following we let $\Grpf$ denote the collection of all closed subgroups of a profinite group $G$ of finite index (or equivalently the collection of all open subgroups of $G$). The following definition is from \cite[Definition 2.2.12]{Thiel}.
\begin{definition}\label{defM1}
Let $G$ be a profinite group, then a rational \textbf{$G$-Mackey functor}\index{Mackey functor} $M$ is a collection of data:
\begin{itemize}
\item A collection of $\mathbb{Q}$-modules $M(H)$ for each subgroup $H$ in $\Grpf$.
\item For each $K\leq H$ with $K,H\in \Grpf$ we have the following maps of $\mathbb{Q}$-modules; a \textbf{restriction} map
\begin{align*}
R^H_K:M(H)\rightarrow M(K),
\end{align*}
an \textbf{induction} map
\begin{align*}
I^H_K:M(K)\rightarrow M(H),
\end{align*}
and a map called \textbf{conjugation} for each $g \in G$ given by:
\begin{align*}
C^H_g:M(H)\rightarrow M(gHg^{-1}).
\end{align*}
These maps satisfy the following conditions:
\end{itemize}

\begin{enumerate}
\item $R^H_H=\id_{M(H)}=I^H_H$ for each $H\leq G$, and $C^H_h=\id_{M(H)}$ for each $h\in H$.
\item If $L\leq K\leq H$ with all three subgroups in $\Grpf$, then $I^H_L=I^H_K\circ I^K_L$, $R^H_L=R^K_L\circ R^H_K$ and for $g,h\in G$ we have $C_{gh}=C_g\circ C_h$. These are the \textbf{transitivity condition} and \textbf{associativity condition} respectively.
\item If $g\in G$ and $K\leq H$ then:
\begin{align*}
R^{gHg^{-1}}_{gKg^{-1}}\circ C_g=C_g\circ R^H_K \,\text{ and }\,I^{gHg^{-1}}_{gKg^{-1}}\circ C_g=C_g\circ I^H_K. 
\end{align*}
This is the \textbf{equivariance condition}.
\item If $K,L\leq H$ with all three in $\Grpf$, then: 
\begin{align*}
R^H_K\circ I^H_L=\sum_{x\in[K\diagup H \diagdown L]} I^K_{K\cap xLx^{-1}}\circ C_x\circ R^L_{L\cap x^{-1}Kx}.
\end{align*}
This condition is called the \textbf{Mackey axiom}.
\end{enumerate}
\end{definition}
We now look at the following example.
\begin{example}
Consider the profinite group given by the $p$-adic integers, $\mathbb{Z}_p$. The open subgroups in this case are of the form $p^k\mathbb{Z}_p$ for $k\geq 0$. We define a Mackey functor by the following assignment:
\begin{align*}
p^k\mathbb{Z}_p\mapsto \mathbb{Q}
\end{align*}
where all of the restriction maps and conjugation maps are taken to be the identity. We finish by defining the induction maps.

We begin by observing that an open subgroup $p^l\mathbb{Z}_p$ is contained inside another $p^k\mathbb{Z}_p$ if and only if $l\geq k$. We define induction maps as follows:
\begin{align*}
I^{p^k\mathbb{Z}_p}_{p^l\mathbb{Z}_p}=\,\text{Multiplication by}\,p^{l-k}.
\end{align*} 
\end{example}
\begin{definition}
Let $G$ be a profinite group. We let $G$-Set denote the category of finite $G$-sets whose group action is continuous. The morphisms in this category are the equivariant $G$-maps. 
\end{definition}
\begin{lemma}
If $X$ is a finite $G$-set then $\stabgx(x)$ is open for each $x\in X$.
\end{lemma}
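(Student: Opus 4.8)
The plan is to realise $\stabgx(x)$ as the preimage of an open set under a continuous map, namely the orbit map at $x$. First I would recall what the hypothesis gives us: since $X$ is an object of the category of finite $G$-sets with continuous action, the action map $a\colon G\times X\to X$ is continuous, and $X$, being finite, carries the discrete topology. Fix $x\in X$ and define the orbit map $\rho_x\colon G\to X$ by $\rho_x(g)=g\cdot x$. This factors as the composite of the continuous map $G\to G\times X$, $g\mapsto(g,x)$, with the action map $a$, and is therefore continuous.

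Next, since $X$ is discrete, the singleton $\{x\}$ is open in $X$, so $\rho_x^{-1}(\{x\})$ is open in $G$. But $\rho_x^{-1}(\{x\})=\{g\in G\mid g\cdot x=x\}=\stabgx(x)$, which gives the claim.

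I do not expect any real obstacle here: the only points requiring care are making explicit that $X$ has the discrete topology and that the orbit map is continuous, both of which are immediate from the definition of a (continuous) finite $G$-set. As a remark worth recording, $\{x\}$ is also closed in $X$, so the same argument shows $\stabgx(x)$ is closed; being an open subgroup of the compact group $G$ it then automatically has finite index, so in fact $\stabgx(x)\in\Grpf$.
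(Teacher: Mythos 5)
Your argument is correct and is essentially the same as the paper's: both exploit continuity of the action restricted to the slice $G\times\{x\}$ (equivalently, the orbit map $g\mapsto gx$) and pull back the open-and-closed singleton $\{x\}$ in the discrete finite set $X$ to conclude $\stabgx(x)$ is open. Your closing remark that the stabiliser is also closed and of finite index matches observations the paper makes in the surrounding remark, so nothing is missing.
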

\begin{proof}
We know that the action map:
\begin{align*}
G\times\left\lbrace x\right\rbrace\rightarrow X
\end{align*}
is continuous. Since the set $\left\lbrace x\right\rbrace$ is open and closed in $X$ it follows that the preimage is open and closed in $G$. This preimage is precisely $\stabgx(x)$ as required. 
\end{proof}
\begin{remark}
For more general $G$-spaces this holds if $x$ is an isolated point. Also we can use that closed subgroups of finite index are open to deduce this when $x$ has finite orbit. This is because $|Gx|=|G/\stabgx(x)|$.
\end{remark}
When we consider finite $G$-sets it is important to note that we are always considering the discrete topology. 
\begin{corollary} 
If $X$ is an element of $G\gset$ as described above, then $X$ is a finite disjoint union of orbits $G/H$ where $H$ is an open subgroup of $G$.
\end{corollary}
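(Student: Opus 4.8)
The plan is to use the standard orbit decomposition of a $G$-set together with the orbit--stabiliser correspondence, invoking the lemma proved just above to control the isotropy groups. First I would recall that for any $G$-set $X$ the orbits $\{Gx \mid x \in X\}$ form a partition of $X$, and each orbit $Gx$ is a $G$-subset. Since $X$ is finite there are only finitely many distinct orbits, say $Gx_1, \dots, Gx_n$, and $X = \coprod_{i=1}^{n} Gx_i$ as $G$-sets.

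Next I would identify each orbit with a quotient. For a fixed $x \in X$ the map $G \to Gx$, $g \mapsto gx$, is surjective and $G$-equivariant, and it factors through a bijection $G/\stabgx(x) \xrightarrow{\sim} Gx$ of $G$-sets; since every finite $G$-set in sight carries the discrete topology, this bijection is automatically a homeomorphism, hence an isomorphism in $G\gset$. Combining with the previous paragraph yields $X \cong \coprod_{i=1}^{n} G/\stabgx(x_i)$.

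Finally, to see that each $H_i := \stabgx(x_i)$ is an open subgroup of $G$, I would apply the lemma proved above, which states precisely that the stabiliser of a point of a finite $G$-set is open. This gives the claimed description of $X$ as a finite disjoint union of orbits $G/H$ with $H$ open.

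There is no genuine obstacle here; the only points meriting a word of care are the observation that the orbit map descends to an isomorphism of $G$-sets (not merely of underlying sets), and that continuity costs nothing because the topology on a finite $G$-set is discrete. One could also bypass the lemma by noting that openness of $H_i$ is equivalent to finiteness of the index $[G:H_i]$, which is immediate since $|G/H_i| = |Gx_i| \le |X| < \infty$, and appealing to the fact recorded earlier that closed subgroups of finite index in a profinite group are open.
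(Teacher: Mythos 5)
Your proof is correct and follows essentially the same route as the paper: decompose $X$ into its finitely many orbits, identify each orbit $Gx_i$ with $G/\stabgx(x_i)$ via the orbit map, and invoke the preceding lemma to conclude each stabiliser is open. The extra remarks (discreteness making the bijection a homeomorphism, and the alternative finite-index argument) only add detail the paper leaves implicit.
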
 
\begin{proof}
We begin by defining an equivalence relation on $X$ where $x\sim y$ if and only if $y=gx$, so that $X=\underset{1\leq i\leq n}{\coprod}Gx_i$. We are using that $X$ is finite to deduce that there are only finitely many orbits. Using the previous lemma we have that $X=\underset{1\leq i\leq n}{\coprod}G/\stabgx(x_i)$ and that each $\text{stab}_G(x_i)$ is open as required.
\end{proof}
In particular this shows that $G$-Set is generated by the orbits $G/H$ where $H$ is an open subgroup, in the sense that they provide an additive basis for $G$-Set. Similar to studying $G$-Mackey functors when $G$ is finite, we have the following alternative definition which we will prove later is equivalent to the first. This is from \cite[Definition 1]{Linder}.
\begin{definition}\label{defM2}
Let $\mathfrak{C}$ be a category with pullbacks and finite coproducts, and $\mathfrak{D}$ be an abelian category. A categorical Mackey functor\index{Mackey functor} is a bifunctor
\begin{center}
$M=(M_*,M^*):\mathfrak{C}\rightarrow \mathfrak{D}$
\end{center}
satisfying two conditions. By bifunctor we mean a covariant functor $M_*$ and a contravariant functor $M^*$ which agree on objects, so we denote by $M(X)$ the common value.

The two conditions are:
\begin{enumerate}
\item For every pullback diagram in $\mathfrak{C}$: 
\begin{align*}
\xymatrix{X_1\ar[r]^{\alpha}\ar[d]^{\beta}&X_2\ar[d]^{\gamma}\\X_3\ar[r]^{\delta}&X_4}
\end{align*}
we have the equality:
\begin{align*}
M^*(\delta)\circ M_*(\gamma)=M_*(\beta)\circ M^*(\alpha).
\end{align*}
\item The following coproduct diagram $\xymatrix{X\ar[r]&X\coprod Y&Y\ar[l]}$ defines an isomorphism $M(X\coprod Y)\cong M(X)\bigoplus M(Y)$ via both $M_*$ and $M^*$.
\end{enumerate}
\end{definition}
A Mackey functor of $\mathbb{Q}$-modules for $G$ is the above definition restricted to the case $\mathfrak{C}=G$-Set and $\mathfrak{D}=\mathbb{Q}$-Mod. We will prove this equivalence but in order to do this we first need to look at a couple of lemmas.
\begin{lemma}\label{pullbackorbit}
In the category $G\gset,$ consider any pullback diagram:
\begin{align*}
\xymatrix{\Omega\ar[r]\ar[d]&A\ar[d]\\C\ar[r]&B.}
\end{align*}
Then $\Omega$ is given by a disjoint union of pullbacks of diagrams whose objects are orbits.
\end{lemma}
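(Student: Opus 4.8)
The plan is to reduce the given pullback to a coproduct of pullbacks indexed by pairs of orbits, exploiting that in $G\gset$ every object canonically decomposes into orbits and every morphism respects this decomposition.

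First I would fix orbit decompositions $A=\coprod_{i}A_i$, $B=\coprod_{j}B_j$, $C=\coprod_{k}C_k$, which exist by the corollary above. A continuous $G$-map sends an orbit onto an orbit, since $f(g\cdot a)=g\cdot f(a)$ forces $f(Ga)=Gf(a)$; hence the leg $A\to B$ restricts, for each $i$, to a map $A_i\to B_{a(i)}$ for a unique index $a(i)$, and similarly $C\to B$ restricts to maps $C_k\to B_{c(k)}$. So both legs of the cospan are assembled out of maps between single orbits. Next I would use that finite $G$-sets form an extensive category: finite coproducts are disjoint and stable under pullback. I would verify the stability directly rather than invoke a categorical black box, by observing that a finite $G$-set over $X\coprod Y$ is the same datum as a pair of finite $G$-sets, one over $X$ and one over $Y$, obtained by restricting along the two inclusions. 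This gives $\Omega=A\times_B C\cong\coprod_{i,k}A_i\times_B C_k$.

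Finally, for a fixed pair $(i,k)$, the maps $A_i\to B$ and $C_k\to B$ factor through the sub-$G$-sets $B_{a(i)}\hookrightarrow B$ and $B_{c(k)}\hookrightarrow B$; since each $B_j\hookrightarrow B$ is a monomorphism, if $a(i)\neq c(k)$ the disjointness of the $B_j$ forces $A_i\times_B C_k=\emptyset$, while if $a(i)=c(k)$ one has $A_i\times_B C_k=A_i\times_{B_{a(i)}}C_k$, a pullback of a cospan of orbits. Assembling, $\Omega\cong\coprod_{(i,k)\,:\,a(i)=c(k)}A_i\times_{B_{a(i)}}C_k$, which is exactly a disjoint union of pullbacks of diagrams whose objects are orbits. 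The conceptual content is all in the first two steps; I expect the only real care to be needed in checking the extensivity claim and in keeping the index bookkeeping straight, rather than in any genuine difficulty.
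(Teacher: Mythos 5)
Your proof is correct and follows essentially the same route as the paper: decompose $A$, $B$, $C$ into orbits and observe that the pullback splits accordingly, with only those pairs of orbits mapping into a common orbit of $B$ contributing. The only difference is one of packaging: where the paper exhibits an explicit $G$-equivariant bijection between $\Omega$ and the disjoint union of orbit-level pullbacks, you justify the splitting via stability of coproducts under pullback together with the monomorphism $B_{a(i)}\hookrightarrow B$, which amounts to the same bookkeeping.
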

\begin{proof}
We know that we can write $A=\underset{1\leq i\leq n_a}{\coprod}G/K_i$, $B=\underset{1\leq j\leq n_b}{\coprod}G/H_j$ and $C=\underset{1\leq k\leq n_c}{\coprod}G/L_k$. This description of the $G$-sets shows that our diagram boils down to:
\begin{center}
$\xymatrix{\Omega\ar[r]\ar[d]&\underset{1\leq i\leq n_a}{\coprod}G/K_i\ar[d]\\\underset{1\leq k\leq n_c}{\coprod}G/L_k\ar[r]&\underset{1\leq j\leq n_b}{\coprod}G/H_j}$
\end{center}
We know that $\Omega=\left\lbrace (a,c)\in A\times C\mid P_C(c)=P_A(a)\right\rbrace$, where $P_A,P_C$ are the relevant projections. First note that if there exists a $G/H_j$ in the decomposition of $B$ which has no element from the decomposition of $A$ and of $C$ mapping into it, then $G/H_j$ does not factor into the pullback by definition. Therefore we can assume that each $G/H_j$ is mapped into by both $A$ and $C$.

For each $G/H_j$ we can pick out the orbits in the decomposition of $A$ and $C$ which both map into $G/H_j$, so we have $\underset{1\leq i\leq n_a^j}{\coprod}G/K_i\subseteq A$ and $\underset{1\leq k\leq n_c^j}{\coprod}G/L_k\subseteq C$, and hence for each $1\leq j\leq n_b$ we have:
\begin{center}
$\xymatrix{\Omega^j\ar[r]\ar[d]&\underset{1\leq i\leq n^j_a}{\coprod}G/K_i\ar[d]\\\underset{1\leq k\leq n^j_c}{\coprod}G/L_k\ar[r]&G/H_j}$
\end{center}
By definition $\Omega^j=\left\lbrace (xK_i,yL_k)\mid xH_j=yH_j\right\rbrace$. Now look at the pullback diagram:
\begin{center}
$\xymatrix{\Omega_{i,k}^j\ar[r]\ar[d]&G/K_i\ar[d]\\G/L_k\ar[r]&G/H_j}$
\end{center}
where $1\leq j\leq n_b$ and $i,k$ are the indices of subgroups $K_i$ and $L_k$ which contribute to $\Omega^j$ as defined above. We set $P=\underset{1\leq j\leq n_b}{\coprod}\underset{i,k}{\coprod}\Omega^j_{i,k}$.

We define a map $\psi:P\rightarrow \Omega$ by $(aK_i,cL_k)\mapsto (aK_i,cL_k)$. This is well defined, injective and surjective. This is also $G$-equivariant since both the cartesian product and pullback construction have $G$-action given by the diagonal. This is obviously a homeomorphism since both the domain and codomain are finite discrete sets.
\end{proof}
\begin{lemma}\label{lemmaPB}
Given a pullback diagram in $G\gset$:
\begin{align*}
\xymatrix{P\ar[r]\ar[d]&G/K\ar[d]\\G/H\ar[r]&G/J}
\end{align*}
where $H,K\leq J$, then $P\cong G\times_J\left(J/H\times J/K\right)$.
\end{lemma}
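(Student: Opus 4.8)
The plan is to exhibit an explicit $G$-equivariant bijection between $P$ and $G\times_J\left(J/H\times J/K\right)$. First I would fix notation for the pullback: since $H,K\leq J$ the two maps $G/H\to G/J$ and $G/K\to G/J$ in the square are the canonical quotient maps, so the description of pullbacks in $G\gset$ gives
\begin{align*}
P=\left\lbrace\left(xH,yK\right)\in G/H\times G/K\mid xJ=yJ\right\rbrace,
\end{align*}
with $G$ acting diagonally. I would also record the standard identification $G/H\cong G\times_J\left(J/H\right)$, $\left[g,aH\right]\mapsto gaH$ (and similarly for $K$); this already explains the shape of the statement, as $P$ ought to be the image under the induction functor $G\times_J(-)\colon J\gset\to G\gset$ of the $J$-pullback of $J/H\to J/J\leftarrow J/K$, which is simply $J/H\times J/K$.

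Next I would write down the comparison map directly,
\begin{align*}
\Phi\colon G\times_J\left(J/H\times J/K\right)\longrightarrow P,\qquad\left[g,\left(aH,bK\right)\right]\longmapsto\left(gaH,gbK\right),
\end{align*}
with $a,b$ chosen in $J$, and verify the routine points: the image lies in $P$ because $a,b\in J$ forces $gaJ=gJ=gbJ$; $\Phi$ does not depend on the choice of $\times_J$-representative, since replacing $\left(g,\left(aH,bK\right)\right)$ by $\left(gj,\left(aH,bK\right)\right)\sim\left(g,\left(jaH,jbK\right)\right)$ for $j\in J$ leaves $\left(gjaH,gjbK\right)$ unchanged; and $\Phi$ is $G$-equivariant, immediately from the definitions.

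It remains to show $\Phi$ is bijective, and this is the step I expect to carry the real content. Surjectivity is easy: given $\left(xH,yK\right)\in P$ we have $x^{-1}y\in J$, so writing $y=xj$ with $j\in J$ gives $\Phi\left(\left[x,\left(eH,jK\right)\right]\right)=\left(xH,yK\right)$. For injectivity — where the hypothesis $H,K\leq J$ is genuinely used — suppose $\Phi\left(\left[g,\left(aH,bK\right)\right]\right)=\Phi\left(\left[g',\left(a'H,b'K\right)\right]\right)$ with $a,b,a',b'\in J$, and set $t=(g')^{-1}g$. From $gaH=g'a'H$ we get $ta\in a'H\subseteq J$, hence $t=(ta)a^{-1}\in J$; moreover $(a')^{-1}ta\in H$, and the same computation on the second coordinate gives $(b')^{-1}tb\in K$. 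Therefore
\begin{align*}
\left[g,\left(aH,bK\right)\right]=\left[g't,\left(aH,bK\right)\right]=\left[g',\left(taH,tbK\right)\right]=\left[g',\left(a'H,b'K\right)\right],
\end{align*}
using $t\in J$ for the middle equality and $taH=a'H$, $tbK=b'K$ for the last. Thus $\Phi$ is an isomorphism of $G$-sets. (Alternatively, one could phrase the entire argument as the statement that $G\times_J(-)$ sends this particular $J$-pullback to the corresponding $G$-pullback; the computation above is exactly the unwinding of that, and the only non-formal input is the observation that $ta\in J$ above.)
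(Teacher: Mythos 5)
Your proposal is correct and follows essentially the same route as the paper: the same explicit map $[g,(aH,bK)]\mapsto(gaH,gbK)$, with the same checks of well-definedness, surjectivity via $x^{-1}y\in J$, injectivity by shuffling an element of $J$ across the $\times_J$-relation, and $G$-equivariance. The only cosmetic differences are that you organise the injectivity argument around $t=(g')^{-1}g\in J$ rather than the paper's direct substitution, and you leave implicit the (trivial) remark that a bijective equivariant map of finite discrete $G$-sets is automatically a homeomorphism.
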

\begin{proof}
Firstly if $(g_1H,g_2K)$ belongs to $P$, then by definition $g_1J=g_2J$ so $g_1^{-1}g_2\in J$. It then follows that $g_1j=g_2$, so that $(g_1H,g_2K)=(g_1H,g_1jK)$. Therefore $P=\left\lbrace (gH,gjK)\mid g \in G,j\in J\right\rbrace$.

We now define a map
\begin{align*}
\psi:G\times_J\left(J/H\times J/K\right)&\rightarrow P,\\
(g,(j_1H,j_2K))&\mapsto (gj_1H,gj_2K)=(gj_1H,gj_1(j_1^{-1}j_2)K).
\end{align*}

To see that it is well defined observe the following: 
\begin{align*}
\psi(g,(j_1H,j_2K))=(gj_1H,gj_2K)=(gj_1H,gj_1(j_1^{-1}j_2)K)=\psi(gj_1,(eH,j_1^{-1}j_2K)).
\end{align*}
To see surjectivity, take any $(gH,gjK)$ in $P$, then $(g,(eH,jK))$ is in the preimage. For injectivity, suppose we have $(g_1,(j_1H,j_2K))$ and $(g_2,(a_1H,a_2K))$ with $(g_1j_1H,g_1j_2K)=(g_2a_1H,g_2a_2K)$. It follows from this equality that $g_1j_1=g_2a_1h$ for some $h\in H$, hence $g_1=g_2a_1hj_1^{-1}$. From this we show that $a_1hj_1^{-1}j_2K=a_2K$.

We know by assumption that $j_2^{-1}g_1^{-1}g_2a_2$ belongs to $K$ and therefore equals some $k\in K$. Substituting the value for $g_1$ stated earlier we get that:
\begin{align*}
k=j_2^{-1}g_1^{-1}g_2a_2=j_2^{-1}(j_1h^{-1}a_1^{-1}g_2^{-1})g_2a_2=j_2^{-1}j_1h^{-1}a_1^{-1}a_2
\end{align*}
hence $j_2^{-1}j_1h^{-1}a_1^{-1}a_2\in K$, proving that $a_1hj_1^{-1}j_2K=a_2K$. Therefore we have the following:
\begin{align*}
(g_1,(j_1H,j_2K))&=(g_2a_1hj_1^{-1},(j_1H,j_2K))=(g_2a_1h,(eH,j_1^{-1}j_2K))\\&=(g_2,(a_1H,a_1hj_1^{-1}j_2K))=(g_2,(a_1H,a_2K)).
\end{align*}
This proves injectivity of $\psi$. This map is also $G$-equivariant since: 
\begin{align*}
a\psi(g,(j_1H,j_2K))=a(gj_1H,gj_2K)=(agj_1H,agj_2K)=\psi(ag,(j_1H,j_2K))
\end{align*}
for each $a\in G$. The map is clearly a homeomorphism since it is between finite discrete spaces.
\end{proof}
We now prove the equivalence of the two definitions of Mackey functors for profinite groups provided above. This is also well known in the case where $G$ is finite.
\begin{theorem}
If $G$ is a profinite group then the definition of Mackey functor in Definition \ref{defM1} is equivalent to Definition \ref{defM2}.
\end{theorem}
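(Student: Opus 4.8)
The plan is to establish a dictionary between the two definitions and check that all the axioms match up. Starting from a Mackey functor $M$ in the sense of Definition \ref{defM1}, I would define a categorical Mackey functor by setting $M(G/H) = M(H)$ on orbits and extending additively to arbitrary finite $G$-sets via the decomposition into orbits established in the corollary above; the isomorphism $M(X\coprod Y)\cong M(X)\oplus M(Y)$ is then true essentially by construction. The covariant part $M_*$ and contravariant part $M^*$ must be defined on morphisms of orbits, and here the key observation is that any $G$-map $G/K \to G/H$ factors (non-uniquely) as a composite of a conjugation isomorphism $G/K \to G/(gKg^{-1})$ followed by a projection $G/(gKg^{-1}) \to G/H$ when $gKg^{-1}\leq H$. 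So I would set $M^*$ of such a map to be $R^H_{gKg^{-1}}\circ C_g$ and $M_*$ to be $C_{g^{-1}}\circ I^H_{gKg^{-1}}$, then check well-definedness (independence of the factorisation) using axioms (1)--(3) of Definition \ref{defM1}, and functoriality using the transitivity/associativity conditions.

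Next I would verify the pullback condition (condition (1) of Definition \ref{defM2}). By Lemma \ref{pullbackorbit} every pullback in $G\gset$ decomposes as a disjoint union of pullbacks of diagrams of orbits, and by additivity of $M$ it suffices to check the square identity $M^*(\delta)\circ M_*(\gamma) = M_*(\beta)\circ M^*(\alpha)$ for a pullback square of orbits. By Lemma \ref{lemmaPB} such a pullback has the explicit form $P \cong G\times_J(J/H\times J/K)$, which I would further decompose into orbits indexed by the double cosets $K\backslash J/H$ — this is precisely the combinatorial content that makes the double-coset sum appear. Tracing $M_*$ and $M^*$ around the square through this decomposition, the identity reduces exactly to the Mackey axiom (condition (4) of Definition \ref{defM1}), together with the equivariance condition to handle the conjugations $C_x$ that appear.

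For the converse direction, given a categorical Mackey functor $M=(M_*,M^*)$ I would define $M(H) := M(G/H)$, take restriction $R^H_K := M^*(\pi)$ and induction $I^H_K := M_*(\pi)$ for the projection $\pi: G/K\to G/H$, and conjugation $C_g := M_*$ (equivalently $M^*$ of the inverse) of the isomorphism $G/H \to G/(gHg^{-1})$, $xH\mapsto xg^{-1}(gHg^{-1})$. Axioms (1)--(3) then follow from functoriality of $M_*$ and $M^*$ separately, and the Mackey axiom (4) follows by applying the pullback condition to the pullback of $G/K\to G/H \leftarrow G/L$ and invoking the orbit decomposition of this pullback from Lemmas \ref{pullbackorbit} and \ref{lemmaPB}. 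Finally I would check that the two constructions are mutually inverse, which is a routine unwinding of definitions on orbits extended additively.

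The main obstacle is the pullback/Mackey-axiom correspondence: one has to carefully match the double-coset decomposition of the explicit pullback $G\times_J(J/H\times J/K)$ against the indexing set $[K\backslash H/L]$ (or rather the double cosets inside $J$) appearing in the Mackey axiom, and keep track of which conjugation maps $C_x$ arise from which orbit components — a bookkeeping exercise that is easy to get wrong in the signs of exponents and the direction of coset representatives. I would isolate this as a separate lemma computing $M$ applied to the pullback square of two orbit projections, proved once and reused in both directions. Everything else (well-definedness of the orbit-wise assignments, functoriality, additivity) is a direct consequence of the axioms and the orbit-decomposition results already available in the excerpt.
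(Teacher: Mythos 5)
Your proposal follows essentially the same route as the paper: define the bifunctor on orbits by $\tilde{M}(G/H)=M(H)$ with $M_*,M^*$ given by induction, restriction and conjugation, extend additively, and reduce the pullback condition via Lemma \ref{pullbackorbit}, Lemma \ref{lemmaPB} and the double-coset decomposition $J/H\times J/K\cong\coprod_{x}J/(H\cap xKx^{-1})$ to the Mackey axiom, with the converse obtained by reversing the construction. The only differences are presentational (you spell out the converse and the well-definedness of the orbit-wise assignments, which the paper dispatches with ``the same construction in reverse''), so the approaches coincide.
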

\begin{proof}
If we begin with a Mackey assignment $N:\text{Grp}(G)^f\rightarrow \mathbb{Q}$ as in Definition \ref{defM1}, we will construct a bifunctor $\tilde{N}$ from $G$-Set to $\mathbb{Q}$-Mod satisfying the conditions of Definition \ref{defM2}.

We define $\tilde{N}$ on $G/H$ for $H$ an open subgroup of $G$ by $\tilde{N}(G/H)=N(H)$. This is sufficient since the collection of all such $G/H$ form an additive basis for $G$-set. If we take the projection $pr:G/H\rightarrow G/L$ where $H\subseteq L$ and $L$ is an open subgroup, we can set $\tilde{N}_*(pr)=I^L_H$ and $\tilde{N}^*(\iota)=R^L_H$. Given conjugation $C_g:G/H\rightarrow G/gHg^{-1}$ we can also define:
\begin{align*}
\tilde{N}(C_g):\tilde{N}(G/H)\rightarrow \tilde{N}(G/gHg^{-1})
\end{align*}
using the conjugation maps provided for $N$ by Definition \ref{defM1}. These assignments are functorial as a direct consequence of the relations satisfied by $N$ in Definition \ref{defM1}. The additivity condition of Definition \ref{defM2} is satisfied by construction, since $\tilde{N}$ is defined additively on an additive basis. Therefore we need only show that $\tilde{N}$ satisfies the pullback condition in Definition \ref{defM2}.

To do this we first show that if we have a pullback diagram of the form:
\begin{align*}
\xymatrix{P\ar[r]\ar[d]&G/H\ar[d]\\G/K\ar[r]&G/J}
\end{align*}
where $H,K\leq J$ and the two arrows are the canonical projections, then $P=\underset{x\in [H\diagdown J\diagup K]}{\coprod}G/\left(H\cap xKx^{-1}\right)$. To see this observe that the following is a pullback diagram where the arrows are the canonical projections:
\begin{center}
$\xymatrix{J/K\times J/H\ar[r]\ar[d]&J/H\ar[d]\\J/K\ar[r]&J/J=\left\lbrace 0\right\rbrace}$
\end{center}
Next observe that we have an isomorphism of $J$-sets given by 
\begin{align*}
\psi:J/H\times J/K&\rightarrow \underset{x\in [H\diagdown J\diagup K]}{\coprod}J/\left(H\cap xKx^{-1}\right)\\(j_1H,j_2K)&\mapsto j_2\left(H\cap xKx^{-1}\right)
\end{align*}
where $x=j_1^{-1}j_2$. The inverse is given by:
\begin{align*}
\phi:\underset{x\in [H\diagdown J\diagup K]}{\coprod}J/\left(H\cap xKx^{-1}\right)&\rightarrow J/H\times J/K\\j\left(H\cap xKx^{-1}\right)&\mapsto (jx^{-1}H,jK).
\end{align*}
To see that this is a $J$ map, if $j \in J$ then:
\begin{align*}
(jj_1H,jj_2K)\mapsto jj_2\left(H\cap {j_1}^{-1}j_2K{j_2}^{-1}j_1\right)
\end{align*}
which is equal to $j\left(j_2\left(H\cap {j_1}^{-1}j_2K{j_2}^{-1}j_1\right)\right)=j\psi(j_1H,j_2K)$. The fact that this is a homeomorphism is clear from the fact that both domain and codomain are finite discrete spaces. We then apply Lemma \ref{lemmaPB} to deduce that 
\begin{align*}
P&\cong G\times _J \left(J/H\times J/K\right)\cong G\times_J\left(\underset{x\in [H\setminus J/K]}{\coprod}J/\left(H\cap xKx^{-1}\right)\right)\\&\cong \underset{x\in [H\diagdown J\diagup K]}{\coprod}G/\left(H\cap xKx^{-1}\right).
\end{align*}
So we now know that the pullback diagram can be written as follows:
\begin{center}
$\xymatrix{\underset{x\in [H\diagdown J\diagup K]}{\coprod}G/\left(H\cap xKx^{-1}\right)\ar[r]^(0.7){\alpha}\ar[d]^{\beta}&G/H\ar[d]^{\gamma}\\G/K\ar[r]^{\delta}&G/J}$
\end{center}
We need to show that the following commutes:
\begin{center}
$\xymatrix{\underset{x\in [H\diagdown J\diagup K]}{\coprod}\tilde{N}(G/\left(H\cap xKx^{-1}\right))\ar[d]^{\beta_*}&\tilde{N}(G/H)\ar[l]^(0.3){\alpha^*}\ar[d]^{\gamma_*}\\ \tilde{N}(G/K)&\tilde{N}(G/J)\ar[l]^{\delta^*}}$
\end{center}
Observing that the maps in the square are obtained from restriction, induction and conjugation maps we apply the Mackey axiom to get the desired result. The correspondence in the opposite direction is the same construction in reverse.  
\end{proof}
A further classification of Mackey functors comes from \cite{Linder}. Before we make this precise we first make clear the following construction. 
\begin{construction}\label{spanconstruct}
If $\mathfrak{C}$ is a category with pullbacks and finite coproducts, we define the category of spans\index{category of spans} over $\mathfrak{C}$ denoted $\text{Span}(\mathfrak{C})$. This category has objects consisting of the objects of $\mathfrak{C}$. For the morphisms, we consider spans of the form:
\begin{align*}
\xymatrix{X&Z\ar[l]^{\alpha}\ar[r]^{\beta}&Y},
\end{align*}
where $\alpha$ and $\beta$ are morphisms in $\mathfrak{C}$. We say that two such spans are equivalent if there exists an isomorphism in $\mathfrak{C}$:
\begin{align*}
\phi:Z\rightarrow Z^{\prime}
\end{align*}
such that the following diagram commutes:
\begin{align*}
\xymatrix{&Z\ar[dd]^{\phi}\ar[dl]_{\alpha}\ar[dr]^{\beta}\\X&&Y\\&Z^{\prime}\ar[ul]^{\alpha^{\prime}}\ar[ur]_{\beta^{\prime}}}
\end{align*}
We consider the equivalence classes of spans from $X$ to $Y$. For composition of a span from $X$ to $Y$ with a span from $Y$ to $V$, consider the following:
\begin{align*}
\xymatrix{&Z\ar[dl]_{\alpha}\ar[dr]^{\beta}&&W\ar[dl]_{\gamma}\ar[dr]^{\zeta}\\X&&Y&&V}
\end{align*}
We calculate the pullback $P$ of the diagram:
\begin{align*}
\xymatrix{Z\ar[dr]_{\beta}&&W\ar[dl]^{\gamma}\\&Y}
\end{align*}
This gives a span $X\leftarrow P\rightarrow V$. Furthermore we also have an addition on the set of equivalence classes of spans from $X$ to $Y$. The addition is given by the following assignment:
\begin{align*}
\left(\xymatrix{X&V\ar[r]_{\beta}\ar[l]^{\alpha}&Y},\xymatrix{X&Z\ar[l]^{\gamma}\ar[r]_{\zeta}&Y}\right)\mapsto \xymatrix{X&V\coprod Z\ar[l]^{\alpha \coprod \gamma}\ar[r]_{\beta\coprod\zeta}&Y}
\end{align*}
Since the set of equivalence classes of spans from $X$ to $Y$ have an addition, we can apply the Grothendieck group completion to obtain a group from this data. We call this $\text{Hom}(X,Y)$, and these are the morphisms in the category $\text{Span}(\mathfrak{C})$.
\end{construction}
We will see in Lemma \ref{topmacklem3} that morphisms of spans over finite $G$-sets, when $G$ is profinite, can be calculated in terms of spans over finite $G/N$-sets for $N\unlhd G$ open. We can therefore see that the above construction is well defined for profinite groups using \cite[pp. 1868-1870]{Webb}. We will provide a proof that finite categorical coproducts and dually finite  products exist in this category.
\begin{proposition}
If $G$ is a profinite group, then finite categorical coproducts and products exists in the span category of finite $G$-sets.
\end{proposition}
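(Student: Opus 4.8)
The plan is to prove both halves of the statement at once by showing that in $\text{Span}(G\gset)$ finite coproducts and finite products not only exist but coincide: for finite $G$-sets $X_1,\dots,X_n$ the disjoint union $\coprod_i X_i$ (taken in $G\gset$), equipped with the evident inclusion and projection spans, is a \emph{biproduct}. It is enough to treat the binary case $X\coprod Y$, because the empty coproduct $\emptyset$ is a zero object of $\text{Span}(G\gset)$ — any span with source or target $\emptyset$ is forced to have apex $\emptyset$, so the corresponding $\Hom$-groups are trivial — which settles the $0$-ary case and supplies zero morphisms, the $1$-ary case is vacuous, and a straightforward induction on $n$ then handles arbitrary finite families.

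First I would record the structural fact that composition in $\text{Span}(G\gset)$ is bilinear for the abelian group structure on the $\Hom$-sets produced by the Grothendieck completion in Construction \ref{spanconstruct}. Since the sum of two spans has apex the disjoint union of their apices and composition of spans is computed by a pullback, this reduces to the statement that in $G\gset$ pullbacks distribute over finite coproducts and $\emptyset$ is strictly initial — that is, $G\gset$ is finitely extensive — which is the same elementary bookkeeping that underlies the orbitwise decomposition of a pullback in Lemma \ref{pullbackorbit}. For profinite $G$ this, together with the well-definedness of the span construction itself, is justified by the reduction of morphism sets to spans of finite $G/N$-sets for open $N\unlhd G$ recorded above (cf.\ \cite[pp.\ 1868--1870]{Webb}).

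Next I would exhibit the biproduct data. Let $\iota_X\colon X\to X\coprod Y$ and $\iota_Y\colon Y\to X\coprod Y$ be the coproduct inclusions in $G\gset$, and define in $\text{Span}(G\gset)$ the injections $i_X\colon X\to X\coprod Y$, $i_Y\colon Y\to X\coprod Y$ represented by the spans $X\xleftarrow{\id}X\xrightarrow{\iota_X}X\coprod Y$ and $Y\xleftarrow{\id}Y\xrightarrow{\iota_Y}X\coprod Y$, and the projections $p_X\colon X\coprod Y\to X$, $p_Y\colon X\coprod Y\to Y$ represented by $X\coprod Y\xleftarrow{\iota_X}X\xrightarrow{\id}X$ and $X\coprod Y\xleftarrow{\iota_Y}Y\xrightarrow{\id}Y$. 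Then I would verify the four biproduct identities
\begin{align*}
p_X\circ i_X=\id_X,\qquad p_Y\circ i_Y=\id_Y,\qquad p_Y\circ i_X=0=p_X\circ i_Y,\qquad i_X\circ p_X+i_Y\circ p_Y=\id_{X\coprod Y},
\end{align*}
each being a one-line pullback computation: $\iota_X$ is a monomorphism, so the pullback of $X\xrightarrow{\iota_X}X\coprod Y\xleftarrow{\iota_X}X$ is $X$; the pullback of $\iota_X$ along $\iota_Y$ is $\emptyset$ since the coproduct in $G\gset$ is disjoint; and reassembling the two diagonal composites $i_X\circ p_X$ and $i_Y\circ p_Y$ yields the span $X\coprod Y\xleftarrow{\iota_X\coprod\iota_Y}X\coprod Y\xrightarrow{\iota_X\coprod\iota_Y}X\coprod Y$, whose legs are both $\id_{X\coprod Y}$.

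Finally I would invoke the standard lemma that, in any $\mathrm{Ab}$-enriched category, data satisfying these four identities exhibits $X\coprod Y$ as both a coproduct and a product of $X$ and $Y$: given $f\colon X\to W$ and $g\colon Y\to W$ the unique morphism $X\coprod Y\to W$ restricting along $i_X,i_Y$ to $f,g$ is $f\circ p_X+g\circ p_Y$ (uniqueness using $i_X\circ p_X+i_Y\circ p_Y=\id$), and dually for the product. The only genuine obstacle is the first step — verifying that $\text{Span}(G\gset)$ is an honest $\mathrm{Ab}$-enriched category with bilinear composition in the profinite setting; once that is secured, the biproduct argument and the induction to finite families are purely formal.
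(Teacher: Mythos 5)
Your proof is correct, but it takes a genuinely different route from the paper. The paper verifies the universal property directly: it writes down the inclusion spans for $G/H\coprod G/K$, computes the relevant composites by explicit pullbacks, and then gives a hands-on uniqueness argument by decomposing the apex of a competing span into orbits via Lemma \ref{pullbackorbit}, before asserting that the product case is the same argument with the spans reversed and that general finite $G$-sets are similar. You instead exploit the additive structure that Construction \ref{spanconstruct} builds into the $\Hom$-sets: once bilinearity of composition is secured (which, as you say, comes down to finite extensivity of $G\gset$ --- pullbacks distribute over finite coproducts and $\emptyset$ is strictly initial, the same bookkeeping as in Lemma \ref{pullbackorbit}, with well-definedness in the profinite case handled by the reduction to finite $G/N$-sets), the four biproduct identities are one-line pullback computations and the standard biproduct lemma delivers product and coproduct simultaneously, for arbitrary finite $G$-sets and not just orbits. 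What your approach buys is economy and completeness: the universal properties are established against \emph{all} morphisms of the span category, i.e.\ formal differences of spans, automatically, whereas the paper's direct uniqueness check only manipulates honest spans; the price is that you must make the $\mathrm{Ab}$-enrichment (bilinearity of composition on the Grothendieck completions) explicit, a point the paper never needs to isolate. Both arguments are sound; yours is the more structural and scales better, the paper's is the more elementary and self-contained.
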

\begin{proof}
We prove this by showing this for the orbits and the more general case will be similar. Given $G/H$ and $G/K$, we will show that $G/H\coprod G/K$ is the coproduct with map $\iota_0$ given by:
\begin{align*}
\left[\xymatrix{G/H& G/H\ar[l]^{Id}\ar[r]^(0.4){\overline{\iota_0}}&G/H\coprod G/K}\right]
\end{align*} 
where $\overline{\iota_0}$ is the coproduct inclusion in finite $G$-sets. We have similar maps for $G/K$. Suppose we have diagrams:
\begin{align*}
\xymatrix{G/H\ar[d]^{\iota_0}\ar[r]^a&X&&G/K\ar[d]^{\iota_1}\ar[r]^b&X\\G/H\coprod G/K&&&G/H\coprod G/K}
\end{align*}
where $a$ and $b$ are given by:
\begin{align*}
\left[\xymatrix{G/H& Y\ar[l]_(0.4){\overline{a}}\ar[r]^(0.4){c}&X}\right]\,\,\,\text{and }\,\left[\xymatrix{G/K& Z\ar[l]_(0.4){\overline{b}}\ar[r]^(0.4){d}&X}\right]
\end{align*}
respectively. We define a map from $G/H\coprod G/K$ to $X$ in the span category by:
\begin{align*}
\left[\xymatrix{G/H\coprod G/K& Y\coprod Z\ar[l]_(0.4){\overline{a}\coprod \overline{b}}\ar[r]^(0.5){c\coprod d}&X}\right].
\end{align*}
We show that the diagram commutes by considering the following diagram:
\begin{align*}
\xymatrix{&G/H\ar[dl]_{Id}\ar[dr]^{\overline{\iota_0}}&&Y\coprod Z\ar[dl]_{\overline{a}\coprod \overline{b}}\ar[dr]^{c+d}\\G/H&&G/H\coprod G/K&&X}
\end{align*}
where $c+d$ refers the map in finite $G$-sets induced by the universal property of colimits. The pullback which determines the composition is the set of pairs $(gH,y)\in G/H\prod Y$ such that $c(y)=gH$. This set is homeomorphic in $G$-Set to $Y$ since we have a $G$-equivariant bijection between finite discrete spaces. The projections must be the inclusion $Y\rightarrow Y\coprod Z$ and the map $c$ from $Y$ to $G/H$. It follows that the span given by the composition is:
\begin{align*}
\left[\xymatrix{G/H& Y\ar[l]_(0.4){\overline{a}}\ar[r]^(0.4){c}&X}\right].
\end{align*} 
A similar argument gives the analogous result for $G/K$. For uniqueness suppose we have a span:
\begin{align*}
\left[\xymatrix{G/H\coprod G/K& A\ar[l]_(0.3){e}\ar[r]^(0.5){f}&X}\right]
\end{align*}
which commutes in the above diagram. Working out the composition gives us the following diagram:
\begin{align*}
\xymatrix{&G/H\ar[dl]_{Id}\ar[dr]^{\overline{\iota_0}}&&A\ar[dl]_{e}\ar[dr]^{f}\\G/H&&G/H\coprod G/K&&X.}
\end{align*}
Using the proof of Lemma \ref{pullbackorbit} we can see that when calculating the pullback we need only concentrate on the orbits in the decomposition of $A$ which map into $G/H$. Therefore we have:
\begin{align*}
\xymatrix{&G/H\ar[dl]_{Id}\ar[dr]^{\overline{\iota_0}}&&\underset{1\leq i\leq n}{\coprod}G/L_i\ar[dl]_{e}\ar[dr]^{f}\\G/H&&G/H\coprod G/K&&X.}
\end{align*}
The pullback which determines the composition is the set of pairs $(gH,xL_i)\in G/H\prod \left(\underset{1\leq i\leq n}{\coprod}G/L_i\right)$ such that $e(xL_i)=gH$. This set is equivalent in $G$-Set to $\underset{1\leq i\leq n}{\coprod}G/L_i$ by the same argument as above. The projections must be the inclusion:
\begin{align*}
\underset{1\leq i\leq n}{\coprod}G/L_i\rightarrow A
\end{align*}
and the map $e$ from $\underset{1\leq i\leq n}{\coprod}G/L_i$ to $G/H$. It follows that the span given by the composition is:
\begin{align*}
\left[\xymatrix{G/H& \underset{1\leq i\leq n}{\coprod}G/L_i\ar[l]_(0.6){e}\ar[r]^(0.6){f}&X}\right].
\end{align*} 
By assumption this is equal to the span from $G/H$ to $X$ with $Y$ in the middle, so $Y$ and $\underset{1\leq i\leq n}{\coprod}G/L_i$ must be isomorphic as $G$-sets with their corresponding spans commuting. A similar argument gives the analogous result for $G/K$, by showing that $Z$ is isomorphic to the complement in $A$ of $\underset{1\leq i\leq n}{\coprod}G/L_i$ and their corresponding spans commute. This proves that the following two spans are equal:
\begin{align*}
\left[\xymatrix{G/H\coprod G/K& A\ar[l]_(0.3){e}\ar[r]^(0.5){f}&X}\right]
\end{align*}
and
\begin{align*}
\left[\xymatrix{G/H\coprod G/K& Y\coprod Z\ar[l]_(0.4){\overline{a}\coprod \overline{b}}\ar[r]^(0.5){c+d}&X}\right].
\end{align*}
For the finite products of $G/H$ and $G/K$ we consider $G/H\coprod G/K$ with the following map:
\begin{align*}
\left[\xymatrix{G/H\coprod G/K& G/H\ar[l]^(0.3){\overline{\iota_0}}\ar[r]^(0.5){Id}&G/H}\right]
\end{align*}
and the proof works out the same way.
\end{proof}

We now provide a characterisation of rational $G$-Mackey functors in terms of $\text{Span}\left(G\text{-set}\right)$ as seen in Construction \ref{spanconstruct}. In this theorem, a finite product preserving functor is a functor:
\begin{align*}
F:\mathfrak{C}\rightarrow \mathfrak{D}
\end{align*} 
such that if $\underset{1\leq j\leq n}{\prod}X_i$ is the product in $\mathfrak{C}$ then $F\left(\underset{1\leq j\leq n}{\prod}X_j\right)$ is the product in $\mathfrak{D}$. Furthermore, the product projections are of the form $F(p_j)$ where $p_j$ are the product projections of $\underset{1\leq j\leq n}{\prod}X_i$ in $\mathfrak{C}$.
\begin{theorem}\label{Catdef}
Let $\mathfrak{C}$ be a category with pullbacks and finite coproducts and $\mathfrak{D}$ be an abelian category. Then the category of Mackey functors from $\mathfrak{C}$ to $\mathfrak{D}$ is canonically isomorphic to the category of all finite product preserving functors from $\text{Span}(\mathfrak{C})$ to $\mathfrak{D}$. 

In particular if $\mathfrak{C}=G\gset^f$ and $\mathfrak{D}=\mathbb{Q}\Mod$, this applies to \index{Mackey functor}Mackey functors over profinite groups.
\end{theorem}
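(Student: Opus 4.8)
The plan is to reproduce Lindner's classical correspondence, constructing explicit functors in both directions that are strictly inverse to one another; nothing here is deep, but the bookkeeping around span composition and the additive structure on hom-sets needs care. Throughout, $\mathfrak{C}$ has pullbacks and finite coproducts and $\mathfrak{D}$ is abelian, so $\text{Span}(\mathfrak{C})$ is defined as in Construction \ref{spanconstruct} and its hom-monoids are commutative monoids whose Grothendieck completions are the actual hom-groups. First I would pass from a Mackey functor $M=(M_*,M^*):\mathfrak{C}\to\mathfrak{D}$ in the sense of Definition \ref{defM2} to a functor $\tilde M:\text{Span}(\mathfrak{C})\to\mathfrak{D}$: on objects set $\tilde M(X)=M(X)$, and on a span $X\xleftarrow{\alpha}Z\xrightarrow{\beta}Y$ set $\tilde M([\alpha,\beta])=M_*(\beta)\circ M^*(\alpha)$. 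Three things must be verified: (i) independence of the chosen representative, which reduces to the claim that $M_*(\phi)$ and $M^*(\phi)$ are mutually inverse for any isomorphism $\phi$ of $\mathfrak{C}$, and that follows by feeding a pullback square built out of $\phi$ and identity maps into the base-change identity of Definition \ref{defM2}(1); (ii) additivity, i.e. that $\tilde M$ of a disjoint union of spans is the sum of the $\tilde M$'s, which is exactly the content of the isomorphism $M(X\coprod Y)\cong M(X)\oplus M(Y)$ of Definition \ref{defM2}(2), so that $\tilde M$ extends uniquely over the Grothendieck completion to the genuine hom-groups; and (iii) functoriality with respect to span composition.

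Step (iii) is the crux and is precisely where the Mackey pullback axiom does its work. Given composable spans $X\xleftarrow{\alpha_1}Z\xrightarrow{\beta_1}Y$ and $Y\xleftarrow{\alpha_2}W\xrightarrow{\beta_2}V$, let $P$ be the pullback of $\beta_1$ along $\alpha_2$ with projections $p:P\to Z$ and $q:P\to W$; the composite span is $X\xleftarrow{\alpha_1 p}P\xrightarrow{\beta_2 q}V$, so $\tilde M$ of it is $M_*(\beta_2)M_*(q)M^*(p)M^*(\alpha_1)$, whereas $\tilde M([\alpha_2,\beta_2])\circ\tilde M([\alpha_1,\beta_1])$ equals $M_*(\beta_2)\big(M^*(\alpha_2)M_*(\beta_1)\big)M^*(\alpha_1)$. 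Applying Definition \ref{defM2}(1) to the cartesian square $(P,Z,W,Y)$ gives $M^*(\alpha_2)M_*(\beta_1)=M_*(q)M^*(p)$, so the two expressions agree. Finally, recall that the coproduct $X\coprod Y$ of $\mathfrak{C}$ becomes a biproduct in $\text{Span}(\mathfrak{C})$ with product projections the spans $X\coprod Y\xleftarrow{\overline{\iota_0}}X\xrightarrow{\id}X$ and its sibling (this is the content of the proposition proved above in the $G$-set case, and its argument is general); $\tilde M$ sends these to $M^*(\overline{\iota_0})$, which by Definition \ref{defM2}(2) exhibits $M(X\coprod Y)$ as $M(X)\oplus M(Y)$, so $\tilde M$ preserves finite products.

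For the reverse direction, given a finite-product-preserving functor $F:\text{Span}(\mathfrak{C})\to\mathfrak{D}$ I would set $M(X)=F(X)$, $M^*(f)=F([X\xleftarrow{f}Y\xrightarrow{\id}Y])$ for $f:Y\to X$, and $M_*(f)=F([Y\xleftarrow{\id}Y\xrightarrow{f}X])$. Covariance of $M_*$ and contravariance of $M^*$ follow from functoriality of $F$ together with the fact that a pullback along an identity is trivial, so two pure spans of the same type compose to another of that type. The base-change identity is read off by composing the span $[\id,\gamma]$ (from $X_2$ to $X_4$) with $[\delta,\id]$ (from $X_4$ to $X_3$): the connecting pullback is $X_1$ since the given square is cartesian, whence $M^*(\delta)\circ M_*(\gamma)=F([X_2\xleftarrow{\alpha}X_1\xrightarrow{\beta}X_3])=M_*(\beta)\circ M^*(\alpha)$; additivity of $M$ comes from product-preservation of $F$ via the biproduct structure. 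The two constructions are visibly the identity on objects, and on a general span the decomposition $[\alpha,\beta]=[\id,\beta]\circ[\alpha,\id]$, whose connecting pullback is $Z$ itself, shows $F([\alpha,\beta])=M_*(\beta)\circ M^*(\alpha)$ and conversely, with the extensions to the full hom-groups matching by additivity; this gives the claimed isomorphism of categories.

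I expect step (iii) — compatibility of $\tilde M$ with span composition, together with the parallel need to keep everything compatible with the commutative-monoid structure on hom-sets and its group completion — to be the main obstacle; everything else is formal diagram-chasing using Definition \ref{defM2}. The concluding assertion is then immediate: $G\gset^f$ has pullbacks and finite coproducts by the corollary above, $\mathbb{Q}\Mod$ is abelian, and $\text{Span}(G\gset^f)$ is well defined for profinite $G$ by the remark following Construction \ref{spanconstruct}, so the general statement specialises to Mackey functors over profinite groups.
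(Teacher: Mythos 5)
The paper never writes out a proof of Theorem \ref{Catdef}: it simply cites Lindner's original paper (\cite[Theorem 4]{Linder}), so there is no in-paper argument to compare yours against. What you have written is a correct reconstruction of that classical proof, and the load-bearing steps are all in place: well-definedness on equivalence classes via the fact that $M_*(\phi)$ and $M^*(\phi)$ are mutually inverse for an isomorphism $\phi$ (which does follow from feeding the two pullback squares built from $\phi$ and identities into axiom (1)); additivity on hom-monoids via axiom (2), so that $\tilde M$ extends over the Grothendieck completion; functoriality of $\tilde M$ under span composition via the Beck--Chevalley identity applied to the connecting pullback $(P,Z,W,Y)$; and, in the reverse direction, recovering the base-change axiom by composing $[\id,\gamma]$ with $[\delta,\id]$ and recovering $F$ from $(M_*,M^*)$ via the decomposition $[\alpha,\beta]=[\id,\beta]\circ[\alpha,\id]$. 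Your variance bookkeeping and the specific instance of Definition \ref{defM2}(1) used in step (iii) are correct.

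Two points you gloss over and would want to make explicit in a full write-up, though neither is a genuine gap. First, for ``finite product preserving'' to interact correctly with the additive structure you should record that composition of spans is well defined on equivalence classes and bilinear, so that $\sspan(\mathfrak{C})$ is preadditive with $X\coprod Y$ a biproduct, and that $M(\emptyset)=0$ (equivalently, the zero object is preserved); the latter follows from axiom (2) applied to $\emptyset\coprod\emptyset\cong\emptyset$, and it is what makes a finite-product-preserving functor automatically additive, which you use implicitly when extending over the group completion. Second, the theorem asserts an isomorphism of \emph{categories}, so the correspondence on morphisms should be stated: a family $\theta_X$ is a natural transformation of span-functors if and only if it commutes with $M_*$ and $M^*$, which is immediate from the decomposition of any span into a contravariant leg followed by a covariant leg. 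With those remarks added, your argument is a complete proof of the statement, and the specialisation to $\mathfrak{C}=G\gset^f$, $\mathfrak{D}=\mathbb{Q}\Mod$ goes through exactly as you say, using the paper's verification that $\sspan(G\gset^f)$ is well defined and has finite products.
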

See \cite[Theorem 4]{Linder} for details.
\begin{construction}\label{Mackquot}
Let $M$ be a Mackey functor over a profinite group $G$ and $N$ any open normal subgroup of $G$, then we have a Mackey functor $\overline{M}$ defined for $G/N$ which takes value $M(G/K)$ on subgroup $K/N$ of $G/N$. The restriction, induction and conjugation maps are the same as those for $M$.
\end{construction}
To see that the construction $\overline{M}$ is a Mackey functor we first establish two group theoretic lemmas.
\begin{lemma}
Let $G$ be a profinite group with an open normal subgroup $N$. If $H$ and $K$ are open subgroups containing $N$ then we have a bijection from $\left[H\diagdown G\diagup K\right]$ to $\left[\left(H/N\right)\diagdown\left(G/N\right)\diagup\left(K/N\right)\right]$.
\end{lemma}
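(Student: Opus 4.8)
The plan is to exhibit an explicit map between the two sets of double cosets and check it is a well-defined bijection. Write $\pi\colon G\to G/N$ for the quotient homomorphism; since $N\leq H$ and $N\leq K$, the images $\pi(H)=H/N$ and $\pi(K)=K/N$ are genuine subgroups of $G/N$. The natural candidate is the assignment $HgK\mapsto (H/N)\,\pi(g)\,(K/N)$, i.e.\ the map induced on double cosets by $\pi$. First I would check this is well defined: if $Hg K = Hg'K$ then $g' = h g k$ for some $h\in H$, $k\in K$, so $\pi(g') = \pi(h)\pi(g)\pi(k)$ with $\pi(h)\in H/N$ and $\pi(k)\in K/N$, hence $(H/N)\pi(g')(K/N) = (H/N)\pi(g)(K/N)$.

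Next I would construct the inverse. Any double coset of $G/N$ has the form $(H/N)\,\bar g\,(K/N)$ for some $\bar g = \pi(g)$, since $\pi$ is surjective; send it to $HgK$. To see this is well defined, suppose $(H/N)\pi(g)(K/N) = (H/N)\pi(g')(K/N)$, so $\pi(g') = \bar h \pi(g)\bar k$ with $\bar h \in H/N$, $\bar k\in K/N$. Lift $\bar h$ to some $h\in H$ and $\bar k$ to some $k\in K$ (possible precisely because $H$ and $K$ contain $N$, so $\pi$ restricts to surjections $H\twoheadrightarrow H/N$ and $K\twoheadrightarrow K/N$). Then $\pi(g') = \pi(hgk)$, so $g'(hgk)^{-1}\in N\subseteq H$ (also $\subseteq K$), whence $g' \in H g k \subseteq HgK$, giving $Hg'K = HgK$. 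The two assignments are visibly mutually inverse, so we have the desired bijection.

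I do not expect any serious obstacle here; the only point requiring the hypothesis $N\leq H$ and $N\leq K$ (as opposed to merely $N$ normal) is the lifting step, which guarantees $\pi|_H$ and $\pi|_K$ are surjective onto $H/N$ and $K/N$, and that $N\subseteq H\cap K$ so that an element congruent to $hgk$ modulo $N$ already lies in $HgK$. One could alternatively phrase the whole argument as a single observation: $\pi$ induces a bijection on $G$-sets $G/K \xrightarrow{\sim} (G/N)\big/(K/N)$ compatible with the left $H$-action, and double cosets $[H\backslash G/K]$ are just $H$-orbits on $G/K$, so the statement is immediate from the fact that $\pi$ identifies the $H$-set $G/K$ with the $H/N$-set $(G/N)/(K/N)$. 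I would likely present the explicit version since it makes the role of the hypotheses transparent and is the form needed in Construction~\ref{Mackquot}.
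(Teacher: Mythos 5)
Your proposal is correct and follows essentially the same route as the paper: the paper also defines the map $HxK\mapsto (H/N)\,xN\,(K/N)$ induced by the quotient, checks well-definedness, and verifies injectivity by lifting $hN$ and $kN$ to $H$ and $K$ and absorbing the stray element of $N$ into $K$ (your inverse construction is just this injectivity/surjectivity check phrased the other way). No gaps.
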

\begin{proof}
We define a map 
\begin{align*}
\theta:\left[H\diagdown G\diagup K\right]&\rightarrow \left[\left(H/N\right)\diagdown \left(G/N\right)\diagup \left(K/N\right)\right]\\ HxK&\mapsto \left(H/N\right)xN\left(K/N\right) 
\end{align*}
To see that this is well defined, if $HxK=HyK$ then $x=hyk$ for $h$ in $H$ and $k$ in $K$. Then $xN=\left(hyk\right)N$ which is equal to be $hNyNkN$ so $\theta(HxK)=\theta(HyK)$.

For surjectivity, given $\left(H/N\right)xN\left(K/N\right)$ in the codomain then we have that $\theta(HxK)$ maps to it.

For injectivity, suppose that $HxK$ and $HyK$ satisfy:
\begin{align*}
\left(H/N\right)xN\left(K/N\right)=\left(H/N\right)yN\left(K/N\right).
\end{align*}
Then there is $hN\in H/N$ and $kN\in K/N$ with $yN=hNxNkN$. But this is equal to $hxkN$ so $y=kxkn$ for some $n\in N$. We now use that $N$ is contained in $K$ to deduce that $kn\in K$ and therefore $HyK=HxK$.
\end{proof}
We will use the following elementary fact.
\begin{lemma}
If $K,H\supseteq N$, then $\left(K/N\right)\cap \left(H/N\right)=(H\cap K)/N$.
\end{lemma}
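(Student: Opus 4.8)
The plan is to prove the set equality by establishing the two inclusions separately, working directly with cosets. The key observation that makes everything routine is that, because $N \subseteq H$ (and $N \subseteq K$), a coset $gN \in G/N$ belongs to the subgroup $H/N$ if and only if $g \in H$: indeed, if $gN = hN$ for some $h \in H$ then $g \in hN \subseteq HN = H$, and conversely if $g \in H$ then obviously $gN \in H/N$. So the quotient map $G \to G/N$ restricts to a surjection $H \to H/N$ with the property that the preimage of $H/N$ in $G$ is exactly $H$, and likewise for $K$ and for $H \cap K$. (We also note in passing that all three of $H/N$, $K/N$, $(H\cap K)/N$ genuinely are subgroups of $G/N$, since $N$, being normal in $G$ and contained in each of $H$, $K$, $H\cap K$, is normal in each of them.)

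For the inclusion $(H\cap K)/N \subseteq (K/N) \cap (H/N)$, I would take a coset $gN$ with $g \in H \cap K$; then $g \in H$ gives $gN \in H/N$ and $g \in K$ gives $gN \in K/N$, so $gN$ lies in the intersection. For the reverse inclusion $(K/N)\cap(H/N) \subseteq (H\cap K)/N$, I would take $gN \in (K/N)\cap(H/N)$; by the observation above, membership in $H/N$ forces $g \in H$ and membership in $K/N$ forces $g \in K$, hence $g \in H \cap K$ and $gN \in (H\cap K)/N$.

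There is no real obstacle here; the statement is elementary and the two inclusions are immediate once the correspondence-theorem observation about preimages of subgroups under $G \to G/N$ is made explicit. The only point requiring any care is to use the hypothesis $N \subseteq H$, $N \subseteq K$ at the right moment — without it, a coset $gN$ meeting $H$ need not be contained in $H$, and the argument would fail. This lemma is then fed, together with the preceding bijection of double coset spaces, into the verification that the quotient construction $\overline{M}$ of Construction \ref{Mackquot} satisfies the Mackey axiom.
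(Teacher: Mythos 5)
Your proof is correct: the double-inclusion argument, hinging on the observation that for $N\subseteq H$ a coset $gN$ lies in $H/N$ if and only if $g\in H$, is exactly the standard justification. The paper itself states this lemma without proof (calling it an elementary fact), so your write-up simply supplies the routine verification it leaves implicit.
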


\begin{proposition}\label{inflatemackey}
The construction $\overline{M}$ is a Mackey functor over $G/N$.
\end{proposition}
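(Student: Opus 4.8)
The plan is to verify directly that $\overline{M}$ satisfies the four axioms of Definition \ref{defM1}, exploiting the standard correspondence between open subgroups of $G/N$ and open subgroups of $G$ containing $N$. Write $q\colon G\to G/N$ for the quotient and $\overline{H}:=H/N=q(H)$; since $N$ is open normal, $H\mapsto\overline{H}$ is a bijection from $\{H\leq G\text{ open}:N\leq H\}$ onto the set of open subgroups of $G/N$, and under it $\overline{K}\leq\overline{H}\iff K\leq H$, while conjugation by $gN$ carries $\overline{H}$ to $\overline{gHg^{-1}}=(gHg^{-1})/N$ (legitimate because $N\trianglelefteq G$ forces $N\leq gHg^{-1}$). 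By construction $\overline{M}(\overline{H})=M(H)$, and the maps $R$, $I$, $C$ for $\overline{M}$ are exactly those of $M$ transported along this correspondence.

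First I would clear axioms (1)--(3). Axiom (1) is immediate: $R^{\overline{H}}_{\overline{H}}=I^{\overline{H}}_{\overline{H}}=\id$ and $C^{\overline{H}}_{hN}=\id$ for $hN\in\overline{H}$, because $hN\in\overline{H}$ means $h\in H$ (using $N\leq H$), so these are the identity statements for $M$. Axioms (2) and (3) follow because a chain $\overline{L}\leq\overline{K}\leq\overline{H}$ is exactly the image of a chain $L\leq K\leq H$, because $(gN)(hN)=(gh)N$, and because the conjugation/restriction/induction compatibilities for $\overline{M}$ are, termwise, those already assumed for $M$.

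The substantive point is the Mackey axiom (4). Fixing open subgroups $\overline{K},\overline{L}\leq\overline{H}$, I must establish
\[
R^{\overline{H}}_{\overline{K}}\circ I^{\overline{H}}_{\overline{L}}=\sum_{xN\in[\overline{K}\backslash\overline{H}/\overline{L}]}I^{\overline{K}}_{\overline{K}\cap xN\overline{L}(xN)^{-1}}\circ C_{xN}\circ R^{\overline{L}}_{\overline{L}\cap(xN)^{-1}\overline{K}(xN)}.
\]
Here I invoke the two lemmas stated immediately before the proposition. Applying the first to the profinite group $H$, its open normal subgroup $N$, and the open subgroups $K,L\supseteq N$ yields a bijection $[K\backslash H/L]\cong[\overline{K}\backslash\overline{H}/\overline{L}]$, which one checks is implemented by $KxL\mapsto\overline{K}(xN)\overline{L}$; thus the index set of the sum is canonically $[K\backslash H/L]$. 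Since $xN\,\overline{L}\,(xN)^{-1}=\overline{xLx^{-1}}$, the second lemma gives $\overline{K}\cap xN\overline{L}(xN)^{-1}=(K\cap xLx^{-1})/N=\overline{K\cap xLx^{-1}}$, and symmetrically $\overline{L}\cap(xN)^{-1}\overline{K}(xN)=\overline{L\cap x^{-1}Kx}$. Substituting these identifications and recalling that the $R$, $I$, $C$ of $\overline{M}$ are those of $M$, the right-hand side becomes $\sum_{x\in[K\backslash H/L]}I^{K}_{K\cap xLx^{-1}}\circ C_x\circ R^{L}_{L\cap x^{-1}Kx}$, which by the Mackey axiom for $M$ equals $R^{H}_{K}\circ I^{H}_{L}$ --- and this is the left-hand side for $\overline{M}$ by definition.

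I expect the only genuine subtlety to be the bookkeeping: confirming that the double-coset bijection of the first lemma sends the summand indexed by $KxL$ to the summand indexed by $\overline{K}(xN)\overline{L}$ with matching conjugating elements, and tracking that normality of $N$ in $G$ is genuinely used at each point where a conjugated subgroup appears, so that every quotient $(\,\cdot\,)/N$ written down is meaningful. The remainder is a mechanical transport of structure along the subgroup correspondence, requiring no new input.
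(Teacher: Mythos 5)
Your proposal is correct and follows essentially the same route as the paper: axioms (1)--(3) are checked by transporting the identities of $M$ along the correspondence $H\mapsto H/N$, and the Mackey axiom is verified using the two preceding lemmas (the bijection $[K\backslash H/L]\cong[\overline{K}\backslash\overline{H}/\overline{L}]$ and $(K/N)\cap(L/N)=(K\cap L)/N$). Your write-up is in fact slightly more careful in making the double-coset bijection and the conjugation identifications explicit, and in stating the axiom for an arbitrary intermediate subgroup $\overline{H}$ rather than only the top group, but no new idea is involved.
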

\begin{proof}
We show that each of the required conditions hold.
\begin{enumerate}
\item Given $K/N$, then $R^{K/N}_{K/N}=R^K_K=\id_{M(G/K)}$ since it is true for $M$. The induction map satisfies the condition for the same reason. Given $kN\in K/N$, then $C_{kN}(s)$ for $s\in M(G/K)$ is defined to be $C_k(s)$ which equals $s$ since it does for $M$. 
\item Given $H/N\geq K/N\geq L/N$, then $R^{K/N}_{L/N}R^{H/N}_{K/N}$ is defined to be $R^K_LR^H_K$ which equals $R^H_L$ since this is true for $M$. The statement for the induction and conjugation maps are similar.
\item Given $gN\in G/N$ and $H/N\geq K/N$. Then $C_{gN}R^{H/N}_{K/N}$ is by definition $C_gR^H_K$, which equals $R^{gHg^{-1}}_{gKg^{-1}}C_g$. This is the definition of $R^{gHg^{-1}/N}_{gKg^{-1}/N}C_{gN}$. The corresponding statement for the induction maps are similar.
\item Let $H/N$ and $K/N$ be subgroups of $G/N$, then we have that:
\begin{align*}
R^{G/N}_{H/N}I^{G/N}_{K/N}&=R^G_HI^G_K=\sum_{H\setminus G/ K}I^H_{H\cap xKx^{-1}}C_xR^K_{K\cap x^{-1}Hx}\\&=\sum_{\left(H/N\right)\setminus \left(G/N\right)/ \left(K/N\right)}I^H_{H\cap xKx^{-1}}C_xR^K_{K\cap x^{-1}Hx}\\&=\sum_{\left(H/N\right)\setminus \left(G/N\right)/ \left(K/N\right)}I^{H/N}_{H/N\cap xKx^{-1}/N}C_xR^{K/N}_{K/N\cap x^{-1}Hx/N}
\end{align*}
Here we use the definition of the restriction, induction and conjugation maps of $\overline{M}$, as well as the proceeding two lemmas.
\end{enumerate}
\end{proof}
\section{The Burnisde Ring of a Profinite Group}
For finite discrete groups there is a construction called the Burnside ring and this is an example of a Mackey functor. Furthermore the Burnside ring interacts with more general Mackey functors in a useful way. We will see this construction is also relevant for profinite groups.

In order to understand what the Burnside ring is, we begin by considering the set of all finite $G$-sets. We put an addition on this set by considering disjoint union of two finite $G$-sets. We can also define a multiplication by considering the cartesian product of two finite $G$-sets. These operations satisfy the associativity and distributivity laws similar to the finite case. The only issue preventing this construction being a ring is that we do not necessarily have additive inverses. As with the finite case we can get around this using the Grothendieck construction which introduces formal differences.
\begin{definition}
If $G$ is a profinite group we can construct the \textbf{Burnside ring}\index{Burnside ring} for $G$, denoted $A(G)$, as the Grothendieck ring of finite $G$-sets.
\end{definition}  
\begin{remark}
$A(G)$ has an additive basis given by the elements $\left[G/H\right]$.
\end{remark}
Since we are working rationally, from here on $A(G)$ shall denote $A(G)\otimes \mathbb{Q}$. When $G$ is a finite group there are equivalent definitions of the Burnside ring, we now examine which definitions extend to the case where $G$ is a profinite group.  We begin by giving the construction for the space of closed subgroups of a profinite group $G$.
\begin{construction}\label{spaceclosedsubgroup}
Let $G$ be a profinite group, then we can define the space of closed subgroups\index{space of closed subgroups} $SG$ to be the space whose underlying set is the collection of all closed subgroups. This comes with an action map:
\begin{align*}
G\times SG&\rightarrow SG\\(g,K)&\mapsto gKg^{-1}.
\end{align*}
For the topology, consider:
\begin{align*}
O(N,J)=\left\lbrace K\in SG\mid NK=J\right\rbrace
\end{align*}
where $N\unlhd_O G$, $J\leq_O G$ and $N\leq J$. Then we can give $SG$ the topology generated by the sets of the form $O(N,J)$. That is, we consider the topology where open sets are precisely the unions of finite intersections of all sets of this form. With this topology we can show that the above action is continuous.

\end{construction}
\begin{remark}\label{ONJ}
The sets $O(N,J)$ are $J$-invariant since if $A\in O(N,J)$ and $j\in J$ then $jAj^{-1}N=jANj^{-1}=jJj^{-1}=J$.
\end{remark}
We now show that the space of closed subgroups of a profinite group $G$ provide us with an example of a $G$-space.
\begin{proposition}
If $G$ is a profinite group then the action on $SG$ given by:
\begin{align*}
G\times SG&\rightarrow SG\\(g,K)&\mapsto gKg^{-1}
\end{align*}
is continuous.
\end{proposition}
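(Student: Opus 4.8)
The plan is to use that $SG$ carries the topology generated by the sets $O(N,J)$, so it suffices to prove that $\mu^{-1}(O(N,J))$ is open in $G\times SG$ for each such subbasic set, where $\mu(g,K)=gKg^{-1}$ denotes the action map. The key observation is that, since $N$ is normal, $NK=KN$ is the full preimage in $G$ of the subgroup $KN/N\leq G/N$; hence $K\in O(N,J)$ if and only if $KN/N=J/N$ in the \emph{finite} group $G/N$. Writing $p_N\colon G\to G/N$ for the quotient homomorphism and $q_N\colon SG\to\mathrm{Sub}(G/N)$ for the map $K\mapsto KN/N$ (with $\mathrm{Sub}(G/N)$ the finite, discretely topologised set of subgroups of $G/N$), the sets $O(N,J)$ are precisely the fibres $q_N^{-1}(J/N)$, so $q_N$ is continuous and preimages under $q_N$ of arbitrary subsets are finite unions of sets $O(N,J)$, hence open.

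Next I would compute $q_N\circ\mu$. Normality of $N$ gives $gKg^{-1}N=g(KN)g^{-1}$, so the image of $gKg^{-1}$ in $G/N$ is $\overline{g}\,\overline{K}\,\overline{g}^{-1}$, where $\overline{g}=p_N(g)$ and $\overline{K}=q_N(K)$. Thus $q_N\circ\mu$ factors as
\[
G\times SG\xrightarrow{\ p_N\times q_N\ }G/N\times\mathrm{Sub}(G/N)\xrightarrow{\ \mathrm{conj}\ }\mathrm{Sub}(G/N),
\]
where $\mathrm{conj}$ is the conjugation action of the finite group $G/N$ on $\mathrm{Sub}(G/N)$. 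Since $p_N$ and $q_N$ are continuous and $\mathrm{conj}$ is a map between finite discrete spaces, $q_N\circ\mu$ is continuous; therefore $\mu^{-1}(O(N,J))=(q_N\circ\mu)^{-1}(J/N)$ is open. As the $O(N,J)$ generate the topology on $SG$, this proves $\mu$ is continuous.

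There is no real obstacle here: the content is just the recognition that the topology on $SG$ is the initial topology pulled back along the maps $q_N$ from the finite discrete quotients $\mathrm{Sub}(G/N)$, after which continuity of $\mu$ reduces to continuity of the (automatically continuous) conjugation actions of the finite groups $G/N$. The only points needing care are the elementary identities $NK=KN$ and $gKg^{-1}N=g(KN)g^{-1}$, both of which use normality of $N$, together with the fact that every subset of a finite discrete space is open.
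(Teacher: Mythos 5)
Your proof is correct, but it takes a different route from the paper's. The paper argues directly from the subbasis: given $(g,K)$ with $gKg^{-1}\in O(N,J)$, it exhibits the explicit open box $Jg\times O(N,g^{-1}Jg)$ inside the preimage of $O(N,J)$, checking by a one-line computation (using normality of $N$ and $j\in J$) that $(jg)A(jg)^{-1}N=J$ for every $(jg,A)$ in that box. You instead observe that the topology on $SG$ is the initial topology induced by the maps $q_N\colon SG\rightarrow S(G/N)$, $K\mapsto NK/N$, into finite discrete spaces, and that $q_N\circ\mu$ factors as the conjugation action of the finite group $G/N$ precomposed with $p_N\times q_N$; continuity is then automatic because every map out of (or into) a finite discrete space in sight is continuous. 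Both arguments are complete, and the identities you need ($O(N,J)=q_N^{-1}(\{J/N\})$, $gKg^{-1}N=g(KN)g^{-1}$) are elementary and verified, so there is no circularity even though the paper only records the identification $O(N,NK)=p_N^{-1}(\{NK/N\})$ and the homeomorphism $SG\cong\underset{\leftarrow}{\lim}\,S(G/N)$ afterwards (Proposition \ref{SGprofinite}). What each approach buys: the paper's is shorter and entirely self-contained at that point in the text; yours is more structural, makes the reduction to finite quotients explicit, and generalises immediately to any statement that can be checked levelwise on the inverse system, at the modest cost of setting up the maps $q_N$ before they are officially introduced.
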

\begin{proof}
Let $\psi$ denote the action map and take any basic open set $O(N,J)$ and pair $(g,K)\in\psi^{-1}\left(O(N,J)\right)$. We claim that $V=Jg\times O(N,g^{-1}Jg)$ is contained in the preimage and $(g,K)$ belongs to $V$. 

To see this, if $(jg,A)\in V$ then $jgAg^{-1}j^{-1}N=jgANg^{-1}j^{-1}=J$ as required. Also $(g,K)$ belongs to $V$ since $K$ belongs to $O(N,g^{-1}Jg)$.
\end{proof}
\begin{remark}
Notice that if $G$ is a finite discrete group then $\left\lbrace e\right\rbrace$ is an open normal subgroup of $G$ and any subgroup $H$ is an open subgroup of $G$. Therefore $O(\left\lbrace e\right\rbrace,H)=\left\lbrace H\right\rbrace$ is open in $SG$ proving that $SG$ is a discrete space. 

Furthermore we can look at the conjugacy classes of closed subgroups of $G$. This is topologised as $SG/G$ using the quotient topology.
\end{remark}
We will now see what the construction $SG$ looks like when $G$ is the $p$-adic integers.
\begin{proposition}\label{padicspace}
The space $S\left(\mathbb{Z}_p\right)$ is homeomorphic to:
\begin{align*}
P=\left\lbrace \frac{1}{n}\mid n\in\mathbb{N}\right\rbrace\bigcup\left\lbrace 0\right\rbrace, 
\end{align*}
which has the subspace topology with respect to $\mathbb{R}$.
\end{proposition}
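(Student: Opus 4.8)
The plan is to identify the closed subgroups of $\mathbb{Z}_p$ explicitly and then show that the topology described in Construction~\ref{spaceclosedsubgroup} matches the subspace topology on $P$. First I would recall that the closed subgroups of $\mathbb{Z}_p$ are exactly $\{0\}$ together with $p^k\mathbb{Z}_p$ for $k\in\mathbb{N}$: every nonzero closed subgroup is of finite index (since $\mathbb{Z}_p$ is a $p$-adic group with only the open subgroups $p^k\mathbb{Z}_p$ as proper subgroups of finite index, and a nonzero closed subgroup is a nonzero ideal of $\mathbb{Z}_p$, hence equal to $p^k\mathbb{Z}_p$ where $k$ is the $p$-adic valuation of a generator), while $\{0\}$ is the only subgroup of infinite index. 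So as a set $S(\mathbb{Z}_p) = \{p^k\mathbb{Z}_p : k\in\mathbb{N}\}\cup\{0\}$, which is countably infinite with one distinguished element $\{0\}$.

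Next I would write down the candidate homeomorphism
\begin{align*}
f:S(\mathbb{Z}_p)&\rightarrow P\\
p^k\mathbb{Z}_p&\mapsto \frac{1}{k+1},\qquad \{0\}\mapsto 0.
\end{align*}
This is a bijection by the subgroup classification. To check continuity and openness it suffices to understand the basic open sets $O(N,J)$. Since $\mathbb{Z}_p$ is abelian, the open normal subgroups are the $p^m\mathbb{Z}_p$ and the open subgroups $J$ are the same family, with $N\leq J$ meaning $p^m\mathbb{Z}_p\subseteq p^l\mathbb{Z}_p$, i.e. $m\geq l$. For such a pair, $O(p^m\mathbb{Z}_p,\,p^l\mathbb{Z}_p) = \{K\in S(\mathbb{Z}_p): p^m\mathbb{Z}_p + K = p^l\mathbb{Z}_p\}$. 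Computing this: if $K = p^k\mathbb{Z}_p$ with $k$ finite, then $p^m\mathbb{Z}_p + p^k\mathbb{Z}_p = p^{\min(m,k)}\mathbb{Z}_p$, so the condition becomes $\min(m,k) = l$; given $m\geq l$ this holds precisely when $k = l$ (if $l < m$) or when $k\geq l=m$ (if $l=m$). If $K=\{0\}$ then $p^m\mathbb{Z}_p+\{0\} = p^m\mathbb{Z}_p$, so $\{0\}\in O(p^m\mathbb{Z}_p,p^l\mathbb{Z}_p)$ iff $l=m$. Thus for $l<m$ the set $O(p^m\mathbb{Z}_p,p^l\mathbb{Z}_p) = \{p^l\mathbb{Z}_p\}$ is a singleton not containing $\{0\}$, while for $l=m$ we get $O(p^m\mathbb{Z}_p,p^m\mathbb{Z}_p) = \{p^k\mathbb{Z}_p : k\geq m\}\cup\{\{0\}\}$, a "tail" neighbourhood of $\{0\}$.

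From this explicit description the result follows: under $f$, the singletons $\{p^l\mathbb{Z}_p\}$ correspond to the isolated points $\{1/(l+1)\}$ of $P$, and the tail sets $\{p^k\mathbb{Z}_p:k\geq m\}\cup\{\{0\}\}$ correspond to $\{1/(k+1):k\geq m\}\cup\{0\}$, which are exactly the basic neighbourhoods of $0$ in the subspace topology on $P\subseteq\mathbb{R}$. Since these two families generate the respective topologies, $f$ is a homeomorphism. I expect the main obstacle to be purely bookkeeping: being careful that the generating sets $O(N,J)$ really do only produce the singletons and the tails (checking that intersections and unions of them give nothing new near $0$), and confirming that every neighbourhood of $0$ in $P$ arises this way so that $f^{-1}$ is continuous as well — but no deep idea is needed beyond the subgroup classification and the valuation computation of $p^m\mathbb{Z}_p + p^k\mathbb{Z}_p$.
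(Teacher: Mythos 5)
Your proposal is correct and follows essentially the same route as the paper: the same bijection $p^k\mathbb{Z}_p\mapsto \frac{1}{k+1}$, $\{0\}\mapsto 0$, with the basic sets $O(N,J)$ identified as singletons $\{p^l\mathbb{Z}_p\}$ (when $N$ is strictly smaller than $J$) and tails $\{p^k\mathbb{Z}_p : k\geq m\}\cup\{\{0\}\}$ (when $N=J$), matching the isolated points and the neighbourhoods of $0$ in $P$. Your version is just slightly more systematic in computing all $O(N,J)$ at once, whereas the paper exhibits the two relevant families directly; the content is the same.
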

\begin{proof}
First notice that there is a bijection of sets given by $p^k\mathbb{Z}_p\mapsto \frac{1}{k+1}$ and $e\mapsto 0$. We next observe that points of the form $p^k\mathbb{Z}_p$ and $\frac{1}{k+1}$ are isolated in their respective spaces. This is clear for $P$ but for $S\left(\mathbb{Z}_p\right)$ we can see the following:
\begin{align*}
\left\lbrace p^k\mathbb{Z}_p\right\rbrace=O\left(p^{k+1}\mathbb{Z},p^k\mathbb{Z}_p\right).
\end{align*}
To see that this is true observe that $p^k\mathbb{Z}_p$ clearly belongs to this set. On the other hand, if $A$ satisfies that $Ap^{k+1}\mathbb{Z}_p=p^k\mathbb{Z}_p$ then $A\leq p^k\mathbb{Z}_p$. However $A\leq p^{k+1}\mathbb{Z}_p$ has to be false since if it were true then we would have the following contradiction:
\begin{align*}
p^k\mathbb{Z}_p=Ap^{k+1}\mathbb{Z}_p=p^{k+1}\mathbb{Z}_p.
\end{align*}
This shows that $A$ must equal $p^k\mathbb{Z}_p$. A similar argument shows that there is a one to one correspondence between the neighbourhood basis of $0$ and that of $e$. We can observe that:
\begin{align*}
O\left(p^k\mathbb{Z}_p,p^k\mathbb{Z}_p\right)=\left\lbrace e\right\rbrace \bigcup \left\lbrace p^n\mathbb{Z}_p\mid n\geq k\right\rbrace,
\end{align*}
which corresponds to the typical open neighbourhood of $0$ in $P$ of the form:
\begin{align*}
\left\lbrace 0\right\rbrace \bigcup \left\lbrace \frac{1}{n}\mid n\geq k+1\right\rbrace.
\end{align*}  
\end{proof}
For $G$ a profinite group we have a more informative way of describing $SG$ which the following proposition will illustrate. Since $G$ is a profinite group, we can write $G$ as an inverse limit $\underset{\leftarrow}{\lim}\,G/N$ where $N$ ranges over all open normal subgroups of $G$. Consequently, if $N$ is an open normal subgroup of $G$ then we have the following map:
\begin{align*}
p_N:SG&\rightarrow S(G/N)\\K&\mapsto NK/N.
\end{align*} 
This map is useful since we know it is a continuous map of spaces where the codomain is a finite discrete space. This is true since we have the following equality:
\begin{align*}
p_N^{-1}\left(\left\lbrace NK/N\right\rbrace\right)=O(N,NK).
\end{align*} 
\begin{proposition}\label{SGprofinite}
If $G$ is a profinite group then we have a homeomorphism $SG\cong \underset{\leftarrow}{\text{lim}}\,S(G/N)$. Hence $SG$ is a compact, Hausdorff and totally disconnected space.
\end{proposition}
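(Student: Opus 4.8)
The plan is to show that the maps $p_N$ assemble into a continuous bijection $\Phi\colon SG\to\underset{\leftarrow}{\lim}\,S(G/N)$ and that this bijection is a homeomorphism, after which the last sentence follows at once. For the construction of $\Phi$ I would first check that the $p_N$ are compatible with the inverse system: for open normal subgroups $N\leq N'$ the transition map $S(G/N)\to S(G/N')$ sends $J/N$ to $N'J/N'$, and since $N'(NK)=N'K$ this map composed with $p_N$ equals $p_{N'}$. The universal property of the inverse limit then produces $\Phi$, which is continuous because each $p_N$ is (as observed after Construction~\ref{spaceclosedsubgroup}).

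For injectivity I would use the standard fact that a closed subgroup $K$ of a profinite group satisfies $K=\bigcap_N NK$, with $N$ ranging over open normal subgroups: each $NK$ is open, hence closed, and contains $K$, while if $x\notin K$ then some coset $xN$ misses the closed set $K$, and because $x\in NK$ is equivalent to $xN\cap K\neq\emptyset$ this gives $x\notin NK$. Thus $NK=NL$ for all $N$ forces $K=L$. For surjectivity, represent a point of the limit by open subgroups $J_N\geq N$ with $N'J_N=J_{N'}$ whenever $N\leq N'$, and set $K=\bigcap_N J_N$, a closed subgroup; one must show $NK=J_N$ for all $N$. Only the inclusion $J_N\subseteq NK$ needs work: fixing $x\in J_N$ and using $x\in NK\iff xN\cap K\neq\emptyset$, it suffices that the closed sets $xN\cap J_M$, for $M\leq N$ open normal, are nonempty and have the finite intersection property, for then compactness of $G$ gives $\emptyset\neq xN\cap\bigcap_{M\leq N}J_M=xN\cap K$. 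Nonemptiness follows from $J_N=NJ_M$, and the finite intersection property from the fact that $M\leq M'$ forces $J_M\subseteq M'J_M=J_{M'}$, so finitely many $M_i$ are dominated by their intersection; the same fact shows $\bigcap_{M\leq N}J_M=K$.

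It then remains to upgrade the continuous bijection $\Phi$ to a homeomorphism without assuming in advance that $SG$ is compact, and for this I would compare subbases. By Construction~\ref{spaceclosedsubgroup} the topology on $SG$ is generated by the sets $O(N,J)=p_N^{-1}(\{J/N\})$, whereas the subspace topology on $\underset{\leftarrow}{\lim}\,S(G/N)\subseteq\prod_N S(G/N)$ is generated by the sets $\pi_N^{-1}(\{J/N\})\cap\underset{\leftarrow}{\lim}\,S(G/N)$; since $\Phi$ is a bijection carrying $O(N,J)$ onto $\pi_N^{-1}(\{J/N\})\cap\underset{\leftarrow}{\lim}\,S(G/N)$ (here surjectivity of $\Phi$ is used), $\Phi$ is open as well as continuous, hence a homeomorphism. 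The final assertion is now immediate: each $S(G/N)$ is finite and discrete, so $\underset{\leftarrow}{\lim}\,S(G/N)$ is a closed subspace of a product of finite discrete spaces and is therefore compact, Hausdorff and totally disconnected, and the homeomorphism transports these properties to $SG$.

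I expect the surjectivity step to be the main obstacle, in particular the bookkeeping needed to turn the compatibility relations $N'J_N=J_{N'}$ into the nonemptiness and the finite intersection property of the family $\{xN\cap J_M\}$; the construction of $\Phi$ is formal, and the remaining points are routine compactness and neighbourhood-basis arguments.
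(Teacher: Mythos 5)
Your proof is correct, but it takes a more self-contained route than the paper. The paper's own proof outsources the set-level identification to Ribes--Zalesskii (\cite[Corollary 1.1.8, Proposition 2.2.1]{Ribes}), quoting that every closed subgroup $H$ is recovered as $\underset{\leftarrow}{\lim}\,HN/N$, and then reduces the proposition to a single topological observation, namely that the generating sets satisfy $O(N,J)=O(N,NK)=p_N^{-1}(\{NK/N\})$, so the topology of Construction \ref{spaceclosedsubgroup} is exactly the inverse-limit (profinite) topology. Your final paragraph comparing subbases is precisely this step, so the topological part of the two arguments coincides. What you add is a from-scratch proof of the bijectivity of $\Phi$: injectivity via $K=\bigcap_N NK$ for closed $K$, and surjectivity by taking $K=\bigcap_N J_N$ for a compatible family and verifying $NK=J_N$ through the finite intersection property of the closed sets $xN\cap J_M$ together with compactness of $G$ -- this is in effect a proof of the cited facts from \cite{Ribes}, and all the bookkeeping (the identities $N'J_M=J_{N'}$, $J_M\subseteq J_{M'}$ for $M\leq M'$, and $\bigcap_{M\leq N}J_M=K$) checks out. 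The trade-off is the usual one: the paper's argument is shorter and leans on standard profinite group theory, while yours is longer but makes the compactness input explicit and keeps the proposition independent of the reference; your explicit treatment of the closing sentence (closedness of the limit in the product of finite discrete spaces) is also slightly more complete than the paper, which leaves that implicit.
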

\begin{proof}
We know from \cite[Corollary 1.1.8, Proposition 2.2.1]{Ribes} that each closed subgroup $H$ of $G$ is of the form $\underset{\leftarrow}{\lim}\,HN/N$, where $N$ ranges over all open normal subgroups of finite index of $G$. Therefore we need only show that the topology described above coincides with the profinite topology. 

Take a subbasis set $O(N,J)$ which is non-empty. Then we know that there exists a closed subgroup of $G$ namely $K$ with $NK=J$ so we can write $O(N,J)$ as $O(N,NK)$. We next observe that:
\begin{align*}
p_N^{-1}\left(\left\lbrace KN/N\right\rbrace\right)=O(N,NK)=O(N,J)
\end{align*}
so the subbasis given above is another way of writing the profinite topology. 
\end{proof}
If $N$ is an open normal subgroup of $G$ and $K$ is a closed subgroup of $G$, then all finite intersections of sets of the form $O(N,NK)$ give a closed-open basis for the topology. We also have another basis for this topology as the following proposition will show.
\begin{proposition}\label{basis}
The sets of the form $O(N,NK)$ give a basis of compact-open sets for the profinite topology on $SG$. 
\end{proposition}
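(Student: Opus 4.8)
The plan is to combine two facts: that each $O(N,NK)$ is compact-open, and that a finite intersection of sets of this form --- which is already known to constitute a basis, by the paragraph immediately preceding this proposition --- is again a \emph{finite union} of sets of the same form. For the compact-open claim I would use Proposition \ref{SGprofinite}: there $SG$ is realised as an inverse limit of the finite discrete spaces $S(G/N)$, hence is compact and Hausdorff, and the projection $p_N\colon SG\to S(G/N)$ is continuous with $p_N^{-1}(\{NK/N\})=O(N,NK)$. Since $\{NK/N\}$ is clopen in the finite discrete space $S(G/N)$, the set $O(N,NK)$ is clopen in $SG$; being a closed subset of a compact space, it is compact, so it is compact-open.

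For the second fact I would fix sets $O(N_1,N_1K_1),\dots,O(N_m,N_mK_m)$ and put $N=N_1\cap\dots\cap N_m$, an open normal subgroup of $G$. The key point is that for each $i$ the projection $p_{N_i}$ factors as $q_i\circ p_N$, where $q_i\colon S(G/N)\to S(G/N_i)$ is the map induced by the quotient homomorphism $G/N\to G/N_i$ (valid since $N\leq N_i$); this is a direct computation from the explicit formulas for these maps. Consequently $\bigcap_i O(N_i,N_iK_i)=p_N^{-1}(F)$, where $F=\bigcap_i q_i^{-1}(\{N_iK_i/N_i\})$ is a subset of the finite set $S(G/N)$, hence $F=\{L_1/N,\dots,L_r/N\}$ for finitely many open subgroups $L_j\supseteq N$ of $G$. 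Since $p_N^{-1}(\{L_j/N\})=O(N,L_j)=O(N,NL_j)$, this yields $\bigcap_i O(N_i,N_iK_i)=\bigcup_{j=1}^{r}O(N,NL_j)$, a finite union of sets of the required form (with the empty intersection giving the empty union).

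It then follows that every open subset of $SG$ --- being a union of finite intersections of sets $O(N,NK)$ --- is a union of sets of the form $O(N,NK)$, so these sets form a basis; combined with the compact-openness established above, this proves the proposition. I expect the only step needing genuine care to be the verification of the factorisation $p_{N_i}=q_i\circ p_N$ together with the bookkeeping identifying subgroups of $G/N$ with open subgroups of $G$ containing $N$; everything else is formal manipulation of preimages.
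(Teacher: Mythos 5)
Your proposal is correct, but it establishes the basis property by a different mechanism than the paper. The paper uses the pointwise basis criterion: given $A\in O(N_1,N_1K)\cap O(N_2,N_2L)$ it sets $N=N_1\cap N_2$ and verifies directly, by the one-line subgroup computation $BN_1=BNN_1=ANN_1=AN_1=N_1K$ for any $B\in O(N,NA)$, that $A\in O(N,NA)\subseteq O(N_1,N_1K)\cap O(N_2,N_2L)$; compactness is then obtained exactly as you do, from continuity of $p_N$ into the finite discrete space $S(G/N)$ (so each $O(N,NK)=p_N^{-1}(\{NK/N\})$ is clopen) together with compactness of $SG$ from Proposition \ref{SGprofinite}. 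You instead give a global decomposition: writing each $p_{N_i}$ as $q_i\circ p_N$ with $N=\bigcap_i N_i$, you identify $\bigcap_i O(N_i,N_iK_i)$ with $p_N^{-1}(F)$ for a finite subset $F\subseteq S(G/N)$ and hence with a finite (in fact disjoint) union of sets $O(N,NL_j)$, which gives the basis property since the generated topology consists of unions of such finite intersections. Both arguments pivot on passing to the common refinement $N=\bigcap_i N_i$; yours buys the slightly stronger structural statement that finite intersections of subbasic sets are finite disjoint unions of basic sets at a single level $N$, at the price of checking the factorisation $p_{N_i}=q_i\circ p_N$ and the correspondence between subgroups of $G/N$ and open subgroups of $G$ containing $N$, whereas the paper's pointwise computation is shorter and avoids any mention of the induced maps $q_i$. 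The only (cosmetic) point you leave implicit is that $SG$ itself is covered by the sets $O(N,NK)$ (equivalently, the empty intersection is handled), which the paper notes explicitly and which is immediate since $K\in O(N,NK)$ for every closed $K$ and any open normal $N$.
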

\begin{proof}
Clearly all sets of the form $O(N,NK)$ cover $SG$ so it is left to show that if $A\in O(N_1,N_1K)\cap O(N_2,N_2L)$ we can find a set of the form $O(N,NA)$ satisfying $A\in O(N,NA)\subseteq O(N_1,N_1K)\cap O(N_2,N_2L)$.

To start with set $N=N_1\cap N_2$, then clearly $A\in O(N,NA)$. Suppose $B\in O(N,NA)$, then we have that:
\begin{align*}
BN_1=BNN_1=ANN_1=AN_1=N_1K
\end{align*}
and so $B\in O(N_1,N_1K)$. Similarly $B\in O(N_2,N_2L)$ also, and hence $A$ belongs to $O(N,NA)\subseteq O(N_1,N_1K)\cap O(N_2,N_2L)$ as required.

In the construction of $G$ as an inverse limit we know that if $N$ is open and normal, then $G/N$ is a discrete group and hence $S(G/N)$ is a finite discrete space. Continuity of the projection maps tells us that the subbasis sets are also closed, and because the space is Hausdorff this further implies compactness. 
\end{proof}
The following proposition will give a more localised description of the topology of the profinite space $SG$. In order to understand this we first define the core of a subgroup of $G$. The idea is that any subgroup determines a normal subgroup contained inside it.
\begin{definition}
Let $G$ be a group and $H$ be any subgroup of $G$. We define the \textbf{core of $H$}\index{core of $H$} with respect to $G$ to be:
\begin{align*}
\text{Core}_G(H)=\underset{g\in G}{\bigcap}gHg^{-1}.
\end{align*}
\end{definition}
We sometimes denote the core of $H$ by $H^C$ when the ambient group is understood. Suppose that $G$ is a profinite group and $H$ is an open subgroup of $G$. Since $H$ is of finite index it therefore follows that $H^C$ is a finite intersection of open subgroups and is therefore open. 
\begin{remark}
If $G$ is a profinite group and $J\leq G$ is an open subgroup, then $J$ as a point in the topological space $SG$ defined above has a neighbourhood $U$ where each element in $U$ is a subgroup of $J$. To see that any such $J$ has a neighbourhood of this form take $U=O(J^C,J)$. This holds for any normal subgroup contained in $J$ replaced with $J^C$. 
\end{remark}
\begin{proposition}\label{nbhdbasis}
If $K\leq G$ is closed and if $G$ is profinite, then a neighbourhood basis for $K$ is given by:
\begin{align*}
\left\lbrace O(N,NK)\mid N\unlhd G,\text{open}\right\rbrace.
\end{align*} 
If $H$ is open in $G$ then a neighbourhood basis of $H$ is given by:
\begin{align*}
\left\lbrace O(N,H)\mid N\unlhd G,\text{ open and }N\leq H\right\rbrace.
\end{align*}
\end{proposition}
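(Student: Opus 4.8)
The plan is to show that the two displayed collections are genuinely neighbourhood bases by combining Proposition \ref{nbhdbasis}'s hypotheses with the description of the topology from Construction \ref{spaceclosedsubgroup} and Propositions \ref{basis} and \ref{SGprofinite}. For the first statement, I would first recall that $K \in O(N,NK)$ trivially, so each set in the proposed collection is an open neighbourhood of $K$. The substantive direction is to show that every open neighbourhood of $K$ contains one of the form $O(N,NK)$. Since $SG$ carries the topology generated by the subbasic sets $O(N,J)$, any open neighbourhood $W$ of $K$ contains a finite intersection $\bigcap_{i=1}^{m} O(N_i, J_i)$ still containing $K$; necessarily $J_i = N_i K$ for each $i$ since $K \in O(N_i,J_i)$ forces $N_i K = J_i$. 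Then I would set $N = \bigcap_{i=1}^m N_i$, which is an open normal subgroup, and argue exactly as in the proof of Proposition \ref{basis}: if $B \in O(N, NK)$ then $B N_i = B N N_i = K N N_i = K N_i = N_i K$ (using $N \le N_i$), so $B \in O(N_i, N_i K)$ for each $i$, whence $O(N,NK) \subseteq \bigcap_i O(N_i, N_iK) \subseteq W$. This gives the first claim.

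For the second statement, suppose $H$ is open. I would first observe that when $N \le H$ we have $NH = H$, so $O(N,NH) = O(N,H)$, and these sets are open neighbourhoods of $H$; thus the second collection is the subfamily of the first consisting of those $N$ with $N \le H$. To see this subfamily is cofinal (hence still a neighbourhood basis), note that given any open normal $N'$, the intersection $N' \cap H^C$ is open (as $H$ is open, $H^C = \mathrm{Core}_G(H)$ is an intersection of finitely many open subgroups, hence open and normal) and normal, is contained in $H$, and $O(N' \cap H^C, H) \subseteq O(N', N'H)$ by the same computation as above. So every basic neighbourhood $O(N', N'H)$ from the first collection contains one from the second, and by the first statement we are done.

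The main obstacle — really the only point requiring care — is bookkeeping around the identity $N_i K = J_i$: one must be careful that a finite intersection of subbasic opens containing $K$ has each factor of the form $O(N_i, N_i K)$ rather than some unrelated $O(N_i, J_i)$, which follows immediately from the definition $O(N,J) = \{K' : NK' = J\}$ but is the step that makes the reduction work. After that, the containment arguments are the verbatim computation from Proposition \ref{basis}, relying only on $N \le N_i \Rightarrow BN_i = BNN_i$. I would also briefly note that openness of $H^C$ for open $H$ is exactly the remark preceding the proposition, so it may be cited rather than reproved.
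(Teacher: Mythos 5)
Your proposal is correct and follows essentially the same route as the paper: the closed-subgroup case rests on the basis property of the sets $O(N,NK)$ (you re-derive it from the subbasis with the same computation used in Proposition \ref{basis}, whereas the paper simply cites that proposition), and for $H$ open you prove cofinality via $N'\cap H^{C}$ exactly as the paper does. No gaps.
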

\begin{proof}
The statement for $K$ closed holds because the sets $O(N,NK)$ form a basis for $SG$ with the profinite topology. For $H$ open we prove that the set $\left\lbrace O(N,H)\mid N\unlhd H,G,\text{open}\right\rbrace$ is cofinal in $\left\lbrace O(N,NH)\mid N\unlhd G,\text{open}\right\rbrace$. 

Given an open set $O(N,NH)$, set $N^{\prime}=N\cap H^c$ where $H^c$ is the core of $H$. A similar argument as in the proof of Proposition \ref{basis} shows that $O(N^{\prime},N^{\prime}H)=O(N^{\prime},H)$ is contained in $O(N,NH)$ as required.
\end{proof}
Notice that we are interested in the core of $H$ since we want an open normal subgroup contained in $H$. This argument will hold if we replace thecore with any open normal subgroup of $G$ contained in $H$.

We can now provide a characterisation for the Burnside ring\index{Burnside ring} of a profinite group $G$.
\begin{theorem}\label{burnchar}
Let $G$ be a profinite group, then we have an isomorphism of rings $A(G)\cong C(SG/G,\mathbb{Q})$, where the ring structure is given by:
\begin{align*}
(f+g)(x)=f(x)+g(x)\, \text{and}\, (fg)(x)=f(x)g(x).
\end{align*}
\end{theorem}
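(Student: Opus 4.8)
The plan is to exploit the inverse limit description $SG \cong \varprojlim S(G/N)$ from Proposition \ref{SGprofinite}, which passes to a homeomorphism $SG/G \cong \varprojlim S(G/N)/(G/N)$ on conjugacy classes. Correspondingly, since $C(-,\mathbb{Q})$ turns inverse limits of profinite spaces into colimits of rings, one gets
\begin{align*}
C(SG/G,\mathbb{Q}) \cong \underset{N \unlhd_O G}{\colim}\, C\big(S(G/N)/(G/N),\mathbb{Q}\big).
\end{align*}
On the other side, the Burnside ring $A(G) = A(G)\otimes\mathbb{Q}$ of finite $G$-sets is, by its very construction from the category of finite $G$-sets (each of which factors through some finite quotient $G/N$ by continuity of the action), the colimit $\colim_N A(G/N)$ along the inflation maps. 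So it suffices to establish a ring isomorphism $A(G/N) \cong C(S(G/N)/(G/N),\mathbb{Q})$ for each finite discrete group $G/N$, compatibly with the transition maps, and then pass to the colimit.

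For the finite case, I would invoke (or reprove) the classical mark homomorphism: for a finite group $\Gamma$, the map sending a finite $\Gamma$-set $X$ to the function $(\Gamma/H) \mapsto |X^H|$ on conjugacy classes of subgroups is a ring homomorphism $A(\Gamma) \to C(S\Gamma/\Gamma,\mathbb{Q})$, which becomes an isomorphism after tensoring with $\mathbb{Q}$ (the matrix of marks is triangular with nonzero diagonal entries $|N_\Gamma(H)/H|$ with respect to a suitable ordering of conjugacy classes by subgroup order). Additivity under disjoint union and multiplicativity under cartesian product (using $(X\times Y)^H = X^H \times Y^H$) give the ring structure claimed in the statement. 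One then checks that the mark homomorphisms for $G/N$ and $G/N'$ (with $N \le N'$) commute with the inflation map on $A$ and the restriction-of-functions map on $C$, which is a direct computation of fixed points of an inflated $G/N'$-set viewed as a $G/N$-set.

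The main obstacle is the bookkeeping at the colimit stage: one must verify that the colimit of the spaces $S(G/N)/(G/N)$ over the inflation system genuinely recovers $SG/G$ as a topological space — that is, that the basis of clopen sets $O(N,NK)$ (Proposition \ref{basis}) descends correctly to conjugacy classes and matches the pulled-back clopen sets from the finite quotients — and that $C(-,\mathbb{Q})$ commutes with this particular cofiltered limit of compact Hausdorff totally disconnected spaces. The point is that a locally constant $\mathbb{Q}$-valued function on the profinite space $SG/G$ factors through some finite quotient; this is where compactness of $SG$ (Proposition \ref{SGprofinite}) and the clopen basis are essential. Once this identification of $C(SG/G,\mathbb{Q})$ with the colimit is secured, and the finite-level isomorphisms are known to be compatible, the theorem follows by taking the colimit of ring isomorphisms. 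I expect the finite-group mark-homomorphism argument to be standard and quotable, so the genuinely new content — and the delicate part — is the compatibility of the two colimit presentations.
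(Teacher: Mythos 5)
Your proposal is correct, but it takes a genuinely different route from the paper. The paper's own proof of Theorem \ref{burnchar} is a direct citation of Dress \cite{Dress}, merely recording that the isomorphism sends the basis element $\left[G/H\right]$ to the mark function $(K)\mapsto |\left(G/H\right)^K|$; it then deduces the colimit description $A(G)\cong \underset{\rightarrow}{\colim}\,A(G/N)$ afterwards, in Corollary \ref{Burnsidecolim}, as a consequence of the theorem together with Proposition \ref{commlim}. You run the logic in the opposite direction: you obtain $A(G)\cong \underset{\rightarrow}{\colim}\,A(G/N)$ directly from the definition of the Burnside ring (every finite $G$-set, and every isomorphism of such, factors through a finite quotient $G/N$), identify $C(SG/G,\mathbb{Q})$ with $\underset{\rightarrow}{\colim}\,C\left(S(G/N)/(G/N),\mathbb{Q}\right)$ by the locally-constant-functions-factor-through-a-finite-stage argument (which is essentially the proof of Proposition \ref{commlim}, plus the identification $SG/G\cong \underset{\leftarrow}{\lim}\,S(G/N)/(G/N)$, a compactness argument you rightly flag as the delicate point), and then quote the classical rational mark isomorphism for finite groups, checking compatibility with inflation. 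This is sound, and importantly it is not circular, since you do not invoke Corollary \ref{Burnsidecolim} but rederive its content from scratch; just be aware that in the paper's logical order that corollary comes after, not before, the theorem. What each approach buys: the paper's proof is essentially free, deferring the real work to the literature on profinite Burnside rings, while yours is self-contained modulo the finite table of marks, makes the colimit structure of both sides explicit (which is then reused throughout the idempotent calculations), and requires carrying out the orbit-space and transition-map bookkeeping that the citation hides.
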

\begin{proof}
This proof is available in \cite[pp. B8]{Dress}. The isomorphism maps $[G/H]$ to the function which sends a conjugacy class of  closed subgroup $K$ to $| (G/H)^K |$.
\end{proof}
We also have the following useful proposition involving profinite spaces. A profinite space is a compact, Hausdorff and totally disconnected space, or equivalently an inverse limit of finite discrete spaces. These are important for our purposes since by Proposition \ref{SGprofinite} the space $SG$ is a profinite space.
\begin{proposition}\label{commlim}
Let $X$ be a profinite space, where $X=\underset{\leftarrow}{\lim}\,X_i$ for an inverse system of finite discrete spaces $X_i$. Then $C(X,\mathbb{Q})$ and $\underset{\rightarrow}{\colim}\,C(X_i,\mathbb{Q})$ are isomorphic as rings.
\end{proposition}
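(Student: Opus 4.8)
The plan is to write down the obvious comparison map and to show it is a ring isomorphism using compactness together with the fact that a continuous function into a discrete space is locally constant. Write $\pi_i : X \to X_i$ for the projections of the inverse system and $\phi_{ji} : X_j \to X_i$ for its transition maps (for $j \geq i$). Pulling back functions yields ring homomorphisms $\pi_i^* : C(X_i,\mathbb{Q}) \to C(X,\mathbb{Q})$ and $\phi_{ji}^* : C(X_i,\mathbb{Q}) \to C(X_j,\mathbb{Q})$ satisfying $\pi_j^* \circ \phi_{ji}^* = \pi_i^*$; since each $X_i$ is finite and discrete, $C(X_i,\mathbb{Q})$ is simply $\mathbb{Q}^{X_i}$ with pointwise operations, and the maps $\phi_{ji}^*$ make $\{C(X_i,\mathbb{Q})\}_i$ into a direct system of rings. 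The compatible cocone $(\pi_i^*)_i$ therefore induces a ring homomorphism $\Psi : \underset{\rightarrow}{\colim}\, C(X_i,\mathbb{Q}) \to C(X,\mathbb{Q})$, and it remains to show that $\Psi$ is bijective. After a standard reduction one may assume that every $\phi_{ji}$, and hence every $\pi_i$, is surjective: this is because $\pi_i(X) = \bigcap_{j \geq i} \phi_{ji}(X_j)$ is a filtered decreasing family of subsets of the finite set $X_i$, hence equals $\phi_{j_0 i}(X_{j_0})$ for some $j_0 \geq i$, and restricting along $\pi_i(X) \hookrightarrow X_i$ changes neither $X = \underset{\leftarrow}{\lim}\, X_i$ nor the colimit (for the latter, because any class killed by such a restriction is already killed by some $\phi_{ji}^*$, by the finiteness just noted).

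For injectivity, a class in $\underset{\rightarrow}{\colim}\, C(X_i,\mathbb{Q})$ is represented by some $f \in C(X_i,\mathbb{Q})$, and $\Psi$ sends it to $f \circ \pi_i$; if this vanishes then surjectivity of $\pi_i$ forces $f = 0$, so the class is zero. For surjectivity, take $f \in C(X,\mathbb{Q})$. Because $\mathbb{Q}$ carries the discrete topology, $f$ is locally constant, and compactness of $X$ makes its image $f(X) = \{q_1,\dots,q_n\}$ finite, so the fibres $f^{-1}(q_k)$ partition $X$ into clopen sets. The sets $\pi_i^{-1}(\{x_i\})$, as $i$ runs over the index set and $x_i$ over $X_i$, form a basis of clopen sets for $X$; hence each $f^{-1}(q_k)$, being compact, is a finite union of such basic sets, and by directedness of the index system a single index $i$ can be chosen so that $f^{-1}(q_k) = \pi_i^{-1}(V_k)$ for some $V_k \subseteq X_i$ and all $k$ simultaneously. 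Since $\pi_i$ is surjective the $V_k$ partition $X_i$, so the function $g \in C(X_i,\mathbb{Q})$ with $g|_{V_k} = q_k$ satisfies $g \circ \pi_i = f$, i.e. $\Psi$ hits $f$. Combining the three steps, $\Psi$ is an isomorphism of rings, and this also exhibits $C(X,\mathbb{Q})$ as the colimit of the system $C(X_i,\mathbb{Q})$ in rings.

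I expect the only genuine subtlety to be the surjectivity step: the assertion that each clopen fibre of a locally constant function on $X$ descends to a finite stage $X_i$ and that, after passing to a common upper bound, they all descend to the \emph{same} stage. This is where the clopen-basis description of the profinite topology on $X$ and the directedness of the indexing poset do the work. The injectivity argument and the verification that $\Psi$ together with all the structure maps are ring homomorphisms are then routine bookkeeping.
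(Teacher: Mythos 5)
Your proposal is correct and follows essentially the same route as the paper: define the comparison map by pulling back along the projections, reduce to the case where all $\pi_i$ are surjective, get injectivity from that surjectivity, and get surjectivity from compactness forcing the image of $f$ to be finite so that $f$ factors through a finite stage. The only difference is that where the paper cites standard facts from Ribes--Zalesskii (surjectivity of the projections after restriction, and the factorisation of a map to a finite discrete space through some $X_i$), you prove them directly via the clopen basis and directedness, which makes the argument self-contained but does not change its substance.
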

\begin{proof}
We define a map: 
\begin{align*}
\phi:\underset{\rightarrow}{\colim\,}C(X_i,\mathbb{Q})&\rightarrow C(X,\mathbb{Q})\\ \left[ f_i\right]&\mapsto f_i\circ p_i,
\end{align*}
where $p_i:X\rightarrow X_i$ is the projection which comes from the definition of limit. First note, using \cite[Corollary 1.1.8]{Ribes} since $X$ is an inverse limit of finite discrete spaces which are in particular compact and Hausdorff, we can assume without loss of generality that each $p_i$ is surjective.

Next we justify that this map is well defined. Suppose we have $f_j:X_j\rightarrow \mathbb{Q}$ and $f_k:X_k\rightarrow \mathbb{Q}$ which are equivalent by the colimit relation. This means that there exists $i\geq j,k$ such that $f_j\circ p_{ij}=f_k\circ p_{ik}$, where $p$ is the morphism in the limit diagram for $X$. It then follows that:
\begin{align*}
f_j\circ p_j=f_j\circ p_{ij}\circ p_i=f_k\circ p_{ik}\circ p_i=f_k\circ p_k
\end{align*}  
as required.

Next we show that $\phi$ is injective, so suppose we have $\left[f_i\right]$ and $\left[ g_j\right]$ with $f_i\circ p_i=g_j\circ p_j$. Since the inverse limit is over a directed set we can choose an index $\gamma$ which satisfies $\gamma\geq i,j$ and so we have the following diagram:
\begin{center}
$\xymatrix{&X\ar_{p_i}[dl]\ar^{p_j}[dr]\ar^{p_{\gamma}}[dd]\\X_i&&X_j\\&X_{\gamma}\ar_{p_{\gamma ,j}}[ur]\ar^{p_{\gamma ,i}}[ul]}$
\end{center}
where $p_{\gamma ,i}$ is the map from $X_{\gamma}$ to $X_i$ from the definition of inverse system. It follows that $f_i\circ p_i=(f_i\circ p_{\gamma ,i})\circ p_{\gamma}$ and $g_j\circ p_j=(g_j\circ p_{\gamma ,j})\circ p_{\gamma}$ so we can assume without loss of generality that $i=\gamma =j$, so that $f_{\gamma}\circ p_{\gamma}=g_{\gamma}\circ p_{\gamma}$. By the above fact that each $p_i$ is surjective and hence an epimorphism, we have that $f_{\gamma}=g_{\gamma}$. In particular this proves injectivity.

In order to see that $\phi$ is surjective, take any $f:X\rightarrow \mathbb{Q}$. Since $X$ is a profinite space and hence compact we have that $f(X)\subseteq \mathbb{Q}$ is a compact subspace where $\mathbb{Q}$ has the discrete topology. This then implies that $f(X)$ must be finite as well as discrete enabling us to apply \cite[Proposition 1.1.16 (a)]{Ribes} to factor $f:X\rightarrow f(X)$ through $X_i$ for some $i\in I$ via some $f_i:X_i\rightarrow f(X)$, proving surjectivity.

For the algebraic structure, starting with addition if we take $\left[f_i\right]+\left[g_j\right]$, there is a $\gamma\geq i,j$ since the set $I$ is directed, and hence $\left[f_i\right]+\left[g_j\right]=\left[f_{\gamma}+g_{\gamma}\right]$. This has image $\left(f_{\gamma}+g_{\gamma}\right)\circ p_{\gamma}=f_{\gamma}\circ p_{\gamma}+g_{\gamma}\circ p_{\gamma}$ under $\phi$. On the other hand $\phi(\left[f_i\right])+\phi(\left[g_j\right])=f_i\circ p_i + g_j\circ p_j$. By the above argument we can use the directed property to deduce that this equals $f_{\gamma}\circ p_{\gamma}+g_{\gamma}\circ p_{\gamma}$ as required. Showing that $\phi$ is multiplicative is done similarly. 
\end{proof}
This proposition has the following interesting corollary.
\begin{corollary}\label{Burnsidecolim}
If $G$ is a profinite group with $G=\underset{\leftarrow}{\lim}\, G/N$, then $A(G)\cong \underset{\rightarrow}{\colim}A(G/N)$.
\end{corollary}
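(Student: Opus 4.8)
The plan is to bootstrap off the two structural results already in hand: the ring isomorphism $A(G)\cong C(SG/G,\mathbb{Q})$ of Theorem \ref{burnchar} and the colimit description $C(X,\mathbb{Q})\cong\underset{\rightarrow}{\colim}\,C(X_i,\mathbb{Q})$ of Proposition \ref{commlim} for a profinite space $X=\underset{\leftarrow}{\lim}\,X_i$. Applying Theorem \ref{burnchar} to $G$ and to each finite quotient $G/N$ gives $A(G)\cong C(SG/G,\mathbb{Q})$ and $A(G/N)\cong C\big(S(G/N)/(G/N),\mathbb{Q}\big)$, where the spaces $S(G/N)/(G/N)$ are finite and discrete since $G/N$ is finite. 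So the whole statement reduces to producing a homeomorphism
\[
SG/G\;\cong\;\underset{\leftarrow}{\lim}\,S(G/N)/(G/N),
\]
taken over the directed poset of open normal subgroups $N$ ordered by reverse inclusion. Once this is available, Proposition \ref{commlim} with $X=SG/G$ and $X_i=S(G/N)/(G/N)$ together with the isomorphisms above assembles $A(G)\cong\underset{\rightarrow}{\colim}\,A(G/N)$; one should also observe that the transition maps of this colimit are the inflation maps $A(G/N)\to A(G/N')$ for $N'\subseteq N$, which is immediate from the construction since inflation corresponds under Theorem \ref{burnchar} to precomposition with the map $S(G/N')/(G/N')\to S(G/N)/(G/N)$ induced by $H\mapsto HN$.

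To build the homeomorphism I would start from Proposition \ref{SGprofinite}, which gives $SG\cong\underset{\leftarrow}{\lim}\,S(G/N)$. The projections $p_N\colon SG\to S(G/N)$, $K\mapsto NK/N$, are equivariant for $G\to G/N$, so composing with the orbit maps $S(G/N)\to S(G/N)/(G/N)$ produces $G$-invariant maps which factor through $SG/G$ and are compatible over $N$; by the universal property of the inverse limit they induce a continuous map $\phi\colon SG/G\to\underset{\leftarrow}{\lim}\,S(G/N)/(G/N)$. Since $SG/G$ is compact (a continuous image of the compact space $SG$) and the target is Hausdorff, it suffices to check that $\phi$ is bijective. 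For surjectivity one uses that a cofiltered limit of non-empty finite sets is non-empty: given a compatible family of conjugacy classes $\overline{x}_N$, the preimages in $S(G/N)$ form an inverse system of non-empty finite sets, so some point of $SG\cong\underset{\leftarrow}{\lim}\,S(G/N)$ realizes it. For injectivity, if $K,K'\in SG$ have the same image under $\phi$, then for each $N$ the set $T_N=\{\,gN\in G/N:\;N(gKg^{-1})=NK'\,\}$ is non-empty, and for $N'\subseteq N$ the quotient map $G/N'\to G/N$ sends $T_{N'}$ into $T_N$ (using $NN'=N$); hence $\underset{\leftarrow}{\lim}\,T_N\neq\emptyset$, and a coherent choice of conjugators is exactly an element $g\in G=\underset{\leftarrow}{\lim}\,G/N$ with $N(gKg^{-1})=NK'$ for every $N$. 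Intersecting over $N$ and using that a closed subgroup $H\leq G$ satisfies $H=\bigcap_N HN$ (from the inverse limit description of $H$, as cited in Proposition \ref{SGprofinite}), we get $gKg^{-1}=K'$, so $[K]=[K']$ in $SG/G$.

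I expect the main obstacle to be precisely this step: showing that passing to the space of conjugacy classes commutes with the inverse limit, i.e.\ that $\phi$ is a homeomorphism. Its content is the two applications of the compactness principle that a cofiltered limit of non-empty finite sets is non-empty — once to descend realizability of a compatible family of conjugacy classes, and once to upgrade a family of mod-$N$ conjugators to a single element of $G$ — together with the elementary identity $\bigcap_N HN=H$ for closed $H$. Everything else is formal: continuity of $\phi$ and the compact-to-Hausdorff argument, the identification of the colimit's transition maps with the inflation maps, and the final transport of structure through Theorem \ref{burnchar} and Proposition \ref{commlim}.
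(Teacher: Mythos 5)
Your proof is correct, and it follows the same overall strategy as the paper — transport the statement through the isomorphism $A(G)\cong C(SG/G,\mathbb{Q})$ of Theorem \ref{burnchar} and then apply Proposition \ref{commlim} — but you fill in a step the paper leaves implicit. The paper's own proof is a two-line diagram chase at the level of $C(SG,\mathbb{Q})$ and $C(S(G/N),\mathbb{Q})$, silently passing between $SG$ and the orbit space $SG/G$ (equivalently, between all continuous functions and the $G$-invariant ones); it never addresses why forming conjugacy classes commutes with the inverse limit. You isolate exactly this point and prove it: the homeomorphism $SG/G\cong \underset{\leftarrow}{\lim}\,S(G/N)/(G/N)$, obtained from a continuous bijection out of a compact space into a Hausdorff one, with surjectivity and injectivity both coming from the fact that a cofiltered limit of non-empty finite sets is non-empty (once for realizing a compatible family of classes by a point of $SG$, once for assembling mod-$N$ conjugators into an element of $G$), plus $\bigcap_N NK=K$ for closed $K$. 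Your injectivity argument is sound — note only that the well-definedness of $T_N$ as a subset of $G/N$ uses normality of $N$ and that $NK'$ is a subgroup, a one-line check worth recording. What your route buys is an honest proof that $SG/G$ is itself profinite with the expected finite approximations, which is of independent use; what the paper's (implicit) route buys is brevity: one can avoid the orbit-space homeomorphism entirely by observing that $C(SG/G,\mathbb{Q})$ is the subring of $G$-invariant functions in $C(SG,\mathbb{Q})$, and that under Proposition \ref{commlim} a $G$-invariant $f=f_N\circ p_N$ with $p_N$ surjective forces $f_N$ to be $G/N$-invariant, so the invariant subrings themselves form the colimit $\underset{\rightarrow}{\colim}\,C(S(G/N)/(G/N),\mathbb{Q})\cong\underset{\rightarrow}{\colim}\,A(G/N)$. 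You also correctly identify the transition maps as the inflations induced by $H\mapsto HN$, which the paper does not spell out.
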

\begin{proof}
This follows directly from Proposition \ref{commlim} combined with the following diagram:
\begin{center}
$\xymatrix{&A(G)\ar[d]^{\cong}\ar[r]&\colim A(G/N)\ar[d]^{\cong}\\C(\underset{\leftarrow}{\lim} S(G/N),\mathbb{Q})\ar[r]^(0.6){=}&C(SG,\mathbb{Q})\ar[r]^(0.4){\cong}&\underset{\rightarrow}{\colim} C(S(G/N),\mathbb{Q})}$
\end{center}
\end{proof}
\begin{example}
We can define a Mackey functor $\underline{A}$ by considering the assignment $H\mapsto A(H)$. If $K\leq H$ then we have a restriction map from $A(H)$ to $A(K)$ which assigns a $H$-set $X$ to the restriction of $X$ with respect to $K$. For the induction map we assign a $K$-set $Y$ to $H\underset{K}{\times}Y$. If we take an orbit $H/L$ in $A(H)$ we have an element of $A(gHg^{-1})$ by considering $gHg^{-1}/gLg^{-1}$.
\end{example}
We will see that we can characterise the Burnside ring of $G$ in terms of homotopy theory of spheres in Theorem \ref{Burnsidehtpy}.
\section{Burnside Ring Idempotents}
The characterisation of the Burnside ring of a profinite group $G$ as $C(SG/G,\mathbb{Q})$ is significant because we can now characterise its multiplicative idempotents. These idempotents are very important for this theory, since it is through these that we can define an action of the Burnside ring on an arbitrary Mackey functor. This action is significant since it is through this that we can define a sheaf structure from the information given by a Mackey functor, see Construction \ref{Sheaf_mack}. 

When calculating explicitly what the multiplicative idempotents are when $G$ is finite, we need to consider the poset of subgroups of $G$. It turns out from \cite[Section 3]{Yosh} that we can calculate these idempotents, but to understand these calculations we need to understand the Moebius function which relates to posets. A chain of elements in a poset $(\mathfrak{I},\leq)$ of length $n$ is a sequence:
\begin{align*}
A_1\leq A_2\leq \ldots \leq A_n.
\end{align*}
When $G$ is a finite discrete group and $H\leq G$, the idempotent $e_H\in A(G)$ can be written as a sum of additive basis elements as follows:
\begin{align*}
e_H=\sum_{D\leq H}\frac{|D|}{|N_G(H)|}\mu(D,H)\left[G/D\right]
\end{align*} 
where $\mu$ is the Moebius function. In this context the Moebius function where $D\leq K$, is given as follows:
\begin{align*}
\mu(D,K)=\sum_i(-1)^ic_i
\end{align*}
where the sum ranges over all lengths of strictly increasing chains from $D$ to $K$, and $c_i$ is the number of chains from $D$ to $K$ of length $i$. For more information see \cite[Proposition 1]{Rota} and \cite[section 2]{Gluck}.

Using the continuous function definition of the Burnside ring we view $\left[G/D\right]$ as the continuous function which sends a subgroup $A$ to $|\left(G/D\right)^A|$. This means that $e_H$, which is the characteristic function for $H$, can be written as the function $A\mapsto \sum_{D\leq K}\frac{|D|}{|N_G(K)|}\mu(D,K)\left[G/D\right](A)$.

By Theorem \ref{burnchar}, an element of $A(G)$ is a multiplicative idempotent\index{multiplicative idempotent of Burnside rings} if and only if its corresponding element of $C(SG/G,\mathbb{Q})$ is. By definition of the point-wise multiplication of the latter, this restricts multiplicative idempotents of the Burnside ring to being the functions which take values $0$ or $1$ at each point. Since $\mathbb{Q}$ has the discrete topology, such idempotents are determined by the compact-open basis in the following manner:

Let $e^G_{O(N,NK)}$ represent the function which sends $x$ to $1$ if it belongs to the $G$-saturation of $O(N,NK)$ and $0$ otherwise. The $G$-saturation of a subset $U$ of a $G$-space $X$ is the set $p^{-1}(p(U))$, where:
\begin{align*}
p:X\rightarrow X/G.
\end{align*}
When the ambient group is understood we simply write $e_{O(N,NK)}$.

We prove the following lemma which will help with the idempotent calculations.
\begin{lemma}\label{non3iso}
Let $N$ be an open normal subgroup of $G$ and $A$ be any closed subgroup. Then we have the following homeomorphism of spaces:
\begin{align*}
G/NA\cong (G/N)/(NA/N).
\end{align*}
\end{lemma}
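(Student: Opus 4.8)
The plan is to realize the claimed homeomorphism as an instance of the third isomorphism theorem at the level of coset spaces. First I would introduce the candidate map
\begin{align*}
\theta:G/NA\rightarrow (G/N)/(NA/N),\qquad gNA\mapsto (gN)(NA/N),
\end{align*}
and check that it is well defined. If $gNA=g^{\prime}NA$ then $g^{\prime}=gna$ for some $n\in N$ and $a\in A$; since $N$ is normal we have $g^{\prime}N=gnaN$ and $naN\in NA/N$, so $g^{\prime}N\in (gN)(NA/N)$ and hence $\theta(g^{\prime}NA)=\theta(gNA)$. This is the one place where normality of $N$ is genuinely used.

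Next I would verify that $\theta$ is a bijection. Surjectivity is immediate, since as $g$ ranges over $G$ the cosets $(gN)(NA/N)$ exhaust $(G/N)/(NA/N)$. For injectivity, suppose $(gN)(NA/N)=(g^{\prime}N)(NA/N)$; then $g^{\prime}N=gxN$ for some $x\in NA$, so $g^{\prime}\in gNAN=gNA$, giving $g^{\prime}NA=gNA$.

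Finally I would observe that all three coset spaces in sight are finite and discrete. Since $N$ is open it has finite index in the compact group $G$, and $NA$ is a subgroup containing the open subgroup $N$, hence is itself open and of finite index; thus $G/NA$ is a finite discrete space. Likewise $G/N$ is finite discrete and $(G/N)/(NA/N)$ is a quotient of a finite discrete space, so it too is finite discrete. Any map out of a discrete space is continuous and a continuous bijection between finite discrete spaces is a homeomorphism, so $\theta$ is the required homeomorphism. (If desired, one can note in passing that $\theta$ is also $G$-equivariant, although this is not needed for the statement as phrased.)

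There is no substantial obstacle here: the argument is essentially a transcription of the third isomorphism theorem to coset spaces together with the observation that everything is finite discrete. The only point requiring a moment's care is the well-definedness of $\theta$, where the normality of $N$ is invoked.
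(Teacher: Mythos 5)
Your proof is correct and follows essentially the same route as the paper's: define the natural map $gNA\mapsto (gN)(NA/N)$, check it is a well-defined bijection, and conclude it is a homeomorphism because both sides are finite discrete. One small caveat: the paper explicitly remarks that the third isomorphism theorem cannot be invoked directly since $NA$ need not be normal, so your ``instance of the third isomorphism theorem'' framing should be read as heuristic only --- your actual coset-level argument does not rely on it and is fine.
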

\begin{proof}
We will construct a bijection and the fact that both the domain and codomain are discrete will give the homeomorphism.
\begin{align*}
\psi:G/NA&\rightarrow (G/N)/(NA/N)\\gNA&\mapsto (gN)NA/N.
\end{align*}
This clearly well defined. For surjectivity, given any $(gN)NA/N$ we know that $gNA$ is in the preimage. For injectivity, if $(g_1N)NA/N=(g_2N)NA/N$ then by definition there exists $na\in NA$ such that $g_1N=(g_2N)naN$. Therefore $g_1N=g_2naN$, and since $N\leq NA$ it follows that $g_1NA=g_2NA$.
\end{proof}
In this proof it was tempting to apply the third isomorphism theorem, however $NA$ may not have been normal.
\begin{proposition}\label{idempotentform}
For $G$ a profinite group and $O(N,NK)$ a basis subset of the profinite space $SG$ we have the following equality: 
\begin{align*}
e^G_{O(N,NK)}=\sum_{NA\leq NK}\frac{|NA|}{|N_G(NK)|}\mu(NA,NK)[G/NA]P_N.
\end{align*}
\end{proposition}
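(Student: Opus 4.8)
The plan is to reduce the statement to the classical (finite-group) formula for Burnside ring idempotents recalled above, applied to the finite quotient $G/N$, and then transport the answer back to $A(G)$ along the inflation map furnished by Corollary~\ref{Burnsidecolim}.

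First I would identify $e^G_{O(N,NK)}$ with the inflation of a finite-group idempotent. From the proof of Proposition~\ref{SGprofinite}, $O(N,NK)=P_N^{-1}(\{NK/N\})$, where $P_N\colon SG\to S(G/N)$ sends $B$ to $NB/N$; moreover $P_N$ is $G$-equivariant for the action of $G$ on $S(G/N)$ through $G\to G/N$. Consequently the $G$-saturation of $O(N,NK)$ is $P_N^{-1}$ of the $(G/N)$-conjugacy class of $NK/N$, so, as a function on $SG/G$, $e^G_{O(N,NK)}$ is the precomposition along the induced map $SG/G\to S(G/N)/(G/N)$ of the characteristic function of that conjugacy class. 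Under the isomorphism $A(G/N)\cong C(S(G/N)/(G/N),\mathbb{Q})$ of Theorem~\ref{burnchar} this characteristic function is precisely the primitive idempotent $e^{G/N}_{NK/N}$, and under Corollary~\ref{Burnsidecolim} precomposition with $P_N$ is exactly inflation $A(G/N)\to A(G)$ of finite $G$-sets. So it remains to expand $e^{G/N}_{NK/N}$ in $A(G/N)$ and inflate term by term.

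Next I would apply the Yoshida formula recalled above in the finite group $G/N$,
\begin{align*}
e^{G/N}_{NK/N}=\sum_{\bar D\leq NK/N}\frac{|\bar D|}{|N_{G/N}(NK/N)|}\,\mu(\bar D,NK/N)\,\bigl[(G/N)/\bar D\bigr],
\end{align*}
and rewrite each ingredient via the correspondence theorem. The subgroups $\bar D\leq NK/N$ are exactly the $NA/N$ with $N\leq NA\leq NK$, which matches the index set of the sum in the statement; for $\bar D=NA/N$ one has $|\bar D|=|NA|/|N|$ and, since $N\leq N_G(NK)$ (here normality of $N$ in $G$ is used), $|N_{G/N}(NK/N)|=|N_G(NK)|/|N|$, so the factors $|N|$ cancel and the coefficient becomes $|NA|/|N_G(NK)|$. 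The interval $[NA,NK]$ in the subgroup lattice of $G$ is order-isomorphic to $[NA/N,NK/N]$ in that of $G/N$, whence $\mu(NA,NK)=\mu(NA/N,NK/N)$. Finally, Lemma~\ref{non3iso} identifies $(G/N)/(NA/N)$ with $G/NA$, and the homeomorphism there is manifestly $G$-equivariant, so the inflation of $(G/N)/(NA/N)$ is the $G$-set $G/NA$; on the level of functions this inflated class is exactly the term $[G/NA]P_N$ appearing in the statement. Substituting these identifications into the displayed expansion gives the claimed formula.

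None of the steps is deep; the main point requiring care is the bookkeeping --- that $G$-saturations pull back from $(G/N)$-saturations along $P_N$, that the $|N|$-factors in the coefficients cancel, and that the Möbius data and the index sets match under the lattice isomorphism --- all of which genuinely relies on $N$ being normal in $G$. The one structural input beyond this is the compatibility of the colimit isomorphism $A(G)\cong\colim_{N} A(G/N)$ with inflation of finite $G$-sets, which is already contained in Corollary~\ref{Burnsidecolim} and its proof; granting it, the result follows immediately from the finite case.
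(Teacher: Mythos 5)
Your proposal is correct and follows essentially the same route as the paper: identify $e^G_{O(N,NK)}$ with $e^{G/N}_{NK/N}\circ P_N$ via Corollary~\ref{Burnsidecolim}, expand the finite-group idempotent by the Yoshida formula for $G/N$, and translate back using Lemma~\ref{non3iso} together with the fact that $G/NA$ is $N$-fixed. If anything, you make explicit some bookkeeping the paper's proof leaves implicit, namely the cancellation of the $|N|$ factors in the coefficients (via $N_{G/N}(NK/N)\cong N_G(NK)/N$) and the lattice isomorphism giving $\mu(NA,NK)=\mu(NA/N,NK/N)$.
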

\begin{proof}
First note that the sum is finite since we are interested in the open subgroups and these have finite index and strictly contain $N$. If $G$ is profinite, by Corollary \ref{Burnsidecolim} we have that $e_{O(N,NK)}$ (the characteristic function for the open subset $O(N,NK)$) is given by $e_{NK/N}\circ P_N$. Using the above formula this boils down to:
\begin{align*}
\underset{NA\leq NK}{\sum}\frac{|NA|}{|N_G(NK)|}\mu(NA,NK)\left[\left(G/N\right)/\left(NA/N\right)\right]\circ P_N
\end{align*} 
First observe that $G/NA$ is $N$-fixed since for any coset $gNA$ we have that $g^{-1}Ng=N\leq NA$ always holds. Therefore for any $J$ we have $|\left(G/NA\right)^J|=|\left(G/NA\right)^{JN/N}|$. 

Next note that by Lemma \ref{non3iso} we have that $G/NA\cong \left(G/N\right)/\left(NA/N\right)$, so that $|\left(G/NA\right)^{JN}|=|\left(\left(G/N\right)/\left(NA/N\right)\right)^{JN/N}|$. Consequently $e_{O(N,NK)}$ can be written as the map:
\begin{align*}
L\mapsto \sum_{NA\leq NK}\frac{|NA|}{|N_G(NK)|}\mu(NA,NK)[G/NA](NL)
\end{align*}
which is $1$ for $L$ belonging to the $G$-saturation of $O(N,NK)$ and $0$ otherwise. 
\end{proof}
We then notice the following observation relating the rational coefficients in the above formula.
\begin{proposition}\label{rationcoeff}
Let $K\leq J\leq G$ be open subgroups of $G$ and $a$ be an element of $N_G(J)$. If we set $q_K=\frac{|K|}{|N_G(J)|}\mu(K,J)$, then we have $q_K=q_{aKa^{-1}}$.
\end{proposition}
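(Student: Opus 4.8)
The plan is to reduce the claimed identity to the single statement that $\mu(K,J)=\mu(aKa^{-1},J)$, and then to deduce this from the fact that the Moebius function of a poset interval depends only on that interval up to isomorphism.

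First I would record that conjugation by $a$, the map $c_a\colon X\mapsto aXa^{-1}$, is an automorphism of the lattice of subgroups of $G$: it is a bijection on subgroups with inverse $c_{a^{-1}}$, and it preserves inclusions in both directions. In particular $|aKa^{-1}|=|K|$, so the rational coefficients $\tfrac{|K|}{|N_G(J)|}$ and $\tfrac{|aKa^{-1}|}{|N_G(J)|}$ coincide. Hence the desired equality $q_K=q_{aKa^{-1}}$ will follow as soon as we know that $\mu(K,J)=\mu(aKa^{-1},J)$.

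Next I would use the hypothesis $a\in N_G(J)$, which says precisely that $aJa^{-1}=J$. Consequently $c_a$ carries the interval $\{D\mid K\leq D\leq J\}$ of the subgroup lattice bijectively onto the interval $\{D\mid aKa^{-1}\leq D\leq J\}$, and this bijection is a poset isomorphism, being the restriction of $c_a$ with inverse the restriction of $c_{a^{-1}}$. Note in passing that $aKa^{-1}\leq aJa^{-1}=J$, so the second interval is nonempty and $q_{aKa^{-1}}$ is indeed defined.

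Finally, by its definition above, $\mu(D,K)$ is computed purely from the numbers $c_i$ of strictly increasing chains of length $i$ between the two endpoints. A poset isomorphism sends strictly increasing chains to strictly increasing chains of the same length, so it preserves each $c_i$ and therefore preserves $\mu$. Applying this to the isomorphism of the previous paragraph yields $\mu(K,J)=\mu(aKa^{-1},J)$, and combining this with the equality of the rational coefficients gives $q_K=q_{aKa^{-1}}$. There is no substantial obstacle here; the only point that needs a little care is verifying that $c_a$ genuinely restricts to an isomorphism of the relevant intervals, which is exactly the step where the assumption $a\in N_G(J)$, rather than merely $a\in G$, is used.
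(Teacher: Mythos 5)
Your proposal is correct and follows essentially the same route as the paper: both reduce to showing $\mu(K,J)=\mu(aKa^{-1},J)$ via $|K|=|aKa^{-1}|$, and both establish this by noting that conjugation by $a\in N_G(J)$ carries strictly increasing chains from $K$ to $J$ bijectively onto strictly increasing chains from $aKa^{-1}$ to $J$ of the same length. Your phrasing in terms of a poset isomorphism of intervals is just a slightly more formal packaging of the paper's chain-by-chain argument.
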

\begin{proof}
Clearly $A\leq J$ if and only if $aAa^{-1}\leq J$ for $a\in N_G(J)$, so we know that $aKa^{-1}\leq J$. We also know that $|K|=|aKa^{-1}|$ so we need only show that $\mu(K,J)=\mu(aKa^{-1},J)$. But this follows from the fact that:
\begin{align*}
K=A_0<A_1<\ldots<A_n=J
\end{align*}
is a strictly increasing chain if and only if the following sequence is:
\begin{align*}
aKa^{-1}=aA_0a^{-1}<aA_1a^{-1}<\ldots<A_n=J. 
\end{align*}
It follows that $\mu(aKa^{-1},J)=\mu(K,J)$, and hence $q_{aKa^{-1}}=q_K$.
\end{proof}

Note that if we want to work with the basis consisting of finite intersections of sets of he form $O(N,NK)$, we can use the fact that $e_{\cap_{1\leq i\leq n}O(N_i,N_iK_i)}=\prod_{1\leq i\leq n}e_{O(N_i,N_iK_i)}$.

In \cite[Proposition 8]{Structure} we can see that in the case where $G$ is a finite group, the Burnside ring interacts with Mackey functors over $G$. We will now see how the equivalent result applies when $G$ is profinite.
\begin{proposition}\label{mackeyact}
If $M$ is a Mackey functor on a profinite group $G$ and $H$ an open subgroup of $G$, then each $M(H)$ has an $A(H)$-module structure. 
\end{proposition}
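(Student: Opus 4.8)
The plan is to define the $A(H)$-action on $M(H)$ on an additive basis of $A(H)$ and then verify the module axioms, the only genuinely load-bearing ingredient being the Mackey axiom of Definition \ref{defM1}. Since $H$ is open in $G$ it is itself a profinite group, and every open subgroup $K$ of $H$ is open in $G$; hence $M(K)$, the maps $I^H_K$ and $R^H_K$, and the conjugations $C_x$ for $x\in H$ are all part of the data of $M$, and for subgroups $K,L\le H$ the Mackey axiom is available with ambient subgroup $H$. Recalling that $A(H)=A(H)\otimes\mathbb{Q}$ has $\mathbb{Q}$-basis the classes $[H/K]$ with $K$ an open subgroup of $H$, I would set
\[
[H/K]\cdot m := I^H_K R^H_K(m)\qquad (m\in M(H))
\]
and extend $\mathbb{Q}$-linearly. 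This descends from finite $H$-sets to the Grothendieck ring because the right-hand side is additive on disjoint unions, and it is $\mathbb{Q}$-bilinear because each $I^H_K$ and $R^H_K$ is a morphism of $\mathbb{Q}$-modules; bilinearity also reduces the unit and associativity axioms to the basis classes.

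For the unit, $[H/H]\cdot m=I^H_H R^H_H(m)=m$ by the first axiom of Definition \ref{defM1}. The substantive point is multiplicativity, $([H/K]\cdot[H/L])\cdot m=[H/K]\cdot([H/L]\cdot m)$. For the left-hand side I would use the decomposition of a product of orbits, $H/K\times H/L\cong\coprod_{x\in[K\backslash H/L]}H/(K\cap xLx^{-1})$ in finite $H$-sets, which is the pullback computation of Lemma \ref{lemmaPB} and of the proof of the equivalence of Definitions \ref{defM1} and \ref{defM2} (pullback over the terminal object $H/H$); thus the left side is $\sum_x I^H_{K\cap xLx^{-1}}R^H_{K\cap xLx^{-1}}(m)$. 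For the right-hand side I would expand $R^H_K I^H_L$ by the Mackey axiom, then collapse the composites $I^H_K\circ I^K_{K\cap xLx^{-1}}$ and $R^L_{L\cap x^{-1}Kx}\circ R^H_L$ using transitivity, obtaining $\sum_x I^H_{K\cap xLx^{-1}}\, C_x\, R^H_{L\cap x^{-1}Kx}(m)$. The equivariance axiom rewrites $C_x\circ R^H_{L\cap x^{-1}Kx}$ as $R^H_{K\cap xLx^{-1}}\circ C_x$, and $C_x$ acts as the identity on $M(H)$ because $x\in H$; so the two expressions agree term by term, giving $([H/K]\cdot[H/L])\cdot m=[H/K]\cdot([H/L]\cdot m)$.

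I expect the multiplicativity check to be the only real work, and within it the bookkeeping of the conjugation maps $C_x$ — the identity on $M(H)$ but genuinely nontrivial on the intermediate groups $L\cap x^{-1}Kx$ and $K\cap xLx^{-1}$ — to be the point that needs care. The remaining ingredients (that an open subgroup of a profinite group is profinite, that all the sums involved are finite since the groups are of finite index, and the orbit decomposition of a product of $G$-sets) are either routine or already established earlier in the chapter, and no input beyond Definition \ref{defM1}, Lemma \ref{lemmaPB}, and the additive-basis description of $A(H)$ is required.
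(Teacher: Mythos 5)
Your proposal is correct and follows essentially the same route as the paper: define the action on the basis classes $[H/K]$ by $m\mapsto I^H_K R^H_K(m)$, extend $\mathbb{Q}$-linearly, get the unit from $[H/H]$, and obtain multiplicativity by comparing the double coset decomposition of $H/K\times H/L$ with the Mackey axiom expansion of $R^H_K I^H_L$, using equivariance and the triviality of $C_x$ on $M(H)$ for $x\in H$. The only cosmetic difference is that you cite Lemma \ref{lemmaPB} for the orbit decomposition of the product, whereas the paper reuses the isomorphism $H/K\times H/L\cong\coprod_{x\in[K\diagdown H\diagup L]}H/(K\cap xLx^{-1})$ established in its proof that Definitions \ref{defM1} and \ref{defM2} agree.
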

\begin{proof}
To prove that $M(H)$ has an $A(H)$-module structure we need to give an action of $A(H)$ on $M(H)$ and prove that this action interacts with the abelian group structure of $M(H)$ in the desired ways.

If $X\in A(H)$ we will use $X$ to construct a map $X:M(H)\rightarrow M(H)$. By above work we know we can write $X=\underset{1\leq j\leq n}{\coprod}a_jH/K_j$ for $a_1,\ldots,a_n\in\mathbb{Q}$. We define the map $X$ as follows:
\begin{align*}
M(H)&\rightarrow \bigoplus_{1\leq j\leq n}M(K_j)\rightarrow M(H)\\
m&\mapsto \sum_{1\leq j\leq n}a_jR^H_{K_j}(m)\mapsto \sum_{1\leq j\leq n}a_jI_{K_j}^HR^H_{K_j}(m)
\end{align*}  
We finish by checking the conditions.
\begin{enumerate}
\item We first need to show that if $r\in A(H)$ and $m$ and $n$ belong $M(H)$ then $r(m+n)=rm+rn$. First note that the action is made up of sums of morphisms of the form $I^H_K\circ R^H_K$. Since these are additive we have:
\begin{align*}
I^H_K\circ R^H_K(m+n)=I^H_K\circ R^H_K(m)+I^H_K\circ R^H_K(n),
\end{align*}
and hence the action defined above is additive.
\item We next justify that if $r,s \in A(H)$ and $m\in A(H)$ then:
\begin{align*}
(r+s)m=rm+sm. 
\end{align*}
This follows immediately from the definition of the action. 
\item We next have to show that the unit element $1$ of $A(H)$ satisfies that $1(m)=m$ for each $m \in M(H)$. In our case this means 
\begin{align*}
H/H(m)=I^H_H\circ R^H_H(m)=\id_{M(H)}(m)=m
\end{align*}
as required.
\item Finally we have to show that the action interacts well with the ring multiplication in $A(H)$. This means that if $r$ and $s$ belongs to $A(H)$ and $m$ to $M(H)$, then $rs(m)=r(s(m))$. In this case this means we need to show that $H/K(H/L(m))=(H/K\times H/L))(m)$ for $m \in A(H)$. 

Firstly starting with $H/K(H/L(m))$, this satisfies: 
\begin{align*}
H/K(H/L(m))=H/K(I^H_L\circ R^H_L(m))=I^H_K\circ R^H_K\circ I^H_L\circ R^H_L(m).
\end{align*}
We now use the Mackey axiom on $I^H_K\circ (R^H_K\circ I^H_L)\circ R^H_L(m)$ as follows:
\begin{align*}
&I^H_K\circ\left(\sum_{y\in\left[K\diagdown H\diagup L\right]}I^K_{K\cap yLy^{-1}}\circ C_y\circ R^L_{y^{-1}Ky\cap L}\right)\circ R^H_L(m)\\&=\sum_{y\in\left[K\diagdown H\diagup L\right]}I^H_{K\cap yLy^{-1}}\circ C_y\circ R^H_{y^{-1}Ky\cap L}(m)\\&=\sum_{y\in\left[K\diagdown H\diagup L\right]}I^H_{K\cap yLy^{-1}}\circ R^H_{yLy^{-1}\cap K}(m)
\end{align*}
On the other hand if we start with $\left(H/K\times H/L\right)(m)$ we first observe that $H/K\times H/L\cong \underset{x\in \left[K\diagdown H\diagup L\right]}{\coprod} H/\left(K\bigcap yLy^{-1}\right)$. This satisfies:
\begin{align*}
H/K\times H/L(m)&=\underset{x\in \left[K\diagdown H\diagup L\right]}{\coprod} H/\left(K\cap yLy^{-1}\right)(m)\\&=\sum_{y\in\left[K\diagdown H\diagup L\right]}I^H_{K\cap yLy^{-1}}\circ R^H_{K\cap yLy^{-1}}(m).
\end{align*}
Thus proving the desired equality.
\end{enumerate}
\end{proof}
We now look at a more general result which will have useful applications later on.
\begin{proposition}\label{actsheaf}
Suppose $X$ is a profinite space with an open-closed basis $\mathfrak{B}$ and $Y$ is a $C(X,\mathbb{Q})$-module, then $Y$ determines a sheaf over $X$.
\end{proposition}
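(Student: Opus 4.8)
The plan is to manufacture a sheaf space $(E,p)$ over $X$ out of the module $Y$ together with the idempotents $e_U := \chi_U \in C(X,\mathbb{Q})$ attached to the open-closed sets $U$, and then appeal to Theorem \ref{nonequivariantsheafequivdefn} to pass to the associated sheaf of $\mathbb{Q}$-modules. First I would replace $\mathfrak{B}$ by the collection of all finite intersections of its members; this is still a basis of open-closed sets, it is now closed under finite intersections, and for $V\subseteq U$ in it one has $e_Ve_U=e_{V\cap U}=e_V$, so multiplication by $e_V$ carries the direct summand $e_UY$ onto $e_VY$. These idempotent maps are the candidate restriction maps.

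For each $x\in X$ the sets $U\in\mathfrak{B}$ containing $x$ form a poset directed under reverse inclusion, so I would set the stalk to be $Y_x:=\underset{x\in U\in\mathfrak{B}}{\colim}\,e_UY$, with the structure maps given by the $e_V$'s above; this is a $\mathbb{Q}$-module, and I write $s_x\in Y_x$ for the image of $s\in e_UY$. Put $E:=\underset{x\in X}{\coprod}Y_x$ with $p:E\to X$ the evident projection, and topologise $E$ by taking as a basis the sets $\langle U,s\rangle:=\{\,s_x\mid x\in U\,\}$ for $U\in\mathfrak{B}$ and $s\in e_UY$. These cover $E$, and if $s_x=t_x$ for $s\in e_UY$, $t\in e_VY$ then by definition of the colimit there is $W\in\mathfrak{B}$ with $x\in W\subseteq U\cap V$ and $e_Ws=e_Wt$, so $\langle W,e_Ws\rangle$ is a basic neighbourhood of $s_x$ sitting inside $\langle U,s\rangle\cap\langle V,t\rangle$; hence this really is a basis, the maps $x\mapsto s_x$ are continuous local sections of $p$, and $p$ is a local homeomorphism. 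Finally each $Y_x$ is a $\mathbb{Q}$-module, and given local sections coming from $s\in e_UY$ and $t\in e_VY$ their pointwise sum agrees, on any $W\in\mathfrak{B}$ with $W\subseteq U\cap V$, with the section coming from $e_W(s+t)$, while the zero section is visibly continuous; so the $\mathbb{Q}$-module operations are continuous and $(E,p)$ is a sheaf space over $X$ in the sense of Definition \ref{sheafspacedefn}. Applying Theorem \ref{nonequivariantsheafequivdefn} then yields the sheaf determined by $Y$. It is worth noting that this sheaf restricts to $\mathfrak{B}$ as $U\mapsto e_UY$: injectivity of $e_UY\to\Gamma E(U)$, $s\mapsto(x\mapsto s_x)$, uses that if $s_x=0$ for all $x\in U$ then by compactness of $U$ finitely many open-closed sets on which $e_{(-)}s$ vanishes cover $U$, and disjointifying them (possible since $X$ is totally disconnected) and summing gives $s=e_Us=0$; surjectivity is the same argument applied to an arbitrary continuous section.

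The main obstacle I anticipate is the verification in the previous paragraph that the proposed basis for the topology on $E$ is genuinely closed under intersection and that $p$ is a local homeomorphism: this is precisely where the colimit presentation of the stalks and the open-closedness of the members of $\mathfrak{B}$ are used, and it is also where one must have arranged $\mathfrak{B}$ to be closed under finite intersections. Note that compactness and total disconnectedness of the profinite space $X$ are needed only for the supplementary identification $\Gamma E(U)\cong e_UY$, not for the bare existence of the sheaf; an alternative route, if one prefers to stay on the level of presheaves, is to check the equaliser form of the sheaf axiom directly for the assignment $U\mapsto e_UY$ on covers of open-closed sets by open-closed sets, reducing an arbitrary such cover to a finite disjoint refinement by compactness plus total disconnectedness and then using $e_{\bigsqcup_i U_i}=\sum_i e_{U_i}$ with $e_{U_i}e_{U_j}=0$ for $i\neq j$.
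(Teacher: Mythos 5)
Your proof is correct, but it takes a genuinely different route from the paper's. You construct the sheaf space directly: stalks $Y_x=\underset{U\backepsilon x}{\colim}\,e_UY$, the germ topology generated by the sets $\langle U,s\rangle$, the local homeomorphism, and continuity of the $\mathbb{Q}$-module operations, and then pass to the associated sheaf via Theorem \ref{nonequivariantsheafequivdefn}; compactness and total disconnectedness of $X$ enter only in your supplementary identification $\Gamma E(U)\cong e_UY$ for $U\in\mathfrak{B}$. The paper instead stays at the level of presheaves: it sets $F(U)=e_UY$ on the basis, takes the restriction maps to be the idempotent projections coming from $e_UY=e_VY\oplus e_{V^c}Y$, extends to arbitrary open sets by the equaliser over basic covers, and then verifies the monopresheaf and gluing conditions by using compactness of the open-closed basic sets to replace an arbitrary cover by a finite partition into open-closed sets, on which $e_U$ decomposes as a sum of orthogonal idempotents — essentially the argument you sketch at the end as an ``alternative route''. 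Your approach buys the stalk description $\underset{U\backepsilon x}{\colim}\,e_UY$ for free (which is what later constructions such as Construction \ref{Sheaf_mack} actually use) and shows that the bare existence of the sheaf needs only the open-closed basis, not compactness; the paper's approach keeps the identification of sections over basic opens with $e_UY$ definitional, at the cost of the compactness/partition argument up front. One small remark: the disjointification in your final step uses only that relative complements of open-closed sets in open-closed sets are again open-closed, not total disconnectedness as such, which is already encoded in the existence of the basis $\mathfrak{B}$.
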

\begin{proof}
Firstly, if $U\in \mathfrak{B}$ we know that the characteristic map $e_U:X\rightarrow \mathbb{Q}$ (defined by $e_U(x)=1$ if $x\in U$ or $0$ otherwise) is continuous and hence an element of $C(X,\mathbb{Q})$. This is similar to the discussion on the Burnside ring idempotents. If we want to define a sheaf on a space $X$ then it is sufficient to define it on a basis of $X$, so in particular we can define what it does on elements of $\mathfrak{B}$.

We define $F(U)$ to be $e_U Y$ where $U\in \mathfrak{B}$ and $e_U Y$ is the image of $Y$ under the action of $e_U$ defined above. We now look at the restriction maps. If $U\subseteq V$ then the complement of $U$ in $V$ denoted $U^c$ is both open and closed in $X$. Therefore $e_{U^c}$ is an element of $C(X,\mathbb{Q})$ and so acts on $Y$, satisfying $e_V=e_U+e_{U^c}$. It follows that $e_V Y=e_U Y\bigoplus e_{U^c}Y$.

We define the restriction map to be the canonical projection onto the $e_U Y$ component. To see that this definition gives us a presheaf, observe that if $W\subseteq U\subseteq V$ then we have the following:
\begin{align*}
e_UY=e_WY\bigoplus e_{W^c}Y
\end{align*}
\begin{align*}
e_V Y=e_{U^c}Y\bigoplus e_U Y=e_{U^c}Y\bigoplus e_{W^c}Y\bigoplus e_WY
\end{align*}
To see that the restriction maps satisfy transitivity we observe the following diagram always commutes:
\begin{center}
$\xymatrix{&e_{U^c}Y\bigoplus e_{W^c}Y\bigoplus e_WY\ar[dl]\ar[dr]\\e_WY&&e_WY\bigoplus e_{W^c}Y\ar[ll]}$
\end{center}
Since $Y$ is defined on the basis $\mathfrak{B}$ we then have that this is defined on any open subset of $X$ as follows:

If $U=\bigcup U_{\lambda}$ is a union of open closed subsets in the basis $\mathfrak{B}$, we then define the sheaf at $U$ to be the following equaliser:
\begin{center}
$\xymatrix{F(U)\ar[r]&\prod F(U_{\lambda}) \ar@< 2pt>[r]^-b
											\ar@<-2pt>[r]_-c&\prod_{(\lambda,\mu)\in\Lambda^2}F(U_{\lambda}\cap U_{\mu})}$,
\end{center}
where $b$ sends $(s_{\lambda})_{\lambda\in\Lambda}$ to $(pr^{U_{\lambda}}_{U_{\lambda}\cap U_{\mu}}(s_{\lambda}))_{(\lambda,\mu)\in \Lambda^2}$ and $pr^{U_{\lambda}}_{U_{\lambda}\cap U_{\mu}}$ is the projection defined above, and $c$ does the same but reverses $\lambda$ and $\mu$. 

We next show that this presheaf which we denote by $F$ is a sheaf. Since any open subset $U$ is evaluated using the equaliser diagram involving the closed open basis, it is sufficient to show that the sheaf conditions hold for $U\in \mathfrak{B}$ and any open covering of $U$ with elements in $\mathfrak{B}$ say $\left\lbrace U_i\mid i\in I\right\rbrace$.

Since $U$ is closed in a profinite space (and hence a Hausdorff space) we must have that the covering has a finite subcover say $\left\lbrace U_i\mid 1\leq i\leq n\right\rbrace$. Finite intersections of open-closed subsets are open-closed, as are complements of open-closed subsets in open-closed subsets. Therefore we can assume that this open covering is a partition involving closed-open subsets where restriction is given by the direct sum projection.

We first show that if $s_1$,$s_2\in F(U)$ with $s_1|_{U_i}=s_2|_{U_i}$ for $1\leq i\leq n$ then $s_1=s_2$. Since $U$ is a finite partition we have that $s_1=\sum_{1\leq i\leq n} \overline{s_1|_{U_i}}$ and similar for $s_2$, where $\overline{s}$ denotes extension by $0$ of $s$. Since $s_1|_{U_i}=s_2|_{U_i}$ for each $1\leq i\leq n$ we have:
\begin{align*}
s_1=\underset{1\leq i\leq n}{\sum} \overline{s_1|_{U_i}}=\underset{1\leq i\leq n}{\sum} \overline{s_2|_{U_i}}=s_2
\end{align*}
as required. 

For the second condition if we have a family of elements $(s_i)_{i\in I}$ in $\underset{i\in I}{\prod}F(U_i)$ with $pr^i_{(i,j)}(s_i)=pr^j_{(i,j)}(s_j)$, we need to find an element $s\in F(U)$ with $pr^U_i(s)=s_i$. By earlier arguments we can rewrite the open covering as a finite partition of open closed subsets $\left\lbrace U_i\mid 1\leq i\leq n\right\rbrace$. Therefore we can write $s=\sum_{1\leq i\leq n} s_i$.  
\end{proof}
Proposition \ref{actsheaf} is useful because we can apply Proposition \ref{mackeyact} to deduce the following Corollary.
\begin{corollary}\label{MHsheaf}
If $M$ is a Mackey functor over a profinite group $G$, then each open subgroup $H$ of $G$ satisfies that $M(H)$ is a sheaf over $SH/H$, the space of $H$-conjugacy classes of closed subgroups of $H$.
\end{corollary}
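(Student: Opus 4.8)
The plan is to combine the module structure produced by Proposition \ref{mackeyact} with the sheaf-building machinery of Proposition \ref{actsheaf}. Since $H$ is open in the profinite group $G$, it is also closed, hence compact, Hausdorff and totally disconnected; thus $H$ is itself a profinite group and all of the preceding results apply verbatim with $H$ in place of $G$. In particular the space $SH$ and the quotient $SH/H$ are available, and $A(H)$ makes sense.

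First I would invoke Proposition \ref{mackeyact}, which already states that $M(H)$ carries the structure of an $A(H)$-module. By Theorem \ref{burnchar} applied to the profinite group $H$ there is a ring isomorphism $A(H)\cong C(SH/H,\mathbb{Q})$, so $M(H)$ is a $C(SH/H,\mathbb{Q})$-module. Next I would check that $SH/H$ is a profinite space equipped with an open-closed basis, so that Proposition \ref{actsheaf} applies. By Proposition \ref{SGprofinite} the space $SH$ is compact, Hausdorff and totally disconnected, and by Proposition \ref{basis} the sets $O(N,NK)$ (for $N\unlhd H$ open and $K$ closed) form a basis of compact-open sets. Passing to the quotient by the compact group $H$ keeps the space compact and Hausdorff, and the images under $p\colon SH\to SH/H$ of the sets $O(N,NK)$ form an open-closed basis $\mathfrak{B}$ of $SH/H$: by Remark \ref{ONJ} each $O(N,NK)$ is invariant under the finite-index subgroup $NK$, so its $H$-saturation $p^{-1}(p(O(N,NK)))$ is a finite union of $H$-conjugates of $O(N,NK)$, hence again compact-open; these saturations cover $SH/H$, and their images form a basis. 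In particular $SH/H$ is a profinite space.

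Finally, applying Proposition \ref{actsheaf} with $X=SH/H$, basis $\mathfrak{B}$, and $Y=M(H)$ (viewed as a $C(SH/H,\mathbb{Q})$-module via the isomorphism above) produces a sheaf of $\mathbb{Q}$-modules over $SH/H$, which is exactly the claimed conclusion. The only real content beyond citation and bookkeeping is the verification that the $H$-saturations of the basic sets $O(N,NK)$ remain clopen in $SH$ and descend to a clopen basis of the orbit space, together with confirming that $SH/H$ inherits the Hausdorff and totally disconnected properties from $SH$; I expect this to be the main, though mild, obstacle, and it is essentially the same observation already used in setting up the idempotents $e^H_{O(N,NK)}$ before Proposition \ref{idempotentform}.
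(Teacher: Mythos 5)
Your argument is correct and is essentially the paper's own proof: cite Proposition \ref{mackeyact} for the $A(H)$-module structure, Theorem \ref{burnchar} to identify $A(H)$ with $C(SH/H,\mathbb{Q})$, and then apply Proposition \ref{actsheaf} with $X=SH/H$. The extra care you take in checking that $SH/H$ is a profinite space with a clopen basis of saturated basic sets is a sensible (and correct) filling-in of hypotheses the paper leaves implicit, but it does not change the route.
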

\begin{proof}
By Proposition \ref{mackeyact} we have that $M(H)$ is an $A(H)$-module, and by Theorem \ref{burnchar} we know that $M(H)$ is a $C(SH/H,\mathbb{Q})$-module. It follows from Proposition \ref{actsheaf} that $M(H)$ is a sheaf over $SH/H$.
\end{proof}
This corollary is useful since it is used in Construction \ref{Sheaf_mack} to construct a Weyl-$G$-sheaf which will be defined in Definition \ref{Weyldefn}. We can also determine how idempotents of the Burnside ring interact with induction and restriction maps of Mackey functors. We begin exploring this by proving the following lemmas:
\begin{lemma}\label{idemack1}
Let $K\leq H\leq G$ where $K$ is closed and $H$ open in $G$. Then for $A$ closed in $G$ and $N$ open and normal in $G$ we have:
\begin{align*}
\sum_{H\diagdown G\diagup NA} \left[H/H\cap xNAx^{-1}\right](K)=\left[G/NA\right](K).
\end{align*}
\end{lemma}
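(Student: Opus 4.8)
The plan is to recognise this as the fixed-point-count incarnation of the standard double-coset decomposition of a transitive $G$-set restricted to a subgroup. By Theorem \ref{burnchar}, under the isomorphism $A(G)\cong C(SG/G,\mathbb{Q})$ the basis element $[G/NA]$ is the function sending (the conjugacy class of) a closed subgroup $K$ to $|(G/NA)^K|$, and likewise $[H/(H\cap xNAx^{-1})](K)=|(H/(H\cap xNAx^{-1}))^K|$ — this last expression makes sense precisely because $K\leq H$. So it suffices to prove the combinatorial identity
\begin{align*}
|(G/NA)^K|=\sum_{x\in[H\diagdown G\diagup NA]}|(H/(H\cap xNAx^{-1}))^K|.
\end{align*}

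First I would observe that $NA$ is a subgroup of finite index: $N$ is open and normal, hence of finite index, so $NA\supseteq N$ is open of finite index and $G/NA$ is a finite $G$-set (in particular the double-coset set $[H\diagdown G\diagup NA]$ is finite). Restricting the $G$-action along $H\leq G$, the finite $H$-set $G/NA$ decomposes into its $H$-orbits, which are indexed precisely by the double cosets $H\diagdown G\diagup NA$; the point $xNA$ has $H$-stabiliser $\{h\in H\mid x^{-1}hx\in NA\}=H\cap xNAx^{-1}$, so its $H$-orbit is $H$-isomorphic to $H/(H\cap xNAx^{-1})$. This gives an isomorphism of $H$-sets $\mathrm{Res}^G_H(G/NA)\cong\coprod_{x\in[H\diagdown G\diagup NA]}H/(H\cap xNAx^{-1})$. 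Now restrict further along $K\leq H$ and take $K$-fixed points: for any disjoint union of $K$-sets $Z=\coprod_i Z_i$ one has $Z^K=\coprod_i Z_i^K$, hence $|Z^K|=\sum_i|Z_i^K|$. Applying this to the decomposition above yields exactly the displayed identity, and translating back through Theorem \ref{burnchar} gives the statement of the lemma.

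There is no serious obstacle here; the only points requiring care are checking that $NA$ is open of finite index (so that every set in sight is finite and the sum on the left is finite), verifying the stabiliser computation $\mathrm{Stab}_H(xNA)=H\cap xNAx^{-1}$, and noting that $[H/(H\cap xNAx^{-1})](K)$ is legitimately evaluated at $K$ because $K\leq H$. One should also remark that $|(G/NA)^K|$ depends only on the conjugacy class of $K$ (conjugation by $g$ induces a bijection $(G/NA)^K\to(G/NA)^{gKg^{-1}}$), so the formula is well-posed as an identity of functions on $SG/G$.
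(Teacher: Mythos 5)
Your proposal is correct and follows essentially the same route as the paper: reduce to the fixed-point identity $|(G/NA)^K|=\sum_x|(H/(H\cap xNAx^{-1}))^K|$ and use the decomposition of $G/NA$ into $H$-orbits indexed by $H\diagdown G\diagup NA$, the paper merely writing out the resulting bijection on $K$-fixed points explicitly where you invoke the $H$-set isomorphism and the compatibility of fixed points with disjoint unions. Your added remarks on finiteness of $NA$ and conjugation-invariance are fine but not needed beyond what the paper already assumes.
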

\begin{proof}
By definition of how these functions are defined we have that this boils down to proving that:
\begin{align*}
|\left(G/NA\right)^K|=\underset{H\diagdown G\diagup NA}{\sum} |\left(H/H\cap xNAx^{-1}\right)^K|.
\end{align*}
If $g\in \left(G/NA\right)^K$ then $g^{-1}Kg\leq NA$. By assumption we therefore have that $K\leq H\cap gNAg^{-1}$. Also note that the right hand side sum represents $G/NA$ written as a disjoint union of $H$-orbits, and so we can find $x\in H\diagdown G\diagup NA$ and $h\in H$ with $hxNA=gNA$. Therefore $h\left(H\cap xNAx^{-1}\right)$ corresponds to $gNA$, and this is $K$-fixed since $hxNA=gNA$ is.

On the other hand given $h\left(H\cap xNAx^{-1}\right)$ which is $K$-fixed, the inverse map is given by sending this to $hxNA$. This is $K$-fixed in $G/NA$ since it follows from our assumption that $(hx)^{-1}Khx\leq NA$. Therefore we have a bijection:
\begin{align*}
\left(G/NA\right)^K\cong \sum_{H\diagdown G\diagup NA} \left(H/H\cap xNAx^{-1}\right)^K
\end{align*}
as required.
\end{proof}
\begin{lemma}\label{idemack2}
If $H\leq G$ is an open subgroup of $G$ and $D\leq G$ such that $D$ is not a subgroup of $H$, then:
\begin{align*}
\sum_{H\setminus G/NA} \left[H/H\cap xNAx^{-1}\right](D)=0.
\end{align*}
\end{lemma}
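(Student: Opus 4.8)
The plan is to evaluate each summand directly using the fixed-point description of basis elements of the Burnside ring. Recall from Theorem \ref{burnchar} that for an open subgroup $L\leq H$ the function $[H/L]$ sends a (conjugacy class of) subgroup $D$ to $|(H/L)^D|$, and that a coset $hL\in H/L$ is fixed by $D$ precisely when $D\subseteq hLh^{-1}$, equivalently $h^{-1}Dh\subseteq L$. The sum in the statement ranges over a set of double coset representatives $x\in H\setminus G/NA$, and for each such $x$ the relevant subgroup is $L=H\cap xNAx^{-1}$, which is open in $H$ since $H$ and $xNAx^{-1}$ are.

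First I would fix one representative $x$ and show that the corresponding summand $[H/(H\cap xNAx^{-1})](D)$ is zero. Suppose $hL$ were a $D$-fixed coset in $H/L$ for $L=H\cap xNAx^{-1}$. Then $h^{-1}Dh\subseteq L\subseteq H$, and since $h\in H$ and $H$ is a subgroup we may conjugate back to get $D=h(h^{-1}Dh)h^{-1}\subseteq hHh^{-1}=H$. This contradicts the hypothesis that $D$ is not a subgroup of $H$. Hence $(H/L)^D=\emptyset$ and the summand vanishes; summing over all $x\in H\setminus G/NA$ then gives the claimed identity.

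There is essentially no obstacle here: the only point deserving a word of care is that although $D$ need not be contained in $H$ (so $D$ does not literally index a value of an element of $A(H)$), the fixed-point counts $|(H/L)^D|$ are nonetheless well-defined, and the argument above shows each of them is zero. This lemma is precisely the complementary case to Lemma \ref{idemack1}: there the hypothesis $K\leq H$ is what allows the fixed-point sets to be non-empty and the sum to reproduce $[G/NA](K)$, whereas here failure of that hypothesis forces every term to vanish.
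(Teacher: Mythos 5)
Your proof is correct and takes essentially the same route as the paper: assume some coset $h(H\cap xNAx^{-1})$ is $D$-fixed, deduce $h^{-1}Dh\leq H$, and use $h\in H$ to contradict $D\not\leq H$, so every summand vanishes. If anything, you are slightly more explicit than the paper in conjugating back by $h\in H$ to conclude $D\leq H$, which is a small but welcome clarification of the same argument.
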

\begin{proof}
Suppose $h(H\cap xNAx^{-1})$ is $D$-fixed, then 
\begin{align*}
h^{-1}Dh\leq H\cap xNAx^{-1}\leq H
\end{align*}
which contradicts the assumption that $D$ is not a subgroup of $H$. Hence each $\left[H/H\cap xNAx^{-1}\right](D)=0$ for each of the summands, which gives the result. 
\end{proof}
\begin{lemma}\label{idemack3}
Let $\alpha$ denote the following:
\begin{align*}
\sum_{NA\leq NK}\frac{|NA|}{|N_G(NK)|}\mu(NA,NK)\sum_{H\setminus G/NA}\left[H/H\cap xNAx^{-1}\right]. 
\end{align*}
Then $NK$ is not $G$-subconjugate to $H$ if and only if no $G$-conjugate of $NK$ belongs to the support of $\alpha$.
\end{lemma}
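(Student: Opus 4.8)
The plan is to compute $\alpha$, regarded as a $\mathbb{Q}$-valued function on $SG$, on two classes of closed subgroups: those contained in $H$, where $\alpha$ should agree with the idempotent $e^G_{O(N,NK)}$, and those not $G$-subconjugate to $H$, where $\alpha$ should vanish. The stated equivalence then drops out by evaluating $\alpha$ at a single suitably chosen $G$-conjugate of $NK$.

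First I would record the two evaluations. If $D\leq H$ is closed, then Lemma \ref{idemack1} gives $\sum_{H\setminus G/NA}[H/H\cap xNAx^{-1}](D)=[G/NA](D)$ for each $NA\leq NK$, so
\[
\alpha(D)=\sum_{NA\leq NK}\frac{|NA|}{|N_G(NK)|}\mu(NA,NK)[G/NA](D).
\]
Exactly as in the proof of Proposition \ref{idempotentform}, since each $G/NA$ is $N$-fixed the value $[G/NA](D)=|(G/NA)^D|$ agrees with the value of $[G/NA]P_N$ at $D$, so the right-hand side is precisely $e^G_{O(N,NK)}(D)$; that is, $\alpha(D)$ equals $1$ if $D$ lies in the $G$-saturation of $O(N,NK)$ and $0$ otherwise. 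On the other hand, if $D$ is not $G$-subconjugate to $H$ then in particular $D$ is not a subgroup of $H$, so Lemma \ref{idemack2} forces every inner sum to vanish and $\alpha(D)=0$.

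For the forward direction, suppose $NK$ is not $G$-subconjugate to $H$. Then neither is any conjugate $gNKg^{-1}$ (subconjugacy is conjugation-invariant), so by the second evaluation $\alpha(gNKg^{-1})=0$ for every $g\in G$; hence no $G$-conjugate of $NK$ lies in the support of $\alpha$. For the converse I would argue contrapositively: suppose $gNKg^{-1}\leq H$ for some $g\in G$ and put $D=gNKg^{-1}$. By the first evaluation $\alpha(D)=e^G_{O(N,NK)}(D)$, and since $N\cdot NK=NK$ we have $NK\in O(N,NK)$, so its conjugate $D$ lies in the $G$-saturation of $O(N,NK)$; therefore $\alpha(D)=1\neq 0$ and this $G$-conjugate of $NK$ is in the support of $\alpha$.

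The only delicate point is bookkeeping: one must check that evaluating the Burnside-ring element $[G/NA]P_N$ at a closed subgroup $D$ of $G$ matches evaluating the function $\sum_{H\setminus G/NA}[H/H\cap xNAx^{-1}]$ at $D$, and one must note that $\alpha$ is only ever evaluated at subgroups of the form $gNKg^{-1}$, for which the choice of $g$ places the subgroup either inside $H$ or outside every conjugate of $H$, so precisely one of Lemmas \ref{idemack1} and \ref{idemack2} applies. No genuinely new estimate is needed.
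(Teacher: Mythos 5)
Your proof is correct and follows essentially the same route as the paper's: evaluate $\alpha$ at conjugates of $NK$, using Lemma \ref{idemack2} to get vanishing when no conjugate lies in $H$, and Lemma \ref{idemack1} (via Proposition \ref{idempotentform}) to identify $\alpha$ with $e^G_{O(N,NK)}$ on subgroups of $H$, which gives the value $1$ at a conjugate $gNKg^{-1}\leq H$. You simply spell out the identification with the idempotent more explicitly than the paper does.
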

\begin{proof}
First suppose that $NK$ is not $G$-subconjugate to $H$. Therefore each $gNKg^{-1}$ is not a subgroup of $H$. If we consider $\alpha(gNKg^{-1})$, we can apply Lemma \ref{idemack2} to deduce that $\alpha(gNKg^{-1})=0$.

Conversely if $xNKx^{-1}\leq H$ then $\alpha(xNKx^{-1})=e_{O(N,NK)}(xNKx^{-1})=1$, which is non-zero. 
\end{proof}
More generally, if $D$ is a closed subgroup of $G$, then there are two possibilities. If $D$ is not a subgroup of $H$, then:
\begin{align*}
\sum_{H\diagdown G\diagup NA}\left[H/H\cap xNAx^{-1}\right](D)=0
\end{align*}
by Lemma \ref{idemack2} and hence $\alpha(D)=0$. The other possibility is that $D\leq H$. If $D$ doesn't belong to $e_{O(N,NK)}^G$ we have by Lemma \ref{idemack1} that:
\begin{align*}
\alpha(D)&=\sum_{NA\leq NK}\frac{|NA|}{|N_G(NK)|}\mu(NA,NK)\left[G/NA\right](D)\\&=e_{O(N,NK)}(D)=0.
\end{align*}
On the other hand if we consider the possibility of $D$ belonging to the support of the idempotent then we will observe that $\alpha(D)=e^G_{O(N,NK)}(D)=1$. Consider the collection:
\begin{align*}
\left\lbrace O(H\cap N,(H\cap N) D) \mid D\in O^G(N,NK)\cap SH\right\rbrace,
\end{align*}
where $H\cap N$ is normal in $H$ and $O^G(N,NK)$ represents the $G$-saturation of $O(N,NK)$. This is an open covering of compact set $SH\cap O^G(N,NK)$ which we know is compact since it is the intersection of two compact sets. We know that $O^G(N,NK)$ is compact since it is a finite union of compact sets of the form $O(N,gNKg^{-1})$, where we are using that $NK$ has finite index. Similar to the proof of Proposition \ref{actsheaf} we can write the above cover as a finite collection of open-closed sets which are pairwise disjoint. Therefore the entire collection can be written as a closed-open set $V$. 

Considering $\alpha$, we can see that we have a set of the form $O(N,NxNKx^{-1})$ for each $xNKx^{-1}$ a subgroup of $H$ contained in the support. However if $G$ is an infinite profinite group then we also have the extra set $V$ contained in the support as described above. This contains subgroups in $O(N,gNKg^{-1})$ which belong to $SH$ and where $gNKg^{-1}$ is not a subgroup of $H$. If $G$ is finite then we do not require this addition since $K$ is an isolated point and we only deal with conjugates of this element.

Consider the set:
\begin{align*}
G\mathfrak{I}^H_{NK}=\left\lbrace xNKx^{-1}\mid x\in H\diagdown G\diagup N_G(NK)\,\text{and}\,  xNKx^{-1}\leq H\right\rbrace.
\end{align*}
We denote this set by $\mathfrak{I}^H_{NK}$ when the ambient group is understood. This is an important indexing set and we have an alternate way of writing this set.
\begin{proposition}\label{indexbij}
There is a bijection between the set $G\mathfrak{I}^H_{NK}$ and: 
\begin{align*}
\left\lbrace (D)_{H}\in SH/H\mid D\in (NK)_G\in SG/G\right\rbrace.
\end{align*}
\end{proposition}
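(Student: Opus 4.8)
The plan is to write down the obvious candidate map and verify it is a well-defined bijection; every verification reduces to an elementary manipulation with double cosets and normalisers, so I do not expect a genuine obstacle — the only thing requiring care is keeping track of which data is independent of the chosen double-coset representative.

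First I would define
$$\Phi\colon G\mathfrak{I}^H_{NK}\longrightarrow \left\lbrace (D)_{H}\in SH/H\mid D\in (NK)_G\right\rbrace,\qquad xNKx^{-1}\longmapsto \left(xNKx^{-1}\right)_H.$$
This is legitimate: an element of $G\mathfrak{I}^H_{NK}$ is by definition a closed subgroup of $H$ of the form $xNKx^{-1}$, hence $G$-conjugate to $NK$, so its $H$-conjugacy class lies in the indicated subset of $SH/H$. For injectivity, suppose $L=xNKx^{-1}$ and $L'=x'NK(x')^{-1}$ lie in $G\mathfrak{I}^H_{NK}$ and are $H$-conjugate, say $L'=hLh^{-1}$ with $h\in H$; then $(x')^{-1}hx$ conjugates $NK$ to itself, so lies in $N_G(NK)$, whence $x'\in Hx\,N_G(NK)$, i.e.\ $x$ and $x'$ represent the same double coset in $H\diagdown G\diagup N_G(NK)$. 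Since distinct double cosets give distinct subgroups $xNKx^{-1}$ (the $h=e$ case of the same normaliser argument), we conclude $L=L'$, so $\Phi$ is injective.

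For surjectivity, take $D\leq H$ closed with $D\in (NK)_G$, and write $D=gNKg^{-1}$ for some $g\in G$. Let $x$ be the representative of the double coset $Hg\,N_G(NK)$ occurring in the definition of $G\mathfrak{I}^H_{NK}$; writing $x=hgn$ with $h\in H$ and $n\in N_G(NK)$, we get $xNKx^{-1}=h\left(gNKg^{-1}\right)h^{-1}=hDh^{-1}\leq H$, so $xNKx^{-1}\in G\mathfrak{I}^H_{NK}$ and $\Phi\left(xNKx^{-1}\right)=\left(hDh^{-1}\right)_H=(D)_H$. Hence $\Phi$ is a bijection. As indicated above, the only subtlety is the bookkeeping in these normaliser computations; there is no real obstacle, and in particular the argument is insensitive to the choice of double-coset representatives used to specify $G\mathfrak{I}^H_{NK}$.
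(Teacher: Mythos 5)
Your proof is correct and follows essentially the same route as the paper: the same assignment $xNKx^{-1}\mapsto(xNKx^{-1})_H$, with injectivity via the observation that $H$-conjugacy of $xNKx^{-1}$ and $x'NK(x')^{-1}$ forces $(x')^{-1}hx\in N_G(NK)$ and hence equality of double cosets, and surjectivity by adjusting a witness $D=gNKg^{-1}$ by an element of $H$. The only difference is that you spell out the surjectivity and representative-independence bookkeeping that the paper leaves as "immediate from the definition".
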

\begin{proof}
We shall label the set we wish to show is equivalent to $G\mathfrak{I}^H_{NK}$ by $A$ and then define a map as follows:
\begin{align*}
G\mathfrak{I}^H_{NK}&\rightarrow A\\ gNKg^{-1}&\mapsto\left(gNKg^{-1}\right)_H 
\end{align*}
To see that this is injective, suppose that $xNKx^{-1}$ and $yNKy^{-1}$ are $H$-conjugate by some element $h$ in $H$. Then $xNKx^{-1}=hyNKy^{-1}h^{-1}$ and therefore $x^{-1}hy$ is an element of $N_G(NK)$. It follows that $xN_G(NK)=hyN_G(NK)$ and so $x$ and $y$ are equivalent in $H\diagdown G\diagup N_G(NK)$. Surjectivity is immediate from the definition of $A$.
\end{proof}
\begin{lemma}\label{idemack4}
The map $\alpha$ from Lemma \ref{idemack3} is equal to:
\begin{align*}
\underset{xNKx^{-1}\in\mathfrak{I}^H_{NK}}{\sum}e_{O(N,xNKx^{-1})}+e_V, 
\end{align*}
as described above. The final $e_V$ term is not present for $G$ finite.
\end{lemma}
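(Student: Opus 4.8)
The plan is to identify $\alpha$, viewed as an element of $A(H)\cong C(SH/H,\mathbb{Q})$, with the characteristic function $\chi_{SH\cap O^G(N,NK)}$ of the closed-open set $SH\cap O^G(N,NK)$, and then to split that set into the supports of the functions on the right-hand side. For the first step I would evaluate $\alpha$ at a closed subgroup $D$ of $G$, exactly as in the discussion preceding the lemma: if $D$ is not a subgroup of $H$ then Lemma \ref{idemack2} gives $\alpha(D)=0$, while if $D\leq H$ then Lemma \ref{idemack1} rewrites the inner double-coset sum as $[G/NA](D)$, so that
\begin{align*}
\alpha(D)=\sum_{NA\leq NK}\frac{|NA|}{|N_G(NK)|}\mu(NA,NK)[G/NA](D)=e^G_{O(N,NK)}(D)
\end{align*}
by Proposition \ref{idempotentform}. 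Hence $\alpha$ is the characteristic function of $SH\cap O^G(N,NK)$.

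Next I would partition $SH\cap O^G(N,NK)$ according to the position of $ND$ relative to $H$. Put $A_{1}=\{D\in SH\cap O^G(N,NK): ND\leq H\}$ and let $V$ be the complement of $A_1$ inside $SH\cap O^G(N,NK)$; then $V$ is closed-open and $H$-invariant, exactly as discussed before the lemma (it is the difference of a closed-open $H$-invariant set and a finite union of closed-open $H$-invariant sets). If $D\in A_1$ then $ND$ is a $G$-conjugate of $NK$ contained in $H$, so by Proposition \ref{indexbij} it is $H$-conjugate to a unique $xNKx^{-1}\in\mathfrak{I}^H_{NK}$, and conjugating $D$ correspondingly places it in the $H$-saturation of $O(N,xNKx^{-1})$; conversely every element of such a saturation lies in $A_1$. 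Since distinct elements of $\mathfrak{I}^H_{NK}$ lie in distinct $H$-conjugacy classes (the injectivity statement in Proposition \ref{indexbij}), the subgroups $ND$ produced by different saturations are never $H$-conjugate, so these saturations are pairwise disjoint and $A_1$ is their disjoint union. Taking characteristic functions and using $\alpha=\chi_{A_1}+\chi_{V}$ yields the claimed identity with $e_V=\chi_{V}$.

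For $G$ finite one may take $N=\{e\}$ (equivalently, $K$ is isolated in $SG$), so that $ND=D\leq H$ automatically for every $D\in SH$; then $A_1=SH\cap O^G(N,NK)$ and $V=\emptyset$, which is why the $e_V$ term disappears. I expect the main obstacle to be the bookkeeping in the second step: the set $O(N,xNKx^{-1})$ is only invariant under $xNKx^{-1}$ (Remark \ref{ONJ}) and not under all of $H$, so the individual sets are permuted by $H$ and only their saturations — and the union $A_1$ — are $H$-invariant. One must check carefully that this union is genuinely disjoint, that $V$ is closed-open and $H$-invariant so that $e_V\in A(H)$, and that $\mathfrak{I}^H_{NK}$ is finite so the sum makes sense. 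The pointwise computation of $\alpha$ and the appeal to Proposition \ref{indexbij} are routine given the earlier results.
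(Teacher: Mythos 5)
Your proposal is correct and follows essentially the same route as the paper: evaluate $\alpha$ pointwise at closed subgroups $D$ using Lemmas \ref{idemack1} and \ref{idemack2} together with Proposition \ref{idempotentform} to identify it with the characteristic function of $SH\cap O^G(N,NK)$, and then decompose that support into the $H$-saturations of the sets $O(N,xNKx^{-1})$ indexed by $\mathfrak{I}^H_{NK}$ (via Proposition \ref{indexbij}) plus the leftover piece $V$. Your explicit description of $V$ as the complement of those saturations inside $SH\cap O^G(N,NK)$, together with the disjointness check and the remark that the finite case amounts to taking $N=\{e\}$, is the intended reading of the paper's more loosely described construction and fills in details the paper leaves implicit.
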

\begin{proof}
If $G\mathfrak{I}^H_{NK}=\emptyset$, then both sides of the equality are $e_V$ by Lemma \ref{idemack3} and proceeding paragraph. For $G\mathfrak{I}^H_{NK}\neq \emptyset$, take $xNKx^{-1}\in G\mathfrak{I}^H_{NK}$ so that $xNKx^{-1}\leq H$. If $D\leq xNKx^{-1}$ is a subgroup, then by Lemma \ref{idemack1} $\alpha(D)=e^G_{O(N,NK)}(D)=0$, if $D$ does not belong to the support. Lemma \ref{idemack1} also says that: 
\begin{align*}
\alpha(D)=e^G_{O(N,NK)}(D)=1,
\end{align*}
if $D$ does belong to the support. This coincides with $\underset{xNKx^{-1}\in G\mathfrak{I}^H_{NK}}{\sum}e_{O(N,xNKx^{-1})}+e_V$. The finite case is the same except without the $e_V$ term.
\end{proof}
These lemmas yield the following Propositions.
\begin{proposition}\label{restrictprop}
Let $M$ be a Mackey functor for a profinite group $G$ and $s\in M(G/G)$, then
\begin{align*}
R^G_H(e_{O(N,NK)}(s))=\underset{xNKx^{-1}\in G\mathfrak{I}^H_{NK}}{\sum}e_{O(N,xNKx^{-1})}R^G_H(s)+e_VR^G_H(s),
\end{align*}
where $e_V$ term is not present in the case where $G$ is finite.
\end{proposition}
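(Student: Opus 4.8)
The plan is to reduce the identity to the purely combinatorial statement of Lemma \ref{idemack4}, which expresses the element $\alpha$ of $A(H)$ from Lemma \ref{idemack3} as $\sum_{xNKx^{-1}\in G\mathfrak{I}^H_{NK}}e_{O(N,xNKx^{-1})}+e_V$ (with $e_V$ absent when $G$ is finite). The bridge from the Mackey-theoretic left-hand side to this algebraic identity is assembled from Proposition \ref{idempotentform}, Proposition \ref{mackeyact}, and the Mackey axiom.

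First I would use Proposition \ref{idempotentform} to write $e_{O(N,NK)}$ in $A(G)$ as the $\mathbb{Q}$-linear combination $\sum_{NA\leq NK}q_{NA}[G/NA]$ of basis elements, where $q_{NA}=\frac{|NA|}{|N_G(NK)|}\mu(NA,NK)$, and then invoke the module structure of Proposition \ref{mackeyact} to obtain $e_{O(N,NK)}(s)=\sum_{NA\leq NK}q_{NA}\,I^G_{NA}R^G_{NA}(s)$. Applying $R^G_H$ and expanding each $R^G_H I^G_{NA}$ by the Mackey axiom (Definition \ref{defM1}(4)) rewrites $R^G_H(e_{O(N,NK)}(s))$ as a double sum over $NA\leq NK$ and over double-coset representatives $x\in[H\backslash G/NA]$ of the terms $I^H_{H\cap xNAx^{-1}}\,C_x\,R^{NA}_{NA\cap x^{-1}Hx}R^G_{NA}(s)$. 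Using transitivity of restriction (Definition \ref{defM1}(2)) to collapse $R^{NA}_{NA\cap x^{-1}Hx}R^G_{NA}$ to $R^G_{NA\cap x^{-1}Hx}$, the equivariance condition (Definition \ref{defM1}(3)) to move $C_x$ past the restriction, and the identity $C_x(s)=s$ for $x\in G$ and $s\in M(G/G)$ (Definition \ref{defM1}(1)), each summand becomes $I^H_{H\cap xNAx^{-1}}R^G_{H\cap xNAx^{-1}}(s)$. Splitting $R^G_{H\cap xNAx^{-1}}=R^H_{H\cap xNAx^{-1}}R^G_H$ once more and recognising $I^H_L R^H_L$ as the action of $[H/L]\in A(H)$ on $M(H)$, this summand is exactly $[H/(H\cap xNAx^{-1})]\cdot R^G_H(s)$.

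By $\mathbb{Q}$-linearity of the $A(H)$-module structure on $M(H)$, summing the double sum back up yields
\begin{align*}
R^G_H(e_{O(N,NK)}(s))=\left(\sum_{NA\leq NK}q_{NA}\sum_{x\in[H\backslash G/NA]}[H/(H\cap xNAx^{-1})]\right)\cdot R^G_H(s),
\end{align*}
and the element of $A(H)$ in parentheses is precisely the $\alpha$ of Lemma \ref{idemack3}. Substituting the conclusion of Lemma \ref{idemack4} then gives the claimed formula, with the $e_V$ term disappearing exactly when $G$ is finite.

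The routine-but-delicate step is the middle one: tracking which subgroup each restriction, induction and conjugation map lives over while applying the Mackey axiom, and verifying that the conjugation maps genuinely drop out because $s$ sits at the top group $G$ (and, along the way, that each $e_{O(N,xNKx^{-1})}$ really is an element of $A(H)$, which forces $N\leq H$ whenever $G\mathfrak{I}^H_{NK}$ is nonempty). I do not expect a genuine obstacle here: all the combinatorial content — that $\alpha$ reorganises into a sum of idempotents of $A(H)$ indexed by $G\mathfrak{I}^H_{NK}$ together with the correction term $e_V$ — has already been isolated in Lemmas \ref{idemack1}–\ref{idemack4}, and the indexing set $[H\backslash G/NA]$ produced by the Mackey axiom is verbatim the one used to define $\alpha$, so this proposition is essentially a corollary of those lemmas.
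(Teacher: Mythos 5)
Your proposal is correct and follows essentially the same route as the paper's proof: expand $e_{O(N,NK)}$ via Proposition \ref{idempotentform} and the Burnside action of Proposition \ref{mackeyact}, apply the Mackey axiom, the equivariance condition and the $G$-fixedness of $s$ to rewrite the left-hand side as $\alpha\left(R^G_H(s)\right)$, and then invoke Lemma \ref{idemack4}. The paper merely compresses the bookkeeping of restrictions and conjugations that you spell out in the middle step; the structure of the argument is identical.
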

\begin{proof}
By Proposition \ref{idempotentform} this amounts to showing that 
\begin{align*}
R^G_H\left(\underset{NA\leq NK}{\sum}\frac{|NA|}{|N_G(NK)|}\mu(NA,NK)[G/NA](s)\right)
\end{align*}
is equal to:
\begin{align*}
\underset{xNKx^{-1}\in G\mathfrak{I}^H_{NK}}{\sum}e_{O(N,xNKx^{-1})}R^G_H(s)+e_VR^G_H(s).
\end{align*}
Using the definition of the Burnside action on the Mackey functor this gives the following:
\begin{align*}
&\sum_{NA\leq NK}\frac{|NA|}{|N_G(NK)|}\mu(NA,NK)R^G_HI^G_{NA}R^G_{NA}(s)\\&=\sum_{NA\leq NK}\frac{|NA|}{|N_G(NK)|}\mu(NA,NK)\sum_{H\setminus G/NA}I^H_{H\cap xNAx^{-1}}R^{xNAx^{-1}}_{xNAx^{-1}\cap H}R^G_{xNAx^{-1}}(s)\\
&=\sum_{NA\leq NK}\frac{|NA|}{|N_G(NK)|}\mu(NA,NK)\sum_{H\setminus G/NA}I^H_{H\cap xNAx^{-1}}R^{H}_{xNAx^{-1}\cap H}R^G_{H}(s)
\end{align*}
where we use the Macky axiom, the equivariance condition and the fact that $s$ is $G$-fixed. But this is by definition:
\begin{align*}
\sum_{NA\leq NK}\frac{|NA|}{|N_G(NK)|}\mu(NA,NK)\sum_{H\setminus G/NA}\left[H/H\cap xNAx^{-1}\right](R^G_H(s))
\end{align*}
which is $\alpha (R^G_H(s))$. By Lemma \ref{idemack4} this is the same as: 
\begin{align*}
\underset{xNKx^{-1}\in G\mathfrak{I}^H_{NK}}{\sum}e_{O(N,xNKx^{-1})}(R^G_H(s))+e_VR^G_H(s). 
\end{align*}
The proof in the finite case is the same, except without the $e_V$ term.
\end{proof}
\begin{proposition}\label{inductprop}
Let $M$ be a Mackey functor for a profinite group $G$ and $s\in M(G/H)$ then:
\begin{align*}
e_{O(N,NK)}I^G_H(s)=I^G_H\left(\underset{xNKx^{-1}\in G\mathfrak{I}^H_{NK}}{\sum}e_{O(N,xNKx^{-1})}(s)+e_Vs\right),
\end{align*}
where the $e_V$ term doesn't appear when $G$ is finite.
\end{proposition}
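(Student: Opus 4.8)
The plan is to prove this as the induction-map analogue of Proposition \ref{restrictprop}, running essentially the same computation in the dual direction. Throughout write $q_{NA}=\frac{|NA|}{|N_G(NK)|}\mu(NA,NK)$, and recall from Proposition \ref{idempotentform} that $e_{O(N,NK)}=\sum_{NA\leq NK}q_{NA}[G/NA]$ in $A(G)=C(SG/G,\mathbb{Q})$; since $NK$ has finite index, every sum and every double-coset set occurring below is finite and every subgroup involved is open.

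First I would expand the left-hand side using the $A(G)$-action on $M(G/G)$ from Proposition \ref{mackeyact}, obtaining
\begin{align*}
e_{O(N,NK)}I^G_H(s)=\sum_{NA\leq NK}q_{NA}\,I^G_{NA}R^G_{NA}I^G_H(s).
\end{align*}
Applying the Mackey axiom of Definition \ref{defM1} to each composite $R^G_{NA}\circ I^G_H$ and then using transitivity of induction ($I^G_{NA}\circ I^{NA}_{NA\cap xHx^{-1}}=I^G_{NA\cap xHx^{-1}}$) turns this into a double sum of terms $I^G_{NA\cap xHx^{-1}}\circ C_x\circ R^H_{H\cap x^{-1}NAx}(s)$ indexed by $x\in[NA\backslash G/H]$. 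The key simplification is that, since $C_x$ acts as the identity on $M(G/G)$ (condition 1 of Definition \ref{defM1}), the equivariance condition gives $I^G_{NA\cap xHx^{-1}}\circ C_x=I^G_{H\cap x^{-1}NAx}$; reindexing the double cosets by $x\mapsto x^{-1}$ then rewrites the whole expression as
\begin{align*}
\sum_{NA\leq NK}q_{NA}\sum_{x\in[H\backslash G/NA]}I^G_{H\cap xNAx^{-1}}R^H_{H\cap xNAx^{-1}}(s).
\end{align*}
Pulling $I^G_H$ out of each term by transitivity and recognising $I^H_{H\cap xNAx^{-1}}R^H_{H\cap xNAx^{-1}}(s)=[H/(H\cap xNAx^{-1})]\cdot s$ as the $A(H)$-action of Proposition \ref{mackeyact}, this equals $I^G_H(\alpha\cdot s)$, where $\alpha$ is precisely the function of Lemma \ref{idemack3}, i.e. $\alpha=R^G_H(e_{O(N,NK)})$.

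It then remains only to invoke Lemma \ref{idemack4}, which identifies $\alpha$ with $\sum_{xNKx^{-1}\in G\mathfrak{I}^H_{NK}}e_{O(N,xNKx^{-1})}+e_V$ (the $e_V$ term absent when $G$ is finite); substituting this into $I^G_H(\alpha\cdot s)$ yields the claimed formula. The main obstacle is the middle step: keeping the conjugation maps $C_x$ and the double-coset reindexing straight so that the sum produced by the Mackey axiom is genuinely recognised as $\alpha\cdot s$ rather than some reshuffled variant --- this is exactly the point where the equivariance condition and the triviality of $C_x$ on $M(G/G)$ must be combined. One could alternatively shortcut the first two displays by first establishing the projection formula $a\cdot I^G_H(t)=I^G_H(R^G_H(a)\cdot t)$ for $a\in A(G)$, $t\in M(G/H)$ (itself a consequence of the Mackey axiom together with the equivariance condition, checked on additive generators $a=[G/K]$) and then applying it with $a=e_{O(N,NK)}$, $t=s$; either way Lemma \ref{idemack4} supplies the final rewriting.
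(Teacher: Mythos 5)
Your proposal is correct and follows essentially the same route as the paper: expand $e_{O(N,NK)}$ via Proposition \ref{idempotentform}, apply the Mackey axiom to $R^G_{NA}I^G_H$, discard the conjugations because the induced classes land in the $G$-fixed module $M(G/G)$ (your ``$C_x$ acts as the identity on $M(G/G)$'' is exactly the paper's ``$I^G_H$ has $G$-fixed image''), pull out $I^G_H$ to recognise $\alpha\cdot s$, and finish with Lemma \ref{idemack4}. The projection-formula shortcut you mention is a reasonable repackaging of the same computation, not a different argument.
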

\begin{proof}
By definition we have the following:
\begin{align*}
e_{O(N,NK)}I^G_H(s)&=\underset{NA\leq NK}{\sum}q_{NA}\mu(NA,NK)I^G_{NA}R^G_{NA}I^G_H(s)\\
&=I^G_H\left(\underset{NA\leq NK}{\sum}q_{NA}\mu(NA,NK)\underset{H\setminus G/NA}{\sum}\left[H/H\cap xNAx^{-1}\right](s)\right)\\
&=I^G_H\left(\underset{xNKx^{-1}\in G\mathfrak{I}^H_{NK}}{\sum}e_{O(N,xNKx^{-1})}(s)+e_Vs\right)
\end{align*}
where $q_{NA}=\frac{|NA|}{|N_G(NK)|}$. Here we apply the Mackey axiom, the fact that $I^G_H$ has $G$-fixed image and Lemma \ref{idemack4} since the formula given of the function $\alpha$ appears.
\end{proof}
\begin{corollary}\label{burncommute}
Let $M$ be a Mackey functor for a profinite group $G$, $s$ an element of $M(G/G)$ then:
\begin{align*}
e_{O(N,NK)}R^G_{NK}(s)+e_VR^G_{NK}(s)=R^G_{NK}(e_{O(N,NK)}(s)),
\end{align*}
where $e_V$ doesn't appear in the finite case. We also have a similar result for the induction map.
\end{corollary}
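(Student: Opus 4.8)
The plan is to derive this as an immediate consequence of Proposition \ref{restrictprop} and Proposition \ref{inductprop} by specialising to the case $H=NK$. The first thing I would record is that $NK$ is indeed an open subgroup of $G$: it is a subgroup because $N$ is normal, and it is open because it contains the open subgroup $N$; in particular $NK$ has finite index in $G$, so the earlier propositions apply with $H=NK$.

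The key step is to identify the indexing set $G\mathfrak{I}^{NK}_{NK}$ that governs the sum on the right-hand side of Proposition \ref{restrictprop}. By definition it consists of those conjugates $xNKx^{-1}$, with $x$ ranging over $NK\diagdown G\diagup N_G(NK)$, that happen to be subgroups of $NK$. If $xNKx^{-1}\leq NK$, then since conjugation preserves index we have $[G:xNKx^{-1}]=[G:NK]$, and hence $[NK:xNKx^{-1}]=1$, forcing $xNKx^{-1}=NK$. The representative $x=e$ realises this, and any $x$ with $xNKx^{-1}=NK$ lies in $N_G(NK)$, so it contributes a single double coset class. Therefore $G\mathfrak{I}^{NK}_{NK}=\{NK\}$.

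With this identification, applying Proposition \ref{restrictprop} with $H=NK$ collapses the sum on the right-hand side to the single summand $e_{O(N,NK)}R^G_{NK}(s)$, which yields
\begin{align*}
R^G_{NK}(e_{O(N,NK)}(s))=e_{O(N,NK)}R^G_{NK}(s)+e_VR^G_{NK}(s),
\end{align*}
the $e_V$ term being absent when $G$ is finite exactly as in Proposition \ref{restrictprop}; this is the stated identity rearranged. For the induction map, applying Proposition \ref{inductprop} with $H=NK$ and $s\in M(G/NK)$ produces the same collapse of the sum, giving
\begin{align*}
e_{O(N,NK)}I^G_{NK}(s)=I^G_{NK}\left(e_{O(N,NK)}(s)+e_Vs\right),
\end{align*}
again with no $e_V$ term in the finite case. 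The only point requiring any thought is the computation $G\mathfrak{I}^{NK}_{NK}=\{NK\}$; there is no real obstacle, since Propositions \ref{restrictprop} and \ref{inductprop} have already carried out the substantive work of tracking how the Burnside ring idempotents pass through restriction and induction via the Mackey axiom.
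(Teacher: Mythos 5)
Your proof is correct and follows essentially the same route as the paper: specialise Propositions \ref{restrictprop} and \ref{inductprop} to $H=NK$ and observe that the indexing set $G\mathfrak{I}^{NK}_{NK}$ collapses to the single class of $NK$. The only cosmetic difference is that the paper identifies this set via Proposition \ref{indexbij}, whereas you argue it directly by the finite-index argument (a $G$-conjugate of $NK$ contained in $NK$ must equal $NK$), which is equally valid.
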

\begin{proof}
By Proposition \ref{restrictprop} we have:
\begin{align*}
R^G_{NK}(e_{O(N,NK)}(s))=\sum_{xNKx^{-1}\in G\mathfrak{I}^{NK}_{NK}}e_{O(N,xNKx^{-1})}R^G_{NK}(s)+e_VR^G_{NK}(s). 
\end{align*}
But by Proposition \ref{indexbij} we have:
\begin{align*}
G\mathfrak{I}^{NK}_{NK}=\left\lbrace (D)_{NK}\in SNK/NK\mid D\in (NK)_G\in SG/G\right\rbrace, 
\end{align*}
which is the set $\left\lbrace (NK)_{NK}\right\rbrace$. Hence the above sum is just $e_{O(N,NK)}R^G_{NK}(s)+e_VR^G_{NK}(s)$. The proof for the induction map is similar.
\end{proof}
This result is useful, especially in the finite case since it tells us that the action of the Burnside ring idempotent commutes with the restriction map. In the infinite case we almost have this except for the additional $e_V$ term. Since in the more general profinite case we will be interested in sheaves, we can restrict from $e_VR^H_G(s)+e_{O(N,NK)}R^G_H(s)$ to $e_{O(N,NK)}R^G_H(s)$ by projection in order to study the stalk of a point in $O(N,NK)$.
\begin{corollary}\label{burn0}
Let $M$ be a Mackey functor for a profinite group $G$, $s\in M(G/G)$, $t\in M(G/H)$, and suppose $NK$ is not $G$-subconjugate to $H$, then the following is true:
\begin{enumerate}
\item $R^G_H(e_{O(N,NK)}(s))=e_VR^G_H(s)$,
\item $e_{O(N,NK)}I^G_H(t)=I^G_H(e_Vs)$.
\end{enumerate}
If $G$ is finite the right hand side terms are all zero. This means in particular that the stalk of any $G$-conjugate of $NK$ is zero since none of them belong to the support.
\end{corollary}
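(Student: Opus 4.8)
The plan is to deduce both identities directly from Propositions \ref{restrictprop} and \ref{inductprop}, once we observe that the hypothesis forces the indexing set appearing in those formulas to be empty. Recall that
\[
G\mathfrak{I}^H_{NK}=\left\lbrace xNKx^{-1}\mid x\in H\diagdown G\diagup N_G(NK)\,\text{ and }\, xNKx^{-1}\leq H\right\rbrace,
\]
so $G\mathfrak{I}^H_{NK}\neq\emptyset$ precisely when some $G$-conjugate of $NK$ is a subgroup of $H$, that is, precisely when $NK$ is $G$-subconjugate to $H$; this is also the content of Lemma \ref{idemack3} read contrapositively. Hence, under the standing assumption that $NK$ is not $G$-subconjugate to $H$, we have $G\mathfrak{I}^H_{NK}=\emptyset$.

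First I would establish (1). Applying Proposition \ref{restrictprop} to $s\in M(G/G)$ and noting that the sum $\sum_{xNKx^{-1}\in G\mathfrak{I}^H_{NK}}e_{O(N,xNKx^{-1})}R^G_H(s)$ is empty, hence zero, we are left with exactly $R^G_H(e_{O(N,NK)}(s))=e_VR^G_H(s)$. For (2) I would argue identically using Proposition \ref{inductprop} applied to the given element $t\in M(G/H)$: the sum over the empty set $G\mathfrak{I}^H_{NK}$ again vanishes, leaving the stated identity. When $G$ is finite, the remarks following Lemma \ref{idemack3} (and recorded in the statements of Propositions \ref{restrictprop} and \ref{inductprop}) show that the correction term $e_V$ is absent, so in that case both right-hand sides are $0$; the concluding assertion about stalks is then the observation that, for $G$ finite, the $e_{O(N,NK)}$-localised piece of $M(G/G)$ restricts to zero on $H$ while induction from $H$ is annihilated by $e_{O(N,NK)}$, which via the sheaf structure of Corollary \ref{MHsheaf} is precisely the vanishing of the stalks at all $G$-conjugates of $NK$.

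I do not expect a genuine obstacle here: the corollary is essentially a bookkeeping specialisation of the two preceding propositions. The only points that need care are the translation of ``not $G$-subconjugate to $H$'' into the emptiness of $G\mathfrak{I}^H_{NK}$ — where Lemma \ref{idemack3} (and, if one prefers the reformulation in terms of conjugacy classes, Proposition \ref{indexbij}) does the work — and keeping track of the $e_V$ term so as to extract the sharper conclusion in the finite case. Accordingly I would present the proof in just a few lines: cite Propositions \ref{restrictprop} and \ref{inductprop} for the two displayed equalities, and the remark after Lemma \ref{idemack3} for the disappearance of $e_V$ when $G$ is finite.
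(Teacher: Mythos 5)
Your proposal is correct and follows essentially the same route as the paper: the paper's proof likewise notes that the hypothesis forces $G\mathfrak{I}^H_{NK}=\emptyset$ and then cites Propositions \ref{restrictprop} and \ref{inductprop}, with the $e_V$ term absent in the finite case. The paper additionally spells out a short computation showing that points outside $V$ have zero stalk on $I^G_H(e_Vs)$ (using $e_{O(N',N'L)}$ with support disjoint from $V$), but this is supplementary to the same core argument you give.
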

\begin{proof}
By assumption we have $G\mathfrak{I}^H_{NK}=\emptyset$, so the result follows from Propositions \ref{restrictprop} and \ref{inductprop}. To see that an element not in $V$, say $L$, has zero stalk on $I^G_H(e_Vs)$ consider a open neighbourhood $O(N^{\prime},N^{\prime}L)$ which is disjoint from $V$. Evaluate $e_{O(N^{\prime},N^{\prime}L)}I^G_H(e_Vs)$, which we have already shown to be $I^G_H(e_We_Vs)$. Recall that $W$ is of the form $SH\cap O(N^{\prime},N^{\prime}L)$ which must be disjoint from $V$ by definition. Since $V$ and $W$ have disjoint support $e_We_Vs$ must evaluate to zero.
\end{proof}
Notice that the above propositions hold if we begin with an element $s\in M(G/H)$ and $NK,L\leq H$. We look at the exact same proof except replace $G\mathfrak{I}^L_{NK}$ with $H\mathfrak{I}^L_{NK}$.

\chapter{Rational G-Spectra}\label{chp3}
In this chapter we wil characterise the homotopy category of rational $G$-spectra. We begin the first section by defining orthogonal $G$-spectra for $G$ a profinite group and exploring the theory surrounding this category which is based on \cite{Fausk} and \cite{MandelMayequivspectra}. We will see how to calculate homotopy classes of morphisms between suspension spectra, Corollary \ref{Ghomotpy} and Proposition \ref{tomDieck}. We will see how to rationalise this model structure. In Theorem \ref{GSpChar} we will see that the category of rational $G$-spectra is Quillen equivalent to a category $\text{Ch}\left(\text{Fun}_{\text{Ab}}\left(\pi_0(\mathfrak{O}^{\mathbb{Q}}_G),\mathbb{Q}\text{-Mod}\right)\right)$. In the second section we will see that there is an equivalence between chain complexes of rational Mackey functors and \\$\text{Ch}\left(\text{Fun}_{\text{Ab}}\left(\pi_0(\mathfrak{O}^{\mathbb{Q}}_G),\mathbb{Q}\text{-Mod}\right)\right)$, Corollary \ref{mackeymodelequi}. We will conclude that chain complexes of rational $G$-Mackey functors algebraically model $G$-spectra.
\section{Characterisation of rational G-spectra}
In order to define a $G$-spectrum, we begin by defining a $G$-universe as seen in \cite[Definition 3.1]{Fausk}.
\begin{definition}\label{Univdefn}
Let $G$ be a compact Hausdorff group. A \textbf{$G$-universe}\index{$G$-universe} $U$ is a countably infinite direct sum $\underset{n\in\mathbb{N}}{\bigoplus} U^{\prime}$ of a real $G$-inner product space $U^{\prime}$ with the following structure:
\begin{itemize}
\item the one dimensional trivial $G$-representation is contained in $U^{\prime}$,
\item $U$ is a topological space whose topology is given by the union of all finite dimensional $G$-subspaces of $U$ (each with the norm topology),
\item the $G$-action on all finite dimensional $G$-subspaces $V$ of $U$ factors through a compact Lie group quotient of $G$.
\end{itemize}  
\end{definition}
The third condition implies that if $V$ is a finite dimensional subrepresentation of $U$, then the action of $G$ on $V$ factors through a quotient by an open normal subgroup. If each finite dimensional orthogonal $G$-representation is isomorphic to a $G$-subspace of $U$, then we say that $U$ is complete. The following definitions show how to use the idea of a universe to construct an indexing category, as seen in \cite[Definitions 3.5, 3.6]{Fausk}.

\begin{definition}
Let $U$ be a universe for a compact Hausdorff group $G$. An \textbf{indexing representation}\index{indexing representation} is a finite dimensional $G$-inner product subspace of $U$. If $V$ and $W$ are two indexing representations and $V\subseteq W$, then the orthogonal complement of $V$ in $W$ is denoted by $W-V$. The collection of all real $G$-inner product spaces which are isomorphic to an indexing representation is denoted $\mathcal{V}(U)$, or simply by $\mathcal{V}$ when $U$ is understood.

We define the category $\mathcal{I}^{\mathcal{V}}_G$ to be the topological category whose objects are elements of $\mathcal{V}$ and the morphisms are the linear isometric isomorphisms. The hom-sets have topology given by the compact-open topology. Similarly we also have the $G$-fixed category $G\mathcal{I}^{\mathcal{V}}$ which has the same objects and whose morphisms are the $G$-equivariant linear isometric isomorphisms. 
\end{definition}
The following definition takes us closer to defining $G$-spectra, where $G$ is compact Hausdorff.
\begin{definition}
An \textbf{$\mathcal{I}^{\mathcal{V}}_G$-space}\index{$\mathcal{I}^{\mathcal{V}}_G$-space} is a continuous $G$-functor (a functor which induces continuous $G$-maps on hom spaces) from $\mathcal{I}^{\mathcal{V}}_G$ to $\mathcal{T}_G$, the category of $G$-spaces whose morphisms are non-equivariant continuous maps. The $G$-action on the hom sets is given as follows:
\begin{align*}
(g*f)(x)=gf(g^{-1}x),
\end{align*}
where $g\in G$, $f$ and $x$ belong to a hom set and an object of either $\mathcal{T}_G$ or $\mathcal{I}^{\mathcal{V}}_G$ respectively. A morphism of $\mathcal{I}^{\mathcal{V}}_G$-spaces is an enriched natural transformation. We denote this category by $\mathcal{I}^{\mathcal{V}}_G\mathcal{T}$. 

We consider the $G$-fixed category $G\mathcal{I}^{\mathcal{V}}\mathcal{T}$ which has the same objects but whose morphisms induce level-wise $G$-equivariant maps of spaces. That is, the objects are continuous $G$-functors from $\mathcal{I}^{\mathcal{V}}_G$ to $G\mathcal{T}$, the category of $G$-spaces whose morphisms are $G$-equivariant continuous maps. In this case the hom sets of the target category are $G$-fixed.
\end{definition}
\begin{example}
Let $\mathbb{S}^{\mathcal{V}}_G\colon\mathcal{I}^{\mathcal{V}}_G\rightarrow \mathcal{T}_G$ be the functor which assigns $V$ to the one point compactification $S^V$ of $V$. For simplicity we denote this by $\mathbb{S}$. 
\end{example}
The next step in defining $G$-spectra is defining a smash product on $\mathcal{I}^{\mathcal{V}}_G\mathcal{T}$, where $G$ is compact Hausdorff. We reference both \cite{Fausk} and \cite{SchwedeMayShipleyMandell}, where we require our $G$-universe to satisfy that the indexing category of finite dimensional subspaces are closed under direct sum.
\begin{construction}
Let $X$ and $Y$ be two objects of $\mathcal{I}^{\mathcal{V}}_G\mathcal{T}$ and $W$ be an $N$-dimensional $G$-representation of $\mathcal{V}$. For each $n\in\left\lbrace 0,1,2,\ldots, N\right\rbrace$, choose $G$-representations of dimension $n$ in $\mathcal{V}$ of the form $V_n$ and $V^{\prime}_n$. We define a smash product\index{smash product} by:
\begin{align*}
\left(X\wedge Y\right)(W)\cong \overset{N}{\underset{n=0}{\bigvee}}\mathcal{I}^{\mathcal{V}}_G\left(W,V_n\oplus V^{\prime}_{N-n}\right)\underset{O(V_n)\times O(V^{\prime}_{N-n})}{\wedge}X(V_n)\wedge Y(V^{\prime}_{N-n}).
\end{align*}
\end{construction}
This is a closed symmetric monoidal category which has unit given by the functor which assigns the trivial representation to $S^0$ and every other indexing representation to a point. This also gives a closed symmetric tensor category on $G\mathcal{I}^{\mathcal{V}}\mathcal{T}$. 

We can also write this using \cite[Definition 21.4]{SchwedeMayShipleyMandell} as:
\begin{align*}
\left(X\wedge Y\right)(W)=\int^{A,B\in \mathcal{I}^{\mathcal{V}}_G}X(A)\wedge Y(B)\wedge \mathcal{I}^{\mathcal{V}}_G(A\oplus B,W).
\end{align*}
Furthermore with this tensor product $\mathbb{S}$ is a symmetric monoid in $G\mathcal{I}^{\mathcal{V}}\mathcal{T}$. See \cite[pp. 116,117]{Fausk}. We can now define an orthogonal $G$-spectrum, which holds for $G$ compact Hausdorff.
\begin{definition}
An \textbf{orthogonal $G$-spectrum}\index{orthogonal $G$-spectrum} $X$ is a $\mathcal{I}^{\mathcal{V}}_G$-space together with a left module structure over the symmetric monoid $\mathbb{S}$. We denote this category by $\mathcal{I}^{\mathcal{V}}_G\mathcal{S}$. We also have the $G$-fixed category $G\mathcal{I}^{\mathcal{V}}\mathcal{S}$.
\end{definition}
\begin{remark}\label{GSpStructure}
In particular we have a morphism $\mathbb{S}\wedge X\rightarrow X$, so given any indexing representation $W$ we have a map:
\begin{align*}
\int^{V,W\in \mathcal{I}^{\mathcal{V}}_G}\mathbb{S}(V)\wedge X(W)\wedge \mathcal{I}^{\mathcal{V}}_G(V\oplus W,Z)\rightarrow X(Z).
\end{align*}
We can canonically include $S^V\wedge X(W)$ into the coend in the following way, where we assume that $Z$ is of the form $V\oplus W$:
\begin{align*}
S^V\wedge X(W)&\rightarrow \int^{A,B\in \mathcal{I}^{\mathcal{V}}_G}\mathbb{S}(A)\wedge X(B)\wedge \mathcal{I}^{\mathcal{V}}_G(A\oplus B,Z)\rightarrow X(Z)\\x&\mapsto [x,\text{Id}].
\end{align*}
Here we take $\text{Id}$ in $\mathcal{I}^{\mathcal{V}}_G(A\oplus B,Z)$ for $A=V$ and $B=W$. By canonically mapping $S^V\wedge X(W)$ to the corresponding term of the coend we have a morphism from $S^V\wedge X(W)$ to $X(V\oplus W)$. Using the adjunction $(\Sigma^V,\Omega^V)$ this gives a corresponding adjoint map:
\begin{align*}
X(W)\rightarrow \Omega^VX(V\oplus W).
\end{align*}
\end{remark}
When defining a model structure on the category of $G$-spectra we have to choose a collection of subgroups of $G$ which play a role in the definition. The more general approach is given by \cite[Definition 2.2]{Fausk} but we consider the collection of subgroups given in \cite[Example 2.5]{Fausk}, which in our case boils down to the collection of all open subgroups.

Recall that if $Z$ is a $G$-space and $H$ a subgroup of $G$ then $\pi_n^H(Z)$ is defined to be $\pi_n(Z^H)$. A map of $G$-spaces $f$ is said to be a $\pi_*$-isomorphism if $\pi_n^H(f)$ is an isomorphism for each $n\in\mathbb{N}$ and $H$ open. A map of $G$-spaces $f$ is said to be a fibration if $f^H$ is a Serre fibration of spaces for each open subgroup $H$. We extend this to $G$-spectra in the following definition as seen in \cite[Definition 4.1]{Fausk}.
\begin{remark}\label{picolimmap}
If $X$ is a $G$-spectrum then we have maps by Remark \ref{GSpStructure} of the form: 
\begin{align*}
S^W\wedge X(V)\rightarrow X(V\oplus W)
\end{align*}
for varying $V$ and $W$ finite dimensional subrepresentations. We can then look at the adjoint map $X(V)\rightarrow \Omega^{W}X(V\oplus W)$.

This gives us maps from $\Omega^VX(V)$ to $\Omega^WX(W)$ for $V\subseteq W$. To see this let $Z$ be the orthogonal complement of$V$ in $W$. Then by the previous paragraph we have a map:
\begin{align*}
X(V)\rightarrow \Omega^ZX(Z\oplus V),
\end{align*}
whose codomain equals $\Omega^ZX(W)$. Applying $\Omega^V(-)$ to this map gives:
\begin{align*}
\Omega^VX(V)\rightarrow\Omega^WX(W).
\end{align*}
\end{remark}
\begin{definition}\label{pidefinition}
The $n$th \textbf{homotopy group of an orthogonal $G$-spectrum}\index{homotopy group of an orthogonal $G$-spectrum} $X$ at a subgroup $H$ of $G$ is:
\begin{align*}
\pi_n^H(X)=\underset{V\in\mathcal{V}}{\colim}\pi_n^H(\Omega^VX(V))
\end{align*} 
for $n\geq 0$, and
\begin{align*}
\pi_{-n}^H(X)=\underset{V\supseteq\mathbb{R}^n}{\colim}\pi_0^H(\Omega^{V-\mathbb{R}^n}X(V))
\end{align*}
for $n\geq 0$. The maps in the colimit systems are given as in Remark \ref{picolimmap}. A map of orthogonal $G$-spectra $f\colon X\rightarrow Y$ is a stable equivalence if $\pi_n^H(f)$ is an isomorphism for all $H$ open and each integer $n$.
\end{definition}
We will see in the following theorem a definition of a model structure on orthogonal $G$-spectra, which is found in \cite[Theorem 4.4]{Fausk}. We bear in mind that an unbased weak equivalence of spaces is a map of spaces:
\begin{align*}
f:X\rightarrow Y
\end{align*}
such that $\pi_*(f):\pi_*(X,x)\rightarrow \pi_*(Y,f(x))$ is an isomorphism for all $x\in X$, and such that $\pi_0(f):\pi_0(X)\rightarrow \pi_0(Y)$ is a bijection of sets.
\begin{theorem}
The category $G\mathcal{I}^{\mathcal{V}}\mathcal{S}$ is a compactly generated proper model category such that the weak equivalences are given by the $\pi_*$-isomorphisms as defined in Definition \ref{pidefinition}, the fibrations are given by the maps of spectra $f\colon X\rightarrow Y$ such that for all indexing representations $V$, $f(V)\colon X(V)\rightarrow Y(V)$ is a fibration of $G$-spaces and the canonical map from $X(V)$ to the pullback of the diagram:
\begin{align*}
\xymatrix{&\Omega^WX(V\oplus W)\ar[d]\\Y(V)\ar[r]&\Omega^WY(V\oplus W)}
\end{align*}  
is an unbased weak equivalence of $G$-spaces for all $V,W\in \mathcal{V}$. A map $f$ is an acyclic fibration if and only if each $f(V)$ is an acyclic fibration of $G$-spaces. The cofibrations are determined by the left lifting property.
\end{theorem}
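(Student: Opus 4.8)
The plan is to follow \cite[Theorem 4.4]{Fausk}, whose proof builds the stable model structure in two stages: a \emph{level model structure} transferred from $G\mathcal{T}$ along the evaluation functors, followed by a Bousfield-type localization that introduces the $\pi_*$-isomorphisms of Definition \ref{pidefinition} as the weak equivalences.

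First I would fix the model structure on based $G$-spaces relative to the family of open subgroups: it is cofibrantly generated, with generating cofibrations the maps $G/H_+\wedge(S^{n-1}_+\to D^n_+)$ and generating acyclic cofibrations the maps $G/H_+\wedge(D^n_+\to(D^n\times[0,1])_+)$ for $H$ an open subgroup, and with weak equivalences and fibrations detected on $H$-fixed points for $H$ open. Because each indexing representation $V\in\mathcal{V}$ has $G$-action factoring through a finite quotient, the evaluation functor $\mathrm{Ev}_V\colon G\mathcal{I}^{\mathcal{V}}\mathcal{S}\to G\mathcal{T}$ has a left adjoint $F_V$ (a shift of a free $\mathbb{S}$-module). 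Taking $\{F_V i\}$ as $i$ runs over the generating (acyclic) cofibrations of $G\mathcal{T}$ and $V$ over $\mathcal{V}$, Quillen's small object argument produces the level model structure; it is compactly generated since the sources and targets of these maps are compact in the relevant sense. Here the weak equivalences, resp. fibrations, are exactly the maps $f$ with each $f(V)$ a weak equivalence, resp. fibration, of $G$-spaces.

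Next I would stabilize. One enlarges the set of generating acyclic cofibrations by the maps obtained from the adjoints $\lambda_{V,W}\colon F_{V\oplus W}(S^W\wedge-)\to F_V(-)$ of the $\mathbb{S}$-module structure maps (smashed with cells) and reruns the small object argument; equivalently, one left Bousfield localizes the level structure at the $\lambda_{V,W}$. Three things then have to be checked. (i) A level-fibrant $G$-spectrum is stably fibrant iff each adjoint structure map $X(V)\to\Omega^{W}X(V\oplus W)$ of Remark \ref{picolimmap} is a weak equivalence of $G$-spaces; applying this relatively to a map $f$ gives precisely the pullback-square condition in the statement, and an acyclic fibration is then seen to be unchanged, hence a levelwise acyclic fibration, which pins down the cofibrations via Lemma \ref{determine}. (ii) The localized weak equivalences coincide with the $\pi_*$-isomorphisms of Definition \ref{pidefinition}: this uses that $\pi_*^H(X)$ is the sequential colimit along the maps of Remark \ref{picolimmap}, that for $H$ open the functor $(-)^H$ commutes with those colimits, and that a $\pi_*$-isomorphism between $\Omega$-$G$-spectra is automatically a level equivalence.

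Finally I would record properness. Right properness is inherited from right properness of $G\mathcal{T}$, since stable fibrations are levelwise fibrations and the colimit defining $\pi_*^H$ commutes with the relevant pullbacks. Left properness follows because stable cofibrations are levelwise cofibrations (indeed levelwise $h$-cofibrations of $G$-spaces), $G\mathcal{T}$ is left proper, and the pushout of a $\pi_*$-isomorphism along such a map remains a $\pi_*$-isomorphism after passing to the colimit. The main obstacle is step (ii): verifying on the nose that the localized weak equivalences are the $\pi_*$-isomorphisms, which is where one genuinely uses that $G$ is compact Hausdorff and that the chosen family consists of open subgroups, so that fixed-point functors commute with the filtered colimits in Definition \ref{pidefinition}. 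All details are in \cite[Sections 3--4]{Fausk}.
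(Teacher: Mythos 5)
Your proposal is correct and follows essentially the same route as the paper, which simply cites \cite[Theorem 4.4]{Fausk}: the cited proof is exactly the two-stage construction you outline (a level model structure transferred along the evaluation functors, then stabilisation by enlarging the generating acyclic cofibrations with the maps adjoint to the structure maps, with the identification of the localized equivalences as the $\pi_*$-isomorphisms and the properness checks). Nothing in your sketch diverges from that argument in any essential way.
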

\begin{proof}
See \cite[Theorem 4.4]{Fausk}.
\end{proof}
We denote the morphisms in the homotopy category of rational $G$-spectra from $X$ to $Y$ by $\left[X,Y\right]^G$. Given any $G$-space $X$ we can define a $G$-spectrum using the following suspension functor construction: 
\begin{construction}
Let $V$ be an element of $\mathcal{V}$ and $X$ be a $G$-space, for profinite $G$. Then we have a suspension $G$-spectrum\index{suspension $G$-spectrum} $\Sigma^{\infty}_VX$ which is defined at $W\in \mathcal{V}$ to be:
\begin{align*}
\Sigma^{\infty}_VX(W)=X\wedge O(W)\underset{O(W-V)}{\wedge}S^{W-V}
\end{align*}
when $W\supset V$ and a point otherwise. When $V=0$ we denote this simply by $\Sigma^{\infty}X$.
\end{construction}
This construction is a functor, so if $K\leq H$ we can apply $\Sigma^{\infty}$ to the projection $\pi:G/K\rightarrow G/H$ to obtain a map of spectra:
\begin{align*}
\Sigma^{\infty}G/K_+\rightarrow \Sigma^{\infty}G/H_+.
\end{align*} 
However, we can also construct a transfer map of $G$-spectra\index{transfer map of spectra} in the opposite direction:
\begin{align*}
\tau^H_K\colon\Sigma^{\infty}G/H_+\rightarrow \Sigma^{\infty}G/K_+,
\end{align*} 
by \cite[Construction II.5.1]{LMSMcC}, which we detail below. Let $V$ be a $G$-representation with norm $\Vert - \Vert$. Then we define:
\begin{align*}
D(V)=\left\lbrace v\in V\mid \Vert v\Vert\leq 1\right\rbrace
\end{align*}
and 
\begin{align*}
S(V)=\left\lbrace v\in V\mid \Vert v\Vert=1 \right\rbrace.
\end{align*}
Furthermore we have a $G$-homeomorphism:
\begin{align*}
D(V)/S(V)\cong S^V.
\end{align*}
\begin{construction}\label{transfer}
If $K\leq H$ are open subgroups of profinite $G$ then $H/K$ is a finite discrete space. We can embed $H/K$ in a $H$-representation $V$, for example:
\begin{align*}
H/K&\rightarrow \mathbb{R}[H/K]\\hK &\mapsto hK.
\end{align*}
This map is injective. We can find a sufficiently small real number $\delta>0$ such that the open balls of radius $\delta$ around $hK$ are pairwise disjoint. We therefore have a $H$-map:
\begin{align*}
j\colon H_+\underset{K}{\wedge}D\left(\mathbb{R}[H/K]\right)&\rightarrow \mathbb{R}[H/K]\\ [h,x]&\mapsto h \left( eK  +\delta x\right)
\end{align*}
which is an embedding of the open discs with image the open $\delta$-balls. This yields a $H$-equivariant Thom-Pontryagin collapse map:
\begin{align*}
c\colon S^{\mathbb{R}[H/K]}&\rightarrow \frac{ H_+\underset{K}{\wedge}D\left(\mathbb{R}[H/K]\right)}{H_+\underset{K}{\wedge}S\left(\mathbb{R}[H/K]\right)}\\x&\mapsto j^{-1}(x)\,\text{if}\,x\in \im(j),\,\text{and}\\x&\mapsto \infty\,\text{else}.
\end{align*}
We compose this with a $H$-homeomorphism:
\begin{align*}
\frac{ H_+\underset{K}{\wedge}D\left(\mathbb{R}[H/K]\right)}{H_+\underset{K}{\wedge}S\left(\mathbb{R}[H/K]\right)}\cong H_+\underset{K}{\wedge}S^{\mathbb{R}[H/K]}
\end{align*} 
which stretches the interval by a homeomorphism $[0,1]\cong [0,\infty]$. We therefore have a $H$-equivariant map:
\begin{align*}
S^{\mathbb{R}[H/K]}\rightarrow H\underset{K}{\wedge}S^{\mathbb{R}[H/K]}
\end{align*}
We then apply the functor $\Sigma_V^{\infty}(-)$ with respect to $H$ and $G_+\underset{H}{\wedge}(-)$, where $V=\mathbb{R}[H/K]$, to obtain:
\begin{align*}
\tau^H_K\colon \Sigma^{\infty}G/H_+\rightarrow \Sigma^{\infty}G/K_+. 
\end{align*} 
Here we use that $G_+\underset{H}{\wedge}\Sigma^{\infty}_VS^V$ is weakly equivalent to $\Sigma^{\infty}G/H_+$, and
\begin{align*}
G_+\underset{H}{\wedge}\Sigma^{\infty}_V \left(H_+\underset{K}{\wedge}S^V\right)\simeq G_+\underset{H}{\wedge}\Sigma^{\infty}_VH/K_+\simeq\Sigma^{\infty}G/K_+.
\end{align*}
\end{construction}
The previous construction required $G$ to be profinite in order for the precise details to work. Namely, we use that $H/K$ is finite. The following result will detail how to calculate homotopy classes of maps between suspension spectra. This is recorded in \cite[Corollary 7.2]{Fausk} for compact Hausdorff groups.
\begin{corollary}\label{Ghomotpy}
Let $X$ and $Y$ be two based $G$-spaces where $X$ is a finite cell complex. Then there is a natural isomorphism:
\begin{align*}
\left[\Sigma^{\infty}X,\Sigma^{\infty}Y\right]^{G}\cong\underset{W\in\mathcal{V}}{\colim}\left[X\wedge S^W,Y\wedge S^W\right]^{G\text{-space}}.
\end{align*}
\end{corollary}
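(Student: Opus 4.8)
The plan is to compute $[\Sigma^{\infty}X,\Sigma^{\infty}Y]^G$ as homotopy classes of honest $G$-maps into a stably fibrant replacement of $\Sigma^{\infty}Y$, and then to commute this computation past the filtered colimit that builds $\Omega^{\infty}$. Since $X$ is a $G$-CW complex, $\Sigma^{\infty}X=\Sigma^{\infty}_0X$ is cofibrant in $G\mathcal{I}^{\mathcal{V}}\mathcal{S}$, so writing $\hat f$ for a stably fibrant replacement we have $[\Sigma^{\infty}X,\Sigma^{\infty}Y]^G\cong\pi_0\,\mathrm{Map}^G(\Sigma^{\infty}X,\hat f\,\Sigma^{\infty}Y)$. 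First I would invoke the free--forgetful adjunction $\Sigma^{\infty}_0\dashv\mathrm{ev}_0$ between based $G$-spaces and orthogonal $G$-spectra, which on mapping spaces reads $\mathrm{Map}^G(\Sigma^{\infty}_0X,Z)\cong\mathrm{Map}^G(X,Z(0))$ for any $G$-spectrum $Z$; applied to $Z=\hat f\,\Sigma^{\infty}Y$ (whose zeroth space is a fibrant $G$-space, as $\mathrm{ev}_0$ is right Quillen) this reduces the problem to understanding $(\hat f\,\Sigma^{\infty}Y)(0)$.

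Next I would identify $(\hat f\,\Sigma^{\infty}Y)(0)$, up to weak $G$-equivalence, with the spectrification $\Omega^{\infty}\Sigma^{\infty}Y=\underset{V\in\mathcal{V}}{\colim}\,\Omega^V\bigl((\Sigma^{\infty}Y)(V)\bigr)$, the colimit being taken along the structure maps of Remark \ref{picolimmap}. Two points need checking here: that the $\Omega$-$G$-spectrum produced by this construction models the stable fibrant replacement --- this is precisely what the model structure of \cite[Theorem 4.4]{Fausk} is set up to deliver, its fibrant objects being the $\Omega$-$G$-spectra --- and that the colimit is a genuine homotopy colimit, which holds because the structure maps are closed $G$-cofibrations and the poset of indexing representations under inclusion is filtered (one may restrict cofinally to a countable nested sequence exhausting $U$). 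Since $(\Sigma^{\infty}Y)(V)=S^V\wedge Y$ by the definition of the suspension $G$-spectrum functor, the zeroth space rewrites as $\underset{V\in\mathcal{V}}{\colim}\,\Omega^V(S^V\wedge Y)$.

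Now I would use the hypothesis that $X$ is a \emph{finite} cell complex, hence a compact object among based $G$-spaces: mapping out of it commutes with the filtered homotopy colimit, so $[X,\underset{V\in\mathcal{V}}{\colim}\,\Omega^V(S^V\wedge Y)]^G\cong\underset{V\in\mathcal{V}}{\colim}\,[X,\Omega^V(S^V\wedge Y)]^G$. Finally the $(\Sigma^W,\Omega^W)$ adjunction identifies each term with $[S^V\wedge X,S^V\wedge Y]^G=[X\wedge S^W,Y\wedge S^W]^G$ (taking $W=V$ and using the symmetry of $\wedge$), and every step of the argument is natural in $X$ and $Y$, which assembles into the asserted isomorphism. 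I expect the main obstacle to be the middle step: confirming rigorously that $\Omega^{\infty}\Sigma^{\infty}$ genuinely computes the zeroth space of the stable fibrant replacement and that the defining colimit is a homotopy colimit (equivalently, that the structure maps are cofibrations), which may require mild point-set hypotheses such as well-pointedness of $Y$ or a homotopically harmless cofibrant replacement; the remaining steps are formal adjunction bookkeeping together with the compactness argument. Alternatively, since every profinite group is compact Hausdorff, one may simply quote \cite[Corollary 7.2]{Fausk} directly.
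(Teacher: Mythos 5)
The paper gives no proof of this corollary at all: it is stated as a result recorded in \cite[Corollary 7.2]{Fausk} for compact Hausdorff groups, so your closing remark (``one may simply quote \cite[Corollary 7.2]{Fausk} directly'') is precisely the route the paper takes. Your longer sketch is the standard argument underlying that citation, and it is sound in outline: cofibrancy of $\Sigma^{\infty}X$ for $X$ a finite $G$-CW complex, the $(\Sigma^{\infty}_0,\mathrm{ev}_0)$ adjunction, identification of the zeroth space of a stably fibrant replacement with $\underset{V\in\mathcal{V}}{\colim}\,\Omega^V(S^V\wedge Y)$, compactness of $X$ to pass the mapping set through the filtered colimit, and the $(\Sigma^V,\Omega^V)$ adjunction termwise. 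What this buys over the paper is an actual indication of why the formula holds in the orthogonal $G$-spectrum model rather than an external reference; the cost is exactly the two points you flag yourself, which are the genuinely delicate ones: (i) that the naive colimit $\underset{V}{\colim}\,\Omega^V(S^V\wedge Y)$ really computes the relevant fibrant replacement (this needs $Y$ well-pointed or a harmless cofibrant replacement, and a cofinal countable exhaustion of the universe), and (ii) that the maps in that colimit are closed inclusions (h-cofibrations) so that a map from the compact space $X$ factors through a finite stage --- compactness alone does not interact with arbitrary filtered colimits of spaces. You should also record that the colimit maps you obtain on the right-hand side (smashing with $S^{W'-W}$ via the structure maps) are the ones implicit in the statement, and that naturality in $X$ and $Y$ is inherited from the adjunctions; with those details supplied, your argument is a correct proof, and in the profinite case it reproduces what Fausk proves in the generality of compact Hausdorff groups.
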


The following result tells us how to calculate homotopy groups of suspension spectra. We consider only the case where $G$ is profinite however the reference \cite[Proposition 7.10]{Fausk} deals with more general compact Hausdorff groups.
\begin{proposition}[Tom Dieck Splitting\index{Tom Dieck Splitting}]\label{tomDieck}
If $Y$ is a $G$-space for profinite $G$, then there is an isomorphism of abelian groups:
\begin{align*}
\underset{H}{\bigoplus}\pi_*\left(\Sigma^{\infty}\left(EW_GH_+\underset{W_GH}{\wedge}Y^H\right)\right)\cong \pi_*^G\left(\Sigma^{\infty}Y\right)
\end{align*}
where the sum ranges over all conjugacy classes of open subgroups $H$ of $G$.
\end{proposition}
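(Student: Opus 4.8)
The plan is to establish the tom Dieck splitting for profinite $G$ by leveraging Corollary \ref{Ghomotpy} together with the inductive structure of a complete $G$-universe, reducing the statement to the (known) tom Dieck splitting for finite quotient groups $G/N$ and then passing to the colimit. First I would unwind the definition of $\pi_*^G(\Sigma^\infty Y)$ as $\underset{V\in\mathcal{V}}{\colim}\,\pi_*^G(\Omega^V\Sigma^\infty Y(V))$. Using the third axiom of a $G$-universe, every indexing representation $V$ has $G$-action factoring through some finite quotient $G/N_V$, so the colimit can be reorganised as a colimit over open normal subgroups $N$ of a colimit over $G/N$-representations. Thus $\pi_*^G(\Sigma^\infty Y)$ is computed as $\underset{N}{\colim}\,\pi_*^{G/N}(\Sigma^\infty Y^{?})$ — one must be careful here that fixed points and the suspension functor interact correctly, which is where Corollary \ref{Ghomotpy} and the compatibility of $\Sigma^\infty$ with the universe filtration are used.

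Next I would invoke the classical tom Dieck splitting for the finite discrete group $G/N$: for a $G/N$-space $Z$,
\begin{align*}
\pi_*^{G/N}\left(\Sigma^\infty Z\right)\cong \underset{\overline{H}}{\bigoplus}\pi_*\left(\Sigma^\infty\left(EW_{G/N}\overline{H}_+\underset{W_{G/N}\overline{H}}{\wedge}Z^{\overline{H}}\right)\right),
\end{align*}
where $\overline{H}$ ranges over conjugacy classes of subgroups of $G/N$. Applying this with $Z=Y$ regarded as a $G/N$-space when it is $N$-fixed (or, more generally, tracking the $N$-fixed points), and then taking the colimit over $N$, the indexing set $\coprod_N \{\text{conj. classes of subgroups of } G/N\}$ must be identified with the set of conjugacy classes of open subgroups of $G$; this uses the correspondence between open subgroups of $G$ and subgroups of $G/N$ (for $N\leq H$), exactly the kind of bijection recorded in the lemma preceding Proposition \ref{inflatemackey}. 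One then checks that the Weyl group $W_G H = N_G(H)/H$ and its classifying space $EW_G H$ arise as the appropriate colimit of $W_{G/N}(H/N)$ and $EW_{G/N}(H/N)$, and that the terms $EW_GH_+\wedge_{W_GH}Y^H$ are the colimits of the corresponding finite-level terms. Since homotopy groups of suspension spectra commute with the relevant filtered colimits (using that $Y^H$ and these Borel constructions are built from the finite-level data), the direct sum decomposition passes to the colimit.

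The main obstacle I anticipate is the bookkeeping around fixed points and colimits: showing that $\underset{N}{\colim}$ of the $N$-fixed finite-level splittings genuinely assembles into a splitting indexed by \emph{all} open subgroups of $G$, with the correct Weyl groups, rather than collapsing or over-counting. Specifically, one must verify that for a fixed open subgroup $H$, the terms $\pi_*(\Sigma^\infty(EW_{G/N}(H/N)_+\wedge_{W_{G/N}(H/N)} Y^H))$ stabilise (are eventually constant, or at worst form a colimit computing the desired term) as $N$ shrinks through open normal subgroups contained in the core of $H$, and that subgroups $H$ which are not open contribute nothing because they never appear at any finite level. A clean way to handle this is to note that $SG\cong\underset{N}{\lim}S(G/N)$ (Proposition \ref{SGprofinite}) and that $SG/G\cong\underset{N}{\lim}S(G/N)/(G/N)$, so the indexing set of the proposed direct sum is literally the colimit of the finite indexing sets; combined with the fact that each open $H$ already appears at the level $N=H^C$ and is unaffected by further refinement, the colimit of finite splittings is exactly the claimed splitting. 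Finally I would remark that exactness of filtered colimits in abelian groups guarantees the colimit of the isomorphisms is an isomorphism, completing the proof.
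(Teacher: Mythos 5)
The paper does not actually prove this proposition: it records it as a special case of \cite[Proposition 7.10]{Fausk}, which treats general compact Hausdorff groups. So your argument is necessarily a different route from the text, and it is the natural one, in the same spirit as the colimit arguments the thesis uses elsewhere (Lemma \ref{topmacklem1}, Lemma \ref{topmacklem2}, Corollary \ref{Burnsidecolim}): exhaust the complete universe $U$ by its $N$-fixed subuniverses $U^N$ for $N$ open normal, apply the classical finite-group splitting to $G/N$, and pass to the filtered colimit, using that $W_{G/N}(H/N)\cong N_G(H)/H=W_GH$ for $N\leq H$ (Lemma \ref{normalisequot}) so that each open $H$ already contributes its correct summand at every finite level, while non-open subgroups never appear. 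What the citation buys is that none of this bookkeeping has to be done; what your approach buys is a self-contained proof using only the finite case and the profinite structure.

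Two points must be made precise before your sketch is a proof. First, the question mark you leave in $\pi_*^{G/N}(\Sigma^{\infty}Y^{?})$ is the crux: $Y$ is not in general a $G/N$-space, and the correct identification is $\pi_*^G(\Sigma^{\infty}Y)\cong\underset{N}{\colim}\,\pi_*^{G/N}(\Sigma^{\infty}Y^N)$. This follows because, for $V\subseteq U^N$, a $G$-map out of the $N$-fixed space $S^{V+n}$ necessarily lands in $\left(Y_+\wedge S^V\right)^N=\left(Y^N\right)_+\wedge S^V$, and because $U^N$ is a complete $G/N$-universe; the claimed summands are then recovered from $\left(Y^N\right)^{H/N}=Y^H$. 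Your alternative of applying the finite splitting ``with $Z=Y$ regarded as a $G/N$-space when it is $N$-fixed'' does not cover the general case. Second, the transition maps in the colimit combine inflation along $G/N'\rightarrow G/N$ with the inclusion $Y^N\hookrightarrow Y^{N'}$, and you need these to be diagonal with respect to the two finite-level splittings and to restrict to isomorphisms on the $H$-summand once $N\leq H$; this is the naturality of the classical splitting under inflation (for $K$ the preimage of $\overline{H}$ one has $W_{G/N'}(K)\cong W_{G/N}(\overline{H})$ and the same fixed-point space $Y^H$), which is true but is exactly the content to be checked -- the bare fact that homotopy groups commute with filtered colimits does not by itself give the summand-by-summand stabilisation. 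With these two points supplied, your argument is complete and recovers the cited statement.
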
 
In this thesis we will be interested in rational homotopy theory so the next step is to construct the rational model structure on orthogonal $G$-spectra. From now onwards $G$ will be profinite. We begin by constructing a rational sphere spectrum.
\begin{construction}\label{rationalfree}
We start by taking a free resolution of $\mathbb{Q}$ as an abelian group of the form:
\begin{align*}
\xymatrix{0\ar[r]&R\ar[r]^f&F\ar[r]&\mathbb{Q}\ar[r]&0}.
\end{align*}
We can use that free abelian groups are direct sums of copies of $\mathbb{Z}$ to write this as:
\begin{align*}
\xymatrix{0\ar[r]&\underset{i}{\oplus}\mathbb{Z}\ar[r]^f&\underset{j}{\oplus}\mathbb{Z}\ar[r]&\mathbb{Q}\ar[r]&0,}
\end{align*}
with a preferred factor of $\underset{j}{\oplus}\mathbb{Z}$ which maps $1\in \mathbb{Z}$ to $1\in\mathbb{Q}$. If $M$ is any abelian group we can use that $\mathbb{Q}$ is flat to deduce that the following sequence is exact:
\begin{align*}
\xymatrix{0\ar[r]&\underset{i}{\oplus}M\ar[r]^f&\underset{j}{\oplus}M\ar[r]&M\otimes\mathbb{Q}\ar[r]&0}.
\end{align*}
If $H$ is any subgroup of $G$, we can apply this for $M=A(H)$ to deduce that there is an injective map:
\begin{align*}
f\colon \underset{i}{\oplus}A(H)\rightarrow \underset{j}{\oplus}A(H)
\end{align*}
such that $A(H)\otimes\mathbb{Q}\cong \underset{j}{\oplus}A(H)/\underset{i}{\oplus}A(H)$.
\end{construction}
The following is another characterisation of the Burnside ring of a profinite group in terms of homotopy theory of spheres, as seen in \cite[Lemma 7.11]{Fausk}.
\begin{theorem}\label{Burnsidehtpy}
If $G$ is a profinite group, then the Burnside ring\index{Burnside ring} $A(G)$ is equivalent to $\left[\mathbb{S},\mathbb{S}\right]^G$, where $\mathbb{S}$ is given by $\Sigma^{\infty}S^0$.
\end{theorem}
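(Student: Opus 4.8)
The plan is to realise $\left[\mathbb{S},\mathbb{S}\right]^G$ as $\pi_0^G(\mathbb{S})$, compute it additively via the Tom Dieck splitting, and then pin down the ring structure by reduction to the finite quotients of $G$. Since $\mathbb{S}=\Sigma^{\infty}S^0=\Sigma^{\infty}(G/G)_+$, the monoid $\left[\mathbb{S},\mathbb{S}\right]^G$ under composition is precisely $\pi_0^G(\mathbb{S})=\left[\Sigma^{\infty}G/G_+,\mathbb{S}\right]^G$ with its multiplication. By Corollary \ref{Ghomotpy} this group is $\colim_{W\in\mathcal{V}}\left[S^W,S^W\right]^{G\text{-space}}$, the equivariant stable $0$-stem. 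Applying the Tom Dieck splitting (Proposition \ref{tomDieck}) to $Y=S^0$, and noting that $(S^0)^H=S^0$ so that $EW_GH_+\wedge_{W_GH}(S^0)^H\cong BW_GH_+$, yields an additive isomorphism $\pi_0^G(\mathbb{S})\cong\bigoplus_{(H)}\pi_0\big(\Sigma^{\infty}BW_GH_+\big)$, the sum over conjugacy classes of open subgroups $H$ of $G$. As $G$ is profinite and $H$ is open, $W_GH=N_GH/H$ is finite, so $BW_GH$ is connected and $\pi_0(\Sigma^{\infty}BW_GH_+)$ is the coefficient group; hence $\pi_0^G(\mathbb{S})$ is free on the conjugacy classes of open subgroups, matching the additive basis $\{[G/H]\}$ of $A(G)$.

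Next I would exhibit the comparison ring homomorphism. Since $A(G)$ is the Grothendieck ring of finite $G$-sets, it suffices to send a finite $G$-set $T$ to the composite $\mathbb{S}=\Sigma^{\infty}(G/G)_+\xrightarrow{\ \tau_T\ }\Sigma^{\infty}T_+\xrightarrow{\ p_T\ }\Sigma^{\infty}(G/G)_+=\mathbb{S}$, where $\tau_T$ is the transfer assembled from the maps $\tau^H_K$ of Construction \ref{transfer} along an orbit decomposition of $T$, and $p_T$ is $\Sigma^{\infty}$ of the collapse $T_+\to S^0$. One checks that disjoint unions of $G$-sets go to sums, that $[G/G]$ goes to the identity, and --- using the base-change (double-coset) formula for transfers, which is the very same bookkeeping as the Mackey axiom of Definition \ref{defM1} --- that the cartesian product of $G$-sets corresponds to composition of stable endomorphisms of $\mathbb{S}$. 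Thus we obtain a ring homomorphism $\Phi\colon A(G)\to\left[\mathbb{S},\mathbb{S}\right]^G$, and it is visibly compatible with the projections $G\twoheadrightarrow G/N$, i.e.\ with the inflation maps $A(G/N)\to A(G)$ and the corresponding inflation of sphere spectra.

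Finally I would show $\Phi$ is an isomorphism. By Corollary \ref{Burnsidecolim}, $A(G)\cong\colim_N A(G/N)$, and the Tom Dieck computation of the first paragraph --- applied also to each finite $G/N$, together with the identification $W_GH\cong W_{G/N}(H/N)$ for $N\le H$ --- shows that $\left[\mathbb{S},\mathbb{S}\right]^G\cong\colim_N\left[\mathbb{S},\mathbb{S}\right]^{G/N}$; here one uses crucially that an open $H$ contains some $N$, so $HN/N$ ranges over only finitely many subgroups and the infinite direct sum is genuinely the colimit of the finite-level rings. For each finite group $G/N$, $\Phi$ restricts to the classical tom Dieck isomorphism $A(G/N)\cong\left[\mathbb{S},\mathbb{S}\right]^{G/N}$; concretely, via Corollary \ref{Ghomotpy} its inverse is the mark homomorphism sending a self-map $f$ of some $S^W$ to $(\deg f^K)_{(K)}$, and invertibility is the nonsingularity of the table of marks of Theorem \ref{burnchar}. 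Passing to the colimit over $N$ gives the theorem. The main obstacle is the multiplicativity of $\Phi$ --- matching cartesian products of $G$-sets with composition of stable endomorphisms of the sphere requires the base-change formula for transfers --- together with the finiteness input needed to identify $\pi_0^G(\mathbb{S})$ with the colimit of its finite-level analogues; it is precisely there that profiniteness (openness forcing finite Weyl groups and finitely many $HN/N$) enters.
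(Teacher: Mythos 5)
The paper itself gives no argument for this statement---it is quoted from Fausk (his Lemma 7.11)---so your proposal is supplying a proof where the thesis only cites one. Your overall strategy is the standard one and fits the paper's toolkit: identify $\left[\mathbb{S},\mathbb{S}\right]^G$ with the equivariant stable $0$-stem via Corollary \ref{Ghomotpy}, compare with $A(G)$ by the transfer-collapse homomorphism $\Phi$ built from Construction \ref{transfer}, and reduce to the classical tom Dieck/Segal theorem for the finite quotients $G/N$ using $A(G)\cong\colim_N A(G/N)$ from Corollary \ref{Burnsidecolim}; this mirrors the reductions the paper itself performs later (Lemma \ref{topmacklem1}, Lemma \ref{topmacklem2}).

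Two steps need more than you give them. First, the identification $\left[\mathbb{S},\mathbb{S}\right]^G\cong\colim_N\left[\mathbb{S},\mathbb{S}\right]^{G/N}$ does not follow from the tom Dieck splitting computation as you state it: knowing that both sides are free abelian on corresponding index sets (conjugacy classes of open subgroups of $G$, respectively of subgroups of $G/N$) does not show that the natural inflation-induced map is an isomorphism; for that you would have to prove that inflation along $G\rightarrow G/N$ respects the splittings summand by summand, which is itself a nontrivial naturality statement. The clean fix is the representation-level argument the paper uses to prove Lemma \ref{topmacklem2}, whose case $H=J=G$ is exactly what you need: by Corollary \ref{Ghomotpy} every class in $\left[\mathbb{S},\mathbb{S}\right]^G$ is represented by a $G$-map $S^V\rightarrow S^V$, by Lemma \ref{topmacklem1} one may take $V=V^N$ for some open normal $N$, and the same applies to homotopies, so every class and every relation is inflated from a finite quotient. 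Second, your description of the finite-level input is loose: integrally the mark homomorphism is injective but not surjective, so it is not an inverse to $\Phi$, and Theorem \ref{burnchar} is the rational statement; injectivity of $\Phi$ at each finite level does follow from marks, but surjectivity needs the triangularity of the transfer elements with respect to the tom Dieck splitting (or simply the explicit citation of the classical tom Dieck/Segal theorem, which you also make and which suffices). With these two repairs---and with multiplicativity either verified by the double-coset formula or, more economically, inherited from the finite levels once the colimit identification is known to be one of rings (composition commutes with inflation)---your outline becomes a complete proof.
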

In particular, as a consequence of this theorem we have a ring monomorphism:
\begin{align*}
\mathbb{Z}&\rightarrow A(G)\\n&\mapsto n*
\end{align*}
where $*$ is the one point space and $n*$ is the coproduct of $*$ with itself $n$ times. If $n$ is negative then we consider formal differences. This means that each integer determines a class in $\left[\mathbb{S},\mathbb{S}\right]^G$. The following lemma will relate this algebraic construction to homotopy theory of spheres in a way which helps us to proceed to construct the rational sphere spectrum. We first recall the following definition of a compact object in a stable model category.
\begin{definition}\label{compact}
We say that an object $X$ in a stable model category is said to be \textbf{compact}\index{compact} if for any collection of objects $\left\lbrace A_i\right\rbrace_{i\in I}$ we have an isomorphism:
\begin{align*}
\left[X,\underset{i\in I}{\coprod}A_i\right]\cong\underset{i\in I}{\bigoplus}\left[X,A_i\right].
\end{align*} 
\end{definition} 
\begin{lemma}
If $H$ is a subgroup of $G$ we have isomorphisms:
\begin{align*}
\left[\underset{i}{\bigvee}\mathbb{S},\underset{j}{\bigvee}\mathbb{S}\right]^H&\cong \underset{i}{\prod}\underset{j}{\bigoplus}\left[\mathbb{S},\mathbb{S}\right]^H\cong \underset{i}{\prod}\underset{j}{\bigoplus}A(H)\\&\cong \Hom_{A(H)}\left(\underset{i}{\bigoplus}A(H),\underset{j}{\bigoplus}A(H)\right),
\end{align*}
where the indices $i$ and $j$ are the same as in Construction \ref{rationalfree}.
\end{lemma}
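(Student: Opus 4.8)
The plan is to evaluate $\left[\bigvee_i \mathbb{S},\bigvee_j \mathbb{S}\right]^H$ by peeling off the two index sets one at a time, using that a wedge is a coproduct in the homotopy category and that $\mathbb{S}$ is a compact object, and then to translate the resulting abelian group into a Hom-module via Theorem \ref{Burnsidehtpy} and an elementary fact about free modules. The three isomorphisms in the statement will then be the three stages of this translation.

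First I would use that $\bigvee_i \mathbb{S}$ is the coproduct of the $i$-indexed family of copies of $\mathbb{S}$ in the triangulated homotopy category of $H$-spectra, so that for any $H$-spectrum $Y$ there is a natural isomorphism $\left[\bigvee_i \mathbb{S}, Y\right]^H \cong \prod_i \left[\mathbb{S}, Y\right]^H$; taking $Y = \bigvee_j \mathbb{S}$ reduces everything to computing $\left[\mathbb{S}, \bigvee_j \mathbb{S}\right]^H$. Next I would invoke that $\mathbb{S} = \Sigma^{\infty}(H/H)_+$ is a compact object in the sense of Definition \ref{compact} --- this is part of the compactly generated model structure on $H$-spectra from \cite{Fausk} --- so that $\left[\mathbb{S}, \bigvee_j \mathbb{S}\right]^H \cong \bigoplus_j \left[\mathbb{S},\mathbb{S}\right]^H$. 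Splicing the two isomorphisms gives the first claimed isomorphism, $\left[\bigvee_i \mathbb{S},\bigvee_j \mathbb{S}\right]^H \cong \prod_i \bigoplus_j \left[\mathbb{S},\mathbb{S}\right]^H$. Since $H$, being closed in the profinite group $G$, is itself profinite, Theorem \ref{Burnsidehtpy} applied with $H$ in place of $G$ identifies $\left[\mathbb{S},\mathbb{S}\right]^H$ with the Burnside ring $A(H)$, yielding the middle isomorphism $\prod_i \bigoplus_j \left[\mathbb{S},\mathbb{S}\right]^H \cong \prod_i \bigoplus_j A(H)$.

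The final isomorphism is purely algebraic. The module $\bigoplus_i A(H)$ is the coproduct of $i$-many copies of the rank-one free module $A(H)$ in $A(H)\leftmod$, so $\Hom_{A(H)}\left(\bigoplus_i A(H),\bigoplus_j A(H)\right) \cong \prod_i \Hom_{A(H)}\left(A(H),\bigoplus_j A(H)\right)$, and $\Hom_{A(H)}(A(H), N) \cong N$ for every $A(H)$-module $N$; combining these gives $\prod_i \bigoplus_j A(H)$, and chaining all three isomorphisms proves the lemma.

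The only point that is not completely formal is the compactness of $\mathbb{S}$: one needs that the sphere spectrum (equivalently $\Sigma^{\infty}(H/H)_+$) is a compact object of the stable homotopy category of $H$-spectra, which is part of the compact-generation package underlying \cite{Fausk}. It is also worth observing, for the use of this lemma in the sequel to Construction \ref{rationalfree}, that the composite isomorphism is the evident one --- sending a map of wedges to the ``matrix'' of its components under $\left[\mathbb{S},\mathbb{S}\right]^H \cong A(H)$ --- so that the algebraic map $f$ constructed there can actually be realised by a map of spectra; but for the statement as given only the abstract isomorphism of abelian groups is needed, and that follows at once from the steps above.
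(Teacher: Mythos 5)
Your proposal is correct and follows essentially the same route as the paper: split the source wedge via the coproduct universal property, use $H$-compactness of $\mathbb{S}$ to split the target wedge, identify $[\mathbb{S},\mathbb{S}]^H$ with $A(H)$ via Theorem \ref{Burnsidehtpy}, and finish with $\Hom_{A(H)}(A(H),N)\cong N$ plus the universal property of the direct sum. The extra remarks on where compactness comes from and on the explicit matrix description are consistent with, though not required by, the paper's argument.
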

\begin{proof}
We begin with an application of the universal properties of colimits to deduce the following isomorphism:
\begin{align*}
\left[\underset{i}{\bigvee}\mathbb{S},\underset{j}{\bigvee}\mathbb{S}\right]^H\cong\underset{i}{\prod}\left[\mathbb{S},\underset{j}{\bigvee}\mathbb{S}\right]^H.
\end{align*}
We next apply the fact that $\mathbb{S}$ is $H$-compact and the fact that $A(H)\cong \left[\mathbb{S},\mathbb{S}\right]^H$, as in Theorem \ref{Burnsidehtpy}, to deduce the following:
\begin{align*}
\left[\underset{i}{\bigvee}\mathbb{S},\underset{j}{\bigvee}\mathbb{S}\right]^H\cong  \underset{i}{\prod}\underset{j}{\bigoplus}A(H).
\end{align*}
For the final isomorphism we use the fact that for any ring $R$ there is an isomorphism $\text{Hom}_{R\text{-Mod}}(R,M)\cong M$ for any $R$-module $M$, along with the universal properties of colimits.
\end{proof}
This lemma is useful since the monomorphism:
\begin{align*}
f\colon \underset{i}{\oplus}A(H)\rightarrow \underset{j}{\oplus}A(H)
\end{align*}
whose quotient determines the rational Burnside ring of $H$ yields a morphism:
\begin{align*}
g\colon \hat{f}\underset{i}{\bigvee}\mathbb{S}\rightarrow \hat{f}\underset{j}{\bigvee}\mathbb{S}
\end{align*}
whose homotopy class corresponds to $f$. We can now define the rational sphere spectrum.
\begin{definition}\label{ratspheredefn}
The \textbf{rational sphere spectrum}\index{rational sphere spectrum} $S^0\mathbb{Q}$ is defined by the following cofibre sequence:
\begin{align*}
\xymatrix{\hat{f}\underset{i}{\bigvee}\mathbb{S}\ar[r]^g&\hat{f}\underset{j}{\bigvee}\mathbb{S}\ar[r]&S^0\mathbb{Q}}.
\end{align*}
where $g$ is defined above.
\end{definition}
\begin{remark}\label{SQmapspectra}
The definition of the rational sphere spectrum comes with a canonical inclusion $\mathbb{S}\rightarrow S^0\mathbb{Q}$. To see this observe the following:
\begin{align*}
\xymatrix{\mathbb{S}\ar[r]^{\iota}&\underset{j}{\bigvee}\mathbb{S}\ar[r]^Q&\hat{f}\underset{j}{\bigvee}\mathbb{S}\ar[r]&S^0\mathbb{Q}.}
\end{align*}  
The first map is an inclusion of $\mathbb{S}$ into the wedge of the spheres corresponding to the chosen summand, which maps $1\in \mathbb{Z}$ to $1\in\mathbb{Q}$ from Construction \ref{rationalfree}. The map $Q$ is the map into the fibrant replacement of the wedge of spheres, and the final map is the cofibre map in the definition of $S^0\mathbb{Q}$.

If $X$ is a $G$-spectrum, we can smash both sides of this map by $X$ to obtain the following morphism:
\begin{align*}
X\rightarrow X\wedge S^0\mathbb{Q}.
\end{align*}
\end{remark}
 
Note that this definition is independent of the choice of representative $g$. In order to construct the rational model structure on orthogonal $G$-spectra we have to apply Bousfield localisation with respect to the rational sphere spectrum. In order to achieve this we need to understand the following definition.
\begin{definition}
Let $E$ be a cofibrant $G$-spectrum and let $X,Y$ and $Z$ be any $G$-spectra. 
\begin{enumerate}
\item A map $f\colon X\rightarrow Y$ is an \textbf{$E$-equivalence} if $\id_E\wedge f:E\wedge X\rightarrow E\wedge Y$ is a weak equivalence. 
\item $Z$ is \textbf{$E$-local} if $f^*\colon \left[Y,Z\right]^G\rightarrow \left[X,Z\right]^G$ is an isomorphism for all \\$E$-equivalences $f\colon X\rightarrow Y$.
\item An \textbf{$E$-localisation} of $X$ is an $E$-equivalence $\lambda\colon X\rightarrow Y$ from $X$ to an $E$-local object $Y$.
\item $Z$ is \textbf{$E$-acyclic} if the map $*\rightarrow Z$ is an $E$-equivalence.
\end{enumerate}  
\end{definition}
The following defines the Bousfield localisation\index{Bousfield localisation} of a model category.
\begin{definition}
If $E$ is a cofibrant $G$-spectrum, then the category of orthogonal $G$-spectra has an $E$-model structure whose weak equivalences are given by the $E$-equivalences and whose $E$-cofibrations are the same cofibrations as the original model structure. The $E$-fibrant objects are the fibrant $E$-local objects and $E$-fibrant approximation constructs a Bousfield localisation $f_X\colon X\rightarrow \hat{f}_EX$ of $X$ at $E$.  
\end{definition}
See \cite[Theorem 4.1.1]{Hirschhorn} or \cite[Definition 2.3]{BousfieldBarnes}. Since $S^0\mathbb{Q}$ is cofibrant we can consider the $S^0\mathbb{Q}$-model structure denoted $L_{S^0\mathbb{Q}}\mathcal{I}^{\mathcal{V}}_G\mathcal{S}$. The morphisms in the homotopy category are denoted by $[-,-]^G_{\mathbb{Q}}$. We can now define the rational homotopy groups of a $G$-spectrum. The following proposition states that the rational homotopy groups of a $G$-spectrum $X$ are as expected.
\begin{proposition}\label{rationalhomo}
Let $X$ be a $G$-spectrum, then we have an isomorphism \\$[\mathbb{S},X]_{\mathbb{Q}}^H=\pi^H_*(X\wedge S^0\mathbb{Q})\cong \pi^H_*(X)\otimes \mathbb{Q}$.
\end{proposition}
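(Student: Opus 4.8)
The plan is to prove the two displayed isomorphisms separately, treating the computation $\pi_*^H(X\wedge S^0\mathbb{Q})\cong\pi_*^H(X)\otimes\mathbb{Q}$ first, since the identification of $[\mathbb{S},X]^H_{\mathbb{Q}}$ with $\pi_*^H(X\wedge S^0\mathbb{Q})$ will depend on it.

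For the computation, I would smash the defining cofibre sequence of Definition \ref{ratspheredefn}, namely \[ \hat{f}\textstyle\bigvee_i\mathbb{S}\xrightarrow{\,g\,}\hat{f}\textstyle\bigvee_j\mathbb{S}\longrightarrow S^0\mathbb{Q}, \] with $X$ to obtain a cofibre sequence of $G$-spectra, and then take the long exact sequence in $\pi_*^H$. Since fibrant replacement is a weak equivalence and smashing with $X$ commutes with wedges, $X\wedge\hat{f}\bigvee_i\mathbb{S}\simeq\bigvee_i X$, and because $\pi_*^H$ commutes with coproducts of spectra this gives $\pi_*^H(X\wedge\hat{f}\bigvee_i\mathbb{S})\cong\bigoplus_i\pi_*^H(X)$, and similarly for $j$. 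The key point is that $g$ was chosen (in the lemma preceding Definition \ref{ratspheredefn}) so that, for every open $H$, its homotopy class corresponds to the integer matrix $f$ of Construction \ref{rationalfree} under $\mathbb{Z}\hookrightarrow A(H)$; hence $\pi_*^H(\id_X\wedge g)$ is precisely $f\otimes_{\mathbb{Z}}\id_{\pi_*^H(X)}\colon\bigoplus_i\pi_*^H(X)\to\bigoplus_j\pi_*^H(X)$. Applying $(-)\otimes_{\mathbb{Z}}\pi_n^H(X)$ to the free resolution $0\to\bigoplus_i\mathbb{Z}\xrightarrow{f}\bigoplus_j\mathbb{Z}\to\mathbb{Q}\to 0$ and using flatness of $\mathbb{Q}$ over $\mathbb{Z}$ shows this map is injective with cokernel $\pi_n^H(X)\otimes\mathbb{Q}$. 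Injectivity of these maps collapses the long exact sequence into short exact sequences $0\to\bigoplus_i\pi_n^H(X)\to\bigoplus_j\pi_n^H(X)\to\pi_n^H(X\wedge S^0\mathbb{Q})\to 0$, yielding $\pi_n^H(X\wedge S^0\mathbb{Q})\cong\pi_n^H(X)\otimes\mathbb{Q}$.

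For the first isomorphism, recall that in the Bousfield localisation $L_{S^0\mathbb{Q}}\mathcal{I}^{\mathcal{V}}_G\mathcal{S}$ one has $[\mathbb{S},X]^H_{\mathbb{Q}}\cong[\mathbb{S},\hat{f}_{S^0\mathbb{Q}}X]^H=\pi_*^H(\hat{f}_{S^0\mathbb{Q}}X)$, since $\mathbb{S}$ is cofibrant. Thus it suffices to identify the $S^0\mathbb{Q}$-local replacement of $X$ with $X\wedge S^0\mathbb{Q}$. I would check that the canonical map $X\to X\wedge S^0\mathbb{Q}$ of Remark \ref{SQmapspectra} is an $S^0\mathbb{Q}$-equivalence and that $X\wedge S^0\mathbb{Q}$ is $S^0\mathbb{Q}$-local. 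For the former, smashing with $S^0\mathbb{Q}$ and using the second isomorphism already proved (applied to $S^0\mathbb{Q}\wedge X$) identifies $\id_{S^0\mathbb{Q}\wedge X}\wedge(\mathbb{S}\to S^0\mathbb{Q})$ on $\pi_*^H$ with the natural map $\pi_*^H(S^0\mathbb{Q}\wedge X)\to\pi_*^H(S^0\mathbb{Q}\wedge X)\otimes\mathbb{Q}$, which is an isomorphism because $\pi_*^H(S^0\mathbb{Q}\wedge X)\cong\pi_*^H(X)\otimes\mathbb{Q}$ is already a $\mathbb{Q}$-vector space. For the latter, $S^0\mathbb{Q}$ is a homotopy-commutative ring spectrum (its unit induces $\mathbb{Q}$ on $\pi_0$) and $X\wedge S^0\mathbb{Q}$ is a module over it, and modules over a ring spectrum $E$ are $E$-local by the standard adjunction argument (alternatively, one invokes that the $S^0\mathbb{Q}$-local objects are exactly those whose fixed-point homotopy groups are rational). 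Combining these, $\hat{f}_{S^0\mathbb{Q}}X\simeq X\wedge S^0\mathbb{Q}$, and hence $[\mathbb{S},X]^H_{\mathbb{Q}}\cong\pi_*^H(X\wedge S^0\mathbb{Q})\cong\pi_*^H(X)\otimes\mathbb{Q}$.

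The main obstacle is the bookkeeping needed to see that $\pi_*^H(\id_X\wedge g)$ is \emph{literally} $f\otimes_{\mathbb{Z}}\id$: this uses that the matrix entries of $g$ lie in the image of $\mathbb{Z}\to A(H)$, so they act on $\pi_*^H(X)$ by multiplication by integers, together with the fact that $\pi_*^H$ of a wedge of copies of $X$ splits as the corresponding direct sum (which follows from $\pi_*^H$ commuting with filtered colimits and from finite wedges being finite products). A secondary point, which I would handle by citing the standard theory of homological localisation transported to the family of open subgroups, is the identification of the $S^0\mathbb{Q}$-localisation with $(-)\wedge S^0\mathbb{Q}$.
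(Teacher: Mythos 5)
Your argument for the isomorphism $\pi_*^H(X\wedge S^0\mathbb{Q})\cong\pi_*^H(X)\otimes\mathbb{Q}$ is exactly the paper's proof: smash the cofibre sequence of Definition \ref{ratspheredefn} with $X$, use that $\pi_*^H$ takes wedges to direct sums to identify $\pi_*^H(\id_X\wedge g)$ with $f\otimes_{\mathbb{Z}}\id$, and use flatness of $\mathbb{Q}$ to collapse the long exact sequence and read off the cokernel. The extra content in your proposal is the identification $[\mathbb{S},X]^H_{\mathbb{Q}}\cong\pi_*^H(X\wedge S^0\mathbb{Q})$, which the paper treats as given (the displayed ``='' is not argued in its proof); your route via showing $X\to X\wedge S^0\mathbb{Q}$ is an $S^0\mathbb{Q}$-equivalence with $S^0\mathbb{Q}$-local target is sound, but one caution: the parenthetical alternative you offer for locality, namely ``the $S^0\mathbb{Q}$-local objects are exactly those whose fixed-point homotopy groups are rational,'' is the paper's Lemma \ref{rathtpylemma}, which is proved \emph{after} and \emph{using} this proposition (via Corollary \ref{corollaryS0Qequiv}), so invoking it here would be circular. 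Your primary justification — that $X\wedge S^0\mathbb{Q}$ is a module over the homotopy ring spectrum $S^0\mathbb{Q}$ and modules over $E$ are $E$-local — avoids this, at the cost of verifying that the Moore spectrum $S^0\mathbb{Q}$ carries a homotopy ring structure, a standard fact but one not established anywhere in the paper.
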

\begin{proof}
Let $H$ be any open subgroup of $G$, then we use the cofibre sequence from Definition \ref{ratspheredefn} to consider the long exact sequence:
\begin{align*}
\xymatrix{\ldots\ar[r]&\pi_n^H\left(X\wedge\hat{f}\underset{i}{\bigvee}\mathbb{S}\right)\ar[r]^{(\id\wedge g)_*}&\pi_n^H\left(X\wedge\hat{f}\underset{j}{\bigvee}\mathbb{S}\right)\ar[r]&\pi_n^H(X\wedge S^0\mathbb{Q})\ar[r]&\ldots}.
\end{align*}
This sequence is equivalent to the following sequences:
\begin{align*}
\xymatrix{\ldots\ar[r]&\pi_n^H\left(\underset{i}{\bigvee}X\right)\ar[r]^{(\id\wedge g)_*}&\pi_n^H\left(\underset{j}{\bigvee}X\right)\ar[r]&\pi_n^H(X\wedge S^0\mathbb{Q})\ar[r]&\ldots,\\
\ldots\ar[r]&\underset{i}{\oplus}\pi_n^H(X)\ar[r]^{\id\otimes g}&\underset{j}{\oplus}\pi_n^H(X)\ar[r]&\pi_n^H(X\wedge S^0\mathbb{Q})\ar[r]&\ldots,\\
\ldots\ar[r]&\pi_n^H(X)\otimes\left(\underset{i}{\oplus}\mathbb{Z}\right)\ar[r]^{\id\otimes g}&\pi_n^H(X)\otimes\left(\underset{j}{\oplus}\mathbb{Z}\right)\ar[r]&\pi_n^H(X\wedge S^0\mathbb{Q})\ar[r]&\ldots}.
\end{align*}
Since $\mathbb{Q}$ is flat the following sequence is exact for any abelian group $M$:
\begin{align*}
\xymatrix{0\ar[r]&M\otimes \left(\underset{i}{\oplus}\mathbb{Z}\right)\ar[r]&M\otimes \left(\underset{j}{\oplus}\mathbb{Z}\right)\ar[r]&M\otimes\mathbb{Q}\ar[r]&0}.
\end{align*}
It follows that $\id\otimes g$ is injective. Using the exactness of the sequences we obtain that $\pi_*^H(X\wedge S^0\mathbb{Q})$ is the cokernel of $\id\otimes g$. We know however that this cokernel is also equal to $\pi_*^H(X)\otimes \mathbb{Q}$. 
\end{proof}
\begin{corollary}
If $A$ is a compact $G$-spectrum then $[A,X]_{\mathbb{Q}}^G=[A,X]^G\otimes\mathbb{Q}$.
\end{corollary}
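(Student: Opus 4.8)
The plan is to run the proof of Proposition \ref{rationalhomo} with $\mathbb{S}$ replaced by the compact spectrum $A$, the key new ingredient being that $[A,-]^G$ converts wedges into direct sums. First I would record that smashing with the canonical map $\mathbb{S}\to S^0\mathbb{Q}$ of Remark \ref{SQmapspectra} realises rationalisation at the level of mapping groups, so that $[A,X]^G_{\mathbb{Q}}\cong [A,X\wedge S^0\mathbb{Q}]^G$. This is exactly what is used implicitly in Proposition \ref{rationalhomo} for $A=\mathbb{S}$: the $S^0\mathbb{Q}$-localisation is smashing, $X\wedge S^0\mathbb{Q}$ is $S^0\mathbb{Q}$-local, and $A$ may be taken cofibrant, so $[A,-]^G_{\mathbb{Q}}$ is computed by mapping into the $S^0\mathbb{Q}$-fibrant replacement, which agrees with $X\wedge S^0\mathbb{Q}$ up to weak equivalence.

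Next I would smash the defining cofibre sequence of $S^0\mathbb{Q}$ from Definition \ref{ratspheredefn} with $X$, giving a cofibre sequence
\begin{align*}
X\wedge\hat{f}\underset{i}{\bigvee}\mathbb{S}\rightarrow X\wedge\hat{f}\underset{j}{\bigvee}\mathbb{S}\rightarrow X\wedge S^0\mathbb{Q},
\end{align*}
and apply $[A,-]^G$ to obtain a long exact sequence. Since $\hat{f}\bigvee_i\mathbb{S}\simeq\bigvee_i\mathbb{S}$, $\mathbb{S}\wedge X\cong X$, and smashing commutes with wedges and preserves weak equivalences between cofibrant objects by Ken Brown's Lemma \ref{KenBrown}, we get $X\wedge\hat{f}\bigvee_i\mathbb{S}\simeq\bigvee_i X$, and likewise for the index $j$. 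Compactness of $A$ (Definition \ref{compact}) then yields $[A,X\wedge\hat{f}\bigvee_i\mathbb{S}]^G\cong\bigoplus_i[A,X]^G\cong [A,X]^G\otimes(\bigoplus_i\mathbb{Z})$, and similarly for $j$; under these identifications the map induced by $g$ becomes $\mathrm{id}\otimes f$, where $f$ is the map from Construction \ref{rationalfree}.

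Finally, flatness of $\mathbb{Q}$ over $\mathbb{Z}$ makes the sequence $0\to M\otimes(\bigoplus_i\mathbb{Z})\to M\otimes(\bigoplus_j\mathbb{Z})\to M\otimes\mathbb{Q}\to 0$ exact for $M=[A,X]^G$, so $\mathrm{id}\otimes f$ is injective, the connecting homomorphisms vanish, and the long exact sequence breaks into short exact sequences exhibiting $[A,X\wedge S^0\mathbb{Q}]^G$ as $\coker(\mathrm{id}\otimes f)=[A,X]^G\otimes\mathbb{Q}$. Combining with the first step gives $[A,X]^G_{\mathbb{Q}}=[A,X]^G\otimes\mathbb{Q}$. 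The main obstacle is the first step: justifying $[A,X]^G_{\mathbb{Q}}\cong[A,X\wedge S^0\mathbb{Q}]^G$ for an arbitrary compact $A$, i.e.\ that the localisation is smashing and $X\wedge S^0\mathbb{Q}$ is suitably local/fibrant, together with the cofibrancy bookkeeping for $A$; the remainder is the same long-exact-sequence-and-tensor-product manipulation as in Proposition \ref{rationalhomo}.
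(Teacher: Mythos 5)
Your proposal is correct and follows essentially the same route as the paper: the paper's proof simply reruns Proposition \ref{rationalhomo} with the compact object $A$ in place of $\mathbb{S}$, using compactness so that $[A,-]^G$ converts wedges into direct sums, exactly as you do. Your extra care about identifying $[A,X]^G_{\mathbb{Q}}$ with $[A,X\wedge S^0\mathbb{Q}]^G$ is a point the paper treats as definitional (compare the proof of Lemma \ref{tenscom}), so it adds rigour but does not change the argument.
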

\begin{proof}
The argument for this is the same as the proof of Proposition \ref{rationalhomo}. As $A$ is compact $[A,\underset{i}{\bigvee}X_i]\cong\underset{i}{\oplus}[A,X_i]$, which was needed in Proposition \ref{rationalhomo}.
\end{proof}
\begin{corollary}\label{corollaryS0Qequiv}
A morphism $f\colon X\rightarrow Y$ is an $S^0\mathbb{Q}$-equivalence if and only if $\pi_*^H(f)\otimes\mathbb{Q}$ is an isomorphism for all $H$ open. Also $\mathbb{S}\rightarrow S^0\mathbb{Q}$ induced by Construction \ref{ratspheredefn} is an $S^0\mathbb{Q}$-equivalence.
\end{corollary}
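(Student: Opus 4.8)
The plan is to deduce everything from Proposition \ref{rationalhomo}, together with the observation that the isomorphism $\pi_*^H(X \wedge S^0\mathbb{Q}) \cong \pi_*^H(X) \otimes \mathbb{Q}$ appearing there is natural in $X$. Indeed, the proof of Proposition \ref{rationalhomo} identifies $\pi_n^H(X \wedge S^0\mathbb{Q})$ with the cokernel of the map $\id \otimes g \colon \pi_n^H(X) \otimes \left(\bigoplus_i \mathbb{Z}\right) \to \pi_n^H(X) \otimes \left(\bigoplus_j \mathbb{Z}\right)$ coming from the cofibre sequence of Definition \ref{ratspheredefn}, and every identification used there (that $\pi_n^H(X \wedge \hat{f}\bigvee_i \mathbb{S}) \cong \pi_n^H(\bigvee_i X) \cong \bigoplus_i \pi_n^H(X)$, and the flatness computation of the cokernel as $\pi_n^H(X)\otimes\mathbb{Q}$) is natural in $X$. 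So $\pi_*^H(-\wedge S^0\mathbb{Q})$ and $\pi_*^H(-)\otimes\mathbb{Q}$ are naturally isomorphic functors on $G$-spectra.

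For the first statement, unwind the definition of an $E$-equivalence with $E = S^0\mathbb{Q}$ (which is cofibrant): $f \colon X \to Y$ is an $S^0\mathbb{Q}$-equivalence precisely when $\id_{S^0\mathbb{Q}} \wedge f$ is a weak equivalence, that is, when $\pi_*^H(\id_{S^0\mathbb{Q}} \wedge f)$ is an isomorphism for every open $H$. By the naturality just recorded, $\pi_*^H(\id_{S^0\mathbb{Q}} \wedge f)$ is isomorphic to $\pi_*^H(f) \otimes \mathbb{Q}$, so these two conditions coincide, giving the claim.

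For the second statement, by the first claim it suffices to check that $\pi_*^H(\lambda) \otimes \mathbb{Q}$ is an isomorphism for every open $H$, where $\lambda \colon \mathbb{S} \to S^0\mathbb{Q}$ is the canonical map of Remark \ref{SQmapspectra}. Smashing $\lambda$ with $X = \mathbb{S}$ recovers the map $\mathbb{S} \to \mathbb{S} \wedge S^0\mathbb{Q}$, and tracing through the proof of Proposition \ref{rationalhomo} shows that $\pi_*^H(\lambda) \colon \pi_*^H(\mathbb{S}) \to \pi_*^H(S^0\mathbb{Q}) \cong \pi_*^H(\mathbb{S}) \otimes \mathbb{Q}$ is exactly the rationalisation map: it is induced by the inclusion of the preferred summand $\mathbb{Z} \hookrightarrow \bigoplus_j \mathbb{Z}$ from Construction \ref{rationalfree} followed by the projection onto the cokernel. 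Applying $- \otimes \mathbb{Q}$ to the rationalisation map $\pi_*^H(\mathbb{S}) \to \pi_*^H(\mathbb{S}) \otimes \mathbb{Q}$ yields the rationalisation map of the already-rational group $\pi_*^H(\mathbb{S}) \otimes \mathbb{Q}$, hence an isomorphism, since $N \to N \otimes \mathbb{Q}$ is an isomorphism whenever $N$ is a $\mathbb{Q}$-module. Thus $\lambda$ is an $S^0\mathbb{Q}$-equivalence.

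The main obstacle I anticipate is bookkeeping rather than anything conceptual: one must extract the naturality of the isomorphism in Proposition \ref{rationalhomo} (which is stated there for a fixed $X$), and one must correctly identify $\pi_*^H(\lambda)$ with the rationalisation map. The cleanest route for the latter is to note that $\lambda$ smashed with $X$ is precisely the map $X \to X \wedge S^0\mathbb{Q}$ of Remark \ref{SQmapspectra}, so it is enough to see that on $\pi_*^H$ this is the canonical map of $\pi_*^H(X)$ into the cokernel computed in the proof of Proposition \ref{rationalhomo}, which is the rationalisation by construction. Everything else is a direct translation between the definition of $E$-equivalence and the computation of rational homotopy groups.
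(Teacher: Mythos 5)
Your proposal is correct and follows essentially the same route as the paper: the first statement is deduced from Proposition \ref{rationalhomo} (you simply make the implicit naturality explicit), and for the second statement your check that $\pi_*^H(\lambda)\otimes\mathbb{Q}$ is an isomorphism, via identifying $\pi_*^H(\lambda)$ with the rationalisation map and using $\mathbb{Q}\otimes_{\mathbb{Z}}\mathbb{Q}\cong\mathbb{Q}$, is the same computation the paper performs on $\pi_*^H(\lambda\wedge\id_{S^0\mathbb{Q}})\colon \pi_*^H(\mathbb{S})\otimes\mathbb{Q}\rightarrow\pi_*^H(\mathbb{S})\otimes\mathbb{Q}\otimes\mathbb{Q}$.
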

\begin{proof}
The first statement is a direct consequence of Proposition \ref{rationalhomo}. For the second, suppose $f\colon\mathbb{S}\rightarrow S^0\mathbb{Q}$ is induced by Remark \ref{SQmapspectra}. To see that this is an $S^0\mathbb{Q}$-equivalence, we look at the following map:
\begin{align*}
\pi_*^H(f\wedge\id)\colon \pi_*^H(\mathbb{S}\wedge S^0\mathbb{Q})\rightarrow \pi^H_*(S^0\mathbb{Q}\wedge S^0\mathbb{Q}).
\end{align*}
Applying the fact that $\mathbb{S}$ is the unit and Proposition \ref{rationalhomo} we have:
\begin{align*}
\pi_*^H(f\wedge\id)\colon \pi_*^H(\mathbb{S})\otimes\mathbb{Q}&\rightarrow \pi^H_*(\mathbb{S})\otimes\mathbb{Q}\otimes\mathbb{Q}\\ \left[a,q\right]&\mapsto \left[a,1,q\right]
\end{align*}
which is an isomorphism. Here we use that $\mathbb{Q}\underset{\mathbb{Z}}{\otimes}\mathbb{Q}\cong\mathbb{Q}$.
\end{proof}
\begin{lemma}\label{rathtpylemma}
If $X$ is a $G$-spectrum then the following are equivalent:
\begin{enumerate}
\item $X$ is $S^0\mathbb{Q}$-local.
\item $\pi_*^H(X)$ is a $\mathbb{Q}$-module for every open subgroup $H$.
\item The morphism $X\rightarrow X\wedge S^0\mathbb{Q}$ from Remark \ref{SQmapspectra} is a $\pi_*$-isomorphism.
\end{enumerate}
\end{lemma}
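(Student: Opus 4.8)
The plan is to prove the cycle of implications $(1)\Rightarrow(2)\Rightarrow(3)\Rightarrow(1)$, where the first step uses the defining property of $S^0\mathbb{Q}$-localisation together with Corollary \ref{corollaryS0Qequiv}, the second is a direct translation of Proposition \ref{rationalhomo}, and the third is the substantive one.

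For $(1)\Rightarrow(2)$, fix an open subgroup $H$ and an integer $k$, and use the identification $\pi_k^H(X)\cong[\Sigma^k\Sigma^{\infty}G/H_+,X]^G$ coming from $\Sigma^{\infty}G/H_+\simeq G_+\wedge_H\mathbb{S}$ and the restriction/$G_+\wedge_H(-)$ adjunction. For each $n\geq 1$ the self-map $n\cdot\id$ of $\Sigma^k\Sigma^{\infty}G/H_+$ induces multiplication by $n$ on $\pi_*^{H'}(-)$ for every open $H'$, so $\pi_*^{H'}(n\cdot\id)\otimes\mathbb{Q}$ is an isomorphism and hence, by Corollary \ref{corollaryS0Qequiv}, $n\cdot\id$ is an $S^0\mathbb{Q}$-equivalence. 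Since $X$ is $S^0\mathbb{Q}$-local, $(n\cdot\id)^{*}$ is an isomorphism of $[\Sigma^k\Sigma^{\infty}G/H_+,X]^G=\pi_k^H(X)$, i.e.\ multiplication by $n$ is invertible on $\pi_k^H(X)$ for all $n\geq 1$; a uniquely divisible abelian group is a $\mathbb{Q}$-module.

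For $(2)\Rightarrow(3)$, the map $X\to X\wedge S^0\mathbb{Q}$ of Remark \ref{SQmapspectra} induces, under the identification $\pi_*^H(X\wedge S^0\mathbb{Q})\cong\pi_*^H(X)\otimes\mathbb{Q}$ of Proposition \ref{rationalhomo}, the canonical map $m\mapsto m\otimes 1$; this is an isomorphism precisely when $\pi_*^H(X)$ is already a $\mathbb{Q}$-module, so in that case $X\to X\wedge S^0\mathbb{Q}$ is a $\pi_*$-isomorphism. (Reading the same identification backwards gives $(3)\Rightarrow(2)$ as a sanity check.) For $(3)\Rightarrow(1)$, note that a $\pi_*$-isomorphism is a weak equivalence of the stable model structure and that being $S^0\mathbb{Q}$-local depends only on the weak-equivalence class of an object, since it is phrased entirely via maps in the homotopy category; thus it suffices to show $X\wedge S^0\mathbb{Q}$ is $S^0\mathbb{Q}$-local for every $X$ — equivalently, that rationalisation is smashing. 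For this I would take an $S^0\mathbb{Q}$-equivalence $f\colon A\to B$, note that its cofibre $C$ has $\pi_*^H(C)\otimes\mathbb{Q}=0$ for all open $H$ (Corollary \ref{corollaryS0Qequiv}), i.e.\ all homotopy groups torsion, and argue that $[C,Y]^G=0$ whenever every $\pi_*^H(Y)$ is a $\mathbb{Q}$-module, via a cellular induction over the cells $\Sigma^k\Sigma^{\infty}G/H_+$ using the skeletal long exact sequences together with $\Hom_{\mathbb{Z}}(T,V)=0$ and $\Ext^1_{\mathbb{Z}}(T,V)=0$ for $T$ torsion and $V$ a $\mathbb{Q}$-module ($\mathbb{Q}$-modules being injective over $\mathbb{Z}$). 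Applying $[-,X\wedge S^0\mathbb{Q}]^G$ to $A\to B\to C$ then shows $f^{*}$ is an isomorphism, using that $\pi_*^H(X\wedge S^0\mathbb{Q})$ is a $\mathbb{Q}$-module by Proposition \ref{rationalhomo}.

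The main obstacle is exactly this last point: establishing $S^0\mathbb{Q}$-locality of $X\wedge S^0\mathbb{Q}$, i.e.\ the vanishing $[C,Y]^G=0$ for $\pi_*C$ torsion and $\pi_*Y$ rational, which for non-compact $C$ requires a convergence argument for the relevant skeletal (Atiyah--Hirzebruch type) spectral sequence. An alternative I would consider is to bypass the hands-on argument by invoking the general theory of homological Bousfield localisation — that localisation at a connective commutative ring spectrum such as $S^0\mathbb{Q}$ is smashing, so $\hat f_{S^0\mathbb{Q}}X\simeq X\wedge S^0\mathbb{Q}$ — from which $(1)\Leftrightarrow(3)$ is immediate, leaving only the elementary implications $(1)\Rightarrow(2)\Rightarrow(3)$ to check as above.
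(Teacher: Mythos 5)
Your implications $(1)\Rightarrow(2)$ and $(2)\Rightarrow(3)$ are sound and essentially the argument the paper gives (the paper runs $(1)\Rightarrow(2)$ with self-maps $n\colon\mathbb{S}\rightarrow\mathbb{S}$ rather than of $\Sigma^k\Sigma^{\infty}G/H_+$, but the mechanism via Corollary \ref{corollaryS0Qequiv} and Proposition \ref{rationalhomo} is the same). The genuine gap is in $(3)\Rightarrow(1)$. Your route requires knowing that $X\wedge S^0\mathbb{Q}$ is $S^0\mathbb{Q}$-local for every $X$, i.e.\ that rationalisation is smashing, and you do not actually establish this: the cellular induction showing $[C,Y]^G=0$ for $C$ with torsion homotopy and $Y$ with rational homotopy is left at the level of a sketch, with the convergence/$\lim^1$ issue over infinitely many cells acknowledged but not resolved (note also that the skeletal subquotients of $C$ do not themselves have torsion homotopy, so the induction needs a genuine universal-coefficient or Atiyah--Hirzebruch input, not just $\Hom$ and $\Ext^1$ out of torsion groups). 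Your fallback is worse: the principle ``localisation at a connective commutative ring spectrum is smashing'' is false in general --- $H\mathbb{F}_p$ is such a ring spectrum and $H\mathbb{F}_p$-localisation is $p$-completion, which is not smashing --- and while Bousfield's theorem does give smashing for the rational Moore spectrum non-equivariantly, no such result is cited or available off the shelf in the profinite-equivariant setting of this paper, so it cannot simply be invoked.

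The paper avoids the smashing question altogether, and you could too. Take the fibrant replacement $\lambda\colon X\rightarrow f_{\mathbb{Q}}X$ in the $S^0\mathbb{Q}$-local model structure, whose codomain is $S^0\mathbb{Q}$-local by construction and where $\lambda$ is an $S^0\mathbb{Q}$-equivalence. In the square comparing $X\rightarrow X\wedge S^0\mathbb{Q}$ with $f_{\mathbb{Q}}X\rightarrow f_{\mathbb{Q}}X\wedge S^0\mathbb{Q}$, the left vertical map is a $\pi_*$-isomorphism by hypothesis $(3)$, the bottom map $\lambda\wedge\id_{S^0\mathbb{Q}}$ is a $\pi_*$-isomorphism because $\lambda$ is an $S^0\mathbb{Q}$-equivalence, and the right vertical map is a $\pi_*$-isomorphism because $f_{\mathbb{Q}}X$ is local, so its homotopy groups are rational by the already-proved $(1)\Rightarrow(2)$ together with Corollary \ref{corollaryS0Qequiv}. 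Two-out-of-three then makes $\lambda$ itself a $\pi_*$-isomorphism, and since being $S^0\mathbb{Q}$-local is invariant under weak equivalence (\cite[Proposition 3.2.2]{Hirschhorn}), $X$ is local. This only uses the existence of the localised model structure, which the paper has already set up, and requires no smashing theorem or cellular convergence argument.
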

\begin{proof}
We begin by showing that condition one implies condition two. Take any $S^0\mathbb{Q}$-local spectrum $X$. By considering the class of the identity morphism $[\id_{\mathbb{S}}]$ in $\pi_*(\mathbb{S})$, we add $[\id_{\mathbb{S}}]$ to itself $n$ times to obtain a representative $n\colon \mathbb{S}\rightarrow \mathbb{S}$. We consider the morphism:
\begin{align*}
n\wedge \id\colon \mathbb{S}\wedge S^0\mathbb{Q}\rightarrow \mathbb{S}\wedge S^0\mathbb{Q}.
\end{align*}
By Proposition \ref{rationalhomo} we know that this is a $\pi_*$-isomorphism, hence $n$ is an $S^0\mathbb{Q}$-equivalence. By definition of $X$ being $S^0\mathbb{Q}$-local we know that the following map is an isomorphism:
\begin{align*}
n^*\colon \pi_*(X)\rightarrow \pi_*(X).
\end{align*}
We next show that condition two implies condition three. By assumption the map:
\begin{align*}
\pi_*(X)&\rightarrow\pi_*(X)\otimes\mathbb{Q}\\
a&\mapsto [a,1]
\end{align*}
is an isomorphism. From Construction \ref{rationalfree} and Definition \ref{ratspheredefn}, this is the map induced by the morphism in condition three by $\pi_*(-)$ as required. To see that condition three implies condition one, consider the fibrant replacement of $X$ with respect to the rational model structure, $\lambda\colon X\rightarrow f_{\mathbb{Q}}X$, with $S^0\mathbb{Q}$-local codomain. We observe the following square:
\begin{align*}
\xymatrix{X\ar[r]\ar[d]&f_{\mathbb{Q}}X\ar[d]\\X\wedge S^0\mathbb{Q}\ar[r]&f_{\mathbb{Q}}X\wedge S^0\mathbb{Q}}
\end{align*}  
The left hand side arrow is a $\pi_*$-isomorphism by assumption, the bottom arrow is a $\pi_*$-isomorphism by definition of fibrant replacement and the right hand side arrow is given by $\pi_*(\id)\otimes\mathbb{Q}$. Using the condition one implies condition two implication proved earlier, we can apply Corollary \ref{corollaryS0Qequiv} to deduce that the right hand side arrow is a $\pi_*$-isomorphism. We also use the fact that both the source and target have rational homotopy groups. This implies that $X\rightarrow f_{\mathbb{Q}}X$ is a $\pi_*$-isomorphism. Since $X$ is weakly equivalent to an $S^0\mathbb{Q}$-local object we can deduce that $X$ is $S^0\mathbb{Q}$-local, as in \cite[Proposition 3.2.2]{Hirschhorn}.
\end{proof}
\begin{proposition}\label{rationhtpycat}
If $X$ and $Y$ are $G$-spectra then the homotopy classes of maps from $X$ to $Y$ in the homotopy category with respect to the rational model structure, denoted $\left[X,Y\right]^G_{\mathbb{Q}}$, is a $\mathbb{Q}$-module. 
\end{proposition}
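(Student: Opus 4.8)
The plan is to show that multiplication by every positive integer $n$ is a bijection on the abelian group $[X,Y]^G_{\mathbb{Q}}$; an abelian group with this property is uniquely divisible and hence carries a (unique) $\mathbb{Q}$-module structure. First I would recall why $[X,Y]^G_{\mathbb{Q}}$ is an abelian group at all. Replacing $X$ by a cofibrant object and $Y$ by an $S^0\mathbb{Q}$-fibrant object $\hat{f}_{\mathbb{Q}}Y$ (so in particular fibrant in $\mathcal{I}^{\mathcal{V}}_G\mathcal{S}$ and $S^0\mathbb{Q}$-local), the standard identification of mapping objects in a left Bousfield localisation gives $[X,Y]^G_{\mathbb{Q}}\cong [X,\hat{f}_{\mathbb{Q}}Y]^G$, the latter being morphisms in the homotopy category of the original, stable model structure; since that category is triangulated, all of its hom-sets are abelian groups.

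Next, fix $n\geq 1$ and let $n\colon\hat{f}_{\mathbb{Q}}Y\rightarrow\hat{f}_{\mathbb{Q}}Y$ be a representative of the $n$-fold sum $\id+\cdots+\id$ in $[\hat{f}_{\mathbb{Q}}Y,\hat{f}_{\mathbb{Q}}Y]^G$. Postcomposition with this self-map induces multiplication by $n$ on $[X,\hat{f}_{\mathbb{Q}}Y]^G$, so it suffices to prove that $n$ is a weak equivalence in $\mathcal{I}^{\mathcal{V}}_G\mathcal{S}$. Because $\hat{f}_{\mathbb{Q}}Y$ is $S^0\mathbb{Q}$-local, Lemma \ref{rathtpylemma} tells us that $\pi_*^K(\hat{f}_{\mathbb{Q}}Y)$ is a $\mathbb{Q}$-module for every open subgroup $K$ of $G$. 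Since each functor $\pi_*^K(-)$ is additive, $\pi_*^K(n)$ is multiplication by $n$ on $\pi_*^K(\hat{f}_{\mathbb{Q}}Y)$, which is an isomorphism of $\mathbb{Q}$-modules. Hence $n\colon\hat{f}_{\mathbb{Q}}Y\rightarrow\hat{f}_{\mathbb{Q}}Y$ is a $\pi_*$-isomorphism, that is, a weak equivalence, and therefore $n_*\colon[X,\hat{f}_{\mathbb{Q}}Y]^G\rightarrow[X,\hat{f}_{\mathbb{Q}}Y]^G$ is a bijection.

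Running this argument for every $n\geq 1$ exhibits $[X,Y]^G_{\mathbb{Q}}$ as a uniquely divisible abelian group, hence a $\mathbb{Q}$-module; one then checks that the resulting scalar action is natural in $X$ and $Y$ and compatible with the $A(G)\otimes\mathbb{Q}$-action coming from the smash product together with the ring map $\mathbb{Z}\rightarrow A(G)$ of Theorem \ref{Burnsidehtpy}, though this is not needed for the bare statement. The main obstacle is purely bookkeeping: one must be careful about the identification $[X,Y]^G_{\mathbb{Q}}\cong[X,\hat{f}_{\mathbb{Q}}Y]^G$ and about the fact that a self-map inducing multiplication by $n$ on all of the $\pi_*^K$ is already a weak equivalence once those groups are rational — but both are immediate from the way the $S^0\mathbb{Q}$-model structure is set up and from Lemma \ref{rathtpylemma}, so no new homotopy-theoretic input is required. (Alternatively, one could bypass the fibrant replacement and argue directly from Corollary \ref{corollaryS0Qequiv}, noting that a self-map of $Y$ representing $n\cdot\id_Y$ is an $S^0\mathbb{Q}$-equivalence because it is multiplication by $n$ on each $\pi_*^H(Y)\otimes\mathbb{Q}$.)
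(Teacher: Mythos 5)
Your proof is correct and takes essentially the same approach as the paper: both establish that $\left[X,Y\right]^G_{\mathbb{Q}}$ is uniquely divisible by showing a self-map representing multiplication by $n$ becomes invertible, with Lemma \ref{rathtpylemma} supplying the rationality input. The only (dual) difference is that you place the map $n$ on the $S^0\mathbb{Q}$-fibrant replacement of $Y$ and use postcomposition with a genuine $\pi_*$-isomorphism, whereas the paper places $n_X$ on $X$ and uses precomposition together with the $S^0\mathbb{Q}$-locality of $Y$ and Corollary \ref{corollaryS0Qequiv}; your parenthetical alternative is essentially the paper's argument.
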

\begin{proof}
In order to prove that $\left[X,Y\right]^G_{\mathbb{Q}}$ is a $\mathbb{Q}$-module we will show that for each $n\in\mathbb{Z}$, there is a self map $n^*$ on $\left[X,Y\right]^G_{\mathbb{Q}}$ which is an isomorphism. Let $n\in \mathbb{Z}$ arbitrary and $X$ be a $G$-spectrum. Then we have a morphism $n_X$ in $\left[X,X\right]^G_{\mathbb{Q}}$ given by adding the class of the identity to itself $n$ times. If we can show that $n_X\colon X\rightarrow X$ is an $S^0\mathbb{Q}$-equivalence, then it would follow from Lemma \ref{rathtpylemma} that we have the following isomorphism (since we can assume $Y$ to be $S^0\mathbb{Q}$-fibrant):
\begin{align*}
n_X^*\colon \left[X,Y\right]^G_{\mathbb{Q}}&\rightarrow \left[X,Y\right]^G_{\mathbb{Q}}\\\left[\alpha\right]&\mapsto \left[\alpha\circ n_X\right]
\end{align*}
as required. We need to know that $n_X$ induces an isomorphism of rational homotopy groups, which follows from Corollary \ref{corollaryS0Qequiv}.
\end{proof}
Rational $G$-spectra is related to $G$-equivariant cohomology theories as seen in the introduction. We next characterise the homotopy category of rational $G$-spectra, where we are working strictly with profinite groups. To do this we define the category $\pi_0(\mathfrak{O}^{\mathbb{Q}}_G)$ to be the full subcategory of the homotopy category of rational $G$-spectra, whose objects are the orbit spectra $\Sigma^{\infty}G/H_{+}$ for $H$ an open subgroup of $G$. We will denote the objects in this category by $G/H_{+}$ and denote by $\pi_0(\mathfrak{O}_G)$ the integral version of $\pi_0(\mathfrak{O}^{\mathbb{Q}}_G)$. The next characterisation utilises \cite[Theorem 5.1.1]{StableModel}.
\begin{theorem}\label{GSpChar}
Let $G$ be a profinite group. The category of rational $G$-spectra is Quillen equivalent to the category $\ch\left(\fun_{\text{Ab}}\left(\pi_0(\mathfrak{O}^{\mathbb{Q}}_G),\mathbb{Q}\Mod\right)\right)$, where we consider contravariant functors and $\ch(-)$ represents the category of chain complexes. The last category has the projective model structure.
\end{theorem}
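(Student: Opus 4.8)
The plan is to apply the general recognition theorem of Schwede--Shipley, \cite[Theorem 5.1.1]{StableModel}, which says that a stable model category with a set of compact generators is Quillen equivalent to modules over the endomorphism ringoid of those generators. So the first step is to identify the generators: I would take the set $\{\Sigma^{\infty}G/H_{+} \mid H \leq_O G\}$ of orbit suspension spectra for $H$ ranging over open subgroups of $G$, viewed inside the rational model structure $L_{S^0\mathbb{Q}}\mathcal{I}^{\mathcal{V}}_G\mathcal{S}$. I would verify that these objects are compact in the sense of Definition \ref{compact} (this follows from the corresponding integral statement used already in the excerpt, together with the fact that $[A,X]^G_{\mathbb{Q}} = [A,X]^G\otimes\mathbb{Q}$ for $A$ compact) and that they generate: a rational $G$-spectrum $X$ with $[\Sigma^{\infty}G/H_+, X]^G_{\mathbb{Q}} = 0$ for all open $H$ and all degrees has $\pi^H_*(X)\otimes\mathbb{Q} = 0$ for all open $H$, hence is rationally trivial by Lemma \ref{rathtpylemma} and the definition of the rational model structure. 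The category is stable because it is a Bousfield localisation of the stable model category of orthogonal $G$-spectra.

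Next I would compute the endomorphism ringoid. By definition $\pi_0(\mathfrak{O}^{\mathbb{Q}}_G)$ is exactly the full subcategory of $\Ho(\Gspectra_{\mathbb{Q}})$ on the objects $\Sigma^{\infty}G/H_+$, so the homotopy groups of the function spectra $F(\Sigma^{\infty}G/H_+, \Sigma^{\infty}G/K_+)$ assemble into a graded category. The crucial input, repeatedly flagged in the excerpt, is that for profinite $G$ these graded morphism groups are concentrated in degree zero: $[\Sigma^{\infty}G/H_+, \Sigma^{\infty}G/K_+]^G_*$ vanishes outside degree $0$. This should be extracted from Corollary \ref{Ghomotpy} (reducing stable maps to a colimit of equivariant homotopy classes of maps of spaces) together with the Tom Dieck splitting, Proposition \ref{tomDieck}, applied to $Y = G/K_+ \wedge S^W$: the relevant spaces $EW_GH_+\wedge_{W_GH} (G/K)^H$ are, after rationalisation, built from finite sets (since $H, K$ are open, $W_GH$ and the fixed point sets are finite), so their stable homotopy is rationally concentrated in degree zero. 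Given this vanishing, the endomorphism ringoid is formal --- it is quasi-isomorphic to its $\pi_0$, which is precisely $\pi_0(\mathfrak{O}^{\mathbb{Q}}_G)$ as an $\mathrm{Ab}$-enriched (indeed $\mathbb{Q}$-linear) category.

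Then I would invoke \cite[Theorem 5.1.1]{StableModel}: a stable model category with a set $\mathcal{P}$ of compact generators whose endomorphism ringoid has homology concentrated in degree zero is Quillen equivalent to $\ch$ of the category of $\mathrm{Ab}$-enriched contravariant functors from $\pi_0$ of the endomorphism ringoid (here $\pi_0(\mathfrak{O}^{\mathbb{Q}}_G)$) to $\mathbb{Q}\Mod$, with the projective model structure. The only point requiring care beyond citing the theorem is matching conventions: \cite{StableModel} works with modules over a DG-ringoid, and one must check that the formality above lets us replace the DG-ringoid by its homology without changing the homotopy category of modules, which is standard when the homology is concentrated in a single degree (the DG-ringoid is intrinsically formal). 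I would also note that the values land in $\mathbb{Q}\Mod$ rather than just $\mathrm{Ab}$ because $[-,-]^G_{\mathbb{Q}}$ is a $\mathbb{Q}$-module by Proposition \ref{rationhtpycat}.

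The main obstacle is the degreewise-concentration claim for $[\Sigma^{\infty}G/H_+,\Sigma^{\infty}G/K_+]^G_*$ in the profinite (non-finite) setting; this is the technical heart, and it is the step that genuinely uses profiniteness (openness of $H$ and $K$ forcing the relevant Weyl groups and fixed-point sets to be finite, so that the Tom Dieck summands rationalise to products of shifts of rational stable cohomotopy of finite sets). Everything else --- compactness, generation, stability of the localised category, and the bookkeeping in applying \cite[Theorem 5.1.1]{StableModel} --- is either already available in the excerpt or routine. A secondary, purely formal point to be careful about is the passage from a DG-endomorphism-ringoid to its homology ringoid, but intrinsic formality of objects concentrated in one degree handles this cleanly.
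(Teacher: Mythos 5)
Your proposal is correct and follows essentially the same route as the paper: it verifies that the orbit spectra $\Sigma^{\infty}G/H_{+}$ for $H$ open are compact generators whose graded morphisms are concentrated in degree zero (the paper's Proposition \ref{tilt}), applies \cite[Theorem 5.1.1]{StableModel}, and then upgrades the target category from $\text{Ab}$ to $\mathbb{Q}\Mod$ using Proposition \ref{rationhtpycat} and Lemma \ref{tenscom}. The only divergence is that the paper simply cites \cite[Theorem 2.9]{BarZp} for the degree-zero concentration instead of reproving it via Corollary \ref{Ghomotpy} and the Tom Dieck splitting as you sketch, and it leaves the formality bookkeeping implicit in the ``tiltor'' formulation of the Schwede--Shipley theorem rather than discussing it separately.
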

In order to apply \cite[Theorem 5.1.1]{StableModel} we need to prove that the homotopy classes of maps $\left[G/H_{+},G/K_{+}\right]_*^{G}$ are concentrated in degree zero, and that the objects of $\pi_0(\mathfrak{O}_G^{\mathbb{Q}})$ are compact generators of $G\text{-Spectra}_{\mathbb{Q}}$. Using the terminology of \cite{StableModel} this says that the objects of $\pi_0(\mathfrak{O}_G^{\mathbb{Q}})$ are \textbf{tiltors}. We recall Definition \ref{compact} for the definition of a compact object. We now define a generating set of objects of a category.
\begin{definition}
If $\mathcal{T}$ is a triangulated category, a \textbf{localising category}\index{localising category} is a full triangulated subcategory (with triangles inherited from $\mathcal{T}$) which is closed under coproducts. A set of objects $\mathcal{P}$ of $\mathcal{T}$ are called \textbf{generators}\index{generators} if the only localising subcategory of $\mathcal{T}$ containing $\mathcal{P}$ is $\mathcal{T}$. 
\end{definition}
The definition of compact generators can be found in \cite[Definition 2.1.2]{StableModel}. We now observe a lemma which is seen in \cite[Lemma 2.2.1]{StableModel} and provides a characterisation of generators of a category.
\begin{lemma}\label{generatorcharlem}
Let $\mathcal{T}$ be a triangulated category with infinite coproducts and $\mathcal{P}$ be a collection of compact objects. Then the following conditions are equivalent:
\begin{enumerate}
\item $\mathcal{P}$ is a collection of generators,
\item an object $X$ of $\mathcal{T}$ is trivial if and only if $\left[P,X\right]_*=0$ for all $P\in\mathcal{P}$.
\end{enumerate}
\end{lemma}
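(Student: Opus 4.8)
The plan is to prove the equivalence of the two conditions by a standard triangulated-category argument, exploiting compactness of the objects in $\mathcal{P}$. Throughout I write $\langle \mathcal{P}\rangle$ for the smallest localising subcategory of $\mathcal{T}$ containing $\mathcal{P}$, so that condition (1) says $\langle\mathcal{P}\rangle = \mathcal{T}$.

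For the implication $(1)\Rightarrow(2)$, one direction is trivial: if $X$ is trivial (that is, a zero object), then $[P,X]_* = 0$ for every $P$. For the converse, suppose $[P,X]_* = 0$ for all $P\in\mathcal{P}$. The key step is to show that the full subcategory
\begin{align*}
\mathcal{X} = \{ Y\in\mathcal{T} \mid [Y,X]_* = 0 \}
\end{align*}
is a localising subcategory: it is closed under shifts since $[Y,X]_*$ is graded; it is closed under triangles because applying $[-,X]$ to a distinguished triangle gives a long exact sequence, so if two of the three terms vanish so does the third; and it is closed under coproducts since $[\coprod_i Y_i, X]_* \cong \prod_i [Y_i,X]_*$. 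By hypothesis $\mathcal{P}\subseteq\mathcal{X}$, so by condition (1) we get $\mathcal{X} = \mathcal{T}$, whence $[X,X]_* = 0$, so $\id_X = 0$ and $X$ is trivial.

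For the implication $(2)\Rightarrow(1)$, this is where compactness of $\mathcal{P}$ is essential and where the main work lies. The strategy is to use the fact that, because the objects of $\mathcal{P}$ are compact, the inclusion $\langle\mathcal{P}\rangle\hookrightarrow\mathcal{T}$ admits a right adjoint; equivalently, every object $X$ of $\mathcal{T}$ fits into a distinguished triangle
\begin{align*}
X' \longrightarrow X \longrightarrow X'' \longrightarrow \Sigma X'
\end{align*}
with $X'\in\langle\mathcal{P}\rangle$ and $X''$ right-orthogonal to $\langle\mathcal{P}\rangle$, i.e.\ $[P,X'']_* = 0$ for all $P\in\mathcal{P}$. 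This is the Bousfield localisation / Brown representability input; it can be constructed explicitly by an iterated cofibre construction (repeatedly killing all maps out of (shifts of) objects of $\mathcal{P}$ into the current object), and compactness is what guarantees the transfinite colimit behaves correctly, namely that $[P,-]_*$ commutes with the relevant coproducts and sequential homotopy colimits so that $[P,X'']_* = 0$ genuinely holds. Granting this triangle, condition (2) applied to $X''$ forces $X'' \simeq 0$, hence $X \simeq X' \in \langle\mathcal{P}\rangle$. Since $X$ was arbitrary, $\langle\mathcal{P}\rangle = \mathcal{T}$, which is condition (1).

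The main obstacle is the construction of the orthogonality triangle in $(2)\Rightarrow(1)$: one must either invoke a form of Brown representability for $\mathcal{T}$ (valid here since the homotopy category of rational $G$-spectra is compactly generated and hence satisfies Brown representability), or carry out the explicit cellular tower argument and verify the compactness-based colimit interchange. Everything else — the closure properties of $\mathcal{X}$, the long exact sequence manipulations — is routine. Since the reference \cite[Lemma 2.2.1]{StableModel} is cited, it would be legitimate simply to appeal to that; but I would at least sketch the orthogonality-triangle argument so the role of compactness is transparent.
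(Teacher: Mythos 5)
Your proposal is correct, and in fact it supplies more than the paper does: the paper states this lemma with only the citation \cite[Lemma 2.2.1]{StableModel} and gives no argument of its own, so there is no in-paper proof to compare against. Your two halves are exactly the standard argument underlying that citation: $(1)\Rightarrow(2)$ via the localising subcategory $\mathcal{X}=\{Y\mid [Y,X]_*=0\}$ (correctly noting, implicitly, that this direction needs no compactness, since $[\coprod_i Y_i,X]_*\cong\prod_i[Y_i,X]_*$ holds for any $X$), and $(2)\Rightarrow(1)$ via the colocalisation triangle $X'\rightarrow X\rightarrow X''$ with $X'\in\langle\mathcal{P}\rangle$ and $[P,X'']_*=0$ for all $P\in\mathcal{P}$, obtained either from Brown representability applied to the compactly generated subcategory $\langle\mathcal{P}\rangle$ or by the explicit cellular tower, with compactness used to pass $[P,-]_*$ through the coproducts and the telescope. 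Two small points of care: $\mathcal{P}$ should be a set (as in the cited lemma) rather than an arbitrary collection, since both the adjoint-functor argument and the cellular tower form coproducts indexed over all maps out of shifts of objects of $\mathcal{P}$; and your parenthetical appeal to the homotopy category of rational $G$-spectra being compactly generated is beside the point at this level of generality --- the right adjoint to the inclusion exists because $\langle\mathcal{P}\rangle$ is itself compactly generated by $\mathcal{P}$ (its coproducts agree with those of $\mathcal{T}$ because localising subcategories are closed under coproducts), so the lemma is proved as an abstract statement about triangulated categories, which is how it is then applied in Proposition \ref{tilt}.
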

We begin working towards proving this theorem by understanding the following lemma.
\begin{lemma}\label{tenscom}
There is an equivalence of categories:
\begin{align*}
\pi_0(\mathfrak{O}^{\mathbb{Q}}_G)\cong \pi_0(\mathfrak{O}_G\otimes \mathbb{Q}).
\end{align*}
\end{lemma}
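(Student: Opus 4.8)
The plan is to exhibit the claimed equivalence as an isomorphism of categories which is the identity on objects, the point being entirely about the morphism groups. Both $\pi_0(\mathfrak{O}^{\mathbb{Q}}_G)$ and $\pi_0(\mathfrak{O}_G\otimes\mathbb{Q})$ have as objects the orbit spectra $G/H_+=\Sigma^{\infty}G/H_+$ for $H$ an open subgroup of $G$, with the morphism groups of the latter obtained by applying $-\otimes\mathbb{Q}$ to those of $\pi_0(\mathfrak{O}_G)$. So essential surjectivity is immediate, and what must be shown is that for open subgroups $H,K$ of $G$ the natural comparison map
\begin{align*}
[\Sigma^{\infty}G/H_+,\Sigma^{\infty}G/K_+]^G\otimes\mathbb{Q}\longrightarrow[\Sigma^{\infty}G/H_+,\Sigma^{\infty}G/K_+]^G_{\mathbb{Q}}
\end{align*}
is an isomorphism, compatibly with composition.

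First I would construct this map. The localisation functor from the homotopy category of $G$-spectra to that of rational $G$-spectra — equivalently, smashing with the canonical map $\mathbb{S}\to S^0\mathbb{Q}$ of Remark \ref{SQmapspectra} — induces an additive map $[\Sigma^{\infty}G/H_+,\Sigma^{\infty}G/K_+]^G\to[\Sigma^{\infty}G/H_+,\Sigma^{\infty}G/K_+]^G_{\mathbb{Q}}$; since the target is a $\mathbb{Q}$-module by Proposition \ref{rationhtpycat}, this extends uniquely to a $\mathbb{Q}$-linear map out of the tensor product. Carrying this out for all pairs $(H,K)$ defines an identity-on-objects functor $F\colon\pi_0(\mathfrak{O}_G\otimes\mathbb{Q})\to\pi_0(\mathfrak{O}^{\mathbb{Q}}_G)$: functoriality is automatic because localisation preserves composition and identities, and the $\mathbb{Q}$-linear extension of a composition-preserving additive map on the integral hom groups is still composition-preserving, the $\mathbb{Q}$-bilinear extension of a $\mathbb{Z}$-bilinear pairing being unique.

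The key step is to see that $F$ is fully faithful, i.e. that the displayed maps are isomorphisms. For this I would invoke the Corollary following Proposition \ref{rationalhomo}, which gives $[A,X]^G_{\mathbb{Q}}\cong[A,X]^G\otimes\mathbb{Q}$ whenever $A$ is a compact $G$-spectrum, noting that the isomorphism produced there is precisely the one induced by $X\to X\wedge S^0\mathbb{Q}$ and hence agrees with $F$. It then only remains to check that $\Sigma^{\infty}G/H_+$ is compact in the sense of Definition \ref{compact}: as $H$ is open it has finite index, so $G/H$ is a finite discrete $G$-set, $G/H_+$ a finite $G$-CW complex, and its suspension spectrum is compact by an induction over the cells (the mapping-out functor commutes with coproducts, using that the sphere and its orbit analogues are compact, as recorded before Definition \ref{compact}).

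The one point requiring a little care — and where I expect the argument to be most delicate — is the compatibility of the Corollary's isomorphism with the composition pairing, since that result is stated only as an isomorphism of abelian groups. I would handle this by either re-examining the proof to record that the identification of $[A,X]^G_{\mathbb{Q}}$ as a cokernel is natural in both $A$ and $X$, so that the composition pairing $[B,C]\otimes[A,B]\to[A,C]$ is transported to the expected one; or, more directly, observing that $F$ sends $x\otimes q$ to $q$ times the localisation of $x$, and that both composition laws restrict to the evident pairing on the image of the integral hom groups and are $\mathbb{Q}$-bilinear, hence coincide. Either way one concludes that $F$ is an isomorphism of categories, a fortiori the desired equivalence.
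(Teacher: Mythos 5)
Your proposal is correct and follows essentially the paper's own route: the substance in both cases is Proposition \ref{rationalhomo} (equivalently its corollary for compact objects, whose proof is the same), identifying $[G/H_+,G/J_+]^G_{\mathbb{Q}}$ with $[G/H_+,G/J_+]^G\otimes\mathbb{Q}$; the paper does this directly via $[G/H_+,Z]^G\cong\pi^H_*(Z)$ and $\pi^H_*(Z\wedge S^0\mathbb{Q})\cong\pi^H_*(Z)\otimes\mathbb{Q}$, while you route through compactness of the orbit spectra, which the paper verifies separately in Proposition \ref{tilt}. Your additional check that the identification respects composition, so that one genuinely gets an identity-on-objects equivalence rather than just hom-group isomorphisms, is a point the paper leaves implicit and is a welcome refinement rather than a departure.
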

\begin{proof}
To prove this we need to show that:
\begin{align*}
[G/H_+,G/J_+]_{\mathbb{Q}}=[G/H_+,G/J_+]\otimes\mathbb{Q}.
\end{align*}
We know that $[G/H_+,G/J_+]_{\mathbb{Q}}=[G/H_+,G/J_+\wedge S^0\mathbb{Q}]$ by definition. This is also the same as $\pi^H_*(G/J_+\wedge S^0\mathbb{Q})$. Applying Proposition \ref{rationalhomo} gives us that $[G/H_+,G/J_+]_{\mathbb{Q}}$ is equivalent to $\pi_*^H(G/J_+)\otimes\mathbb{Q}$. However the latter can also be written as $[G/H_+,G/J_+]\otimes\mathbb{Q}$, as required.
\end{proof}
\begin{theorem}
The morphisms in $\pi_0(\mathfrak{O}^{\mathbb{Q}}_G)$ are concentrated in degree zero.
\end{theorem}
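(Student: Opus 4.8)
The plan is to compute the graded group $[\Sigma^\infty G/H_+,\Sigma^\infty G/K_+]^G_*$ after rationalisation and to reduce it, via the Tom Dieck splitting, to the rational stable homotopy of classifying spaces of \emph{finite} groups, where vanishing above degree zero is classical. By Lemma \ref{tenscom} it is enough to show that $\pi_*^H(\Sigma^\infty G/K_+)\otimes\mathbb{Q}$ is concentrated in degree zero, since $[\Sigma^\infty G/H_+,\Sigma^\infty G/K_+]^G_*=\pi_*^H(\Sigma^\infty G/K_+)$ and rationalisation merely tensors this computation with $\mathbb{Q}$.

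First I would apply Proposition \ref{tomDieck} to the profinite group $H$ (which is profinite, being closed of finite index in $G$) and the based $H$-space obtained by restricting $G/K_+$ along $H\hookrightarrow G$. This yields
\[
\pi_*^H(\Sigma^\infty G/K_+)\cong\bigoplus_{(L)}\pi_*\!\left(\Sigma^\infty\!\left(EW_HL_+\underset{W_HL}{\wedge}(G/K)^L_+\right)\right),
\]
the sum ranging over conjugacy classes of open subgroups $L$ of $H$. The key point, and the place where profiniteness is essential, is that each Weyl group $W_HL=N_H(L)/L$ is \emph{finite}: since $L$ is open it is contained in $N_H(L)$, which is then open of finite index in $H$. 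Likewise $(G/K)^L$ is a finite set, being a subset of the finite discrete set $G/K$, carrying an action of the finite group $W_HL$.

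Next I would decompose $(G/K)^L_+=\bigvee_i (W_HL/J_i)_+$ as a wedge of based $W_HL$-orbits ($J_i\leq W_HL$), so that $EW_HL_+\wedge_{W_HL}(W_HL/J_i)_+\cong (EW_HL/J_i)_+$. Since $EW_HL$ is a free contractible $W_HL$-CW complex it is also a free contractible $J_i$-CW complex, so $EW_HL/J_i$ is a model for $BJ_i$. Hence each Tom Dieck summand is $\bigoplus_i\pi_*(\Sigma^\infty (BJ_i)_+)$. Finally, rationally $\pi_*(\Sigma^\infty X_+)\otimes\mathbb{Q}\cong H_*(X;\mathbb{Q})$, and $H_n(BJ;\mathbb{Q})=0$ for $n>0$ whenever $J$ is finite, because $|J|$ is invertible in $\mathbb{Q}$. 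Assembling these identifications shows that $\pi_*^H(\Sigma^\infty G/K_+)\otimes\mathbb{Q}$ is a $\mathbb{Q}$-vector space concentrated in degree zero, as required; the connectivity of suspension spectra disposes of negative degrees automatically.

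The genuinely load-bearing step is the finiteness of $W_HL$ for $L$ open in $H$, which is precisely what turns the Tom Dieck summands into suspension spectra of classifying spaces of finite (not merely profinite) groups, where the rational computation is available. The remaining ingredients — the identification $EW_HL\times_{W_HL}(W_HL/J)\simeq BJ$ and the rational acyclicity of $BJ$ for finite $J$ — are standard, as is the rational comparison $\pi_*(\Sigma^\infty X_+)\otimes\mathbb{Q}\cong H_*(X;\mathbb{Q})$; the only further care needed is the bookkeeping of restricting $G/K_+$ to an $H$-set before invoking the splitting.
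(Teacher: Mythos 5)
Your argument is correct, but it is worth noting that it is genuinely more self-contained than what the thesis does at this point: the text proves the statement by citation, deferring to \cite[Theorem 2.9]{BarZp}, whereas you reconstruct the underlying computation. Your route — identify $[\Sigma^{\infty}G/H_+,\Sigma^{\infty}G/K_+]^G_*$ with $\pi^H_*(\Sigma^{\infty}G/K_+)$, rationalise using the compactness of $G/H_+$ (Lemma \ref{tenscom} and its surrounding corollaries), split via Proposition \ref{tomDieck} applied to the profinite group $H$, and then reduce each summand to $\pi_*\bigl(\Sigma^{\infty}(BJ_i)_+\bigr)\otimes\mathbb{Q}\cong H_*(BJ_i;\mathbb{Q})$ for \emph{finite} groups $J_i$ — is exactly the standard mechanism, and you correctly isolate the load-bearing profinite input, namely that open subgroups $L\leq H$ have finite Weyl groups $W_HL$ and that $(G/K)^L$ is a finite $W_HL$-set; the connectivity of suspension spectra handles negative degrees. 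Two pieces of bookkeeping deserve to be made explicit if this were written out in full: the identification $[\Sigma^{\infty}G/H_+,X]^G_*\cong\pi^H_*(X)$ rests on the induction–restriction adjunction together with the fact that the complete $G$-universe restricts to a complete $H$-universe for $H$ open (the same point is used implicitly in Proposition \ref{tilt}), and applying Proposition \ref{tomDieck} to $H$ uses that an open subgroup of a profinite group is itself profinite. With those remarks your proof stands on its own, and it has the advantage over the thesis's citation of displaying precisely why the profinite hypothesis, rather than compactness alone, makes the degree-zero concentration work; the citation, in turn, buys brevity and consistency with the $\mathbb{Z}_p$ case treated in \cite{BarZp}.
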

\begin{proof}
See \cite[Theorem 2.9]{BarZp}.
\end{proof}
\begin{proposition}\label{tilt}
The objects of $\pi_0(\mathfrak{O}_G^{\mathbb{Q}})$ are tiltors for $G\text{-spectra}_{\mathbb{Q}}$. 
\end{proposition}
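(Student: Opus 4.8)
The plan is to establish the three conditions that, in the terminology of \cite{StableModel}, make a set of objects a collection of tiltors for $G\text{-spectra}_{\mathbb{Q}}$: the objects $\Sigma^{\infty}G/H_+$, for $H$ an open subgroup, must be (i) concentrated in degree zero, (ii) compact in $\Ho(G\text{-spectra}_{\mathbb{Q}})$, and (iii) a collection of generators. Condition (i) is precisely the statement of the preceding theorem (proved in \cite[Theorem 2.9]{BarZp}), so the genuine work is in (ii) and (iii).

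For compactness, I would first argue integrally: since $H$ is open, $G/H$ is a finite discrete $G$-set, so $\Sigma^{\infty}G/H_+$ is a finite $G$-CW spectrum and is one of the generating cells of the compactly generated model structure of \cite[Theorem 4.4]{Fausk}; hence it is a compact object of $\Ho(G\text{-spectra})$ in the sense of Definition \ref{compact}. To transfer this to the rational category, observe that by Lemma \ref{rathtpylemma} an ordinary wedge of $S^0\mathbb{Q}$-local spectra again has rational homotopy groups, hence is $S^0\mathbb{Q}$-local, so coproducts in $\Ho(G\text{-spectra}_{\mathbb{Q}})$ of local objects are computed as wedges. Replacing the $A_i$ by their rational fibrant replacements and combining the corollary $[A,X]^G_{\mathbb{Q}}\cong[A,X]^G\otimes\mathbb{Q}$ (valid for $A$ compact) with the fact that $(-)\otimes\mathbb{Q}$ commutes with direct sums gives
\begin{align*}
\Big[\Sigma^{\infty}G/H_+,\bigvee_i A_i\Big]^G_{\mathbb{Q}}\cong\Big(\bigoplus_i\big[\Sigma^{\infty}G/H_+,A_i\big]^G\Big)\otimes\mathbb{Q}\cong\bigoplus_i\big[\Sigma^{\infty}G/H_+,A_i\big]^G_{\mathbb{Q}},
\end{align*}
so each $\Sigma^{\infty}G/H_+$ is compact in $\Ho(G\text{-spectra}_{\mathbb{Q}})$.

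For generation I would invoke Lemma \ref{generatorcharlem}: since the orbit spectra are compact, it is enough to show that any rational $G$-spectrum $X$ with $\left[\Sigma^{\infty}G/H_+,X\right]^G_{\mathbb{Q},*}=0$ for every open $H$ is trivial in $\Ho(G\text{-spectra}_{\mathbb{Q}})$. Now $\left[\Sigma^{\infty}G/H_+,X\right]^G_{\mathbb{Q},*}=\pi^H_*(X\wedge S^0\mathbb{Q})$, which by Proposition \ref{rationalhomo} equals $\pi^H_*(X)\otimes\mathbb{Q}$. Thus the hypothesis says $\pi^H_*(X\wedge S^0\mathbb{Q})=0$ for all open $H$, i.e.\ $X\wedge S^0\mathbb{Q}\to\ast$ is a $\pi_*$-isomorphism, hence a weak equivalence for the model structure of Definition \ref{pidefinition} (whose weak equivalences are exactly the $\pi_*$-isomorphisms tested on open subgroups). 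Since $X\to X\wedge S^0\mathbb{Q}$ is an $S^0\mathbb{Q}$-equivalence by Corollary \ref{corollaryS0Qequiv}, the composite $X\to\ast$ is an $S^0\mathbb{Q}$-equivalence, so $X$ is trivial in the rational homotopy category; this proves (iii).

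The only substantive input here is (i), which is quoted rather than proved; the remaining steps are formal. The one place that calls for care in the write-up is the identification of coproducts in $\Ho(G\text{-spectra}_{\mathbb{Q}})$ with wedges of local objects — this is exactly where Lemma \ref{rathtpylemma} does its work — and the consistent bookkeeping that, on rational spectra, $\pi^H_*$ and $\pi^H_*((-)\wedge S^0\mathbb{Q})$ may be used interchangeably; once these are in place the argument goes through with no real obstacle.
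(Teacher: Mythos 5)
Your proposal is correct and follows essentially the same route as the paper: verify the three tiltor conditions, quoting the degree-zero statement, getting compactness from the identification $[\Sigma^{\infty}G/H_+,X]^G_*\cong\pi^H_*(X)$ together with homotopy groups commuting with wedges, and getting generation from Lemma \ref{generatorcharlem} plus the fact that weak equivalences are detected by $\pi^H_*$ on open subgroups. The only difference is one of bookkeeping: you make explicit the passage between the integral and rational categories (local wedges, $[A,X]^G_{\mathbb{Q}}\cong[A,X]^G\otimes\mathbb{Q}$), which the paper's terser proof leaves implicit.
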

\begin{proof}
We now prove that $\pi_0(\mathfrak{O}_G^{\mathbb{Q}})$ are tiltors. First notice that the homotopy classes of maps are concentrated in degree zero by Lemma \ref{tenscom}. For compactness, observe that for each $G/H_{+}$ and each $G$-spectrum $X$ we have:
\begin{align*}
\left[G/H_+,X\right]_*^G\cong [\mathbb{S},X]_*^H=\pi^H_*(X).
\end{align*}
We know also that for any collection of $G$-spectra $X_i$ we have:
\begin{align*}
\pi^H_*\left(\underset{i\in I}{\coprod}{X_i}\right)\cong \underset{i\in I}{\bigoplus}\pi^H_*\left(X_i\right),
\end{align*}
as required. To prove that these objects are generators we can apply Lemma \ref{generatorcharlem} since the objects $G/H_+$ are compact. This will show us that these objects are generators if $\left[G/H_+,X\right]_*=0$ for all $G/H_+$ implies that $X$ is trivial. We know that:
\begin{align*}
\left[G/H_+,X\right]_*=\pi_*^H(X).
\end{align*}
From the definition of the model structure on the model category of $G$-spectra we know that $\pi_*^H(X)=0$ for all $H$ open if and only if $X$ is trivial in the homotopy category, as required.
\end{proof}
We can now prove Theorem \ref{GSpChar}.
\begin{proof}
We begin by applying Proposition \ref{tilt} to deduce that $\pi_0(\mathfrak{O}_G^{\mathbb{Q}})$ are tiltors for $G\text{-Spectra}_{\mathbb{Q}}$. Therefore we can apply \cite[Theorem 5.1.1]{StableModel} to $G\text{-spectra}_{\mathbb{Q}}$, which tells us that it is Quillen equivalent to the model category:
\begin{align*}
\text{Ch}\left(\text{Fun}_{\text{Ab}}\left(\pi_0(\mathfrak{O}^{\mathbb{Q}}_{G}),\text{Ab}\right)\right), 
\end{align*}
which are the contravariant additive functors. The model structure on \\$\text{Ch}\left(\text{Fun}_{\text{Ab}}\left(\pi_0(\mathfrak{O}^{\mathbb{Q}}_{G}),\text{Ab}\right)\right)$ is given by the projective model structure, where the weak equivalences are the homology isomorphisms and the fibrations are the level-wise epimorphisms, see \cite[pp. 135]{StableModel}. Observe that in the model category of rational $G$-spectra we have $\left[X,Y\right]_*^G$ is a $\mathbb{Q}$-module by Proposition \ref{rationhtpycat}. Hence $\text{Fun}_{\text{Ab}}\left(\pi_0(\mathfrak{O}^{\mathbb{Q}}_{G}),\text{Ab}\right)$ is equivalent to $\text{Fun}_{\text{Ab}}\left(\pi_0(\mathfrak{O}^{\mathbb{Q}}_{G}),\mathbb{Q}\text{-Mod}\right)$. We can apply Lemma \ref{tenscom} in the following way. If we have a functor:
\begin{align*}
F:\pi_0(\mathfrak{O}^{\mathbb{Q}}_{G})\rightarrow \text{Ab},
\end{align*}
then we can see that $F(a)$ must belong to $\mathbb{Q}$-mod for any $a\in \pi_0(\mathfrak{O}^{\mathbb{Q}}_{G})$. Since Lemma \ref{tenscom} says that $\pi_0(\mathfrak{O}^{\mathbb{Q}}_{G})(a,a)\cong \pi_0(\mathfrak{O}_{G})(a,a)\otimes\mathbb{Q}$, we have a morphism $q_a=q\otimes\id$ on $a$ for any $q\in \mathbb{Q}$. Therefore $F(q_a)$ is an isomorphism on $F(a)$ for every $q\in\mathbb{Q}$, which defines the $\mathbb{Q}$-module structure on $F(a)$.

We know that the category of rational $G$-spectra is Quillen equivalent to the model category $\text{Ch}\left(\text{Fun}_{\text{Ab}}\left(\pi_0(\mathfrak{O}^{\mathbb{Q}}_{G}),\mathbb{Q}\text{-Mod}\right)\right)$.
\end{proof}
\section{Mackey functor characterisation}
In order to further characterise the category of rational $G$-spectra we will prove that the category of rational $G$-Mackey functors are equivalent to the category $\text{Fun}_{\text{Ab}}\left(\pi_0(\mathfrak{O}^{\mathbb{Q}}_G),\mathbb{Q}\text{-Mod}\right)$ of contravariant additive functors. We will use \cite[Proposition 5.9.6]{LMSMcC} which gives the corresponding result when $G$ is finite. $G$ will be assumed to be profinite throughout.

We denote by $\text{Orb}(G)$ the category of orbits $G/H$ for $H$ an open subgroup of $G$ and morphisms the $G$-equivariant maps. Notice these boil down to compositions of projections and conjugations. We begin by establishing a couple of useful lemmas which combine to generalise the argument given in the case when $G$ is finite to the more general profinite case. During these lemmas $G$ shall be assumed to be profinite.
\begin{lemma}\label{topmacklem1}
Let $U$ be a $G$-universe and $V\subseteq U$ a finite dimensional subrepresentation. Then there is an open normal subgroup $N$ of $G$ such that $V^N=V$. Consequently $U=\underset{N\underset{open}{\unlhd} G}{\lim}U^N$, where the diagram is given by the inclusions of fixed points.
\end{lemma}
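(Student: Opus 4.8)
The plan is to exploit the defining property of a $G$-universe: by the third axiom in Definition \ref{Univdefn}, the $G$-action on every finite dimensional $G$-subspace factors through a compact Lie group quotient of $G$. Since $G$ is profinite, every compact Lie group quotient of $G$ is a finite discrete group, so the action of $G$ on $V$ factors through $G/N$ for some open normal subgroup $N$ of finite index. First I would spell out this factorisation: the action map $G \to O(V)$ is continuous, $O(V)$ has the property that the image is a finite subgroup (being a compact Lie group quotient which is also profinite, hence finite), so the kernel $N$ is an open normal subgroup. Elements of $N$ act trivially on $V$, which is precisely the statement $V^N = V$.

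Next I would record the consequence that $U = \underset{N\underset{open}{\unlhd}G}{\lim}\,U^N$, where the maps in the diagram are the inclusions $U^{N'} \hookrightarrow U^N$ for $N \leq N'$ (note the containment reverses: a larger normal subgroup has a smaller fixed-point space, so the directed system is actually a filtered colimit rather than a limit — I would check the variance carefully here, since $U$ is topologised as the union of its finite dimensional $G$-subspaces, which suggests $U = \underset{N}{\colim}\,U^N$ is the natural statement, or equivalently the statement in the lemma should be read with the appropriate interpretation of $\lim$ over the cofiltered poset of open normal subgroups ordered by reverse inclusion). The key point is set-theoretic: given any vector $u \in U$, it lies in some finite dimensional $G$-subspace $V$ by the union topology, and by the first part $V = V^N \subseteq U^N$ for a suitable $N$; conversely each $U^N$ is contained in $U$. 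So as sets $U = \bigcup_N U^N$, and one then checks the topology agrees, which follows because both sides are topologised as unions/colimits of their finite dimensional pieces.

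The main obstacle — really the only subtle point — is the variance and the precise categorical meaning of the displayed formula: one must be careful that the poset of open normal subgroups is being used with the correct ordering so that "$\lim$" makes sense, and that the inclusions of fixed-point subspaces genuinely assemble $U$ rather than just a subspace of it. Resolving this amounts to combining the union-topology axiom for $U$ with the first assertion: every finite dimensional $G$-subspace is fixed by some open normal subgroup, so the finite dimensional $G$-subspaces are cofinal among the subspaces $U^N$, and $U$ is the (filtered) colimit of its finite dimensional $G$-subspaces by definition. The remaining verifications — that $N$ can be taken normal (intersect with conjugates, using finite index), that $V^N = V$ is equivalent to $N$ acting trivially, and that the topology on the colimit matches — are routine and I would not dwell on them.
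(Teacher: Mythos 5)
Your proposal is correct and follows essentially the same route as the paper: the third axiom of Definition \ref{Univdefn} plus profiniteness of $G$ forces the compact Lie quotient acting on $V$ to be finite discrete, giving an open normal $N$ with $V^N=V$, and then every $x\in U$ lies in some finite dimensional subrepresentation and hence in some $U^N$. Your caution about the variance of the displayed formula is reasonable but harmless here, since the paper's own argument is exactly the set-theoretic union argument you describe (one inclusion is obvious, the other uses the first part), so no further comparison is needed.
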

\begin{proof}
The first statement, that the action of $G$ on a finite subrepresentation $V$ factors through an open normal subgroup, follows from the third condition of Definition \ref{Univdefn}. This says that the action on $V$ factors through a compact Lie quotient $G/N$. However, we can apply the fact that $G$ is profinite to deduce that the only way $G/N$ can be both compact Lie and totally disconnected is if $G/N$ is a finite discrete group. Therefore $N$ is open normal and $V^N=V$. Clearly $\underset{N\underset{open}{\unlhd} G}{\lim}U^N\subseteq U$, for the other direction if $x\in U$ then we can choose a finite subrepresentation $V$ containing $x$. An application of the first part of the lemma gives an open normal subgroup $N$ of $G$ such that $V^N=V$. Therefore $x\in V^N\subseteq U^N$ and hence is represented in the limit.
\end{proof}
The next lemma is a key part of the main theorem of this chapter. For this lemma we will show that the $G$-homotopy classes of morphisms from $G/H_{+}$ to $G/J_{+}$ are determined by each of the $G/N$-homotopy classes of morphisms from $(G/N)/(H/N)_{+}$ to $(G/N)/(J/N)_{+}$ for $N$ open and normal in $G$ and contained in $H\cap J$. We use Corollary \ref{Ghomotpy} throughout, which tells us that:
\begin{align*}
\left[G/H_+,G/J_+\right]_*^{G}\cong\underset{V\in\mathcal{V}}{\colim}\left[S^V\wedge G/H_+,S^V\wedge G/J_+\right]_*^{G\text{-Space}}.
\end{align*}

If $N$ is an open normal subgroup of $G$ and $\varepsilon\colon G\rightarrow G/N$ is the quotient map we define the inflation functor:
\begin{align*}
\varepsilon^*\colon G/N\text{-space}\rightarrow G\text{-space}
\end{align*}
where if $X$ is a $G/N\text{-space}$ then $\varepsilon^*(X)$ has underlying set $X$ and the action is defined by $g*x=\varepsilon(g)x$ for $g\in G$ and $x\in X$.

Notice that if we consider the $G/N$-universe $U^N$ then each finite dimensional $W\subseteq U^N$ is of the form $V^N$ for $V\subseteq U$ finite dimensional. This is clear since by using the inflation functor we can see that $W\subseteq U$ is an indexing representation for $G$ and $(\varepsilon^*W)^N=W$.

Observe that if $N_1\leq N_2$ are both open normal subgroups of $G$ then we have a morphism:
\begin{align*}
\xymatrix{\left[(G/N_2)/(H/N_2)_{+},(G/N_2)/(J/N_2)_{+}\right]^{G/N_2}\ar[d]\\
\left[(G/N_1)/(H/N_1)_{+},(G/N_1)/(J/N_1)_{+}\right]^{G/N_1}}
\end{align*}
since if $V^{N_2}\subseteq U^{N_2}$ is a finite subrepresentation we can use that $N_1\leq N_2$ to deduce that $\left({V^{N_2}}\right)^{N_1}=V^{N_2}$, and hence that $V^{N_2}\subseteq U^{N_1}$ is an indexing representation for $U^{N_1}$ using the inflation functors. We can therefore view a representative $f\colon G/H_{+}\wedge S^{V^{N_2}}\rightarrow G/J_{+}\wedge S^{V^{N_2}}$ of a class in 
\begin{align*}
\left[(G/N_2)/(H/N_2)_{+},(G/N_2)/(J/N_2)_{+}\right]^{G/N_2}
\end{align*}
as a representative of a class in 
\begin{align*}
\left[(G/N_1)/(H/N_1)_{+},(G/N_1)/(J/N_1)_{+}\right]^{G/N_1}.
\end{align*}
\begin{lemma}\label{topmacklem2}
There is an isomorphism of $\mathbb{Q}$-modules: 
\begin{align*}
\underset{N\underset{open}{\unlhd} G}{\colim}\left[(G/N)/(H/N)_{+},(G/N)/(J/N)_{+}\right]^{G/N}\cong\left[G/H_{+},G/J_{+}\right]^G
\end{align*}
for $H$ and $J$ open subgroups of $G$. 
\end{lemma}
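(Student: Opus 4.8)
The plan is to evaluate both sides of the asserted isomorphism by means of the stable-to-unstable comparison of Corollary \ref{Ghomotpy}, and then to reorganise the resulting double colimit into a single colimit over $\mathcal{V}(U)$, the collapse being exactly what Lemma \ref{topmacklem1} provides.

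First I would set up the unstable input. Fix open subgroups $H,J\le G$. Since $H\cap J$ is open, hence of finite index, the open normal subgroups $N\unlhd G$ with $N\subseteq H\cap J$ are cofinal among all open normal subgroups (intersect any $M$ with $\mathrm{Core}_G(H\cap J)$), so it does no harm to index the left-hand colimit by such $N$. For each such $N$ the third isomorphism theorem (Lemma \ref{non3iso} with $A=H$, so that $NH=H$) furnishes a $G$-homeomorphism $\varepsilon_N^{*}\big((G/N)/(H/N)\big)\cong G/H$, where $\varepsilon_N\colon G\to G/N$ is the quotient, natural in $H$, and likewise for $J$; after adjoining disjoint basepoints, $\varepsilon_N^{*}\big((G/N)/(H/N)_+\big)\cong G/H_+$. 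Moreover $U^N$ is a $G/N$-universe with $\mathcal{V}(U^N)\subseteq\mathcal{V}(U)$ (a finite-dimensional $G/N$-subrepresentation of $U^N$ is, once inflated, a finite-dimensional $G$-subrepresentation of $U$), and inflation commutes with one-point compactification and with smash products, so $\varepsilon_N^{*}\big((G/N)/(H/N)_+\wedge S^{W}\big)\cong G/H_+\wedge S^{W}$ for every $W\in\mathcal{V}(U^N)$.

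Next I would check that inflation is a bijection on unstable homotopy classes: for based $G/N$-spaces $A$ and $B$ one has $[\varepsilon_N^{*}A,\varepsilon_N^{*}B]^{G\text{-space}}\cong[A,B]^{(G/N)\text{-space}}$, because a based map $A\to B$ intertwines the inflated $G$-actions if and only if it intertwines the $G/N$-actions, the same observation applied to $A\wedge[0,1]_+$ identifies $G$-homotopies of inflated maps with $G/N$-homotopies, and every $G$-map between inflated spaces is itself inflated from a $G/N$-map. Combining this with the homeomorphisms above gives, for each $W\in\mathcal{V}(U^N)$, a natural isomorphism
\[
\big[(G/N)/(H/N)_+\wedge S^{W},(G/N)/(J/N)_+\wedge S^{W}\big]^{(G/N)\text{-space}}\cong\big[G/H_+\wedge S^{W},G/J_+\wedge S^{W}\big]^{G\text{-space}},
\]
compatible with inclusions $W\subseteq W'$ and with the change-of-$N$ maps of the paragraph preceding the statement, since those were defined precisely by reinterpreting a representative over $U^{N_2}$ as one over $U^{N_1}\subseteq U$ when $N_1\le N_2$. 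Applying Corollary \ref{Ghomotpy} to the finite group $G/N$ (note that $(G/N)/(H/N)_+$ is a finite $G/N$-CW complex) then yields
\[
\big[(G/N)/(H/N)_+,(G/N)/(J/N)_+\big]^{G/N}\cong\underset{W\in\mathcal{V}(U^N)}{\colim}\,\big[G/H_+\wedge S^{W},G/J_+\wedge S^{W}\big]^{G\text{-space}},
\]
and one takes the colimit over $N$ (directed by reverse inclusion, the transition maps being the inclusions $\mathcal{V}(U^{N_2})\hookrightarrow\mathcal{V}(U^{N_1})$ for $N_1\subseteq N_2$).

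Finally I would collapse the double colimit. The iterated colimit over $N$ and then over $\mathcal{V}(U^N)$ equals the colimit over the poset of pairs $(N,W)$ with $W\in\mathcal{V}(U^N)$; the projection $(N,W)\mapsto W$ to $\mathcal{V}(U)$ is final (its comma categories are nonempty and connected), because by Lemma \ref{topmacklem1} every finite-dimensional $W\in\mathcal{V}(U)$ satisfies $W^{N}=W$ for some open normal $N$ (which may be shrunk into $H\cap J$), so $W\in\mathcal{V}(U^{N})$, and any two such choices together with any $W_1\subseteq W'\supseteq W_2$ are dominated by $(N_1\cap N_2,\,W_1+W_2)$. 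Hence the double colimit is $\underset{W\in\mathcal{V}(U)}{\colim}\big[G/H_+\wedge S^{W},G/J_+\wedge S^{W}\big]^{G\text{-space}}$, which by Corollary \ref{Ghomotpy} for $G$ is exactly $[G/H_+,G/J_+]^G$. Every map occurring is $\mathbb{Q}$-linear (by Proposition \ref{rationhtpycat}, filtered colimits of $\mathbb{Q}$-modules again being $\mathbb{Q}$-modules), so this is an isomorphism of $\mathbb{Q}$-modules. I expect the only real difficulty to be bookkeeping: verifying that the change-of-$N$ maps on the left correspond to the representation-inclusion maps on the right, so that the two nested colimits genuinely amalgamate into a single colimit over the pair-poset, and that $\mathcal{V}(U)=\bigcup_N\mathcal{V}(U^N)$ is a filtered union — which is precisely the content of Lemma \ref{topmacklem1} and the construction set up just before the statement.
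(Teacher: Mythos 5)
Your argument is correct and is essentially the paper's own proof: both sides are compared through Corollary \ref{Ghomotpy}, with Lemma \ref{topmacklem1} supplying an open normal subgroup $N$ (shrunk into $H\cap J$) that fixes the indexing representation, and Lemma \ref{non3iso} together with inflation identifying the orbit spaces and their equivariant homotopy classes. The only difference is presentational: the paper passes back and forth between stable representatives directly, whereas you package the same reinterpretation as a cofinality collapse of the double colimit over pairs $(N,W)$.
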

\begin{proof}
Take $[f]\in \left[G/H_{+},G/J_{+}\right]^G$ where $f\colon G/H_{+}\wedge S^V\rightarrow G/J_{+}\wedge S^V$ is a representative and $S^V$ is the one point compactification of $V$. By Lemma \ref{topmacklem1} we can choose an open normal subgroup $N$ of $G$ such that $V=V^N$. Since we can always replace $N$ by a smaller open normal subgroup contained in $H\cap J$, we can assume that $N\leq H\cap J$. Combining this with Lemma \ref{non3iso} we can view $f$ as a map of the form:
\begin{align*}
f\colon(G/N)/(H/N)_{+}\wedge S^{V^N}\rightarrow (G/N)/(J/N)_{+}\wedge S^{V^N}.
\end{align*}
For the other direction take a representative in
\begin{align*}
\underset{N\underset{open}{\unlhd} G}{\colim}\left[(G/N)/(H/N)_{+},(G/N)/(J/N)_{+}\right]^{G/N}
\end{align*}
of the form:
\begin{align*}
h\colon(G/N)/(H/N)_{+}\wedge S^{V^N}\rightarrow (G/N)/(J/N)_{+}\wedge S^{V^N}.
\end{align*}
Notice that since $N$ is normal we know that $V^N$ is a $G$-representation by consideration of the inflation functor. Therefore as $V^N\subseteq V\subseteq U$, we know that $V^N$ is a finite subrepresentation of $U$, so if $W=V^N$ then the map
\begin{align*}
h\colon G/H_{+}\wedge S^{W}\rightarrow G/J_{+}\wedge S^{W}
\end{align*}
is represented in $\left[G/H_{+},G/J_{+}\right]^G$.
\end{proof} 
The next lemma is proven similarly, it refers to the span category used in Theorem \ref{Catdef}. Namely, if we take a span of the form:
\begin{align*}
\xymatrix{G/H&G/L\ar[r]^a\ar[l]^b&G/J},
\end{align*}
we can choose an open normal subgroup $N$ contained in $H,L$ and $J$, and apply Lemma \ref{non3iso} to obtain:
\begin{align*}
\xymatrix{\left(G/N\right)/\left(H/N\right)&\left(G/N\right)/\left(L/N\right)\ar[r]^a\ar[l]^b&\left(G/N\right)/\left(J/N\right)}.
\end{align*}
\begin{lemma}\label{topmacklem3}
If $H$ and $J$ are open subgroups of $G$ profinite, then there is an isomorphism:
\begin{align*}
\sspan_G\left(G/H,G/J\right)\cong \underset{N\underset{open}{\leq}G}{\colim}\,\sspan_{G/N}\left((G/N)/(H/N),(G/N)/(J/N)\right).
\end{align*}
\end{lemma}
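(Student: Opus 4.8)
The plan is to mimic the proof of Lemma \ref{topmacklem2}, which handles the morphisms of the orbit category, and extend it to spans. The key point is that a span over $G\gset$ between orbits $G/H$ and $G/J$ is, up to the span equivalence relation, a finite formal $\mathbb{Q}$-linear combination of spans whose middle object is an orbit $G/L$, and each such orbit-span is given by a pair of $G$-maps $G/H \leftarrow G/L \rightarrow G/J$. Since $H$ and $J$ are open and the middle orbits appearing are finite in number, we can choose a single open normal subgroup $N \unlhd G$ contained in $H$, $J$, and in each $L$ occurring; then Lemma \ref{non3iso} (together with the elementary fact that the category $G\gset$-maps $G/L \to G/H$ are in bijection with $(G/N)\gset$-maps $(G/N)/(L/N) \to (G/N)/(H/N)$ once $N \leq L \cap H$) lets us view that entire span as a span over $(G/N)\gset$. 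Conversely a span over $(G/N)\gset$ inflates along $\varepsilon\colon G \to G/N$ to a span over $G\gset$, since $(G/N)\gset \hookrightarrow G\gset$ via inflation is a full embedding onto the subcategory of $G$-sets on which $N$ acts trivially.

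First I would set up the two maps. Define a map $\Phi\colon \colim_{N} \sspan_{G/N}\left((G/N)/(H/N),(G/N)/(J/N)\right) \to \sspan_G(G/H, G/J)$ by inflation along $\varepsilon_N\colon G \to G/N$; this is well defined because inflation is a functor $G/N\gset \to G\gset$ preserving pullbacks and coproducts (hence compatible with the composition and the additive structure in the span construction, Construction \ref{spanconstruct}), and because for $N_1 \leq N_2$ inflation along $G \to G/N_2$ factors through inflation along $G \to G/N_1$, so $\Phi$ is compatible with the colimit structure maps described just before the statement. Then I would construct the inverse: given a span $G/H \xleftarrow{b} \Omega \xrightarrow{a} G/J$, use Lemma \ref{pullbackorbit}/Corollary to write $\Omega = \coprod_{1 \leq i \leq n} G/L_i$ with each $L_i$ open (open since $H,J$ open forces the stabilisers appearing to be open, as in the remark after the stabiliser lemma), pick $N \unlhd G$ open with $N \leq H \cap J \cap \bigcap_i L_i$, and apply Lemma \ref{non3iso} to realise the span as the inflation of a span over $(G/N)\gset$. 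This gives a map back; one then checks the two composites are the identity, which is immediate on representatives, and that the span equivalence relation and addition are respected — both follow because all the isomorphisms used are inflations and inflation is a fully faithful pullback-preserving additive functor.

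The main obstacle, and the step I would spend most care on, is verifying that $\Phi$ is well defined and bijective \emph{after} group completion — that is, checking compatibility with the Grothendieck completion that turns the monoid of equivalence classes of spans into the abelian group $\Hom$ of Construction \ref{spanconstruct}. Since group completion is a left adjoint it commutes with the filtered colimit over $N$, so it suffices to prove the bijection at the level of monoids of span-equivalence-classes; this reduces the problem to: (i) every span over $G\gset$ between two orbits is equivalent to an inflated span, which is the Lemma \ref{pullbackorbit} plus Lemma \ref{non3iso} argument above, and (ii) two spans over $(G/N_1)\gset$ and $(G/N_2)\gset$ become equivalent over $G\gset$ iff they already become equivalent over $(G/N)\gset$ for some common smaller $N$, which follows from fullness of inflation together with the fact that an isomorphism of $G$-sets on which $N$ acts trivially is the inflation of an isomorphism of $(G/N)$-sets. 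I would also remark, as the text already hints, that this lemma is exactly what justifies that Construction \ref{spanconstruct} is well behaved for profinite $G$ via \cite[pp. 1868-1870]{Webb}, so the proof should end by noting the consistency with that reference.

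Finally I would record that this is the span-category analogue of Lemma \ref{topmacklem2}: the latter handles $\Hom$-sets in $\orb(G)$ (equivalently, spans with one leg an identity), and the present lemma upgrades this to all of $\sspan_G(G\gset^f)$, which is what is needed to feed into Theorem \ref{Catdef} and obtain the desired equivalence $\mackey_{\mathbb{Q}}(G) \cong \colim_N \mackey_{\mathbb{Q}}(G/N)$ compatibly with the orbit-category description used in Theorem \ref{GSpChar}. I expect no genuinely hard analytic input here — everything is finite-$G$-set combinatorics plus the observation that a finite-dimensional representation, a finite $G$-set, and a finite span all only see a finite quotient of $G$ — so the write-up is essentially a careful bookkeeping argument parallel to the proof already given for Lemma \ref{topmacklem2}.
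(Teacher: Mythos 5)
Your proposal is correct and follows essentially the same route as the paper, which only sketches this lemma: decompose the middle object of a span into orbits with open stabilisers, choose one open normal subgroup $N$ inside $H$, $J$ and all the middle-orbit subgroups, apply Lemma \ref{non3iso} to view the span as an inflated span over $G/N$, and check compatibility with the colimit structure maps exactly as in Lemma \ref{topmacklem2}. The only nitpick is that the hom-groups of Construction \ref{spanconstruct} are Grothendieck groups of spans (so $\mathbb{Z}$-linear formal differences rather than ``$\mathbb{Q}$-linear combinations''), which does not affect your argument since group completion commutes with the filtered colimit as you note.
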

We now give a construction for a functor which we will show is an equivalence of categories between $\pi_0(\mathfrak{O}^{\mathbb{Q}}_G)$ and $\text{Span}\left(\text{Orb}(G)\right)$. We briefly include $\Sigma^{\infty}$ into our notation for clarity.
\begin{construction}\label{top_mack}
We define an assignment:
\begin{align*}
\psi\colon\text{Span}\left(\text{Orb}(G)\right)&\rightarrow \pi_0(\mathfrak{O}^{\mathbb{Q}}_G)\\G/H&\mapsto \Sigma^{\infty}G/H_{+}\,\text{on objects}\\\left[\xymatrix{G/H&G/L\ar[r]^{\beta}\ar[l]^{\alpha}&G/J}\right]&\mapsto \Sigma^{\infty}\beta\circ \tau(\alpha)\,\text{on morphisms}
\end{align*}
where $\tau(\alpha)$ is the transfer map construction associated to $\alpha$ as seen in Construction \ref{transfer} from \cite[Construction 2.5.1]{LMSMcC}. 
\end{construction}
In the case where $G$ is finite, the assignment $\psi$ has the following property, as seen in \cite[Proposition 5.9.6]{LMSMcC}.
\begin{theorem}
If $G$ is a finite group then the assignment $\psi$ from Construction \ref{top_mack} yields an equivalence of categories.
\end{theorem}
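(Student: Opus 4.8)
The plan is to check that $\psi$ is a well-defined additive functor, bijective on objects and fully faithful; an equivalence then follows. On objects $\psi$ is the tautological bijection $G/H \mapsto \Sigma^{\infty}G/H_{+}$, so it suffices to produce a fully faithful functor realising this assignment. First I would verify that $\psi$ is well-defined on morphisms: an isomorphism of spans commuting with the legs induces, by naturality of the transfer construction of Construction~\ref{transfer} and of $\Sigma^{\infty}(-)$, an equality of homotopy classes $\Sigma^{\infty}\beta\circ\tau(\alpha) = \Sigma^{\infty}\beta'\circ\tau(\alpha')$; identities go to identities; and since disjoint unions of $G$-sets are carried to wedges of suspension spectra, which are simultaneously products and coproducts in the stable homotopy category, $\psi$ sends the sum of two spans to the sum of the associated maps, and so extends over the Grothendieck completion. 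For functoriality on composition, the composite of spans $G/H \xleftarrow{\alpha} G/L \xrightarrow{\beta} G/K$ and $G/K \xleftarrow{\gamma} G/M \xrightarrow{\zeta} G/V$ is computed by a pullback $P$, and one must show $\Sigma^{\infty}\zeta\circ\tau(\gamma)\circ\Sigma^{\infty}\beta\circ\tau(\alpha)$ equals $\Sigma^{\infty}(\zeta\overline{\beta})\circ\tau(\alpha\overline{\gamma})$, where $\overline{\beta},\overline{\gamma}$ are the legs of $P$. Using Lemma~\ref{lemmaPB} and its consequences to decompose $P$ into orbits, this reduces to the base-change identity $\tau(\gamma)\circ\Sigma^{\infty}\beta = \Sigma^{\infty}\overline{\beta}\circ\tau(\overline{\gamma})$ for a pullback square of orbits, together with the compatibility of $\tau$ with composition and with disjoint unions; these are the standard push--pull and additivity properties of the stable transfer, which I would quote from \cite[Chapter IV]{LMSMcC} rather than reprove.

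It then remains to prove that for all subgroups $H,K \leq G$ the induced map
\[
\text{Span}_G(G/H, G/K) \longrightarrow [\Sigma^{\infty}G/H_{+}, \Sigma^{\infty}G/K_{+}]^{G}
\]
is an isomorphism of abelian groups. On the source, by Construction~\ref{spanconstruct} together with the orbit decomposition of finite $G$-sets, this group is free abelian on the isomorphism classes of connected spans $G/H \xleftarrow{\alpha} G/L \xrightarrow{\beta} G/K$; by the classification of $G$-maps out of an orbit (as in Theorem~\ref{Catdef} and Lemma~\ref{lemmaPB}), such classes are indexed by an $H$-conjugacy class of subgroup $L \leq H$ together with an $L$-fixed coset $gK \in (G/K)^{L}$, modulo the Weyl group $N_H(L)/L$. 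On the target, the adjunction used in the proof of Proposition~\ref{tilt} identifies $[\Sigma^{\infty}G/H_{+}, \Sigma^{\infty}G/K_{+}]^{G}$ with $\pi_0^{H}(\Sigma^{\infty}G/K_{+})$, and applying the tom Dieck splitting of Proposition~\ref{tomDieck} to the finite $H$-set $G/K$ identifies the latter with $\bigoplus_{(L)}\mathbb{Z}\bigl[(G/K)^{L}/(N_H(L)/L)\bigr]$, the sum over $H$-conjugacy classes of subgroups $L$ of $H$. One then checks that $\psi$ matches these bases: the free generator attached to a pair $(L, gK)$ is sent, via the transfer leg $\tau(\alpha)$ (which lands in the $(L)$-summand, by the geometric-fixed-point description of the transfer of an orbit inclusion) followed by $\Sigma^{\infty}\beta$ (which records the point $gK$), to the corresponding basis element of the direct sum. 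Since $\psi$ is additive and carries a basis bijectively to a basis, it is an isomorphism; the rational statement needed in Construction~\ref{top_mack} follows by applying $-\otimes\mathbb{Q}$ and invoking Lemma~\ref{tenscom}.

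The step I expect to be the main obstacle is precisely this last identification of bases, carried out compatibly with the composition law. Concretely, one must know that (i) the transfer of an orbit inclusion is exactly the canonical generator of the relevant tom Dieck summand, detected on $L$-geometric fixed points, and (ii) iterated composites of transfers and induced maps obey the double coset (Mackey) formula, so that the span composition of Construction~\ref{spanconstruct} is transported to the correct composition on the spectrum side. Both of these are part of the analysis of the equivariant Burnside category and underlie \cite[Proposition 5.9.6]{LMSMcC}; in a self-contained account one could instead deduce them from Corollary~\ref{Ghomotpy} and Proposition~\ref{tomDieck} by a direct fixed-point computation. With well-definedness, functoriality, bijectivity on objects, and fully-faithfulness in place, $\psi$ is an equivalence of categories.
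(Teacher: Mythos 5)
Your argument is sound in outline, but note first that the paper does not prove this statement at all: it is quoted directly from \cite[Proposition 5.9.6]{LMSMcC}, so there is no internal proof to compare against. What you have written is essentially a reconstruction of the standard argument behind that citation: well-definedness and additivity of $\psi$ from the biproduct structure on finite wedges, functoriality from the base-change (push--pull) identity for a pullback of orbits together with the contravariant functoriality of the transfer of Construction \ref{transfer}, and full faithfulness by matching the free abelian basis of $\sspan_G(G/H,G/K)$ given by orbit-middle spans, indexed by pairs $(L\leq H,\, gK\in (G/K)^L)$ modulo $N_H(L)/L$, against the basis of $\left[\Sigma^{\infty}G/H_+,\Sigma^{\infty}G/K_+\right]^G$ obtained from the change-of-groups adjunction and the tom Dieck splitting (Proposition \ref{tomDieck}). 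This is correct, and it has the merit of showing exactly where finiteness of $G$ enters and of being the template the paper itself imitates in the profinite case (Lemmas \ref{topmacklem2} and \ref{topmacklem3}, Proposition \ref{topMiso}). Be aware, though, that the two facts you isolate as the crux --- the push--pull formula and the identification of $\Sigma^{\infty}\beta\circ\tau(\alpha)$ with the tom Dieck basis --- are precisely the content of the cited result, so your proof is an unfolding of the same reference rather than an independent one; moreover ``exactly the canonical generator'' is a little too strong: geometric fixed points only detect the leading term, giving a triangular matrix with units on the diagonal over the common indexing set, which is all the argument needs. Finally, your closing remark that the rational statement follows by applying $-\otimes\mathbb{Q}$ and Lemma \ref{tenscom} is a point the paper glosses over (as literally stated, with target $\pi_0(\mathfrak{O}^{\mathbb{Q}}_G)$, an integral span category cannot be equivalent to a $\mathbb{Q}$-linear one), so your bookkeeping there is more careful than the statement in the text.
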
 
If we write $\psi_N$ for the functor for the finite discrete group $G/N$, we have an isomorphism induced by $\psi_N$:
\begin{align*}
\sspan_{G/N}\left((G/N)/(H/N),(G/N)/(J/N)\right)\rightarrow \left[G/H_{+},G/J_{+}\right]^{G/N}
\end{align*}
and we therefore have the following commutative diagram:
\begin{align*}
\xymatrix{\text{Span}_G\left(G/H,G/J\right)\ar[d]^{\cong}\ar[r]^{\psi(-)}&\left[G/H_{+},G/J_{+}\right]^G\\  \underset{N\underset{open}{\leq}G}{\colim}\,\text{Span}_{G/N}\left((G/N)/(H/N),(G/N)/(J/N)\right)\ar[r]_(.6){\cong}&\underset{N\underset{open}{\unlhd} G}{\colim}\left[G/H_{+},G/J_{+}\right]^{G/N}\ar[u]^{\cong}}
\end{align*}
The bottom map exists by applying the universal property of colimits to the following collection of maps:
\begin{align*}
\text{Span}_{G/N}\left((G/N)/(H/N),(G/N)/(J/N)\right)&\rightarrow \underset{N\underset{open}{\unlhd} G}{\colim}\left[G/H_{+},G/J_{+}\right]^{G/N}\\ x&\mapsto [\psi_N(x)]
\end{align*}
We can do this since we have the following commuting square: 
\begin{align*}
\xymatrix{\text{Span}_{G/N}(G/H,G/J)\ar[d]\ar[r]^{\psi_N}&[G/H_+,G/J_+]^{G/N}\ar[d]\\ \text{Span}_{G/N^{\prime}}(G/H,G/J)\ar[r]^{\psi_{N^{\prime}}}&[G/H_+,G/J_+]^{G/N^{\prime}}}
\end{align*}
This is true since the two vertical arrows are inclusions. The functors $\tau(-)$ and $\Sigma^{\infty}(-)$ are applied to the same $G$-maps regardless of which order we apply the functors $\psi_N$. We will see in Proposition \ref{psifunctor} that $\psi$ is a functor. This diagram proves the following proposition.
\begin{proposition}\label{topMiso}
For $H$ and $J$ open subgroups of profinite group $G$ there is an isomorphism:
\begin{align*}
\sspan_G\left(G/H,G/J\right)\rightarrow \left[G/H_{+},G/J_{+}\right]^G
\end{align*}
induced by the assignment $\psi$.
\end{proposition}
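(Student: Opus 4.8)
The plan is to read the isomorphism off the commutative square displayed immediately before the statement, in which three of the four edges are (or will be seen to be) isomorphisms. In that square the top edge is $\psi(-)\colon \sspan_G(G/H,G/J)\to [G/H_+,G/J_+]^G$; the left edge is the isomorphism $\sspan_G(G/H,G/J)\cong \underset{N\underset{open}{\unlhd}G}{\colim}\,\sspan_{G/N}\bigl((G/N)/(H/N),(G/N)/(J/N)\bigr)$ of Lemma \ref{topmacklem3}; the right edge is the isomorphism $\underset{N\underset{open}{\unlhd}G}{\colim}[G/H_+,G/J_+]^{G/N}\cong [G/H_+,G/J_+]^G$ of Lemma \ref{topmacklem2}; and the bottom edge is the map induced on colimits by the finite-group functors $\psi_N$. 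So the first step is to note that the bottom edge is an isomorphism: for each open normal $N$ the quotient $G/N$ is a finite discrete group, hence $\psi_N$ is an equivalence of categories by the finite case \cite[Proposition 5.9.6]{LMSMcC} and in particular induces an isomorphism on the relevant hom-groups; since $\{\psi_N\}_N$ is a morphism of direct systems (this is the small commuting square recorded just before the statement, which holds because the transition maps on both sides are the inclusions coming from inflation and $\tau(-)$, $\Sigma^\infty(-)$ are applied to the same $G$-maps either way), the induced map on colimits is again an isomorphism.

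The second step is to check that the large square genuinely commutes, so that $\psi(-)$ is identified with the composite of the three isomorphisms just described. Unwinding Construction \ref{top_mack}, $\psi$ sends a span $G/H \xleftarrow{\alpha} G/L \xrightarrow{\beta} G/J$ to $\Sigma^\infty\beta \circ \tau(\alpha)$, both of which are assembled levelwise from genuine $G$-maps of orbits and from the Thom--Pontryagin collapse of Construction \ref{transfer}. If $N\leq H\cap L\cap J$ is open normal, these are exactly the inflations along $\varepsilon\colon G\to G/N$ of the corresponding data for the $G/N$-orbits $(G/N)/(H/N)$, $(G/N)/(L/N)$, $(G/N)/(J/N)$ under the identifications of Lemma \ref{non3iso}; since inflation is symmetric monoidal and exact and commutes with $\Sigma^\infty$ and with one-point compactification, $\psi$ restricted to representatives coming from $G/N$ agrees with $\psi_N$. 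That $\psi$ is a genuine functor — so that all of these are well-defined maps of hom-groups, compatible with composition — is Proposition \ref{psifunctor}. Granting commutativity, $\psi(-)$ equals the inverse of the right edge, composed with the bottom edge, composed with the left edge, a composite of isomorphisms, and is therefore itself an isomorphism.

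I expect the only real work to be in the second step: tracking the finite-dimensional subrepresentations $V$ and their fixed subspaces $V^N$ through the colimit in Corollary \ref{Ghomotpy} and through Construction \ref{transfer}, and verifying that the transfer $\tau(\alpha)$ attached to a $G$-map of orbits is the $N$-inflation of the transfer of the associated $G/N$-map in a way natural in $N$. This is conceptually routine — it is the equivariant-to-quotient compatibility already exploited in the proofs of Lemmas \ref{topmacklem1} and \ref{topmacklem2} — but it is the place where something genuinely has to be checked rather than quoted; the conclusion then follows formally from the principle that an edge of a commuting square whose other three edges are isomorphisms is itself an isomorphism.
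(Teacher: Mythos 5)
Your proposal is correct and is essentially the paper's own argument: the paper proves Proposition \ref{topMiso} via exactly this commutative square, with the left and right edges given by Lemmas \ref{topmacklem3} and \ref{topmacklem2}, the bottom edge the map on colimits induced by the finite-group equivalences $\psi_N$ of \cite[Proposition 5.9.6]{LMSMcC}, and commutativity justified by noting that the transition maps are inclusions and that $\tau(-)$ and $\Sigma^{\infty}(-)$ are applied to the same $G$-maps regardless of the order in which the $\psi_N$ are applied. The additional verification you flag in your second step (tracking $V^N$ through Corollary \ref{Ghomotpy} and Construction \ref{transfer}) is more detail than the paper records, but the route is the same.
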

\begin{proposition}\label{psifunctor}
The assignment $\psi$ constructed in Construction \ref{top_mack} is a functor.
\end{proposition}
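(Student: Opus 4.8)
The plan is to deduce functoriality of $\psi$ from the finite case, where it is already known: by \cite[Proposition 5.9.6]{LMSMcC} each $\psi_N\colon\sspan_{G/N}(\orb(G/N))\rightarrow\pi_0(\mathfrak{O}_{G/N})$ is an equivalence of categories, hence a functor. The point is that the three functor axioms (well-definedness on the morphism groups, preservation of identities, preservation of composition, and the additivity needed to extend $\psi$ from single spans to the Grothendieck groups of Construction \ref{spanconstruct}) each involve only finitely much data, and any such finite piece of data descends to a finite quotient $G/N$. Concretely, given one span, or a composable pair of spans, between orbits of the open subgroups $H_1,\dots,H_r$ of $G$, one takes $N=\operatorname{Core}_G(H_1\cap\dots\cap H_r)$, which is open and normal (a finite intersection of open subgroups is open of finite index, and its core is a finite intersection of conjugates), and $N$ is contained in each $H_i$; then by Lemma \ref{non3iso} the span(s) are the inflation of span(s) over $\orb(G/N)$.

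The key compatibility — exactly what the commuting diagram preceding Proposition \ref{topMiso} records — is that the inflation functor $\varepsilon^*$ carries $\psi_N$ of a span over $\orb(G/N)$ to $\psi$ of the inflated span over $\orb(G)$. This holds because both constituents of $\psi$ commute with inflation: $\Sigma^{\infty}(-)$ does so by the identification $U=\underset{N}{\lim}\,U^N$ of Lemma \ref{topmacklem1}, and the transfer $\tau(-)$ does so because the Thom--Pontryagin collapse of Construction \ref{transfer} for $K\leq H$ with $N\leq K$ is built solely from the finite set $H/K$ and the representation $\mathbb{R}[H/K]$, which are unchanged under the identification $H/K\cong(H/N)/(K/N)$. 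Together with Lemma \ref{topmacklem3}, which writes $\sspan_G(G/H,G/J)=\underset{N}{\colim}\,\sspan_{G/N}\big((G/N)/(H/N),(G/N)/(J/N)\big)$, this shows that on morphisms $\psi$ is the colimit of the maps $\varepsilon^*\circ\psi_N$, and independence of the level follows from the compatibility square between $\psi_N$ and $\psi_{N'}$ already displayed in the text.

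With this in hand the axioms follow formally. Well-definedness on morphisms and additivity hold because they hold at each finite level for $\psi_N$ and $\varepsilon^*$ is additive; the identity span on $G/H$ descends to the identity span on $(G/N)/(H/N)$, and $\psi_N(\id)=\id$, $\varepsilon^*(\id)=\id$, so $\psi(\id)=\id$. For composition, given composable spans through orbits $G/L$ and $G/W$ with the relevant subgroups all containing a common open normal $N$ as above, note that $\varepsilon^*\colon(G/N)\text{-Set}\rightarrow G\text{-Set}$ preserves pullbacks (it leaves underlying sets unchanged, and pullbacks of equivariant sets are created in $\mathrm{Set}$), so the composite span over $\orb(G)$ is the inflation of the composite span over $\orb(G/N)$; hence
\begin{align*}
\psi(\mathrm{span}_2)\circ\psi(\mathrm{span}_1)
&=\varepsilon^*\psi_N(\mathrm{span}_2)\circ\varepsilon^*\psi_N(\mathrm{span}_1)
=\varepsilon^*\big(\psi_N(\mathrm{span}_2)\circ\psi_N(\mathrm{span}_1)\big)\\
&=\varepsilon^*\psi_N(\mathrm{span}_2\circ\mathrm{span}_1)=\psi(\mathrm{span}_2\circ\mathrm{span}_1),
\end{align*}
using that $\varepsilon^*$ and $\psi_N$ are functors.

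The main obstacle is the inflation-compatibility of the transfer construction. Unwound directly, functoriality of $\psi$ amounts to functoriality of $\Sigma^{\infty}(-)$, contravariant functoriality of $\tau(-)$, and the base-change ("double coset") identity $\tau(\gamma)\circ\Sigma^{\infty}\beta=\Sigma^{\infty}\gamma'\circ\tau(\beta')$ for a pullback square of orbits; rather than establish this identity at the level of orbit spectra directly, one reduces it to the known finite case, and the price is the careful verification — sketched above — that the Thom--Pontryagin collapse maps and suspension spectra of Constructions \ref{transfer} and \ref{top_mack} are genuinely unchanged under inflation. This is precisely where the profiniteness of $G$ enters, via Lemma \ref{topmacklem1}: every finite subrepresentation of the universe is fixed by an open normal subgroup.
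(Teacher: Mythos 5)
Your proposal is correct and follows essentially the same route as the paper: reduce to the known finite case by choosing an open normal subgroup $N$ contained in all subgroups appearing in the spans, use that $\psi_N$ is a functor for $G/N$, and transport the conclusion through the colimit identifications of Lemmas \ref{topmacklem2} and \ref{topmacklem3} together with the commuting square relating $\psi_N$ and $\psi$. The extra details you supply (inflation-compatibility of $\Sigma^{\infty}$ and the transfer, and that inflation preserves pullbacks) are exactly the points the paper treats briefly in the paragraph preceding Proposition \ref{topMiso}, so they supplement rather than change the argument.
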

\begin{proof}
Clearly the functor maps identity morphisms in $\text{Span}(\text{Orb}(G))$ to identity morphisms in $\pi_0(\mathfrak{O}^{\mathbb{Q}}_G)$. It is left to verify that this assignment respects composition.

Let
\begin{align*}
x_1=\left[\xymatrix{G/H&G/L_1\ar[r]^{\beta_1}\ar[l]^{\alpha_1}&G/K}\right]
\end{align*}
and 
\begin{align*}
x_2=\left[\xymatrix{G/K&G/L_2\ar[r]^{\beta_2}\ar[l]^{\alpha_2}&G/J}\right]
\end{align*}
We begin by choosing an open normal subgroup $N$ which is smaller than all of the subgroups in the definition of $x_1$ and $x_2$. If $\psi_N$ is the functor known to hold for $G/N$, then $\psi_N(x_1\circ x_2)=\psi_N(x_1)\circ\psi_N(x_2)$. Finally we apply the isomorphisms in Lemma \ref{topmacklem2} and \ref{topmacklem3} and the induced commuting square to obtain $\psi(x_1\circ x_2)=\psi(x_1)\circ\psi(x_2)$.
\end{proof}
Since we now know that $\psi$ is a functor we have the following theorem.
\begin{theorem}\label{orb-span}
The functor $\psi$ constructed in Construction \ref{top_mack} is an equivalence of categories.
\end{theorem}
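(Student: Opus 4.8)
The plan is to show $\psi$ is fully faithful and essentially surjective. Essential surjectivity is immediate: by construction $\psi$ sends $G/H$ to $\Sigma^\infty G/H_+$, and the objects of $\pi_0(\mathfrak{O}^{\mathbb{Q}}_G)$ are \emph{defined} to be precisely the orbit spectra $\Sigma^\infty G/H_+$ for $H$ an open subgroup of $G$, so every object of the target is hit on the nose. Fully faithfulness is the substantive part, and here almost all of the work has already been done: Proposition \ref{topMiso} states exactly that the assignment $\psi$ induces an isomorphism
\begin{align*}
\sspan_G\left(G/H,G/J\right)\xrightarrow{\ \cong\ }\left[G/H_+,G/J_+\right]^G
\end{align*}
for all pairs of open subgroups $H,J$. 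Since the hom-sets of $\text{Span}(\text{Orb}(G))$ are by definition these span groups $\sspan_G(G/H,G/J)$, and the hom-sets of $\pi_0(\mathfrak{O}^{\mathbb{Q}}_G)$ are the groups $[G/H_+,G/J_+]^G$, this isomorphism is precisely the statement that $\psi$ is a bijection on hom-sets.

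So the remaining step is bookkeeping: I would assemble Proposition \ref{psifunctor} (which gives that $\psi$ is a functor, so that $\psi$ is at least well-defined on morphisms and respects composition) together with Proposition \ref{topMiso} (bijectivity on hom-sets) and the trivial essential surjectivity observation, and invoke the standard criterion that a fully faithful, essentially surjective functor is an equivalence of categories. One should note the minor subtlety that $\text{Orb}(G)$ consists of the orbits $G/H$ with $G$-equivariant maps, which are compositions of projections and conjugations, and that a general span in $\text{Span}(\text{Orb}(G))$ is a formal $\mathbb{Q}$-linear combination of such spans after Grothendieck completion and rationalisation; Proposition \ref{topMiso} already accounts for this, since the span groups appearing there are the rational ones and $[G/H_+,G/J_+]^G$ is a $\mathbb{Q}$-module by Proposition \ref{rationhtpycat}.

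The only genuine obstacle is ensuring that the $\mathbb{Q}$-linear structures match up on both sides: one must check that the additive and $\mathbb{Q}$-module structure on $\sspan_G(G/H,G/J)$ (coproduct of spans, then group completion, then tensoring with $\mathbb{Q}$) is carried by $\psi$ to the additive and $\mathbb{Q}$-module structure on $[G/H_+,G/J_+]^G$ coming from the triangulated/stable structure. This is implicit in the commuting diagram preceding Proposition \ref{topMiso}, since each $\psi_N$ for $G/N$ finite is known to be additive by \cite[Proposition 5.9.6]{LMSMcC}, and additivity passes to the filtered colimit via Lemmas \ref{topmacklem2} and \ref{topmacklem3}; I would make this explicit in a sentence or two. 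Once that compatibility is recorded, the equivalence follows formally, so the proof is short.

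\begin{proof}
By Proposition \ref{psifunctor}, $\psi$ is a functor. It is essentially surjective by construction: the objects of $\pi_0(\mathfrak{O}^{\mathbb{Q}}_G)$ are exactly the spectra $\Sigma^{\infty}G/H_{+}$ for $H$ open in $G$, and these are the images under $\psi$ of the objects $G/H$ of $\text{Span}(\text{Orb}(G))$. For full faithfulness, Proposition \ref{topMiso} gives, for each pair of open subgroups $H,J$ of $G$, that the map
\begin{align*}
\psi\colon\sspan_G\left(G/H,G/J\right)\rightarrow\left[G/H_{+},G/J_{+}\right]^G
\end{align*}
is an isomorphism. Since the morphism $\mathbb{Q}$-modules of $\text{Span}(\text{Orb}(G))$ are the groups $\sspan_G(G/H,G/J)$ and those of $\pi_0(\mathfrak{O}^{\mathbb{Q}}_G)$ are the groups $[G/H_{+},G/J_{+}]^G$ (which are $\mathbb{Q}$-modules by Proposition \ref{rationhtpycat}), and since the commuting diagram preceding Proposition \ref{topMiso} identifies $\psi$ with the filtered colimit of the additive equivalences $\psi_N$ for the finite quotients $G/N$ via Lemmas \ref{topmacklem2} and \ref{topmacklem3}, the bijection $\psi$ is $\mathbb{Q}$-linear. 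Hence $\psi$ is fully faithful. A fully faithful and essentially surjective functor is an equivalence of categories, so $\psi$ is an equivalence.
\end{proof}
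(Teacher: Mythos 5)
Your proof is correct and follows essentially the same route as the paper: essential surjectivity is immediate since each $\Sigma^{\infty}G/H_{+}$ is the image of $G/H$, and full faithfulness is exactly Proposition \ref{topMiso}, with Proposition \ref{psifunctor} supplying functoriality. Your additional remark on compatibility of the additive/$\mathbb{Q}$-linear structures is a reasonable bit of extra care, but it does not change the argument, which matches the paper's.
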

\begin{proof}
We need to show that $\psi$ is essentially surjective and that $\psi$ induces an isomorphism on morphism sets. This last follows from Proposition \ref{topMiso}. For the first if we take $\Sigma^{\infty}G/H_{+}$, then this is in the image of $G/H$ with respect to $\psi$ as required.
\end{proof}
This leads to the following corollary which provides an alternative definition of a Mackey functor.
\begin{corollary}
The category of contravariant functors from $\pi_0(\mathfrak{O}^{\mathbb{Q}}_G)$ to $\mathbb{Q}\Mod$ is equivalent to the category of contravariant functors from $\sspan(\orb(G))$ to $\mathbb{Q}\Mod$.\index{Mackey functor}
\end{corollary}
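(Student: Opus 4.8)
The plan is to obtain this as a formal consequence of Theorem \ref{orb-span}. That theorem provides an equivalence of categories $\psi\colon \sspan(\orb(G))\to \pi_0(\mathfrak{O}^{\mathbb{Q}}_G)$, and it is a standard fact that an equivalence $\psi\colon \mathcal{A}\to\mathcal{B}$ induces an equivalence $(-)\circ\psi\colon \fun(\mathcal{B},\mathcal{D})\to \fun(\mathcal{A},\mathcal{D})$ on contravariant functor categories for any target $\mathcal{D}$. Applying this with $\mathcal{D}=\mathbb{Q}\Mod$ gives the statement.

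First I would fix a quasi-inverse. By Theorem \ref{orb-span} there is a functor $\phi\colon \pi_0(\mathfrak{O}^{\mathbb{Q}}_G)\to \sspan(\orb(G))$ together with natural isomorphisms $\eta\colon \id\Rightarrow \phi\circ\psi$ and $\varepsilon\colon \psi\circ\phi\Rightarrow \id$. Precomposition then defines functors $\psi^{*}\colon M\mapsto M\circ\psi$ and $\phi^{*}\colon N\mapsto N\circ\phi$ between the two categories of contravariant functors into $\mathbb{Q}\Mod$; a contravariant functor precomposed with a covariant one is again contravariant, so no further passage to opposite categories is needed beyond what is implicit in $\psi$.

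Next I would check that $\psi^*$ and $\phi^*$ are mutually quasi-inverse. Given a contravariant $M\colon \pi_0(\mathfrak{O}^{\mathbb{Q}}_G)\to\mathbb{Q}\Mod$, applying $M$ to the components of $\varepsilon$ yields isomorphisms $M(\varepsilon_X)\colon M(X)\to M(\psi\phi X)=(\phi^*\psi^*M)(X)$ which are natural in $X$ and in $M$, so $\phi^*\psi^*\cong \id$; the natural isomorphism $\psi^*\phi^*\cong\id$ is obtained symmetrically from $\eta$. Hence $\psi^*$ is an equivalence of categories. If one works instead with the additive (equivalently, $\mathbb{Q}$-linear) functor categories $\fun_{\text{Ab}}$ appearing in Theorem \ref{GSpChar}, the same argument applies once one observes that $\psi$ is additive --- it is assembled from the suspension functor $\Sigma^{\infty}$ and the transfer maps of Construction \ref{transfer}, all of which are additive --- and that a quasi-inverse of an additive functor between $\mathbb{Q}$-linear categories is again additive, so that $\psi^*$ and $\phi^*$ restrict to $\fun_{\text{Ab}}$.

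There is no serious obstacle here: the entire mathematical content, namely that $\psi$ is an equivalence, has already been carried out in Theorem \ref{orb-span} via Lemmas \ref{topmacklem1}, \ref{topmacklem2} and \ref{topmacklem3}. The only points requiring care are the direction of the whiskered natural transformations (since $M$ is contravariant, it reverses the components of $\varepsilon$ and $\eta$) and the preservation of additivity under precomposition. Combined with Theorem \ref{Catdef}, which identifies the finite product preserving functors out of the span category with $G$-Mackey functors, this corollary supplies the last ingredient needed to express rational $G$-Mackey functors in terms of the orbit category $\pi_0(\mathfrak{O}^{\mathbb{Q}}_G)$.
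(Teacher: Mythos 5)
Your proposal is correct and follows the same route as the paper, which deduces the corollary directly from the equivalence of domain categories established in Theorem \ref{orb-span}; you simply spell out the standard precomposition argument that the paper leaves implicit.
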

\begin{proof}
This follows since the domain categories are equivalent by Theorem \ref{orb-span}.
\end{proof}
We also have the following consequences of Theorem \ref{orb-span}.
\begin{corollary}\label{mackeymodelequi}
There is an equivalence of categories:
\begin{align*}
\fun_{\ab}\left(\pi_0(\mathfrak{O}^{\mathbb{Q}}_G),\mathbb{Q}\Mod\right)\cong \mackey_{\mathbb{Q}}\left(G\right).
\end{align*}
\end{corollary}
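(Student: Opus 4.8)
The plan is to build the equivalence as a composite of two pieces, both of which are essentially already available. The first piece is Theorem \ref{orb-span}, which provides an additive equivalence $\psi\colon \sspan(\orb(G)) \to \pi_0(\mathfrak{O}^{\mathbb{Q}}_G)$: it is a functor by Proposition \ref{psifunctor} and realises the identifications of hom-groups of Proposition \ref{topMiso}, so it respects the addition of morphisms. Precomposition with $\psi$ therefore yields an equivalence
\begin{align*}
\fun_{\ab}\left(\pi_0(\mathfrak{O}^{\mathbb{Q}}_G),\mathbb{Q}\Mod\right) \;\cong\; \fun_{\ab}\left(\sspan(\orb(G)),\mathbb{Q}\Mod\right),
\end{align*}
which is the preceding Corollary, whose proof applies verbatim at the level of additive functors. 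The second piece identifies the right-hand side with $\mackey_{\mathbb{Q}}(G)$: by Theorem \ref{Catdef}, applied with $\mathfrak{C} = G\gset$ and $\mathfrak{D} = \mathbb{Q}\Mod$, the category $\mackey_{\mathbb{Q}}(G)$ is canonically isomorphic to the category of finite-product-preserving functors $\sspan(G\gset) \to \mathbb{Q}\Mod$, so it remains to compare those with additive functors on the full subcategory $\sspan(\orb(G))$.

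First I would record that $\sspan(G\gset)$ is semi-additive: its hom-sets are abelian groups after the Grothendieck completion built into Construction \ref{spanconstruct}, and it admits finite products and coproducts which coincide, by the Proposition on the existence of finite (co)products in the span category of finite $G$-sets. Next, using the Corollary that every object of $G\gset$ is a finite disjoint union of orbits --- a decomposition that becomes a finite biproduct in $\sspan(G\gset)$ --- one sees that $\sspan(G\gset)$ is the biproduct (additive) completion of its full subcategory $\sspan(\orb(G))$. Since the source is semi-additive and the target $\mathbb{Q}\Mod$ is abelian, a functor $\sspan(G\gset) \to \mathbb{Q}\Mod$ is finite-product-preserving exactly when it is additive, and the standard description of additive functors out of a biproduct completion shows that restriction along the inclusion $\sspan(\orb(G)) \hookrightarrow \sspan(G\gset)$ is an equivalence onto $\fun_{\ab}(\sspan(\orb(G)),\mathbb{Q}\Mod)$, with quasi-inverse sending a finite orbit decomposition of a $G$-set to the corresponding biproduct. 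Combining this with the first piece gives
\begin{align*}
\mackey_{\mathbb{Q}}(G) \;\cong\; \fun_{\ab}\left(\sspan(\orb(G)),\mathbb{Q}\Mod\right) \;\cong\; \fun_{\ab}\left(\pi_0(\mathfrak{O}^{\mathbb{Q}}_G),\mathbb{Q}\Mod\right).
\end{align*}

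I expect the only genuine work to be bookkeeping: one must check that the proposed quasi-inverse --- extending an additive functor from $\orb(G)$ to all finite $G$-sets by the biproduct formula --- is well defined on span morphisms, i.e. compatible with the pullback (Mackey-axiom) condition of Definition \ref{defM2} when a $G$-set is written as a union of orbits in two different ways. This compatibility is exactly what Theorem \ref{Catdef} encodes, so in practice it should suffice to invoke that theorem together with the semi-additivity of the span category rather than to reprove the equivalence by hand. The main pitfall to guard against is conflating ``additive'' with the stronger notion of ``finite-product-preserving'' used in Theorem \ref{Catdef}; this is why the semi-additivity of $\sspan(G\gset)$ --- and the fact that $\sspan(\orb(G))$ carries no such structure, so that ``additive functor'' there means nothing beyond preservation of sums of morphisms --- needs to be stated carefully, and is the only step I would expect to require real care.
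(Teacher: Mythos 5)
Your proposal is correct and follows essentially the same route as the paper: the equivalence $\psi$ of Theorem \ref{orb-span} gives $\fun_{\ab}(\pi_0(\mathfrak{O}^{\mathbb{Q}}_G),\mathbb{Q}\Mod)\cong\fun_{\ab}(\sspan(\orb(G)),\mathbb{Q}\Mod)$, and the identification of the latter with $\mackey_{\mathbb{Q}}(G)$ is exactly the paper's appeal to Theorem \ref{Catdef}. The only difference is that you spell out the biproduct-completion comparison between additive functors on $\sspan(\orb(G))$ and finite-product-preserving functors on $\sspan(G\gset)$, a step the paper leaves implicit, and your treatment of it is sound.
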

A consequence of this particular corollary is that if $X$ is a $G$-spectrum then we have a graded $G$-Mackey functor\index{Mackey functor}:
\begin{align*}
\pi_0(\mathfrak{O}^{\mathbb{Q}}_G)&\rightarrow \mathbb{Q}\Mod\\G/H_{+}&\mapsto \pi_*^H\left(X\right)\cong [G/H_+,X]^G_*.
\end{align*}
More generally, a rational $G$-spectrum $E$ and a $G$-space $X$ determine a rational $G$-Mackey functor as follows:
\begin{align*}
\pi_0(\mathfrak{O}^{\mathbb{Q}}_G)&\rightarrow \mathbb{Q}\text{-Mod}\\G/H_{+}&\mapsto \left[G/H_{+}\wedge X,E\right]_*^G
\end{align*}
We may extend this equivalence to chain complexes. 
\begin{corollary}
There is an equivalence of categories:
\begin{align*}
\fun_{\ch\left(\ab\right)}(\pi_0(\mathfrak{O}^{\mathbb{Q}}_G),\ch\left(\mathbb{Q}\Mod\right))\cong \fun_{\ch\left(\ab\right)}(\sspan\left(\orb(G)\right),\ch\left(\mathbb{Q}\Mod\right)).
\end{align*}
Furthermore there is an equivalence:
\begin{align*}
\fun_{\ch\left(\ab\right)}(\sspan\left(\orb(G)\right),\ch\left(\mathbb{Q}\Mod\right))&\cong \ch\left(\fun_{\ab}(\sspan\left(\orb(G)\right),\mathbb{Q}\Mod\right)\\&\cong \ch\left(\mackey_{\mathbb{Q}}(G)\right).
\end{align*}
\end{corollary}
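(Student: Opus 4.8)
The plan is to derive both displayed equivalences formally from Theorem~\ref{orb-span}, Corollary~\ref{mackeymodelequi}, and the fact that the structure of a chain complex in a functor category is detected objectwise.

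First I would treat the equivalence
\[
\fun_{\ch\left(\ab\right)}(\pi_0(\mathfrak{O}^{\mathbb{Q}}_G),\ch\left(\mathbb{Q}\Mod\right))\cong \fun_{\ch\left(\ab\right)}(\sspan\left(\orb(G)\right),\ch\left(\mathbb{Q}\Mod\right)).
\]
By Theorem~\ref{orb-span} the functor $\psi\colon\sspan(\orb(G))\to\pi_0(\mathfrak{O}^{\mathbb{Q}}_G)$ is an equivalence of categories, and by Proposition~\ref{topMiso} it induces isomorphisms of abelian groups on morphism objects, so it is an additive equivalence. Choosing an additive quasi-inverse $\Phi$, precomposition $(-)\circ\psi$ and $(-)\circ\Phi$ are mutually quasi-inverse; since $\psi$ and $\Phi$ are additive they carry $\ch(\ab)$-enriched (i.e.\ additive) functors to additive functors and natural transformations to natural transformations, so this restricts to an equivalence of the two functor categories of interest. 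This gives the first isomorphism.

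Next I would establish
\[
\fun_{\ch\left(\ab\right)}(\sspan\left(\orb(G)\right),\ch\left(\mathbb{Q}\Mod\right))\cong \ch\left(\fun_{\ab}(\sspan\left(\orb(G)\right),\mathbb{Q}\Mod)\right).
\]
The point is purely formal: a chain complex of $\mathbb{Q}$-modules is the same datum as a $\mathbb{Z}$-graded family of $\mathbb{Q}$-modules equipped with differentials squaring to zero, and all of this structure is preserved and reflected by evaluation at objects of $\sspan(\orb(G))$. Concretely, given an additive $F\colon\sspan(\orb(G))\to\ch(\mathbb{Q}\Mod)$, put $F_n(X)=F(X)_n$; additivity of $F$ makes each $F_n$ an additive functor to $\mathbb{Q}\Mod$, the differentials of $F(X)$ are natural in $X$ and so define maps $d_n\colon F_n\to F_{n-1}$ in $\fun_{\ab}(\sspan(\orb(G)),\mathbb{Q}\Mod)$ with $d_{n-1}d_n=0$, i.e.\ a chain complex in that abelian category. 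Conversely a chain complex of additive functors reassembles objectwise into an additive functor into $\ch(\mathbb{Q}\Mod)$, and these two constructions are mutually inverse and match natural transformations with chain maps. Finally, for $\fun_{\ab}(\sspan(\orb(G)),\mathbb{Q}\Mod)\cong\mackey_{\mathbb{Q}}(G)$ I would again precompose with $\psi$ using Theorem~\ref{orb-span} to get $\fun_{\ab}(\pi_0(\mathfrak{O}^{\mathbb{Q}}_G),\mathbb{Q}\Mod)\cong\fun_{\ab}(\sspan(\orb(G)),\mathbb{Q}\Mod)$, and then invoke Corollary~\ref{mackeymodelequi} to identify the left-hand side with $\mackey_{\mathbb{Q}}(G)$; applying $\ch(-)$ to this equivalence of abelian categories yields $\ch(\fun_{\ab}(\sspan(\orb(G)),\mathbb{Q}\Mod))\cong\ch(\mackey_{\mathbb{Q}}(G))$, which chains with the previous isomorphism.

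The hard part, such as it is, is not a computation but the bookkeeping around the enrichment in the first step: one must be careful that $\psi$ is an equivalence of additive (equivalently $\ch(\ab)$-enriched) categories rather than merely of underlying categories, so that precomposition really preserves the classes of functors in question; once that is in place, both equivalences are formal consequences of objectwise (co)limit computations and the earlier results, with no genuine obstruction remaining.
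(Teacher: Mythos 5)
Your proposal is correct and follows essentially the paper's own route: the first equivalence is obtained by precomposition along the equivalence $\psi$ of Theorem~\ref{orb-span}, and the second is the formal objectwise identification of additive functors into $\ch(\mathbb{Q}\Mod)$ with chain complexes of additive functors, together with the identification of the latter category with $\mackey_{\mathbb{Q}}(G)$. The only cosmetic difference is that you reach $\mackey_{\mathbb{Q}}(G)$ by passing back through $\pi_0(\mathfrak{O}^{\mathbb{Q}}_G)$ via Corollary~\ref{mackeymodelequi}, whereas the paper cites the span characterisation of Mackey functors (Theorem~\ref{Catdef}) directly; the two are interchangeable.
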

\begin{proof}
The first equivalence follows from Theorem \ref{orb-span} and the second follows from the definition of the two categories.
\end{proof}

As a consequence of this chapter we have the following diagram which illustrates the characterisation of rational $G$-Spectra for $G$ profinite.
\begin{align}\label{diagram}
\xymatrix{\text{Rational $G$-spectra}\ar[d]^{\simeq}\\ \text{Ch}\left(\text{Fun}_{\text{Ab}}\left(\pi_0(\mathfrak{O}^{\mathbb{Q}}_G),\mathbb{Q}\text{-Mod}\right)\right)\ar[d]^{\cong}\\ \text{Ch}\left(\text{Fun}_{\text{Ab}}\left(\text{Span}\left(\text{Orb}(G)\right),\mathbb{Q}\text{-Mod}\right) \right)\ar[d]^{=}\\ \text{Ch}\left(\text{Mackey}_{\mathbb{Q}}(G)\right)}
\end{align}
where $\simeq$ represents a Quillen equivalence and $\cong$ denotes an equivalence of categories. The final equality refers to the characterisation of rational Mackey functors in Theorem \ref{Catdef}.
\begin{remark}\label{MackModel}
In light of Diagram \ref{diagram} we arrive at the conclusion that \\$\text{Ch}\left(\text{Mackey}_{\mathbb{Q}}(G)\right)$ is Quillen equivalent to $G\text{-Spectra}_{\mathbb{Q}}$. The model structure on $\text{Ch}\left(\text{Mackey}_{\mathbb{Q}}(G)\right)$ which achieves this is the projective model structure. Namely, the weak equivalences are the homology isomorphisms and the fibrations are the level-wise epimorphisms. See \cite[Theorem 5.1.1]{StableModel}.
\end{remark}
This diagram delivers the following theorem.
\begin{theorem}\label{equicohomchar}
If $G$ is a profinite group then the category of rational $G$-spectra is Quillen equivalent to $\ch\left(\mackey_{\mathbb{Q}}\left(G\right)\right)$ with the projective model structure.
\end{theorem}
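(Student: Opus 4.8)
The plan is to assemble this theorem directly from the chain of equivalences already established in this chapter, so that no genuinely new argument is needed. The starting point is Theorem \ref{GSpChar}, which provides a Quillen equivalence between rational $G$-spectra and $\ch\left(\fun_{\ab}\left(\pi_0(\mathfrak{O}^{\mathbb{Q}}_G),\mathbb{Q}\Mod\right)\right)$ equipped with the projective model structure. It then remains to identify the target of this Quillen equivalence with $\ch\left(\mackey_{\mathbb{Q}}(G)\right)$ as a model category.

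First I would invoke Corollary \ref{mackeymodelequi}, which gives an equivalence of abelian categories $\fun_{\ab}\left(\pi_0(\mathfrak{O}^{\mathbb{Q}}_G),\mathbb{Q}\Mod\right)\cong \mackey_{\mathbb{Q}}(G)$; this rests in turn on Theorem \ref{orb-span} (the functor $\psi$ is an equivalence $\sspan(\orb(G))\simeq\pi_0(\mathfrak{O}^{\mathbb{Q}}_G)$) together with the categorical characterisation of Mackey functors in Theorem \ref{Catdef}. An equivalence of abelian categories induces an equivalence on the categories of chain complexes, applied levelwise, so $\ch\left(\fun_{\ab}\left(\pi_0(\mathfrak{O}^{\mathbb{Q}}_G),\mathbb{Q}\Mod\right)\right)\cong \ch\left(\mackey_{\mathbb{Q}}(G)\right)$. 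The next point to check is that this equivalence is compatible with the model structures: the projective model structure on chain complexes of an abelian category with enough projectives is defined so that weak equivalences are the homology isomorphisms and fibrations are the levelwise epimorphisms (see Remark \ref{MackModel}), and an equivalence of abelian categories transports these classes to the corresponding classes on the other side, hence is a (trivial) Quillen equivalence. Composing this with the Quillen equivalence of Theorem \ref{GSpChar} yields a zig-zag of Quillen equivalences between rational $G$-spectra and $\ch\left(\mackey_{\mathbb{Q}}(G)\right)$ with the projective model structure, which is exactly the claim. Diagram \eqref{diagram} records precisely this composite.

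The main obstacle — such as it is — is not conceptual but bookkeeping: one must be careful that the projective model structure referred to in \cite[Theorem 5.1.1]{StableModel}, which a priori lives on $\ch\left(\fun_{\ab}\left(\pi_0(\mathfrak{O}^{\mathbb{Q}}_G),\text{Ab}\right)\right)$, really does match the projective model structure on $\ch\left(\mackey_{\mathbb{Q}}(G)\right)$ after transport along the equivalence of Corollary \ref{mackeymodelequi}. This is handled by the observation already made in the proof of Theorem \ref{GSpChar}: every object of $\fun_{\ab}\left(\pi_0(\mathfrak{O}^{\mathbb{Q}}_G),\text{Ab}\right)$ automatically takes values in $\mathbb{Q}\Mod$ because of the rational structure furnished by Lemma \ref{tenscom}, so the $\text{Ab}$-valued and $\mathbb{Q}\Mod$-valued functor categories coincide, and Theorem \ref{Catdef} identifies the latter with $\mackey_{\mathbb{Q}}(G)$. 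Since weak equivalences and fibrations in the projective structure are detected levelwise (homology isomorphisms, respectively epimorphisms) and these notions are preserved and reflected by an equivalence of abelian categories, the identification of model structures is immediate. I would close the proof by citing Diagram \eqref{diagram} and Remark \ref{MackModel} for the assembled statement.
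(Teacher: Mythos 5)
Your proposal is correct and follows essentially the same route as the paper, which obtains the theorem by reading it off Diagram \eqref{diagram}: the Quillen equivalence of Theorem \ref{GSpChar} composed with the equivalence of abelian categories from Theorem \ref{orb-span}/Corollary \ref{mackeymodelequi} (extended levelwise to chain complexes), with the model structure identification recorded in Remark \ref{MackModel}. Your extra care about transporting the projective model structure along the equivalence matches what the paper does implicitly via Lemma \ref{tenscom} in the proof of Theorem \ref{GSpChar}.
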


\chapter{G-Equivariant Sheaves}
In this chapter we will study $G$-equivariant sheaves of $\mathbb{Q}$-modules over $G$-spaces for profinite $G$. We will begin by understanding the properties of this category, proving that it is an abelian category (Proposition \ref{Gsheafabelian}), and constructing finite limits (Construction \ref{limconstruct}), infinite products (Construction \ref{profinfprod}), and arbitrary colimits (Construction \ref{sheafcolim}). We will also define and consider some important subcategories of $G$-equivariant sheaves when $X=SG$, including a subcategory which we shall eventually show is equivalent to rational $G$-Mackey functors called Weyl-$G$-sheaves. We will look at some useful properties of $G$-equivariant sheaves over a profinite $G$-space $X$, since we will see that both $G$ and $X$ having a closed-open basis adds more theory which we will use to our advantage (Proposition \ref{Weylequi}). Finally we will set up adjunctions involving $G$-equivariant sheaves which we shall use to calculate the injective dimension of Weyl-$G$-sheaves. 
\section{Category of G-equivariant sheaves}
In the first section we will define $G$-equivariant sheaves and Weyl-$G$-sheaves of $\mathbb{Q}$-modules for profinite $G$. We will also define the morphisms between these objects showing that we have well defined categories.
\subsection{Defining the Category of G-sheaves over a G-space}
We begin with the definition of a $G$-equivariant sheaf over a profinite $G$-space $X$ where $G$ is a profinite group. Consider the $G$-action map:
\begin{align*}
\alpha\colon G\times X&\rightarrow X\\(g,x)&\mapsto gx, 
\end{align*}
as well as the projection map:
\begin{align*}
\pi\colon G\times X&\rightarrow X\\(g,x)&\mapsto x.
\end{align*}
We also consider the following projection map:
\begin{align*}
\text{proj}:G\times G\times X&\rightarrow X\\(q_1,g_2,x)&\mapsto x,
\end{align*}
as well as the following multiplication map:
\begin{align*}
\text{multi}:G\times G\times X&\rightarrow X\\(g_1,g_2,x)&\mapsto g_1g_2x.
\end{align*}
\begin{definition}\label{Defn1}
A $G$-\textbf{equivariant sheaf}\index{$G$-equivariant sheaf} of $\mathbb{Q}$-modules over $X$ is sheaf of $\mathbb{Q}$-modules $F$ over $X$ together with an isomorphism $\rho:\pi^*(F)\rightarrow \alpha^*(F)$ such that $\rho$ is compatible with the group structure. This means that the two maps induced by $\rho$ from $\text{proj}^*(F)$ to $\text{multi}^*(F)$ are equal:
\begin{align*}
\xymatrix{\text{proj}^*(F)\ar@< 2pt>[r]^-a
					\ar@<-2pt>[r]_-b &\text{multi}^*(F)},
\end{align*}
where $a$ is given by $\rho\circ\left(\id\times\rho\right)$ and $b$ is given by $\rho\circ\left(m\circ\id\right)$ where $m$ is group multiplication.
\end{definition}
From Definition \ref{Defn1} the map $\rho:\pi^*(F)\rightarrow \alpha^*(F)$ induces maps on stalks $(g,x)$ as follows:
\begin{align*}
\rho_{(g,x)}:\pi^*(F)_{(g,x)}\rightarrow \alpha^*(F)_{(g,x)}
\end{align*} 
From the definition of the functors $\pi^*$ and $\alpha^*$ we can see that:
\begin{align*}
\pi^*(F)_{(g,x)}&=\left\lbrace (g,x,a)\mid a\in F_x \right\rbrace\cong F_x\,\text{and}\\\alpha^*(F)_{(g,x)}&=\left\lbrace (g,x,a)\mid a\in F_{gx} \right\rbrace\cong F_{gx}.
\end{align*}
where the isomorphisms are given by:
\begin{align*}
\iota_g:F_x&\rightarrow \left\lbrace (g,x)\right\rbrace\times F_x\\s_x&\mapsto ((g,x),s_x) 
\end{align*}
and
\begin{align*}
Pr_g:\left\lbrace (g,x)\right\rbrace\times F_{gx}&\rightarrow F_{gx}\\((g,x),t_{gx})&\mapsto t_{gx}.
\end{align*}
Consequently the map of sheaves $\rho$ induces the following maps of $\mathbb{Q}$-modules:
\begin{align*}
\rho_{(g,x)}:F_{x}\rightarrow F_{gx}.
\end{align*}
Furthermore if $g\in \text{stab}_G(x)$ then $\rho_{(g,x)}$ induces an action on the stalk $F_x$ as the following proposition will show.
\begin{proposition}\label{Unital}
Consider $(e,x)\in G\times X$. Then the map induced by $\rho$ on stalks:
\begin{align*}
\rho_{(e,x)}:F_x\rightarrow F_x
\end{align*}
is the identity.
\end{proposition}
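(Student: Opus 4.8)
The plan is to extract the statement from the cocycle condition that is built into Definition \ref{Defn1}, by passing to stalks. Recall that for a continuous map $f$ the stalk of $f^*(F)$ at a point $y$ is canonically identified with $F_{f(y)}$; applying this to the maps $\text{proj},\text{multi}\colon G\times G\times X\rightarrow X$ and to $\pi,\alpha\colon G\times X\rightarrow X$, the isomorphism $\rho$ induces on the stalk over $(g_1,g_2,x)\in G\times G\times X$ the two composites
\begin{align*}
a_{(g_1,g_2,x)}&=\rho_{(g_1,g_2x)}\circ\rho_{(g_2,x)}\colon F_x\rightarrow F_{g_2x}\rightarrow F_{g_1g_2x},\\
b_{(g_1,g_2,x)}&=\rho_{(g_1g_2,x)}\colon F_x\rightarrow F_{g_1g_2x},
\end{align*}
and the compatibility condition $a=b$ of Definition \ref{Defn1} says precisely that $\rho_{(g_1,g_2x)}\circ\rho_{(g_2,x)}=\rho_{(g_1g_2,x)}$ for all $g_1,g_2\in G$ and all $x\in X$.

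Granting that translation, the proof is short. First I would specialise the identity to $g_1=g_2=e$, obtaining $\rho_{(e,x)}\circ\rho_{(e,x)}=\rho_{(e,x)}$, so that $\rho_{(e,x)}$ is an idempotent endomorphism of the $\mathbb{Q}$-module $F_x$. Since $\rho$ is by hypothesis an isomorphism of sheaves, each induced stalk map $\rho_{(g,x)}$ is an isomorphism of $\mathbb{Q}$-modules, in particular a monomorphism; composing the relation $\rho_{(e,x)}^2=\rho_{(e,x)}$ with $\rho_{(e,x)}^{-1}$ then gives $\rho_{(e,x)}=\id_{F_x}$, which is the claim. (Equivalently: an idempotent which is a monomorphism is the identity.)

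The only genuine work lies in justifying the stalk-level description of $a$ and $b$ above. One must check that, under the canonical identifications of the stalks of the pullback sheaves over $G\times G\times X$ — using the germ-level isomorphisms $\iota_g$ and $Pr_g$ already recorded, together with functoriality of pullback along the projections $G\times G\times X\rightarrow G\times X$ and along $\id\times\rho$ and group multiplication — the composite $\rho\circ(\id\times\rho)$ really restricts to $\rho_{(g_1,g_2x)}\circ\rho_{(g_2,x)}$ and $\rho\circ(m\circ\id)$ to $\rho_{(g_1g_2,x)}$ on the stalk over $(g_1,g_2,x)$. This is a routine but slightly fiddly diagram chase, keeping track of which copy of $G$ acts where; I expect it to be the main (indeed the only) obstacle, after which the conclusion is immediate.
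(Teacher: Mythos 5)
Your proposal is correct and takes essentially the same route as the paper: specialise the cocycle condition on stalks to $(e,e,x)$ to get $\rho_{(e,x)}^2=\rho_{(e,x)}$, then use that an idempotent isomorphism (the paper) or idempotent monomorphism (you) must be the identity. The stalk-level translation of the compatibility condition that you flag as the remaining fiddly step is exactly what the paper also takes for granted, so there is no gap relative to the paper's own level of detail.
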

\begin{proof}
If $((e,x),s_x)\in \left\lbrace(e,x)\right\rbrace\times F_x$ then we know that: 
\begin{align*}
\rho_{(e,x)}((e,x),s_x)=\rho_{(e*e,x)}((e,x),s_x)=\rho_{(e,x)}(\rho_{(e,x)}((e,x),s_x))
\end{align*}
as a consequence of the compatibility of $\rho$ with the group structure of $G$ seen in Definition \ref{Defn1}. We now apply the general fact that if $f:A\rightarrow A$ is an isomorphism of sets or abelian groups with $f^2(a)=f(a)$ then $f=\id$.
\end{proof} 
This proposition shows that this particular definition of $G$-equivariant sheaf has a unitality property. 
\begin{corollary}\label{Coract}
If $F$ is a $G$-equivariant sheaf over a $G$-space $X$ and $x\in X$ then $F_x$ has a $\stabgx(x)$-action. 
\end{corollary}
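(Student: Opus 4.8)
The plan is to read off the $\stabgx(x)$-action directly from the maps $\rho_{(g,x)}$ that were already extracted from the structure isomorphism $\rho$, and then verify that the three module-action axioms (well-definedness, unitality, and compatibility with group multiplication) hold. First I would observe that if $g \in \stabgx(x)$, then $gx = x$, so the map of $\mathbb{Q}$-modules
\begin{align*}
\rho_{(g,x)} \colon F_x \rightarrow F_{gx} = F_x
\end{align*}
is an endomorphism of $F_x$; this is the candidate for the action of $g$. Since $\rho$ is an isomorphism of sheaves, each $\rho_{(g,x)}$ is an isomorphism, so in fact $g$ acts by automorphisms of $F_x$.

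Next I would check the axioms. Unitality is exactly Proposition \ref{Unital}: $\rho_{(e,x)} = \id_{F_x}$, and $e \in \stabgx(x)$ always. For compatibility with multiplication, suppose $g, h \in \stabgx(x)$, so $hx = x$ and $g(hx) = gx = x$. The cocycle condition in Definition \ref{Defn1} — that the two maps $a = \rho \circ (\id \times \rho)$ and $b = \rho \circ (m \circ \id)$ from $\text{proj}^*(F)$ to $\text{multi}^*(F)$ agree — when passed to the stalk over $(g,h,x) \in G \times G \times X$ and unwound through the identifications $\text{proj}^*(F)_{(g,h,x)} \cong F_x$ and $\text{multi}^*(F)_{(g,h,x)} \cong F_{ghx}$, yields precisely
\begin{align*}
\rho_{(gh,x)} = \rho_{(g,hx)} \circ \rho_{(h,x)} = \rho_{(g,x)} \circ \rho_{(h,x)},
\end{align*}
where the last equality uses $hx = x$. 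Since $gh \in \stabgx(x)$ as well, this says the assignment $g \mapsto \rho_{(g,x)}$ is a group homomorphism from $\stabgx(x)$ into $\operatorname{Aut}_{\mathbb{Q}}(F_x)$, which is exactly a $\stabgx(x)$-action on the $\mathbb{Q}$-module $F_x$ (i.e.\ $F_x$ becomes a $\mathbb{Q}[\stabgx(x)]$-module, and each $g$ acts $\mathbb{Q}$-linearly since $\rho_{(g,x)}$ is a map of $\mathbb{Q}$-modules).

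The main obstacle — really the only place requiring care — is the bookkeeping in translating the cocycle diagram of Definition \ref{Defn1} into the stalkwise identity $\rho_{(gh,x)} = \rho_{(g,hx)} \circ \rho_{(h,x)}$. One must be precise about which pullback identifications ($\iota_g$, $Pr_g$, and their analogues for the double pullbacks $\text{proj}^*$ and $\text{multi}^*$) are being used, and check that the maps $a$ and $b$ really do restrict to $\rho_{(g,hx)} \circ \rho_{(h,x)}$ and $\rho_{(gh,x)}$ respectively on the stalk over $(g,h,x)$; this is the same kind of computation already carried out in the discussion preceding Proposition \ref{Unital} for the single pullback, just one level up. Everything else is formal. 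I would present the argument in three short steps matching the three axioms, citing Proposition \ref{Unital} for unitality and doing the stalkwise unwinding of the cocycle condition explicitly for the multiplicativity step.
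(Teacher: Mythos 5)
Your proposal is correct and follows the same route as the paper: the action is the collection of maps $\rho_{(g,x)}$ for $g\in\stabgx(x)$, with unitality supplied by Proposition \ref{Unital} and multiplicativity by the compatibility of $\rho$ with the group structure in Definition \ref{Defn1}. The paper's proof is just a two-line version of this, leaving the stalkwise unwinding of the cocycle condition implicit, which you spell out.
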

\begin{proof}
The associativity of the map is given by the definition of $\rho$ from Definition \ref{Defn1}. The unitality is given by Proposition \ref{Unital}.  
\end{proof}
There is a second definition of $G$-equivariant sheaf which we will show to be equivalent to the one already stated.

\begin{definition}\label{defn2}
A \textbf{$G$-equivariant sheaf}\index{$G$-equivariant sheaf} of $\mathbb{Q}$-modules over $X$ is a sheaf $(E,p)$ where:
\begin{enumerate}
\item $E$ is a topological space with a continuous $G$-action.
\item $p$ is a continuous $G$-equivariant map $p:E\rightarrow X$.
\item $p$ is a local homeomorphism of spaces.
\item Each map $g:E_x\rightarrow E_{gx}$ is a map of $\mathbb{Q}$-modules for every $x\in X,g\in G$. 
\end{enumerate}
\end{definition}
Note that the existence of the map $g$ from point $4$ follows immediately from points $1$ and $2$. The existence of a map $g$ with domain $E_x$, for all $x$, follows from the $G$-action from point $1$. The fact that this map has codomain $E_{gx}$ follows as a result of the $G$-equivariance of the map $p$ from point $2$. We now prove that the two definitions provided are equivalent.
\begin{theorem}
The definitions of $G$-equivariant sheaves of $\mathbb{Q}$-modules of Definition \ref{Defn1} and Definition \ref{defn2} are equivalent.
\end{theorem}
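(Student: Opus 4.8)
The plan is to bootstrap off the non-equivariant equivalence of Theorem \ref{nonequivariantsheafequivdefn} and show that the two kinds of extra equivariant structure correspond to one another. In one direction, start with a pair $(F,\rho)$ as in Definition \ref{Defn1}. Form the associated sheaf space $(LF,p)$, whose fibre over $x$ is the stalk $F_x$. Using the stalk maps $\rho_{(g,x)}\colon F_x\rightarrow F_{gx}$ extracted in the discussion preceding Proposition \ref{Unital}, define a $G$-action on $LF$ by $g\cdot s_x=\rho_{(g,x)}(s_x)$. The cocycle condition in Definition \ref{Defn1} gives associativity and Proposition \ref{Unital} gives unitality, so this is a genuine $G$-action; each map $g\colon (LF)_x\rightarrow (LF)_{gx}$ is $\mathbb{Q}$-linear because $\rho$ is a morphism of sheaves of $\mathbb{Q}$-modules, and $p$ becomes $G$-equivariant by construction. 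The one genuinely topological point is continuity of $G\times LF\rightarrow LF$: identify the total space of $\pi^*F$ over $G\times X$ with the pullback $G\times_X LF\cong\{(g,x,a)\mid a\in F_x\}$ and that of $\alpha^*F$ with $\{(g,x,b)\mid b\in F_{gx}\}$; then $\rho$, being an isomorphism of sheaves, induces a homeomorphism between these total spaces, and post-composing with the evident projection onto $LF$ yields exactly the action map, which is therefore continuous.

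Conversely, given $(E,p)$ as in Definition \ref{defn2}, pass to the sheaf of sections $\Gamma E$, which by Theorem \ref{1} satisfies $E\cong L\Gamma E$ and whose stalk at $x$ is $E_x=p^{-1}(x)$. The $G$-action on $E$ restricts to $\mathbb{Q}$-linear maps $g\colon E_x\rightarrow E_{gx}$; reinterpreting these via $E_x\cong (\Gamma E)_x$ together with the identifications $\pi^*(\Gamma E)_{(g,x)}\cong(\Gamma E)_x$ and $\alpha^*(\Gamma E)_{(g,x)}\cong(\Gamma E)_{gx}$, they assemble into a stalkwise isomorphism $\rho\colon\pi^*(\Gamma E)\rightarrow\alpha^*(\Gamma E)$. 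That $\rho$ is actually a morphism of sheaves, i.e. continuous on total spaces, follows from continuity of the $G$-action on $E$ and the homeomorphism $E\cong L\Gamma E$, and the compatibility of $\rho$ with the group structure demanded in Definition \ref{Defn1} is precisely associativity of the $G$-action on $E$.

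Finally I would check that these two assignments are mutually inverse up to natural isomorphism: the underlying sheaf/sheaf-space correspondence is already an equivalence by Theorem \ref{nonequivariantsheafequivdefn} and Lemma \ref{2}, and under it the two pieces of equivariant data have been shown to determine each other, so the round trips recover $\rho$ and the $G$-action respectively. The same bookkeeping shows that a morphism of $G$-sheaves in the sense of Definition \ref{Defn1} — a morphism of underlying sheaves commuting with the $\rho$'s — corresponds under the equivalence to a $G$-equivariant morphism of the underlying sheaf spaces that is $\mathbb{Q}$-linear on stalks, so the equivalence is one of categories. The main obstacle is this continuity matching in the two constructions: correctly describing the total spaces of the pullbacks $\pi^*F$ and $\alpha^*F$ and recognising that ``$\rho$ is a homeomorphism of these'' is literally the statement ``the induced $G$-action on $LF$ is continuous''. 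Once those pullback total spaces are written out explicitly — which, at the level of stalks, is essentially already done in the excerpt — the remaining verifications are routine.
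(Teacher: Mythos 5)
Your proposal is correct, and its skeleton matches the paper's proof: build the action on $LF$ from the stalk maps $\rho_{(g,x)}$ (unitality from Proposition \ref{Unital}, associativity from the cocycle condition), and conversely assemble the stalkwise maps $g\colon E_x\rightarrow E_{gx}$ into $\rho$. Where you genuinely diverge is in the only nontrivial step, the two continuity checks. The paper does these by hand: it takes a basic open $s(U)\subseteq LF$, uses continuity of the $G$-action on $X$ and of $\rho$ (respectively of the action map $\psi$ on $E$) to manufacture an explicit neighbourhood $U_1\times t(U_2)$ inside the preimage. You instead identify $L\pi^*(F)$ and $L\alpha^*(F)$ with the fibre products $\left\lbrace((g,x),a)\mid a\in F_x\right\rbrace$ and $\left\lbrace((g,x),b)\mid b\in F_{gx}\right\rbrace$ carrying the subspace topology of the product --- which is consistent with the paper's own conventions (Construction \ref{pullbacksheaf}, Proposition \ref{pullbacktop}, and the set descriptions used inside the paper's proof) --- so that the action map is literally the composite $G\times LF\cong L\pi^*(F)\xrightarrow{L\rho}L\alpha^*(F)\rightarrow LF$ and continuity is automatic; in the converse direction, continuity of the stalkwise $\rho$ reduces to continuity of $(g,e)\mapsto((g,p(e)),g\cdot e)$ into a subspace of a product, which is exactly continuity of $\psi$ plus $p$. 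This buys a shorter, more structural argument at the price of having to justify the homeomorphism $G\times LF\cong L\pi^*(F)$ and the fibre-product description of pullback sheaf spaces, which you should state explicitly (your converse direction is a bit terse on this point, though the one-line argument above fills it). Two small remarks: your final paragraph about the round trips and about morphisms goes beyond what this theorem asserts --- the paper treats the morphism comparison and the unit/counit isomorphisms as separate statements (cf.\ Theorems \ref{1}, \ref{2} and the subsequent morphism theorem) --- and for $\rho$ being an isomorphism it is worth citing that a stalkwise bijective sheaf morphism is an isomorphism, as the paper does.
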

\begin{proof}
Starting off with Definition \ref{Defn1}. If $LF$ denotes the sheaf space of $F$ we define $E$ to be the topological space $LF$ and $p$ to be the projection for $LF$. By definition we have:
\begin{align*}
L\pi^*(F)=\left\lbrace ((g,x),s_x)\mid (g,x)\in G\times X, \, s_x\in F_x\right\rbrace
\end{align*}
and
\begin{align*}
L\alpha^*(F)=\left\lbrace  ((g,x),s_{gx})\mid (g,x)\in G\times X,\, s_{gx}\in F_{gx}\right\rbrace.
\end{align*}
For the $G$-action on $E$ we define the maps $g$ to be the map $Pr_g\circ \rho_{(g,x)}\circ \iota_g$ where $Pr_g$ and $\iota_g$ are seen in the paragraph proceeding Proposition \ref{Unital}. We therefore have a possible $G$-action on $E$:
\begin{align*}
\psi:G\times E&\rightarrow E\\(g,e)&\mapsto g(e).
\end{align*}
This is a $G$-action since we already observed unitality in Proposition \ref{Unital} and associativity follows from the fact that $\rho$ is compatible with the group structure of $G$. This is similar to Corollary \ref{Coract}. 

We know that $p$ is $G$-equivariant and the fact that each $g$ is a map of $\mathbb{Q}$-modules follows directly from the construction of $g$ from the map of sheaves of $\mathbb{Q}$-modules $\rho$. Continuity of $p$ and that $p$ is a local homeomorphism follows from the fact that it is satisfied for $LF$. Finally we need to verify the continuity of the constructed $G$-action.

Consider $s(U)\subseteq LF$ for $U$ an open neighbourhood in $LF$ and $s\in F(U)$. Take $(g,t_y)\in \psi^{-1}(s(U))$ for $t\in F(V^{\prime})$ and some $V^{\prime}\subseteq X$ open. Then $g(t_y)=s_{gy}\in s(U)$ and hence $gy\in U$. We know that the $G$-action map for $X$ denoted $\phi$ is continuous, and since $(g,y)\in \phi^{-1}(U)$ there exists $W\times V\subseteq G\times X$ open with $(g,y)\in W\times V\subseteq \phi^{-1}(U)$. We can assume $V\subseteq V^{\prime}$ since if not $V^{\prime}\cap V$ satisfies $\phi(W\times \left(V\cap V^{\prime}\right))\subseteq U$.

Let $A=\left(W\times V\times s(U)\right)\cap L\alpha^*(F)$ which is open in $L\alpha^*(F)$ and containing $((g,y),s_{gy})$. By continuity of $\rho$ we have that $\rho^{-1}(A)$ is open and contains $((g,y),t_y)$. Therefore there exists $U_1\subseteq G$ an open neighbourhood of $g$ and $U_2\subseteq X$ an open neighbourhood of $y$ with:
\begin{align*}
((g,y),t_y)\in \left(U_1\times U_2\times t(U_2)\right)\cap L\pi^*(F)\subseteq \rho^{-1}(A).
\end{align*}
Consider the open subset $U_1\times t(U_2)$ and take any point $(a,t_z)$. We know that $(a,z,t_z)\in L\pi^*(F)$ so therefore $\rho_{(a,z)}((a,z),t_z)\in A$ and hence by the definition of $A$ we have:
\begin{align*}
a(t_z)=Pr_a\rho_{(a,z)}\iota_a(t_z)=Pr_a\rho_{(a,z)}((a,z),t_z)\in s(U). 
\end{align*}
Therefore:
\begin{align*}
(g,t_y)\in U_1\times t(U_2) \subseteq \psi^{-1}(s(U))
\end{align*} 
is an open neighbourhood.

In the other direction suppose we are starting with Definition \ref{defn2}. Given $(E,p)$ we begin constructing the homeomorphism of sheaf spaces $\rho$ as follows:
\begin{align*}
\rho:L\pi^*(E)&\rightarrow L\alpha^*(E)\\((g,x),s_x)&\mapsto ((g,x),g(s_x)).
\end{align*}
This map is compatible with the group structure as required in Definition \ref{Defn1} as a result of the fact that we are using a $G$-action, this is similar to Corollary \ref{Coract}. For continuity take an arbitrary open set of the form $\left(W\times V\times s(U)\right)\cap L\alpha^*(E)$ which we denote by $A$. If $((g,x),t_x)\in \rho^{-1}(A)$ then $((g,x),g(t_x))\in A$ and hence $gx\in U$ and $((g,x),g(t_x))=((g,x),s_{gx})$. It follows that $(g,t_x)\in \psi^{-1}(s(U))$ where $\psi$ is the action map for $E$. By continuity of $\psi$ there exists $U_1$ open in $G$ and $U_2$ open in $X$ with:
\begin{align*}
(g,t_x)\in U_1\times t(U_2)\subseteq \psi^{-1}(s(U))
\end{align*}
and so:
\begin{align*}
((g,x),t_x)\in \left[U_1\times U_2\times t(U_2)\right]\cap L\pi^*(E)\subseteq \rho^{-1}(A)
\end{align*}
as required. Therefore $\rho$ is a map of sheaves satisfying the compatibility with the group structure. We finally show that this is an isomorphism on each stalk and hence an isomorphism of sheaves. This follows from the fact that each $g$ is an isomorphism with inverse ${g^{-1}}$. 
\end{proof}
Since the definition of a $G$-sheaf of $\mathbb{Q}$-modules over a $G$-space $X$ specifies that each $x\in X$ must satisfy that the stalk over $x$ is a $\text{stab}_G(x)$-module, we have the following definition.
\begin{definition}
A $G$-sheaf of $\mathbb{Q}$-modules $E$ over a $G$-space $X$ is said to be \textbf{stalk-wise fixed}\index{stalk-wise fixed} if the action of $\text{stab}_G(x)$ on $E_x$ is trivial for each $x\in X$. 
\end{definition}
We recall the definition of the space of closed subgroups given in Construction \ref{spaceclosedsubgroup}.
\begin{remark}
In the case where $X=SG$, a $G$-sheaf of $\mathbb{Q}$-modules $(E,p)$ over $X$ satisfies that $p^{-1}(K)$ has a $N_G(K)$-action since $N_G(K)$ fixes $K$ and $p$ is $G$-equivariant. 
\end{remark}
\begin{example}
If we let $X$ denote the trivial one point $G$-space, then a $G$-sheaf of $\mathbb{Q}$-modules is equivalent to a discrete $\mathbb{Q}[G]$-module.
\end{example}
We will see other examples after we have developed enough theory in Examples \ref{transitivesheafex}, \ref{constantsheafex} and \ref{Gskyscraper}.
\begin{definition}\label{Weyldefn}
In the case where $X=SG$, we define a \textbf{$\text{Weyl-}G$-sheaf}\index{$\text{Weyl-}G$-sheaf} of $\mathbb{Q}$-modules over $X$ to be a $G$-sheaf of $\mathbb{Q}$-modules $F$ such that for each $K\in SG$ we have that $F_K$ is $K$-fixed and hence a $W_G(K)$-module.
\end{definition}
We now move on to consider morphisms of $G$-sheaves. We give the definition of morhpisms separately for each of the two equivalent definitions of $G$-sheaves given above and show that they too coincide. We start with a morphism for Definition \ref{Defn1}.
\begin{definition}
Let $E$ and $E^{\prime}$ be $G$-equivariant sheaves of $\mathbb{Q}$-modules with corresponding homeomorphisms $\rho$ and $\rho^{\prime}$. A morphism of $G$-sheaves of $\mathbb{Q}$-modules $f:E\rightarrow E^{\prime}$ is a morphism of sheaves such that the following square commutes:
\begin{align*}
\xymatrix{\pi^*(E)\ar[d]^{\rho}\ar[r]^{\pi^*(f)}&\pi^*(E^{\prime})\ar[d]^{\rho^{\prime}}\\ \alpha^*(F)\ar[r]^{\alpha^*(f)}&\alpha^*(E^{\prime})}
\end{align*}
\end{definition}
The following is a morphism with respect to Definition \ref{defn2}.
\begin{definition}
If $(E,P)$ and $(E^{\prime},p^{\prime})$ are $G$-equivariant sheaves of $\mathbb{Q}$-modules then a morphism of $G$-equivariant sheaves of $\mathbb{Q}$-modules is a map of sheaf spaces
\begin{align*}
f:(E,p)\rightarrow (E^{\prime},p^{\prime})
\end{align*}
which is $G$-equivariant, and which induce a map of $\mathbb{Q}$-modules on stalks.
\end{definition}
\begin{theorem}
The morphisms of $G$-equivariant sheaves with respect to Definition \ref{Defn1} are equivalent to those morphisms with respect to Definition \ref{defn2}.
\end{theorem}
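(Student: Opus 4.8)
The plan is to show that the two notions of morphism correspond to each other under the equivalence of the two definitions of $G$-equivariant sheaf already established, and that this correspondence is compatible with composition and identities, so that we obtain an isomorphism of $\Hom$-sets respecting the categorical structure. Concretely, start with the equivalence between Definition \ref{Defn1} and Definition \ref{defn2}: recall that given $(F,\rho)$ in the sense of Definition \ref{Defn1} the associated data of Definition \ref{defn2} is $(LF,p)$ with the $G$-action $\psi(g,e)=g(e)=Pr_g\circ\rho_{(g,x)}\circ\iota_g(e)$ for $e\in F_x$, and conversely given $(E,p)$ the associated homeomorphism is $\rho((g,x),s_x)=((g,x),g(s_x))$. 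Both directions are already proven to be well defined in the preceding theorem, so I would take them as given.

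First I would fix a morphism of sheaves $f\colon E\to E'$ (equivalently $f\colon F\to F'$; recall every sheaf morphism is the same data as a morphism of the associated sheaf spaces by Theorem \ref{nonequivariantsheafequivdefn}) and unwind what the two compatibility conditions say pointwise. On the one hand, the square involving $\pi^*(f),\alpha^*(f),\rho,\rho'$ commutes if and only if it commutes on every stalk $(g,x)$; using the identifications $\pi^*(F)_{(g,x)}\cong F_x$, $\alpha^*(F)_{(g,x)}\cong F_{gx}$ via $\iota_g$ and $Pr_g$, this says exactly that $f_{gx}\circ\rho_{(g,x)}=\rho'_{(g,x)}\circ f_x$ as maps $F_x\to F'_{gx}$. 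On the other hand, $G$-equivariance of $f$ as a map of sheaf spaces says $f(g\cdot e)=g\cdot f(e)$, which on the stalk $E_x$ (mapping to $E'_{gx}$) is precisely $f_{gx}(g(s_x))=g(f_x(s_x))$, i.e. the same equation once one translates the $G$-action on $E$ back through the formula $g(s_x)=Pr_g\rho_{(g,x)}\iota_g(s_x)$. So the two conditions are literally the same equation written in two notations; the ``induces a map of $\mathbb{Q}$-modules on stalks'' clause of Definition \ref{defn2} is automatic since $f$ is already a morphism of sheaves of $\mathbb{Q}$-modules, so no extra condition is lost.

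Having matched the two defining conditions, I would then note that the underlying data is the same set (morphisms of the underlying sheaf / sheaf space), so the two subsets of it cut out by the respective conditions coincide; this gives a bijection $\Hom_{\text{Def \ref{Defn1}}}(E,E')\cong\Hom_{\text{Def \ref{defn2}}}(E,E')$ which is just the identity on underlying maps. Because composition and identities of $G$-sheaf morphisms are computed as composition and identities of the underlying sheaf (space) morphisms in both cases, this bijection is functorial, and we conclude that the two categories of $G$-equivariant sheaves of $\mathbb{Q}$-modules are isomorphic, not merely equivalent.

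I do not expect a serious obstacle here: the only real content is the bookkeeping of the stalkwise identifications $\iota_g$, $Pr_g$ and checking that ``$\rho,\rho'$-compatibility of $f$'' and ``$G$-equivariance of $f$'' are the same condition after transporting the $G$-action through the dictionary. The mildly delicate point — and the step I would be most careful about — is making sure the translation is consistent at the level of sheaf \emph{spaces} rather than just on individual stalks, i.e. that continuity of $f$ as a map $LF\to LF'$ already guarantees continuity of the correspondingly-defined equivariant map and vice versa; but since $f$ is assumed continuous on the nose in both definitions and the $G$-actions involved are the continuous ones constructed in the previous theorem, this is immediate and requires no new argument.
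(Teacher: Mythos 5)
Your proposal is correct and is essentially the paper's own argument: the paper likewise unwinds both conditions stalkwise via the identifications $\iota_g$, $Pr_g$ and the formula $g(s_x)=Pr_g\rho_{(g,x)}\iota_g(s_x)$, checking in both directions that $\rho,\rho'$-compatibility of $f$ and $G$-equivariance of $f$ are the same pointwise equation. The only difference is presentational: you phrase it as a single identification of the two conditions (plus remarks on functoriality), while the paper writes out the two chains of equalities explicitly.
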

\begin{proof}
Suppose we have a map of sheaves $f:E\rightarrow E^{\prime}$ with respect to Definition \ref{Defn1} such that the following commutes: 
\begin{align*}
\xymatrix{\pi^*(E)\ar[d]^{\rho}\ar[r]^{\pi^*(f)}&\pi^*(E^{\prime})\ar[d]^{\rho^{\prime}}\\ \alpha^*(E)\ar[r]^{\alpha^*(f)}&\alpha^*(E^{\prime})}
\end{align*}
Then for each $s_x\in E_x$ we have:
\begin{align*}
f(g(s_x))&=f(Pr_g(\rho_{(g,x)}((g,x),s_x)))=Pr_g(\alpha^*(f)(\rho_{(g,x)}((g,x),s_x)))\\&=Pr_g(\rho^{\prime}_{(g,x)}\pi^*(f)(((g,x),s_x)))=Pr_g(\rho^{\prime}_{(g,x)}((g,x),f(s_x)))=g(f(s_x)).
\end{align*}
On the other hand take a $G$-equivariant map of $G$-sheaf spaces from $(E,p)$ to $(E^{\prime},p^{\prime})$. If we take any $((g,x),s_x)\in L\pi^*(E)$ we have:
\begin{align*}
\alpha^*(f)(\rho((g,x),s_x))&=\alpha^*(f)(\rho_{(g,x)}((g,x),s_x))=((g,x),f(g(s_x)))\\&=((g,x),g(f(s_x)))=\rho^{\prime}(\pi^*(f)((g,x),s_x))
\end{align*}
as required.
\end{proof}
We can now define the following two categories.
\begin{definition}
We define the category of $G$-sheaves over a topological $G$-space $X$ by setting the objects to be the $G$-equivariant sheaves of $\mathbb{Q}$-modules over $X$ and the morphisms to be the maps of $G$-equivariant sheaves corresponding to either Definition \ref{Defn1} or \ref{defn2}. We let $\text{Sh}_{\mathbb{Q}}(X)$ denote this category.

We define the category of Weyl-$G$-sheaves to be the full subcategory of \\$\text{Sh}_{\mathbb{Q}}(SG)$ consisting of the collection of objects given by the set Weyl-$G$-sheaves over $SG$. We shall denote this category by $\text{Weyl-}G\text{-Sheaf}_{\mathbb{Q}}(SG)$.
\end{definition}
If $A$ and $B$ are $G$-equivariant sheaves, we let $\text{Sh}_{\mathbb{Q}}(X)(A,B)$ denote the collection of morphisms of $G$-equivariant sheaves from $A$ to $B$. We now define the composition function as follows:
\begin{align*}
\text{Sh}_{\mathbb{Q}}(E,F)\times \text{Sh}_{\mathbb{Q}}(F,H)&\rightarrow \text{Sh}_{\mathbb{Q}}(E,H)\\(\alpha,\beta)&\mapsto \beta\circ\alpha
\end{align*} 
where $\circ$ is interpreted as composition in the category of sheaves. Here we are using that a morphism of $G$-sheaves is in particular a morphism of sheaf spaces with extra conditions. We also note that a composition of $G$-equivariant maps is a $G$-equivariant map.

We also have the identity morphism, as in the category of sheaves over $X$ this clearly corresponds to a $G$-equivariant map. The associativity of the composition function holds since it does in the category of sheaves and this is the category we are technically applying the composition in.

This shows that the category of $G$-sheaves of $\mathbb{Q}$-modules over $X$ is a well defined category. Similarly this holds for the category of Weyl-$G$-sheaves over $SG$.
\section{Abelian category of G-sheaves}
In this section we will verify that the categories of $G$-sheaves and Weyl-$G$-sheaves of $\mathbb{Q}$-modules respectively, are abelian categories.
\subsection{Additivity of Morphisms}
We prove that for any two $G$-equivariant sheaves $E$ and $F$ we have that $\hom(E,F)$ is an abelian group. We begin by defining a suitable operation.
\begin{definition}
Let $(E,p)$ and $(F,p^{\prime})$ be two $G$-equivariant sheaves over $X$. Then we have:
\begin{enumerate}
\item A zero morphism $0:E\rightarrow F$ which is the zero morphism in the category of sheaves and at each stalk $x$ we have $0_x$ is the zero map. This is continuous since it is a morphism of sheaves and it is clearly $G$-equivariant.  
\item If $f,g\in \hom(E,F)$ we define $f+g$ to be the morphism where:
\begin{align*}
f+g:E&\rightarrow F\\s_x&\mapsto f(s_x)+g(s_x)
\end{align*}
where addition is taken in $F_x$.
\item If $f\in \hom(E,F)$ we define $-f$ to be the morphism where:
\begin{align*}
-f:E&\rightarrow F\\s_x&\mapsto -f(s_x)
\end{align*} 
where the negative is taken in $F_x$.
\end{enumerate}
\end{definition}
To prove that both $f+g$ and $-f$ belong to $\hom(E,F)$ we need the following proposition.
\begin{proposition}
For $(E,p)$ a $G$-equivariant sheaf over $X$ we define $E\pi E$ to be the set:
\begin{align*}
\left\lbrace (e,e^{\prime})\in E\times E\mid p(e)=p(e^{\prime})\right\rbrace
\end{align*}
Then the following map is continuous:
\begin{align*}
m:E\pi E&\rightarrow E\\(e,e^{\prime})&\mapsto e-e^{\prime}
\end{align*}
where subtraction is taken in $p^{-1}(p(e))$.
\end{proposition}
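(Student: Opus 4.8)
The plan is to reduce the continuity of the fibrewise subtraction map $m$ to facts already established for sheaf spaces and for the $\mathbb{Q}$-module structure, then handle the $G$-equivariance as an afterthought since the proposition as stated only claims continuity. First I would recall that, from Definition \ref{sheafspacedefn}, the $\mathbb{Q}$-module operations on $(E,p)$ are by hypothesis continuous, meaning that for sections $s,t$ over an open set $U$ the pointwise sum $s+t\colon U\to E$ is continuous, and similarly the zero section is continuous. From this one deduces that the addition map $a\colon E\pi E\to E$, $(e,e')\mapsto e+e'$, is continuous: given a point $(e,e')$ with $p(e)=p(e')=x$, choose sections $s,t$ through $e$ and $e'$ respectively over a common open neighbourhood $U$ of $x$ (possible because $p$ is a local homeomorphism, so $E$ is covered by images of sections), and then on the open neighbourhood $\{(s_y,t_y)\mid y\in U\}$ of $(e,e')$ in $E\pi E$ the map $a$ agrees with $y\mapsto (s+t)(y)$ precomposed with the projection $(s_y,t_y)\mapsto y$, which is continuous.

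Next I would observe that negation $n\colon E\to E$, $e\mapsto -e$, is continuous by the same argument: $-e$ on a neighbourhood is given by the section $-s = (-1)\cdot s$, and scalar multiplication by the fixed rational $-1$ is continuous because it is built from the continuous addition and zero section (or one includes it among the continuous $\mathbb{Q}$-module operations directly). Then $m$ factors as the composite
\begin{align*}
E\pi E \xrightarrow{\;\mathrm{id}\times n\;} E\pi E \xrightarrow{\;a\;} E,
\end{align*}
where $\mathrm{id}\times n$ is continuous because $n$ is and because the relevant subspace topology on $E\pi E\subseteq E\times E$ is respected; so $m$ is continuous as a composite of continuous maps. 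This is really the only content of the statement, and it is essentially a bookkeeping exercise once one unwinds what ``continuous $\mathbb{Q}$-module operations with respect to $E$'' means in Definition \ref{sheafspacedefn}.

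The main obstacle, such as it is, is being careful about the topology on $E\pi E$ and about the existence of local sections: one must justify that every point of $E$ lies in the image of a continuous section over some open set, which follows from $p$ being a local homeomorphism, and that a basic open neighbourhood of $(e,e')$ in $E\pi E$ can be taken of the product form $s(U)\times t(U)$ intersected with $E\pi E$ for sections $s,t$ over the same $U$ — here one shrinks the two neighbourhoods to their common refinement, exactly as in the proof of Proposition \ref{actsheaf}. I would also remark that although the proposition is stated only for continuity, the map $m$ is automatically $G$-equivariant, since each $g\colon E_x\to E_{gx}$ is a $\mathbb{Q}$-module homomorphism by Definition \ref{defn2}(4), so $g(e-e')=g(e)-g(e')$; this will be used in the subsequent verification that $f+g$ and $-f$ are morphisms of $G$-sheaves.
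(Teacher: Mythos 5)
Your overall route is sound and, unlike the paper, self-contained: the paper disposes of this proposition by citing \cite[Proposition 2.5.2]{Tennison}, whereas you reduce continuity of $m$ to the continuity of section-wise addition guaranteed by Definition \ref{sheafspacedefn}, using local sections through the two points and the observation that $\left( s(U)\times t(U)\right)\cap E\pi E=\left\lbrace (s(y),t(y))\mid y\in U\right\rbrace$ is an open neighbourhood of $(e,e')$ on which addition agrees with $(s+t)$ composed with the continuous map $(s(y),t(y))\mapsto y$. That part is correct, and the existence of sections through every point of $E$ does indeed come from $p$ being a local homeomorphism, as you say.

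The one step you assert rather than prove is the continuity of negation. Saying that multiplication by $-1$ ``is built from the continuous addition and zero section'' is not a derivation---an additive inverse cannot be produced by composing addition with the zero section---and Definition \ref{sheafspacedefn} (and the paper's gloss after it) only posits continuity of $s+t$ and of the zero section, not of scalar multiplication. The gap is small and closes as follows: given a continuous section $s$ through $e$ over $U$ with $p(e)=x$, use the local homeomorphism to choose a continuous section $u$ through $-e$ over (a shrinking of) $U$; then $s+u$ is continuous and agrees with the zero section at $x$, and two continuous sections of a sheaf space that agree at a point agree on a neighbourhood (near any point of agreement both are inverse to a restriction of $p$), so $u=-s$ near $x$, hence $-s$ is continuous and $n\colon e\mapsto -e$ is continuous; your factorisation $m=a\circ(\mathrm{id}\times n)$ then goes through. (Alternatively, skip negation and run the same agreement argument directly to show $s-t$ is continuous, then repeat your neighbourhood argument for $m$ itself.) With that patch the proof is complete; the closing remark on $G$-equivariance is correct but not needed for the statement, which only claims continuity.
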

\begin{proof}
See \cite[Proposition 2.5.2]{Tennison}.
\end{proof}
\begin{proposition}
If $f,g\in \hom(E,F)$ then $f+g$ and $-f$ are elements of $\hom(E,F)$.
\end{proposition}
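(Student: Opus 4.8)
The plan is to verify the two closure properties directly, using that everything is controlled stalkwise together with the continuity result from the preceding proposition. First I would address $f+g$. The underlying map of sheaf spaces: I would observe that $s_x \mapsto f(s_x) + g(s_x)$ factors as the composite $E \xrightarrow{(f,g)} F\pi F \xrightarrow{m'} F$, where $(f,g)$ sends $s_x$ to $(f(s_x),g(s_x))$ — this lands in $F\pi F$ because $f$ and $g$ are morphisms of sheaves, hence $p'(f(s_x)) = x = p'(g(s_x))$ — and $m'$ is the continuous addition map on $F\pi F$ (the addition analogue of the continuous subtraction map $m$ from the quoted \cite[Proposition 2.5.2]{Tennison}, or equivalently $m'(e,e') = m(e, -e')$ composed with the fibrewise negation, which is continuous by the sheaf space axioms in Definition \ref{sheafspacedefn}). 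The map $(f,g)$ is continuous since its two components are; hence $f+g$ is continuous. That $f+g$ commutes with $p$ and $p'$ is immediate, and that it is $\mathbb{Q}$-linear on each stalk follows because addition of $\mathbb{Q}$-module homomorphisms is a $\mathbb{Q}$-module homomorphism.

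Next I would check $G$-equivariance of $f+g$, which is the only place the $G$-sheaf structure (as opposed to the bare sheaf structure) enters. Using that $f$ and $g$ are morphisms of $G$-sheaves, for $s_x \in E_x$ and $\gamma \in G$ we have $(f+g)(\gamma(s_x)) = f(\gamma(s_x)) + g(\gamma(s_x)) = \gamma(f(s_x)) + \gamma(g(s_x))$, and this equals $\gamma(f(s_x)+g(s_x)) = \gamma((f+g)(s_x))$ precisely because the map $\gamma\colon F_x \to F_{\gamma x}$ is a $\mathbb{Q}$-module homomorphism (point $4$ of Definition \ref{defn2}). So $f+g \in \hom(E,F)$. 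The argument for $-f$ is entirely parallel but simpler: $-f$ factors as $E \xrightarrow{f} F \xrightarrow{-1} F$ where $-1$ is fibrewise negation, continuous by the sheaf space axioms; it commutes with the projections, is stalkwise linear, and is $G$-equivariant because $(-f)(\gamma(s_x)) = -f(\gamma(s_x)) = -\gamma(f(s_x)) = \gamma(-f(s_x))$ again using $\mathbb{Q}$-linearity of $\gamma$ on stalks.

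I do not anticipate a genuine obstacle here; the one point requiring a little care is that continuity of the pointwise sum is not automatic for arbitrary sections (this is exactly the subtlety flagged after Definition \ref{sheafspacedefn}), so the proof must genuinely route through the continuity of the fibrewise addition map on $F\pi F$ rather than arguing pointwise. Once that is in place, the remaining verifications — commutation with $p$, stalkwise $\mathbb{Q}$-linearity, and $G$-equivariance — are each a one-line consequence of the corresponding property of $f$ and $g$ and the fact that the structure maps $\gamma$ are $\mathbb{Q}$-linear. I would keep the writeup to a short paragraph citing \cite[Proposition 2.5.2]{Tennison} for the continuity input and Definition \ref{defn2} for the equivariance input.
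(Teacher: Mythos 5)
Your proposal is correct and follows essentially the same route as the paper: both factor the pointwise operation through the continuous fibrewise map on $F\pi F$ from \cite[Proposition 2.5.2]{Tennison} (the paper composes $E\to E\pi E\xrightarrow{f\times g}F\pi F\xrightarrow{m}F$ to get continuity of $f-g$, while you package the same idea as $(f,g)$ followed by a fibrewise addition), and then check compatibility with the projections, stalkwise $\mathbb{Q}$-linearity, and $G$-equivariance exactly as the paper does, using that each $g\in G$ acts by $\mathbb{Q}$-module maps on stalks. No gaps.
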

\begin{proof}
For continuity we show that the map $f-g$ is continuous. Observe that the map $f-g$ is given by the composition $m\circ \left(f\times g\right)\circ\iota$ where:
\begin{align*}
\iota:E&\rightarrow E\pi E\\e&\mapsto (e,e)
\end{align*}
and $m$ is given as in the previous proposition with $F$ taking the place of $E$. It is a composition of continuous maps. To see that $f+g$ is a map of sheaf spaces we observe that:
\begin{align*}
p^{\prime}((f+g)(s_x))=p^{\prime}(f(s_x)+g(s_x))=x=p(s_x)
\end{align*} 
since addition is taken in $F_x$. Similarly $-f$ is a map of sheaf spaces.

If $a\in G$ recall from Definition \ref{defn2} that $a$ is a map of $\mathbb{Q}$-modules on the stalks of the $G$-sheaves of $\mathbb{Q}$-modules. We therefore have that:
\begin{align*}
a((f+g)(s_x))&=a(f(s_x)+g(s_x))=a(f(s_x))+a(g(s_x))\\&=f(a(s_x))+g(a(s_x))=(f+g)(a(s_x))
\end{align*}
Similarly $-f$ is a morphism of $G$-equivariant sheaves.
\end{proof}
Therefore we have verified that if $(E,p)$ and $(F,p^{\prime})$ are $G$-equivariant sheaves of $\mathbb{Q}$-modules then $\hom(E,F)$ is an abelian group.
\subsection{Finite Limits and Colimits}
We construct finite limits of $G$-equivariant sheaves of $\mathbb{Q}$-modules. To do this we first make a useful remark.
\begin{remark}
In the category of $\mathbb{Q}$-modules it is known that finite limits and filtered colimits commute.  In particular, if $I$ is a finite indexing category and $F^i$ for $i\in I$ is a diagram of sheaves of $\mathbb{Q}$-modules over a space $X$ then:
\begin{align*}
\underset{I}{\lim}\left(F^i_x\right)=\left(\underset{I}{\lim} F^i\right)_x.
\end{align*}
\end{remark}
We can construct finite limits of $G$-equivariant sheaves of $\mathbb{Q}$-modules by considering sheaf spaces.
\begin{construction}\label{limconstruct}
Let $I$ be a finite indexing category and $F^i$ for $i\in I$ be a diagram of $G$-equivariant sheaves of $\mathbb{Q}$-modules. We omit notation for maps in our diagram. Applying the forgetful functor gives a diagram of sheaves. We begin by setting $F_x=\underset{I}{\lim} F^i_x$ for each $x\in X$ and setting $LF=\underset{x\in X}{\coprod}F_x$.

We topologise $LF$ by defining the generating sets for the topology to be those of the form:
\begin{align*}
(s^i)_{i\in I}(U)=\left\lbrace (s^i_x)_{i\in I}\mid x \in U\right\rbrace
\end{align*} 
where $U$ is a basic open subset for $X$ and $(s^i)_{i\in I}\in \underset{I}{\lim}F^i(U)$, and the limit is taken in the category of presheaves.

We define the action by the following map:
\begin{align*}
\psi:G\times LF&\rightarrow LF\\(g,(s^i_x)_{i\in I})&\mapsto (g(s^i_x))_{i\in I}.
\end{align*}
Here we use the $G$-action on each $F^i$.
\end{construction}
\begin{proposition}
The object $F$ explicitly stated in Construction \ref{limconstruct} is a $G$-equivariant sheaf of $\mathbb{Q}$-modules over $X$.
\end{proposition}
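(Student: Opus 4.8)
The plan is to verify the four requirements in the definition of a $G$-equivariant sheaf (Definition \ref{defn2}) for the explicitly constructed object $(F, p)$, where $p \colon LF \to X$ is the obvious projection sending the germ $(s^i_x)_{i\in I}$ to $x$. The underlying sheaf-space structure is already handled by the non-equivariant theory: since finite limits commute with the filtered colimits computing stalks, $F_x = \underset{I}{\lim} F^i_x$ is exactly the stalk of the presheaf limit, so $(LF, p)$ is the sheaf space of the presheaf limit of the $F^i$, which is a sheaf by Construction \ref{nonequivsheafconstruct}. In particular $p$ is a continuous local homeomorphism onto $X$, and each stalk $F_x$ carries the $\mathbb{Q}$-module structure inherited from the limit in $\mathbb{Q}\leftmod$. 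So the first three conditions — topological space, continuous map, local homeomorphism — reduce to the corresponding non-equivariant facts, and it remains to check the $G$-action is well defined, continuous, and $\mathbb{Q}$-linear on stalks with $p$ equivariant.

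First I would check that $\psi$ is well defined and lands in $LF$: for $g \in G$ and a germ $(s^i_x)_{i\in I} \in F_x$, applying the stalk maps $\rho^i_{(g,x)} \colon F^i_x \to F^i_{gx}$ of each $G$-sheaf $F^i$ componentwise yields a compatible family $(g(s^i_x))_{i\in I}$ in $\underset{I}{\lim} F^i_{gx} = F_{gx}$, because the maps in the diagram $I$ are maps of $G$-sheaves and hence commute with the $\rho^i$. Associativity and unitality of $\psi$ follow componentwise from the same properties for each $F^i$ (using Proposition \ref{Unital} for unitality), so $\psi$ is genuinely a $G$-action; that $p \circ \psi(g,-) = g \cdot (p(-))$ is immediate from the definitions, giving $G$-equivariance of $p$. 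Condition (4), that each $g \colon F_x \to F_{gx}$ is $\mathbb{Q}$-linear, follows because the limit in $\mathbb{Q}\leftmod$ is computed with $\mathbb{Q}$-linear projections and each componentwise $g$ on $F^i$ is $\mathbb{Q}$-linear, so the induced map on the limit is $\mathbb{Q}$-linear.

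The main obstacle is the continuity of $\psi$, and this is the one step I would spend real effort on. The argument should mirror the continuity proof in the equivalence of the two definitions of $G$-sheaf given earlier in the chapter: take a basic open set $(t^i)_{i\in I}(W)$ of $LF$ (with $W$ open in $X$ and $(t^i)_{i\in I} \in \underset{I}{\lim} F^i(W)$), take a point $(g, (s^i_y)_{i\in I})$ in its preimage under $\psi$, so that $gy \in W$ and $g(s^i_y) = t^i_{gy}$ for all $i$. Using continuity of the $G$-action map $\phi \colon G \times X \to X$ one finds $U_1 \ni g$, $U_2 \ni y$ open with $\phi(U_1 \times U_2) \subseteq W$; then using continuity of each $\rho^i$ (equivalently, of the action on $LF^i$) one shrinks $U_1, U_2$ so that for every $(a, z) \in U_1 \times U_2$ and every germ of a fixed local section $(\sigma^i)_{i\in I}$ through $(s^i_y)_{i\in I}$, one has $a(\sigma^i_z) = t^i_{az}$ simultaneously for all $i$ — here finiteness of $I$ is essential, since it lets us intersect finitely many neighbourhoods. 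This exhibits an open neighbourhood $(\sigma^i)_{i\in I}(U_2)$, suitably restricted over $U_1$, contained in $\psi^{-1}((t^i)_{i\in I}(W))$, establishing continuity. Finally, I would remark that the same construction applied with $X = SG$ and a diagram of Weyl-$G$-sheaves produces a Weyl-$G$-sheaf, since $K$-fixedness of each $F^i_K$ passes to the limit; this is the form in which the construction will be used later.
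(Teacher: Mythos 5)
Your proposal is correct and follows essentially the same route as the paper: the underlying sheaf is handled by the non-equivariant limit construction, the action axioms, $\mathbb{Q}$-linearity and equivariance of $p$ are checked componentwise, and continuity of the action is obtained from continuity of each componentwise action together with a finite intersection of neighbourhoods, which is exactly where finiteness of the diagram enters in the paper as well. Your extra remarks (well-definedness of the limit family under the stalk action maps, and the Weyl-$G$-sheaf case) are sound and appear in the paper only implicitly or in a separate later proposition.
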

\begin{proof}
This is a sheaf of $\mathbb{Q}$-modules since applying the forgetful functor and taking the limit in the category of presheaves is exactly how limits of sheaves are calculated so we need only verify the $G$-action.

Let $g,h\in G$ and $(s^i_x)_{i\in I}\in F_x$. The proposed action map is a unital $G$-action map since:
\begin{align*}
e(s^i_x)_{i\in I}=(e(s^i_x))_{i\in I}=(s^i_x)_{i\in I}
\end{align*}
is true for each $F^i$. The action is associative as:
\begin{align*}
g(h(s^i_x)_{i\in I})=g(h(s^i_x))_{i\in I}=(g(h(s^i_x)))_{i\in I}=((gh)(s^i_x))_{i\in I}=(gh)((s^i_x)_{i\in I}),
\end{align*}
again using that this holds for each $F^i$. This shows that we have a $G$-action map. Similarly $g$ represents a $\mathbb{Q}$-module map since for each $q\in \mathbb{Q}$ we have:
\begin{align*}
g(q(s^i_x)_{i\in I})=g(qs^i_x)_{i\in I}=(g(qs^i_x))_{i\in I}=(q(gs^i_x))_{i\in I}=q(g(s^i_x)_{i\in I}).
\end{align*}
The fact that the induced local homeomorphism $p:LF\rightarrow X$ is $G$-equivariant is immediate from construction. Finally we verify continuity of the $G$-action. Take a set of the form $(s^i)_{i\in I}(U)$ as defined in Construction \ref{limconstruct} and $(g,(t^i_x)_{i\in I})\in \psi^{-1}((s^i)_{i\in I}(U))$. Then $(g,t^i_x)\in \psi^{-1}_i(s^i(U))$ for each $i\in I$ where $\psi_i$ is the action map on the sheaf space for $F^i$.

We know that each $\psi_i$ is continuous so there exists $N_i\unlhd G$ open and $U_i\subseteq X$ open so that:
\begin{align*}
(g,t^i_x)\in gN_i\times t^i(U_i)\subseteq \psi^{-1}_i(s^i(U)).
\end{align*}
We can set $N=\underset{I}{\cap}N_i$ and $V=\underset{I}{\cap}U_i$ so that:
\begin{align*}
(g,(t^i_x)_{i\in I})\in gN\times(t^i)_{i\in I}(V)\subseteq \psi^{-1}((s^i)_{i\in I}(U))
\end{align*}
as required.
\end{proof}
Notice that the continuity of the $G$-action here relies on us dealing with a finite diagram. Given a diagram of $G$-sheaves we have constructed a $G$-sheaf which is potentially the limit, to prove that it is, we need to prove the universality condition.
\begin{proposition}
If $F^i$ is a finite diagram of $G$-sheaves of $\mathbb{Q}$-modules over $X$ then the construction $F$ is universal in the category of $G$-sheaves of $\mathbb{Q}$-modules over $X$.
\end{proposition}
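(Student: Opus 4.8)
The plan is to verify the universal property directly by exploiting that the forgetful functor to (non-equivariant) sheaves already behaves well with respect to finite limits, and that the $G$-action on $F$ was constructed level-wise so as to be compatible with everything in sight. Concretely, suppose $G$ is a $G$-sheaf of $\mathbb{Q}$-modules over $X$ equipped with a cone $\varphi^i\colon G\to F^i$ of morphisms of $G$-sheaves compatible with the diagram maps. First I would forget the $G$-structure everywhere: since $F$ is, by construction, the limit of the $F^i$ in the category of sheaves of $\mathbb{Q}$-modules (this was already noted in the proof that $F$ is a $G$-sheaf, and follows from Construction \ref{nonequivsheafconstruct}), there is a unique morphism of sheaves $u\colon G\to F$ with $p^i\circ u=\varphi^i$ for each $i$, where $p^i\colon F\to F^i$ are the projections. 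On stalks this $u$ is the canonical map $G_x\to\underset{I}{\lim}F^i_x=F_x$ induced by the $\varphi^i_x$.

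The remaining work is to check that this unique $u$ is in fact a morphism of $G$-sheaves, i.e. that it is $G$-equivariant. This is where the level-wise definition of the action pays off: for $g\in G$ and $s_x\in G_x$ we compute, using the formula $\psi(g,(t^i_x)_{i\in I})=(g(t^i_x))_{i\in I}$ from Construction \ref{limconstruct} together with the fact that each $\varphi^i$ is $G$-equivariant,
\begin{align*}
u(g\cdot s_x)=\left(\varphi^i(g\cdot s_x)\right)_{i\in I}=\left(g\cdot\varphi^i(s_x)\right)_{i\in I}=g\cdot\left(\varphi^i(s_x)\right)_{i\in I}=g\cdot u(s_x).
\end{align*}
Since $u$ is also automatically a map of $\mathbb{Q}$-modules on stalks (being the limit map in $\mathbb{Q}\leftmod$ at each stalk) and continuous (being a morphism of sheaf spaces), it is a morphism of $G$-sheaves. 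Uniqueness of $u$ as a morphism of $G$-sheaves is immediate, since it is already unique as a morphism of sheaves and the forgetful functor to sheaves is faithful. Finally I would observe that the projections $p^i\colon F\to F^i$ are themselves morphisms of $G$-sheaves: each $p^i$ is a morphism of sheaves by Construction \ref{nonequivsheafconstruct}, and $p^i(g\cdot(s^j_x)_{j\in I})=p^i((g(s^j_x))_{j\in I})=g(s^i_x)=g\cdot p^i((s^j_x)_{j\in I})$, so $F$ together with the $p^i$ is a genuine cone in $\Sh_{\mathbb{Q}}(X)$.

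I do not expect a serious obstacle here; the substantive content was already discharged in proving that $F$ is a well-defined $G$-sheaf (in particular the continuity of the action, which is the only place finiteness of $I$ is used). The one thing to be careful about is bookkeeping with the two descriptions of a $G$-sheaf: I would work throughout with the sheaf-space description of Definition \ref{defn2}, where ``$G$-equivariant morphism'' literally means commuting with the $G$-actions on the total spaces and being a map of $\mathbb{Q}$-modules on stalks, since the computation above is then transparent. After this, I would remark that the dual construction gives finite colimits: applying the sheafification of the level-wise presheaf colimit as in Construction \ref{nonequivsheafconstruct} and transporting the $G$-action through sheafification yields finite colimits of $G$-sheaves, with the same universality argument run in the opposite direction.
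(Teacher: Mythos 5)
Your proposal is correct and follows essentially the same route as the paper: forget to non-equivariant sheaves to obtain the unique sheaf morphism into the limit, then verify its $G$-equivariance by a stalk-wise computation using the component-wise definition of the action and the equivariance of the cone maps, with uniqueness inherited from the sheaf level. The extra checks you include (equivariance of the projections, the remark on colimits) go slightly beyond what the paper records but change nothing essential.
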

\begin{proof}
If we begin with morphisms of $G$-equivariant sheaves of $\mathbb{Q}$-modules \\$\phi_i:A\rightarrow F^i$, then we can use the fact that every morphism of $G$-equivariant sheaves is a morphism of sheaves to find a unique morphism of sheaves $\phi$ such that the following diagram commutes:
\begin{align*}
\xymatrix{A\ar[rr]^{\phi_i}\ar@{-->}[dr]_{\phi}&&F^i\\&F\ar[ur]^{p_i}}
\end{align*}
It is now sufficient to prove that this morphism $\phi$ of sheaf spaces is $G$-equivariant. To see this observe that if $g\in G$ and $a\in A$, then we have the following:
\begin{align*}
g\phi(a)&=g(p_i\circ\phi(a))_{i\in I}=(g(\phi_i(a)))_{i\in I}\\&=(\phi_i(ga))_{i\in I}=(p_i\circ\phi(ga))_{i\in I}=\phi(ga)
\end{align*}
proving $G$-equivariance of $\phi$ as required.
\end{proof}
We now look at constructing colimits of $G$-sheaves of $\mathbb{Q}$-modules. To do this we first understand the following construction of equivalence relations. 
\begin{construction}\label{colimequiv}
A binary relation $R$ on a set $X$ is a subset of $X\times X$ where we say that for $x,y\in X$ we have $xRy$ if $(x,y)\in R$. Suppose that $R$ is a binary relation containing the set:
\begin{align*}
\left\lbrace(x,x)\in X\times X\mid x\in X\right\rbrace.
\end{align*}
We make explicit the smallest equivalence relation generated by $R$. We first define $R_1$ to be the following set:
\begin{align*}
\left\lbrace (x,y)\in X\times X\mid (x,y)\in R\,\,\text{or}\, (y,x)\in R\right\rbrace
\end{align*}
Then $R_1$ represents a relation which satisfies symmetry. We take $R_2$ to be the set:
\begin{align*}
\left\lbrace(x,y)\in X\times X\mid \exists\, x_0,x_1,x_2,\ldots,x_n\in X\text{with}\,(x_i,x_{i+1})\in R_1,\,\text{for $0\leq i\leq n$}\right\rbrace
\end{align*}
where $x_0=x$ and $x_{n}=y$. The set $R_2$ represents the desired equivalence relation.
\end{construction}
\begin{remark}
If $R$ is a binary relation satisfying that each $x$ in $X$ has an element $y$ such that $xRy$, then the above construction creates an equivalence relation. This is because this condition paired with symmetry and transitivity implies reflexivity.
\end{remark}
Suppose we have a diagram $F^i$ of $G$-sheaves of $\mathbb{Q}$-modules with maps omitted from the notation. As in the construction of limits we apply the forgetful functor to obtain a diagram of sheaves of $\mathbb{Q}$-modules $F^i$. For the following construction keep in mind that $\Gamma$ represents the sheafification functor and $P\underset{I}{\colim}$ represents the colimit in the category of presheaves.
\begin{construction}\label{sheafcolim}
If $F^i$ is a diagram of sheaves of $\mathbb{Q}$-modules, we define $\underset{I}{\colim} F^i=\Gamma P\underset{I}{\colim} F^i$. This means that we consider the sheaf space whose underlying set is given by $\underset{x\in X}{\coprod} \left(\underset{I}{\colim}F^i\right)_x$ where:
\begin{align*}
\left(\underset{I}{\colim} F^i\right)_x&=\left(P\underset{I}{\colim}F^i\right)_x=\underset{I}{\colim}\left({F^i}_x\right)=\underset{I}{\oplus} {F^i}_x/\sim
\end{align*}
where $\sim$ is the smallest equivalence relation generated by the binary relation $x_i\sim \varphi_{ij}(x_i)$, for all $i,j\in I$ and all $x_i\in F^i$. Notice that $\varphi_{ii}=\id_{F^i}$, which implies that this relation is reflexive, allowing us to build an equivalence relation using Construction \ref{colimequiv}.

The set $\underset{x\in X}{\coprod}\left(\underset{I}{\oplus}{F^i}_x/\sim\right)$ is topologised by considering the topology generated by sets of the form:
\begin{align*}
s(U)=\left\lbrace s_x\mid \in U\right\rbrace
\end{align*}
where $s\in P\underset{I}{\colim}F^i(U)$ and $U$ is a basic open subset of $X$.
\end{construction}
Given a diagram of $G$-sheaves of $\mathbb{Q}$-modules over $X$ we know how to construct the colimit sheaf space in the category of sheaves of $\mathbb{Q}$-modules over $X$. We first show that we have a well-defined action of $G$ on the sheaf space, and then show that it is continuous.
\begin{lemma}\label{colimact}
If $F^i$ is a diagram of $G$-sheaves of $\mathbb{Q}$-modules over $X$ then the colimit sheaf space in the category of sheaves of the diagram $F^i$ as given in Construction \ref{sheafcolim} has a $G$-action. 
\end{lemma}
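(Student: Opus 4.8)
The plan is to define the $G$-action on the colimit sheaf space $L\!\left(\colim_I F^i\right)$ pointwise on stalks, using the $G$-actions already present on each $F^i$, and then to check that this is a well-defined group action compatible with the quotient by the equivalence relation $\sim$ from Construction \ref{sheafcolim}. First I would recall that the underlying set is $\coprod_{x\in X}\left(\bigoplus_I F^i_x/\sim\right)$, and that each $F^i$ carries a $G$-action by Definition \ref{defn2}; in particular for $g\in G$ and $x\in X$ the structure maps give $g\colon F^i_x\rightarrow F^i_{gx}$, which are $\mathbb{Q}$-module isomorphisms. I would then define
\begin{align*}
\psi\colon G\times L\!\left(\colim_I F^i\right)&\rightarrow L\!\left(\colim_I F^i\right)\\
\left(g,\left[(s^i_x)_{i\in I}\right]\right)&\mapsto \left[(g(s^i_x))_{i\in I}\right],
\end{align*}
where square brackets denote the class in $\bigoplus_I F^i_x/\sim$.

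The key steps are as follows. First, show $\psi$ is well defined: if $(s^i_x)_{i\in I}\sim (t^i_x)_{i\in I}$ in $\bigoplus_I F^i_x$, then $(g(s^i_x))_{i\in I}\sim (g(t^i_x))_{i\in I}$ in $\bigoplus_I F^i_{gx}$. Since $\sim$ is the smallest equivalence relation generated by the relations $s_i\sim \varphi_{ij}(s_i)$ coming from the maps of the diagram, it suffices to check that $g$ commutes with each $\varphi_{ij}$; but $\varphi_{ij}$ is a morphism of $G$-sheaves, so on stalks $\varphi_{ij}(g(s_i))=g(\varphi_{ij}(s_i))$ by $G$-equivariance, and then one propagates this through the symmetric--transitive closure exactly as in Construction \ref{colimequiv}. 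Second, verify that $\psi$ is a genuine $G$-action: unitality follows from $e(s^i_x)=s^i_x$ for each $F^i$ (Proposition \ref{Unital}), and associativity from $g(h(s^i_x))=(gh)(s^i_x)$ for each $F^i$, both statements descending to the quotient. Third, note that each map $g\colon \left(\colim_I F^i\right)_x\rightarrow \left(\colim_I F^i\right)_{gx}$ is a $\mathbb{Q}$-module map, since it is induced by the $\mathbb{Q}$-linear maps $g$ on each $F^i_x$ and the quotient map is $\mathbb{Q}$-linear, and it is an isomorphism with inverse induced by $g^{-1}$. Fourth, record that the induced projection $p\colon L\!\left(\colim_I F^i\right)\rightarrow X$ is $G$-equivariant, which is immediate since $p$ sends the class over $x$ to $x$ and $\psi$ sends it to a class over $gx$.

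The main obstacle — and the only genuinely non-routine point — is the well-definedness on the quotient: one must be careful that although the colimit stalk is, in general, a quotient of $\bigoplus_I F^i_x$ by a nontrivial equivalence relation (rather than a direct sum), the $G$-action still respects $\sim$ because every generating relation of $\sim$ comes from a diagram map $\varphi_{ij}$ which is $G$-equivariant. It is worth spelling out explicitly that if $x_0,\dots,x_n$ is a chain witnessing $(s,t)\in R_2$ with each consecutive pair related by some $\varphi_{ij}^{\pm 1}$, then $g(x_0),\dots,g(x_n)$ is a chain of the same form witnessing $(g(s),g(t))\in R_2$. Everything else is a direct transfer of the corresponding verification in Construction \ref{limconstruct} for limits, carried out stalkwise and then passed through sheafification; continuity of the resulting action is deferred to the subsequent result, so it is not part of this lemma.
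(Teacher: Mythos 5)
Your proposal is correct and follows essentially the same route as the paper: define the action stalkwise via the $G$-actions on each $F^i$, pass to the direct sums, and check compatibility with the colimit equivalence relation so the action descends to the quotient, with unitality, associativity, $\mathbb{Q}$-linearity and equivariance of the projection inherited from the individual sheaves. The only difference is that you spell out the well-definedness step (using $G$-equivariance of the diagram maps $\varphi_{ij}$ and propagating through the symmetric--transitive closure), which the paper merely asserts; this is a welcome clarification rather than a different argument.
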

\begin{proof}
First notice that for each $x\in X$ and $i\in I$ we can use the $G$-action on $F^i$ to get a map of $\mathbb{Q}$-modules ${F^i}_x$ to ${F^i}_{gx}$. Using the universal properties of colimits we further obtain a map from $\underset{I}{\oplus}{F^i}_x$ to $\underset{I}{\oplus}{F^i}_{gx}$. This map is compatible with the colimit equivalence relation and we therefore obtain the following map:
\begin{align*}
\psi:G\times \left(\underset{x\in X}{\coprod}\left(\underset{I}{\oplus}{F^i}_x/\sim\right)\right)&\rightarrow \left(\underset{x\in X}{\coprod}\left(\underset{I}{\oplus}{F^i}_x/\sim\right)\right)\\(g,[s_x])&\mapsto [g(s_x)]
\end{align*}
This is a $G$-action since it is constructed using the action of $G$ on each $F^i$.
\end{proof} 
\begin{lemma}\label{colimactcts}
If $F^i$ is a diagram of $G$-sheaves of $\mathbb{Q}$-modules then the $G$-action defined in Lemma \ref{colimact} for the colimit sheaf space displayed in Construction \ref{sheafcolim} is continuous. 
\end{lemma}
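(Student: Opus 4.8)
The plan is to mimic the continuity argument for finite limits (the proof immediately following Construction \ref{limconstruct}), but being careful that colimits of sheaves require sheafification, so the underlying set of the colimit sheaf space is built from germs of sections of the presheaf colimit. First I would fix a basic open set in the colimit sheaf space, namely one of the generating sets $s(U)$ where $s\in P\underset{I}{\colim}F^i(U)$ and $U$ is a basic open subset of $X$, and take a point $(g,[t_y])$ in the preimage $\psi^{-1}(s(U))$ under the action map $\psi$ from Lemma \ref{colimact}. Since the equivalence classes are taken in the filtered colimit of stalks, I would first use that any germ $t_y$ of a section of the presheaf colimit is, locally around $y$, represented by a section $\sigma^{i}$ of some single $F^i$ (this is how presheaf colimits of sheaves behave on a basis), so that $[t_y] = [\sigma^i_y]$ with $\sigma^i\in F^i(V)$ for some open $V\ni y$. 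Likewise $s$ near the point $gy$ is represented by some $\sigma^j\in F^j(W)$.

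Next I would push the problem down to a single index $k\geq i,j$ in the (filtered) diagram $I$, replacing $\sigma^i$ and $\sigma^j$ by their images in $F^k$ so that both the source germ and the target section live in the same $G$-sheaf $F^k$. At that point the continuity of the action $\psi$ reduces to the continuity of the action map $\psi_k\colon G\times LF^k\to LF^k$ on the single $G$-sheaf $F^k$, which holds by the definition of a $G$-sheaf (Definition \ref{Defn1}/\ref{defn2}). Concretely, from $\psi_k^{-1}(\sigma^j(W))$ being open and containing $(g, \sigma^k_y)$ I obtain an open neighbourhood $gN\times \sigma^k(V')$ (with $N\unlhd G$ open, $V'\subseteq X$ open, $y\in V'$) contained in it; then I check that the image of $gN\times \sigma^k(V')$ under the quotient map to the colimit sheaf space gives an open neighbourhood $gN\times t(V')$ of $(g,[t_y])$ contained in $\psi^{-1}(s(U))$, using that the quotient and sheafification maps are compatible with the $G$-action (Lemma \ref{colimact}) and that germs of equal sections stay equal after pushing forward along the structure maps of the colimit.

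The main obstacle I expect is not the group-theoretic bookkeeping but the interaction between sheafification and the $G$-action: unlike the finite-limit case, where the colimit sheaf space is literally built stalkwise from the diagram, here the underlying set involves equivalence classes of germs, so I must argue carefully that (a) a germ in the presheaf colimit is locally the germ of a single $F^i$-section on a basis of $X$, and (b) the chosen open neighbourhood in $LF^k$ maps to a genuine open set in $\Gamma P\underset{I}{\colim}F^i$ that still lies in the preimage of $s(U)$. The filteredness of $I$ (or, if $I$ is not filtered, the fact that the relevant colimit is computed after sheafification and the structure maps $\varphi_{ij}$ identify sections) is what lets me choose a common index $k$; I would state explicitly that I only use this to reduce to the single-sheaf case, where the hypothesis that each $F^i$ is a $G$-sheaf does the work. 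Once continuity is established, Lemma \ref{colimact} together with this lemma shows the colimit sheaf space of Construction \ref{sheafcolim} is a $G$-sheaf, and a short universal-property check (analogous to the limit case) then identifies it as the categorical colimit in $\text{Sh}_{\mathbb{Q}}(X)$.
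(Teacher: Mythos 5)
Your overall strategy --- localise the equality of germs to a small neighbourhood, transport the problem to the individual sheaves $F^i$ where continuity of the action is known, and then push the resulting open box forward to the colimit sheaf space --- is exactly the strategy of the paper's proof. The gap is in your reduction to a \emph{single} index $k$. Construction \ref{sheafcolim} is stated for an arbitrary small diagram $I$ (it is used for coproducts, coequalisers, pushouts, etc.), and at a stalk the presheaf colimit is $\underset{I}{\oplus}\,F^i_x/\!\sim$, so a germ $[t]_y$ is in general represented only by a \emph{finite sum} $\sum_j t_{i_j}$ of sections living over several different indices $i_j$, with no common upper bound in $I$. Your claim (a), that a germ of the presheaf colimit is locally the germ of a section of a single $F^i$, is true for filtered $I$ but false already for a two-object discrete diagram, and the fallback you offer --- that sheafification and the structure maps $\varphi_{ij}$ identify sections --- does not produce a common index either: sheafification changes neither the stalks nor the fact that a class is a sum over distinct indices. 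So as written the proof only covers filtered diagrams, not the general statement.

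The repair is essentially what the paper does, and it preserves the rest of your outline: after using $[g(t)]_x=[s]_x$ to find a basic $V$ on which $g^{-1}(s)\sim t$ as sections of the presheaf colimit, write $s$ as a finite sum of sections $s_{k_i}\in F^{k_i}(V)$, apply continuity of each individual action map $\psi_{k_i}$ to get, for each of the \emph{finitely many} indices appearing, an open box $W_{k_i}\times g^{-1}(s_{k_i})(U_{k_i})\subseteq \psi_{k_i}^{-1}(s_{k_i}(V))$, and then intersect: with $W=\cap_{k_i}W_{k_i}$ and $U_1=\cap_{k_i}U_{k_i}$ the set $W\times[t](U_1)=W\times[g^{-1}(s)](U_1)$ is an open neighbourhood of $(g,[t]_{g^{-1}x})$ carried by $\psi$ into $[s](U)$. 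Note that the finiteness that makes the intersection legitimate comes from the finiteness of the sum representing a single element of the direct sum, not from any hypothesis on $I$; this is the point your single-index reduction was trying to shortcut. With that change the remainder of your argument (compatibility of the quotient with the $G$-action, and the universal-property check identifying the construction as the categorical colimit) goes through as you describe.
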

\begin{proof}
To see that the $G$-action map is continuous take an open subset of the form:
\begin{align*}
[s](U)=\left\lbrace [s]_x\mid x\in U\right\rbrace
\end{align*} 
for $[s]\in P\underset{I}{\colim} F^i(U)$ and $U$ a basic open subset of $X$. We will prove that $\psi^{-1}([s](U))$ is open so take any element $(g,[t]_{g^{-1}x})$ belonging to it, so that $x\in U$. Therefore we have that $[g(t)]_x=[s]_x$.

It therefore follows that there exists some $V\subseteq X$ open neighbourhood of $x$ with $[s]=[g(t)]$ in $\Gamma P\underset{I}{\colim} F^i(V)$. Since $\left(\Gamma P\underset{I}{\colim} F^i\right)_x=\left(P\underset{I}{\colim}F^i\right)_x$ and since the sheaf space topology is generated by sets of the form $r(W)$ where $r\in \left(P\underset{I}{\colim} F^i\right)(W)$ and $W\subseteq X$ open and basic, we can assume that $[s]=[g(t)]$ in $\left(P\underset{I}{\colim}F^i\right)(V)$.

Therefore $s$ has a representative $\underset{k_i}{\sum} s_{k_i}$ in $\underset{k_i}{\oplus}F_{k_i}(V)$ and $g(t)$ has one $\underset{j_i}{\sum} g(t_{j_i})$ in $\underset{j_i}{\oplus}F_{j_i}(V)$ with $s\sim g(t)$. It follows that $\underset{k_i}{\sum} g^{-1}(s_{k_i})\sim \underset{j_i}{\sum} t_{j_i}$ for $g^{-1}(s_{k_i})\in F^{k_i}(g^{-1}V)$ and $t_{j_i}\in F^{j_i}(g^{-1}V)$. Suppose that $\psi_{k_i}$ is the continuous $G$-action map for the $G$-sheaf $F^{k_i}$.

If $(g,g^{-1}(s_{k_i})_{g^{-1}x})\in \psi_{k_i}^{-1}(s_{k_i}(V))$ (which is open by the continuity of $\psi_{k_i}$) then there exists $W_{k_i}\subseteq G$ open and $U_{k_i}\subseteq X$ open with:
\begin{align*}
(g,g^{-1}(s_{k_i})_{g^{-1}x})\in W_{k_i}\times g^{-1}(s_{k_i})(U_{k_i})\subseteq \psi_{k_i}^{-1}(s_{k_i}(V)).
\end{align*}
Notice that we can assume that $U_{k_i}\subseteq g^{-1}V$ since if it does not we can take $U_{k_i}\cap g^{-1}V$. Set $W=\underset{k_i}{\cap}W_{k_i}$ and $U_1=\underset{k_i}{\cap}U_{k_i}$. Therefore it follows:
\begin{align*}
(g,g^{-1}(s)_{g^{-1}x})\in W\times \underset{k_i}{\sum} g^{-1}(s_{k_i})(U_1)\subseteq \underset{k_i}{\bigoplus}\psi_{k_i}^{-1}\left(\left(\underset{k_i}{\sum}s_{k_i}\right)(V)\right).
\end{align*}
The analogous statement is equally true for the sum represented by $t$. This is because the morphisms of $G$-sheaves are compatible with the $G$-action maps $\psi_k$. Since $g^{-1}(s)\sim t$ as sections over $g^{-1}V$ and hence over $U_1$, we have that $g^{-1}(s)_z\sim t_z$ for every $z\in U_1$. Consequently we have that $\left[g^{-1}(s)\right](U_1)=[t](U_1)$.

By construction of $\psi$ this means that:
\begin{align*}
(g,g^{-1}(s)_{g^{-1}x})\in W\times [g^{-1}(s)](U_1)\subseteq \psi^{-1}([s](V)).
\end{align*}

We have that $W\times [t](U_1)$ is open and containing $(g,[t]_{g^{-1}x})$. This set has image contained in $[s](U)$ since:
\begin{align*}
\psi(W\times [t](U_1))&=\psi(W\times [g^{-1}(s)](U_1))\subseteq [s](V)\subseteq [s](U)
\end{align*}  
This proves that the $G$-action is continuous.
\end{proof}
\begin{lemma}\label{QmodGequi}
Using the action defined in Lemma \ref{colimact} we have that each $g$ is a map of $\mathbb{Q}$-modules. The projection map for the colimit sheaf explicitly stated in Construction \ref{sheafcolim} is $G$-equivariant.
\end{lemma}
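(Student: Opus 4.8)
The plan is to reduce both assertions to the corresponding facts already established for each individual sheaf $F^i$, using that the colimit in Construction \ref{sheafcolim} is computed stalkwise as a colimit in $\mathbb{Q}\Mod$ followed by sheafification. First I would recall that for each $i \in I$ and each $x \in X$ the structure map $g \colon F^i_x \to F^i_{gx}$ is a morphism of $\mathbb{Q}$-modules by condition $4$ of Definition \ref{defn2}. The additive map $\bigoplus_{I} F^i_x \to \bigoplus_{I} F^i_{gx}$ obtained coordinatewise from these maps is then $\mathbb{Q}$-linear, since each coordinate is, and it carries the generating relations $s_i \sim \varphi_{ij}(s_i)$ of Construction \ref{sheafcolim} over the point $x$ to the analogous relations over $gx$; this last point is where one uses that the transition morphisms $\varphi_{ij}$ in the diagram are morphisms of $G$-sheaves and hence intertwine the $G$-actions. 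Consequently the coordinatewise map descends to a $\mathbb{Q}$-linear homomorphism on the quotients $\bigoplus_{I} F^i_x/\!\sim \;\to\; \bigoplus_{I} F^i_{gx}/\!\sim$, and a check on representatives identifies this with the map denoted $g$ in Lemma \ref{colimact} (equivalently, one can phrase the argument via the universal property of $\underset{I}{\colim}$ in $\mathbb{Q}\Mod$: the maps $F^i_x \to \left(\underset{I}{\colim}\, F^j\right)_{gx}$ form a cone, inducing a unique $\mathbb{Q}$-module map out of the stalk). Since sheafification does not change stalks, this shows each $g$ acts $\mathbb{Q}$-linearly on the stalks of $\underset{I}{\colim}\, F^i$, which is precisely the first claim.

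For the second statement, the projection $p \colon L\!\left(\underset{I}{\colim}\, F^i\right) \to X$ is by construction the map sending a germ lying in the stalk over $x$ to the point $x$. Given $g \in G$ and $[s_x] \in \left(\underset{I}{\colim}\, F^i\right)_x$, Lemma \ref{colimact} defines $g \cdot [s_x] = [g(s_x)]$, and by construction $g(s_x)$ lies in the stalk $\left(\underset{I}{\colim}\, F^i\right)_{gx}$ over $gx$. Hence $p\bigl(g \cdot [s_x]\bigr) = gx = g \cdot p([s_x])$, so $p$ is $G$-equivariant.

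The only point requiring genuine care is the well-definedness assertion in the first paragraph, namely that the coordinatewise map respects the colimit equivalence relation of Construction \ref{sheafcolim}; here one invokes that the morphisms of the diagram are morphisms of $G$-sheaves. Apart from that bookkeeping the lemma is purely formal, and I do not anticipate any real obstacle; together with the earlier results it completes the verification that Construction \ref{sheafcolim} produces an object of $\sheaf_{\mathbb{Q}}(X)$.
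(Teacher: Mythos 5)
Your proposal is correct and follows essentially the same route as the paper: both arguments note that each coordinate map $g_i\colon F^i_x\to F^i_{gx}$ is $\mathbb{Q}$-linear, that the induced map on $\bigoplus_I F^i_x$ descends to the colimit quotient because the relation is compatible with the $G$-action (the paper establishes this compatibility in Lemma \ref{colimact}, which you re-justify via the $G$-equivariance of the transition maps $\varphi_{ij}$), and that equivariance of the projection is immediate since the action sends the stalk over $x$ to the stalk over $gx$. No gaps.
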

\begin{proof}
We know that the action maps $g_i$ from ${F^i}_x$ to ${F^i}_{gx}$ are maps of $\mathbb{Q}$-modules for each $i$. It follows that the map $\underset{I}{\oplus}{g_i}$ is a map of $\mathbb{Q}$-modules. The colimit quotient is taken in the category of $\mathbb{Q}$-modules and this relation is compatible with the $G$-action, so the map $g$ induced by the quotient is a map of $\mathbb{Q}$-modules.

The $G$-equivariance of the projection map for the colimit construction follows from the fact that the $G$-action is constructed from the $G$-action on each $F^i$ and these actions commute with the projections for $F^i$.  
\end{proof}
We have now constructed a $G$-sheaf of $\mathbb{Q}$-modules $F$ given a diagram of $G$-sheaves of $\mathbb{Q}$-modules $F^i$. We now need to verify that this construction is universal.
\begin{proposition}
If $F^i$ is a diagram of $G$-sheaves of $\mathbb{Q}$-modules then the colimit sheaf as seen in Construction \ref{sheafcolim} with $G$-action given by Lemma \ref{colimact} which we denote by $F$, is a $G$-sheaf of $\mathbb{Q}$-modules satisfying the universality condition.   
\end{proposition}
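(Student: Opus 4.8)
The plan is to reduce the universality of $F$ as a $G$-sheaf to the universality of the underlying sheaf colimit already recorded in Construction \ref{nonequivsheafconstruct}, and then to check the one extra ingredient, namely $G$-equivariance of the comparison morphism. We have already verified in Lemmas \ref{colimact}, \ref{colimactcts} and \ref{QmodGequi} that the underlying sheaf $\Gamma P\colim F^i$ carries a continuous $G$-action, that each $g$ acts as a $\mathbb{Q}$-module map on stalks, and that its projection is $G$-equivariant; so $F$ is a genuine object of $\sheaf_{\mathbb{Q}}(X)$. It therefore remains to show that $F$, together with the canonical morphisms $\iota_i\colon F^i\to F$, is initial among cocones under the diagram $(F^i)$ in $\sheaf_{\mathbb{Q}}(X)$.

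First I would take an arbitrary cocone of $G$-sheaf morphisms $\phi_i\colon F^i\to A$. Forgetting the $G$-structure, each $\phi_i$ is a morphism of sheaves, and since $F=\Gamma P\colim F^i$ is the colimit in the category of sheaves — this is exactly Construction \ref{nonequivsheafconstruct}, using that sheafification is left adjoint to the forgetful functor by Theorem \ref{Limsheaf} and hence preserves colimits — there is a unique morphism of sheaves $\phi\colon F\to A$ with $\phi\circ\iota_i=\phi_i$ for all $i$. Uniqueness of $\phi$ as a $G$-sheaf morphism is then automatic, since every morphism of $G$-sheaves is in particular a morphism of the underlying sheaves. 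Thus the only remaining point is to prove that this $\phi$ is $G$-equivariant.

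For $G$-equivariance I would argue stalkwise: a morphism of sheaves is $G$-equivariant exactly when for every $x\in X$ and $g\in G$ the square relating $\phi_x\colon F_x\to A_x$, $\phi_{gx}\colon F_{gx}\to A_{gx}$ and the stalk actions $g\colon F_x\to F_{gx}$, $g\colon A_x\to A_{gx}$ commutes. On stalks we have $F_x=\colim_I F^i_x$ in $\mathbb{Q}\leftmod$, with structure maps $\iota_{i,x}\colon F^i_x\to F_x$, and by construction (Lemma \ref{colimact}) the action $g\colon F_x\to F_{gx}$ is the unique map induced by the family $\iota_{i,gx}\circ g_i$, where $g_i\colon F^i_x\to F^i_{gx}$ is the action on $F^i$. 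Precomposing both $\phi_{gx}\circ g$ and $g\circ\phi_x$ with $\iota_{i,x}$ gives, respectively, $\phi_{gx}\circ\iota_{i,gx}\circ g_i=\phi_{i,gx}\circ g_i$ and $g\circ\phi_{i,x}$; these agree because each $\phi_i$ is $G$-equivariant. By the universal property of the colimit $F_x$ the two maps $F_x\to A_{gx}$ coincide, which is the required equivariance.

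I do not expect a serious obstacle here: the content lies entirely in the bookkeeping of the two universal properties — colimits of sheaves via sheafification, and colimits of $\mathbb{Q}$-modules on stalks — together with the observation that $G$-equivariance of a sheaf morphism is a stalkwise, hence colimit-detectable, condition. The one point needing care is the claim that $\Gamma P\colim F^i$ really is the colimit in $G$-sheaves and not merely in sheaves; this is precisely what the equivariance verification secures, so the argument must be carried out in that order: build the underlying sheaf colimit, equip it with the $G$-action from Lemma \ref{colimact}, and only then use equivariance of the $\phi_i$ to promote the comparison map to a morphism of $G$-sheaves.
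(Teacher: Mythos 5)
Your proposal is correct and is essentially the paper's own argument: obtain the unique comparison morphism from the underlying non-equivariant sheaf colimit, note that uniqueness as a $G$-sheaf morphism is automatic, and then verify $G$-equivariance stalkwise using that each $\phi_i$ is equivariant. The only cosmetic difference is that the paper checks equivariance by an explicit computation on germ representatives $\sum_j e^{i_j}$, whereas you invoke the joint epimorphicity of the colimit injections $F^i_x\to F_x$; these are the same verification phrased concretely versus categorically.
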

\begin{proof}
The fact that $F$ is a $G$-sheaf of $\mathbb{Q}$-modules follows from Lemmas \ref{colimactcts}, \ref{QmodGequi} and the fact that $F$ is a sheaf over $X$. We now prove universality. Suppose that we have morphisms $\alpha_i:F^i\rightarrow A$ and the colimit maps $p_i:F^i\rightarrow F$. As in the limit case we use that every morphism of $G$-sheaves is in particular a morphism of sheaves so there exists a unique morphism of sheaves making the following triangle commute:
\begin{align*}
\xymatrix{F^i\ar[dr]^{p_i}\ar[rr]^{\alpha_i}&&A\\&F\ar@{-->}[ur]_{\exists !\alpha}}
\end{align*}
We need only prove that $\alpha$ is a morphism of $G$-sheaves. We view these $G$-sheaves as $G$-sheaf spaces, and we will prove that $\alpha$ is a $G$-equivariant map of sheaf spaces. Take $[e]_x\in F_x\subseteq F$, which has representative $\underset{1\leq j\leq n}{\sum}e^{i_j}$ in $\underset{I}{\oplus}{F^i_x}$. Then we have:
\begin{align*}
\alpha(g[e])&=\alpha([ge])=\alpha(\underset{1\leq j\leq n}{\sum}p_{i_j}(ge^{i_j}))=\underset{1\leq j\leq n}{\sum}\alpha (p_{i_j}(ge^{i_j}))\\&=g(\underset{1\leq j\leq n}{\sum}\alpha_{i_j}(e^{i_j}))=g(\underset{1\leq j\leq n}{\sum}\alpha(p_{i_j}(e^{i_j})))\\&=g(\alpha(\underset{1\leq j\leq n}{\sum}p_{i_j}(e^{i_j})))=g\alpha([e])
\end{align*}
as required.
\end{proof}
We finish the construction of limits and colimits by mentioning what happens in the special case where we have a diagram of Weyl-$G$-sheaves. 
\begin{proposition}
Let $X=SG$ for $G$ a profinite group and $F^i$ be a diagram of Weyl-$G$-sheaves. Then the colimit of this diagram in the category of $G$-sheaves is a Weyl-$G$-sheaf. Similarly if $F^i$ is a finite diagram of Weyl-$G$-sheaves then the limit is a Weyl-$G$-sheaf.
\end{proposition}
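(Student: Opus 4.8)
The plan is to verify that the two defining properties of a Weyl-$G$-sheaf over $SG$ — namely that for each closed subgroup $K$ the stalk at $K$ is $K$-fixed — are inherited by colimits and by finite limits, using the explicit stalk descriptions from Construction \ref{sheafcolim} and Construction \ref{limconstruct}. Recall from Definition \ref{Weyldefn} that a $G$-sheaf $F$ over $SG$ is a Weyl-$G$-sheaf precisely when the $\stabgx(K)$-action on $F_K$ (which exists by Corollary \ref{Coract}, noting that $\stabgx(K)=N_G(K)$ in this setting) restricts to a trivial action of $K$ on $F_K$. So in both cases the task reduces to a stalkwise computation.

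First I would treat the colimit. Given a diagram $F^i$ of Weyl-$G$-sheaves, Construction \ref{sheafcolim} tells us that the colimit $F$ has stalk $F_K = \underset{I}{\colim}\,(F^i_K) = \underset{I}{\oplus}F^i_K/\!\sim$, and Lemma \ref{colimact} shows the $N_G(K)$-action on $F_K$ is induced from the actions on each $F^i_K$ via the universal property. Since each $F^i$ is a Weyl-$G$-sheaf, $K$ acts trivially on every $F^i_K$, hence trivially on $\underset{I}{\oplus}F^i_K$, and therefore trivially on the quotient $F_K$. This shows $F$ is a Weyl-$G$-sheaf. Second, for a finite diagram $F^i$ of Weyl-$G$-sheaves, Construction \ref{limconstruct} (together with the remark that finite limits commute with the filtered colimits computing stalks) gives $F_K = \underset{I}{\lim}F^i_K$, and the $N_G(K)$-action is again the induced one, acting coordinatewise. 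Since $K$ acts trivially on each $F^i_K$, it acts trivially on the limit $\underset{I}{\lim}F^i_K = F_K$, so $F$ is a Weyl-$G$-sheaf.

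There is no serious obstacle here: the content is entirely in having already established the explicit stalk formulas and the compatibility of the $G$-action with the colimit/limit constructions (Lemmas \ref{colimact}, \ref{colimactcts}, \ref{QmodGequi} for colimits; the universality proposition for limits), all of which are proved earlier in this section. The only point requiring a moment's care is to make sure the $K$-action on the stalk of the constructed sheaf really is the one induced coordinatewise from the $F^i_K$ — but this is exactly what the cited constructions give, since the global $G$-action map $\psi$ is defined by $[s_x]\mapsto[g(s_x)]$ (resp.\ $(s^i_x)_{i}\mapsto(g(s^i_x))_i$), and passing to stalks at $K$ and restricting the group to $K\le N_G(K)$ preserves this coordinatewise description. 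One could also phrase the colimit case slightly more abstractly by noting that the full subcategory of Weyl-$G$-sheaves is closed under the relevant colimits because triviality of a group action on a module is preserved by quotients and direct sums, but the stalkwise argument above is the most direct.

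\begin{proof}
We treat colimits first. Let $F^i$ be a diagram of Weyl-$G$-sheaves over $SG$ and let $F = \underset{I}{\colim}F^i$ be the colimit in the category of $G$-sheaves, constructed as in Construction \ref{sheafcolim}, with $G$-action as in Lemma \ref{colimact}. For each $K\in SG$ we have
\begin{align*}
F_K = \underset{I}{\colim}\left(F^i_K\right) = \underset{I}{\oplus}F^i_K/\sim,
\end{align*}
and by Lemma \ref{colimact} the action of $g\in N_G(K)$ on $F_K$ is $[s_K]\mapsto [g(s_K)]$, induced coordinatewise from the actions of $g$ on the $F^i_K$. Since each $F^i$ is a Weyl-$G$-sheaf, $K$ acts trivially on every $F^i_K$, hence trivially on $\underset{I}{\oplus}F^i_K$, and therefore trivially on the quotient $F_K$. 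Thus $F_K$ is $K$-fixed for every $K$, so $F$ is a Weyl-$G$-sheaf.

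Now let $F^i$ be a finite diagram of Weyl-$G$-sheaves over $SG$ and let $F = \underset{I}{\lim}F^i$ be the limit in the category of $G$-sheaves, constructed as in Construction \ref{limconstruct}. Since finite limits commute with the filtered colimits computing stalks, we have $F_K = \underset{I}{\lim}F^i_K$ for each $K\in SG$, and the action of $g\in N_G(K)$ on $F_K$ sends $(s^i_K)_{i\in I}$ to $(g(s^i_K))_{i\in I}$, acting coordinatewise by the actions on the $F^i_K$. Since $K$ acts trivially on each $F^i_K$, it acts trivially on $\underset{I}{\lim}F^i_K = F_K$. Hence $F_K$ is $K$-fixed for every $K$, so $F$ is a Weyl-$G$-sheaf.
\end{proof}
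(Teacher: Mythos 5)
Your proof is correct and follows essentially the same route as the paper's: both reduce to a stalkwise check, using that the stalk of the colimit is the quotient of $\underset{I}{\oplus}F^i_K$ and the stalk of the finite limit is $\underset{I}{\lim}F^i_K$, with $K\leq\stabgx(K)$ acting coordinatewise and hence trivially because each $F^i$ is a Weyl-$G$-sheaf. The paper merely phrases the colimit case via a representative finite sum $\sum_j s^{i_j}_K$ rather than your "trivial on the direct sum, hence on the quotient" wording, which is the same argument.
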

\begin{proof}
Let $H\in SG$ and $h\in H$. We begin by considering finite limits. If $(s^i_H)_{i \in I}\in \underset{I}{\lim}F^i_H,$ then we have that:
\begin{align*}
h(s^i_H)_{i \in I}=(h(s^i_H))_{i \in I}=(s^i_H)_{i \in I}
\end{align*} 
where we use that each $F^i$ is a Weyl-$G$-sheaf. Similarly if $I$ is any small indexing category, take any $[s_H]\in \underset{I}{\colim}F^i_H$. By definition of colimits in the category of $\mathbb{Q}$-modules, $[s_H]$ has a representative $s_H\in \underset{I}{\bigoplus} F^i_H$, and so $s_H=\underset{1\leq j\leq n}{\sum}s^{i_j}_H$ for $s^{i_j}_H\in F^{i_j}_H$. We therefore have that:
\begin{align*}
h[s_H]=\left[h\left(\sum_{1\leq j\leq n}s^{i_j}_H\right)\right]&=\left[\sum_{1\leq j\leq n}hs^{i_j}_H\right]\\&=\left[\sum_{1\leq j\leq n}s^{i_j}_H\right]=[s_H]
\end{align*}
as required.
\end{proof}
\subsection{Abelian Category}
The next question we ask about the category of $G$-sheaves is whether or not this category is an abelian category. We first verify that finite products and finite coproducts coincide.
\begin{proposition}\label{finiprodcoprod}
Let $I$ be a finite set and $F^i$ a collection of $G$-equivariant sheaves. Then $\underset{I}{\prod}F^i$ is equal to $\underset{I}{\coprod}F^i$.
\end{proposition}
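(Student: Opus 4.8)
The plan is to use the explicit constructions of finite limits and finite colimits of $G$-sheaves from Construction \ref{limconstruct} and Construction \ref{sheafcolim}, applied to the discrete indexing category whose objects are the elements of $I$ and whose only morphisms are identities. For such a diagram the finite limit is exactly the categorical product $\prod_I F^i$ and the finite colimit is exactly the categorical coproduct $\coprod_I F^i$, so it suffices to compare the two constructions directly and check that they produce the same $G$-sheaf.

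First I would treat the coproduct. Since the indexing category is discrete, the equivalence relation appearing in Construction \ref{sheafcolim} is generated by the relations $x_i \sim \varphi_{ii}(x_i) = x_i$ and is therefore trivial, so the presheaf colimit is simply $U \mapsto \bigoplus_I F^i(U)$ with componentwise restriction maps, componentwise $G$-action, and componentwise $\mathbb{Q}$-module structure. The key point — and this is where finiteness of $I$ is used — is that this presheaf is already a sheaf: the sheaf condition is the exactness of an equalizer diagram built from products, and a finite direct sum of equalizer diagrams of $\mathbb{Q}$-modules is again an equalizer diagram, since finite direct sums coincide with finite products and products commute with limits. Hence the sheafification in Construction \ref{sheafcolim} is the identity, and $\coprod_I F^i$ is the $G$-sheaf whose sections over $U$ are $\bigoplus_I F^i(U)$.

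Next I would compute $\prod_I F^i$ from Construction \ref{limconstruct}: its value on an open set $U$ is $\prod_I F^i(U)$, again with componentwise restriction, $G$-action, and $\mathbb{Q}$-module structure. Since $I$ is finite, $\prod_I F^i(U) = \bigoplus_I F^i(U)$ as $\mathbb{Q}$-modules, and all the structure maps agree with those described for the coproduct; in particular the topologies on the two sheaf spaces are generated by the same basic sets. So the two $G$-sheaves have the same underlying sheaf space, the same $G$-action, and the same $\mathbb{Q}$-module structure on stalks. To finish I would check that the coproduct inclusions $\iota_j \colon F^j \to \bigoplus_I F^i$ and the product projections $p_j \colon \bigoplus_I F^i \to F^j$ are morphisms of $G$-sheaves — immediate from the componentwise description together with the arguments of Lemma \ref{QmodGequi} — and that $p_k \circ \iota_j = \id_{F^j}$ for $j = k$ and $0$ otherwise, which exhibits the canonical comparison morphism from the coproduct to the product as an isomorphism.

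The only genuine subtlety, and hence the main obstacle, is the verification that no sheafification is needed, i.e. that the presheaf direct sum $U \mapsto \bigoplus_I F^i(U)$ is already a sheaf; this is precisely where the hypothesis that $I$ is finite enters, since an infinite direct sum of sheaves need not satisfy the gluing axiom. Once that is in place, everything else is a routine bookkeeping check that the topology, the $G$-action, and the $\mathbb{Q}$-module structure coincide on the nose, which follows formally from the componentwise nature of both constructions.
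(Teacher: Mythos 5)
Your proof is correct and takes essentially the same route as the paper: both arguments come down to identifying the underlying non-equivariant sheaves of the finite product and coproduct and then observing that the componentwise $G$-actions (and $\mathbb{Q}$-module structures) agree, since $\bigoplus_I g_i = \prod_I g_i$ for finite $I$. The only difference is that the paper cites Tennison's result that finite products and coproducts of sheaves coincide, whereas you re-prove it directly by checking that the presheaf direct sum over a finite index set is already a sheaf, so no sheafification is needed — a sound, slightly more self-contained version of the same argument.
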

\begin{proof}
From the construction of colimits and finite limits above we know that the underlying sheaf is constructed in the category of sheaves. We know from \cite[Proposition 3.5.3]{Tennison} that finite products and finite coproducts of sheaves coincide hence the only thing left to show is that the $G$-actions coincide.

From Construction \ref{sheafcolim} we know that if $g\in G$ then the map $g$ equals $\underset{I}{\bigoplus}g_i$ in the category of $\mathbb{Q}$-modules, but this equals $\underset{I}{\prod}g_i$ since the category of $\mathbb{Q}$-modules is abelian. 
\end{proof} 
\begin{proposition}
The category of $G$-equivariant sheaves of $\mathbb{Q}$-modules over a space $X$ is an additive category. 
\end{proposition}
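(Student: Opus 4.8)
The plan is to verify the defining properties of an additive category directly, assembling the material already developed in this section. Recall that a category is additive precisely when it is enriched over abelian groups, composition is bilinear, it has a zero object, and it has all finite biproducts. The enrichment over abelian groups is exactly the content of the preceding subsection: for any two $G$-equivariant sheaves $E$ and $F$ the set $\hom(E,F)$ carries the abelian group structure given by the stalkwise addition $f+g$ and negation $-f$ constructed above. For bilinearity of composition I would note that composition of morphisms of $G$-sheaves is computed as composition of the underlying maps of sheaf spaces, which is in turn computed stalkwise; hence it suffices to observe the identities $h_x\circ(f_x+g_x)=h_x\circ f_x+h_x\circ g_x$ and $(f_x+g_x)\circ k_x=f_x\circ k_x+g_x\circ k_x$ for stalk maps. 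These hold because every stalk map of a morphism of $G$-sheaves of $\mathbb{Q}$-modules is in particular a $\mathbb{Q}$-module homomorphism by Definition \ref{defn2}, so additivity passes through; bilinearity of the global morphisms then follows since addition, composition and the zero morphism were all defined stalkwise.

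For the zero object I would take the $G$-sheaf whose underlying sheaf space is $X$ itself with the identity projection, i.e. the sheaf all of whose stalks are the zero $\mathbb{Q}$-module, equipped with its unique (necessarily trivial) $G$-action. This is a $G$-equivariant sheaf of $\mathbb{Q}$-modules in the sense of Definition \ref{defn2}, and it is simultaneously initial and terminal: for any $G$-sheaf $F$ the only candidate morphism to or from it is the one that is zero on every stalk, and such a map is automatically continuous, a $\mathbb{Q}$-module map on stalks, and $G$-equivariant.

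Finally, for finite biproducts I would invoke Construction \ref{limconstruct}, which produces finite products of $G$-equivariant sheaves, and Construction \ref{sheafcolim}, which produces finite coproducts, together with Proposition \ref{finiprodcoprod}, which identifies these two constructions over a finite indexing set. On this common object, combining the canonical product projections $p_i$ with the canonical coproduct inclusions $\iota_i$ supplies the biproduct data, and the biproduct identities $p_i\circ\iota_i=\id$, $p_i\circ\iota_j=0$ for $i\ne j$, and $\sum_i\iota_i\circ p_i=\id$ are verified stalkwise, where they reduce to the familiar identities for finite direct sums of $\mathbb{Q}$-modules.

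I expect the only mildly delicate point to be confirming that these stalkwise biproduct identities genuinely assemble into identities of morphisms of $G$-sheaves. This is handled by the fact that a morphism of sheaf spaces is the identity, respectively zero, if and only if it is so on every stalk, so equality of $G$-sheaf morphisms can be checked stalkwise; the required continuity and $G$-equivariance of all the structure maps were already built into Constructions \ref{limconstruct} and \ref{sheafcolim}. Consequently this last step is routine bookkeeping rather than a substantive obstacle, and the proof amounts to citing the three ingredients above.
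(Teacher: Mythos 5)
Your proof is correct and follows essentially the same route as the paper: hom-sets are abelian groups by the earlier subsection, finite products and coproducts agree by Proposition \ref{finiprodcoprod}, and composition is bilinear. The only cosmetic difference is that you verify bilinearity (and the biproduct identities) stalkwise by hand, where the paper simply cites the corresponding fact for non-equivariant sheaves from Tennison; both verifications are fine.
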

\begin{proof}
We have already seen that in the category of $G$-equivariant sheaves over $X$ that each of the hom-sets are an abelian group. By Proposition \ref{finiprodcoprod} we know that finite products and coproducts agree. It is left to verify that the following composition map is bilinear:
\begin{align*}
\hom(E,F)\times \hom(F,H)&\rightarrow \hom(E,H)\\(\alpha,\beta)&\mapsto \beta\circ\alpha 
\end{align*} 
This is true since it holds in the category of sheaves by \cite[3.2.2]{Tennison}.
\end{proof}
This argument works equally for Weyl-$G$-sheaves hence both $G$-sheaves over $X$ and Weyl-$G$-sheaves over $X$ are additive categories.
\begin{proposition}\label{Gsheafabelian}
The categories of $G$-sheaves and Weyl-$G$-sheaves over $X$ are abelian categories.
\end{proposition}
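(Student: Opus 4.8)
The plan is to verify the abelian category axioms one by one, leaning heavily on the corresponding facts already established for the (non-equivariant) category of sheaves of $\mathbb{Q}$-modules over $X$, which is abelian by \cite[Theorem 3.5.5]{Tennison}, and on the additivity and (co)limit constructions proved earlier in this section. The first three axioms are already in hand: the hom-sets are abelian groups with bilinear composition (proved in the subsection on additivity of morphisms), binary coproducts exist and agree with binary products (Proposition \ref{finiprodcoprod} together with Construction \ref{sheafcolim}), and the zero $G$-sheaf — the sheaf which is $0$ on every stalk, with its unique (trivial) $G$-action — is a zero object. So the work is concentrated in axioms (4) and (5): existence of kernels and cokernels, and the statement that every monomorphism is a kernel and every epimorphism is a cokernel.

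For kernels and cokernels, first I would observe that the forgetful functor $U$ from $G$-sheaves over $X$ to sheaves over $X$ is faithful and, by Construction \ref{limconstruct} and Construction \ref{sheafcolim}, creates finite limits and all colimits. In particular, given a morphism $f\colon E\to F$ of $G$-sheaves, one forms the kernel $\ker(Uf)$ and cokernel $\coker(Uf)$ in $\sheaf_{\mathbb{Q}}(X)$; these are an equaliser and coequaliser respectively, hence a finite limit and a colimit, so by the constructions in this section they carry canonical $G$-sheaf structures and the canonical maps $\ker f\to E$ and $F\to\coker f$ are maps of $G$-sheaves. Concretely the $G$-action is the one induced stalkwise: since each $g\colon E_x\to E_{gx}$ and $g\colon F_x\to F_{gx}$ is a $\mathbb{Q}$-module map commuting with $f_x$, it restricts to $\ker f_x\to\ker f_{gx}$ and descends to $\coker f_x\to\coker f_{gx}$, and continuity and the cocycle/unitality conditions of Definition \ref{Defn1} are inherited. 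The universal properties in the category of $G$-sheaves then follow formally from the universal properties in $\sheaf_{\mathbb{Q}}(X)$ together with the fact that any competing cone/cocone is in particular a cone/cocone of sheaves, exactly as in the universality proofs for Construction \ref{limconstruct} and Construction \ref{sheafcolim}.

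For axiom (5), I would use the stalkwise characterisation of monomorphisms and epimorphisms. A morphism $f\colon E\to F$ of $G$-sheaves is a monomorphism precisely when each $f_x$ is injective: this reduces to the same statement for the underlying sheaf morphism, since $U$ reflects monomorphisms (it creates kernels, and $f$ is mono iff $\ker f=0$), and then one applies the characterisation of sheaf monomorphisms. Given such an $f$, form $q\colon F\to\coker f$ as above; then $\ker q$ has stalks $\ker(q_x)=\im(f_x)\cong E_x$ by exactness in $\mathbb{Q}\Mod$, and the induced map $E\to\ker q$ is an isomorphism on stalks hence an isomorphism of $G$-sheaves, so $f$ realises $E$ as $\ker q$. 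Dually, if $f$ is an epimorphism then each $f_x$ is surjective, the inclusion $i\colon\ker f\to E$ is a $G$-sheaf morphism, and $\coker i$ has stalks $E_x/\ker(f_x)\cong F_x$, exhibiting $f$ as $\coker i$. Finally I would remark that every step above respects the Weyl condition: a finite limit or arbitrary colimit of Weyl-$G$-sheaves is again a Weyl-$G$-sheaf (proved at the end of the previous subsection), and kernels/cokernels are built from such, so the same argument shows $\Weyl_{\mathbb{Q}}(SG)$ is abelian as a full subcategory of $\sheaf_{\mathbb{Q}}(SG)$.

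\textbf{Main obstacle.} The genuinely delicate point is not the diagram-chasing but making sure that the cokernel computed as a colimit of $G$-sheaves really is computed stalkwise, i.e. that the sheafification step in Construction \ref{sheafcolim} does not disturb the $G$-action on stalks; this is where one must invoke that sheafification preserves stalks (Lemma with $F_x\cong\Gamma LF_x$) and that the induced $G$-action of Lemma \ref{colimact} is compatible with it, exactly as handled in Lemma \ref{colimactcts}. Everything else is a routine transfer of the abelian structure of $\sheaf_{\mathbb{Q}}(X)$ through the faithful forgetful functor.
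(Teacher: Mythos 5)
Your proposal is correct and follows essentially the same route as the paper: additivity and the existence of kernels and cokernels are taken from the earlier constructions of finite limits and colimits of $G$-sheaves, and normality is obtained by transferring the non-equivariant fact that a monomorphism is the kernel of its cokernel (and dually), observing that the relevant cokernel and kernel maps of a $G$-sheaf morphism are again $G$-sheaf morphisms. The only difference is cosmetic: where the paper cites \cite[Theorem 4.13]{Tennison} for the non-equivariant normality statement, you re-derive it by a stalkwise exactness argument, which is a fine self-contained substitute.
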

\begin{proof}
These categories are both additive and we know from the previous subsection that finite limits and small colimits exist so in particular all kernels and cokernels exist also. For normality we give a proof for $G$-sheaves over $X$ and the same argument will work for Weyl-$G$-sheaves.

If $f:E\rightarrow F$ is a monomorphism of $G$-sheaves over $X$, then by \cite[Theorem 4.13]{Tennison} $E=\ker\left\lbrace F\rightarrow \coker(f)\right\rbrace$ and $f$ is the inclusion of the kernel. Since $f$ is a morphism of $G$-sheaves so is the map $F\rightarrow \coker(f)$, and therefore $f$ is the inclusion of the kernel of the $G$-sheaf morphism  $F\rightarrow \coker(f)$ as required.

If $p:E\rightarrow F$ is an epimorphism of $G$-sheaves we can again apply \cite[Theorem 4.13]{Tennison} to deduce that $F=\coker\left\lbrace\ker p\rightarrow E\right\rbrace$ and $p$ is the projection onto the cokernel. Since $p$ is a morphism of $G$-sheaves so is the map $\ker p\rightarrow E$, and therefore $p$ is the projection of the cokernel of the $G$-sheaf morphism \\$\ker p\rightarrow E$.
\end{proof}
\section{Useful properties of G-sheaves over profinite spaces}
\sectionmark{Useful properties}
In this section we will see how the assumption that $G$ and $X$ are profinite gives us a useful alternative characterisation of $G$-sheaves and Weyl-$G$-sheaves, Proposition \ref{Weylequi}. Before we examine some useful properties of $G$-sheaves we first take a moment to examine a couple of properties of the $G$-space $SG$.
\begin{lemma}\label{subgroup_open}
If $G$ is a profinite group with open subgroup $H$, then $SH$ is an open-closed subspace of $SG$.
\end{lemma}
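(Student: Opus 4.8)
The plan is to show that the set $SH \subseteq SG$ of closed subgroups of $H$ is both open and closed in the profinite topology on $SG$ described in Construction \ref{spaceclosedsubgroup} and Proposition \ref{SGprofinite}. Since $H$ is open in $G$, its core $H^C = \mathrm{Core}_G(H)$ is a finite intersection of open subgroups, hence open and normal in $G$ (as noted after the definition of core). I would first observe that a closed subgroup $K$ of $G$ satisfies $K \leq H$ if and only if $K \leq H$, and relate this to the neighbourhood $O(H^C, H)$ appearing in the remark before Proposition \ref{nbhdbasis}: every element of $O(H^C,H)$ is a subgroup of $H$. So $O(H^C, H) \subseteq SH$.

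The key step is to prove the reverse inclusion, namely $SH \subseteq O(H^C, H)$, which would give $SH = O(H^C, H)$ and immediately settle the claim since each basic set $O(N,J)$ is open, and (by Proposition \ref{basis} and the discussion of the profinite topology) also closed, being the preimage of a point under the continuous map $p_{H^C}\colon SG \to S(G/H^C)$ into a finite discrete space. To see $SH \subseteq O(H^C,H)$: take any closed $K \leq H$. Then $H^C K \leq H$ since both $H^C \leq H$ and $K \leq H$. Conversely $H \leq H^C K$? This need not hold directly, so I would instead argue more carefully: $O(H^C, H) = \{K \in SG \mid H^C K = H\}$, and for $K \leq H$ we have $H^C \leq H^C K \leq H$, but we need $H^C K = H$ exactly. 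This is \emph{not} automatic — e.g. $K$ could be a proper subgroup with $H^C K \subsetneq H$. So the identification $SH = O(H^C,H)$ is false in general, and the right statement is $SH = p_{H^C}^{-1}(S(H/H^C))$, the preimage of the set of subgroups of $G/H^C$ contained in $H/H^C$.

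Thus the cleaner approach: consider the continuous projection $p_N \colon SG \to S(G/N)$ with $N = H^C$ (this is continuous into a finite discrete space, as recorded before Proposition \ref{SGprofinite}, since $p_N^{-1}(\{NK/N\}) = O(N,NK)$). Let $\mathcal{A} = \{ J \leq G/N \mid J \leq H/N \}$, a subset of the finite discrete space $S(G/N)$, hence clopen. I claim $SH = p_N^{-1}(\mathcal{A})$. For $\supseteq$: if $p_N(K) \in \mathcal{A}$ then $NK/N \leq H/N$, so $NK \leq H$, hence $K \leq NK \leq H$, i.e. $K \in SH$. For $\subseteq$: if $K \leq H$, then since $N \leq H$ we get $NK \leq H$, so $NK/N \leq H/N$ and $p_N(K) \in \mathcal{A}$. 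Therefore $SH = p_N^{-1}(\mathcal{A})$ is the preimage of a clopen set under a continuous map, hence clopen in $SG$.

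The only subtle point — the main obstacle — is the reverse inclusion in the claim $SH \subseteq p_N^{-1}(\mathcal{A})$, which hinges on the fact that $N = H^C \leq H$ so that $NK \leq H$ whenever $K \leq H$; this is exactly why taking $N$ to be a normal subgroup \emph{contained in} $H$ (the core, or any such open normal subgroup) is essential, rather than an arbitrary open normal subgroup. I would also remark that one could alternatively phrase this using the homeomorphism $SH \cong \varprojlim S(H/(H \cap N))$ from Proposition \ref{SGprofinite} applied to $H$, compatibly with the maps $p_N$, but the direct preimage argument above is shortest and avoids checking compatibility of inverse systems.
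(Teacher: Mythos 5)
Your argument is correct, but it takes a genuinely different route from the paper's. You realise $SH$ as $p_{H^C}^{-1}(\mathcal{A})$, where $p_{H^C}\colon SG\rightarrow S(G/H^C)$ is the continuous projection onto a finite discrete space and $\mathcal{A}$ is the (automatically clopen) set of subgroups of $G/H^C$ contained in $H/H^C$; both inclusions you check are right, and the point you isolate — that $H^C$ is an open normal subgroup \emph{contained in} $H$, so $K\leq H$ forces $H^CK\leq H$ — is exactly where the argument lives (your initial rejection of the naive identification $SH=O(H^C,H)$ is also correct). This gives openness and closedness in one stroke, and in effect writes $SH$ as a finite union of the basic clopen sets $O(H^C,H^CK)$. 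The paper instead proves the two halves separately: openness by exhibiting $SH$ as the union $O(H^C,H^C)\cup\bigcup_{K\in SH}O(H^C,H^CK)$, and closedness by noting that $H$ is closed in the compact group $G$, so $SH$ is itself a profinite (hence compact) space whose intrinsic topology coincides with the subspace topology from $SG$, and a compact subspace of the Hausdorff space $SG$ is closed. Your preimage argument is shorter and avoids the compactness step; what the paper's proof buys in exchange is the explicit record that the profinite topology on $SH$ agrees with the subspace topology, a fact it uses later (for instance when working with sections over $SH$ in Construction \ref{mackey_construct}), which your proof does not address — though it is not required for the literal statement of the lemma. One tacit point in both arguments worth making explicit is that $H$ is closed in $G$ (open subgroups of compact groups are closed), which is what identifies $SH$ with $\left\lbrace K\in SG\mid K\leq H\right\rbrace$ in the first place.
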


\begin{proof}
If $H\leq G$ is an open subgroup, then the core of $H$ denoted $H^C$ is also an open subgroup. We then observe that:
\begin{align*}
SH=O(H^C,H^C)\bigcup \left[\underset{K\in SH}{\bigcup}O(H^C,H^CK)\right].
\end{align*}
To see that this is true, recall that sets of the form $O(N,NK)$ are defined in the paragraph proceeding Remark \ref{ONJ}. We first note that each set in the union is contained in $SH$ as each $K$ and $H^C$ is. Therefore we have one inclusion. For the other direction we observe that each $K$ in $SH$ belongs to $O(H^C,H^CK)$ in the union.

Since in a compact topological group we know that an open subgroup is a closed subgroup of finite index, we must have that $H$ is also closed. In particular $H$ is a profinite group and so $SH$ is a profinite space. We note that the profinite topology on $SH$ coincides with the subspace topology with respect to $SG$. Since $SH$ is a compact subspace of a Hausdorff space $SG$, it is therefore closed. 
\end{proof}
\begin{lemma}\label{restrictequi}
If $U$ is an open compact subset of a $G$-space $X$, then there exists a normal subgroup $N$ of $G$ such that $NU= U$. 
\end{lemma}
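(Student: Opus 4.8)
The plan is to exploit continuity of the action map $\alpha\colon G\times X\to X$, compactness of $U$, and the fact that the open normal subgroups of a profinite group form a neighbourhood basis of the identity.

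First I would fix a point $u\in U$. Since $U$ is open and $\alpha(e,u)=u\in U$, the pair $(e,u)$ lies in the open set $\alpha^{-1}(U)\subseteq G\times X$, so there is a basic open neighbourhood $W_u\times V_u$ of $(e,u)$ with $W_u\times V_u\subseteq\alpha^{-1}(U)$. Because $G$ is profinite, I can shrink $W_u$ to an open normal subgroup $N_u\unlhd G$ with $e\in N_u\subseteq W_u$; then $N_u\times V_u\subseteq\alpha^{-1}(U)$, which is to say $N_uV_u\subseteq U$. Running over all $u\in U$, the sets $V_u$ form an open cover of the compact set $U$, so there are finitely many points $u_1,\ldots,u_k$ with $U=\bigcup_{i=1}^k V_{u_i}$. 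I would then set $N=\bigcap_{i=1}^k N_{u_i}$, which is a finite intersection of open normal subgroups and hence itself an open normal subgroup of $G$.

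It remains to verify that $NU=U$. The inclusion $U\subseteq NU$ is immediate from $e\in N$. For the reverse inclusion, given any $u\in U$ choose an index $i$ with $u\in V_{u_i}$; since $N\subseteq N_{u_i}$ we get $Nu\subseteq N_{u_i}V_{u_i}\subseteq U$, so $NU\subseteq U$ and equality follows. There is no real obstacle in this argument; the only point meriting care is the passage from the open neighbourhood $W_u$ to an open \emph{normal} subgroup $N_u$, which is exactly the standard fact that open normal subgroups form a neighbourhood basis of $e$ in a profinite group (open subgroups of finite index contain their core, which is open and normal). Note that although the lemma only asks for a normal subgroup, this construction in fact produces an open normal subgroup, which is what will be needed in later applications.
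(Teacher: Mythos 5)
Your argument is correct and is essentially the same as the paper's: both use continuity of the action map to get basic neighbourhoods $N_u\times V_u\subseteq\alpha^{-1}(U)$ with $N_u$ an open normal subgroup, then compactness of $U$ to pass to a finite subcover and intersect the corresponding subgroups. Your explicit verification of both inclusions $U\subseteq NU$ and $NU\subseteq U$ is a minor (and welcome) addition; otherwise the proofs coincide.
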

\begin{proof}
We begin by finding an open neighbourhood $V$ of $e$ such that $VU\subseteq U$ and then using the fact that the set of all open normal subgroups form a neighbourhood basis for $e$ we can find the required $N$. Let $\psi$ be the continuous action map $\psi:G\times X\rightarrow X$. Then $\psi^{-1}(U)$ is open and by definition of the product topology, for each $x \in U$ we know that $(e,x)\in \psi^{-1}(U)$ implies that there are open neighbourhoods of $e$ and $x$ given by $V_x$ and $U_x$ respectively such that $V_x\times U_x\subseteq \psi^{-1}(U)$. This implies that $V_xU_x\subseteq U$. Using the definition of the basis we can find open normal subgroups $N_x\subseteq V_x$ such that $N_xU_x\subseteq U$.

Since the set $\left\lbrace U_x\mid x\in U_x\right\rbrace$ is an open covering of $U$ we can use the fact that $U$ is compact to obtain a finite subcover $\left\lbrace U_{x_i}\mid 1\leq i\leq n\right\rbrace$. Set $N=\underset{1\leq i\leq n}{\cap}N_{x_i}$, then we can show that $N$ satisfies the required property. Let $n\in N$ and $u\in U$ be any pair of elements. Then $u$ belongs to some $U_{x_i}$, and since $n\in N\subseteq N_{x_i}$ we have that $nu\in U$. 
\end{proof}
\begin{remark}
In particular if $X$ is a $G$-space and $U\subseteq X$ is compact and open, then the subgroup of $G$ defined by:
\begin{align*}
\text{stab}_G(U)=\left\lbrace g\in G\mid gU= U\right\rbrace
\end{align*}
is open since the proceeding lemma guarantees an open normal subgroup is contained in $\text{stab}_G(U)$.
\end{remark}
\begin{example}\label{SGbasisstab}
For the $G$-space $SG$, the basic subsets of the form $O(N,NK)$ where $N$ is open and normal in $G$ and $K$ closed in $G$ satisfy:
\begin{align*}
\text{stab}_G\left(O(N,NK)\right)=N_G(NK).
\end{align*}
\end{example}
\begin{proposition}\label{restProp}
If $F$ is a $G$-sheaf of $\mathbb{Q}$-modules over a profinite $G$-space $X$, then any section $s\in F(U)$ for any $U$ restricts to an $N$-equivariant section over an $N$-invariant domain, for some open normal subgroup $N$ of $G$.
\end{proposition}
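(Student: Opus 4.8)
The plan is to pass to the sheaf-space description of $F$, say $(E,p)$ with $p\colon E\to X$ a $G$-equivariant local homeomorphism (Definition~\ref{defn2}), under which $s$ becomes a continuous section $s\colon U\to E$ with $p\circ s=\id_U$. First I would reduce to the case that $U$ is compact and open: since $X$ is a profinite space it has a basis of compact open sets, so after replacing $U$ by a compact open subset and $s$ by its restriction (which only shrinks the domain on which we will prove the claim) we may assume $U$ is compact open. Lemma~\ref{restrictequi} then supplies an open normal subgroup $N_0\unlhd G$ with $N_0U=U$.

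Next I would compare the two continuous maps $\phi_1,\phi_2\colon N_0\times U\to E$ given by $\phi_1(g,x)=g\cdot s(x)$ and $\phi_2(g,x)=s(gx)$. Here $\phi_1$ is the composite of $\id\times s$ with the $G$-action on $E$, and $\phi_2$ is the composite of the $G$-action $N_0\times U\to U$ (which does land in $U$ because $N_0U=U$) with $s$; both are therefore continuous, and $p\circ\phi_1=p\circ\phi_2$ equals the map $(g,x)\mapsto gx$. Let $A=\{(g,x)\in N_0\times U:\phi_1(g,x)=\phi_2(g,x)\}$. Since $p$ is a local homeomorphism, the locus on which two continuous lifts of a fixed continuous map agree is open, so $A$ is open in $N_0\times U$; moreover $\{e\}\times U\subseteq A$ because $\phi_1(e,x)=s(x)=\phi_2(e,x)$.

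Finally, as $U$ is compact the tube lemma produces an open neighbourhood $N'$ of $e$ in $N_0$ with $N'\times U\subseteq A$, and since $G$ is profinite I may shrink $N'$ to an open normal subgroup $N\unlhd G$ with $N\subseteq N'$. Then $NU\subseteq N_0U=U$ and $U\subseteq NU$, so $U$ is $N$-invariant, and for every $g\in N$ and $x\in U$ we have $(g,x)\in A$, i.e.\ $g\cdot s(x)=s(gx)$; this is exactly the assertion that $s|_U$ is an $N$-equivariant section over the $N$-invariant domain $U$.

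The two points needing care — which together form the main obstacle — are the following. First, the well-definedness and continuity of $\phi_2$ force the preliminary reduction to a domain that is simultaneously compact \emph{and} invariant under some open normal subgroup, which is precisely what Lemma~\ref{restrictequi} is designed to provide. Second, the openness of the agreement locus $A$ is where the local-homeomorphism hypothesis on $p$ is genuinely used; once $A$ is known to be open and to contain $\{e\}\times U$, the compactness of $U$ (tube lemma) and the profiniteness of $G$ (open normal subgroups form a neighbourhood basis of $e$) combine routinely to extract the required open normal subgroup $N$.
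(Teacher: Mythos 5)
Your proposal is correct, and it reaches the conclusion by a route that differs from the paper's in its final step. You share the paper's opening moves: shrink to a compact open domain using the closed-open basis of the profinite space $X$, and invoke Lemma \ref{restrictequi} to make that domain invariant under an open normal subgroup. The paper then applies Lemma \ref{restrictequi} a \emph{second} time, now to the image $s(V)$ inside the sheaf space (which is open by the definition of the topology on $LF$ and compact as a continuous image of a compact set), obtaining an open normal subgroup under which $s(V)$ is also invariant; equivariance follows because $p$ restricted to $s(V)$ is the inverse homeomorphism of $s$, is $G$-equivariant, and is injective, so $f(ns(y))=ny=f(s(ny))$ forces $ns(y)=s(ny)$. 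You avoid any statement about the image: you compare the two continuous lifts $(g,x)\mapsto g\cdot s(x)$ and $(g,x)\mapsto s(gx)$ of the action map along the local homeomorphism $p$, observe that their agreement locus is open (local injectivity of $p$) and contains $\{e\}\times U$, and then uniformise with the tube lemma before shrinking to an open normal subgroup. What each approach buys: yours uses Lemma \ref{restrictequi} only once and spares you from verifying that $s(V)$ is compact open in $LF$ and that the lemma applies to the (typically non-Hausdorff) $G$-space $LF$, at the cost of the tube-lemma step; the paper's version avoids the tube lemma by transferring invariance to the image and exploiting the global inverse $p|_{s(V)}$. Both arguments ultimately rest on the same three ingredients: compactness of the domain, the neighbourhood basis of $e$ by open normal subgroups, and local injectivity of $p$.
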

\begin{proof}
Given any section $s:U\rightarrow LF$, we can find a compact open subset of the form $V$ contained in $U$ since $X$ has a closed-open basis. Notice that $V$ is a compact subset since it is a closed subset of a Hausdorff space. We first note that $V$ is $N$-invariant for some open normal subgroup $N$ of $G$ by Lemma \ref{restrictequi}.

Another application of Lemma \ref{restrictequi} shows that there is an open normal subgroup $N^{\prime}$ such that $s(V)$ is $N^{\prime}$-invariant. This is because $s(V)$ is open by definition of the topology and compact since it is the image of a compact subset via a continuous function. If we set $\overline{N}=N\cap N^{\prime}$, then this is an open normal subgroup of $G$ such that both $V$ and $s(V)$ are $\overline{N}$-invariant. We now show that $s$ is $\overline{N}$-equivariant, namely that $s(ny)=ns(y)$ for all $y\in V$ and $n\in\overline{N}$.

Since $p$ is the local homeomorphism for the sheaf $F$ we know that $f=p|_{s(V)}$ is the homeomorphism inverse to $s_{|_{V}}$. For $n\in \overline{N}$ and $y\in V$ we have that $f\circ s(ny)=ny$, and we can observe the following:
\begin{align*}
f(ns(y))=nf(s(y))=ny
\end{align*}
where we use the $\overline{N}$-invariance fact stated above and the fact that $f$ is $\overline{N}$-equivariant since $p$ is $G$-equivariant. Using injectivity of $f$ we have that $ns(y)=s(ny)$ as required.
\end{proof}
We consequently have the following useful application.
\begin{corollary}\label{equivariant}
If $F$ is a $G$-sheaf of $\mathbb{Q}$-modules over $SG$, then any section $s\in F(U)$ restricts to an $N$-equivariant section over an $N$-invariant domain, for some open normal subgroup $N$ of $G$.
\end{corollary}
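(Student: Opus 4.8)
The plan is to deduce this corollary directly from Proposition \ref{restProp} by observing that the hypotheses are satisfied in the case $X = SG$. Indeed, Proposition \ref{restProp} applies to any $G$-sheaf of $\mathbb{Q}$-modules over an arbitrary \emph{profinite} $G$-space $X$, so the only thing that needs checking is that $SG$ is a profinite $G$-space in the sense required there.

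First I would recall that by Proposition \ref{SGprofinite} the space $SG$ is compact, Hausdorff and totally disconnected, hence is a profinite space; it carries a continuous $G$-action as established in the proposition following Construction \ref{spaceclosedsubgroup}. Therefore $SG$ meets exactly the standing hypotheses of Proposition \ref{restProp}. Applying that proposition verbatim to $X = SG$ gives: for any $G$-sheaf $F$ of $\mathbb{Q}$-modules over $SG$ and any section $s \in F(U)$, there is an open normal subgroup $N$ of $G$ and an $N$-invariant compact-open $V \subseteq U$ such that $s|_V$ is $N$-equivariant, which is precisely the assertion of Corollary \ref{equivariant}.

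Since the corollary is an immediate specialisation, there is no real obstacle; the only point worth being careful about is confirming that the $G$-space structure on $SG$ is the one to which Proposition \ref{restProp} refers (conjugation action), which it is, and that ``profinite $G$-space'' in the statement of Proposition \ref{restProp} only requires $X$ to be a profinite space equipped with a continuous $G$-action (it does not, for instance, require the action to be free or transitive) — both conditions hold for $SG$. Thus the proof is a one-line invocation of the preceding proposition together with Proposition \ref{SGprofinite}.
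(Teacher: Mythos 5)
Your proposal is correct and matches the paper exactly: the corollary is stated there as an immediate consequence of Proposition \ref{restProp} applied to $X = SG$, which is a profinite $G$-space by Proposition \ref{SGprofinite} together with the continuity of the conjugation action. No further argument is required.
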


We now seek to establish that an even stronger property holds for a Weyl-$G$-sheaf of $\mathbb{Q}$-modules. We begin with the following proposition about profinite groups.
\begin{proposition}\label{finquot}
If $G$ is an infinite profinite group, $K$ is an infinite closed subgroup of $G$ and $M\unlhd G$ is an open normal subgroup, then $K\cap M$ is non-trivial.
\end{proposition}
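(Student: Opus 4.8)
The plan is to argue by contradiction: suppose $K \cap M$ is trivial. Since $M$ is open and normal of finite index, the quotient map $\varepsilon \colon G \to G/M$ restricts to a continuous homomorphism $K \to G/M$ whose kernel is exactly $K \cap M = \{e\}$. So this restriction is injective, which would force $K$ to embed into the finite group $G/M$, contradicting the assumption that $K$ is infinite.

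First I would make precise that $K \cap M$ is a closed subgroup of $K$ (intersection of closed subgroups), and that $M \unlhd G$ implies $K \cap M \unlhd K$; this is routine but worth stating since we will want to pass to the quotient $K/(K \cap M)$. Then the key step is the second isomorphism theorem for profinite groups: $K/(K\cap M) \cong KM/M \leq G/M$. Since $M$ is open of finite index in $G$, $G/M$ is a finite discrete group, so $K/(K\cap M)$ is finite. If $K \cap M$ were trivial this would say $K$ itself is finite, contradicting the hypothesis that $K$ is infinite. Hence $K \cap M$ is non-trivial.

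The only point requiring a little care is the validity of the second isomorphism theorem in the topological (profinite) category — i.e. that $KM$ is actually a closed subgroup of $G$ and that the algebraic isomorphism $K/(K\cap M) \cong KM/M$ is a homeomorphism. This is standard: $M$ open implies $KM$ is open (a union of cosets of $M$), hence closed of finite index, and both sides are finite discrete, so any bijection between them is a homeomorphism. Alternatively, one can avoid invoking $KM$ altogether and simply use that the composite $K \hookrightarrow G \twoheadrightarrow G/M$ is a continuous homomorphism from a profinite group to a finite group with kernel $K \cap M$, so its image has order $[K : K\cap M]$, which is finite; triviality of $K \cap M$ then makes $K$ finite, a contradiction. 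I expect this last observation — choosing the cleanest route through the isomorphism theorems so as not to belabour topological bookkeeping — to be the only mild obstacle; the mathematical content is entirely elementary.
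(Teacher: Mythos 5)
Your proof is correct. It does, however, take a slightly different route from the paper's. The paper stays entirely inside $K$: it observes that $K\cap M$ is open and normal in $K$, and that an infinite closed subgroup of a profinite group is itself profinite and therefore not discrete; if $K\cap M$ were trivial, the trivial subgroup would be open in $K$, forcing $K$ to be discrete, a contradiction. You instead pass to the finite quotient $G/M$: the restriction of $G\twoheadrightarrow G/M$ to $K$ has kernel $K\cap M$, so $[K:K\cap M]\leq |G/M|<\infty$, and triviality of the kernel would make $K$ finite. Both arguments ultimately rest on compactness (an open subgroup of a compact group has finite index, equivalently an infinite profinite group cannot be discrete), so the content is the same; your version has the small advantage of not needing to cite non-discreteness of infinite profinite groups explicitly, while the paper's is marginally shorter since it never leaves $K$ and needs no isomorphism theorem or index count. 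Your closing remark is also the right instinct: the cleanest formulation is simply that $K\hookrightarrow G\twoheadrightarrow G/M$ is a continuous homomorphism to a finite group with kernel $K\cap M$, and no discussion of $KM$ or topological second isomorphism theorems is needed.
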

\begin{proof}
We know that if $K$ is closed in $G$ then it is a profinite group as a subspace of $G$ and in particular $K$ is also not discrete. We know that $K\cap M$ is open and normal in $K$. If $K\cap M$ were trivial then the trivial subgroup would be open, contradicting that $K$ is not discrete.
\end{proof}
Let $E$ be a $G$-equivariant sheaf of $\mathbb{Q}$-modules over a $G$-space $X$ and $U$ be an open subset of $X$. Then if $s$ is a section over $U$, for each $g\in G$ we can define a section $g*s$ over $gU$ where $(g*s)(x)$ is defined to be $gs(g^{-1}x)$ for every $x\in gU$. 
\begin{remark}\label{commuteaction}
It follows that if $s$ is a section over $U$ for the $G$-equivariant sheaf $E$ over $X$, $g\in G$ and $x\in X$ we have:
\begin{align*}
g(s_x)=(g*s)_{gx}.
\end{align*}
Consequently the $G$-action on the sheaf space can be written down in terms of the action on sections.
\end{remark}
The following proposition relates the action on a section over a $G$-sheaf space by $G$ and the equivariance of sections.
\begin{proposition}\label{equifix}
If $E$ is a $G$-sheaf space over $X$, then a section $s:X\rightarrow E$ is $G$-equivariant if and only if $(g*s)=s$ for each $g\in G$. 
\end{proposition}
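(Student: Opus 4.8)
The plan is to unwind the definitions of both sides of the equivalence and use Remark \ref{commuteaction} as the bridge. Recall that for a section $s$ over an open set $U$ and $g \in G$, the section $g * s$ over $gU$ is defined by $(g*s)(x) = g s(g^{-1}x)$, and Remark \ref{commuteaction} records the key identity $g(s_x) = (g*s)_{gx}$ at the level of the sheaf space $E$. When $s$ is a global section $s : X \to E$, each $g*s$ is again a global section (since $gX = X$), so the statement ``$g*s = s$ for all $g$'' makes sense as an equality of global sections, and equality of sections can be checked stalkwise.

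First I would treat the forward direction. Suppose $s : X \to E$ is $G$-equivariant as a map of $G$-spaces, i.e. $s(gx) = g\,s(x)$ for all $g \in G$, $x \in X$. Fix $g \in G$; I want $g * s = s$. For any $x \in X$, compute $(g*s)(x) = g\,s(g^{-1}x)$, and by equivariance $g\,s(g^{-1}x) = s(g g^{-1} x) = s(x)$. Hence $g*s$ and $s$ agree pointwise, so they are equal as sections.

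Next the reverse direction. Suppose $g * s = s$ for every $g \in G$. I want to show $s$ is $G$-equivariant, i.e. $g\,s(x) = s(gx)$ for all $g, x$. By Remark \ref{commuteaction} applied to the global section $s$, we have $g(s_x) = (g*s)_{gx}$; but $s_x$ is just the value $s(x)$ in the stalk $E_x$, and $(g*s)_{gx} = s_{gx} = s(gx)$ by hypothesis. Therefore $g\,s(x) = s(gx)$, which is exactly $G$-equivariance. (Alternatively, one can avoid invoking the Remark and argue directly: from $g*s = s$ evaluated at $gx$ we get $s(gx) = (g*s)(gx) = g\,s(g^{-1}gx) = g\,s(x)$.)

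Since both implications are essentially one-line verifications once the notation $g*s$ and the identity of Remark \ref{commuteaction} are in place, there is no real obstacle here; the only thing to be careful about is the logical bookkeeping of which quantifier ranges over what, and the observation that for a \emph{global} section $g*s$ is defined over all of $X$ so the comparison $g*s = s$ is legitimate. The proof is purely formal and does not use profiniteness of $G$ or $X$.
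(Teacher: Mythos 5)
Your proof is correct and is essentially the same definitional unwinding as the paper's, which phrases the two directions as a single chain of equivalences ($(g*s)=s$ for all $g$ iff $gs(g^{-1}x)=s(x)$ iff $s(g^{-1}x)=g^{-1}s(x)$ iff equivariance). Your optional detour through Remark \ref{commuteaction} is harmless but unnecessary, as your own parenthetical direct computation shows.
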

\begin{proof}
We know that $(g*s)=s$ for each $g\in G$ if and only if $(g*s)(x)=s(x)$ for every $x\in X$ and $g \in G$. But this happens if and only if $gs(g^{-1}x)=s(x)$ for each $g\in G$ and $x \in X$. This in turn happens if and only if $s(g^{-1}x)=g^{-1}s(x)$ for every $g\in G$ and $x\in X$. This is equivalent to saying that $s$ is $G$-equivariant.  
\end{proof}
We now examine more generally how closed subgroups act on sections whose domain is invariant to the action of the subgroup. In particular, if $E$ is a $G$-sheaf of $\mathbb{Q}$-modules over a profinite $G$-space $X$ and $U$ a compact basic open subset of $X$ then $\Gamma\left(U,E\right)$ has a $\text{stab}_G(U)$-action using the action defined in Proposition \ref{equifix}.
\begin{definition}\label{discreteGmod}
If $M$ is a $\mathbb{Q}$-module with an action of a profinite group $G$, then $M$ is a \textbf{discrete $G$-module}\index{discrete $G$-module} if the action map is continuous with respect to the discrete topology on $M$.
\end{definition}
\begin{remark}\label{discGmod}
Equivalently this definition says that $M$ is a discrete $G$-module if and only if $\text{stab}_G(m)$ is open in $G$ for each $m\in M$. Consequently, a discrete $G$-module $M$ satisfies that every element $m$ has finite orbit. Another equivalent definition is that $M$ is a $G$-module if and only if we have the following equality:
\begin{align*}
M=\underset{H\,\text{open}}{\colim}M^H.
\end{align*}  
\end{remark}
We aim to characterise Weyl-$G$-sheaves over $SG$ by local equivariance properties of the sections since this is used in future chapters, namely Lemma \ref{stalk}. The following proposition takes us in that direction. 
\begin{proposition}\label{sectionquot}
If $E$ is a $G$-sheaf of $\mathbb{Q}$-modules over a profinite $G$-space $X$ and $U$ a compact basic open subset of $X$ then $\Gamma\left(U,E\right)$ is a discrete $\stabgx(U)$-module.
\end{proposition}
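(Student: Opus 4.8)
The plan is to verify the criterion of Remark \ref{discGmod}: it suffices to show that for every section $s\in\Gamma(U,E)$ the subgroup of $\text{stab}_G(U)$ fixing $s$ (under the action of Proposition \ref{equifix}) contains an open subgroup of $G$. Note first that $\text{stab}_G(U)$ is open in $G$ by the remark following Lemma \ref{restrictequi}, hence closed of finite index, hence itself a profinite group, so the equivalence of Remark \ref{discGmod} is available with $\text{stab}_G(U)$ in place of $G$.

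First I would apply Proposition \ref{restProp} to $s$. Since $U$ is itself compact and open in $X$, we may take $V=U$ in the argument of that proposition; this produces an open normal subgroup $\overline N\unlhd G$ such that both $U$ and $s(U)$ are $\overline N$-invariant and $s(ny)=n\,s(y)$ for all $y\in U$ and all $n\in\overline N$. Because $\overline N U=U$, every $n\in\overline N$ lies in $\text{stab}_G(U)$, so $\overline N\leq\text{stab}_G(U)$ and the $\text{stab}_G(U)$-action restricts to an action of $\overline N$ on $\Gamma(U,E)$. Next I would check that $\overline N$ fixes $s$: for $n\in\overline N$ the translated section $n\ast s$ lies in $\Gamma(nU,E)=\Gamma(U,E)$, and for $x\in U$ we have $(n\ast s)(x)=n\,s(n^{-1}x)=s(x)$, the last equality being the $\overline N$-equivariance of Proposition \ref{restProp} applied with $y=n^{-1}x\in U$; this is exactly the argument of Proposition \ref{equifix} run for the sub-action of $\overline N$ on sections over the $\overline N$-invariant set $U$. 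Hence $n\ast s=s$ for all $n\in\overline N$.

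Finally, $\overline N$ is open in $G$, hence $\overline N=\overline N\cap\text{stab}_G(U)$ is open in $\text{stab}_G(U)$ for the subspace topology, so the stabiliser of $s$ in $\text{stab}_G(U)$ is open. Since $\Gamma(U,E)$ is a $\mathbb{Q}$-module on which $\text{stab}_G(U)$ acts $\mathbb{Q}$-linearly, Remark \ref{discGmod} (for the profinite group $\text{stab}_G(U)$) yields that $\Gamma(U,E)$ is a discrete $\text{stab}_G(U)$-module. The only point that needs genuine care is making sure the open normal subgroup supplied by Proposition \ref{restProp} actually sits inside $\text{stab}_G(U)$ so that the fixing statement is meaningful; beyond that, the proof is an assembly of results already established in this section, so I do not anticipate a serious obstacle.
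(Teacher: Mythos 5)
Your proposal is correct and follows essentially the same route as the paper: apply Proposition \ref{restProp} (with the whole compact basic open set $U$ as the domain) to obtain an open normal subgroup $N$ with $U$ being $N$-invariant and $s$ being $N$-equivariant, use Proposition \ref{equifix} to convert that equivariance into $N$ fixing $s$, and conclude that the stabiliser of $s$ in $\stabgx(U)$ contains $N$ and is therefore open (the paper phrases this as finite index, which is equivalent here). The extra care you take in checking $N\leq\stabgx(U)$ and in invoking Remark \ref{discGmod} for the profinite group $\stabgx(U)$ is exactly the implicit content of the paper's argument.
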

\begin{proof}
Let $s\in \Gamma\left(U,E\right)$, then by Proposition \ref{restProp} there exists an open normal subgroup $N$ such that $U$ is $N$-invariant and $s$ is $N$-equivariant. Therefore by Proposition \ref{equifix} the action of $N$ on $s$ is trivial and $N$ has finite index in $\text{stab}_G(U)$. We let $J$ denote $\text{stab}_{\text{stab}_G(U)}(s)$. We know that $N\leq J$ and so it follows that $J$ has finite index in $\text{stab}_G(U)$ since we know that $N$ is of finite index. 
\end{proof} 
We now have the following application to the special case where $X=SG$, bearing in mind Example \ref{SGbasisstab}.
\begin{proposition}\label{equisectweyl}
If $E$ is a $G$-sheaf of $\mathbb{Q}$-modules over a profinite $G$-space $SG$ and $O(N,NK)$ a compact basic open subset of $SG$, then $\Gamma\left(O(N,NK),E\right)$ is a discrete $N_G(NK)$-module.
\end{proposition}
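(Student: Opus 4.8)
The plan is to obtain this as an immediate specialisation of Proposition \ref{sectionquot}. Since $G$ is profinite and, by Proposition \ref{SGprofinite}, $SG$ is a profinite $G$-space, while $O(N,NK)$ is by hypothesis a compact basic open subset of $SG$, the hypotheses of Proposition \ref{sectionquot} are met with $X = SG$ and $U = O(N,NK)$. Applying that proposition directly yields that $\Gamma\left(O(N,NK),E\right)$ is a discrete $\text{stab}_G\left(O(N,NK)\right)$-module.

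To finish, I would substitute the stabiliser computation recorded in Example \ref{SGbasisstab}, namely $\text{stab}_G\left(O(N,NK)\right) = N_G(NK)$. Concretely, for $g \in G$ one checks, using normality of $N$, that $g$ carries $O(N,NK)$ onto $O(N,NgKg^{-1})$, so $g$ fixes $O(N,NK)$ precisely when $NgKg^{-1} = NK$, i.e.\ precisely when $g \in N_G(NK)$. Combining this identification with the conclusion of the previous paragraph gives the statement.

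There is essentially no obstacle here: the proposition is a corollary of Proposition \ref{sectionquot} together with Example \ref{SGbasisstab}. The only minor points to keep in view are that the notion ``discrete $N_G(NK)$-module'' is meaningful because $N_G(NK)$ is an open subgroup of $G$ (it contains the open subgroup $NK \supseteq N$), and that the group $\text{stab}_G$ appearing in the conclusion of Proposition \ref{sectionquot} is exactly the one described by Example \ref{SGbasisstab} for this particular basic set, so that the substitution is legitimate.
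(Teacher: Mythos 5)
Your proposal is correct and matches the paper's own treatment: the paper states this proposition as an immediate specialisation of Proposition \ref{sectionquot} to $X=SG$, $U=O(N,NK)$, ``bearing in mind'' the identification $\text{stab}_G\left(O(N,NK)\right)=N_G(NK)$ of Example \ref{SGbasisstab}. Your added verification that $g$ carries $O(N,NK)$ onto $O(N,NgKg^{-1})$, so the stabiliser is exactly $N_G(NK)$, is a correct justification of that example and does not change the route.
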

\begin{remark}
It is a consequence of Proposition \ref{sectionquot} that if $K\leq \text{stab}_G(U)$ then $\Gamma\left(U,E\right)$ is also a discrete $K$-module. Therefore in the particular case where $X=SG$ we know that $\Gamma\left(O(N,NK),E\right)$ is also a discrete $N_G(K)$-module and a discrete $K$-module.
\end{remark}
The following proposition provides us with a useful way of characterising stalk-wise fixed sheaves. A specific case of this result will give us precisely the characterisation of Weyl-$G$-sheaves over $SG$ that we will need in Lemma \ref{stalk}.
\begin{proposition}\label{stalkfixchar}
Suppose that $X$ is a profinite $G$-space such that each $x\in X$ has a neighbourhood basis $\mathfrak{B}_x$ such that each $U\in\mathfrak{B}_x$ is $\stabgx(x)$-invariant. Let $E$ be a stalk-wise fixed $G$-sheaf of $\mathbb{Q}$-modules over $X$. Then for each $x\in X$ it follows that each $s_x\in E_x$ is represented by a section:
\begin{align*}
s:U\rightarrow E
\end{align*}
which is $\stabgx(x)$-equivariant for some $U\in \mathfrak{B}_x$.
\end{proposition}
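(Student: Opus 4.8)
The plan is to start from an arbitrary section representing $s_x$, first make it equivariant for a suitable open normal subgroup of $G$ via Proposition \ref{restProp}, and then average over the resulting finite quotient to promote equivariance all the way to the full stabiliser. Throughout write $H=\stabgx(x)$. Since $E$ is stalk-wise fixed, the $H$-action on $E_x$ is trivial; this is precisely where that hypothesis will be used.

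First I would choose any section $t$ representing $s_x$ and shrink its domain: because $X$ is profinite it has a compact-open basis, so we may assume $t$ is defined on a compact-open neighbourhood of $x$. Applying Proposition \ref{restProp} (equivalently Corollary \ref{equivariant} when $X=SG$) we obtain an open normal subgroup $N\unlhd G$ such that, after restriction to a compact-open $N$-invariant neighbourhood $V$ of $x$, the section $t$ is $N$-equivariant on $V$; since $V$ still contains $x$, it still represents $s_x$. Now choose $W\in\mathfrak{B}_x$ with $W\subseteq V$. As $W$ is $H$-invariant, it is $H_0$-invariant where $H_0:=H\cap N$, and $t|_W$ is $H_0$-equivariant. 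Note $H_0$ is normal in $H$ and of finite index, because $N$ is normal of finite index in $G$; pick coset representatives $g_1=e,g_2,\dots,g_k$ for $H_0$ in $H$.

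The key construction is the averaged section
\[
s \;=\; \frac{1}{k}\sum_{i=1}^{k} g_i * (t|_W) \;\in\; E(W),
\]
which is legitimate because each $g_i\in H$ fixes $W$ setwise, so $g_i*(t|_W)\in E(g_iW)=E(W)$, and because $E$ is a sheaf of $\mathbb{Q}$-modules, so we may divide by $k$. To see that $s$ is $H$-equivariant I would invoke Proposition \ref{equifix}: for $h\in H$ left translation by $h$ permutes the cosets $g_iH_0$, so $hg_i=g_{\sigma(i)}n_i$ with $n_i\in H_0$ and $\sigma$ a permutation; since the action on sections is a left action and $n_i*(t|_W)=t|_W$ by $H_0$-equivariance (Proposition \ref{equifix} again), additivity of $h*(-)$ gives $h*s=\tfrac1k\sum_i(hg_i)*(t|_W)=\tfrac1k\sum_i g_{\sigma(i)}*(t|_W)=s$. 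Finally, by Remark \ref{commuteaction} the germ of $g_i*(t|_W)$ at $x$ equals $g_i\cdot (t|_W)_x=g_i\cdot s_x=s_x$, the last step because $E$ is stalk-wise fixed; averaging, the germ of $s$ at $x$ is $s_x$. Thus $s\colon W\to E$ is the required $H$-equivariant section with $W\in\mathfrak{B}_x$.

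The only genuinely delicate point is the bookkeeping around finiteness: $H=\stabgx(x)$ itself need not be open, so one cannot average over $H$ directly, but $H\cap N$ has finite index in $H$ and the whole construction is carried out modulo that subgroup; one must also make sure each restriction along the way keeps $x$ in the domain, which is automatic once one restricts to members of $\mathfrak{B}_x$. Everything else is routine manipulation of the action on sections.
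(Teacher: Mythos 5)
Your argument is correct, but it proves the proposition by a genuinely different mechanism than the paper. The paper's proof never modifies the representative: it applies Proposition \ref{sectionquot} to see that the $\stabgx(x)$-action on a representative $s$ over some $U\in\mathfrak{B}_x$ has finite orbit $g_1*s,\dots,g_k*s$, uses stalk-wise fixedness to conclude $(g_j*s)_x=s_x$, hence $(g_j*s)$ and $s$ agree on some $V_j\in\mathfrak{B}_x$, and then restricts to $W\in\mathfrak{B}_x$ inside $\bigcap_j V_j$, where the restriction is already fixed under the $*$-action and hence equivariant by Proposition \ref{equifix}. You instead extract a finite-index subgroup $H_0=\stabgx(x)\cap N$ fixing the section via Proposition \ref{restProp} and then \emph{average} over $\stabgx(x)/H_0$, using the $\mathbb{Q}$-module structure of $E(W)$, checking afterwards (via Remark \ref{commuteaction} and stalk-wise fixedness) that the averaged section still has germ $s_x$. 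The trade-offs: the paper's route works verbatim for sheaves of abelian groups, since no division is performed, whereas your transfer argument genuinely needs divisibility by $k$ (harmless here, as everything is rational); on the other hand your construction is the more standard representation-theoretic reflex and makes the role of the finite quotient very explicit. Two small points of bookkeeping, neither a gap: Proposition \ref{restProp} as stated does not promise that the $N$-invariant domain contains $x$, so you should note (as you implicitly do) that its proof lets you take the compact-open subset to be a basic neighbourhood of $x$, e.g.\ via Lemma \ref{restrictequi}; and Proposition \ref{equifix} is stated for global sections, so you are using its evident local analogue for a subgroup preserving $W$, which the paper also does elsewhere (e.g.\ in Proposition \ref{Weylequi}).
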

\begin{proof}
We begin with an application of Proposition \ref{sectionquot} to deduce that the action of $\text{stab}_G(x)$ on a representative $s:U\rightarrow E$ factors through a finite quotient, so we can consider finitely many translates say $g_1,g_2,\ldots,g_k$. Since $s_x$ is fixed we know that $(g_j*s)_x=s_x$ for each $1\leq j\leq k$, and so by definition there exists some $V_j\in \mathfrak{B}_x$ with $(g_j*s)_{|_{V_j}}=s_{|_{V_j}}$ for each $j$. We can therefore choose $W\in \mathfrak{B}_x$ with $W\subseteq \underset{1\leq j\leq k}{\bigcap} V_j$, and hence $s_{|_{W}}$ is $\text{stab}_G(x)$-equivariant by construction (since $W$ is known to be $\text{stab}_G(x)$-invariant).
\end{proof}
The following proposition is similar to the proceeding one and is crucial in proving the equivalence of categories between Weyl-$G$-sheaves of $\mathbb{Q}$-modules and rational $G$-Mackey functors in Lemma \ref{stalk}.
\begin{proposition}\label{Weylequi}
If $E$ is a Weyl-$G$-sheaf over $SG$ and $K\in SG$, then for any $s_K\in E_K$ there exists an open subgroup $J$ of $G$ containing $K$ such that $s_K$ is represented by a $J$-equivariant section of the form:
\begin{align*}
s:O(N,NK)\rightarrow E,
\end{align*}
where $J=NK$.
\end{proposition}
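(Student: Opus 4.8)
The plan is to combine the stalk-wise fixedness hypothesis (encoded in Proposition \ref{stalkfixchar}) with the specific neighbourhood-basis structure of $SG$ at an open subgroup, together with the fact that $K$ is contained in some open subgroup whose conjugation data stabilizes a small enough $O(N,NK)$. First I would observe that the sets $O(N,NK)$ for $N\unlhd G$ open form a neighbourhood basis of $K$ by Proposition \ref{nbhdbasis}, and by Remark \ref{ONJ} each $O(N,NK)$ is $NK$-invariant; in particular it is $K$-invariant, so the hypotheses of Proposition \ref{stalkfixchar} are met at the point $K$ with $\mathfrak{B}_K=\{O(N,NK)\mid N\unlhd G\text{ open}\}$ (here using $\stabgx(K)=N_G(K)\supseteq K$ and that a Weyl-$G$-sheaf is in particular stalk-wise fixed at $K$, since $E_K$ is $K$-fixed). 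Applying Proposition \ref{stalkfixchar} gives a $K$-equivariant section $s:O(N_0,N_0K)\to E$ representing $s_K$ for some open normal $N_0$.

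Next I would upgrade $K$-equivariance to $J$-equivariance for an open subgroup $J\supseteq K$ of the prescribed form $J=NK$. By Proposition \ref{equisectweyl} the module $\Gamma(O(N_0,N_0K),E)$ is a discrete $N_G(N_0K)$-module, hence the $N_G(N_0K)$-action on the specific section $s$ factors through a finite quotient $N_G(N_0K)/M$ for some open normal $M\unlhd N_G(N_0K)$; shrinking, we may assume $M$ comes from an open normal subgroup of $G$ and, intersecting with $N_0$, that $M\leq N_0$. Now the key point: $s$ being $K$-equivariant means $k*s=s$ for all $k\in K$ (Proposition \ref{equifix}), and since the action is continuous (discrete module), the stabilizer $\stabgx(s)$ inside $N_G(N_0K)$ is an open subgroup containing $K$. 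Set $N=\stabgx(s)\cap N_0$ — wait, more carefully: I want $J=NK$ where $N$ is an open normal subgroup of $G$ contained in the open subgroup $\stabgx(s)$. Take $N$ to be any open normal subgroup of $G$ with $N\leq \stabgx(s)\cap N_0$ (possible since open normal subgroups are a neighbourhood basis of $e$). Then $J:=NK$ is open, contains $K$, and since both $N\leq\stabgx(s)$ and $K\leq\stabgx(s)$, and $\stabgx(s)$ is a subgroup, we get $J=NK\leq\stabgx(s)$, i.e. $j*s=s$ for all $j\in J$, which by Proposition \ref{equifix} says $s$ is $J$-equivariant. Finally, restrict the domain: $N\leq N_0$ implies $O(N,NK)\subseteq O(N_0,N_0K)$ (a containment of the type proved in Proposition \ref{basis}), so $s|_{O(N,NK)}$ is a $J$-equivariant section on $O(N,NK)$ with $J=NK$, still representing $s_K$ since $K\in O(N,NK)$.

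The main obstacle I expect is bookkeeping the interaction between three nested conditions — that $O(N,NK)$ be small enough to be inside the domain on which $s$ is defined, that $N$ be small enough that $N$ fixes the section $s$, and that $J=NK$ genuinely equal the stabilizer-type subgroup rather than just be contained in it — while making sure $N$ can simultaneously be taken to be normal \emph{in $G$} (not merely in $N_G(NK)$) and open. The resolution is to first produce the section and its open stabilizer $\stabgx(s)\leq N_G(N_0K)$, and only then choose the open normal $N\unlhd G$ inside $\stabgx(s)\cap N_0$; because $N$ is contained in the stabilizer and $K$ is too, $NK$ automatically acts trivially on $s$, so $J$-equivariance with $J=NK$ is immediate, and one does not need $J$ to equal a stabilizer exactly. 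A secondary subtlety is checking that conjugation of sections by elements of $NK$ lands back in $\Gamma(O(N,NK),E)$, which holds precisely because $O(N,NK)$ is $NK$-invariant (Remark \ref{ONJ}), so the action $g*(-)$ is well defined on sections over this domain; this is what makes ``$J$-equivariant section on $O(N,NK)$'' a meaningful notion in the first place.
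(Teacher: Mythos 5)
Your proposal is, in substance, the paper's own argument: both proofs take a representative of $s_K$ on a basic neighbourhood, use discreteness of the module of sections (Proposition \ref{equisectweyl}) together with $K$-fixedness of the germ to match the finitely many $K$-translates on a smaller basic set and obtain a $K$-equivariant section (Proposition \ref{equifix}), then find an open normal subgroup $N$ of $G$ fixing the section and restrict to the $NK$-invariant set $O(N,NK)$ to conclude $NK$-equivariance. Your second half, taking $N$ inside the open stabilizer of $s$ in the discrete $N_G(N_0K)$-module $\Gamma(O(N_0,N_0K),E)$, is a clean repackaging of the paper's appeal to Corollary \ref{equivariant} followed by intersecting; the two are equivalent and your bookkeeping there (openness of the stabilizer, existence of an open normal $N\leq \stabgx(s)\cap N_0$, $NK\leq\stabgx(s)$, $O(N,NK)\subseteq O(N_0,N_0K)$) is correct.

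The one misstep is your opening citation. A Weyl-$G$-sheaf is \emph{not} stalk-wise fixed in the paper's sense: stalk-wise fixed demands that the full stabilizer $\stabgx(K)=N_G(K)$ act trivially on $E_K$, whereas the Weyl condition only makes $E_K$ $K$-fixed. So Proposition \ref{stalkfixchar} does not apply verbatim, and its literal conclusion (an $N_G(K)$-equivariant representative) would in fact be too strong, since an $N_G(K)$-equivariant section on an $N_G(K)$-invariant neighbourhood forces the germ at $K$ to be $N_G(K)$-fixed, which a Weyl-$G$-sheaf need not satisfy. The repair is immediate and is exactly what the paper does: rerun the proof of Proposition \ref{stalkfixchar} with $K$ in place of $\stabgx(K)$, using that each $O(N,NK)$ is $K$-invariant (Remark \ref{ONJ}, since $K\leq NK$), that $E_K$ is $K$-fixed by the Weyl hypothesis, and that the $K$-action on $\Gamma(O(N,NK),E)$ is discrete by the remark following Proposition \ref{equisectweyl}; this yields precisely the $K$-equivariant representative you use, after which the rest of your argument goes through unchanged.
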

\begin{proof}
Take any such $s_K$, then this has a representative $s:O(N^{\prime},N^{\prime}K)\rightarrow E$ and the action of $K$ upon $s$ factors through a finite quotient $K/\left(K\cap M^{\prime}\right)$, for some open normal subgroup $M^{\prime}$ of $G$ by Proposition \ref{equisectweyl}.

Consider the coset representatives $k_1,k_2,\ldots,k_n$. Since $E$ is a Weyl-$G$-sheaf we have that there ${k_i}s_K=s_K$ for each $1\leq i\leq n$, and hence for each $1\leq i\leq n$ there exists $N_i$ open and normal in $G$ so that:
\begin{align*}
{s_|}_{O(N_i,N_iK)}= {(k_i*s)_|}_{O(N_i,N_iK)}.
\end{align*}
Set $L=\underset{1\leq i\leq n}{\bigcap}N_i$, then ${s_|}_{O(L,LK)}={(k*s)_|}_{O(L,LK)}$ for each $k\in K$ and therefore is $K$-equivariant by Proposition \ref{equifix}. By Proposition \ref{equivariant} there exists an open normal subgroup $M$ of $G$ so that ${s_|}_{O(L,LK)}$ is $M$-equivariant and therefore this section must be $J=MK$-equivariant as required.

Note if we set $N=M\cap L$ then we first observe that $O(N,NK)\subseteq O(L,LK)$. Also $O(N,NK)$ is $NK$-invariant since if $A\in O(N,NK)$ and $x\in NK$ then:
\begin{align*}
xAx^{-1}N=xANx^{-1}=xNKx^{-1}=NK
\end{align*}
so $xAx^{-1}\in O(N,NK)$. Furthermore if we restrict $s$ to $O(N,NK)$ then we have that $xs(A)=s(xAx^{-1})$ since $x\in NK\leq MK$.
\end{proof}
\begin{corollary}\label{RFdefn}
If $E$ is a $G$-sheaf of $\mathbb{Q}$-modules over $SG$ and $s_K\in E_K$ is $K$-fixed, then $s_K$ can be represented by a section:
\begin{align*}
s:O(N,NK)\rightarrow E
\end{align*}
which is $NK$-equivariant.
\end{corollary}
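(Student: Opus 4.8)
The plan is to deduce Corollary \ref{RFdefn} directly from Proposition \ref{Weylequi}, since the only difference is that here we assume merely that the single stalk element $s_K$ is $K$-fixed, rather than assuming $E$ is globally a Weyl-$G$-sheaf. The key observation is that the proof of Proposition \ref{Weylequi} never uses the full strength of the Weyl-$G$-sheaf hypothesis: it only uses that $s_K$ itself satisfies $k s_K = s_K$ for every $k \in K$. So the strategy is to re-run that argument verbatim with this weaker local hypothesis.

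First I would invoke Proposition \ref{equisectweyl} (applicable since $E$ is a $G$-sheaf of $\mathbb{Q}$-modules over the profinite $G$-space $SG$ and $O(N',N'K)$ is a compact basic open subset with $K \leq \text{stab}_G(O(N',N'K)) = N_G(N'K)$) to conclude that $\Gamma(O(N',N'K), E)$ is a discrete $K$-module, where $s : O(N',N'K) \to E$ is a chosen representative of $s_K$. Hence the action of $K$ on $s$ factors through a finite quotient $K/(K \cap M')$ for some open normal $M' \unlhd G$; pick coset representatives $k_1, \dots, k_n$. Next, using the hypothesis that $s_K$ is $K$-fixed, we have $k_i s_K = s_K = (k_i * s)_K$ for each $i$ (here I use Remark \ref{commuteaction} to identify $k_i(s_K)$ with $(k_i * s)_{K}$, noting $k_i K = K$), so for each $i$ there is an open normal $N_i \unlhd G$ with $s|_{O(N_i, N_iK)} = (k_i * s)|_{O(N_i, N_iK)}$. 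Setting $L = \bigcap_{1 \le i \le n} N_i$, the restriction $s|_{O(L,LK)}$ satisfies $s|_{O(L,LK)} = (k * s)|_{O(L,LK)}$ for all $k \in K$, hence is $K$-equivariant by Proposition \ref{equifix}.

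Then I would apply Corollary \ref{equivariant} to $s|_{O(L,LK)}$ to obtain an open normal subgroup $M \unlhd G$ such that $s|_{O(L,LK)}$ is $M$-equivariant; combining this with $K$-equivariance, the section is $J$-equivariant for $J = MK$. Finally, setting $N = M \cap L$, one checks exactly as in Proposition \ref{Weylequi} that $O(N,NK) \subseteq O(L,LK)$ and that $O(N,NK)$ is $NK$-invariant (if $A \in O(N,NK)$ and $x \in NK$ then $xAx^{-1}N = xANx^{-1} = xNKx^{-1} = NK$), so restricting $s$ to $O(N,NK)$ gives a section which is $NK$-equivariant, since $NK \leq MK = J$. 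This yields the claimed representative $s : O(N,NK) \to E$.

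There is essentially no serious obstacle here — the result is a mild weakening of hypotheses in Proposition \ref{Weylequi}, and the same proof goes through. The only point requiring a moment's care is making sure that the appeal to Proposition \ref{equisectweyl} (discreteness of the module of sections) genuinely does not need the Weyl condition; it does not, since it is a statement about arbitrary $G$-sheaves over $SG$. One could also phrase the corollary as an immediate consequence by remarking that Proposition \ref{Weylequi}'s proof only ever touches the stalk element $s_K$ and its $K$-orbit, never other stalks, so the argument is local in exactly the right way.
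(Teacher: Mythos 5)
Your proposal is correct and matches the paper's intent exactly: the paper states Corollary \ref{RFdefn} without a separate proof precisely because, as you observe, the argument for Proposition \ref{Weylequi} only ever uses that the single germ $s_K$ is $K$-fixed (together with Proposition \ref{equisectweyl} and Corollary \ref{equivariant}, which hold for arbitrary $G$-sheaves), so re-running that proof verbatim under the weaker hypothesis is the intended justification.
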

Consider $q_H:SH\rightarrow SH/H$ the quotient map for $H$ an open subgroup of a profinite group $G$. For the following proposition, if $V\subseteq SH/H$ is open notice that $q_H^{-1}(V)$ is open in $SH$, which is open in $SG$ by Lemma \ref{subgroup_open}. 
\begin{proposition}\label{sheafact}
If $F$ is a $G$-sheaf over $SG$ and $H\leq G$ open, then the assignment $V\mapsto F(q_H^{-1}(V))^H$ for $V\subseteq SH/H$ open gives a sheaf over $SH/H$.
\end{proposition}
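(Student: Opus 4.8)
The plan is to verify directly that the assignment $V \mapsto F(q_H^{-1}(V))^H$ satisfies the two sheaf axioms of Definition \ref{sheaf}, exploiting that $q_H^{-1}$ turns open covers of $SH/H$ into $H$-invariant open covers of $SH$ and that taking $H$-fixed points is a left exact operation. First I would fix notation: write $\widetilde F(V) = F(q_H^{-1}(V))^H$, where the $H$-action on the sections $F(q_H^{-1}(V))$ is the one described in the paragraph before Remark \ref{commuteaction} (i.e. $(h * s)(x) = h\,s(h^{-1}x)$), which makes sense because $q_H^{-1}(V)$ is $H$-invariant: if $x \in q_H^{-1}(V)$ and $h \in H$ then $q_H(hx) = (hxh^{-1})$ — note $q_H$ is the orbit map for the $H$-conjugation action, so conjugating by $h \in H$ does not change the $H$-orbit, hence $hx \in q_H^{-1}(V)$. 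The restriction maps of $\widetilde F$ are just the restriction maps of $F$ along the inclusions $q_H^{-1}(W) \subseteq q_H^{-1}(V)$ for $W \subseteq V$; these are $H$-equivariant since the restriction maps of the $G$-sheaf $F$ commute with the $G$-action on sections (this is exactly Remark \ref{commuteaction} combined with the fact that morphisms of sheaves commute with restriction), so they land in $H$-fixed points. This shows $\widetilde F$ is a presheaf of $\mathbb{Q}$-modules on $SH/H$.

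Next I would check the monopresheaf (separation) axiom. Suppose $\{V_\lambda\}$ is an open cover of $V \subseteq SH/H$ and $s_1, s_2 \in \widetilde F(V) \subseteq F(q_H^{-1}(V))$ agree after restriction to each $q_H^{-1}(V_\lambda)$. Since $q_H$ is continuous and surjective onto its image, $\{q_H^{-1}(V_\lambda)\}$ is an open cover of $q_H^{-1}(V)$, so $s_1 = s_2$ by the separation axiom for the sheaf $F$ itself (Definition \ref{sheaf}, condition 1). For the gluing axiom, given a compatible family $(s_\lambda)$ with $s_\lambda \in \widetilde F(V_\lambda) = F(q_H^{-1}(V_\lambda))^H$ whose restrictions agree on overlaps, the sets $q_H^{-1}(V_\lambda \cap V_\mu) = q_H^{-1}(V_\lambda) \cap q_H^{-1}(V_\mu)$ so the family $(s_\lambda)$ is also compatible as a family of sections of $F$, hence glues to a unique $s \in F(q_H^{-1}(V))$ by the gluing axiom for $F$. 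The remaining point — the only genuinely equivariant content — is that this glued section $s$ is $H$-fixed. This follows from the separation axiom of $F$ applied once more: for each $h \in H$, the section $h * s$ restricts on each $q_H^{-1}(V_\lambda)$ to $h * (s|_{q_H^{-1}(V_\lambda)}) = h * s_\lambda = s_\lambda = s|_{q_H^{-1}(V_\lambda)}$, using that each $s_\lambda$ is $H$-fixed and that the $G$-action on sections commutes with restriction (Remark \ref{commuteaction}); here one needs $h * s$ to be a section over $q_H^{-1}(V)$, which holds because $q_H^{-1}(V)$ is $H$-invariant as noted above. By uniqueness in the separation axiom, $h * s = s$, so $s \in \widetilde F(q_H^{-1}(V))^H = \widetilde F(V)$.

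The main obstacle, such as it is, is bookkeeping rather than mathematics: one must be careful that $q_H$ is the orbit map for the \emph{conjugation} action of $H$ on $SH$ (not a quotient by a subgroup), so that $q_H^{-1}$ commutes with finite intersections and sends open covers to open covers, and that the preimages $q_H^{-1}(V)$ are genuinely $H$-invariant open subsets of $SH$ — hence, by Lemma \ref{subgroup_open}, open in $SG$, which is what makes $F(q_H^{-1}(V))$ meaningful as an evaluation of a sheaf over $SG$. Once these compatibilities are in place, every sheaf axiom for $\widetilde F$ is deduced from the corresponding axiom for $F$ together with the left exactness of $(-)^H$ and the equivariance of restriction maps. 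I would close by remarking that the same argument shows $V \mapsto F(q_H^{-1}(V))^H$ is functorial in $F$ and in fact underlies the right adjoint appearing in Proposition \ref{Weyladjunct}, though that is beyond what the present statement requires.
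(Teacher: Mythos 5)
Your proof is correct and follows essentially the same route as the paper: pull back an open cover of $SH/H$ along $q_H$, deduce both sheaf axioms from those of $F$, and check that the glued section is $H$-fixed. Your argument for the $H$-fixedness (comparing $h*s$ with $s$ on each $q_H^{-1}(V_\lambda)$ and invoking the separation axiom) is in fact a more explicit justification of the step the paper dispatches with the remark that the section is ``built from $H$-equivariant sections glued together.''
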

\begin{proof}
We start with $V\subseteq SH/H$ and an open covering $\left\lbrace U_{\lambda}\mid \lambda\in \Lambda\right\rbrace$ of $V$. Suppose $s_1,s_2\in F(q_H^{-1}(V))^H$ such that $\rho^{q_H^{-1}(V)}_{q_H^{-1}(U_{\lambda})}(s_1)=\rho^{q_H^{-1}(V)}_{q_H^{-1}(U_{\lambda})}(s_2)$ for each $\lambda$. Then since these sections are also sections with respect to the sheaf $F$ we can use the sheaf properties of $F$ to deduce that $s_1=s_2$. 

Similarly suppose that we have a family $s_{\lambda}$ such that for each pair $\lambda,\mu\in \Lambda$ we have $\rho^{q_H^{-1}(U_{\lambda})}_{q_H^{-1}(U_{\lambda})\bigcap {q_H^{-1}(U_{\mu})}}(s_{\lambda})=\rho^{{q_H^{-1}(U_{\mu})}}_{q_H^{-1}(U_{\lambda})\bigcap {q_H^{-1}(U_{\mu})}}(s_{\mu})$. It follows from the fact $F$ is a sheaf and that we are dealing with sections over the sheaf $F$ that there exists a section $s$ over $q_H^{-1}(V)$ such that each $s_{\lambda}=\rho^{q_H^{-1}(V)}_{\lambda}(s)$. This is $H$-equivariant since it is built from $H$-equivariant sections glued together. The definition we are using here is given in \cite[Definition 2.1.1]{Tennison}.
\end{proof}
\section{Infinite Products of G-equivariant sheaves}
Previously we have constructed the limit of a finite diagram of $G$-equivariant sheaves and observed that the given proof does not hold for infinite diagrams. We now address what an infinite product of $G$-equivariant sheaves look like.

In the setting where $G$ is finite the argument provided for the products of finite diagrams hold since $G$ has the discrete topology and therefore the continuity issues of the $G$-action disappear. This in particular means that if $G$ is a finite discrete group, then products of $G$-equivariant sheaves are products taken in the category of non-equivariant sheaves with additional structure given by a $G$-action. For more general profinite groups we would like this characterisation of products to hold, however infinite products are more difficult to characterise as the following example illustrates. We recall the definition of a discrete $G$-module from Definition \ref{discreteGmod}.
\begin{example}
If we let $X$ be the one point space and $G=\mathbb{Z}_p$, then $G$-sheaves of $\mathbb{Q}$-modules are given by discrete $\mathbb{Q}[\mathbb{Z}_p]$-modules. If we consider a collection $\mathbb{Q}\left[\mathbb{Z}/p^n\mathbb{Z}\right]$ for each $n\in\mathbb{N}$, then the underlying sheaf product is given by $\underset{n\in\mathbb{N}}{\prod}\mathbb{Q}\left[\mathbb{Z}/p^n\mathbb{Z}\right]$. This is a problem since this product is not a discrete $\mathbb{Z}_p$-module.
\end{example} 
We overcome the discreteness problem by considering the discretisation. Recall from Remark \ref{discGmod} that if $M$ is a $G$-module for $G$ profinite, then the discretisation of $M$ denoted $\text{disc}(M)$ is given by $\underset{H\underset{open}{\leq} G}{\colim}\, M^H$. In order to construct infinite products explicitly we need to define the following category.
\begin{construction}\label{Gsubdisc}
We define the category $G\text{-subdisc}$, whose objects are the discrete $H$-modules $M$ for any open subgroup $H$ of $G$. For the morphisms, if $M$ is a discrete $H$-module and $N$ a discrete $J$-module for $H$ and $J$ open in $G$, then a morphism from $M$ to $N$ is a $H\cap J$-equivariant module map $f:M\rightarrow N$.

If $F^i$ are a collection of $G$-sheaves of $\mathbb{Q}$-modules over a profinite $G$-space $X$, and $\mathfrak{B}_X^{\text{op}}$ represents the category of basic open subsets of $X$ (i.e., the closed-open basis) then we have the following functor:
\begin{align*}
F:\mathfrak{B}_X^{\text{op}}&\rightarrow G\text{-subdisc}\\U&\mapsto \text{disc}\left(\underset{i\in I}{\prod}F^i(U)\right)
\end{align*}
where the discretisation at each $U$ is taken with respect to $\text{stab}_G(U)$. Notice that this could be zero. 
\end{construction}
We now give the explicit construction for the infinite product of $G$-sheaves of $\mathbb{Q}$-modules over a profinite $G$-space $X$.
\begin{construction}\label{profinfprod}
Given a family $F^i$ of $G$-sheaves of $\mathbb{Q}$-modules over $X$ we look at the functor $F$ from Construction \ref{Gsubdisc}. We define the underlying set of the $G$-sheaf space of the product $E$ to be:
\begin{align*}
\underset{x\in X}{\coprod}\underset{U\backepsilon x}{\colim}\,F(U)
\end{align*} 
We topologise this by considering a basis given by sets of the form:
\begin{align*}
s(U)=\left\lbrace s(x)\mid x\in U\right\rbrace
\end{align*}
where $U$ ranges through the basic open subsets of $X$ and $s\in F(U)$. For the $G$-action we define the map as follows:
\begin{align*}
\psi:G\times E&\rightarrow E\\ \left(g,\left[(s^i)_{i\in I}\right]_x\right)&\mapsto \left[(g*s^i)_{i\in I}\right]_{gx}
\end{align*}
where $(s^i)_{i\in I}\in F(U)$ for some open neighbourhood $U$ of $x$ and $\left[..\right]_x$ represents the germ of the section over $x$. Here we are using that if $s\in F(U)$ then it follows that $g*s$ belongs to $F(gU)$. To see this, observe that if $N$ is an open normal subgroup of $G$ such that $s$ is $N$-equivariant, then it follows that $g*s$ is also $N$-equivariant. 

Notice that we have a monomorphism $\iota$ from $E$ into $\underset{i\in I}{\prod}F^i$, hence the product projection maps are of the form $p_i\circ\iota$ where $p_i$ is the product projection for the non-equivariant sheaf product.
\end{construction}
We observe that this is what we need for $F$ to be a $G$-equivariant sheaf in light of Proposition \ref{sectionquot}. This tells us that if $F$ is a $G$-equivariant sheaf and $U$ a closed-open basis element of $X$, then sections over $U$ has to be a discrete $\text{stab}_G(U)$-module. It is for this reason that the product in the category of non-equivariant sheaves fails to deliver a $G$-sheaf by considering the $G$-action induced by the product. Namely, the $\text{stab}_G(U)$-action on sections over $U$ for the non-equivariant product of sheaves is not a discrete action.  
\begin{lemma}
Construction \ref{profinfprod} is the product in the category of $G$-sheaves.
\end{lemma}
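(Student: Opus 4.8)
The plan is to verify that the $G$-sheaf $E$ produced by Construction \ref{profinfprod}, equipped with the projections $\pi_i = p_i\circ\iota\colon E\to F^i$, satisfies the universal property of the categorical product in $\text{Sh}_{\mathbb{Q}}(X)$. So let $A$ be a $G$-sheaf of $\mathbb{Q}$-modules over $X$ together with morphisms $\phi_i\colon A\to F^i$ of $G$-sheaves; I must construct a unique morphism of $G$-sheaves $\phi\colon A\to E$ with $\pi_i\circ\phi=\phi_i$ for every $i\in I$.

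First I would build $\phi$ on stalks. Fix $x\in X$ and $a_x\in A_x$. By the universal property of the product in $\mathbb{Q}\Mod$, the maps $\phi_{i,x}\colon A_x\to F^i_x$ assemble into a single $\mathbb{Q}$-linear map $A_x\to\underset{i\in I}{\prod}F^i_x$. The key point is that the image of $a_x$ actually lands in the $\mathbb{Q}$-submodule $\underset{U\backepsilon x}{\colim}\,F(U)$ which is the stalk of $E$ at $x$. To see this, pick a section $s\colon U\to A$ representing $a_x$. By Proposition \ref{restProp} there is an open normal subgroup $N\unlhd G$ and a compact-open, $N$-invariant neighbourhood $V\subseteq U$ of $x$ such that $s|_V$ is $N$-equivariant. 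Since each $\phi_i$ is $G$-equivariant, $(\phi_i\circ s)|_V$ is an $N$-equivariant section of $F^i$ over $V$, hence by Proposition \ref{equifix} an element of $F^i(V)^N$; so the tuple $\big((\phi_i\circ s)|_V\big)_{i\in I}$ lies in $\big(\underset{i\in I}{\prod}F^i(V)\big)^N$. As $V$ is $N$-invariant we have $N\leq\text{stab}_G(V)$, and $N$ is open in $G$, so this tuple belongs to $\text{disc}\big(\underset{i\in I}{\prod}F^i(V)\big)=F(V)$. I then define $\phi(a_x)$ to be the germ at $x$ of this element of $F(V)$. Independence of the choices of $s$, $N$ and $V$ is routine from the colimit descriptions of the stalks of $A$ and of $E$.

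Next I would check that $\phi$ is a morphism of $G$-sheaves. That each stalk map $\phi_x$ is $\mathbb{Q}$-linear is immediate, since the stalk of $E$ is a colimit in $\mathbb{Q}\Mod$ and the $\phi_{i,x}$ are $\mathbb{Q}$-linear. Continuity of $\phi$ as a map of sheaf spaces follows, as in the continuity arguments already carried out in Constructions \ref{limconstruct} and \ref{sheafcolim}, by pulling back a basic open $t(W)$ of $E$: locally $t$ is a tuple of sections of the $F^i$, and one uses continuity of each $\phi_i$ as a map of sheaf spaces together with the fact that sections of $F$ are exactly the (locally) $N$-equivariant tuples. Equivariance of $\phi$ is read off by comparing its definition with the action map $\psi$ of Construction \ref{profinfprod}: using the identity $g(s_x)=(g*s)_{gx}$ of Remark \ref{commuteaction} and the $G$-equivariance of the $\phi_i$, one gets $\phi(g\cdot a_x)=g\cdot\phi(a_x)$. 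By construction $\pi_i\circ\phi=\phi_i$ on stalks, hence as morphisms. Finally, for uniqueness I would use that $\iota\colon E\to\underset{i\in I}{\prod}F^i$ is a monomorphism and that the underlying sheaf of $\underset{i\in I}{\prod}F^i$ is the sheaf-theoretic product, so the projections $p_i$ are jointly monic; any $\phi'$ with $\pi_i\circ\phi'=\phi_i$ therefore satisfies $\iota\circ\phi'=\iota\circ\phi$, whence $\phi'=\phi$.

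I expect the main obstacle to be exactly the step showing that the assembled stalk map factors through $\underset{U\backepsilon x}{\colim}F(U)$ rather than landing in the larger, non-discrete product stalk: this is where the profiniteness of $X$ and $G$ is genuinely used (via Proposition \ref{restProp}), and it is the conceptual reason the naive ``product of underlying sheaves with the induced $G$-action'' fails to be a $G$-sheaf. A secondary technical point needing care is the continuity verification for $\phi$, where one must track how a basic open set of $E$ interacts with the discretisation functor $F$.
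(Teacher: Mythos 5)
Your universal-property argument is essentially the paper's: the one substantive point---that the map assembled from the $\phi_i$ lands in the discretisation---is exactly the factorisation the paper uses, except that the paper runs it section-wise rather than stalk-wise. There, one first takes the unique morphism of non-equivariant sheaves $\alpha\colon T\rightarrow \underset{i\in I}{\prod}F^i$ induced by the $\alpha_i$, and then observes via Proposition \ref{sectionquot} that for a closed-open basic $U$ the module $T(U)$ is a discrete $\text{stab}_G(U)$-module, so $\im\alpha(U)$ lies in the discrete part $F(U)$; hence $\alpha$ factors through the monomorphism $\iota\colon E\rightarrow\underset{i\in I}{\prod}F^i$, and uniqueness follows at once from uniqueness of $\alpha$. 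A side benefit of that route is that continuity of the factored map costs nothing (each $s\in F(U)$ is in particular a section of the non-equivariant product, so preimages under $\alpha$ of the generating opens $s(U)$ of $E$ are open), whereas in your stalk-wise construction the continuity of $\phi$ is asserted by analogy with earlier constructions rather than proved; that step is fixable, but as written it is a loose end.

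The genuine gap is earlier: you take for granted that Construction \ref{profinfprod} defines a $G$-sheaf at all. The construction only specifies a set, a topology and an action map $\psi$; the present lemma is where the paper verifies that this is a sheaf space of $\mathbb{Q}$-modules and, crucially, that $\psi$ is continuous, and this occupies the first half of its proof. The continuity check is not formal: given a basic open $s(U)$ with $s\in F(U)$ and $(g,t_y)\in\psi^{-1}(s(U))$, one uses that $s$ is $N$-equivariant for some open normal subgroup $N$ of $G$ (exactly what membership in the discretisation buys) to exhibit the neighbourhood $gN\times (g^{-1}*s)(g^{-1}U)$ of $(g,t_y)$ inside $\psi^{-1}(s(U))$. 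Without this verification the statement that the construction ``is the product in the category of $G$-sheaves'' is not established, since the candidate object has not been shown to lie in the category; you should either supply this argument or point to where it has been carried out.
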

\begin{proof}
This is clearly a sheaf space of $\mathbb{Q}$-modules since the sections are constructed using sections of the sheaf of $\mathbb{Q}$-modules given by $\underset{i\in I}{\prod}F^i$ in a manner similar to the sheafification process. We need to verify that the $G$-action on this space is continuous. Take any basic open subset of the form $s(U)$ and take $(g,t_y)\in \psi^{-1}(s(U))$.

It follows that $g(t_y)=s_x$ for some $x\in U$ and so $gy=x$. We know by definition of $F(U)$ that there must exist some open normal subgroup $N$ of $G$ such that $s$ is $N$-equivariant, and so $V=gN\times (g^{-1}*s)(g^{-1}U)$ is an open neighbourhood of $(g,t_y)$. To see that $V\subseteq \psi^{-1}(s(U))$, if $(gn,(g^{-1}*s)(g^{-1}z))$ belongs to $V$ then:
\begin{align*}
gn(g^{-1}*s)(g^{-1}z)=gng^{-1}s(z)=s(gng^{-1}z),
\end{align*}
where the last belongs to $s(U)$ since $U$ is $N$-invariant. 

For the universality condition, if for each $i\in I$ we have morphisms of $G$-sheaves of $\mathbb{Q}$-modules over $X$ of the form $\alpha_i:T\rightarrow F^i$ then we have a unique morphism $\alpha:T\rightarrow \underset{i\in I}{\prod}F^i$ of sheaves of $\mathbb{Q}$-modules. If $U$ is an closed-open basic subset of $X$, then since $T(U)$ is a discrete $\text{stab}_G(U)$-module it follows that $\im\alpha(U)$ is contained in the discrete part of $\underset{i\in I}{\prod}F^i(U)$. This says that $\alpha$ factors as demonstrated by the following diagram:
\begin{align*}
\xymatrix{&E\ar[dr]^{\iota}\\T\ar@{-->}[ur]^{\overline{\alpha}}\ar[rr]^{\alpha}\ar[dr]_{\alpha_i}&&\underset{i\in I}{\prod}F^i\ar[dl]^{p_i}\\&F^i}
\end{align*}  
The uniqueness of the morphism follows immediately from the uniqueness of $\alpha$ since $\iota$ is a monomorphism.
\end{proof}
\section{Useful Adjunctions}
We finish the chapter by proving some useful adjunctions relating to $G$-equivariant sheaves over a $G$-space $X$. We will use these results when constructing injective resolutions of $G$-sheaves and Weyl-$G$-sheaves in Chapter \ref{finalchap}. We begin by proving the following proposition.
\begin{proposition}\label{orbclose}
If $G$ is a profinite group and $X$ a profinite $G$-space, then the $G$-orbit of any point $x\in X$ is closed. 
\end{proposition}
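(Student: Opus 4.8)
The plan is to exploit the fact that $G$ is compact and $X$ is Hausdorff, so that the orbit map realises the orbit as a continuous image of a compact space inside a Hausdorff space. Concretely, fix $x \in X$ and consider the continuous orbit map
\begin{align*}
\alpha_x : G &\rightarrow X\\
g &\mapsto gx,
\end{align*}
which is the restriction of the action map $\alpha : G \times X \rightarrow X$ to $G \times \{x\}$, and is therefore continuous. Its image is exactly the orbit $Gx$. Since $G$ is profinite it is in particular compact, so $\alpha_x(G) = Gx$ is a compact subset of $X$. Because $X$ is profinite, it is Hausdorff, and compact subsets of a Hausdorff space are closed. Hence $Gx$ is closed in $X$.

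First I would recall why $G$ is compact: a profinite group is by definition (or by the characterisation used earlier via \cite{Fausk}) an inverse limit of finite discrete groups, hence a closed subspace of a product of finite discrete spaces, which is compact by Tychonoff; alternatively one simply cites that profinite groups are compact Hausdorff and totally disconnected. Then I would note that $X$ being a profinite $G$-space is in particular compact Hausdorff and totally disconnected (the relevant case in this thesis being $X = SG$, which is profinite by Proposition \ref{SGprofinite}). The only genuine content is the elementary point-set topology lemma that the continuous image of a compact set in a Hausdorff space is closed, which follows since such an image is compact and compact subsets of Hausdorff spaces are closed.

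There is essentially no obstacle here; the statement is a soft consequence of compactness. The one thing worth being slightly careful about is making sure the orbit map $\alpha_x$ is genuinely continuous, which is immediate because it factors as $G \cong G \times \{x\} \hookrightarrow G \times X \xrightarrow{\alpha} X$, a composite of continuous maps. If one wanted to avoid even invoking the action map directly, one could instead observe that $Gx$ is the image of the compact space $G/\mathrm{stab}_G(x)$ under the induced injective continuous map into $X$, but the orbit-map argument is cleaner and needs no discussion of the stabiliser. I would present the proof in two or three sentences along exactly these lines.
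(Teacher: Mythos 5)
Your proof is correct and rests on exactly the same mechanism as the paper's: compactness of $G$, Hausdorffness of $X$, and the fact that a continuous image of a compact set in a Hausdorff space is compact, hence closed. The paper reaches this via a slight detour, using the map $f\colon G\times X\rightarrow X\times X$, $(g,x)\mapsto (gx,x)$, noting that the image $Gx\times\{x\}$ of the closed set $G\times\{x\}$ is closed, and then transferring this back to $X$ by a small diagram of homeomorphisms; your version, restricting the action map directly to $G\times\{x\}$ and taking its image, is the same argument stated more directly.
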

\begin{proof}
Consider the following continuous map:
\begin{align*}
f:G\times X&\rightarrow X\times X\\(g,x)&\mapsto (gx,x).
\end{align*}
Both the domain and codomain are compact and Hausdorff, and therefore closed and compact conditions are equivalent in these spaces. In particular  this implies that preimages of compact spaces are compact and images of closed subsets are closed.

We know that $G\times \left\lbrace x\right\rbrace$ is closed in $G\times X$, so therefore its image $Gx\times \left\lbrace x\right\rbrace$ is closed in $X\times \left\lbrace x\right\rbrace$. We also know that $Gx\times \left\lbrace x\right\rbrace$ is closed in $X\times X$ for the same reason. This therefore leads to the following commutative diagram:
\begin{align*}
\xymatrix{Gx\ar[d]\ar[r]^-{\cong}&Gx\times \left\lbrace x\right\rbrace\ar[d]\ar[dr]\\X\ar[r]^-{\cong}&X\times\left\lbrace x\right\rbrace\ar[r]&X\times X}
\end{align*}
where the two horizontal morphisms in the square are homeomorphisms, and the three arrows in the triangle on the left are closed inclusions, and the remaining arrow on the left is an inclusion. This diagram shows that $Gx$ is the homeomorphic image of a closed subset of a space which is homeomorphic to $X$, hence $Gx$ is closed in $X$.  
\end{proof}
We now look at an example of a $G$-equivariant sheaf which will be useful in our theory. Notice that if $X$ is a $G$-space and $A$ is a $\mathbb{Q}$-module, for each $x\in X$ we can view $A$ as a discrete $\text{stab}_G(x)$-module by giving $A$ the trivial $\text{stab}_G(x)$-action.
\begin{example}\label{constantsheafex}
Let $X$ be a $G$-space, $A$ be a discrete $\mathbb{Q}[G]$-module and $cA$ be the constant sheaf over $X$. We define a continuous $G$-action on the sheaf space of $cA$ by:
\begin{align*}
G\times LcA&\rightarrow LcA\\(g,a_{x})&\mapsto ga_{gx}
\end{align*}
where $a_x$ and $ga_{gx}$ correspond to the stalk $a,ga\in A$ as germs over $x$ and $gx$ respectively.
\end{example}
\begin{proof}
The construction $LcA$ is clearly a sheaf space with $G$-equivariant local homeomorphism. It is left to show that the $G$-action on $LcA$ is continuous. We begin by taking a basic open subset of $LcA$ of the form $a(U)$ for $a\in A$ and some basic open subset $U$ which we can assume to be compact. We know that we have an open subgroup $\text{stab}_G(U)$ since $U$ is compact, and since $A$ is discrete we know that $\text{stab}_G(a)$ is open too. Therefore we can choose an open normal subgroup $N$ satisfying that $N\leq \text{stab}_G(a)\cap \text{stab}_G(U)$. We will show that the open subset $Ng\times g^{-1}a(g^{-1}U)$ of $G\times LcA$ is in the preimage of $a(U)$.

Take any element in the set $(ng,g^{-1}a_{g^{-1}t})$ for $t\in U$ and let $\psi$ denote the action map. Then:
\begin{align*}
\psi(ng,g^{-1}a_{g^{-1}t})=ngg^{-1}a_{ngg^{-1}t}=na_{nt}=a_{nt}.
\end{align*}
The latter belongs to $a(U)$ since $U$ is $N$-invariant.
\end{proof}
In the case where $A$ has the trivial action, the action on this space permutes the stalks and the action of $\text{stab}_G(x)$ on the stalk at $x$ is trivial. Note if $X$ is a transitive space and $x\in X$ arbitrary, then $cA$ is represented by the sheaf space $G\underset{\text{stab}_G(x)}{\times}A$.
\begin{example}\label{transitivesheafex}
Let $X$ be a transitive $G$-space and $A$ be a discrete $\mathbb{Q}\left[\text{stab}_G(x)\right]$-module for some $x\in X$. Then we have a $G$-sheaf given by:
\begin{align*}
G\underset{\text{stab}_G(x)}{\times}A.
\end{align*}
The $G$-action is given by the map:
\begin{align*}
G\times \left(G\underset{\text{stab}_G(x)}{\times}A\right)&\rightarrow G\underset{\text{stab}_G(x)}{\times}A\\
\left(g,[g^{\prime},a]\right)&\mapsto [gg^{\prime},a].
\end{align*}
Notice we have a non-trivial action on the stalks. If $h\in\text{stab}_G(x)$ then we have the following:
\begin{align*}
ghg^{-1}*[g,a]=[ghg^{-1}g,a]=[gh,a]=[g,ha].
\end{align*} 
\end{example}
We use that a typical element of $\text{stab}_G(gx)$ is of the form $ghg^{-1}$. We next prove a useful lemma which will provide us with an important adjoint pair of functors that will play a pivotal role later on.
\begin{lemma}\label{restconst}
Let $O$ be an orbit in a $G$-space $X$, $x\in O$ be a point and $E$ be a $G$-equivariant sheaf over $X$. Then:
\begin{align*}
E_{|_{O}}\cong G\underset{\stabgx(x)}{\times}E_x
\end{align*}
as $G$-sheaf spaces over $O$.
\end{lemma}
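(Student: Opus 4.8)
The plan is to exhibit a mutually inverse pair of morphisms of $G$-sheaf spaces over the orbit $O$. Fix the point $x \in O$ once and for all and write $H = \stabgx(x)$. Since $O$ is a transitive $G$-space, every point of $O$ has the form $gx$ for some $g \in G$, and $gx = g'x$ precisely when $g^{-1}g' \in H$; this gives the familiar homeomorphism $O \cong G/H$, and it is exactly the data needed to make sense of the balanced product $G \underset{H}{\times} E_x$ as the sheaf space construction described in Example \ref{transitivesheafex}, with projection $[g,a] \mapsto gx$.

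First I would define the candidate map $\Phi \colon G \underset{H}{\times} E_x \to E_{|_O}$ by $[g,a] \mapsto g(a) \in E_{gx}$, where $g(-) \colon E_x \to E_{gx}$ is the stalk isomorphism coming from the $G$-action on $E$ (point $4$ of Definition \ref{defn2}). I would check this is well defined: if $[g,a] = [gh, h^{-1}a]$ for $h \in H$, then since $h$ acts on $E_x$ (as $h \in H = \stabgx(x)$, using Corollary \ref{Coract}) we have $gh(h^{-1}a) = g(h(h^{-1}a)) = g(a)$, using the associativity of the action from Definition \ref{Defn1}. The map $\Phi$ is $G$-equivariant by construction ($g' \cdot [g,a] = [g'g,a] \mapsto (g'g)(a) = g'(g(a))$), it commutes with the two projections to $O$, and on each stalk it restricts to the isomorphism $g(-)$, hence is a stalkwise isomorphism of $\mathbb{Q}$-modules. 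The inverse $\Psi \colon E_{|_O} \to G \underset{H}{\times} E_x$ sends $s_{y} \in E_y$ with $y = gx$ to $[g, g^{-1}(s_y)]$; this is well defined because a different choice $g' = gh$ gives $[gh, (gh)^{-1}(s_y)] = [gh, h^{-1}(g^{-1}(s_y))] = [g, g^{-1}(s_y)]$. One checks $\Phi$ and $\Psi$ are mutually inverse set maps and that both respect the $\mathbb{Q}$-module and $G$-actions, so it only remains to verify each is continuous, i.e.\ a morphism of sheaf spaces in the sense of Definition \ref{defn2}.

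The main obstacle is continuity of $\Phi$ (equivalently, that $\Phi$ is a local homeomorphism, since it is a continuous bijection between sheaf spaces and both projections are local homeomorphisms). Here I would argue locally: pick $s_x \in E_x$ and a section $s \colon U \to E$ of $E$ representing it with $U$ a (compact) closed-open neighbourhood of $x$; by Proposition \ref{restProp} (or Corollary \ref{equivariant} when $X = SG$) we may shrink $U$ so that $U$ is $N$-invariant and $s$ is $N$-equivariant for some open normal $N \unlhd G$. Then the translates $g \ast s$ over $gU$, for $g$ ranging over coset representatives of the finite quotient $G/N$, assemble via $\Phi$ into an open neighbourhood of $\Phi([g,s_x])$ on which $\Phi$ is a homeomorphism onto its image; the finiteness of $G/N$ and the $N$-equivariance are precisely what make this patching continuous, mirroring the continuity arguments already carried out for Construction \ref{limconstruct} and Construction \ref{profinfprod}. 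Continuity of $\Psi$ follows symmetrically, or simply because a continuous bijection of sheaf spaces over the same base which commutes with the (local homeomorphism) projections is automatically a homeomorphism. This establishes $E_{|_O} \cong G \underset{H}{\times} E_x$ as $G$-sheaf spaces over $O$, as claimed.
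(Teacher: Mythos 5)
Your map $\Phi([g,a])=g(a)$ is exactly the map the paper uses, and the algebraic parts of your argument are fine: well-definedness over the balanced product, $G$-equivariance, injectivity and surjectivity, stalkwise $\mathbb{Q}$-linearity, and the closing observation that a continuous bijection of sheaf spaces commuting with the (local homeomorphism) projections is open and hence a homeomorphism, which is \cite[Lemma 2.3.5]{Tennison} and is also how the paper finishes. The genuine gap is the continuity of $\Phi$ itself, which you correctly identify as the main obstacle but do not actually prove. Your proposed route via Proposition \ref{restProp} has two problems: first, that proposition is only available for a \emph{profinite} $G$-space $X$ with a compact closed-open basis, an assumption the lemma does not make (and the lemma is later invoked, e.g.\ in Proposition \ref{equiadjunct}, for a general $G$-space $X$); second, the decisive sentence, that the translates $g\ast s$ ``assemble'' into a neighbourhood on which $\Phi$ is a homeomorphism because of the finiteness of $G/N$ and $N$-equivariance, is an assertion rather than an argument. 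You never describe the topology on $G\underset{\stabgx(x)}{\times}E_x$ (the quotient topology from $G\times E_x$) nor check that preimages under $\Phi$ of basic open sets $s(U)\cap E_{|_{O}}$ are open, so continuity is left unestablished.

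The missing idea, which is the paper's entire continuity argument and makes the difficulty disappear, is that $\Phi$ is induced on the quotient by the map $\psi\colon G\times E_x\rightarrow E_{|_{O}}$, $(g,e)\mapsto ge$, and $\psi$ is simply the restriction of the $G$-action map $G\times E\rightarrow E$, which is continuous by Definition \ref{defn2}. Since the balanced product carries the quotient topology, the universal property of quotient maps immediately gives continuity of $\Phi=\overline{\psi}$, with no section-level patching, no appeal to Proposition \ref{restProp}, and no profiniteness hypothesis on $X$. Once you have that, the remainder of your proof (openness via Tennison, bijectivity, hence $\Psi=\Phi^{-1}$ is automatically continuous) goes through and coincides with the paper's proof; your explicit formula for $\Psi$ is then a harmless but unnecessary extra.
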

\begin{proof}
We begin by considering the map:
\begin{align*}
\psi:G\times E_x &\rightarrow E_{|_{O}}\\(g,e)&\mapsto ge.
\end{align*}
This is a continuous $G$-map since it is the restriction of the $G$-action map on the $G$-sheaf space $E$. Also we can see that this is surjective since if $e$ belongs to $E_{|_{O}}$ where $e$ is a germ over $gx$ for some $g\in G$, then $(g,g^{-1}e)$ is in the preimage. Therefore $\psi$ is a continuous surjection.

Since $\psi(gh,e)=(gh)e=g(he)=\psi(g,he)$ we have the following factorisation:
\begin{align*}
\xymatrix{G\times E_x\ar[r]^{\psi}\ar@{->>}[d]&E_{|_{O}}\\G\underset{\text{stab}_G(x)}{\times}E_x\ar[ur]_{\overline{\psi}}}
\end{align*}
We know that $\overline{\psi}$ satisfies the same properties that we have shown $\psi$ to possess. $\overline{\psi}$ is also a map of sheaf spaces since it is compatible with the local homeomorphisms and induces $\mathbb{Q}$-module maps on stalks. It therefore is an open map too since we can apply \cite[Lemma 2.3.5]{Tennison}. We will now prove that in addition it is also injective. Suppose that $\left[g_1,e_1\right],\left[g_2,e_2\right]$ satisfy that $g_1e_1=g_2e_2$. It therefore follows that $g_1x=g_2x$ and hence ${g_1}^{-1}g_2x=x$ so that ${g_1}^{-1}g_2\in \text{stab}_G(x)$. Therefore:
\begin{align*}
[g_2,e_2]=[g_1{g_1}^{-1}g_2,e_2]=[g_1,{g_1}^{-1}g_2e_2]=[g_1,e_1]
\end{align*}
as required.
\end{proof}
As a consequence of Proposition \ref{orbclose} we can apply extension by zero to end up with a $G$-equivariant sheaf $\overline{E_{|_{O}}}$. Notice that the stalks of $\overline{E_{|_{O}}}$ or equivalently of $\overline{G\underset{\text{stab}_G(x)}{\times}E_x}$ for $x\in O$ are isomorphic to $E_y$ for any $y\in O$ and zero otherwise. 
\begin{example}\label{Gskyscraper}
If $O$ is an orbit of a $G$-space $X$, then for any $x\in O$ we have a sheaf over $X$ given by $\overline{G\underset{\text{stab}_G(x)}{\times}A}$ for any discrete $\mathbb{Q}\left[\text{stab}_G(x)\right]$-module $A$. This is extension by zero of the $G$-sheaf from Example \ref{transitivesheafex}.
\end{example}
If $X$ is a $G$-space and $x\in X$, then there are functors defined by:
\begin{align*}
(-)_x:G\text{-Sheaf}(X)&\rightarrow \mathbb{Q}\left[\text{stab}_G(x)\right]\text{-Mod}\\E&\mapsto E_x
\end{align*}
where a morphism $f:E\rightarrow F$ is mapped to $f_x$, and
\begin{align*}
R:\mathbb{Q}\left[\text{stab}_G(x)\right]\text{-Mod}&\rightarrow G\text{-Sheaf}(X)\\A&\mapsto \overline{G\underset{\text{stab}_G(x)}{\times}A}
\end{align*}
where a morphism $\alpha:A\rightarrow B$ is mapped to the morphism of $G$-sheaves induced by $\id{\times}\alpha$. We now verify that these functors provide an adjunction.
\begin{proposition}\label{equiadjunct}
The pair of functors $\left((-)_x,R\right)$ are an adjoint pair.
\end{proposition}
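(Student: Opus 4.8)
The plan is to exhibit, for each $G$-equivariant sheaf $E$ over $X$ and each $\mathbb{Q}[\stabgx(x)]$-module $A$, a natural bijection
\begin{align*}
\Hom_{\mathbb{Q}[\stabgx(x)]}\left(E_x,A\right)\cong G\text{-Sheaf}(X)\left(E,R(A)\right),
\end{align*}
and then to check naturality in both variables. First I would unwind $R(A)=\overline{G\underset{\stabgx(x)}{\times}A}$: by Lemma \ref{restconst} and the construction of extension by zero (using Proposition \ref{orbclose} to know the orbit $O=Gx$ is closed, so extension by zero produces a genuine $G$-sheaf), the stalks of $R(A)$ are $A$ at points of $O$ (more precisely $E_y$-shaped copies $G\underset{\stabgx(x)}{\times}A$ localised to $O$) and $0$ elsewhere. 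So a $G$-sheaf morphism $f\colon E\to R(A)$ is determined by its behaviour over the subspace $O$, i.e.\ by a $G$-sheaf morphism $E_{|_O}\to G\underset{\stabgx(x)}{\times}A$ over $O$, since any morphism into a sheaf supported on $O$ must vanish off $O$.

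Next I would construct the two maps of the adjunction. Given a $\stabgx(x)$-module map $\varphi\colon E_x\to A$, define $f\colon E\to R(A)$ by specifying it over $O$: on the stalk at $gx$ send $e\in E_{gx}$ to $[g,\varphi(g^{-1}e)]$; this is well defined because $\varphi$ is $\stabgx(x)$-equivariant (if $gx=g'x$ then $g^{-1}g'\in\stabgx(x)$, and one checks the two formulas agree), it is $G$-equivariant by construction, it is a $\mathbb{Q}$-module map on each stalk, and it is continuous because it is built stalk-wise from the continuous $G$-action using the local homeomorphism, exactly as in the proof that $\overline{\psi}$ in Lemma \ref{restconst} is a morphism of sheaf spaces. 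Extend by zero off $O$. Conversely, given $f\colon E\to R(A)$, restrict to the stalk over $x$: the stalk of $R(A)$ at $x$ is $G\underset{\stabgx(x)}{\times}A$, which has a canonical $\stabgx(x)$-equivariant projection (or rather identification after choosing the coset representative $e$) onto $A$ given by $[h,a]\mapsto ha$ for $h\in\stabgx(x)$; composing $f_x$ with this gives a $\stabgx(x)$-module map $E_x\to A$. I would then verify these two assignments are mutually inverse, which is a direct computation using the unitality of the $\stabgx(x)$-action (Proposition \ref{Unital}, Corollary \ref{Coract}) and the defining relation $[gh,a]=[g,ha]$ of the balanced product seen in Example \ref{transitivesheafex}.

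Finally I would check naturality: for a $G$-sheaf map $E'\to E$ the induced map on stalks $E'_x\to E_x$ makes the relevant square commute by functoriality of $(-)_x$, and for a $\stabgx(x)$-module map $A\to B$ the induced map $R(A)\to R(B)$ is $\id\times\alpha$ on the balanced product, which again commutes with the two constructions above by inspection. Both checks are routine diagram chases once the bijection is in place.

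The main obstacle I anticipate is not the algebra of the balanced product but the topology: verifying carefully that the morphism $f\colon E\to R(A)$ built from $\varphi$ is genuinely continuous as a map of sheaf spaces, and in particular that extension by zero off the closed orbit $O$ yields a continuous (hence legitimate) $G$-sheaf morphism. This is where Proposition \ref{orbclose} is essential, and the continuity argument will mirror the open-set-chasing already carried out in Lemma \ref{restconst} and in the continuity proofs of Constructions \ref{limconstruct} and \ref{profinfprod}; I would lean on those rather than repeat the $\epsilon$-style neighbourhood bookkeeping in full.
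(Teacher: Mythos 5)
Your proposal is correct and follows essentially the same route as the paper: both reduce a morphism $E\rightarrow\overline{G\underset{\stabgx(x)}{\times}A}$ to its restriction over the closed orbit (extension by zero, Proposition \ref{orbclose} together with the direct image/inverse image adjunction) and then to the stalk map at $x$ via the balanced-product identification of Lemma \ref{restconst}, your explicit formula $e\mapsto[g,\varphi(g^{-1}e)]$ being exactly the paper's $[g,e]\mapsto[g,h_x(e)]$ read through that isomorphism. The only slip is the phrase that the stalk of $R(A)$ at $x$ is $G\underset{\stabgx(x)}{\times}A$ — the fibre over $x$ is only the classes $[h,a]$ with $h\in\stabgx(x)$, i.e.\ a copy of $A$ — but your subsequent identification $[h,a]\mapsto ha$ uses precisely this, so the argument is unaffected.
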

\begin{proof}
Let $E,F$ be $G$-equivariant sheaves, $A,B$ be $\mathbb{Q}\left[\text{stab}_G(x)\right]$-modules and $f:E\rightarrow F$ and $\alpha:A\rightarrow B$ be morphisms in their respective categories. We will define a map of sets as follows:
\begin{align*}
\phi_{E,A}:\hom\left(E,\overline{G\underset{\text{stab}_G(x)}{\times}A}\right)&\rightarrow \hom\left(E_x,A\right)\\
h&\mapsto h_x. 
\end{align*}
We show that this is a bijection by proving that this is a composition of bijective maps. First note that the following map is a bijection:
\begin{align*}
\hom\left(E,\overline{G\underset{\text{stab}_G(x)}{\times}A}\right)&\rightarrow \hom\left(E_{|_{O}},G\underset{\text{stab}_G(x)}{\times}A\right)\\h&\mapsto h_{|_{O}}
\end{align*}
as a result of the direct image-inverse image functor as seen in \cite[Theorem 3.7.13]{Tennison}, and using the fact that both $O$ and the complement of $O$ are closed under taking orbits so that $h$ is $G$-equivariant implies that $h_{|_{O}}$ is. Also $h$ applied to the stalks of the complement of $O$ must be zero. Therefore, using that the complement of $O$ is $G$-invariant, $h$ is $G$-equivariant if $h_{|_{O}}$ is.

As a consequence of Lemma \ref{restconst}, $h_{|_{O}}$ in turn is uniquely determined by $h_x$. This is because we can define a morphism of $G$-sheaves from $h_x$ as follows:
\begin{align*}
G\underset{\text{stab}_G(x)}{\times}E_x&\rightarrow G\underset{\text{stab}_G(x)}{\times}A\\ \left[g,e\right]&\mapsto \left[g,h_x(e)\right] 
\end{align*}

We now seek to show that these bijections are natural. Let $\alpha^{\prime}$ denote the map induced by $\id\times \alpha$. Then the commutativity of the two necessary squares  follows immediately from the equalities:
\begin{enumerate}
\item $\left(\alpha^{\prime}\circ h\right)_x=\alpha\circ h_x$,
\item $\left(h\circ f\right)_x=h_x\circ f_x$.
\end{enumerate}
\end{proof}
\begin{corollary}
If $X$ is a profinite space of the form $SG$ where $G$ is a profinite group, then the adjoint pair defined in Proposition \ref{equiadjunct} give an adjoint pair $\left((-)_x,R\right)$:
\begin{align*}
\xymatrix{\Weyl(SG)\ar@<-4pt>[r]|-a
					&\mathbb{Q}\left[N_G(K)/K\right]\Mod\ar@<-4pt>[l]|-b},
\end{align*} 
where $a$ is the functor $(-)_x$ and $b$ is the functor $R$ from Proposition \ref{equiadjunct}, for any $K\in SG$.
\end{corollary}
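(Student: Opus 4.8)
The plan is to deduce this corollary directly from Proposition \ref{equiadjunct} by specialising the ambient space to $X = SG$ and checking that the adjunction restricts to the relevant full subcategories. Recall that Proposition \ref{equiadjunct} gives, for any $G$-space $X$ and any point $x \in X$, an adjoint pair $\left((-)_x, R\right)$ between $G\text{-Sheaf}(X)$ and $\mathbb{Q}\left[\stabgx(x)\right]\text{-Mod}$. Taking $X = SG$ and $x = K \in SG$, we have $\stabgx(K) = N_G(K)$ by definition of the conjugation action on $SG$, so the target category of $(-)_K$ is $\mathbb{Q}\left[N_G(K)\right]\text{-Mod}$. The task is then twofold: first, to see that the stalk functor $(-)_K$, when restricted to the full subcategory $\Weyl(SG)$, lands in $\mathbb{Q}\left[N_G(K)/K\right]\text{-Mod}$; and second, to see that $R$ applied to a $\mathbb{Q}\left[N_G(K)/K\right]$-module produces a Weyl-$G$-sheaf.

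For the first point, if $E$ is a Weyl-$G$-sheaf over $SG$ then by Definition \ref{Weyldefn} the stalk $E_K$ is $K$-fixed, hence the $N_G(K)$-action on $E_K$ factors through $N_G(K)/K$; this is exactly saying $E_K$ is a $\mathbb{Q}\left[N_G(K)/K\right]$-module, so the corestriction of $(-)_K$ to $\Weyl(SG)$ is well defined. For the second point, given a $\mathbb{Q}\left[N_G(K)/K\right]$-module $A$ — equivalently a $\mathbb{Q}\left[N_G(K)\right]$-module on which $K$ acts trivially — we must check that $R(A) = \overline{G\underset{N_G(K)}{\times}A}$ is a Weyl-$G$-sheaf, i.e. that each of its stalks $R(A)_L$ is $L$-fixed. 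By the remark following Lemma \ref{restconst}, the stalks of $\overline{G\underset{N_G(K)}{\times}A}$ are isomorphic to $A$ at points of the orbit $GK$ and zero elsewhere; zero stalks are trivially fixed, and at a point $gKg^{-1} \in GK$ the stalk is $[g,A]$, on which $\stabgx(gKg^{-1}) = gN_G(K)g^{-1}$ acts via $g h g^{-1} \cdot [g,a] = [g, ha]$ (as computed in Example \ref{transitivesheafex}). Since $gKg^{-1} \leq gN_G(K)g^{-1}$ corresponds to $h \in K$, and $K$ acts trivially on $A$ by hypothesis, the stalk at $gKg^{-1}$ is indeed $gKg^{-1}$-fixed. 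Thus $R$ does land in $\Weyl(SG)$.

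Finally, since $\Weyl(SG)$ is by definition a \emph{full} subcategory of $\sheaf_{\mathbb{Q}}(SG)$, and $\mathbb{Q}\left[N_G(K)/K\right]\text{-Mod}$ is a full subcategory of $\mathbb{Q}\left[N_G(K)\right]\text{-Mod}$ (those modules with trivial $K$-action), the natural bijection
\begin{align*}
\phi_{E,A}\colon \hom\left(E, R(A)\right) \xrightarrow{\ \cong\ } \hom\left(E_K, A\right)
\end{align*}
of Proposition \ref{equiadjunct} restricts verbatim to these subcategories once we have verified that both functors preserve them, which is precisely what the previous paragraph established. Naturality in both variables is inherited from Proposition \ref{equiadjunct}. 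I do not anticipate a serious obstacle here: the only mild subtlety is keeping straight that the hypothesis ``$A$ is a $\mathbb{Q}\left[N_G(K)/K\right]$-module'' translates into ``$K$ acts trivially on $A$'', which is exactly the condition needed to make $R(A)$ stalk-wise $K$-fixed along the orbit $GK$; everything else is bookkeeping with the already-established adjunction and the stalk computation of Example \ref{transitivesheafex}.
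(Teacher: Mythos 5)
Your argument is correct and is essentially the intended one: the paper states this corollary without proof as a direct specialisation of Proposition \ref{equiadjunct} to $X=SG$, $x=K$, and your verification that $(-)_K$ lands in $\mathbb{Q}\left[N_G(K)/K\right]\Mod$ (via $K$-fixedness of stalks of Weyl-$G$-sheaves) and that $R$ of a module with trivial $K$-action is again a Weyl-$G$-sheaf (via Lemma \ref{restconst} and the stalk action of Example \ref{transitivesheafex}) is exactly the bookkeeping the paper leaves implicit. Nothing further is needed.
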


This adjoint pair has a useful application as the following proposition will illustrate.
\begin{proposition}\label{equivinjshf}
If $X$ is a $G$-space then the $G$-equivariant sheaves of the form $\overline{G\underset{\stabgx(x)}{\times}A}$ are injective in the category of $G$-equivariant sheaves. This result holds in the category of Weyl-$G$-sheaves over $SG$ also.
\end{proposition}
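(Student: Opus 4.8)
The plan is to derive the injectivity of $\overline{G\underset{\stabgx(x)}{\times}A}$ directly from the adjunction $\left((-)_x,R\right)$ of Proposition~\ref{equiadjunct}, via the standard fact that a right adjoint preserves injective objects whenever its left adjoint preserves monomorphisms. Writing $R(A)=\overline{G\underset{\stabgx(x)}{\times}A}$, suppose $\iota\colon E\rightarrow F$ is a monomorphism of $G$-equivariant sheaves and $g\colon E\rightarrow R(A)$ is any morphism; I want to produce $h\colon F\rightarrow R(A)$ with $h\circ\iota=g$. Transposing $g$ across the adjunction gives $\widetilde{g}\colon E_x\rightarrow A$ in $\mathbb{Q}\left[\stabgx(x)\right]\Mod$. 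If the stalk functor carries $\iota$ to a monomorphism $\iota_x\colon E_x\rightarrow F_x$, and if $A$ is injective as a $\mathbb{Q}\left[\stabgx(x)\right]$-module, then $\widetilde{g}$ extends to some $\widetilde{h}\colon F_x\rightarrow A$, and transposing back gives the required $h$. So everything reduces to two points.

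For the first, $(-)_x$ preserves monomorphisms: a morphism of $G$-sheaves is a monomorphism exactly when its kernel vanishes, the kernel is computed as in the underlying category of sheaves (Construction~\ref{limconstruct} and the proof of Proposition~\ref{Gsheafabelian}), and a monomorphism of sheaves is precisely one that is injective on every stalk, by the characterisation of monomorphisms recalled earlier in the chapter. Hence $\iota_x$ is injective. For the second, the standing hypothesis is used: $G$ is profinite and $x\in X$, so $\stabgx(x)$ is a closed subgroup of $G$, hence a profinite group, and by Castellano and Weigel \cite[Proposition 3.7]{Castellano} (already invoked in the example following the definition of injective dimension) every discrete $\mathbb{Q}\left[\stabgx(x)\right]$-module is injective, $A$ in particular. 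Combining the two points shows that $\overline{G\underset{\stabgx(x)}{\times}A}$ is injective in the category $\text{Sh}_{\mathbb{Q}}(X)$.

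For the statement about Weyl-$G$-sheaves over $SG$, I would run the identical argument with the adjoint pair $\left((-)_x,R\right)$ between $\Weyl(SG)$ and $\mathbb{Q}\left[N_G(K)/K\right]\Mod$ from the corollary to Proposition~\ref{equiadjunct}, taking $x=K$. Only two things change: one must check that $(-)_x$ still preserves monomorphisms of Weyl-$G$-sheaves, which holds because $\Weyl(SG)$ is a full abelian subcategory of $\text{Sh}_{\mathbb{Q}}(SG)$ whose kernels agree with those of $\text{Sh}_{\mathbb{Q}}(SG)$ (Proposition~\ref{Gsheafabelian} and the preceding constructions), so a monomorphism there is still injective on stalks; and one uses that $N_G(K)/K$ is a profinite group, so Castellano and Weigel again gives injectivity of any discrete $\mathbb{Q}\left[N_G(K)/K\right]$-module. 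The transposition bookkeeping is routine and already implicit in Proposition~\ref{equiadjunct}; the only step that genuinely uses profiniteness of $G$ is the injectivity of $A$, and I expect the most delicate point to be the verification in the Weyl-$G$-sheaf case that monomorphisms of the full subcategory $\Weyl(SG)$ are detected stalkwise.
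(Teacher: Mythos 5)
Your argument is exactly the paper's: it combines the adjunction $\left((-)_x,R\right)$ of Proposition \ref{equiadjunct} (and its Weyl-$G$-sheaf corollary) with the Castellano--Weigel result that every discrete $\mathbb{Q}\left[\stabgx(x)\right]$-module is injective, and uses that the stalk functor preserves monomorphisms so that the right adjoint $R$ preserves injectives. You merely spell out the transposition and the stalkwise detection of monomorphisms that the paper leaves implicit, so the proposal is correct and follows essentially the same route.
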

\begin{proof}
First notice that we can apply the result from \cite[Proposition 3.1]{Castellano} which says that the category $\mathbb{Q}\left[\text{stab}_G(x)\right]\text{-Mod}$ has injective dimension zero. Looking at the adjoint pair of functors in Proposition \ref{equiadjunct} it is clear that the left adjoint of the pair preserves monomorphisms, so we can use the fact that the corresponding right adjoint preserves injective objects.
\end{proof}
The application of \cite{Castellano} in this proof formally completes a proof in \cite{BarZp} which left out the verification that $\mathbb{Q}\left[\mathbb{Z}_p\right]$-modules are injective.

\chapter{Correspondence}
In this chapter we will construct a correspondence between rational Mackey functors for $G$ profinite, and Weyl-$G$-sheaves over $SG$. The following diagram will illustrate our plan:
\begin{align*}
\xymatrix{\text{Weyl-}G\text{-sheaf}(SG)\ar@<-4pt>[r]|-a
					&\text{Mackey}_{\mathbb{Q}}(G)\ar@<-4pt>[l]|-b}
\end{align*} 
where we shall explicitly construct functors $a$ and $b$, (Theorems \ref{sheaftomack} and \ref{macktosheaf}). In Theorem \ref{equivalencemain} we will see that these functors are equivalences.
\section{Weyl-G-sheaves determine Mackey Functors}
We now seek to demonstrate a correspondence between Mackey functors of profinite groups $G$ and $\text{Weyl-}G$-sheaves over the $G$-space $SG$. 

We begin by proving a lemma which loosely says that the product of sums of two transversals is another transversal, and this is useful in proving the transitivty property of Mackey functors for the construction we will derive from any $\text{Weyl-}G$-sheaf.

\begin{lemma}\label{transverse}
Let $G$ be a profinite group with $L\leq J\leq H\leq G$, all open. Take sums over a left transversal for $L$ with respect to $H$, for $L$ with respect to $J$ and for $J$ with respect to $H$ given by $\underset{1\leq k\leq n}{\sum} h_kL$, $\underset{1\leq a\leq m}{\sum} j_a L$ and $\underset{1\leq b\leq p}{\sum} \overline{h}_bJ$ respectively. Then $\underset{1\leq b\leq p}{\sum} \,\,\underset{1\leq a\leq m}{\sum}\overline{h}_b j_a L=\underset{1\leq k\leq n}{\sum} h_kL$. 
\end{lemma}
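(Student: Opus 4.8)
The plan is to show that the set $\{\overline{h}_b j_a L : 1\leq b\leq p,\ 1\leq a\leq m\}$ is precisely a left transversal for $L$ in $H$, i.e.\ that these $pm$ cosets are pairwise distinct and exhaust $H/L$. Exhaustion is the easy direction: given any $hL$ with $h\in H$, first write $hJ = \overline{h}_b J$ for a unique $b$, so $h = \overline{h}_b j$ for some $j\in J$; then write $jL = j_a L$ for a unique $a$, so $j = j_a \ell$ for some $\ell\in L$, whence $hL = \overline{h}_b j_a L$. This shows every coset of $L$ in $H$ appears among the $\overline{h}_b j_a L$.

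For distinctness, suppose $\overline{h}_b j_a L = \overline{h}_{b'} j_{a'} L$. Reducing modulo $J$ (using $L\leq J$ and that each $j_a\in J$) gives $\overline{h}_b J = \overline{h}_{b'} J$, so $b = b'$ since the $\overline{h}_b$ form a transversal for $J$ in $H$. Cancelling $\overline{h}_b$ then yields $j_a L = j_{a'} L$, so $a = a'$ since the $j_a$ form a transversal for $L$ in $J$. Hence the $pm$ cosets are distinct, and by the counting identity $n = [H:L] = [H:J][J:L] = pm$ (or simply by the exhaustion argument above combined with distinctness) the double sum $\sum_b \sum_a \overline{h}_b j_a L$ ranges over exactly the same set of cosets as $\sum_k h_k L$. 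Since a sum over a transversal is independent of the chosen representatives (each $h_k L$ equals exactly one $\overline{h}_b j_a L$ and vice versa), the two formal sums are equal.

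I do not anticipate a genuine obstacle here; the only points requiring a little care are (i) making sure the reduction modulo $J$ is legitimate, which it is because $L\subseteq J$ so $\overline{h}_b j_a L\subseteq \overline{h}_b J$, and (ii) noting that because $G$ is profinite and $L,J,H$ are open, all indices $[H:L]$, $[H:J]$, $[J:L]$ are finite, so the sums are genuinely finite and the counting argument applies. The statement is essentially the standard fact that cosets multiply transitively through an intermediate subgroup, adapted to the (finite-index) profinite setting, so the proof is a short direct verification.
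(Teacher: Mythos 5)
Your proof is correct and follows essentially the same route as the paper: both arguments show that $\{\overline{h}_b j_a L\}$ is a left transversal for $L$ in $H$ by writing any $h\in H$ first via a $J$-coset representative $\overline{h}_b$ and then via an $L$-in-$J$ representative $j_a$. Your version is in fact slightly more careful, since you explicitly verify pairwise distinctness of the cosets $\overline{h}_b j_a L$ by reducing modulo $J$ (so the double sum has no repeated terms), a point the paper's proof leaves implicit.
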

\begin{proof}
It is sufficient to show that the set $\left\lbrace \overline{h}_bj_aL\right\rbrace_{a,b}$ is a transversal for $L$ with respect to $H$. We do this by showing that this set is in one to one correspondence with the set $\left\lbrace h_kL\right\rbrace_{k}$. Given $h_kL$, we will show that this equals some $\overline{h}_bj_aL$. First $h_kJ=\overline{h}_bJ$ for some $b$, so $h_k=\overline{h}_bj$ for $j\in J$. Similarly $jL=j_aL$ for some $a$ so $h_kL=\overline{h}_bjL=\overline{h}_bj_aL$. This is clearly a surjective correspondence since any $\overline{h}_bj_aL$ equals some $h_kL$ as the cosets partition $H$ and we are working with transversals. Also this is injective since if $h_kL$ and $h_{k^{\prime}}L$ equal the same $\overline{h}_bj_aL$ then they are equal by transitivity of equality.
\end{proof}
We begin constructing the correspondence by defining and proving the existence of a functor:
\begin{align*}
\text{Mackey}:\text{Weyl-}G\text{-Sheaf}_{\mathbb{Q}}(SG)\rightarrow \mackey_{\mathbb{Q}}(G)
\end{align*}
where $\text{Weyl-}G\text{-Sheaf}_{\mathbb{Q}}(SG)$ is the category of $\text{Weyl-}G$-sheaves over $SG$ and \\$\mackey_{\mathbb{Q}}(G)$ is the category of Mackey functors over $G$. We will achieve this by first constructing a functor from $G$-sheaves over $SG$ to $\mackey_{\mathbb{Q}}(G)$ and later we will show that this restricts to a functor from $\text{Weyl-}G\text{-Sheaf}_{\mathbb{Q}}(SG)$ to $\mackey_{\mathbb{Q}}(G)$ which is an equivalence of categories.
\begin{construction}\label{mackey_construct}
Let $F$ be a $G$-sheaf over $SG$, we can define a Mackey functor\index{Mackey functor} $\text{Mackey}(F)$ as follows:

If $H\leq G$ is open then we define $\text{Mackey}(F)(G/H)=F(SH)^H$. Explicitly this is the set $\left\lbrace \alpha:SH\rightarrow LF\mid p\circ \alpha=\id,\alpha\,\text{is continuous}\right\rbrace^H$, where $LF$ is the sheaf space for $F$ and $p$ is the local homeomorphism for $LF$. For the maps, if $H,J\leq G$ are both open with $J\leq H$ then we have the following restriction map:
\begin{center}
$R^H_J:F(SH)^H\rightarrow F(SJ)^J$ given by $\alpha\mapsto \alpha|_{SJ}$.
\end{center}
Note that if $\alpha$ is $H$-fixed then it is $J$-fixed so the restriction has image in $F(SJ)^J$. For the induction maps we proceed as follows:
\begin{center}
$I^H_J:F(SJ)^J\rightarrow F(SH)^H$, given by $\beta\mapsto \underset{1\leq i\leq n}{\sum}h_i \overline{\beta}$,
\end{center}
where $n=|H/J|$, $\left\lbrace h_iJ\mid 1\leq i\leq n\right\rbrace$ is a left transversal, and $\overline{\beta}$ is the extension of $\beta$ by $0$, which is continuous since $SJ$ is closed-open. The conjugation maps $C_g$ are given by the $G$-action on the sheaf $F$.
\end{construction} 

\begin{lemma}\label{well_define}
The maps defined in Construction \ref{mackey_construct} are well defined.
\end{lemma}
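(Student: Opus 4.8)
The plan is to verify, one map at a time, that each of the assignments $R^H_J$, $I^H_J$, and $C_g$ lands in the claimed target $\mathbb{Q}$-module and is a well-defined $\mathbb{Q}$-linear map, independent of any auxiliary choices. The restriction map is the easiest: if $\alpha\in F(SH)^H$, then $\alpha|_{SJ}$ is a section of $F$ over $SJ$ (using that $SJ$ is an open-closed subspace of $SH$, hence of $SG$, by Lemma \ref{subgroup_open}), and since $\alpha$ is fixed by all of $H$ it is in particular fixed by $J\leq H$; so $\alpha|_{SJ}\in F(SJ)^J$. Linearity is immediate since restriction of sections is additive and $\mathbb{Q}$-linear in the category of sheaves of $\mathbb{Q}$-modules. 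The conjugation map $C_g\colon F(SH)^H\to F(gHg^{-1})^{gHg^{-1}}$ is given by the $G$-action on $F$ via the formula $(g*\alpha)(x)=g\alpha(g^{-1}x)$ from Remark \ref{commuteaction}; one checks that $g*\alpha$ is a section over $g(SH)=S(gHg^{-1})$, that it is fixed by $gHg^{-1}$ whenever $\alpha$ is fixed by $H$ (using Proposition \ref{equifix}), and that the assignment is $\mathbb{Q}$-linear because each stalk map $g\colon F_x\to F_{gx}$ is a $\mathbb{Q}$-module map by Definition \ref{defn2}(4).

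The main work is the induction map $I^H_J$. Here I would proceed in three steps. First, fix a left transversal $\{h_iJ\mid 1\leq i\leq n\}$ for $J$ in $H$ and show that $\sum_{1\leq i\leq n} h_i\overline{\beta}$ genuinely is a section over $SH$ and is $\mathbb{Q}$-linear in $\beta$: the extension $\overline{\beta}$ of $\beta$ by zero is a section over $SH$ because $SJ$ is open-closed in $SH$ (so extension by zero makes sense in the category of sheaves), each $h_i\overline{\beta}=h_i*\overline{\beta}$ is a section over $h_i(SH)=SH$ since $h_i(SJ)=S(h_iJh_i^{-1})\subseteq SH$, and a finite sum of sections over $SH$ is a section over $SH$. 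Linearity in $\beta$ follows since extension by zero and each $h_i*(-)$ are additive $\mathbb{Q}$-module maps on sections. Second, I must check that $\sum h_i\overline{\beta}$ is $H$-fixed, i.e.\ lies in $F(SH)^H$. For $h\in H$ we compute $h*\sum_i h_i\overline{\beta}=\sum_i (hh_i)*\overline{\beta}$; since $\{hh_iJ\}$ is again a left transversal for $J$ in $H$, for each $i$ there is a unique $\sigma(i)$ and some $j_i\in J$ with $hh_i=h_{\sigma(i)}j_i$, so $(hh_i)*\overline{\beta}=h_{\sigma(i)}*(j_i*\overline{\beta})$, and since $\beta$ is $J$-fixed and $\overline{\beta}$ is obtained from $\beta$ by extension by zero over the $J$-invariant set $SJ$, we get $j_i*\overline{\beta}=\overline{\beta}$; thus $h*\sum_i h_i\overline{\beta}=\sum_i h_{\sigma(i)}*\overline{\beta}=\sum_i h_i\overline{\beta}$ as $\sigma$ is a permutation. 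Third, I would show independence of the choice of transversal: replacing $h_i$ by $h_i j_i'$ with $j_i'\in J$ changes $h_i*\overline{\beta}$ to $h_i*(j_i'*\overline{\beta})=h_i*\overline{\beta}$ by the same $J$-fixedness argument, so the sum is unchanged.

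I anticipate the subtle point to be the interplay between extension-by-zero and the $G$-action: one needs that for $j\in J$, $j$ maps $\overline{\beta}$ (a section over all of $SH$, zero outside $SJ$) to itself, which rests on $SJ$ being $J$-invariant (immediate, since $SJ$ is the set of closed subgroups of $J$ and $J$ normalizes nothing outside but conjugation by $j\in J$ preserves $SJ$) together with the $J$-equivariance of $\beta$ on $SJ$ and the fact that $j$ acts as zero on the zero stalks outside $SJ$. Once this is pinned down, the $H$-fixedness and transversal-independence arguments are formal manipulations with transversals, of exactly the flavour of Lemma \ref{transverse}. The remaining verifications — that $R^H_H=\mathrm{id}=I^H_H$, and $\mathbb{Q}$-linearity throughout — are then routine. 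I would present the restriction and conjugation cases briefly and devote the bulk of the write-up to the three-step induction argument above.
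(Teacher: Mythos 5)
Your proposal is correct and follows essentially the same route as the paper's proof: the key points in both are that $j*\overline{\beta}=\overline{\beta}$ for $j\in J$ (extension by zero of a $J$-fixed section over the $J$-invariant, open-closed set $SJ$ stays $J$-fixed), which gives independence of the transversal, and that acting by $h\in H$ merely permutes the cosets in the sum, which gives $H$-fixedness. You additionally spell out the routine checks of $\mathbb{Q}$-linearity and that the sum is a genuine section over $SH$, which the paper leaves implicit, but the argument is the same.
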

\begin{proof}
This is clearly true for the conjugation maps, and it is true for the restriction for the reason stated in Construction \ref{mackey_construct}. For induction, first notice that if $L$ is an open subgroup of $J$ and we have an $L$-equivariant map from $SL$ to some sheaf space $E$ denoted $\beta$, the extension of $\beta$ by zero $\overline{\beta}$ gives a map from $SJ$ to $E$. Also $j\overline{\beta}$ defined by $x\mapsto j\beta(j^{-1}x)$ is independent of the choice of left coset representative in $J/L$. To see this suppose we have two representatives $j_1L=j_2L$. Then we have $j_1\beta=j_1\left(j_1^{-1}j_2\right)\beta$ since $j_1^{-1}j_2\in L$ and $\beta $ is $L$-fixed. By associativity of the group action this equals $\left(j_1j_1^{-1}\right)j_2\beta=j_2\beta$. This shows that $I_L^J(\beta)$ as defined above is independent of the choice of transversal.

To show that the section $\underset{J/L}{\sum}j\overline{\beta}$ is $J$-fixed, note that for any $a\in J$ and any point $x$ of $SJ$ we have:
\begin{align*}
\left(a\underset{J/L}{\sum}\left(j\overline{\beta}\right)\right)(x)&=a\underset{J/L}{\sum}\left(j\overline{\beta}\right)(a^{-1}xa)=\underset{J/L}{\sum}a\left(j\overline{\beta}\right)(a^{-1}xa)\\&=\underset{J/L}{\sum}aj\overline{\beta}(j^{-1}a^{-1}xaj)=\underset{J/L}{\sum}\left(aj\overline{\beta}\right)(x)
\end{align*}
Since we have already proven that the definition of induction above is independent of the choice of transversal we have that this equals $\underset{J/L}{\sum}\left(j\overline{\beta}\right)(x)$.
\end{proof}
\begin{lemma}\label{fix}
The construction $\mackey(F)$ satisfies that $\mackey(F)(G/H)$ is $H$-fixed, and both restriction and induction from $H$ to itself is the identity.
\end{lemma}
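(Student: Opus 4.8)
The plan is to read off all three assertions directly from Construction \ref{mackey_construct}, using the dictionary between $H$-fixedness of a section and the pointwise group action on sections recorded in Remark \ref{commuteaction} and Proposition \ref{equifix}.

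First I would treat the conjugation claim, i.e.\ that $C_h$ acts as the identity on $\mackey(F)(G/H)$ for every $h\in H$. Since conjugation by an element of $H$ satisfies $hHh^{-1}=H$, it permutes the closed subgroups of $H$, so $SH$ is $H$-invariant and $C_h$ does map $\mackey(F)(G/H)=F(SH)^H$ into itself. By the definition of the conjugation maps via the $G$-action on $F$ (Remark \ref{commuteaction}), $C_h(\alpha)=h*\alpha$ for $\alpha\in F(SH)^H$. Now the statement that $\alpha$ is $H$-fixed is, by the argument of Proposition \ref{equifix} applied to the subgroup $H$ acting on the $H$-invariant set $SH$, precisely the statement that $h*\alpha=\alpha$ for every $h\in H$. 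Hence $C_h(\alpha)=\alpha$, which is condition $(1)$ of Definition \ref{defM1} for $\mackey(F)$.

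Next I would handle restriction and induction. The map $R^H_H$ sends $\alpha$ to $\alpha|_{SH}=\alpha$, so it is visibly $\id_{\mackey(F)(G/H)}$. For $I^H_H$, the index $|H/H|$ equals $1$, so the defining sum of Construction \ref{mackey_construct} has a single summand $h_1\overline{\beta}$ with $h_1\in H$, and $\overline{\beta}=\beta$ since $\beta$ is already a section over all of $SH$. By Lemma \ref{well_define} the value $I^H_H(\beta)$ is independent of the choice of transversal, and since $\beta$ is $H$-fixed we have $h_1\beta=\beta$ by the same application of Proposition \ref{equifix} as above; therefore $I^H_H(\beta)=\beta$.

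There is no substantial obstacle in this lemma: the only points requiring care are the identification of the conjugation maps with the pointwise $G$-action on sections, the observation that $SH$ is genuinely $H$-invariant so that all of these operations remain inside $F(SH)^H$, and the translation between the superscript-$H$ notation for fixed sections and the equivariance condition $h*\alpha=\alpha$ supplied by Proposition \ref{equifix}.
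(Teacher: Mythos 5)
Your proposal is correct and follows essentially the same route as the paper's proof: conjugation by $h\in H$ is the identity because sections in $F(SH)^H$ are $H$-fixed, $R^H_H$ is visibly the identity, and $I^H_H$ is the identity since extension by zero from $SH$ to itself does nothing and the single-coset transversal acts trivially on $H$-fixed sections. The extra care you take in citing Proposition \ref{equifix} and Remark \ref{commuteaction}, and in noting independence of the transversal via Lemma \ref{well_define}, only makes explicit what the paper leaves implicit.
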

\begin{proof}
We next show that for each $H\leq G$ open we have that $C_h$ for each $h \in H$, $I^H_H$, and $R^H_H$ are all the identity. The conjugation by each $h\in H$ follows from the fact that $F(SH)^H$ is $H$-fixed. Clearly $R^H_H$ is the identity. We see that $I^H_H$ is the identity by observing the following two facts. We know that extension by $0$ of a section over $SH$ to itself is clearly the identity. Also there is only one element in the transversal for $H$ over itself, namely $eH$, so using that we are interested in $H$-fixed sections the left transversal $eH$ changes nothing.
\end{proof}
\begin{lemma}\label{associative}
The structure maps of $\mackey(F)$ are associative and transitive.
\end{lemma}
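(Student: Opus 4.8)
The plan is to verify the two conditions of Definition \ref{defM1}(2), namely the transitivity of restriction and induction, and the associativity of the conjugation maps, for the construction $\mackey(F)$ from Construction \ref{mackey_construct}. Throughout I fix open subgroups $L\leq J\leq H\leq G$.

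First I would handle the easy cases. Transitivity of restriction, $R^H_L=R^J_L\circ R^H_K$, is immediate since restriction is literally restriction of a section along the closed-open inclusion $SL\hookrightarrow SJ\hookrightarrow SH$, and $(\alpha|_{SJ})|_{SL}=\alpha|_{SL}$. Associativity of conjugation, $C_{gh}=C_g\circ C_h$, follows directly from the fact that the conjugation maps are induced by the $G$-action on the sheaf $F$, which is a genuine group action (this is the content of Corollary \ref{Coract} / the unitality and associativity built into Definition \ref{Defn1}); so $C_{gh}$ and $C_g\circ C_h$ both act on stalks by the stalk-level action of $gh$.

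The substantive step is transitivity of induction, $I^H_L=I^H_J\circ I^J_L$. Here I would unwind the definitions: given a $J$-fixed section $\beta$ over $SL$ — wait, $\beta$ is an $L$-fixed section over $SL$ — we have $I^J_L(\beta)=\sum_{1\leq a\leq m}j_a\overline{\beta}$ where $\{j_aL\}$ is a left transversal for $L$ in $J$ and $\overline{\beta}$ is extension by zero to $SJ$, and then $I^H_J\left(\sum_a j_a\overline{\beta}\right)=\sum_{1\leq b\leq p}\overline{h}_b\left(\overline{\sum_a j_a\overline{\beta}}\right)$ with $\{\overline{h}_bJ\}$ a transversal for $J$ in $H$. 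The key points to check are: (i) extension by zero is transitive, i.e. the zero-extension from $SL$ to $SH$ factors as zero-extension from $SL$ to $SJ$ followed by zero-extension from $SJ$ to $SH$ (clear, since $SL$, $SJ$, $SH$ are nested closed-open subspaces); (ii) the translates $\overline{h}_b$ commute with zero-extension in the appropriate sense, so that $\overline{h}_b\left(\overline{j_a\overline{\beta}}\right)=\overline{\overline{h}_b j_a\overline{\beta}}$ as sections over $SH$; and (iii) the resulting double sum $\sum_b\sum_a \overline{h}_b j_a\overline{\beta}$ equals $\sum_{1\leq k\leq n}h_k\overline{\beta}$, which is exactly $I^H_L(\beta)$. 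Part (iii) is precisely Lemma \ref{transverse}, which tells us that $\{\overline{h}_b j_a L\}_{a,b}$ is a left transversal for $L$ in $H$; combined with the fact (already established in Lemma \ref{well_define}) that $I^H_L(\beta)$ is independent of the choice of transversal, this gives the equality. I expect the main obstacle to be the bookkeeping in (ii): being careful that applying a group element $g$ to a zero-extended section $\overline{\gamma}$ (via $x\mapsto g\gamma(g^{-1}xg)$, extended by zero off $gSJg^{-1}$) genuinely agrees with zero-extending the translated section, which uses that $SJ$, $SL$ are conjugation-invariant inside $SH$ when $J,L$ are — no wait, $J$ and $L$ need not be normal, so one must instead track that $\overline{h}_b SJ\overline{h}_b^{-1}$ need not equal $SJ$, and argue directly with supports. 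This is routine but requires attention; once it is in place the transitivity of induction drops out of Lemma \ref{transverse}.

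Finally I would remark that the same computations restrict without change to the subcategory $\Weyl_{\mathbb{Q}}(SG)$, since the construction of $\mackey(F)$ makes no use of the Weyl condition beyond what is needed for $F$ to be a $G$-sheaf; the Weyl hypothesis will only become relevant later (for the Mackey axiom and for the equivalence in Theorem \ref{equivalencemain}).
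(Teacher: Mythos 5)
Your proposal is correct and follows essentially the same route as the paper: restriction transitivity and conjugation associativity are immediate from the definitions, and induction transitivity reduces, after noting that extension by zero composes across the nested closed-open subspaces $SL\subseteq SJ\subseteq SH$, to the transversal identity of Lemma \ref{transverse} (with independence of the choice of transversal from Lemma \ref{well_define}). The only blemish is the typo $R^H_K$ for $R^H_J$ in the restriction identity; otherwise the paper's proof is just a terser version of your steps (i)--(iii).
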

\begin{proof}
For associativity of the conjugation maps observe that we have $C_gC_h=C_{gh}$ by the definition of the action on the $G$-sheaf. It is clear that $R^J_L\circ R^H_J=R^H_L$ by definition of restriction of functions. For induction we have:
\begin{align*}
I^H_JI^J_L(\beta)&=\underset{h}{\sum} hJ \left(\overline{\underset{j}{\sum} jL\overline{\beta}}\right)=\left(\underset{h}{\sum}hJ\left( \underset{j}{\sum} jL \overline{\overline{\beta}}\right)\right)\\&=\underset{h}{\sum} hL\overline{\beta}=I^H_L(\beta)
\end{align*}
where we apply Lemma \ref{transverse} and $\overline{\overline{\beta}}$ corresponds to extension by $0$ applied twice.
\end{proof}
\begin{lemma}\label{equivariance}
The construction $\mackey(F)$ satisfies the equivariance condition, that is the restriction and induction maps commute with the conjugation maps.
\end{lemma}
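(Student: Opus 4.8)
The plan is to first unwind the conjugation maps concretely. By construction $C_g$ is induced by the $G$-action on $F$, so on an $H$-fixed section $\alpha\colon SH\to LF$ it is the section $g*\alpha$ over $g\cdot SH=S(gHg^{-1})$ given by $(g*\alpha)(x)=g\cdot\alpha(g^{-1}xg)$, in the notation of Remark \ref{commuteaction}. A short check using associativity of the $*$-action together with the $H$-fixedness of $\alpha$ (as in Lemma \ref{fix}) shows that $g*\alpha$ is $gHg^{-1}$-fixed, so $C_g$ indeed lands in $F(S(gHg^{-1}))^{gHg^{-1}}$. Throughout I would use that $g\cdot SK=S(gKg^{-1})$ for $K\leq H$, that $SK$ is closed-open in $SH$ (Lemma \ref{subgroup_open}), and that each stalk map induced by $g$ is a $\mathbb{Q}$-module map, in particular sends the zero germ to the zero germ.

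For the restriction half, $R^{gHg^{-1}}_{gKg^{-1}}\circ C_g$ sends $\alpha$ to $(g*\alpha)|_{S(gKg^{-1})}$, while $C_g\circ R^H_K$ sends it to $g*(\alpha|_{SK})$. Evaluating both at a point $x\in S(gKg^{-1})$ and noting that $g^{-1}xg\in SK$, each is $g\cdot\alpha(g^{-1}xg)$, so the two agree. This is just the assertion that translation of sections commutes with restriction once the domains are correctly matched, and needs no further input.

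For the induction half, take $\beta\in F(SK)^K$. Distributing $g*(-)$ over the finite sum defining $I^H_K(\beta)$ — which is legitimate because the $G$-action on the sheaf space is additive on each stalk — reduces the claim to a termwise identity. On the other side, I would observe that if $\{h_iK\}$ is a left transversal of $K$ in $H$, then $\{(gh_ig^{-1})\,gKg^{-1}\}$ is a left transversal of $gKg^{-1}$ in $gHg^{-1}$; since $I^{gHg^{-1}}_{gKg^{-1}}$ is independent of the choice of transversal by Lemma \ref{well_define}, I may compute it using this one. The termwise identity then becomes $(gh_ig^{-1})*\overline{(g*\beta)}=(gh_i)*\overline{\beta}$, which I would deduce from the small lemma that extension by zero commutes with the $*$-action, i.e.\ $\overline{g*\beta}=g*\overline{\beta}$ as sections over $S(gHg^{-1})$. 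This in turn holds because $g$ sends the zero germ to the zero germ, so the two sections agree both on $S(gKg^{-1})$ and on its complement; combining with associativity of the $*$-action then closes the argument.

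The main obstacle is the bookkeeping in the induction case: correctly identifying the conjugated transversal, invoking Lemma \ref{well_define} to replace it, and isolating the auxiliary fact that extension-by-zero is compatible with the translation action — which is exactly where linearity of the stalk maps enters. The restriction case, and the check that $C_g$ lands in the correct fixed-section module, are routine once the formula $C_g(\alpha)=g*\alpha$ is made explicit.
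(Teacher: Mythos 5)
Your proof is correct and follows essentially the same route as the paper: the restriction case is immediate from the definition of $C_g$ as $g*(-)$, and the induction case is handled by observing that the conjugated transversal $\{(gh_ig^{-1})\,gKg^{-1}\}$ is again a transversal and pushing $g$ through the sum $\sum_i h_i\overline{\beta}$. The only difference is presentational: you isolate the compatibility $\overline{g*\beta}=g*\overline{\beta}$ of extension by zero with the translation action as an explicit auxiliary step, which the paper uses implicitly in its chain of equalities.
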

\begin{proof}
For the equivariance condition, by definition of restriction we clearly have $C_gR^H_J=R^{gHg^{-1}}_{gJg^{-1}}C_g$. For the induction maps first observe that if we take a sum over a transversal $\underset{1\leq i\leq n}{\sum} h_iJ$ then conjugation gives another $\underset{1\leq i\leq n}{\sum} gh_ig^{-1}gJg^{-1}$. We have the following equalities: 
\begin{align*}
I^{gHg^{-1}}_{gJg^{-1}}(g\beta)(x)&=\underset{1\leq i\leq n}{\sum}\left(gh_ig^{-1}\right)\left(\overline{g\beta}\right)(x)=\underset{1\leq i\leq n}{\sum}g\left(h_ig^{-1}\right)\left(g\overline{\beta}\right)(g^{-1}xg)\\&=\underset{1\leq i\leq n}{\sum}gh_i\left(g^{-1}g\overline{\beta}\right)(h_i^{-1}g^{-1}xgh_i)\\&=\underset{1\leq i\leq n}{\sum}\left(gh_i\overline{\beta}\right)(x)\\&=g\underset{1\leq i\leq n}{\sum}\left(h_i\overline{\beta}(x)\right)=C_gI^H_J(\beta)(x)
\end{align*}
which proves the equality as needed. 
\end{proof}
\begin{lemma}\label{Mackey_axiom}
The construction $\mackey(F)$ satisfies the Mackey axiom.
\end{lemma}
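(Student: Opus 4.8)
The plan is to verify the Mackey axiom for $\mackey(F)$ by direct computation on sections, reducing the identity
\begin{align*}
R^H_K\circ I^H_L=\sum_{x\in[K\diagup H\diagdown L]} I^K_{K\cap xLx^{-1}}\circ C_x\circ R^L_{L\cap x^{-1}Kx}
\end{align*}
to a statement about how the double-coset decomposition $H=\coprod_x KxL$ interacts with left transversals and extension-by-zero of sections. First I would fix $K,L\leq H$ all open, take $\beta\in F(SL)^L$, and write $I^H_L(\beta)=\sum_{1\leq i\leq n} h_i\overline{\beta}$ for a left transversal $\{h_iL\}$ of $L$ in $H$. The key observation is that one may choose this transversal compatibly with the double cosets: for each $x\in[K\backslash H/L]$ choose a left transversal $\{k_{x,j}(K\cap xLx^{-1})\}_j$ of $K\cap xLx^{-1}$ in $K$, and then $\{k_{x,j}x L\}_{x,j}$ is a left transversal of $L$ in $H$ (this is the standard double-coset counting identity, and is the analogue here of Lemma \ref{transverse}). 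Restricting $\sum_{x,j} k_{x,j}x\,\overline{\beta}$ to $SK$ then groups the sum according to $x$.

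Second, I would analyze a single term $k_{x,j}x\,\overline{\beta}$ restricted to $SK$. Since $\overline{\beta}$ is supported on $SL$, the translate $k_{x,j}x\,\overline{\beta}$ is supported on $k_{x,j}x\,SL = k_{x,j}\,S(xLx^{-1})$, and intersecting this with $SK$ picks out exactly $S(K\cap xLx^{-1})$ translated by $k_{x,j}$; here I would use that $SL$, $SK$ are the closed-open subspaces of $SG$ from Lemma \ref{subgroup_open}, that $g\cdot SL = S(gLg^{-1})$, and that $S(K\cap xLx^{-1}) = SK\cap S(xLx^{-1})$ as subspaces (this last is an elementary fact about closed subgroups, essentially because $S(A\cap B)=SA\cap SB$). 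Summing over $j$ for fixed $x$ produces precisely $I^K_{K\cap xLx^{-1}}$ applied to the restriction of $x\,\overline{\beta}$ to $S(K\cap xLx^{-1})$, and that restriction is $C_x R^L_{L\cap x^{-1}Kx}(\beta)$: conjugating $\beta$ by $x$ via the $G$-action on $F$ gives a section over $S(xLx^{-1})$, restricting it to the subspace $S(K\cap xLx^{-1})$ agrees with first restricting $\beta$ to $S(L\cap x^{-1}Kx)$ and then applying $C_x$, because conjugation commutes with restriction by Lemma \ref{equivariance} (the equivariance condition already established).

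Third, I would assemble these: $R^H_K I^H_L(\beta) = R^H_K\big(\sum_{x,j}k_{x,j}x\,\overline{\beta}\big) = \sum_x \sum_j k_{x,j}x\,\overline{\beta}\big|_{SK} = \sum_x I^K_{K\cap xLx^{-1}}C_x R^L_{L\cap x^{-1}Kx}(\beta)$, which is the Mackey axiom. I would also remark that the extension-by-zero operations nest correctly — $\overline{x\,\overline{\beta}\big|_{S(K\cap xLx^{-1})}}$ over $SK$ agrees with $k_{x,j}x\,\overline{\beta}\big|_{SK}$ up to the transversal translation — which is the same bookkeeping used in Lemma \ref{associative}, so no new subtlety arises there.

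The main obstacle I expect is the combinatorial matching of transversals: establishing cleanly that the double-coset-compatible family $\{k_{x,j}xL\}_{x,j}$ really is a transversal of $L$ in $H$, and then tracking which summands of $I^H_L(\beta)$ survive restriction to $SK$ and in what grouping. Since induction was shown independent of the choice of transversal in Lemma \ref{well_define}, I am free to pick the convenient one, which removes the only genuine difficulty; the remaining steps (support of translated sections, $g\cdot SL = S(gLg^{-1})$, compatibility of restriction with conjugation, nesting of extensions by zero) are each short and of the type already handled in the preceding lemmas. A minor point to state carefully is that all subgroups involved are open, so all the spaces $SK$, $SL$, $S(K\cap xLx^{-1})$ are genuinely closed-open in $SG$ and the extensions by zero are continuous.
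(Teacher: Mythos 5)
Your proposal is correct and follows essentially the same route as the paper: both refine the double-coset decomposition $H=\coprod_x KxL$ into a left transversal of $L$ in $H$ of the form $\{k_{x,j}xL\}$ (with $k_{x,j}$ a transversal of $K\cap xLx^{-1}$ in $K$), then restrict the induced sum of translated extension-by-zero sections to $SK$ and regroup the summands, identifying each group with $I^K_{K\cap xLx^{-1}}C_xR^L_{L\cap x^{-1}Kx}(\beta)$. The extra care you take with supports ($g\cdot SL=S(gLg^{-1})$, $S(K\cap xLx^{-1})=SK\cap S(xLx^{-1})$) and with invariance under choice of transversal is implicit in the paper's argument, so there is no substantive difference.
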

\begin{proof}
We start with $J,L\leq H$ where $H,J,L\leq G$ are open. First note that we have the following:

\begin{align*}
\underset{hL\in H/L}{\coprod} hL&=H=\underset{x\in\left[J\diagdown H\diagup L\right]}{\coprod}JxL=\underset{x\in \left[J\diagdown H\diagup L\right]}{\coprod}\,\,\,\underset{j\in J}{\bigcup} jxL\\&=\underset{x\in \left[J\diagdown H\diagup L\right]}{\coprod}\,\,\,\underset{J/J\cap xLx^{-1}}{\coprod}j_x xL.
\end{align*}
We therefore have that: 
\begin{align*}
R^H_JI^H_L(\beta)&=(\underset{H/L}{\sum} hL\overline{\beta})|_{SJ}=\left(\underset{x\in\left[J\diagdown H\diagup L\right]}{\sum}\underset{J/J\cap xLx^{-1}}{\sum}j_x x\overline{\beta}\right)|_{SJ}\\&=\underset{x\in\left[J\diagdown H\diagup L\right]}{\sum}\underset{J/J\cap xLx^{-1}}{\sum}\left(j_x x\overline{\beta|_{S\left(x^{-1}Jx\cap L\right)}}\right)
\end{align*}
where the extension by zero in the top line is respect to $H$ and the bottom with respect to $J$. If we start from the other direction we have:
\begin{align*}
\underset{x\in \left[J\diagdown H\diagup L\right]}{\sum}I^J_{J\cap xLx^{-1}}C_xR^L_{x^{-1}Jx\cap L}(\beta)&=\underset{x\in \left[J\diagdown H\diagup L\right]}{\sum}\underset{J/J\cap xLx^{-1}}{\sum}j_x x\left(\overline{\beta|_{S\left(L\cap x^{-1}Jx\right)}}\right)\\&= \underset{x\in \left[J\diagdown H\diagup L\right]}{\sum}\underset{J/J\cap xLx^{-1}}{\sum}\left(j_x x\overline{\beta|_{S\left(L\cap x^{-1}Jx\right)}}\right)
\end{align*}
proving that the two sides coincide.
\end{proof}
\begin{theorem}\label{sheaftomack}
Let $G$ be a profinite group. If $F$ is a $G$-sheaf of $\mathbb{Q}$-modules over $SG$ then $\mackey(F)$ as constructed in Construction \ref{mackey_construct} is a Mackey functor.
\end{theorem}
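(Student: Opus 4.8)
The plan is to verify that $\mackey(F)$ satisfies all four axioms in Definition \ref{defM1}, and this is exactly what the preceding chain of lemmas accomplishes. So the proof itself should be short: it is an assembly of Lemmas \ref{well_define}, \ref{fix}, \ref{associative}, \ref{equivariance} and \ref{Mackey_axiom}. First I would note that Construction \ref{mackey_construct} already supplies the data of a Mackey functor: for each open $H\leq G$ we have the $\mathbb{Q}$-module $\mackey(F)(G/H)=F(SH)^H$ (this is a $\mathbb{Q}$-module because $F(SH)$ is and taking $H$-fixed points is $\mathbb{Q}$-linear), together with restriction maps $R^H_J$, induction maps $I^H_J$, and conjugation maps $C_g$. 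By Lemma \ref{well_define} these maps are well defined — in particular $I^H_J$ is independent of the chosen transversal and lands in the $H$-fixed sections, and the restriction of an $H$-fixed section is $J$-fixed.

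Then I would check the axioms one at a time against Definition \ref{defM1}. Axiom (1) — that $R^H_H=\id=I^H_H$ and $C^H_h=\id$ on $F(SH)^H$ — is Lemma \ref{fix}. Axiom (2), the transitivity of restriction and induction and the associativity $C_{gh}=C_g\circ C_h$, is Lemma \ref{associative} (where the induction case invokes Lemma \ref{transverse} to identify the double sum over iterated transversals with a single transversal, and the conjugation case is immediate from the $G$-action on the sheaf space). Axiom (3), the equivariance condition relating $C_g$ with $R$ and $I$, is Lemma \ref{equivariance}. Axiom (4), the Mackey axiom, is Lemma \ref{Mackey_axiom}, which rests on the decomposition of $H$ into double cosets $JxL$ refined by cosets of $J\cap xLx^{-1}$ in $J$, compatibly with extension by zero. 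Since every required identity has been established, $\mackey(F)$ is a Mackey functor over $G$.

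There is essentially no remaining obstacle — the work has been front-loaded into the lemmas. The one point deserving a sentence of care in the write-up is that the constructions all make sense \emph{topologically}: the extension $\overline{\beta}$ of a section by zero is continuous precisely because $SJ$ is closed-open in $SH$ (Lemma \ref{subgroup_open}), and the finite sums $\sum_i h_i\overline{\beta}$ are genuine sections because continuous addition of sections is available in a sheaf of $\mathbb{Q}$-modules. These facts are already used inside the lemmas, so the proof of the theorem needs only to record that the hypotheses of each lemma are met and cite them in turn.

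\begin{proof}
By Construction \ref{mackey_construct}, $\mackey(F)$ assigns to each open subgroup $H$ of $G$ the $\mathbb{Q}$-module $F(SH)^H$, together with restriction, induction and conjugation maps. By Lemma \ref{well_define} these maps are all well defined. The axioms of Definition \ref{defM1} are then verified as follows: condition (1) is Lemma \ref{fix}; condition (2), the transitivity and associativity conditions, is Lemma \ref{associative}; condition (3), the equivariance condition, is Lemma \ref{equivariance}; and condition (4), the Mackey axiom, is Lemma \ref{Mackey_axiom}. Hence $\mackey(F)$ is a Mackey functor over $G$.
\end{proof}
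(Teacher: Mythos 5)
Your proof is correct and takes exactly the same route as the paper, which simply combines Lemmas \ref{well_define}, \ref{fix}, \ref{associative}, \ref{equivariance} and \ref{Mackey_axiom}. Your additional remarks about the continuity of extension by zero (via $SJ$ being closed-open) accurately reflect where the topological care was already handled inside those lemmas.
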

\begin{proof}
This follows by combining Lemmas \ref{well_define}, \ref{fix}, \ref{associative}, \ref{equivariance} and \ref{Mackey_axiom}.
\end{proof}
We will see that the assignment $\mackey$ is functorial.
\begin{theorem}\label{MackisFunct}
The assignment $\mackey(-)$ is a functor.
\end{theorem}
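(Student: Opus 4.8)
The plan is to verify that $\mackey(-)$ respects identities and composition. Given a morphism of $G$-sheaves $f\colon F\rightarrow F'$ over $SG$, I would define $\mackey(f)$ to be the collection of maps
\begin{align*}
\mackey(f)(G/H)\colon F(SH)^H\rightarrow F'(SH)^H,\qquad \alpha\mapsto f\circ\alpha,
\end{align*}
for each open subgroup $H$ of $G$. First I would check this is well defined: since $f$ is a morphism of sheaf spaces it is compatible with the local homeomorphisms, so $p'\circ(f\circ\alpha)=p\circ\alpha=\id$ and $f\circ\alpha$ is continuous; and since $f$ is $G$-equivariant, if $\alpha$ is $H$-fixed then so is $f\circ\alpha$ (using Remark \ref{commuteaction}, which expresses the $G$-action on sections in terms of the $G$-action on the sheaf space). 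Thus $\mackey(f)(G/H)$ lands in $F'(SH)^H$ and, being post-composition with an additive stalk-wise map, is a map of $\mathbb{Q}$-modules.

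Next I would verify that $\mackey(f)$ is a morphism of Mackey functors, i.e. that the maps $\mackey(f)(G/H)$ commute with restriction, induction and conjugation. For restriction this is immediate: $(f\circ\alpha)|_{SJ}=f\circ(\alpha|_{SJ})$. For conjugation it follows from $G$-equivariance of $f$, exactly as in the equivariance arguments already carried out in Lemma \ref{equivariance}. For induction I would use that extension by zero commutes with post-composition by $f$ — precisely, $f\circ\overline{\beta}=\overline{f\circ\beta}$ since $f$ is fibre-wise linear and $f$ sends the zero section to the zero section — together with additivity of $f$ on stalks to get $f\circ\left(\sum_{1\leq i\leq n}h_i\overline{\beta}\right)=\sum_{1\leq i\leq n}h_i\overline{f\circ\beta}$, again using Remark \ref{commuteaction} to commute $f$ past the $h_i$-action.

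Finally, functoriality itself is routine: $\mackey(\id_F)(G/H)$ sends $\alpha$ to $\id_F\circ\alpha=\alpha$, so $\mackey(\id_F)=\id_{\mackey(F)}$; and for composable $G$-sheaf morphisms $f\colon F\rightarrow F'$, $g\colon F'\rightarrow F''$ we have $\mackey(g\circ f)(G/H)(\alpha)=(g\circ f)\circ\alpha=g\circ(f\circ\alpha)=\mackey(g)(G/H)\bigl(\mackey(f)(G/H)(\alpha)\bigr)$, so $\mackey(g\circ f)=\mackey(g)\circ\mackey(f)$.

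I do not anticipate a serious obstacle here; the content is bookkeeping. The one point requiring a little care is the interaction of $f$ with the induction maps, since those involve both extension by zero and the $G$-action on sections — but both of these are handled by the observations that $f$ preserves zero sections and that $f$, being a morphism of $G$-sheaves, commutes with the $G$-action in the sense of Remark \ref{commuteaction}.
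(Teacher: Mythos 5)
Your proposal is correct and follows essentially the same route as the paper: defining $\mackey(f)$ on each $G/H$ as the induced map on $H$-fixed sections over $SH$ (post-composition with $f$ in sheaf-space terms), then checking compatibility with restriction directly, with conjugation via $G$-equivariance, and with induction via the interaction of $f$ with extension by zero and the group action, with functoriality being immediate. The only cosmetic difference is that the paper routes the $H$-fixedness check through the $N_G(H)$-module structure of Proposition \ref{sectionquot}, while you argue it directly from equivariance; the substance is the same.
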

\begin{proof}
We will first define what $\mackey$ does to morphisms of Weyl-$G$-sheaves. Let $f:F\rightarrow F^{\prime}$ be a morphism of Weyl-$G$-sheaves. By Lemma \ref{subgroup_open} we know that $SH$ is open for any open subgroup $H$. This means we have a morphism of $\mathbb{Q}$-modules, $f_{SH}:F(SH)\rightarrow F^{\prime}(SH)$, for every open subgroup $H$. By Proposition \ref{sectionquot}, both $\mathbb{Q}$-modules are $N_G(H)$-modules and since $f$ is a $G$-equivariant map of spaces we know that $f_{SH}$ is $N_G(H)$-equivariant. It follows that a $H$-fixed section $s$ in $F(SH)$ is $H$-fixed in $F^{\prime}(SH)$. This proves that for each $H$ we have a map of $\mathbb{Q}$-modules, $f_H:\mackey(F)(G/H)\rightarrow \mackey(F^{\prime})(G/H)$. 

To prove that this is a morphism of Mackey functors we need to show that we get a commuting square with conjugation, restriction and induction. This is true for conjugation as a consequence of the $G$-equivariance of $f$. For restriction, if $L\leq H$, $K\in SL$ and $s\in \mackey(F)(G/H)$, then this compatibility holds as follows:
\begin{align*}
f(s_{|_{SL}})_{K}=f_K(s_K)=(f(s)_{|_{SL}})_K.
\end{align*}
Here we use the equality $(f(s))_K=f_K(s_K)$. In order to prove compatibility with the induction maps we have to show that:
\begin{align*}
\underset{H/L}{\sum}h\overline{f(s)}=f\left(\underset{H/L}{\sum}h\overline{s}\right).
\end{align*}
We can again use the equality $(f(s))_K=f_K(s_K)$ to prove this since $f$ is additive on stalks and $G$-equivariant. On the one hand we have $\overline{f(s)}_x=0$ if $x\notin SL$ and $f_x(s_x)$ otherwise. On the other hand $f(\overline{s})_x=f_x(\overline{s}_x)$, which equals $0$ if $x\notin SL$ and $f_x(s_x)$ otherwise. The functoriality conditions are clear from the construction.
\end{proof}
\section{Mackey Functors determine Weyl-G-sheaves}
We construct a functor in the opposite direction, from rational $G$-Mackey functors to $\text{Weyl-}G$-sheaves over $SG$, for a profinite group $G$. We use the idea found in \cite{Gre98}, where a Mackey functor $M$ determines sheaves $M(G/H)$ over $SH/H$ for varying $H$ and these fit together to give a Weyl-$G$-sheaf over $SG$. In our setting, we know from Corollary \ref{MHsheaf} that each $M(G/H)$ determines a sheaf over $SH/H$. In this section we will piece these together to make a Weyl-$G$-sheaf over $SG$. We begin by understanding some of the key parts of the construction needed. We start by constructing the stalks of the Weyl-$G$-sheaf.
\begin{lemma}\label{stalkwelldefined}
If $M$ is a Mackey functor for $G$, then the $\mathbb{Q}$-module:
\begin{align*}
F_K=\underset{J\underset{\text{open}}{\geq}K}{\colim}M(G/J)_{(K)}
\end{align*}
is well-defined, where $M(G/J)_{(K)}$ refers to the stalk of the conjugacy class of $K$ in $SJ/J$. Here we are using that $M(G/J)$ determines a sheaf over $SJ/J$.
\end{lemma}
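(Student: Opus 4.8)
The plan is to verify three things: that the category indexing the colimit is filtered, that each object of the diagram is a well-defined $\mathbb{Q}$-module, and that the diagram carries transition maps turning it into a genuine functor into $\mathbb{Q}\text{-Mod}$, so that its colimit exists.

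First I would check that $\mathcal{J}_K=\{J\leq G\mid J\text{ open},\ K\leq J\}$, partially ordered by reverse inclusion, is directed. It is non-empty since $G\in\mathcal{J}_K$, and given $J_1,J_2\in\mathcal{J}_K$ the intersection $J_1\cap J_2$ is again an open subgroup containing $K$, hence an upper bound for both in this order. Thus $\underset{J\in\mathcal{J}_K}{\colim}$ will be a filtered colimit once the diagram is in place. Next, for each $J\in\mathcal{J}_K$, Corollary \ref{MHsheaf} (obtained from the $A(J)$-module structure of Proposition \ref{mackeyact}, the identification $A(J)\cong C(SJ/J,\mathbb{Q})$ of Theorem \ref{burnchar}, and Proposition \ref{actsheaf}) tells us that $M(G/J)$ determines a sheaf over $SJ/J$; since $K$ is closed in $J$, the conjugacy class $(K)_J$ is a point of $SJ/J$, so the stalk $M(G/J)_{(K)}:=M(G/J)_{(K)_J}$ is a well-defined $\mathbb{Q}$-module. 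Using the explicit description from Proposition \ref{actsheaf}, this stalk is $\underset{N}{\colim}\, e^J_{O(N,NK)}M(G/J)$, where $N$ ranges over the open normal subgroups of $J$ and $e^J_{O(N,NK)}$ is the Burnside idempotent of Proposition \ref{idempotentform}.

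Then I would construct the transition maps. For $K\leq J'\leq J$ with $J,J'\in\mathcal{J}_K$, the restriction map $R^J_{J'}\colon M(G/J)\rightarrow M(G/J')$ interacts with the Burnside idempotents: by Proposition \ref{restrictprop} applied with ambient group $J$, $R^J_{J'}(e^J_{O(N,NK)}(s))$ decomposes as a sum $\sum_{xNKx^{-1}\in J\mathfrak{I}^{J'}_{NK}}e^{J'}_{O(N,xNKx^{-1})}R^J_{J'}(s)+e_VR^J_{J'}(s)$. Projecting onto the summand indexed by the class of $K$ itself — the one whose support is a neighbourhood of $(K)_{J'}$ in $SJ'/J'$, the terms with $x\notin N_J(NK)$ and the $e_V$-term being supported away from $(K)_{J'}$, as in the discussion after Corollary \ref{burncommute} — and passing to germs at $(K)_{J'}$ yields a map $\phi^J_{J'}\colon M(G/J)_{(K)}\rightarrow M(G/J')_{(K)}$. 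Equivalently, since $SJ'$ is an open-closed subspace of $SJ$ by Lemma \ref{subgroup_open} and the conjugacy-class map $SJ'/J'\rightarrow SJ/J$ carries $(K)_{J'}$ to $(K)_J$, the map $R^J_{J'}$ is a morphism over the induced ring map on global sections, hence induces a map of the associated sheaf spaces covering this continuous map and so a map on stalks; independence of the chosen section $s$ and of the neighbourhood is immediate from the filtered-colimit description of the stalk. Finally I would check functoriality: $\phi^J_J=\mathrm{id}$ because $R^J_J=\mathrm{id}$ (Definition \ref{defM1}), and $\phi^{J'}_{J''}\circ\phi^J_{J'}=\phi^J_{J''}$ for $K\leq J''\leq J'\leq J$ from the transitivity $R^J_{J''}=R^{J'}_{J''}\circ R^J_{J'}$ together with the naturality of the operation ``take the germ at $(K)$''. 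Hence $J\mapsto M(G/J)_{(K)}$ is a well-defined filtered diagram in $\mathbb{Q}\text{-Mod}$, and $F_K$ is its colimit, which exists since $\mathbb{Q}\text{-Mod}$ is cocomplete.

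I expect the main obstacle to be the construction and well-definedness of the transition maps: one must confirm that $R^J_{J'}$ genuinely sends the $(K)$-stalk of the sheaf over $SJ/J$ into the $(K)$-stalk of the sheaf over $SJ'/J'$, which relies on the idempotent bookkeeping of Proposition \ref{restrictprop} and the indexing bijection of Proposition \ref{indexbij}, and, in the infinite profinite case, on discarding the extra $e_V$-term by localising at $(K)$ as in Corollary \ref{burncommute}. The remaining points (directedness of $\mathcal{J}_K$, existence of stalks, and the cocycle identities) are then routine.
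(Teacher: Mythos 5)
Your proposal is correct and follows essentially the same route as the paper: the transition maps $M(G/J)_{(K)}\to M(G/J')_{(K)}$ are induced by the restriction maps $R^J_{J'}$, using the idempotent decomposition of Proposition \ref{restrictprop} and localising at $(K)$ to discard the $e_V$-term and the other conjugate summands, before passing to the filtered colimit. The only difference is one of emphasis: the step you call ``immediate from the filtered-colimit description'' (compatibility of the decompositions for $N_2\leq N_1$, i.e.\ that the two idempotent expressions give the same germ at $(K)$) is exactly the point the paper verifies explicitly, while the directedness and cocycle checks you add are left implicit there.
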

\begin{proof}
This boils down to showing that if $J\geq H\geq K$ are such that $H$ and $J$ are open in $G$ and $K$ closed, then we have a map of the form:
\begin{align*}
\left({R^J_H}\right)_{(K)}:M(G/J)_{(K)}\rightarrow M(G/H)_{K)}. 
\end{align*}
Take $s\in M(G/J)$ and $N_2\leq N_1$ both open and normal in $G,J$ and $H$. Then $O(N_2,N_2K)\subseteq O(N_1,N_1K)$ are open neighbourhoods of $K$, hence their images with respect to the quotient map in the orbit spaces $SJ/J$ and $SH/H$ are open neighbourhoods of the respective class of $K$.

By Proposition \ref{restrictprop} we have the following two equalities:
\begin{align*}
R^J_H\left(e^J_{O\left(N_1,N_1K\right)}s\right)&=\underset{x\in\mathfrak{I}^H_{N_1K}}{\sum}e^H_{O(N_1,xN_1Kx^{-1})}R^J_H(s)+e_VR^J_H(s)\\
R^J_H\left(e^J_{O\left(N_2,N_2K\right)}s\right)&=\underset{x\in\mathfrak{I}^H_{N_2K}}{\sum}e^H_{O(N_2,xN_2Kx^{-1})}R^J_H(s)+e_VR^J_H(s).
\end{align*}
Notice that in the sums of the right hand side, the $O(N_1,xN_1Kx^{-1})$ are disjoint since if $A$ belonged to more than one we would have the following contradiction: 
\begin{align*}
xN_1Kx^{-1}=N_1A=yN_1Ky^{-1}
\end{align*}
and $yN_1Ky^{-1}\neq xN_1Kx^{-1}$. Therefore the right hand side is an idempotent with support given by a finite disjoint union of closed-open subsets.

We need to show that the two right hand side expressions represent the same germ in the stalk of $(K)$. First notice that the class $x=e$ belongs to both $\mathfrak{I}^H_{N_1K}$ and $\mathfrak{I}^H_{N_2K}$. Secondly notice that $(K)$ belongs to both $O(N_1,N_1K)$ and $O(N_2,N_2K)$. We can therefore restrict the top equality to $e^H_{O(N_1,N_1K)}R^J_H(s)$ and the bottom one to $e^H_{O(N_2,N_2K)}R^J_H(s)$, since we are interested in the diagram of neighbourhoods of $(K)$. To conclude, we observe that the support of the second is contained in that of the first and both contain $(K)$.
\end{proof}
In order to define the topology on the sheaf space of the Weyl-$G$-sheaf, we consider the next construction as seen in \cite[Construction 3.7.11]{Tennison}.
\begin{construction}\label{pullbacksheaf}
Let $F$ be a sheaf over a topological space $X$ and $\phi:Y\rightarrow X$ be a continuous surjection. Then we construct a sheaf $\phi^*(F)$ using a pullback diagram as follows:
\begin{align*}
\xymatrix{L\phi^*(F)\ar[d]^{p^{\prime}}\ar[r]&LF\ar[d]^p\\Y\ar[r]^{\phi}&X}
\end{align*}
The sections for $\phi^*(F)$ are given by the continuous sections over $L\phi^*(F)$.
\end{construction}
\begin{proposition}\label{pullbacktop}
The topology for $L\phi^*(F)$ has a basis given by the sets of the form:
\begin{align*}
U_s=\left\lbrace (y,s_{\phi(y)})\mid y\in U\right\rbrace
\end{align*}
where $U$ is an open neighbourhood of $y$ and $s$ is a section over $\phi(U)$ with respect to $LF$.
\end{proposition}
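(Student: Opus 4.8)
The plan is to work directly with the subspace topology that $L\phi^*(F)$ inherits from $Y \times LF$, as in Construction \ref{pullbacksheaf}. First I would record the two relevant bases: the sheaf space $LF$ carries its usual basis of germ-neighbourhoods $\tilde{\sigma}(V) = \{\sigma_x \mid x \in V\}$ indexed by open $V \subseteq X$ and $\sigma \in F(V)$, so that $Y \times LF$ has the basis of products $U \times \tilde{\sigma}(V)$ with $U \subseteq Y$ open. Intersecting such a product with the fibre product $L\phi^*(F) = \{(y,t) \mid \phi(y) = p(t)\}$ gives exactly
\[
(U \times \tilde{\sigma}(V)) \cap L\phi^*(F) = \{(y, \sigma_{\phi(y)}) \mid y \in U \cap \phi^{-1}(V)\},
\]
since the fibre-product condition forces the second coordinate to be the germ of $\sigma$ at $\phi(y)$. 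Writing $W = U \cap \phi^{-1}(V)$, which is open in $Y$ by continuity of $\phi$, and noting $\phi(W) \subseteq V$ so that $\sigma$ restricts to a section over $\phi(W)$, this set is precisely the set $W_{\sigma}$ of the statement. Thus every member of the standard basis for the subspace topology on $L\phi^*(F)$ already has the asserted form $U_s$.

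Conversely I would check that each $U_s$ is open in $L\phi^*(F)$; combined with the previous paragraph this gives the claim, since then the $U_s$ are open, each standard basic open of the subspace is one of them, and hence every open set is a union of sets $U_s$. Fix $U_s$ with $s$ a continuous section over $\phi(U)$ (a continuous map $\phi(U) \to LF$ splitting $p$, in the sense of \cite[Definition 2.1.1]{Tennison}) and a point $(y_0, s_{\phi(y_0)}) \in U_s$; put $x_0 = \phi(y_0)$. The point $s(x_0)$ lies in the fibre $p^{-1}(x_0)$ and is the germ of some $\sigma \in F(V)$ with $\sigma_{x_0} = s(x_0)$, where $V$ is an open neighbourhood of $x_0$ in $X$. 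Continuity of $s$ at $x_0$ against the basic open $\tilde{\sigma}(V)$ produces an open $V' \subseteq X$ with $x_0 \in V' \subseteq V$ and $s(x) = \sigma_x$ for all $x \in \phi(U) \cap V'$. Then $W := U \cap \phi^{-1}(V')$ is an open neighbourhood of $y_0$ in $Y$ on which $s_{\phi(y)} = \sigma_{\phi(y)}$, so
\[
\{(y, s_{\phi(y)}) \mid y \in W\} = (W \times \tilde{\sigma}(V')) \cap L\phi^*(F)
\]
is open in the subspace topology, contains $(y_0, s_{\phi(y_0)})$, and is contained in $U_s$. Hence $U_s$ is a neighbourhood of each of its points, i.e. open.

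The only delicate point is the bookkeeping around sections over the possibly non-open set $\phi(U)$: one must localise the germ $s(x_0)$ to a genuine section over an open subset of $X$ before intersecting with the fibre product, and keep the two index systems (open sets of $Y$ versus open sets of $X$) separate throughout. Surjectivity of $\phi$ plays no role in this topology statement. Everything else is a direct unravelling of the subspace and sheaf space topologies, so I do not anticipate any real obstacle.
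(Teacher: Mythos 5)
Your proof is correct, and it takes a more explicit route than the paper does. The paper's argument verifies the two abstract basis criteria: it notes that a typical basic open of the subspace topology is $(V\times s(W))\cap L\phi^*(F)$, checks that the sets $U_s$ cover $L\phi^*(F)$, and, given a point of $V_t\cap U_s$, produces a common refinement $W'_s\subseteq V_t\cap U_s$ by restricting to an open set on which the two sections agree and pulling back along $\phi$ — this is the one place where the paper invokes surjectivity of $\phi$, to get $\phi(\phi^{-1}(W))=W$ so that the restricted section is again a section over the image of an open set of $Y$. You instead compare directly with the subspace topology in both directions: every subspace-basic open $(U\times\tilde{\sigma}(V))\cap L\phi^*(F)$ is literally a set of the asserted form with $U$ replaced by $U\cap\phi^{-1}(V)$, and conversely each $U_s$ is open in the subspace topology because the germ $s(\phi(y_0))$ can be localised to a genuine section $\sigma$ over an open $V\subseteq X$, after which continuity of $s$ forces $s=\sigma$ on $\phi(U)\cap V'$ and yields a subspace-basic open neighbourhood inside $U_s$. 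This buys two things: it shows the $U_s$ form a basis for the subspace topology itself rather than merely satisfying the basis axioms for some topology (a point the paper leaves implicit in its opening sentence), and it makes visible that surjectivity of $\phi$ is irrelevant to this particular statement, being needed only elsewhere in Construction \ref{pullbacksheaf}. Your handling of the delicate point — that $\phi(U)$ need not be open, so sections over it must be localised to sections over open subsets of $X$ before intersecting with the fibre product — is exactly the right bookkeeping.
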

\begin{proof}
By definition, $L\phi^*(F)$ is given the subspace topology of the open box topology. This means that a typical basis subset of $L\phi^*(F)$ is of the form $\left(V\times s(W)\right)\cap L\phi^*(F)$, for basis subsets $V$ and $W$ of $Y$ and $X$ respectively. To see that sets of the form $U_s$ cover $L\phi^*(F)$, take any element $(y,s_{\phi(y)})\in L\phi^*(F)$. Then for any open neighbourhood $U$ of $y$ we have that $(y,s_{\phi(y)})\in U_s$.

Suppose $(y,s_{\phi(y)})\in V_t\cap U_s$, so in particular we have $s_{\phi(y)}=t_{\phi(y)}$. Then there exists $W\subseteq \phi(U)\cap \phi(V)$ such that $s_{|_{W}}$ and $t_{|_{W}}$ agree as sections over $LF$. Since $\phi$ is surjective and continuous we have that $\phi^{-1}(W)$ is open in $Y$ and $\phi(\phi^{-1}(W))=W$. Set $W^{\prime}=\phi^{-1}(W)$, then $W^{\prime}_s\subseteq U_s\cap V_t$ as required. 
\end{proof}
The following two lemmas and corollary focus on Corollary \ref{MHsheaf}, which says that if $L$ is an open subgroup of $G$ profinite then $M(G/L)$ is a sheaf over $SL/L$. Specifically it builds on the previous lemma and tells us how we can relate sections over sheaves $M(G/L)$ for varying $L$. We will consider the sheaf space of each sheaf $M(G/L)$ over $SL/L$, which has underlying set $\underset{y\in SL/L}{\coprod}M(G/L)_{(y)}$ and whose topology is determined by the sheaf structure of $M(G/L)$. In particular, if $s$ belongs to $M(G/L)$ then it is a global section over $SL/L$ by:
\begin{align*}
s\colon SL/L&\rightarrow \underset{y\in SL/L}{\coprod}M(G/L)_{(y)}\\(A)&\mapsto s((A)),
\end{align*} 
where $s((A))$ is the germ of $s$ in the stalk of $(A)$ with respect to $M(G/L)$. We will be using this idea during the next two lemmas and corollary. 
\begin{lemma}\label{contruct1}
Let $x$ be a closed subgroup of $G$ with $N$ open and normal and $L$ open containing $Nx$. Then every section of the form:
\begin{align*}
s:O(N,Nx)/L\rightarrow\underset{y\in O(N,Nx)/L}{\coprod}M(G/L)_{(y)}
\end{align*}
yields a section of the form 
\begin{align*}
s^{\prime}:O(N,Nx)/Nx\rightarrow\underset{y\in O(N,Nx)/Nx}{\coprod}M(G/Nx)_{(y)},
\end{align*}
such that $s^{\prime}(y)=s(y)$ in $F_y=\underset{J\underset{\text{open}}{\geq}y}{\colim}M(G/J)_{(y)}$.
\end{lemma}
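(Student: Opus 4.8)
The plan is to use the fact, established in Lemma \ref{stalkwelldefined} and its proof, that the restriction map $R^L_{Nx}$ induces on stalks over $(y)$ the structure map in the colimit system defining $F_y$. Since $Nx$ is open (it is the product of the open normal subgroup $N$ with the closed subgroup $x$, hence of finite index and therefore open) and $Nx \leq L$, we have a restriction map $R^L_{Nx}\colon M(G/L)\rightarrow M(G/Nx)$ which, by Corollary \ref{MHsheaf}, is compatible with the sheaf structures over $SL/L$ and $SNx/Nx$ respectively. The first step is to observe that $O(N,Nx)$ is $Nx$-invariant (indeed $Nx$-invariant follows from Remark \ref{ONJ} since $O(N,Nx)$ is $J$-invariant for $J=Nx$), so that $O(N,Nx)/Nx$ makes sense, and that the quotient map $SL/Nx\to SL/L$ restricts to a continuous surjection $O(N,Nx)/Nx \to O(N,Nx)/L$ which I will call $q$.

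Second, I would define $s'$ pointwise: for $(A) \in O(N,Nx)/Nx$, set $s'((A)) = \left(R^L_{Nx}\right)_{(A)}\bigl(s(q((A)))\bigr)$, where $\left(R^L_{Nx}\right)_{(A)}$ is the induced map on stalks. Then $s'$ is a section of the sheaf space of $M(G/Nx)$ over $O(N,Nx)/Nx$: continuity follows because $s$ is continuous, $q$ is continuous, and the map induced by $R^L_{Nx}$ on sheaf spaces is continuous (being a morphism of sheaves, it is a map of sheaf spaces by Theorem \ref{nonequivariantsheafequivdefn}). The identity $s'(y) = s(y)$ in $F_y = \underset{J\underset{\text{open}}{\geq}y}{\colim}M(G/J)_{(y)}$ is then immediate: both $M(G/L)_{(y)}$ and $M(G/Nx)_{(y)}$ map into the colimit $F_y$, the map $\left(R^L_{Nx}\right)_{(y)}$ is precisely the transition map of the colimit system (this is the content of Lemma \ref{stalkwelldefined}'s proof), and so the class of $s(y)$ and the class of $\left(R^L_{Nx}\right)_{(y)}(s(y)) = s'(y)$ agree in $F_y$.

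The main obstacle I anticipate is verifying continuity of $s'$ carefully, and in particular checking that $s'$ does land in the sheaf space over $O(N,Nx)/Nx$ rather than only defining a set-theoretic function — this requires knowing that $q$ is an open continuous surjection so that the description of the pullback/restriction topology from Proposition \ref{pullbacktop} applies, and that the local sections of $M(G/L)$ pull back to local sections of $M(G/Nx)$. A subtlety worth spelling out is that a priori $s$ is given only as a section over the open subset $O(N,Nx)/L$ of $SL/L$ (not a global section), but this does not cause trouble since the restriction-of-sheaves argument is local; one resolves it by shrinking to basic neighbourhoods $O(N',N'x)/L$ with $N' \leq N$ and using Proposition \ref{restrictprop} to track how the idempotents $e_{O(N',N'x)}$ behave under $R^L_{Nx}$, exactly as in the proof of Lemma \ref{stalkwelldefined}. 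Once the topology matching is in place, the stalkwise identity is formal, so I expect the whole argument to be short.
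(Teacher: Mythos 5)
Your proposal is correct and its core is the same as the paper's: define $s^{\prime}$ by applying the Mackey restriction $R^L_{Nx}$ to $s$ and deduce the germ identity $s^{\prime}(y)=s(y)$ in $F_y$ from the fact (Lemma \ref{stalkwelldefined}) that $R^L_{Nx}$ induces exactly the transition maps of the colimit system. The one place where you make the argument harder than it needs to be, and where your stated justification is slightly off, is continuity. The paper sidesteps the pointwise verification entirely: by the construction of the sheaf from the module (Proposition \ref{actsheaf}), a section over the basic open $O(N,Nx)/L$ \emph{is} an element $e^L_{O(N,Nx)}s$ with $s\in M(G/L)$; applying $R^L_{Nx}$ then yields an element of $M(G/Nx)$, i.e.\ a \emph{global} section over $SNx/Nx$, which is automatically continuous, and $s^{\prime}$ is just its restriction to the open subset $O(N,Nx)/Nx$ (the idempotent bookkeeping $e^{Nx}_{O(N,Nx)}R^L_{Nx}(s)$ being handled as in Lemma \ref{stalkwelldefined}). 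By contrast, your appeal to Theorem \ref{nonequivariantsheafequivdefn} to get a continuous ``sheaf-space map induced by $R^L_{Nx}$'' is not valid as stated, since $M(G/L)$ and $M(G/Nx)$ are sheaves over the different spaces $SL/L$ and $SNx/Nx$, so $R^L_{Nx}$ is not a morphism of sheaves in the sense of that theorem; what you would actually need is a comparison morphism $q^*M(G/L)\rightarrow M(G/Nx)$ over $O(N,Nx)/Nx$, and your proposed fallback (shrinking to basic neighbourhoods and tracking idempotents via Proposition \ref{restrictprop}, as in Lemma \ref{stalkwelldefined}) is precisely how one would build it, so the gap is repairable along the lines you indicate — but the paper's observation renders all of this unnecessary.
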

\begin{proof}
Suppose we have a section of the form:
\begin{align*}
s:O(N,Nx)/L\rightarrow\underset{y\in O(N,Nx)/L}{\coprod}M(G/L)_{(y)}
\end{align*}
We then define $s^{\prime}$ as follows:
\begin{align*}
s^{\prime}:O(N,Nx)/Nx&\rightarrow \underset{y\in O(N,Nx)/Nx}{\coprod}M(G/Nx)_{(y)}\\(A)&\mapsto R^L_{Nx}(s)((A)),
\end{align*}
which is a continuous section since $R^L_{Nx}(s)$ is a global section over $SNx/Nx$. We then restrict this to the open subset $O(N,Nx)/Nx$. Here we are using that a section $s$ over $O(N,Nx)/L$ is of the form $e^L_{O(N,Nx)}s$ for $s\in M(G/L)$ by definition. We can also use the proof of Lemma \ref{stalkwelldefined} to apply the restriction map $R^L_{Nx}$, to $e^L_{O(N,Nx)}s$ in order to restrict to a section $e^{Nx}_{O(N,Nx)}R^L_{Nx}(s)$. This clearly satisfies $R^L_{Nx}(s)(y)=s(y)$ in $F_y$.
\end{proof}
In the following lemma, we will apply Construction \ref{pullbacksheaf} to the sheaf $M(G/H)$ over $SH/H$ with respect to the projection map $p\colon SH\rightarrow SH/H$. Namely, we will see how to extend a section $s$ in $M(G/H)$ with domain in $SH/H$ to a section with domain $SH$. 
\begin{lemma}\label{contruct2}
A section of the form: 
\begin{align*}
s:O(N,Nx)/Nx\rightarrow\underset{y\in O(N,Nx)/NK}{\coprod}M(G/Nx)_{(y)}
\end{align*}
yields a section of the form:
\begin{align*}
s^{\prime}:O(N,Nx)\rightarrow \underset{y\in O(N,Nx)}{\coprod}M(G/Nx)_{(y)}
\end{align*}
where the codomain is topologised as in Construction \ref{pullbacksheaf}.
\end{lemma}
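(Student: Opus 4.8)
The plan is to recognise the stated codomain as the restriction over $O(N,Nx)$ of the pullback sheaf space of $M(G/Nx)$ along the quotient map $p\colon SNx\rightarrow SNx/Nx$, and then to take $s'$ to be the pullback of the section $s$ along $p$ in the sense of Construction \ref{pullbacksheaf}.

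First I would set up the pullback. By Corollary \ref{MHsheaf} the $\mathbb{Q}$-module $M(G/Nx)$ is a sheaf over $SNx/Nx$, and by Proposition \ref{actsheaf} its sections over $O(N,Nx)/Nx$ are $e^{Nx}_{O(N,Nx)}M(G/Nx)$; thus the given datum $s$ is genuinely a continuous section of the sheaf space of $M(G/Nx)$ over the open subset $O(N,Nx)/Nx$ of $SNx/Nx$. The map $p$ is a continuous surjection (indeed open, $SNx/Nx$ carrying the quotient topology of the $Nx$-conjugation action), so Construction \ref{pullbacksheaf} applies with $F=M(G/Nx)$ and $\phi=p$. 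By Remark \ref{ONJ} the set $O(N,Nx)$ is $Nx$-invariant, hence $Nx$-saturated, so $p^{-1}\left(O(N,Nx)/Nx\right)=O(N,Nx)$. Restricting $Lp^{*}(M(G/Nx))$ over $O(N,Nx)$ therefore yields exactly the set $\underset{y\in O(N,Nx)}{\coprod}M(G/Nx)_{(y)}$ with the topology of Construction \ref{pullbacksheaf}, which is the space the lemma asks us to map into.

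Next I would define $s'(A)=\left(A,s(p(A))\right)$ for $A\in O(N,Nx)$, where $s(p(A))$ is the germ of $s$ at the $Nx$-conjugacy class $p(A)$. This germ lies in $M(G/Nx)_{(A)}$, so $s'(A)$ is a legitimate point of the pullback, and postcomposing with the projection $Lp^{*}(M(G/Nx))\rightarrow SNx$ returns $A$, so $s'$ is a set-theoretic section over $O(N,Nx)$. For continuity I would use Proposition \ref{pullbacktop}: a basic open of the pullback space has the form $U_t=\left\{\left(A,t_{p(A)}\right)\mid A\in U\right\}$ with $U\subseteq SNx$ open and $t$ a section of $M(G/Nx)$ over $p(U)$, and then
\begin{align*}
(s')^{-1}(U_t)=\left\{A\in U\cap O(N,Nx)\mid s(p(A))=t(p(A))\right\}=U\cap p^{-1}(E),
\end{align*}
where $E=\left\{z\in p(U)\cap\left(O(N,Nx)/Nx\right)\mid s(z)=t(z)\right\}$. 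Since the projection from the sheaf space of $M(G/Nx)$ is a local homeomorphism, the locus $E$ on which the two continuous sections $s$ and $t$ agree is open (the same fact already used inside the proof of Proposition \ref{pullbacktop}); as $p$ is continuous, $(s')^{-1}(U_t)$ is open, so $s'$ is continuous.

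This is essentially a bookkeeping argument, and the only step needing genuine care — the main obstacle, such as it is — is the identification in the second paragraph: using $Nx$-saturation of $O(N,Nx)$ to see both that $s$ really is a section over an honest open subset of $SNx/Nx$ and that the restricted pullback space coincides with the claimed codomain. Once that is in place the pullback-of-sections recipe of Construction \ref{pullbacksheaf} does the rest, and I would close by observing that, by construction, $s'$ and $s$ have matching germs at every $A\in O(N,Nx)$ — the compatibility needed for the subsequent gluing step of the Weyl-$G$-sheaf construction.
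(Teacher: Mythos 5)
Your proposal is correct and follows essentially the same route as the paper: the paper also defines $s'(A)=s((A))$, i.e.\ pulls $s$ back along the quotient map $p$, and cites Proposition \ref{pullbacktop} together with the continuous surjection $p$ and the sheaf $M(G/Nx)$ over $SNx/Nx$ for continuity. You merely spell out the details the paper leaves implicit (the $Nx$-saturation of $O(N,Nx)$ and the openness of the agreement locus of two sections), which is a fair elaboration of the same argument.
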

\begin{proof}
We define $s^{\prime}$ as follows:
\begin{align*}
s^{\prime}:O(N,Nx)&\rightarrow \underset{y\in O(N,Nx)}{\coprod}M(G/Nx)_{(y)}\\A&\mapsto s((A))
\end{align*}
This is continuous as a result of Proposition \ref{pullbacktop}, by considering the continuous surjection $p$ and the sheaf $M(G/Nx)$ over $SNx/Nx$.
\end{proof}
Combining the first two lemmas we have the following corollary.
\begin{corollary}\label{corsec}
A section of the form:
\begin{align*}
s:O(N,NK)/H\rightarrow \underset{L\in O(N,NK)/H}{\coprod}M(G/H)_{(L)}
\end{align*}
yields a section of the form:
\begin{align*}
s^{\prime}:O(N,NK)\rightarrow \underset{L\in O(N,NK)}{\coprod}M(G/NK)_{(L)}
\end{align*}
such that $s^{\prime}(y)=s(y)$ in $F_y=\underset{J\underset{\text{open}}{\geq}y}{\colim}M(G/J)_{(y)}$ and the codomain is topologised as in Construction \ref{pullbacksheaf}.
\end{corollary}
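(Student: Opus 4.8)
The statement to prove is Corollary \ref{corsec}, which chains together Lemma \ref{contruct1} and Lemma \ref{contruct2}. The plan is straightforward composition of the two preceding results, so the proof will be short; the main work has already been done in establishing the two lemmas.

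First I would apply Lemma \ref{contruct1} with the open subgroup $L$ taken to be $H$ and the closed subgroup $x$ taken to be $K$. The hypothesis of that lemma requires $H$ to be an open subgroup containing $NK$; since $K \leq H$ and $N \unlhd G$ is open, we have $NK \leq H$ whenever $N \leq H$, and if $N$ is not contained in $H$ we may shrink $N$ to $N \cap H^C$ as in Proposition \ref{basis} and Proposition \ref{nbhdbasis} without changing the germ data at $(K)$. Thus the given section
\begin{align*}
s:O(N,NK)/H\rightarrow \underset{L\in O(N,NK)/H}{\coprod}M(G/H)_{(L)}
\end{align*}
produces a section $\tilde{s}$ over $O(N,NK)/NK$ valued in the stalks of $M(G/NK)$, with $\tilde{s}(y) = s(y)$ in $F_y = \underset{J\underset{\text{open}}{\geq}y}{\colim}M(G/J)_{(y)}$ by the final assertion of Lemma \ref{contruct1} (the relevant restriction map is $R^H_{NK}$ applied to the idempotent-truncated section $e^H_{O(N,NK)}s$, using the analysis in the proof of Lemma \ref{stalkwelldefined}).

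Next I would feed $\tilde{s}$ into Lemma \ref{contruct2} with the closed subgroup $x$ taken to be $K$. That lemma takes a section over $O(N,NK)/NK$ valued in stalks of $M(G/NK)$ and returns a section $s^{\prime}$ over $O(N,NK)$ itself, valued in $\underset{L \in O(N,NK)}{\coprod} M(G/NK)_{(L)}$, where the codomain carries the pullback topology of Construction \ref{pullbacksheaf} applied to the sheaf $M(G/NK)$ over $SNK/NK$ along the projection $p\colon SNK \to SNK/NK$ (restricted to $O(N,NK)$). Continuity of $s^{\prime}$ is exactly the content of Lemma \ref{contruct2}, which in turn rests on Proposition \ref{pullbacktop}. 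Since $s^{\prime}$ is defined pointwise by $s^{\prime}(A) = \tilde{s}((A))$, and $\tilde{s}(y) = s(y)$ in $F_y$, we get $s^{\prime}(y) = s(y)$ in $F_y$, which is the asserted compatibility.

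The only subtlety, and the one place the proof needs care rather than being a pure formality, is checking that the two lemmas are being applied to compatible data: the quotient map used in Lemma \ref{contruct1} (namely $SH \to SH/H$ restricted to $O(N,NK)$) and the one used implicitly when passing from $O(N,NK)/NK$ to $O(N,NK)$ in Lemma \ref{contruct2} must line up so that the composite germ-level identity $s^{\prime}(y) = \tilde{s}(y) = s(y)$ holds in the colimit $F_y$. This is where one uses that both $R^H_{NK}$ and the subsequent extension do not alter the germ at $(K)$ (equivalently at any $y \in O(N,NK)$), which follows from the idempotent computations of Propositions \ref{restrictprop} and \ref{idempotentform} together with the observation in the proof of Lemma \ref{stalkwelldefined} that the support of the restricted idempotent shrinks but still contains the point in question. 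Once that bookkeeping is in place, the corollary is immediate from the composition $s \mapsto \tilde{s} \mapsto s^{\prime}$.
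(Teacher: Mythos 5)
Your proof is correct and follows essentially the same route as the paper: the paper's own argument simply defines $s^{\prime}(L)=R^H_{NK}(s)((L))$ and cites Lemmas \ref{contruct1} and \ref{contruct2} together with Proposition \ref{pullbacktop}, which is exactly your composition $s\mapsto\tilde{s}\mapsto s^{\prime}$ with the germ compatibility coming from the restriction-idempotent analysis of Lemma \ref{stalkwelldefined}. Your extra remark about shrinking $N$ so that $NK\leq H$ is reasonable bookkeeping that the paper leaves implicit.
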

\begin{proof}
Given $s:O(N,NK)/H\rightarrow \underset{L\in O(N,NK)/H}{\coprod}M(G/H)_{(L)}$, we define $s^{\prime}$ as follows:
\begin{align*}
s^{\prime}:O(N,NK)&\rightarrow \underset{L\in O(N,NK)}{\coprod}M(G/NK)_{(L)}\\
L&\mapsto R^H_{NK}(s)((L))
\end{align*}
as done in Lemmas \ref{contruct1} and \ref{contruct2}, by using Proposition \ref{pullbacktop}.
\end{proof}
\begin{construction}\label{Sheaf_mack}
We construct the underlying set of the sheaf space by setting $E=\underset{K\in SG}{\coprod}F_K$ where: 
\begin{align*}
F_K=\underset{J\geq_O K}{\colim}M(G/J)_{(K)}
\end{align*}
and $J\geq_O K$ means $K$ is a subgroup of $J$ where $J$ is open in $G$. We define the topology by giving the generating sets of neighbourhoods of a point as follows.

Let $\overline{s}_x \in F_x=\underset{J\geq_O x}{\colim}M(G/J)_{(x)}$. Then this has some representative $s_x\in M(G/L)_{(x)}$ and this gives a section:
\begin{align*}
s:O(N,Nx)/L\rightarrow \underset{(y)\in O(N,Nx)/L}{\coprod}M(G/L)_{(y)}.
\end{align*}
for $N$ open and normal in $G$. An application of Corollary \ref{corsec} gives a section:
\begin{align*}
s:O(N,Nx)\rightarrow \underset{y\in O(N,Nx)}{\coprod}M(G/Nx)_{(y)}.
\end{align*}
We define the sets:
\begin{align*}
s(O(N,Nx))=\left\lbrace \overline{s}_y\mid y \in O(N,Nx)\right\rbrace
\end{align*}
to be the generating collection of neighbourhoods of $\overline{s}_x$. Here $\overline{s}_x$ represents the equivalence class of ${s}_x$ in $F_x$.

The sets of the form $s(O(N,Nx))$ deliver a sub-basis for the topology. The projection map required in the definition of $G$-sheaf space sends $\overline{s}_x\in F_x$ to $x$. 
\end{construction}
The above construction is well-defined by Lemma \ref{stalkwelldefined}. The following lemma will refine our understanding of the topology defined in Construction \ref{Sheaf_mack}.
\begin{lemma} \label{basis_sheaf}
The sets of the form $s(O(N,NK))$ form a basis for the topology in Construction \ref{Sheaf_mack}.
\end{lemma}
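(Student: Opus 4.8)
The plan is to verify the standard basis criterion for Construction \ref{Sheaf_mack}: since the sets $s(O(N,Nx))$ already cover the sheaf space $E$ (each germ $\overline{s}_x$ lies in one of them, by construction), it suffices to show that whenever a germ lies in the intersection of two such sets, there is a third set of the same form containing that germ and contained in the intersection.

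First I would fix a germ $g$ in $s_1(O(N_1,N_1K_1)) \cap s_2(O(N_2,N_2K_2))$, and let $x = p(g)$ be its base point, so that $x \in O(N_1,N_1K_1) \cap O(N_2,N_2K_2)$ and $g = \overline{s_1}_x = \overline{s_2}_x$ in $F_x$, where each $s_i$ is a representative lying in $M(G/L_i)_{(x)}$ for some open subgroup $L_i \supseteq x$. By Proposition \ref{nbhdbasis} (and the shrinking argument of Proposition \ref{basis}, applied with ambient group $G$ and point $x$) the set $O(N_1 \cap N_2,(N_1\cap N_2)x)$ is a neighbourhood of $x$ contained in $O(N_1,N_1K_1)\cap O(N_2,N_2K_2)$; throughout I will only ever shrink $N$ below $N_1 \cap N_2$, so that $O(N,Nx)$ stays inside this intersection.

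Next comes the main step. Because $F_x = \underset{J\geq_O x}{\colim}\, M(G/J)_{(x)}$ is a filtered colimit and $\overline{s_1}_x = \overline{s_2}_x$, there is an open subgroup $J$ with $x \leq J \subseteq L_1 \cap L_2$ such that the images $\left(R^{L_1}_J\right)_{(x)}(s_1)$ and $\left(R^{L_2}_J\right)_{(x)}(s_2)$ coincide as germs at $(x)$ in the sheaf $M(G/J)$ over $SJ/J$ (Corollary \ref{MHsheaf}). Since $M(G/J)$ is a genuine sheaf, two sections with the same germ at the point $(x)$ already agree on some neighbourhood of $(x)$; choosing $N$ small enough (and below $N_1 \cap N_2$ as above) gives the equality of truncated sections $e^J_{O(N,Nx)} R^{L_1}_J(s_1) = e^J_{O(N,Nx)} R^{L_2}_J(s_2)$, where the description of restriction applied to an idempotent-truncated section is the one extracted in the proof of Lemma \ref{stalkwelldefined} (and, in its equivariant form, in Proposition \ref{restrictprop}). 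Write $t \in M(G/J)$ for a section representing this common value near $(x)$. Applying Corollary \ref{corsec} (which packages Lemmas \ref{contruct1} and \ref{contruct2}) to $t$ produces a section over $O(N,Nx)$ whose value at each $y \in O(N,Nx)$ equals the germ of $R^{L_1}_J(s_1)$ — hence of $s_1$ — in $F_y$, and equally the germ of $s_2$ in $F_y$. Thus $\overline{t}_y = \overline{s_1}_y$ and $\overline{t}_y = \overline{s_2}_y$ for every $y \in O(N,Nx)$, so
\[
t(O(N,Nx)) \subseteq s_1(O(N_1,N_1K_1)) \cap s_2(O(N_2,N_2K_2)),
\]
and this set contains $\overline{t}_x = \overline{s_1}_x = g$. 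That is exactly the basis criterion, so the sub-basis of Construction \ref{Sheaf_mack} is in fact a basis.

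The part I expect to be delicate is not any single calculation but the bookkeeping in the middle paragraph: reconciling the two layers of structure — the colimit over open subgroups $J$ with $x \leq J$, and, for each such $J$, the sheaf $M(G/J)$ living over the orbit space $SJ/J$ — and then using Corollary \ref{corsec} to transport the three relevant sections ($s_1$, $s_2$, and their common restriction $t$) to genuine sections over subsets of $SG$ in a way that makes them agree pointwise in the stalks $F_y$ for all $y$ near $x$, rather than merely at $x$ itself. The upgrade from "agree at the point $(x)$" to "agree on a neighbourhood" is precisely where the sheaf axioms for $M(G/J)$ (Corollary \ref{MHsheaf}) are indispensable, and it is the reason the argument does not collapse to a purely formal statement about colimits.
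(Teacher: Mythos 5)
Your proposal is correct and follows essentially the same route as the paper's proof: pass to a common level $J$ of the filtered colimit where the two germs agree, use the sheaf property of $M(G/J)$ over $SJ/J$ to upgrade agreement at $(x)$ to agreement (after truncating by an idempotent) on a basic neighbourhood $O(N,Nx)$ with $N\leq N_1\cap N_2$, and take the resulting section as the required basic set inside the intersection. Your write-up is somewhat more explicit than the paper's (which compresses the transport back to sections over $SG$ and the containment $O(N,Nx)\subseteq O(N_1,N_1K_1)\cap O(N_2,N_2K_2)$), but there is no substantive difference in the argument.
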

\begin{proof}
To prove the statement we need to show that if:
\begin{align*}
s_x\in s_1(O(N_1,N_1K_1))\cap s_2(O(N_2,N_2K_2))
\end{align*}
then there is a $t(O(N,NK))$ satisfying:
\begin{align*}
s_x\in t(O(N,NK)) \subseteq s_1(O(N_1,N_1K_1))\cap s_2(O(N_2,N_2K_2)). 
\end{align*}
Given any element
\begin{align*}
s_x\in s_1(O(N_1,N_1K_1))\cap s_2(O(N_2,N_2K_2)),
\end{align*}
it follows that $x\in O(N_1,N_1K_1)\cap O(N_2,N_2K_2)$ and ${s_1}_x={s_2}_x$ in $F_x$. It therefore follows that they are equal as germs in some sheaf $M(G/Nx)$ for some \\$N\leq N_1\cap N_2$. By definition of the stalk of a sheaf, there exists an open set $W$ upon which $e_Ws_1$ and $e_Ws_2$ agree. We can assume $W$ to be of the form $O(\tilde{N},\tilde{N}x)/Nx$ since these sets form a basis of $SNx/Nx$. By taking $s_1=t$ we have that 
\begin{align*}
t(O(\tilde{N},\tilde{N}x))\subseteq s_1(O(N_1,N_1K_1))\cap s_2(O(N_2,N_2K_2)).
\end{align*}
\end{proof}
We now verify that Construction \ref{Sheaf_mack} satisfies the desired conditions.
\begin{lemma}\label{Gspace}
If $M$ is a rational $G$-Mackey funtor then the construction $E$ as given in Construction \ref{Sheaf_mack} is a $G$-set.
\end{lemma}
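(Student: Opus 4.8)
The plan is to exhibit an explicit continuous $G$-action on the set $E = \coprod_{K\in SG} F_K$ from Construction \ref{Sheaf_mack}, and to check that it satisfies the unit and associativity axioms of a group action; continuity of the action is not yet needed at this stage (the lemma only asks that $E$ be a $G$-set), so I would defer that to a subsequent lemma and concentrate purely on the set-theoretic action. First I would recall that each $F_K$ is the colimit $\colim_{J\geq_O K} M(G/J)_{(K)}$, and that the conjugation maps $C_g$ of the Mackey functor $M$ give, for every open $J\supseteq K$, an isomorphism $C_g\colon M(G/J)\to M(G/gJg^{-1})$ which, after applying the Burnside-idempotent machinery (Corollary \ref{burncommute}, Proposition \ref{restrictprop}), descends to a map on stalks $M(G/J)_{(K)}\to M(G/gJg^{-1})_{(gKg^{-1})}$. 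Indeed the conjugation $C_g$ takes the idempotent $e_{O(N,NK)}$ to $e_{O(N,gNKg^{-1})}$ (it is the function pushed forward along the homeomorphism $K\mapsto gKg^{-1}$ of $SG$), so it carries the directed system defining $F_K$ isomorphically onto the one defining $F_{gKg^{-1}}$.

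The key steps, in order, would be: (1) define $g\colon F_K\to F_{gKg^{-1}}$ as the colimit over $J\geq_O K$ of the stalk-level maps induced by $C_g\colon M(G/J)\to M(G/gJg^{-1})$, noting that $gJg^{-1}\geq_O gKg^{-1}$ so the target genuinely lies in the system defining $F_{gKg^{-1}}$; (2) assemble these into a single map $\psi\colon G\times E\to E$, $(g,\overline{s}_K)\mapsto g(\overline{s}_K)\in F_{gKg^{-1}}$, and observe it commutes with the projection $p\colon E\to SG$ in the sense $p(g\cdot e)=g\,p(e)$; (3) verify the unit axiom: $C_e=\id$ on each $M(G/J)$ by condition (1) of Definition \ref{defM1}, so the induced map on $F_K$ is the identity; (4) verify associativity: $C_{gh}=C_g\circ C_h$ by the associativity condition of Definition \ref{defM1} (condition (2)), so passing to colimits gives $(gh)\cdot e = g\cdot(h\cdot e)$. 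Each of these is essentially a diagram chase using the functoriality of $\colim$ and the already-established relations among $C_g$, $R^H_K$, $I^H_K$.

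The main obstacle I anticipate is \emph{well-definedness} of the colimit map in step (1): one must confirm that the maps $C_g\colon M(G/J)_{(K)}\to M(G/gJg^{-1})_{(gKg^{-1})}$ are compatible with the transition maps $(R^{J'}_J)_{(K)}$ of the colimit system, i.e.\ that the squares
\begin{align*}
\xymatrix{M(G/J')_{(K)}\ar[r]^{C_g}\ar[d]_{(R^{J'}_J)_{(K)}}&M(G/gJ'g^{-1})_{(gKg^{-1})}\ar[d]^{(R^{gJ'g^{-1}}_{gJg^{-1}})_{(gKg^{-1})}}\\M(G/J)_{(K)}\ar[r]_{C_g}&M(G/gJg^{-1})_{(gKg^{-1})}}
\end{align*}
commute. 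This follows from the equivariance condition of Definition \ref{defM1}, namely $R^{gJ'g^{-1}}_{gJg^{-1}}\circ C_g = C_g\circ R^{J'}_J$, once one checks that the germ construction in Lemma \ref{stalkwelldefined} is natural with respect to conjugation --- which in turn uses that $C_g$ intertwines the Burnside idempotents $e_{O(N,NK)}$ and $e_{O(N,gNKg^{-1})}$ as in Proposition \ref{restrictprop} and the surrounding discussion. A secondary subtlety is that a given class $\overline{s}_K\in F_K$ has many representatives $s_K\in M(G/L)_{(K)}$ for various open $L\supseteq K$; one must check the action is independent of this choice, but this is immediate once the compatibility squares above commute. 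Having dealt with these, steps (2)--(4) are routine.
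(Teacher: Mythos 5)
Your proposal is correct and follows essentially the same route as the paper's proof: the conjugation maps $C_g$ of the Mackey functor induce maps on stalks $M(G/J)_{(K)}\rightarrow M(G/gJg^{-1})_{(gKg^{-1})}$, these pass to the colimits $F_K\rightarrow F_{gKg^{-1}}$ by the universal property, and unitality and associativity follow from $C_e=\id$ and $C_{gh}=C_g\circ C_h$. The only difference is that you spell out the compatibility of $C_g$ with the transition maps of the colimit system (via the equivariance axiom and the intertwining of idempotents), a verification the paper subsumes under ``universal properties of colimits''; your deferral of continuity to a later lemma also matches the paper's structure.
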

\begin{proof}
Clearly $E$ is a topological space as we have a set where the topology is generated by a given sub-basis. We begin by showing that $E$ has a $G$-action. If $g\in G$ then we have a map $C_g\colon M(G/J)\rightarrow M(G/gJg^{-1})$ using the conjugacy map for the Mackey functor. This further induces $g\colon M(G/J)_{(K)}\rightarrow M(G/gJg^{-1})_{(gKg^{-1})}$, and using the universal properties of colimits this gives a map $F_K\rightarrow F_{gKg^{-1}}$ for each $K\in SG$. This gives a map $g:E\rightarrow E$. The associativity of this $G$-action follows from the associativity condition of the conjugation maps for Mackey functors. The action is clearly unital since the conjugation map $C_e$ is always the identity.
\end{proof}
\begin{lemma}\label{ctsGact}
The $G$-action of the $G$-set given in Construction \ref{Sheaf_mack} is continuous. 
\end{lemma}
\begin{proof}
We next want to check that the $G$-action constructed is continuous, which involves showing that the map $\psi:G\times E\rightarrow E$ given by $(g,e)\mapsto ge$ is continuous. We need only prove that the preimage of the basis subsets of $E$ are open, so take any $s(O(N,NK))$ for $s\in M(G/NK)$. Take any point $(g,t_y)$ in $\psi^{-1}(s(O(N,NK)))$. By definition it follows that $(gt)_{gyg^{-1}}=s_{gyg^{-1}}$ in $F_{gyg^{-1}}$ and that they are therefore equal as germs in some sheaf $M(G/N_1gyg^{-1})$ for some $N_1\leq N$. Similar to Lemma \ref{basis_sheaf} we can find $N_2\leq N_1$ and representatives such that $(gt)_{|_{O(N_2,N_2gyg^{-1})}}=s_{|_{O(N_2,N_2gyg^{-1})}}$. Therefore we also have $t_{|_{O(N_2,N_2y)}}=(g^{-1}s)_{|_{O(N_2,N_2y)}}$. Observe also that these sections have $N_2$-invariant domains and are obtained from an $N_2$-fixed module $M(G/N_1y)$. We can conclude that the set:
\begin{align*}
W=gN_2\times t(O(N_2,N_2y))
\end{align*}
is contained in $\psi^{-1}(s(O(N,NK)))$. Take any point $(gn,t_z)$ and observe that it has image $(gt)_{gnzn^{-1}g^{-1}}$. Since $gnzn^{-1}g^{-1}$ belongs to $O(N_2,N_2gyg^{-1})$ and $s$ and $(gt)$ are equal on this, it follows that:
\begin{align*}
(gt)_{gnzn^{-1}g^{-1}}=s_{gnzn^{-1}g^{-1}}\in s(O(N_2,N_2gyg^{-1}))\subseteq s(O(N,NK)).
\end{align*}
Therefore the open set $W$ is contained in the preimage as required.
\end{proof}
\begin{lemma}\label{ctsGmap}
The projection map in Construction \ref{Sheaf_mack} is a continuous $G$-map.
\end{lemma}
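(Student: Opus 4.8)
The projection map in question is $p\colon E\to SG$ sending a germ $\overline{s}_x\in F_x$ to $x$; the plan is to verify separately that it is $G$-equivariant and that it is continuous.

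For $G$-equivariance there is essentially nothing to do beyond unwinding Construction \ref{Sheaf_mack} and Lemma \ref{Gspace}. Recall that the $G$-action on $E$ is built so that $g$ restricts to a map $F_K\to F_{gKg^{-1}}$ for each closed subgroup $K$, induced from the conjugation maps $C_g$ of the Mackey functor via the universal property of the colimits defining the $F_K$. Hence for $\overline{s}_K\in F_K$ we have $g\overline{s}_K\in F_{gKg^{-1}}$, so $p(g\overline{s}_K)=gKg^{-1}=g\cdot p(\overline{s}_K)$, where $g\cdot(-)$ denotes the action on $SG$ from Construction \ref{spaceclosedsubgroup}. Thus $p$ commutes with the two $G$-actions.

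For continuity it suffices to check that the preimage of each member of a basis of $SG$ is open. By Proposition \ref{basis} the sets $O(N,NK)$ form such a basis, so fix one and take any $\overline{s}_x\in p^{-1}(O(N,NK))$, so that $x\in O(N,NK)$. Since by Proposition \ref{nbhdbasis} the sets $O(N',N'x)$ with $N'\unlhd G$ open form a neighbourhood basis of $x$, we may choose $N'$ with $O(N',N'x)\subseteq O(N,NK)$, and after intersecting with a suitable open normal subgroup we may also assume $N'$ is small enough that a representative $s_x\in M(G/L)$ of $\overline{s}_x$ (for some open $L\geq N'x$) determines, via Corollary \ref{corsec}, a section $s$ over $O(N',N'x)$. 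Then $s(O(N',N'x))$ is one of the basic open neighbourhoods of $\overline{s}_x$ described in Construction \ref{Sheaf_mack} and Lemma \ref{basis_sheaf}, and by construction every germ appearing in it lies over a point of $O(N',N'x)\subseteq O(N,NK)$. Hence $s(O(N',N'x))\subseteq p^{-1}(O(N,NK))$, which shows that $p^{-1}(O(N,NK))$ is a union of basic open sets and therefore open.

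I do not expect a genuine obstacle here; the only point requiring a little care is matching the neighbourhood basis of $x$ in $SG$ with the basic open sets of $E$, and this bookkeeping is exactly what Corollary \ref{corsec} and Lemma \ref{basis_sheaf} were set up to handle. Combining the two parts gives that $p$ is a continuous $G$-map, completing the lemma.
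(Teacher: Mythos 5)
Your proposal is correct and follows essentially the same route as the paper: equivariance is the same direct unwinding of the action on stalks, and continuity is shown exactly as in the paper by exhibiting, around each germ in $p^{-1}(O(N,NK))$, a basic open set of the form $s(O(N',N'x))$ with $O(N',N'x)\subseteq O(N,NK)$, so that the preimage is a union of basic opens.
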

\begin{proof}
We next need to verify certain properties of the projection map $p\colon E\rightarrow SG$. We first show that $p$ is a $G$-equivariant map. To see this take any $g\in G$ and $s_K\in E$ where $s_K\in F_K$. Then $g(s_K)$ belongs to $F_{gKg^{-1}}$ and hence $p(g(s_K))=gKg^{-1}$. On the other hand $g(p(s_K))=g(K)=gKg^{-1}$ which proves the required equality.

We now prove that $p$ is continuous. Given a sub-basic open subset of $SG$ of the form $O(N,NK)$, we have that $p^{-1}(O(N,NK))=\underset{y\in O(N,NK)}{\coprod}F_y$, so to prove continuity we need to show that $\underset{y\in O(N,NK)}{\coprod}F_y$ is open. We do this by showing it equals its interior and this requires showing that each point is contained in an open subset in $\underset{y\in O(N,NK)}{\coprod}F_y$. Take any $\left[\overline{t}_x\right]$ in $F_x\subseteq \underset{y\in O(N,NK)}{\coprod}F_y$. As seen earlier when constructing the neighbourhood generating sets in Construction \ref{Sheaf_mack}, we can assume $\overline{t}_x$ gives a continuous section $t:A\rightarrow \underset{z\in A}{\coprod}F_z$. We can assume that $A$ is of the form $O(N^{\prime},N^{\prime}x)$ for $N^{\prime}\leq N$ since we can restrict to a basic open subset of this form. It follows by definition that $t(A)\subseteq \underset{z\in A}{\coprod}F_z\subseteq \underset{y\in O(N,NK)}{\coprod}F_z$ is open as required.
\end{proof}
We now prove that the projection map $p$ is a local homeomorphism.
\begin{lemma}\label{localhomeo}
The projection map in Construction \ref{Sheaf_mack} is a local homeomorphism.
\end{lemma}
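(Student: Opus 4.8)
The plan is to exhibit, for each point $s_K \in E$ lying in the stalk $F_K$, an open neighbourhood of $s_K$ on which the projection $p$ restricts to a homeomorphism onto an open subset of $SG$. The natural candidate is one of the basic open sets $s(O(N,NK))$ from Construction \ref{Sheaf_mack}, which by Lemma \ref{basis_sheaf} form a basis for the topology on $E$. So first I would fix $s_K \in F_K$, choose a representative $s_K \in M(G/L)_{(K)}$ for some open $L \geq K$, and pass via Corollary \ref{corsec} to the associated section $s\colon O(N,NK) \rightarrow \coprod_{y \in O(N,NK)} M(G/NK)_{(y)}$, giving the basic open set $s(O(N,NK)) = \{\overline{s}_y \mid y \in O(N,NK)\}$ containing $s_K$.

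The key steps are then: (1) show $p$ restricted to $s(O(N,NK))$ is a bijection onto $O(N,NK)$; surjectivity is immediate since $\overline{s}_y$ projects to $y$ for each $y \in O(N,NK)$, and injectivity holds because two points of the set $s(O(N,NK))$ lying over the same $y$ are both equal to $\overline{s}_y$ by definition. (2) Show $p|_{s(O(N,NK))}$ is continuous and open. Continuity is inherited from continuity of $p$ (Lemma \ref{ctsGmap}). For openness, I would intersect $s(O(N,NK))$ with another basic set $t(O(N',N'K'))$ and use the argument of Lemma \ref{basis_sheaf}: such an intersection is a union of basic sets of the form $r(O(N'',N''x))$, each of which projects under $p$ to $O(N'',N''x)$, which is open in $SG$ by Proposition \ref{basis}. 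Hence $p$ carries a basis of the subspace topology on $s(O(N,NK))$ to open sets. (3) Conclude that $p|_{s(O(N,NK))}$ is a homeomorphism onto the open subset $O(N,NK) \subseteq SG$, so each point of $E$ has a neighbourhood on which $p$ is a homeomorphism onto its (open) image, which is exactly the definition of a local homeomorphism.

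The main obstacle I anticipate is the openness of $p|_{s(O(N,NK))}$, i.e. verifying that the image of an arbitrary open subset of the subspace $s(O(N,NK))$ is open in $SG$. A general open subset of this subspace is an arbitrary union of sets of the form $s(O(N,NK)) \cap t(O(N',N'K'))$, so the crux is to show each such intersection projects to an open set; by the basis argument of Lemma \ref{basis_sheaf} it is itself a union of basic open sets $r(O(N'',N''x))$, and $p(r(O(N'',N''x))) = O(N'',N''x)$ is a basic compact-open subset of $SG$ by Proposition \ref{basis} and Proposition \ref{nbhdbasis}. A minor subtlety is checking that the representatives agree on overlaps so that these projections genuinely cover the image; this is handled exactly as in the proof of Lemma \ref{basis_sheaf}, where one finds a common refinement $O(\tilde N, \tilde N x)$ on which the two germs coincide. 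Once openness is in hand, the homeomorphism claim and hence the local homeomorphism property follow formally, completing the verification that $(E,p)$ from Construction \ref{Sheaf_mack} is a $G$-sheaf space.
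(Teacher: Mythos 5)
Your proposal is correct and takes essentially the same approach as the paper: restrict $p$ to a basic neighbourhood $s(O(N,NK))$, observe that this restriction is bijective and continuous, and reduce the whole lemma to showing the restriction is open. The only cosmetic difference is in that last step, where the paper uses bijectivity to rewrite the image of $\bigl(\bigcup_\alpha s^{\alpha}(A_\alpha)\bigr)\cap s(O(N,NK))$ as $\bigl(\bigcup_\alpha A_\alpha\bigr)\cap O(N,NK)$ directly, while you refine the intersections of basic sets into unions of basic sets via Lemma \ref{basis_sheaf} before projecting; both arguments are sound.
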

\begin{proof}
Given any point $\overline{s}_x$ in $F_x$, this has a neighbourhood of the form \\$s(O(N,NK))$ as seen previously. Let $f=p_{|s(O(N,NK))}$, we will prove that 
\begin{align*}
f:s(O(N,NK))\rightarrow O(N,NK)
\end{align*}
is a homeomorphism, and since it is clearly bijective and continuous we need only show that it is open.

Take any open set of the form $\left(\underset{\alpha}{\bigcup} U_{\alpha}\right)\cap s(O(N,NK))$ where each $U_{\alpha}=s^{\alpha}(A_{\alpha})$, which is a typical open subset of the domain of $f$ under the subspace topology. Since $f$ is bijective we have that:
\begin{align*}
f\left(\left(\underset{\alpha}{\bigcup} U_{\alpha}\right)\bigcap s(O(N,NK))\right)=f\left(\underset{\alpha}{\bigcup} U_{\alpha}\right)\bigcap f\left(s(O(N,NK))\right), 
\end{align*}
so we need only show that $f\left(\underset{\alpha}{\bigcup} U_{\alpha}\right)\cap O(N,NK)$ is open. This equals:
\begin{align*}
\left(\underset{\alpha}{\bigcup} f\left(U_{\alpha}\right)\right)\cap O(N,NK)=\left(\underset{\alpha}{\bigcup}A_{\alpha}\right)\,\cap O(N,NK)
\end{align*}
which is open in $O(N,NK)$ as required.
\end{proof}
\begin{lemma}\label{Qmod}
If $U$ is any open subset of $SG$, then the set of sections over $U$ into the space $E$ from Construction \ref{Sheaf_mack} is a $\mathbb{Q}$-module.
\end{lemma}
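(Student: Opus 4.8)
The plan is to put a $\mathbb{Q}$-module structure on $\Gamma(U,E)$ by defining all operations pointwise on stalks and then checking that the resulting functions are again continuous sections. First I would observe that each stalk $F_K=\underset{J\geq_O K}{\colim}M(G/J)_{(K)}$ is itself a $\mathbb{Q}$-module: the indexing poset of open subgroups containing $K$ is directed (via intersection), and each $M(G/J)_{(K)}$ is a stalk of a sheaf of $\mathbb{Q}$-modules by Corollary \ref{MHsheaf}, so $F_K$ is a filtered colimit of $\mathbb{Q}$-modules, hence a $\mathbb{Q}$-module with $\mathbb{Q}$-linear colimit structure maps. Thus for $s,t\in\Gamma(U,E)$ and $q\in\mathbb{Q}$ I can define $s+t$, $qs$ and the constant $0$ by $x\mapsto s_x+t_x$, $x\mapsto qs_x$ and $x\mapsto 0\in F_x$, all operations taken in $F_x$. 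The $\mathbb{Q}$-module axioms for these operations on $\Gamma(U,E)$ then hold automatically because they hold in each $F_x$; so the only thing left to prove is that $s+t$, $qs$ and $0$ are continuous, i.e. genuinely lie in $\Gamma(U,E)$.

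The zero function and the scalar multiples are the easy cases. Near any $x\in U$, continuity of $s$ together with the local homeomorphism property (Lemma \ref{localhomeo}) lets me shrink to an open set $O(N,Nx)$ on which $s$ coincides with the section of $E$ built, via Lemmas \ref{contruct1} and \ref{contruct2}, from a germ $\sigma_x\in M(G/L)_{(x)}$ representing $s_x$, for some open $L\geq x$ with $Nx\leq L$; this is exactly how the generating neighbourhoods in Construction \ref{Sheaf_mack} arise. Since $M(G/L)$ is a sheaf of $\mathbb{Q}$-modules, the germs $q\sigma_x$ and $0$ are represented by sections of $M(G/L)$, and running these through the same construction shows that on $O(N,Nx)$ the functions $qs$ and $0$ agree with continuous sections of $E$; hence $qs,0\in\Gamma(U,E)$.

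The main obstacle is continuity of $s+t$, where one must be slightly careful because $s$ and $t$ a priori come from germs in different sheaves $M(G/L)$ and $M(G/L')$. Fixing $x\in U$ and arguing as above, after shrinking I may assume $s$ and $t$ agree on $O(N,Nx)$ with the $E$-sections induced from germs $\sigma_x\in M(G/L)_{(x)}$ and $\tau_x\in M(G/L')_{(x)}$, and by enlarging $N$ downwards I may assume $Nx\leq L\cap L'$. Applying the restriction maps $R^{L}_{Nx}$ and $R^{L'}_{Nx}$ to representative sections as in Lemma \ref{contruct1} (using the idempotent computation of Proposition \ref{restrictprop} behind Lemma \ref{stalkwelldefined}) replaces $\sigma_x,\tau_x$ by germs of the single sheaf $M(G/Nx)$ over $SNx/Nx$ still representing $s_x$ and $t_x$ in $F_x$. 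Now $M(G/Nx)$ is a sheaf of $\mathbb{Q}$-modules (Corollary \ref{MHsheaf}), so the pointwise sum of these two sections of $M(G/Nx)$ is again a section of $M(G/Nx)$, and Corollary \ref{corsec} turns it into a section $\rho\colon O(N,Nx)\to E$ with $\rho(y)$ equal in $F_y$ to the image of $\sigma_{(y)}+\tau_{(y)}$. Because the colimit maps into $F_y$ are $\mathbb{Q}$-linear, and because $s$ and $t$ agree near $x$ with the sections induced by $\sigma_x$ and $\tau_x$, we obtain $(s+t)_y=s_y+t_y=\rho(y)$ for all $y$ in $O(N,Nx)$; thus $s+t$ coincides near $x$ with the continuous section $\rho$ and is therefore continuous at $x$. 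As $x\in U$ was arbitrary, $s+t\in\Gamma(U,E)$, which completes the verification that $\Gamma(U,E)$ is a $\mathbb{Q}$-module.
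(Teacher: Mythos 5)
Your proof is correct and takes essentially the same route as the paper: the key step in both is that on a small enough basic open set $O(N,NK)$ the subgroup $NK$ is maximal, so representatives of both sections can be restricted into the single sheaf $M(G/NK)$ and added there using its sheaf-of-$\mathbb{Q}$-modules structure. The only difference is presentational: you define the operations stalk-wise and verify continuity locally via Lemmas \ref{contruct1}, \ref{contruct2} and Corollary \ref{corsec}, whereas the paper defines the $\mathbb{Q}$-module structure on sections over basic open subsets and extends to general open sets as a limit.
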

\begin{proof}
We will show that there is a $\mathbb{Q}$-module structure on the sections over the basic open subsets, then this will be true for more general subsets since the set of sections over a more general $U$ are a limit of the sections over the basic subsets contained in $U$. Suppose we have two sections $\overline{s}$ and $\overline{t}$ over some $O(N,NK)$.

Since $NK$ is maximal in $O(N,NK)$ and we know that representatives $s$ and $t$ each belong to some $M(G/J)$ for $J$ containing $NK$, we can restrict to have them both belong to $M(G/NK)$. Therefore we use additivity of sections over the sheaf $M(G/NK)$ to define $\overline{s+t}$.
\end{proof}
\begin{lemma}\label{Weyl}
The $G$-space given in Construction \ref{Sheaf_mack} satisfies that $p^{-1}(K)$ is $K$-fixed for each $K \in SG$.
\end{lemma}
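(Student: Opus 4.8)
The plan is to compute the fibre $p^{-1}(K)$ directly and then exploit the conjugation axiom for Mackey functors. Recall from Construction \ref{Sheaf_mack} that $p^{-1}(K)=F_K=\underset{J\geq_O K}{\colim}M(G/J)_{(K)}$, and that by Lemma \ref{Gspace} the action of $g\in G$ on $E$ is the map induced, via the universal property of the colimit, by the conjugation maps $C_g\colon M(G/J)\rightarrow M(G/gJg^{-1})$ of the Mackey functor $M$, passed to stalks as $C_g\colon M(G/J)_{(K)}\rightarrow M(G/gJg^{-1})_{(gKg^{-1})}$.

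First I would record the elementary point that if $k\in K$ then $k\cdot K=kKk^{-1}=K$, so $k$ fixes $K$ as a point of $SG$ and the $K$-action on $E$ does indeed restrict to an action on $F_K=p^{-1}(K)$. Moreover, for every open subgroup $J$ occurring in the colimit defining $F_K$ we have $K\leq J$, hence $k\in J$ and $kJk^{-1}=J$. Thus $C_k$ restricts to an endomorphism $C_k\colon M(G/J)\rightarrow M(G/J)$ for each such $J$, and to $C_k\colon M(G/J)_{(K)}\rightarrow M(G/J)_{(K)}$ on the relevant stalks.

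Next I would invoke the first axiom of Definition \ref{defM1}, which says that $C^J_h=\id_{M(J)}$ for every $h\in J$. Since $k\in K\leq J$, this yields $C_k=\id_{M(G/J)}$, and hence $C_k=\id$ on each stalk $M(G/J)_{(K)}$. These identity maps are trivially compatible with the restriction maps $R^J_H$ used as the transition maps in the colimit (compatibility would in any case follow from the equivariance axiom of Definition \ref{defM1}), so the map they induce on $F_K$ is the identity. Therefore $p^{-1}(K)=F_K$ carries the trivial $K$-action, i.e. is $K$-fixed.

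I do not expect any genuine obstacle here; the only step needing a moment's care is the bookkeeping observation that $kJk^{-1}=J$ whenever $k\in K\leq J$, which is precisely what makes the conjugation axiom applicable to every term of the colimit. Together with Lemmas \ref{Gspace}--\ref{Qmod}, this lemma upgrades the construction $E$ from a $G$-sheaf of $\mathbb{Q}$-modules over $SG$ to a Weyl-$G$-sheaf in the sense of Definition \ref{Weyldefn}, so that each stalk $F_K$ is in fact a $W_G(K)$-module.
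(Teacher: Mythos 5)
Your proof is correct and takes essentially the same route as the paper: the paper's own (very short) argument also observes that any germ in $F_K=p^{-1}(K)$ is represented in some $M(G/J)_{(K)}$ with $K\leq J$, that $M(G/J)$ is $J$-fixed (so $C_k=\id$ for $k\in K\leq J$ by the first axiom of Definition \ref{defM1}), and that the $K$-invariance of the neighbourhood basis keeps the germ at the same stalk. Your additional bookkeeping ($kJk^{-1}=J$ for $k\in K\leq J$, and compatibility of the identity maps with the colimit transition maps) simply makes explicit what the paper leaves implicit.
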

\begin{proof}
If $K\in SG$, then $p^{-1}(K)=F_K$. Notice that $F_K=\underset{J}{\colim} M(G/J)_{(K)}$ where $J$ ranges over all of the open subgroups containing $K$. Take any $\left[s_K\right]\in F_K$ which has representative $s_K$ in some $M(G/J)_{(K)}$. This germ is fixed by $K$ since $M(G/J)$ is $J$-fixed, $K\leq J$ and the neighbourhood basis for $K$ is $K$-invariant.
\end{proof}
\begin{proposition}\label{G_sheaf}
Every Mackey functor $M$ over a profinite group $G$ determines a $\text{Weyl-}G$-sheaf\index{$\text{Weyl-}G$-sheaf} space. 
\end{proposition}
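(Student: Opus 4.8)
The plan is to verify that the data assembled in Construction \ref{Sheaf_mack} satisfies Definition \ref{defn2} together with the extra condition in Definition \ref{Weyldefn}. First I would record that the underlying set $E=\coprod_{K\in SG}F_K$ is well-defined by Lemma \ref{stalkwelldefined}, and that Lemma \ref{basis_sheaf} upgrades the generating sub-basis of Construction \ref{Sheaf_mack} to an honest basis, so $E$ is genuinely a topological space whose topology is controlled by the sets $s(O(N,NK))$.

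Next I would assemble the facts already established. Lemma \ref{Gspace} and Lemma \ref{ctsGact} give that $E$ carries a continuous $G$-action; Lemma \ref{ctsGmap} gives that the projection $p\colon E\rightarrow SG$ is a continuous $G$-equivariant map; and Lemma \ref{localhomeo} gives that $p$ is a local homeomorphism. This verifies points $1$, $2$ and $3$ of Definition \ref{defn2}. For the $\mathbb{Q}$-module structure, Lemma \ref{Qmod} shows that the sections over any open subset form a $\mathbb{Q}$-module; since these operations are computed by restricting two given germs to representatives in a common $M(G/NK)$ and using additivity of sections of that sheaf, the pointwise sum and scalar multiple of continuous sections are again continuous sections, so the stalkwise $\mathbb{Q}$-module operations extend continuously as required. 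That each map $g\colon F_x\rightarrow F_{gx}$ is a $\mathbb{Q}$-module homomorphism follows because it is induced, via the universal property of the colimit defining $F_K$, from the conjugation maps $C_g$ of $M$, which are $\mathbb{Q}$-module maps by Definition \ref{defM1}; this gives point $4$. Finally, Lemma \ref{Weyl} states that $p^{-1}(K)=F_K$ is $K$-fixed for every $K\in SG$, which is exactly the condition in Definition \ref{Weyldefn} identifying $E$ as a Weyl-$G$-sheaf space.

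The genuine difficulty is already absorbed into the supporting lemmas, principally Lemma \ref{ctsGact} (continuity of the $G$-action, which rests on Proposition \ref{restrictprop} and the bookkeeping of idempotent supports used in Lemma \ref{stalkwelldefined}) and Lemma \ref{localhomeo}. At the level of this proposition the only remaining subtlety is to confirm that the $\mathbb{Q}$-module operations are continuous on the space $E$, rather than merely defined on each stalk separately; this is handled exactly as in the proof of Lemma \ref{Qmod}. Combining all of the above, $E$ with its projection $p$ is a Weyl-$G$-sheaf space, so the assignment of Construction \ref{Sheaf_mack} does what is claimed.
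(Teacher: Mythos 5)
Your proposal is correct and follows essentially the same route as the paper, which likewise deduces the result by combining Lemmas \ref{Gspace}, \ref{ctsGact}, \ref{ctsGmap}, \ref{localhomeo}, \ref{Qmod} and \ref{Weyl} applied to Construction \ref{Sheaf_mack}. The extra detail you supply on point $4$ of Definition \ref{defn2} and on the continuity of the $\mathbb{Q}$-module operations is consistent with the supporting lemmas and does not change the argument.
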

\begin{proof}
The $G$-space given in Construction \ref{Sheaf_mack} is a $G$-sheaf space, and it follows from Lemmas \ref{Gspace}, \ref{ctsGact}, \ref{ctsGmap}, \ref{localhomeo}, \ref{Qmod} and \ref{Weyl} that it is a Weyl-$G$-sheaf space.
\end{proof}
\begin{theorem}\label{macktosheaf}
There is a functor:
\begin{align*}
\weyl:\mackey_{\mathbb{Q}}(G)\rightarrow \Weyl_{\mathbb{Q}}(SG)
\end{align*}
which sends a Mackey functor $M$ over $G$ to a $\text{Weyl-}G$-sheaf denoted $\weyl(M)$, where $\weyl(M)$ is defined using Proposition \ref{G_sheaf}. We define the sheaf at each open neighbourhood $U$ to be the collection of continuous sections from $U$ to $E$, where the explicit construction for $E$ is given in Construction \ref{Sheaf_mack}.  
\end{theorem}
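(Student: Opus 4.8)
The assignment on objects is already provided by Proposition \ref{G_sheaf}, so the plan is to define $\weyl$ on morphisms and then verify functoriality. Given a morphism $f\colon M\to M'$ of rational $G$-Mackey functors, it consists of $\mathbb{Q}$-module maps $f_J\colon M(G/J)\to M'(G/J)$ for each open $J\leq G$, compatible with restriction, induction and conjugation. The first step I would take is to observe that each $f_J$ is $A(J)$-linear: by Proposition \ref{mackeyact} the $A(J)$-action on $M(G/J)$ is assembled from the composites $I^J_L\circ R^J_L$, and since $f$ commutes with induction and restriction it commutes with this action. Hence, via Theorem \ref{burnchar} and Corollary \ref{MHsheaf}, $f_J$ is a morphism of sheaves of $\mathbb{Q}$-modules over $SJ/J$, and in particular it induces $\mathbb{Q}$-module maps on stalks $(f_J)_{(K)}\colon M(G/J)_{(K)}\to M'(G/J)_{(K)}$ for every $K\in SJ$.

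Next I would assemble these into a map $\theta_f\colon E\to E'$ of the underlying $G$-sheaf spaces from Construction \ref{Sheaf_mack}. Because $f$ commutes with restriction, the maps $(f_J)_{(K)}$ are compatible with the structure maps $\left(R^J_H\right)_{(K)}$ of the colimit system defining $F_K=\underset{J\geq_O K}{\colim}M(G/J)_{(K)}$, so they induce a $\mathbb{Q}$-module map $\theta_{f,K}\colon F_K\to F'_K$ for each $K\in SG$; let $\theta_f$ be their disjoint union. It is immediate that $p'\circ\theta_f=p$, that $\theta_f$ is a $\mathbb{Q}$-module map on each stalk (a colimit of $\mathbb{Q}$-linear maps), and, since the $G$-action on $E$ is built in Lemma \ref{Gspace} from the conjugation maps $C_g$ of $M$ and $f$ commutes with conjugation, that $\theta_f$ is $G$-equivariant.

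The hard part will be continuity of $\theta_f$. A basic open of $E'$ has the form $s(O(N,NK))$ where, after restricting along an appropriate $R^L_{NK}$ and applying Corollary \ref{corsec}, $s$ is a section of $\underset{y\in O(N,NK)}{\coprod}M'(G/NK)_{(y)}$. Given $\overline{t}_x\in\theta_f^{-1}(s(O(N,NK)))$, one has $x\in O(N,NK)$ and $(f(t))_x=s_x$ in $F'_x$; hence these germs already agree in some sheaf $M'(G/N_1x)$ with $N_1\leq N$, and therefore as sections over some basic neighbourhood which we may take to be $O(\tilde N,\tilde N x)$ with $\tilde N\leq N_1$. Since $f$ intertwines the structure maps, $\theta_f$ carries the basic open $t(O(\tilde N,\tilde N x))$ into $s(O(N,NK))$, exhibiting an open neighbourhood of $\overline{t}_x$ inside the preimage. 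This argument runs parallel to the proofs of Lemma \ref{basis_sheaf} and Lemma \ref{ctsGact}, the only real care being to keep track of the pullback topology of Construction \ref{pullbacksheaf} through the colimit defining the stalks.

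Finally, functoriality is routine: $\weyl(\id_M)$ induces the identity on every stalk $F_K$ and hence equals $\id_{\weyl(M)}$, while $\weyl(g\circ f)$ and $\weyl(g)\circ\weyl(f)$ agree stalkwise because $(g\circ f)_J=g_J\circ f_J$ and passing to stalks and to filtered colimits is functorial. Since $\weyl(M)$ is a Weyl-$G$-sheaf by Proposition \ref{G_sheaf} and the category of Weyl-$G$-sheaves is a full subcategory of $G$-sheaves over $SG$, defining $\weyl(f)$ as the morphism of $G$-sheaves induced by post-composition with $\theta_f$ on sections gives the desired functor into $\Weyl_{\mathbb{Q}}(SG)$.
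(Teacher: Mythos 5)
Your proposal is correct and follows essentially the same route as the paper: induce the stalk maps from the compatibility of $f$ with the Burnside idempotents (built from $I^J_L\circ R^J_L$), prove continuity by finding a smaller basic neighbourhood on which the sections agree as in Lemma \ref{basis_sheaf}, and note functoriality stalkwise. The only difference is cosmetic: you spell out the $G$-equivariance and the compatibility with the colimit system slightly more explicitly than the paper, which states the induced stalk map and continuity argument directly.
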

\begin{proof}
This will follow immediately from Proposition \ref{G_sheaf} when we define what this does on morphisms of Mackey functors. Let $f:M\rightarrow \overline{M}$ be a morphism of Mackey functors. For each open subgroup $NK$, where $K$ is a closed subgroup and $N$ open and normal, we have a morphism of $\mathbb{Q}$-modules, \\$f_{NK}:M(G/NK)\rightarrow \overline{M}(G/NK)$. Let $s\in M(G/NK)$, then:
\begin{align*}
f_{NK}(e_{O(N,NK)}(s))&=f_{NK}\left(\underset{NA\leq NK}{\sum}q_AI^{NK}_{NA}R^{NK}_{NA}(s)\right)\\
&=\underset{NA\leq NK}{\sum}q_AI^{NK}_{NA}R^{NK}_{NA}f_{NK}(s)=e_{O(N,NK)}f_{NK}(s).
\end{align*}
Therefore $f$ induces a map of stalks:
\begin{align*}
f_K:\weyl(M)_K\rightarrow \weyl(\overline{M})_K.
\end{align*}
This shows that we have a map of sets which are compatible with the local homeomorphisms. To show that this is a morphism of sheaf spaces it is sufficient to prove that it is continuous by \cite[Lemma 2.3.5]{Tennison}.

Let $s(O(N,NK))$ be a typical open subset of $\weyl(\overline{M})$ and $t_y$ an element in the preimage of $s(O(N,NK))$ with respect to $\weyl(f)$. It follows that $f_y(t_y)=s_y$ in $\weyl(\overline{M})_y$ and $y\in O(N,NK)$. Similar to Lemma \ref{basis_sheaf}, this implies that there exists some open normal subgroup $N_1$ and open subset $V$ of $O(N,NK)$ such that $e_VR^{NK}_{N_1K}(f(t))=e_VR^{NK}_{N_1K}(s)$. This proves that $f(t)(V)=s(V)\subseteq s(O(N,NK))$ in $\weyl(\overline{M})$. In particular, $t(V)$ is contained in the preimage as required.

For functoriality of this construction, if $f$ is the identity morphism of Mackey functors, then the induced map of stalks will be the identity also. Therefore $\weyl$ maps the identity to the identity. The transitivity property follows from the identity $(h\circ f)_x=h_x\circ f_x$. 
\end{proof}

\chapter{Equivalence}
In this chapter we shall focus on proving that the two functors $\text{Mackey}(-)$ and $\text{Weyl}(-)$ are inverse equivalences (Theorem \ref{equivalencemain}). We will then look at some useful consequences of this equivalence at the end of the chapter. These consequences include allowing us to calculate products of Weyl-$G$-sheaves (Construction \ref{prodweyl}), and defining an adjunction $(L,R)$ between Weyl-$G$-sheaves and $G$-sheaves of $\mathbb{Q}$-modules (Proposition \ref{Weyladjunct}). 
\section{Equivalence of Categories}
In order to prove that the two functors are inverse equivalences we will begin by looking at how we can interpret the action of the Burnside ring on Mackey functors in a sheaf theoretic language.
\begin{lemma}\label{burnsheafrest}
Let $F$ be $G$-sheaf of $\mathbb{Q}$-modules, then the Mackey functor \\$\mackey(F)$ satisfies that $\left(\left[G/NA\right](s)\right)(x)=\left[G/NA\right](x)s(x)$.
\end{lemma}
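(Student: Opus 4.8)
The plan is to unwind the definition of the Burnside-ring action on $\mackey(F)(G/G)=F(SG)^G$ given in Proposition \ref{mackeyact}, substitute the explicit formulas for restriction and induction from Construction \ref{mackey_construct}, and then evaluate the resulting section at the point $x\in SG$. A useful preliminary observation is that $NA$ is open (it contains the open normal subgroup $N$), so $[G/NA]$ is genuinely a basis element of $A(G)$, $S(NA)$ is closed-open in $SG$ by Lemma \ref{subgroup_open}, and all the maps below are defined.

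By Proposition \ref{mackeyact}, the basis element $[G/NA]$ acts on $s\in F(SG)^G$ by $s\mapsto I^G_{NA}R^G_{NA}(s)$, and by Construction \ref{mackey_construct} we have $R^G_{NA}(s)=s|_{S(NA)}$ and $I^G_{NA}(\beta)=\sum_{1\le i\le n}h_i\overline{\beta}$, where $n=|G/NA|$, $\{h_iNA\mid 1\le i\le n\}$ is a left transversal for $NA$ in $G$, and $\overline{\beta}$ is extension by zero. Hence $[G/NA](s)=\sum_{1\le i\le n}h_i\overline{s|_{S(NA)}}$, and evaluating at $x$ (pointwise addition of germs in $F_x$) gives
\begin{align*}
\left([G/NA](s)\right)(x)=\sum_{1\le i\le n}h_i\cdot\left(\overline{s|_{S(NA)}}\right)\!\left(h_i^{-1}xh_i\right).
\end{align*}
Now the germ $\left(\overline{s|_{S(NA)}}\right)\!\left(h_i^{-1}xh_i\right)$ is $s\!\left(h_i^{-1}xh_i\right)$ when $h_i^{-1}xh_i\in S(NA)$, i.e.\ when $x\leq h_iNAh_i^{-1}$, and is zero otherwise. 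Since $s$ is $G$-fixed, Remark \ref{commuteaction} gives $h_i\bigl(s_{h_i^{-1}xh_i}\bigr)=s_{h_i(h_i^{-1}xh_i)h_i^{-1}}=s_x$, so the $i$-th summand equals $s(x)$ precisely when $x\leq h_iNAh_i^{-1}$ and is zero otherwise. Therefore
\begin{align*}
\left([G/NA](s)\right)(x)=\bigl|\{\,i\mid x\leq h_iNAh_i^{-1}\,\}\bigr|\cdot s(x).
\end{align*}

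To finish, I would identify the coefficient with a fixed-point count: since the cosets $h_iNA$ enumerate $G/NA$, and $h_iNA$ is fixed by the (left-multiplication) action of the subgroup $x$ if and only if $h_i^{-1}xh_i\subseteq NA$, i.e.\ $x\leq h_iNAh_i^{-1}$, the cardinality above equals $|(G/NA)^x|$. By Theorem \ref{burnchar} this is exactly the value $[G/NA](x)$ of the corresponding element of $C(SG/G,\mathbb{Q})$, which yields $\left([G/NA](s)\right)(x)=[G/NA](x)\,s(x)$. The argument is essentially bookkeeping; the one place that needs care is the step using $G$-fixedness, where the conjugation action on $SG$ must be tracked correctly so that Remark \ref{commuteaction} applies, together with the recognition that counting transversal representatives $h_i$ with $x\leq h_iNAh_i^{-1}$ is the same as counting the $x$-fixed points of $G/NA$.
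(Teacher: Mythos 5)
Your proposal is correct and follows essentially the same route as the paper's proof: write $[G/NA](s)$ as the sum of conjugated extensions-by-zero of $s|_{S(NA)}$ over a transversal of $G/NA$, evaluate at $x$, use $G$-fixedness of $s$ to identify each nonzero summand with $s(x)$, and count the nonzero summands as $|(G/NA)^x|=[G/NA](x)$ via Theorem \ref{burnchar}. The only differences are cosmetic (explicit transversal bookkeeping and the citation of Remark \ref{commuteaction} for the fixedness step).
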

\begin{proof}
First note that by definition $\left[G/NA\right](s)=\underset{gNA\in G/NA}{\sum}C_g\overline{s_{|_{S(NA)}}}$. Next note that $C_g\overline{s_{|_{S(NA)}}}(x)$ is defined to be $g\overline{s_{|_{S(NA)}}}(g^{-1}xg)$ which is zero for $x$ outside of $S(gNAg^{-1})$. Next notice that $C_g\overline{s_{|_{S(NA)}}}(x)=g\overline{s_{|_{S(NA)}}}(g^{-1}xg)=gs(g^{-1}xg)=s(x)$ since $s$ started out $G$-equivariant, as $s$ belongs to $M(G/G)$. Therefore the above boils down to $\underset{gNA\in G/NA}{\sum}\overline{s_{|_{S(gNAg^{-1})}}}$.

Now each summand of $\underset{gNA\in G/NA}{\sum}\overline{s_{|_{S(gNAg^{-1})}}}(x)$ is equal to $s(x)$ if and only if $x\in S(gNAg^{-1})$, otherwise it is zero. But $x\in S(gNAg^{-1})$ if and only if $g^{-1}xg\leq NA$. This in turn happens if and only if $gNA\in \left(G/NA\right)^x$. Hence the sum becomes $\underset{gNA\in\left(G/NA\right)^x}{\sum}\overline{s_{|_{S(gNAg^{-1})}}}(x)$ which equals $|\left(G/NA\right)^x|s(x)$. This by definition is $\left[G/NA\right](x)s(x)$.
\end{proof}
\begin{proposition}
If $F$ is a $G$-sheaf then $e_U\mackey(F)(G/H)$ is equal to the set $\left\lbrace \overline{s_{|_{U}}}\mid s \in \mackey(F)(G/H)\right\rbrace$.
\end{proposition}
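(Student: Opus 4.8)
The plan is to unravel the action of the idempotent $e_U \in A(H)$ on $\mackey(F)(G/H) = F(SH)^H$ using the explicit formula from Lemma \ref{burnsheafrest}, which expresses the action of a basis element $[G/NA]$ pointwise: $\bigl([G/NA](s)\bigr)(x) = [G/NA](x)\cdot s(x)$. Since every idempotent $e_{O(N,NK)}$ is a finite $\mathbb{Q}$-linear combination of such basis elements by Proposition \ref{idempotentform}, the action of $e_U$ on a section $s$ will also be pointwise multiplication by the function $e_U(x)$, which takes the value $1$ on (the $H$-saturation of) $U$ and $0$ elsewhere. Concretely, I would first reduce to the case where $U$ is a basic compact-open subset of $SH/H$ (or its preimage in $SH$), since a general open subset is handled by the equaliser/partition argument used in Proposition \ref{actsheaf}, and the statement is stable under the relevant gluing.

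First I would write $e_U = \sum_{NA \le NK} q_{NA}\,[G/NA]$ as in Proposition \ref{idempotentform}, apply the action to $s \in F(SH)^H$ term by term, and invoke Lemma \ref{burnsheafrest} to rewrite $\bigl(e_U(s)\bigr)(x) = \bigl(\sum_{NA} q_{NA}\,[G/NA](x)\bigr)s(x) = e_U(x)\,s(x)$ for every $x \in SH$. This shows $e_U(s)$ is the section which agrees with $s$ on $U$ (where $e_U(x) = 1$) and vanishes off $U$ (where $e_U(x) = 0$); that is, $e_U(s) = \overline{s_{|_U}}$, the extension by zero of the restriction. This immediately gives the inclusion $e_U\mackey(F)(G/H) \subseteq \{\overline{s_{|_U}} \mid s \in \mackey(F)(G/H)\}$. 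For the reverse inclusion, given any $s \in \mackey(F)(G/H)$, the same computation shows $\overline{s_{|_U}} = e_U(s) \in e_U\mackey(F)(G/H)$, so the two sets coincide. One should check that $\overline{s_{|_U}}$ is genuinely a well-defined continuous $H$-fixed section of $F$ over $SH$: continuity of extension by zero uses that $U$ is closed-open (as in Construction \ref{mackey_construct}), and $H$-fixedness uses that $U$ is $H$-invariant (its $H$-saturation, really) together with $H$-equivariance of $s$.

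The main obstacle I anticipate is bookkeeping around saturations: $e_U$ as an element of $A(H) = C(SH/H,\mathbb{Q})$ is the characteristic function of a conjugacy class in $SH/H$, so when we pull back along $SH \to SH/H$ the relevant subset is the $H$-saturation of $U$, and one must make sure that "restriction of $s$ to $U$" and "restriction of $s$ to its $H$-saturation" are interchangeable for $H$-fixed sections $s$ — which they are, precisely because an $H$-fixed section is determined on an orbit by its value at any single point. A secondary, purely technical point is confirming that Lemma \ref{burnsheafrest} as stated (for $s \in \mackey(F)(G/G)$, using $G$-equivariance) transfers verbatim to $\mackey(F)(G/H)$ with $H$ in place of $G$; this is the remark at the very end of Chapter 2 that the propositions hold with $H\mathfrak{I}^L_{NK}$ replacing $G\mathfrak{I}^L_{NK}$, so no new argument is needed. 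With these caveats handled, the proof is essentially the pointwise computation above.
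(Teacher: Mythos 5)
Your proposal is correct and follows essentially the same route as the paper: reduce to a basic open set $U=O(N,NK)$, expand $e_U$ via Proposition \ref{idempotentform}, and apply Lemma \ref{burnsheafrest} to see that $e_U$ acts as pointwise multiplication by the characteristic function, so $e_U(s)=\overline{s_{|_{U}}}$. Your additional remark that Lemma \ref{burnsheafrest} (stated for $G$-fixed sections) must be transferred to $H$-fixed sections in $F(SH)^H$ is a reasonable extra care that the paper's proof passes over silently.
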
  
\begin{proof}
By definition of $\text{Mackey}(F)$ as a Mackey functor, we have:
\begin{align*}
\text{Mackey}(F)(G/H)=F(SH)^H.
\end{align*}
Since the sets of the form $O(N,NK)$ form a basis for $SH$ where $N$ is open and normal in $H$ we can assume that $U=O(N,NK)$. We then have:
\begin{align*}
e_{O(N,NK)}s(x)&=\underset{NA\leq NK}{\sum}q_{NA}\left(\left[G/NA\right](s)\right)(x)\\&=\left(\underset{NA\leq NK}{\sum}q_{NA}\left[G/NA\right](x)\right)s(x)
\end{align*}
where the first equality comes from Proposition \ref{idempotentform} and the second from Lemma \ref{burnsheafrest}. This is either $s(x)$ if $x\in O(N,NK)$ or $0$ otherwise proving that $e_{O(N,NK)}s=\overline{s_{|_{O(N,NK)}}}$ as required.
\end{proof}
\begin{proposition}\label{stalkeq}
We have the following equality of colimits:
\begin{align*}
\underset{H}{\colim}\,\underset{U\backepsilon K}{\colim}\, e_U F(SH)^H=\underset{H\geq K}{\colim}\,\underset{U\backepsilon K}{\colim}\,F(q_H^{-1}(U))^H,
\end{align*}
where $H$ ranges over the open subgroups containing closed subgroup $K$ of $G$ and $U$ is a basic open neighbourhood of the conjugacy class of $K$ in $SH/H$.
\end{proposition}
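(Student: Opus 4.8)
The plan is to realise both sides as the colimit of a single filtered diagram up to a natural isomorphism, and then invoke the fact that isomorphic diagrams have isomorphic colimits.

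First I would fix an open subgroup $H$ with $K\leq H$ and compare the two systems indexed by the basic open neighbourhoods $U$ of the conjugacy class of $K$ in $SH/H$. By Corollary \ref{MHsheaf} (obtained from Proposition \ref{mackeyact}, Theorem \ref{burnchar} and Proposition \ref{actsheaf}) the module $F(SH)^H=\mackey(F)(G/H)$ is a sheaf over $SH/H$ whose sections over $U$ are $e_UF(SH)^H$, and by the proposition preceding this one we have $e_UF(SH)^H=\{\,\overline{s|_{q_H^{-1}(U)}}\mid s\in F(SH)^H\,\}$, where $q_H\colon SH\to SH/H$ is the quotient map, $\overline{(-)}$ denotes extension by zero, and (with $U$ a basic open of $SH/H$) the set $q_H^{-1}(U)$ is the $H$-saturation of a set $O(N,NK)$, hence open, closed and $H$-invariant; in particular the extension by zero of an $H$-fixed section over $q_H^{-1}(U)$ is again $H$-fixed. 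The assignment $\overline{s|_{q_H^{-1}(U)}}\mapsto s|_{q_H^{-1}(U)}$ then defines a $\mathbb{Q}$-linear map $e_UF(SH)^H\to F(q_H^{-1}(U))^H$ into the sheaf of Proposition \ref{sheafact}; it is injective because a section over the open-closed set $q_H^{-1}(U)$ and its extension by zero determine one another, and surjective because any $H$-fixed section over $q_H^{-1}(U)$ extends by zero to a global $H$-fixed section of $F$ over $SH$. For $U'\subseteq U$ both sides transform by restriction of sections of $F$, so these maps constitute an isomorphism of the two diagrams of neighbourhoods of $(K)$ in $SH/H$, whence $\underset{U\ni K}{\colim}\,e_UF(SH)^H\cong\underset{U\ni K}{\colim}\,F(q_H^{-1}(U))^H$.

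Next I would check that, as $H$ ranges over the open subgroups containing $K$ directed by reverse inclusion, these isomorphisms are compatible with the transition maps of the two outer colimits. On the left the transition map for $K\leq H\leq H'$ is induced by the restriction map $R^{H'}_H$ of $\mackey(F)$ followed by an idempotent action; by Construction \ref{mackey_construct} the map $R^{H'}_H$ is simply restriction of an $H'$-fixed section over $SH'$ to the open-closed subspace $SH$ (Lemma \ref{subgroup_open}). On the right the transition maps are again restriction of $F$-sections, obtained by intersecting $q_{H'}^{-1}(U')$ with $SH$, parallel to the colimit defining $F_K$ in Lemma \ref{stalkwelldefined}. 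The only point to control is the correction term produced when an idempotent is pushed past a restriction map in the infinite profinite setting; but Proposition \ref{restrictprop} and Corollary \ref{burncommute} show that this extra $e_V$-term has support disjoint from a neighbourhood of $K$ inside $O(N,NK)$, hence maps to zero in the colimit at $(K)$. Therefore the two outer diagrams correspond under the isomorphisms of the first step, and taking colimits over $H$ yields the asserted equality.

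The step I expect to be the main obstacle is the second one: matching the transition maps in the $H$-direction, since this is precisely where the $e_V$-correction of the preceding section enters. It is resolved by the fact that the restriction maps of $\mackey(F)$ are literally restriction of sections of $F$, together with the vanishing at the stalk of a point of $O(N,NK)$ of the $e_V$-summand recorded in Corollary \ref{burncommute}; the first step, by contrast, is formal once the description of $e_UF(SH)^H$ from the preceding proposition is in hand.
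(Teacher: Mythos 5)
Your proposal is correct and follows essentially the same route as the paper: both rest on identifying $e_UF(SH)^H$ with the $H$-fixed sections over the open-closed, $H$-invariant saturation $q_H^{-1}(U)$ via restriction and extension by zero, exactly as in the proposition preceding the statement. The paper merely exhibits this two-way correspondence on representatives, while you additionally verify naturality in $U$ and the compatibility of the $H$-direction transition maps (discarding the $e_V$-term at the stalk via Proposition \ref{restrictprop} and Corollary \ref{burncommute}), details the paper leaves implicit.
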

\begin{proof}
Let the left hand side equal $A$ and the right hand side equal $B$. Take a representative in $A$ of the form $e_{O(N,NK)}s$ for $s\in F(SH)^H$. Then since $O(N,NK)$ is $NK$-invariant, $NK\leq H$ and since $e_{O(N,NK)}s=\overline{s_{|_{O(N,NK)}}}$, we can restrict to $S(NK)$ and this representative gives a class in $B$. This is $NK$-equivariant since $s$ was $H$-equivariant by assumption.

On the other hand if we take a representative section $s$ over some $H$-invariant subset $U\subseteq SH$, then extending by zero gives a $H$-equivariant section over $SH$. The extension by zero is continuous since $U$ is a closed-open subset. Therefore the class given by the representative $e_U\overline{s}$ gives an element of $A$.
\end{proof}
\begin{lemma}\label{stalk}
If $F$ is a $\text{Weyl-}G$-Sheaf over $SG$, then for each $K\in SG$ we have $F_K\cong \weyl\circ\mackey(F)_K$.
\end{lemma}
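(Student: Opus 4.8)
The plan is to compute both sides as colimits over the same diagram and identify them term by term. The right-hand side $\weyl\circ\mackey(F)_K$ is, by Construction \ref{Sheaf_mack} applied to the Mackey functor $\mackey(F)$, the colimit
\begin{align*}
\weyl\circ\mackey(F)_K=\underset{J\geq_O K}{\colim}\,\mackey(F)(G/J)_{(K)},
\end{align*}
where $\mackey(F)(G/J)_{(K)}$ denotes the stalk at the conjugacy class $(K)$ of the sheaf over $SJ/J$ attached to the $A(J)$-module $\mackey(F)(G/J)=F(SJ)^J$ via Corollary \ref{MHsheaf}. Unwinding the sheaf-over-$SJ/J$ structure from Proposition \ref{actsheaf}, this stalk is $\underset{U\backepsilon (K)}{\colim}\,e_U F(SJ)^J$, the colimit over basic open neighbourhoods $U$ of $(K)$ in $SJ/J$. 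So the right-hand side is exactly the iterated colimit $\underset{J\geq_O K}{\colim}\,\underset{U\backepsilon(K)}{\colim}\,e_U F(SJ)^J$, which is the left-hand side $A$ of Proposition \ref{stalkeq}.

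Next I would invoke Proposition \ref{stalkeq} to rewrite this as $B=\underset{J\geq_O K}{\colim}\,\underset{U\backepsilon(K)}{\colim}\,F(q_J^{-1}(U))^J$. Now I claim $B\cong F_K$. On the one hand, each $q_J^{-1}(U)$ is an open subset of $SJ\subseteq SG$ containing $K$ (using Lemma \ref{subgroup_open}), so there is a canonical map $F(q_J^{-1}(U))^J\to F(q_J^{-1}(U))\to F_K$ to the stalk of the sheaf $F$ at $K$; these are compatible with the restriction maps in both colimit variables, hence assemble to a map $B\to F_K$. For the reverse direction, given $s_K\in F_K$, apply Proposition \ref{Weylequi}: since $F$ is a Weyl-$G$-sheaf, $s_K$ is represented by a $J$-equivariant section $s\colon O(N,NK)\to LF$ with $J=NK$. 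The domain $O(N,NK)$ is $NK$-invariant (Remark \ref{ONJ}) and, being a basic open subset of $SG$ contained in $S(NK)$, it is of the form $q_{NK}^{-1}(U)$ for $U=O(N,NK)/NK$ a basic open neighbourhood of $(K)$ in $S(NK)/(NK)$. Thus $s$ defines an element of $F(q_{NK}^{-1}(U))^{NK}$, giving a class in $B$. I would then check that these two assignments are mutually inverse: the composite $B\to F_K\to B$ is the identity because $J$-equivariant sections representing the same germ at $K$ already agree on a smaller basic neighbourhood (again by the monopresheaf property and shrinking the domain to a smaller $O(N',N'K)$), and the composite $F_K\to B\to F_K$ is the identity by construction of the representative. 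Naturality of everything in $F$ is routine.

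The main obstacle I anticipate is the careful bookkeeping in the second direction: verifying that the $J$-equivariant representative supplied by Proposition \ref{Weylequi} genuinely lands in the fixed-point subspace $F(q_J^{-1}(U))^J$ for the \emph{correct} open subgroup $J=NK$ and neighbourhood $U$, and that the colimit class it produces in $B$ is independent of the choices of $N$ (and of the chosen representative section). This is precisely where the Weyl condition is used in an essential way — for a general $G$-sheaf the stalk $F_K$ carries a possibly nontrivial $K$-action and the identification fails; the hypothesis that $F_K$ is $K$-fixed (Definition \ref{Weyldefn}) is what lets Proposition \ref{Weylequi} produce the equivariant local section in the first place. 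A secondary but purely technical point is confirming that the directed system of open subgroups $J\geq_O K$ is cofinal among the pairs $(J,U)$ actually arising, so that the two iterated colimits match on the nose rather than merely up to reindexing; this follows from Proposition \ref{nbhdbasis}, which guarantees the sets $O(N,NK)$ form a neighbourhood basis of $K$.
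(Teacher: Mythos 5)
Your proposal is correct and follows essentially the same route as the paper: both identify $\weyl\circ\mackey(F)_K$ with the iterated colimit of $e_UF(SJ)^J$, rewrite it via Proposition \ref{stalkeq} as equivariant sections over $q_J^{-1}(U)$, and use Proposition \ref{Weylequi} (the Weyl condition) to produce equivariant local representatives of germs in $F_K$; your presentation as a pair of mutually inverse maps is just a repackaging of the paper's single map $\theta$ checked to be well defined, injective and surjective.
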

\begin{proof}
Let $\overline{F_K}=\underset{H\geq K}{\colim}\,\underset{U\backepsilon K}{\colim}\,e_UF(SH)^H$ where is $U$ open in $SH/H$ and $H$ is open. Then by Proposition \ref{stalkeq} this equals 
\begin{center}
$\underset{H\geq K}{\colim}\,\underset{U\backepsilon K}{\colim}\,F(q_H^{-1}(U))^H$,
\end{center}
we now show that this is isomorphic to $\underset{U\backepsilon K}{\colim}F(U)$. We define a map of abelian groups:
\begin{center}
$\theta:F_K\rightarrow \overline{F_K}$ given by $\left[s\right]\mapsto \left[s|_{U_J}\right]$
\end{center}
where $K$ is a subgroup of an open subgroup $J$, $s|_{U_J}$ is a $J$-equivariant variant section that $s$ restricts to from Proposition \ref{Weylequi}.

We prove this is well defined. If $\left[s\right]=\left[t\right]$ then $s=t:U\rightarrow LF$ for some $U$. If $\theta\left[s\right]=\left[s|_{U_J}\right]$ and $\theta\left[t\right]=\left[t|_{V_L}\right]$, then since $s=t$ on the larger domain $U$ we must have that $s|_{U_J}=t|_{U_J}$ and $s|_{V_L}=t|_{V_L}$. It follows that we need only show that $\left[t|_{U_J}\right]=\left[t|_{V_L}\right]$, but this is true since they have common descendent $\left[t|_{{U\cap V}_{J\cap L}}\right]$. 

This is surjective since any element $\left[s\right]$ in the codomain has itself in the preimage. For injectivity, if we have $\left[s\right],\left[t\right]$ with $\theta\left[s\right]=\theta\left[t\right]$, then $\left[s|_{U_J}\right]=\left[t|_{V_L}\right]$. By definition of equality of equivalence classes there is an open subgroup $H$ and a $H$-invariant set $W_H$ such that $s$ and $t$ have the same restriction to $W_H$ and are $H$-equivariant. This then means that $s$ and $t$ belong to the same equivalence class of $F_K$.
\end{proof}
Notice that this result requires $F$ to be a Weyl-$G$-sheaf. This is because in this case we know that each germ at $K$ is represented by a $J$-equivariant section where $J$ is an open subgroup containing $K$. For more general $G$-sheaves we don't necessarily know that $J$ contains $K$.  
\begin{lemma}\label{equivequi}
If $K\in SG$ and $g\in G$ arbitrary, then the following square commutes:
\begin{align*}
\xymatrix{F_K\ar[r]^{\theta_K}\ar[d]^{C_g}&\overline{F}_K\ar[d]^{C_g}\\F_{gKg^{-1}}\ar[r]^{\theta_{gKg^{-1}}}&\overline{F}_{gKg^{-1}}}
\end{align*}
where $\theta_K$ is the map defined in the proof of Lemma \ref{stalk}.
\end{lemma}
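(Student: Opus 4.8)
The plan is to unwind the definition of the conjugation maps on both the colimit $F_K = \underset{J \geq_O K}{\colim} M(G/J)_{(K)}$ side (for $F = \weyl(M)$, or more precisely on $\weyl \circ \mackey(F)$) and on the stalk $\overline{F}_K$ side, and to check that the isomorphism $\theta_K$ from Lemma \ref{stalk} intertwines them. Recall that $\theta_K$ sends a germ $[s] \in F_K$, represented by a section $s$ over an open set $U \ni K$, to the class $[s|_{U_J}]$ of a $J$-equivariant restriction guaranteed by Proposition \ref{Weylequi}, where $J = NK$ is an open subgroup containing $K$. First I would recall from Construction \ref{Sheaf_mack} and Lemma \ref{Gspace} that the $G$-action on the sheaf space $E$ is built from the conjugation maps $C_g$ of the Mackey functor via the induced maps $g \colon M(G/J)_{(K)} \to M(G/gJg^{-1})_{(gKg^{-1})}$, which on germs of sections is exactly the operation $s \mapsto g*s$ with $(g*s)(x) = g s(g^{-1}xg)$, as in Remark \ref{commuteaction}. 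On the $\overline{F}_K$ side, by Proposition \ref{stalkeq} we have $\overline{F}_K \cong \underset{H \geq K}{\colim}\,\underset{U \ni K}{\colim}\, F(q_H^{-1}(U))^H$, and the conjugation map $C_g$ there is likewise induced by $s \mapsto g*s$, now landing in $F(q_{gHg^{-1}}^{-1}(gUg^{-1}))^{gHg^{-1}}$.

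The key steps, in order, are: (1) spell out $C_g \colon F_K \to F_{gKg^{-1}}$ explicitly as $[s] \mapsto [g*s]$ on representatives, noting that $g$ carries the diagram of open subgroups containing $K$ to that of open subgroups containing $gKg^{-1}$ (so the colimit is well-behaved under this action); (2) do the same for $C_g \colon \overline{F}_K \to \overline{F}_{gKg^{-1}}$; (3) take a germ $[s] \in F_K$ with $J$-equivariant representative $s|_{U_J}$ over an $NK$-invariant set $O(N,NK)$, and compute both composites. Going down-then-across: $C_g[s] = [g*s]$, and then $\theta_{gKg^{-1}}[g*s] = [(g*s)|_{U'_{J'}}]$ where $J' = N'(gKg^{-1})$ for a suitable open normal $N'$; one checks that $g N g^{-1}$ works, so $J' = gJg^{-1}$ and $(g*s)$ restricted to $O(gNg^{-1}, gNg^{-1}\,gKg^{-1})= gO(N,NK)g^{-1}$ is automatically $gJg^{-1}$-equivariant because $s|_{U_J}$ was $J$-equivariant (this is the content of Lemma \ref{equivariance}'s equivariance computation applied at the level of sections). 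Going across-then-down: $\theta_K[s] = [s|_{U_J}]$, and then $C_g[s|_{U_J}] = [g*(s|_{U_J})]$, which is the restriction of $g*s$ to the same set $gO(N,NK)g^{-1}$. Since restriction commutes with the $g*(-)$ operation (both are defined pointwise on stalks), the two results agree as germs in $\overline{F}_{gKg^{-1}}$.

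The main obstacle I anticipate is bookkeeping rather than conceptual: one must be careful that the choice of $J$-equivariant representative in the definition of $\theta_K$ is compatible with the conjugation action, i.e. that conjugating a $J$-equivariant section by $g$ yields a genuinely $gJg^{-1}$-equivariant section over the conjugated domain, and that the two colimit systems (indexed by open subgroups $\geq K$ versus $\geq gKg^{-1}$) are matched up correctly by $g$. This is where Proposition \ref{equifix} and Remark \ref{commuteaction} do the real work: a section $s$ is $J$-equivariant iff $j*s = s$ for all $j \in J$, and then $(g j g^{-1})*(g*s) = g*(j*s) = g*s$, so $g*s$ is $gJg^{-1}$-equivariant. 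Once that lemma-level fact is in hand, commutativity of the square follows because every arrow in sight is ultimately the pointwise operation $s \mapsto g*s$ (on stalks, on sections, and on the colimit quotients), and pointwise operations commute with restriction and with passage to colimits. I would therefore structure the proof as: first establish the equivariance-transport fact for sections, then observe that both $\theta$ and both copies of $C_g$ are induced by $g*(-)$, and conclude by naturality of the colimit construction.
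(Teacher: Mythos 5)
Your proposal is correct and follows essentially the same route as the paper: both reduce the square to the fact that conjugating a $J$-equivariant section over a $J$-invariant domain by $g$ yields a $gJg^{-1}$-equivariant section over the conjugated domain, so that both composites give the germ of $g*s$ at $gKg^{-1}$. The only cosmetic difference is that the paper checks this transport by a direct pointwise computation and reconciles the possibly different choice made by $\theta_{gKg^{-1}}$ by restricting to the intersection $V_{J_1}\cap V_{J_2}$, whereas you deduce it from the action identity $(gjg^{-1})*(g*s)=g*(j*s)$ together with the well-definedness of $\theta$ established in Lemma \ref{stalk}.
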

\begin{proof}
If $s_K\in F_K$ then this maps to a corresponding $\left[{s_|}_{U_J}\right]$ where $J$ and $U_J$ exist as in Proposition \ref{Weylequi}. We first show that if $t={s_|}_{U_J}$, then $g*t$ is a $gJg^{-1}$ equivariant section over $gU_Jg^{-1}$. Clearly $gKg^{-1}\leq gJg^{-1}$, and we now show that $gU_Jg^{-1}$ is $gJg^{-1}$-invariant. Take $gjg^{-1}\in gJg^{-1}$ and $gug^{-1}\in gU_Jg^{-1}$. Then:
\begin{align*}
(gjg^{-1})(gug^{-1})=gjug^{-1}\in gU_Jg^{-1}
\end{align*}
using that $U_J$ is $J$-invariant. Next if $gjg^{-1}\in gJg^{-1}$ and $gug^{-1}\in gU_Jg^{-1}$ then we have:
\begin{align*}
gjg^{-1}(g*s)(gyg^{-1})&=gjg^{-1}gs(y)=gjs(y)=gs(jyj^{-1})\\&=gs(g^{-1}gjg^{-1}gyg^{-1}gj^{-1}g^{-1}g)=(g*s)(gjg^{-1}*gyg^{-1})
\end{align*}
as required. Set $t={(g*s)_|}_{gU_Jg^{-1}}$, $J_1=gJg^{-1}$ and $V_{J_1}=gU_Jg^{-1}$. Let $J_2$ be the open subgroup for $C_g(s_K)$ with subset $V_{J_{V_2}}$ so that ${(g*s)_|}_{V_{J_2}}$ is $J_2$-equivariant. Then $V_{J_1}\cap V_{J_2}$ is $J_1\cap J_2$-invariant and ${(g*s)_|}_{V_{J_1}\cap V_{J_2}}$ is $J_1\cap J_2$-equivariant, so in either direction the sections restrict to the same equivariant sections. 
\end{proof}
\begin{theorem}\label{weylmack}
If $F$ is any \index{$\text{Weyl-}G$-sheaf} Weyl-$G$-sheaf of $\mathbb{Q}$-modules then
\begin{align*}
\weyl\,\circ\mackey(F)\cong F
\end{align*}
in the category of $\text{Weyl-}G$-Sheaves.
\end{theorem}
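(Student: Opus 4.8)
The plan is to build a natural isomorphism $\eta_F\colon \weyl\circ\mackey(F)\to F$ out of the stalk-wise isomorphisms $\theta_K$ already constructed in Lemma \ref{stalk}, and then to check that these assemble into a morphism of $G$-sheaf spaces which is automatically an isomorphism. Recall that $\weyl\circ\mackey(F)$ has stalk at $K$ equal to $\overline{F_K}=\underset{H\geq K}{\colim}\,\underset{U\backepsilon K}{\colim}\,e_UF(SH)^H$, and Lemma \ref{stalk} provides an isomorphism of $\mathbb{Q}$-modules $\theta_K\colon F_K\to\overline{F_K}$; so I would instead define $\eta_F$ stalk-wise by $\theta_K^{-1}$. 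The content of the theorem is therefore: (i) the family $(\theta_K^{-1})_{K\in SG}$ is continuous as a map of sheaf spaces, hence a morphism of sheaves; (ii) it is $G$-equivariant; (iii) it is natural in $F$.

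First I would address continuity. By \cite[Lemma 2.3.5]{Tennison} a fibre-wise bijective map of sheaf spaces which is continuous is automatically a local homeomorphism, so it suffices to show $\eta_F$ is continuous. For this I would take a basic open set $s(O(N,Nx))$ in the sheaf space of $F$ — i.e. the germs of a section $s\in F(O(N,Nx))$, which by Corollary \ref{RFdefn}/Proposition \ref{Weylequi} we may take to be $Nx$-equivariant — and show its preimage is open in the sheaf space of $\weyl\circ\mackey(F)$, whose topology was spelled out in Construction \ref{Sheaf_mack} and Lemma \ref{basis_sheaf}. Concretely, given $\overline{t}_y$ with $\theta_y^{-1}(\overline{t}_y)=s_y$, one unwinds the definition of $\theta_y$ (restrict a representative to a $J$-equivariant section on some $O(N',N'y)$) to see that the section $t$ on $O(N',N'y)$ built from $s$ via Construction \ref{Sheaf_mack} maps into $s(O(N,Nx))$; this is essentially the same computation already carried out in the proof of Theorem \ref{macktosheaf} and in Lemma \ref{basis_sheaf}. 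Equivariance is exactly the commuting square of Lemma \ref{equivequi}, which says $\theta_K$ (hence $\theta_K^{-1}$) commutes with the conjugation maps $C_g$; combined with the fact that both sheaves are Weyl-$G$-sheaves this gives that $\eta_F$ is a morphism in $\Weyl_{\mathbb{Q}}(SG)$. Since each $\theta_K$ is an isomorphism of $\mathbb{Q}$-modules, $\eta_F$ is an isomorphism on every stalk, and a morphism of sheaves which is a stalk-wise isomorphism is an isomorphism.

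For naturality, given $f\colon F\to F'$ a morphism of Weyl-$G$-sheaves, I would check that the square relating $\eta_F$, $\eta_{F'}$, $\weyl\circ\mackey(f)$ and $f$ commutes on stalks; this reduces to the identity $(\weyl\circ\mackey(f))_K\circ\theta_K=\theta_K\circ f_K$, which follows by tracing through the definitions of $\mackey$ on morphisms (Theorem \ref{MackisFunct}) and $\weyl$ on morphisms (Theorem \ref{macktosheaf}), together with the stalk-wise compatibility $f(s)_K=f_K(s_K)$ used repeatedly there. Since stalk-wise equality of sheaf morphisms implies equality, naturality follows.

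The main obstacle, I expect, will be the bookkeeping in the continuity argument: one must carefully match the two descriptions of the relevant sheaf-space topologies — the basis $s(O(N,Nx))$ for $F$ on the one hand, and the basis from Construction \ref{Sheaf_mack} for $\weyl\circ\mackey(F)$ on the other — and verify that a germ-by-germ inverse of $\theta$ pulls one back into the other on a small enough $N$-invariant neighbourhood. This is where the hypothesis that $F$ is a Weyl-$G$-sheaf is essential (as remarked after Lemma \ref{stalk}): it guarantees that every germ at $K$ has a representative that is $J$-equivariant for an \emph{open} subgroup $J\supseteq K$, which is precisely what makes $\theta_K$ well-defined and what lets the neighbourhoods $O(N,NK)$ do their job. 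Everything else — equivariance, naturality, and the passage from stalk-wise iso to iso — is formal given Lemmas \ref{stalk} and \ref{equivequi}.
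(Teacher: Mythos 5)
Your proposal is correct and follows essentially the same route as the paper: identify the stalks via Lemma \ref{stalk}, compare the two sheaf-space topologies on basic open sets using \cite[Lemma 2.3.5]{Tennison} together with the equivariant representative sections from Proposition \ref{Weylequi}/Construction \ref{Sheaf_mack}, and obtain $G$-equivariance from Lemma \ref{equivequi}. The paper phrases this as showing the set-theoretic identity map $LF\to L(\weyl\circ\mackey(F))$ is open rather than as continuity of a stalk-wise $\theta^{-1}$, and it omits the naturality check (which is only needed later, in Theorem \ref{equivalencemain}), but these are presentational differences only.
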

\begin{proof}
We will prove the result by showing that $LF$ is isomorphic to the space $L\text{Weyl}\,\circ\text{Mackey}(F)$ as $\text{Weyl-}G$-Sheaf spaces. They are equal as sets, by Lemma \ref{stalk}.

We now need to prove that they are topologically equal, so we let $\overline{LF}$ denote the sheaf space with the topology given by applying $\text{Weyl}\circ\text{Mackey}(F)$. By \cite[Lemma 2.3.5]{Tennison} we know that the set theoretic identity map from $LF$ to $\overline{LF}$ is continuous if and only if open, so since it is already bijective we need only prove that it is open. Take any $U$ open in $LF$. If $a\in U$, then by definition of local homeomorphism, there exists a neighbourhood $V_a\subseteq U$ such that $p|_{V_a}$ is a homeomorphism with inverse $s_a$ which has codomain $W_a=p|_{V_a}(V_a)$. Then $U$ is a union of open sets of the form $s_a(W_a)$, so to prove the result we need only show that the sets $s(W)$ are open in $\overline{LF}$.

Take any $s_x\in s(W)$ for $x\in W$. Then $s_x\in F_x=\underset{H\geq K}{\colim}\,\underset{U\backepsilon K}{\colim}\,e_UF(SH)^H$, where $H$ and $U$ range over the open subgroups and subsets respectively. This germ has a representative $s^{\prime}:W_x\rightarrow \overline{LF}$ which is continuous and coincides with the restriction of $s:W\rightarrow LF$ to $W_x$ since they originate from the same germ. Therefore $s(W_x)$ is open in $\overline{LF}$ and $s(W)$, therefore $s(W)$ is a union of these sets which are open in $\overline{LF}$ as required. This is a $G$-equivariant homeomorphism by Lemma \ref{equivequi}.
\end{proof}
We now work towards proving that there is an equivalence in the other direction, namely that $\text{Mackey}\circ \text{Weyl}(M)\cong M$. We begin by proving some necessary lemmas.
\begin{lemma}\label{Normrestrict}
Let $G$ be a profinite group and $M$ a Mackey functor over $G$. Then if $H\leq K$ are open subgroups of $G$ then:
\begin{align*}
R^{N_K(H)}_H\colon M(G/N_K(H))\rightarrow M(G/H)
\end{align*}
is surjective onto $M(G/H)^{N_K(H)}$. 
\end{lemma}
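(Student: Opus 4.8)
The plan is to use the transfer (induction) map $I^{N_K(H)}_H$ together with the Mackey axiom to exhibit an explicit preimage, up to a scalar, of any $N_K(H)$-fixed element. First I would fix notation: write $N = N_K(H)$ and recall that since $H$ is open in the profinite group $G$, it has finite index in $N$, so the index $[N:H] = n$ is finite and invertible in $\mathbb{Q}$. For $t \in M(G/H)^{N}$, the natural candidate preimage is $\frac{1}{n} I^{N}_{H}(t) \in M(G/N)$, and I would verify that $R^{N}_{H}\bigl(\frac{1}{n} I^{N}_{H}(t)\bigr) = t$.

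The key computation is to apply the Mackey axiom to $R^{N}_{H} \circ I^{N}_{H}$. This gives
\begin{align*}
R^{N}_{H} \circ I^{N}_{H}(t) = \sum_{x \in [H \diagup N \diagdown H]} I^{H}_{H \cap xHx^{-1}} \circ C_x \circ R^{H}_{H \cap x^{-1}Hx}(t).
\end{align*}
Here the crucial point is that $H$ is \emph{normal} in $N = N_K(H)$, so for every $x \in N$ we have $xHx^{-1} = H$, and hence the double cosets $[H \backslash N / H]$ are just the cosets $H \backslash N$, of which there are $n$. For each such $x$, normality forces $H \cap xHx^{-1} = H$ and $H \cap x^{-1}Hx = H$, so each summand collapses to $I^{H}_{H} \circ C_x \circ R^{H}_{H}(t) = C_x(t)$ using that restriction and induction from $H$ to itself are the identity (condition (1) of Definition \ref{defM1}). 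Since $t$ is $N$-fixed, $C_x(t) = t$ for every $x \in N$, so the sum equals $n \cdot t$. Dividing by $n$ gives $R^{N}_{H}\bigl(\frac{1}{n} I^{N}_{H}(t)\bigr) = t$, proving surjectivity onto $M(G/H)^{N}$. Conversely, the image of $R^{N}_{H}$ automatically lands in $M(G/H)^{N}$: for $s \in M(G/N)$ and $x \in N$, the equivariance condition gives $C_x R^{N}_{H}(s) = R^{xNx^{-1}}_{xHx^{-1}} C_x(s) = R^{N}_{H} C_x(s)$, and $C_x(s) = s$ since $x \in N$ fixes both $N$ and acts trivially by condition (1); hence $R^{N}_{H}(s)$ is $N$-fixed.

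I do not expect a serious obstacle here; the only subtlety is being careful that the double-coset sum in the Mackey axiom genuinely collapses, which rests entirely on the normality of $H$ in its normalizer $N_K(H)$ and on finiteness of the index (so that $\frac{1}{n}$ makes sense rationally). If one wanted to avoid the explicit scalar, one could instead phrase the argument as: $R^{N}_{H} \circ I^{N}_{H}$ acts as multiplication by the invertible integer $n$ on $M(G/H)^{N}$, hence $R^{N}_{H}$ restricted to the image of $I^{N}_{H}$ is surjective onto $M(G/H)^{N}$. Either formulation completes the proof.
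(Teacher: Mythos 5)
Your proposal is correct and follows essentially the same route as the paper: apply the Mackey axiom to $R^{N_K(H)}_H\circ I^{N_K(H)}_H$, use normality of $H$ in $N_K(H)$ to collapse every double-coset summand to $C_x(m)=m$, obtain multiplication by the (finite, rationally invertible) number of cosets, and divide by that scalar. Your added observation that the image of $R^{N_K(H)}_H$ lands in the $N_K(H)$-fixed points is a harmless extra that the paper omits.
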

\begin{proof}
Let $m\in M(G/H)^{N_K(H)}$, then we will show that:
\begin{align*}
R^{N_K(H)}_H(qI^{N_K(H)}_H(m))=m
\end{align*}
for some $q\in\mathbb{Q}$. We begin by observing the following equalities:
\begin{align*}
R^{N_K(H)}_H(I^{N_K(H)}_H(m))&=\underset{H\diagdown N_K(H)\diagup H}{\sum}I^H_{H\cap xHx^{-1}}C_xR^H_{H\cap x^{-1}Hx}(m)\\&=\underset{H\diagdown N_K(H)\diagup H}{\sum}I^H_{H\cap xHx^{-1}}R^H_{H\cap xHx^{-1}}C_x(m)\\&=\underset{H\diagdown N_K(H)\diagup H}{\sum}I^H_{H}R^H_{H}(m)=\underset{H\diagdown N_K(H)\diagup H}{\sum}m\\&=|H\diagdown N_K(H)\diagup H|m.
\end{align*}
We therefore set $q=\frac{1}{|H\diagdown N_K(H)\diagup H|}$ and use the fact that restriction, induction and conjugation maps are $\mathbb{Q}$-equivariant.
\end{proof}
We now state a proposition for finite groups which will be proven in a more general profinite context in Proposition \ref{profinitefixed}.
\begin{proposition}\label{fixcommte}
If $G$ is a finite group and $K\leq H\leq G$ then:
\begin{enumerate}
\item $e_K\left(M(G/K)^{N_H(K)}\right)$ is $N_H(K)$-fixed.
\item $e_K\left(M(G/K)^{N_H(K)}\right)=\left(e_KM(G/K)\right)^{N_H(K)}$
\end{enumerate}
\end{proposition}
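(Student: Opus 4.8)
\textbf{Proof plan for Proposition \ref{fixcommte}.}
The plan is to work entirely with the formula from Proposition \ref{idempotentform} specialised to the finite case, where $e_K$ is the characteristic function of the conjugacy class of $K$ in $SG/G$, and to exploit the interaction results between Burnside idempotents and restriction/induction maps established in Propositions \ref{restrictprop} and \ref{inductprop}, together with their consequences Corollary \ref{burncommute} and Corollary \ref{burn0}. The key algebraic input will be that in the finite case the support sets $O(N,NK)$ degenerate to the single point $\{K\}$ (since $K$ is isolated in $SG$, by the remark following Construction \ref{spaceclosedsubgroup}), so that all of the $e_V$ correction terms vanish. Thus the action of $e_K$ is genuinely a projection commuting with the relevant structure maps, and the proposition becomes a statement about a projection operator respecting a fixed-point submodule.

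For part (1), I would first observe that $M(G/K)$ is an $A(K)$-module by Proposition \ref{mackeyact}, and that the conjugation maps $C_n$ for $n\in N_H(K)$ act on $M(G/K)$ since $N_H(K)$ normalises $K$; moreover the $N_H(K)$-action permutes the $A(K)$-module structure compatibly. The claim is that $e_K\bigl(M(G/K)^{N_H(K)}\bigr)\subseteq M(G/K)^{N_H(K)}$. I would prove this by showing that $C_n$ commutes with the action of $e_K$ on $M(G/K)$ for each $n\in N_H(K)$: indeed $e_K=\sum_{D\leq K}q_D\mu(D,K)[G/D]$ with the action of $[G/D]$ given by $I^K_D\circ R^K_D$, and conjugating by $n\in N_H(K)$ sends this sum to $\sum_{D\leq K}q_D\mu(D,K) I^K_{nDn^{-1}} C_n R^K_D$ which, after reindexing $D\mapsto nDn^{-1}$ and applying Proposition \ref{rationcoeff} (which gives $q_{nDn^{-1}}=q_D$) together with the equivariance condition, equals $e_K$ acting followed by $C_n$. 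Hence for $m$ fixed by $N_H(K)$ we get $C_n(e_K m)=e_K(C_n m)=e_K m$, so $e_K m$ is fixed.

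For part (2), one inclusion is immediate from part (1): $e_K\bigl(M(G/K)^{N_H(K)}\bigr)\subseteq M(G/K)^{N_H(K)}$ and every element of the left side lies in $e_K M(G/K)$, giving $e_K\bigl(M(G/K)^{N_H(K)}\bigr)\subseteq \bigl(e_K M(G/K)\bigr)^{N_H(K)}$. For the reverse inclusion, take $x\in \bigl(e_K M(G/K)\bigr)^{N_H(K)}$, so $x=e_K y$ for some $y\in M(G/K)$ and $C_n x = x$ for all $n\in N_H(K)$. The idea is to average: since $e_K$ commutes with each $C_n$ (as shown above) and $N_H(K)/K$ is finite with $K$ acting trivially on $M(G/K)$ up to the Weyl action, set $\bar y = q'\sum_{nK\in N_H(K)/K} C_n y$ where $q' = 1/|N_H(K)/K|$; this $\bar y$ is $N_H(K)$-fixed, and $e_K \bar y = q'\sum_n e_K C_n y = q'\sum_n C_n (e_K y) = q'\sum_n C_n x = q'\sum_n x = x$. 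Hence $x\in e_K\bigl(M(G/K)^{N_H(K)}\bigr)$, completing the equality.

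The main obstacle I anticipate is verifying carefully that $e_K$ commutes with the conjugation maps $C_n$ for $n\in N_H(K)$ — this is the crux of both parts, and it requires combining the explicit Möbius-function formula for $e_K$ with the equivariance axiom of the Mackey functor and the invariance of the rational coefficients under conjugation (Proposition \ref{rationcoeff}), paying attention to the reindexing of the summation over subgroups $D\leq K$ by $D\mapsto nDn^{-1}$. A secondary subtlety is that one should phrase the averaging argument in part (2) so that it uses only that $N_H(K)/K$ is finite (true since $H$ is finite here) and that the $\mathbb{Q}$-coefficients let us divide; this is where the finiteness hypothesis is genuinely used, and it is also the point that will need to be replaced by a compactness/discreteness argument in the profinite generalisation Proposition \ref{profinitefixed}.
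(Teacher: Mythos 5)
Your proof is correct. Part (1) is essentially the paper's own argument: the paper states Proposition \ref{fixcommte} without a separate proof and defers to the profinite version, Proposition \ref{profinitefixed}, whose first part is Lemma \ref{fixcolimitlemma} — and that lemma proves fixedness exactly as you do, by writing the idempotent as $\sum_{D\leq K} q_D\mu(D,K)\, I^K_D R^K_D$, pushing a conjugation $C_n$ ($n\in N_H(K)$) through via the equivariance axiom, reindexing $D\mapsto nDn^{-1}$, and invoking Proposition \ref{rationcoeff} for the invariance of the coefficients. (One small framing slip: $e_K$ here is the idempotent of $A(K)$, the characteristic function of $\{K\}$ in $SK/K$, acting through the $A(K)$-module structure of Proposition \ref{mackeyact}, not an element of $C(SG/G,\mathbb{Q})$; this does not affect your computation, which uses precisely the $A(K)$-action formula.) For part (2) your averaging argument is valid — the average over $N_H(K)/K$ is well defined because $C_k=\id$ for $k\in K$ by the Mackey functor axioms, it is $N_H(K)$-fixed by reindexing, and commutation of $e_K$ with $C_n$ gives $e_K\bar y=x$ — but it is more machinery than needed: since $e_K$ is a multiplicative idempotent of $A(K)$, any $x\in\left(e_KM(G/K)\right)^{N_H(K)}$ satisfies $e_Kx=x$ and is itself $N_H(K)$-fixed, so $x=e_Kx\in e_K\left(M(G/K)^{N_H(K)}\right)$ directly. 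The paper's route to the corresponding inclusion in the profinite setting (Proposition \ref{profinitefixed}) is different again: there one works with germs and shrinks to a common $N_H(NK)$-invariant neighbourhood $O(N^{\prime},N^{\prime}NK)$, finiteness of the Weyl group replacing your finiteness of $N_H(K)/K$; your closing remark correctly identifies this as the point where the finite argument must be adapted.
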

We now give a lemma which holds in the case where $G$ is finite. It will be used to prove the analogous result for the profinite case in Lemma \ref{fixinflate}. 
\begin{lemma}\label{Fixptfin}
Let $M$ be a Mackey functor on a finite discrete group $G$ and $K\leq H\leq G$. Then: 
\begin{align*}
e_KM(G/H)\cong e_K\left(M(G/K)^{N_H(K)}\right)
\end{align*}
\end{lemma}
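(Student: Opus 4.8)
The plan is to identify $e_K M(G/H)$ with a submodule of $e_K M(G/K)$ via restriction, show the image lands inside the $N_H(K)$-fixed points, and then produce a section of this map using induction together with the Burnside-idempotent manipulations already established in Proposition \ref{restrictprop}, Proposition \ref{inductprop} and Corollary \ref{burncommute}. The key point to keep in mind throughout is that $K$ is an isolated point of $SG$ when $G$ is finite, so that the idempotent $e_K$ is literally the characteristic function of $\{K\}$, and the auxiliary $e_V$ terms appearing in the propositions of Section 2.4 all vanish.

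First I would set up the restriction map. Given $s \in e_K M(G/H)$, write $s = e_K s'$ for some $s' \in M(G/H)$ and consider $R^H_K(s) = R^H_K(e_K s')$. By Corollary \ref{burncommute} (in the finite case, with no $e_V$ term) this equals $e_K R^H_K(s')$, so $R^H_K$ restricts to a map $e_K M(G/H) \to e_K M(G/K)$. Next I would check the image is $N_H(K)$-invariant: for $n \in N_H(K)$, using the equivariance condition $C_n R^H_K = R^{nHn^{-1}}_{nKn^{-1}} C_n = R^H_K C_n$ (since $n$ normalises $K$ and lies in $H$) together with the fact that $C_n$ fixes the $G$-fixed... more precisely, $C_n$ acts trivially on $e_K M(G/H)$ by Proposition \ref{restrictprop}-type reasoning — here I would instead argue directly that $C_n R^H_K(e_K s') = R^H_K(C_n e_K s') = R^H_K(e_{nKn^{-1}} C_n s') = R^H_K(e_K C_n s')$, which by the same Corollary is again an element of $e_K R^H_K M(G/H) \subseteq e_K M(G/K)$; combined with Proposition \ref{fixcommte}(1), which tells us $e_K(M(G/K)^{N_H(K)})$ is $N_H(K)$-fixed, one sees the image is contained in $e_K(M(G/K)^{N_H(K)})$. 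So we have a well-defined map $\Phi\colon e_K M(G/H) \to e_K(M(G/K)^{N_H(K)})$.

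Then I would construct the inverse. Starting from $t \in e_K(M(G/K)^{N_H(K)})$, which by Proposition \ref{fixcommte}(2) equals $(e_K M(G/K))^{N_H(K)}$, I would first apply Lemma \ref{Normrestrict} to lift $t$ through $R^{N_H(K)}_K$, writing $t = R^{N_H(K)}_K(\tilde t)$ with $\tilde t = q\, I^{N_H(K)}_K(t)$ for the appropriate rational $q$; then set $\Psi(t) = e_K\, I^{H}_{N_H(K)}(\tilde t) \in e_K M(G/H)$. The point is to verify $\Phi \Psi = \mathrm{id}$ and $\Psi \Phi = \mathrm{id}$. For $\Phi\Psi$, one computes $R^H_K(e_K I^H_{N_H(K)}(\tilde t))$ using Corollary \ref{burncommute} to pull the $e_K$ out, then the Mackey axiom for $R^H_K I^H_{N_H(K)}$, and then observes that because $e_K$ kills every double-coset summand whose indexing subgroup is not $G$-conjugate into $K$ (Lemma \ref{idemack3}, Lemma \ref{idemack4} with no $e_V$ term since $G$ is finite), only the summands indexed by $N_H(K)$-conjugates of $K$ survive; summing these and using the $N_H(K)$-invariance of $t$ recovers $t$ up to the rational factor that $q$ was chosen to cancel. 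For $\Psi\Phi$, the analogous computation shows $e_K I^H_{N_H(K)} R^{N_H(K)}_K R^H_K(s)$ collapses back to $s$ for $s \in e_K M(G/H)$, again because the Mackey-axiom expansion of $R^H_K I^H_{N_H(K)}$ only contributes the identity double coset after applying $e_K$.

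The main obstacle I anticipate is the bookkeeping in these two Mackey-axiom computations: one must carefully track which double-coset representatives $x \in [K \backslash H / N_H(K)]$ (or $[N_H(K) \backslash H / K]$) give subgroups $K \cap x N_H(K) x^{-1}$ that are $G$-conjugate into $K$ after applying $e_K$, and confirm that exactly the diagonal coset survives and that the rational coefficients $q_{NA}$ assemble correctly. This is precisely the kind of idempotent-versus-transfer interaction that Lemmas \ref{idemack1}--\ref{idemack4} were built to handle, so the strategy is to reduce each verification to a direct citation of Lemma \ref{idemack4} (finite case) plus Proposition \ref{restrictprop}/\ref{inductprop}, rather than re-deriving the combinatorics. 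Everything else — well-definedness, $\mathbb{Q}$-linearity, naturality if needed — is routine once the two round-trip identities are in hand.
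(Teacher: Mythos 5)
Your forward map and your $\Phi\Psi=\mathrm{id}$ verification are essentially the paper's proof: the paper also takes $e_K R^H_K$ as the isomorphism and proves surjectivity by lifting $e_K m$ through $R^{N_H(K)}_K$ via Lemma \ref{Normrestrict}, transferring up by $I^H_{N_H(K)}$, expanding $R^H_K I^H_{N_H(K)}$ by the Mackey axiom, and using Corollary \ref{burn0} and Corollary \ref{burncommute} (no $e_V$ term, indexing set $\mathfrak{I}^K_K=\left\lbrace K\right\rbrace$) to see that only the identity double coset survives. Where you genuinely diverge is injectivity: the paper does not construct a left inverse at all, but argues directly that $e_K$ kills $\ker R^H_K$ --- if $R^H_K(s)=0$ then $e_K s=\sum_{D\le K}\tfrac{|D|}{|N_H(K)|}\mu(D,K)I^H_D R^K_D R^H_K(s)=0$ by the Moebius formula and transitivity of restriction --- which is shorter than your two-sided-inverse plan and avoids any matching of rational constants.

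Two steps in your sketch need repair, though neither is fatal. First, the displayed composite $e_K I^H_{N_H(K)} R^{N_H(K)}_K R^H_K(s)$ does not typecheck ($R^{N_H(K)}_K$ cannot be applied to an element of $M(G/K)$), and the mechanism you cite for $\Psi\Phi=\mathrm{id}$ --- the Mackey-axiom expansion of $R^H_K I^H_{N_H(K)}$ --- is the $\Phi\Psi$ computation, not the $\Psi\Phi$ one. With $\Psi(t)=e_K I^H_{N_H(K)}\left(q\,I^{N_H(K)}_K(t)\right)$, Corollary \ref{burncommute} (finite case) lets you drop the inner idempotent on $e_K M(G/H)$, and transitivity of induction gives $\Psi\Phi(s)=q\,e_K I^H_K R^H_K(s)=q\,e_K\left[H/K\right](s)$; what you actually need is the mark computation $e_K\left[H/K\right]=|(H/K)^K|\,e_K=|N_H(K)/K|\,e_K$ in the rational Burnside ring of $H$, together with the observation that the $q$ of Lemma \ref{Normrestrict} equals $1/|K\diagdown N_H(K)\diagup K|=1/|N_H(K)/K|$ because $K$ is normal in $N_H(K)$. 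The constants do cancel, but that is the argument, not the double-coset analysis. Second, your containment argument only shows that the image of $R^H_K$ is stable under $N_H(K)$, not pointwise fixed, and citing Proposition \ref{fixcommte}(1) does not close this; the correct (and easier) point is that $C_n$ is the identity on $M(G/H)$ for $n\in H$ by axiom 1 of Definition \ref{defM1}, so the equivariance condition gives $C_n R^H_K(s)=R^H_K(C_n s)=R^H_K(s)$ directly.
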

\begin{proof}
We prove that $e_KR^H_K$ gives an isomorphism onto $e_K\left(M(G/K)^{N_H(K)}\right)$. Starting with surjectivity, suppose $e_Km\in e_K\left(M(G/K)^{N_H(K)}\right)$. Begin by choosing $a\in M(G/N_H(K))$ with $R^{N_H(K)}_K(a)=m$ which we can do by Lemma \ref{Normrestrict}. Then we have:
\begin{align*}
R^H_K(I^H_{N_H(K)}(e_Ka))=\underset{K\diagdown H\diagup N_H(K)}{\sum}I^K_{K\cap xN_H(K)x^{-1}}C_xR^{N_H(K)}_{N_H(K)\cap x^{-1}Kx}(e_Ka).
\end{align*} 
We then apply Corollary \ref{burn0} to $R^{N_H(K)}_{N_H(K)\bigcap x^{-1}Kx}(e_Ka)$, in order to deduce that the only non-zero summand is given by the class $KeN_H(K)$. Therefore the above sum boils down to the following:
\begin{align*}
R^{N_H(K)}_K(e_Ka)=\underset{\mathfrak{I}^K_K}{\sum} e_KR^{N_H(K)}_K(a)=e_KR^{N_H(K)}_K(a)=e_Km,
\end{align*}
where the indexing set $\mathfrak{I}^K_K$ is given by:
\begin{align*}
\left\lbrace gKg^{-1}\mid g\in \left[K\diagdown N_H(K)\diagup K\right]\,\text{and}\, gKg^{-1}\leq K\right\rbrace.
\end{align*}
Here we use the fact that the indexing set $\mathfrak{I}^K_K$ is equal to $\left\lbrace K\right\rbrace$ as seen in Corollary \ref{burncommute}. For injectivity if $s\in \ker R^H_K$ then:
\begin{align*}
e_Ks&=\underset{D\leq K}{\sum}\frac{|K|}{|N_H(K)|}\mu(D,K)\left[H/D\right](s)\\&=\underset{D\leq K}{\sum}\frac{|K|}{|N_H(K)|}\mu(D,K)I^H_DR^H_D(s)\\&=\underset{D\leq K}{\sum}\frac{|K|}{|N_H(K)|}\mu(D,K)I^H_DR^K_DR^H_K(s)=0.
\end{align*} 
This proves that $e_K\ker R^H_K$ is trivial.
\end{proof}
We next prove a useful group theoretic result which holds for any group $G$. However we shall see that this is helpful when $G$ is profinite since a neighbourhood basis for the identity is given by open normal subgroups. 
\begin{lemma}\label{normaliseinclude}
If $N_1,N_2\unlhd G$ are open with $N_1\leq N_2$ and $K\leq G$, then:
\begin{align*}
N_G(N_1K)\leq N_G(N_2K). 
\end{align*}
\end{lemma}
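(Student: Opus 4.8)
The statement to prove is: if $N_1, N_2 \unlhd G$ are open normal subgroups with $N_1 \leq N_2$ and $K \leq G$ is any subgroup, then $N_G(N_1K) \leq N_G(N_2K)$. The plan is to take an arbitrary $g \in N_G(N_1K)$ and show directly that $g$ normalizes $N_2K$. The key observation is that $N_2 K = N_2 (N_1 K)$, since $N_1 \leq N_2$ implies $N_2 N_1 = N_2$ and hence $N_2(N_1K) = (N_2 N_1) K = N_2 K$.

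First I would fix $g \in N_G(N_1K)$, so that $g(N_1K)g^{-1} = N_1K$. Then I would compute
\begin{align*}
g(N_2K)g^{-1} = g\,N_2(N_1K)\,g^{-1} = (gN_2g^{-1})\,(g(N_1K)g^{-1}) = N_2\,(N_1K) = N_2 K,
\end{align*}
where the third equality uses that $N_2$ is normal in $G$ (so $gN_2g^{-1} = N_2$) together with the assumption $g(N_1K)g^{-1} = N_1K$. This shows $g \in N_G(N_2K)$, completing the argument. One should be slightly careful that $N_2 K$ and $N_1 K$ need only be subsets (they are subgroups here because $N_i$ is normal, so $N_iK$ is a subgroup), but the chain of equalities above only manipulates them as subsets of $G$, so no subtlety arises.

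There is essentially no obstacle here: the proof is a one-line manipulation once one notices $N_2 K = N_2(N_1 K)$. The only thing worth stating explicitly for the reader is why $N_2(N_1K) = N_2K$, namely $N_1 \leq N_2 \Rightarrow N_2 N_1 = N_2$. I would present the proof in the compact displayed form above, preceded by the sentence identifying $g$ and followed by the conclusion that $g$ normalizes $N_2K$. The normality of $N_2$ in $G$ is exactly what lets us pull $gN_2g^{-1}$ back to $N_2$, and this is the place where the hypothesis "$N_2 \unlhd G$" is used; the hypothesis "$N_1 \unlhd G$" is not strictly needed for the containment of normalizers, only that $N_1$ is a subgroup with $N_1 \leq N_2$, though in context $N_1$ is normal anyway.
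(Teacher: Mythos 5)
Your proof is correct, and it follows the same basic strategy as the paper --- a direct verification that $g(N_2K)g^{-1}=N_2K$, with normality of $N_2$ in $G$ doing the work --- but your packaging is slightly different and in fact a little tighter. The paper argues element-wise: for $g\in N_G(N_1K)$ and $k\in K$ it shows $gkg^{-1}\in N_1gKg^{-1}=N_1K\leq N_2K$, deduces $gKg^{-1}\leq N_2K$, and then asserts $N_2gKg^{-1}=N_2K$; strictly speaking this last step only yields the containment $N_2gKg^{-1}\subseteq N_2K$ directly, and the reverse containment needs a word (e.g.\ run the same argument with $g^{-1}$, which also lies in $N_G(N_1K)$). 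Your version sidesteps this by conjugating the whole product at once: writing $N_2K=N_2(N_1K)$ (from $N_1\leq N_2$, so $N_2N_1=N_2$) and using the set identity $g(AB)g^{-1}=(gAg^{-1})(gBg^{-1})$ together with $gN_2g^{-1}=N_2$ and $g(N_1K)g^{-1}=N_1K$ gives the equality $g(N_2K)g^{-1}=N_2K$ in one line. Your closing remarks are also accurate: the manipulations are purely set-level (the $N_iK$ are subgroups here because the $N_i$ are normal, but this is not needed for the computation), and normality of $N_1$ in $G$ is not actually used for the containment of normalizers, only $N_1\leq N_2$ and the hypothesis on $g$.
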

\begin{proof}
If $g \in N_G(N_1K)$ then for any $k\in K$ we have that:
\begin{align*}
gkg^{-1}\in N_1gKg^{-1}=N_1K\leq N_2K
\end{align*}
so $gKg^{-1}\leq N_2K$ and hence $N_2gKg^{-1}=N_2K$. It follows that $g\in N_G(N_2K)$.
\end{proof}
\begin{lemma}\label{colimitfixed}
Let $G$ be a profinite group. For $K,H\leq G$, $H$ open and $K$ closed the colimit 
\begin{align*}
\underset{N\unlhd G}{\colim}\left(M(G/NK)^{N_H(NK)}\right)
\end{align*}
over the diagram of open normal subgroups $N$ of $H$ exists. 
\end{lemma}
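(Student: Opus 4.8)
The plan is to show that the diagram $N \mapsto M(G/NK)^{N_H(NK)}$, indexed by the open normal subgroups $N$ of $H$ (equivalently, by a cofinal system of open normal subgroups of $G$), is a well-defined functor into $\mathbb{Q}\text{-Mod}$; once this is established the colimit exists because $\mathbb{Q}\text{-Mod}$ is cocomplete. So the real content is constructing the transition maps and checking they are compatible, i.e. that we genuinely have a functor on the poset of open normal subgroups ordered by reverse inclusion.

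First I would fix the indexing: as $N$ ranges over open normal subgroups of $H$, the subgroups $NK$ form a decreasing chain of open subgroups of $G$ each containing $K$, with $\bigcap_N NK = K$ since $K$ is closed and the $N$ are a neighbourhood basis of the identity. For $N_1 \leq N_2$ both open normal in $H$ we have $NK_1 := N_1K \leq N_2K =: NK_2$, so there is a restriction map $R^{N_2K}_{N_1K}\colon M(G/N_2K)\rightarrow M(G/N_1K)$ from the Mackey functor structure. The first step is to check this restricts to a map on fixed points
\begin{align*}
R^{N_2K}_{N_1K}\colon M(G/N_2K)^{N_H(N_2K)}\rightarrow M(G/N_1K)^{N_H(N_1K)}.
\end{align*}
By Lemma \ref{normaliseinclude} we have $N_G(N_1K)\leq N_G(N_2K)$, and intersecting with $H$ gives $N_H(N_1K)\leq N_H(N_2K)$; so an element fixed by $N_H(N_2K)$ is in particular fixed by $N_H(N_1K)$, and it remains to see the restriction map carries $N_H(N_1K)$-fixed points to $N_H(N_1K)$-fixed points. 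This is immediate from the equivariance condition of Definition \ref{defM1}: for $g\in N_H(N_1K)$, $C_g$ commutes with $R^{N_2K}_{N_1K}$ (noting $g$ normalises both $N_1K$ and $N_2K$), so $C_g(R^{N_2K}_{N_1K}(m)) = R^{N_2K}_{N_1K}(C_g(m)) = R^{N_2K}_{N_1K}(m)$ whenever $m$ is $N_H(N_2K)$-fixed.

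Then I would verify functoriality of this assignment: for $N_1\leq N_2\leq N_3$ the composite $R^{N_2K}_{N_1K}\circ R^{N_3K}_{N_2K}$ equals $R^{N_3K}_{N_1K}$ by the transitivity condition of Definition \ref{defM1}, and $R^{NK}_{NK}=\id$, so restricting to fixed points we get a genuine functor from the poset of open normal subgroups of $H$ (with reverse inclusion) to $\mathbb{Q}\text{-Mod}$. Finally, the indexing poset is directed (given $N_1, N_2$, their intersection $N_1\cap N_2$ is again open normal in $H$ and contained in both), so the colimit is a filtered colimit and in particular exists in $\mathbb{Q}\text{-Mod}$. I do not expect any serious obstacle here; the only point requiring a little care is the bookkeeping around which normaliser is acting and confirming via Lemma \ref{normaliseinclude} that the fixed-point subspaces are nested the right way, which is exactly why that lemma was proved immediately beforehand. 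This lemma is the technical underpinning for later using $\underset{N\unlhd G}{\colim}\left(M(G/NK)^{N_H(NK)}\right)$ as a model for a stalk of the Weyl-$G$-sheaf at $K$ relative to $H$.
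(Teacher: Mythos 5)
Your proposal is correct and takes essentially the same route as the paper: the entire content of the paper's proof is that for $N_1\leq N_2$ the restriction map sends $N_H(N_2K)$-fixed elements into $N_H(N_1K)$-fixed elements, which you establish exactly as the paper does, via Lemma \ref{normaliseinclude} together with the equivariance condition (your extra remarks on transitivity and directedness are routine and implicit in the paper). One cosmetic slip only: the phrase ``carries $N_H(N_1K)$-fixed points to $N_H(N_1K)$-fixed points'' should read ``carries $N_H(N_2K)$-fixed points into $N_H(N_1K)$-fixed points'', which is what your displayed computation actually proves.
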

\begin{proof}
We need to prove that if $N_1\leq N_2$ are open and normal in $H$ then:
\begin{align*}
A=\im {R^{N_2K}_{N_1K}}_{|_{M(G/N_2K)^{N_H(N_2K)}}}\subseteq M(G/N_1K)^{N_H(N_1K)}.
\end{align*}
Let $a\in A$ then $a=R^{N_2K}_{N_1K}(b)$ for $b\in M(G/N_2K)^{N_H(N_2K)}$. By Lemma \ref{normaliseinclude} we have $N_H(N_1K)\leq N_H(N_2K)$, so if $g\in N_H(N_1K)$ we can use the equivariance condition as follows:
\begin{align*}
ga=gR^{N_2K}_{N_1K}(b)=R^{N_2K}_{N_1K}(gb)=R^{N_2K}_{N_1K}(b)=a
\end{align*}
as required.
\end{proof}
We now prove a lemma aimed at establishing Proposition \ref{fixcommte} in the profinite case more generally.
\begin{lemma}\label{fixcolimitlemma}
Let $G$ be a profinite group, $H$ an open subgroup of $G$, $N,\overline{N}$ open normal subgroups of $G$ such that $\overline{N}\unlhd N\unlhd H$ and $K$ a closed subgroup of $H$. Then $e_{O(\overline{N},\overline{N}NK)}\left(M(G/NK)^{N_H(NK)}\right)$ is $N_H(NK)$-fixed.
\end{lemma}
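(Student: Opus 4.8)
The plan is to reduce this statement to the finite-group case already established in Proposition \ref{fixcommte}, via the inflation/colimit machinery of Section 2.1. First I would observe that since $N$ is open and normal in $H$, the quotient $H/N$ is a finite group, $NK/N$ is a subgroup of $H/N$, and by Construction \ref{Mackquot} and Proposition \ref{inflatemackey} the assignment $\overline{M}$ which sends a subgroup $J/N$ of $H/N$ to $M(G/J)$ is a Mackey functor over $H/N$. The key translations are: $N_H(NK)/N = N_{H/N}(NK/N)$ (this uses that $N \leq NK$, so conjugation by elements of $H$ preserves $NK$ if and only if the induced conjugation on $H/N$ preserves $NK/N$); the action of $N_H(NK)$ on $M(G/NK)$ factors through $N_H(NK)/N$ because the conjugation maps $C_n$ for $n \in N$ act trivially — here I would invoke Proposition \ref{sectionquot} / Proposition \ref{equisectweyl}, or more directly the fact that $M(G/NK)$ carries a discrete $N_G(NK)$-module structure so that $N$ acts trivially on it since $N$ lies in the kernel; and finally $e_{O(\overline{N},\overline{N}NK)}$ corresponds, under the identification $A(G) \cong C(SG/G,\mathbb{Q})$ and the restriction to $H$, to the Burnside ring idempotent $e_{NK/N}$ of $A(H/N)$, using Proposition \ref{idempotentform} together with Lemma \ref{non3iso} and Proposition \ref{nbhdbasis} to see that $O(\overline{N},\overline{N}NK) \cap SH$ is (up to $H$-saturation) the point $NK/N$ in $S(H/N)/(H/N)$.

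Once these identifications are in place, the statement becomes exactly: for the finite group $H/N$, the closed (hence, being finite, arbitrary) subgroups $NK/N \leq H/N$, and the Mackey functor $\overline{M}$, the submodule $e_{NK/N}\left(\overline{M}((H/N)/(NK/N))^{N_{H/N}(NK/N)}\right)$ is $N_{H/N}(NK/N)$-fixed. This is precisely part (1) of Proposition \ref{fixcommte} applied to $G = H/N$, $K = NK/N$, $H = H/N$. So the proof is: set up the dictionary, check each correspondence carefully, and then quote Proposition \ref{fixcommte}(1).

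The main obstacle I anticipate is verifying rigorously that the idempotent $e_{O(\overline{N},\overline{N}NK)}$, when restricted to act on $M(G/NK) = \overline{M}((H/N)/(NK/N))$, really does coincide with the finite-group idempotent $e_{NK/N} \in A(H/N)$ rather than some nearby idempotent differing by a term supported away from $NK$. Here I would lean on Corollary \ref{burncommute} and Corollary \ref{burn0}: the action of $e_{O(\overline{N},\overline{N}NK)}$ on elements of $M(G/NK)$ differs from that of the ``honest'' idempotent only by an $e_V$-type term whose support is disjoint from any $H$-conjugate of $NK$, and such a term annihilates the stalk at $NK$; since we only ever evaluate the result against the $N_H(NK)$-action at $NK$, the $e_V$ contribution is irrelevant. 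Equivalently, restricting $e^G_{O(\overline{N},\overline{N}NK)}$ along the inclusion $H \hookrightarrow G$ and then applying Corollary \ref{Burnsidecolim} and the colimit description $A(H) \cong \underset{\rightarrow}{\colim}\, A(H/N')$ expresses it in $A(H/N)$ as $e_{NK/N}$ plus terms supported on subgroups not $H$-subconjugate to $NK$, which act as zero on the $NK$-graded piece by Corollary \ref{burn0}.

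A secondary technical point is the interaction of the fixed-point functor $(-)^{N_H(NK)}$ with the colimit/discreteness structure — but since $N_H(NK)$ contains the open normal subgroup $N$ and the action factors through the finite group $N_H(NK)/N$, taking $N_H(NK)$-fixed points is the same as taking $N_H(NK)/N$-fixed points, which is the finite operation appearing in Proposition \ref{fixcommte}. With those reductions done, part (1) of Proposition \ref{fixcommte} closes the argument, and I would note in passing that the same dictionary will yield the analogue of part (2) of Proposition \ref{fixcommte} in the profinite setting, which is presumably the content of the subsequent lemmas building towards Lemma \ref{fixinflate}.
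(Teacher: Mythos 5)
Your reduction-to-the-finite-case strategy has two genuine problems as written. First, it is circular relative to the paper's logical structure: Proposition \ref{fixcommte} is only \emph{stated} for finite groups, with the explicit remark that it "will be proven in a more general profinite context in Proposition \ref{profinitefixed}" --- and the proof of Proposition \ref{profinitefixed}(1) consists of citing Lemma \ref{fixcolimitlemma}, the very statement you are trying to prove. So quoting \ref{fixcommte}(1) here only works if you supply an independent proof of the finite case, and that independent proof is exactly the paper's (short, direct) argument: write $e_{O(\overline{N},\overline{N}NK)}$ out via Proposition \ref{idempotentform} as $\sum_{\overline{N}A\leq NK}q_{\overline{N}A}I^{NK}_{\overline{N}A}R^{NK}_{\overline{N}A}$, apply $h\in N_H(NK)$, use the equivariance axiom together with $C_h(s)=s$ to move $h$ inside and replace each $\overline{N}A$ by $h\overline{N}Ah^{-1}$, and then re-index using Proposition \ref{rationcoeff} (conjugation-invariance of the coefficients $q_{\overline{N}A}$) to see the sum is unchanged. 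The inflation detour therefore buys nothing that the two-line computation does not already give.

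Second, your dictionary is pitched at the wrong level. The idempotent $e_{O(\overline{N},\overline{N}NK)}$, acting through the $A(NK)$-module structure on $M(G/NK)$, is the characteristic function of the $\overline{N}$-basic neighbourhood $O(\overline{N},NK)$; under the quotient correspondence it matches the idempotent $e_{NK/\overline{N}}$ for the finite quotient by $\overline{N}$, \emph{not} $e_{NK/N}\in A(H/N)$. The lemma deliberately keeps two normal subgroups $\overline{N}\unlhd N$ apart, because it is later fed into the colimit over $\overline{N}$ that defines $\left(M(G/NK)^{N_H(NK)}\right)_{(NK)}$; collapsing $\overline{N}$ and $N$ identifies the wrong idempotent (the repair is to run your argument with $G/\overline{N}$, $NK/\overline{N}\leq H/\overline{N}$ and Lemma \ref{normalisequot}, not with $H/N$). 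Relatedly, your fallback that any discrepancy "supported away from $NK$" can be ignored "since we only ever evaluate the result against the $N_H(NK)$-action at $NK$" misreads the statement: the lemma asserts that the actual elements $e_{O(\overline{N},\overline{N}NK)}s$ of $M(G/NK)$ are $N_H(NK)$-fixed, not merely their germs at $NK$, so a correction term could not be discarded on those grounds. In fact no $e_V$ correction arises here at all --- the $e_V$ terms of Corollaries \ref{burncommute} and \ref{burn0} appear only when idempotents are pushed across restriction or induction between \emph{different} subgroups, whereas here the idempotent acts within the single $A(NK)$-module $M(G/NK)$.
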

\begin{proof}
Let $h\in N_H(NK)$ and $e_{O(\overline{N},\overline{N}NK)}s\in e_{O(\overline{N},\overline{N}NK)}M(G/NK)^{N_H(NK)}$. Bearing in mind that $NK=\overline{N}NK$ we have:
\begin{align*}
he_{O(\overline{N},\overline{N}NK)}(s)&=h\left(\underset{\overline{N}A\leq NK}{\sum}q_{\overline{N}A}I^{NK}_{\overline{N}A}R^{NK}_{\overline{N}A}(s)\right)\\&=\underset{\overline{N}A\leq NK}{\sum}q_{\overline{N}A}I^{NK}_{h\overline{N}Ah^{-1}}R^{NK}_{h\overline{N}Ah^{-1}}(s)
\end{align*}
For the last term we use Proposition \ref{rationcoeff} to deduce that this is the same sum which determines $e_{O(\overline{N},\overline{N}NK)}s$. 
\end{proof}
This in particular allows us to consider the following $\mathbb{Q}$-module:
\begin{align*}
\left(M(G/NK)^{N_H(NK)}\right)_{(NK)}=\underset{\overline{N}}{\colim}\,e_{O(\overline{N},NK)}\left(M(G/NK)^{N_H(NK)}\right).
\end{align*}
We now verify that an analogous colimit exists when $K$ is closed but not open.
\begin{proposition}
The following colimit exists: 
\begin{align*}
\tilde{F}_K=\underset{N}{\colim}\,e_{O(N,NK)}\left(M(G/NK)^{N_H(NK)}\right)
\end{align*}
as $N$ ranges over the open normal subgroups of $G$ contained in $H$ exists. Let $N_1\leq N_2$ be open and normal in $G$. Then the image of $N_H(N_2K)$-fixed section $e_{O(N_2,N_2K)}s$ under the morphisms of the colimit diagram is $N_H(N_1K)$-fixed.
\end{proposition}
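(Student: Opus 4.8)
The statement to be proven has two parts: first, that the colimit $\tilde{F}_K = \underset{N}{\colim}\, e_{O(N,NK)}\left(M(G/NK)^{N_H(NK)}\right)$ exists over the diagram of open normal subgroups $N$ of $G$ contained in $H$; second, that the image of an $N_H(N_2K)$-fixed section $e_{O(N_2,N_2K)}s$ under the transition morphisms of the colimit diagram is $N_H(N_1K)$-fixed whenever $N_1 \leq N_2$ are open normal subgroups of $G$. The key point is that to even speak of the colimit we must exhibit well-defined transition maps, so the two parts are really two halves of the same argument: the transition map from the $N_2$-term to the $N_1$-term is induced by the restriction map $R^{N_2K}_{N_1K}$ (composed with the appropriate idempotent actions), and the content is that this lands in the correct fixed-point submodule.

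\textbf{Key steps.} First I would fix $N_1 \leq N_2$ open normal subgroups of $G$ contained in $H$, and consider the restriction map $R^{N_2K}_{N_1K}\colon M(G/N_2K) \to M(G/N_1K)$, which exists since $N_1K \leq N_2K$ are both open subgroups of $G$. Second, I would apply Corollary \ref{burncommute} (and the surrounding discussion of how Burnside idempotents interact with restriction in the profinite setting) to see that, after restricting from the extra $e_V$-term to the relevant localised summand, $R^{N_2K}_{N_1K}$ carries $e_{O(N_2,N_2K)}M(G/N_2K)$ into $e_{O(N_1,N_1K)}M(G/N_1K)$; this uses that $O(N_1,N_1K) \subseteq O(N_2,N_2K)$ and that $(N_1K)_{N_1K}$ is the unique relevant conjugacy class, exactly as in the proof of Lemma \ref{stalkwelldefined}. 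Third, for the fixed-point claim, I would take $g \in N_H(N_1K)$ and an $N_H(N_2K)$-fixed section $e_{O(N_2,N_2K)}s$; by Lemma \ref{normaliseinclude} we have $N_H(N_1K) \leq N_H(N_2K)$, so $g$ also normalises $N_2K$, and hence $g$ fixes $e_{O(N_2,N_2K)}s$. Then using the equivariance condition of the Mackey functor (condition 3 of Definition \ref{defM1}) in the form $R^{N_2K}_{N_1K} \circ C_g = C_g \circ R^{N_2K}_{N_1K}$ — valid since $g$ normalises both $N_1K$ and $N_2K$ — together with Lemma \ref{fixcolimitlemma} which tells us the idempotent $e_{O(N_1,N_1K)}$ commutes with the $N_H(N_1K)$-action, I would conclude
\begin{align*}
g \cdot e_{O(N_1,N_1K)} R^{N_2K}_{N_1K}(s) = e_{O(N_1,N_1K)} R^{N_2K}_{N_1K}(C_g(s)) = e_{O(N_1,N_1K)} R^{N_2K}_{N_1K}(s),
\end{align*}
giving $N_H(N_1K)$-invariance of the image. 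Finally, functoriality of the transition maps (i.e. $R^{N_3K}_{N_1K} = R^{N_2K}_{N_1K} \circ R^{N_3K}_{N_2K}$ for $N_1 \leq N_2 \leq N_3$) follows from the transitivity of restriction in Definition \ref{defM1}, so the diagram is genuinely a diagram over the directed poset of open normal subgroups of $G$ contained in $H$, and since $\mathbb{Q}\Mod$ is cocomplete the colimit exists.

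\textbf{Main obstacle.} The routine part is the equivariance bookkeeping; the one genuinely delicate point is making sure that, in the profinite setting, the Burnside idempotent $e_{O(N_2,N_2K)}$ truly survives restriction down to $e_{O(N_1,N_1K)}$ without a residual contribution from the ``extra'' $e_V$-term flagged in Corollary \ref{burncommute} and the discussion preceding Proposition \ref{indexbij}. I expect this to be handled exactly as in Lemma \ref{stalkwelldefined}: since we are only interested in germs at the conjugacy class of $K$, and since $(K) \in O(N_1,N_1K) \subseteq O(N_2,N_2K)$ with the supports forming a nested family of closed-open sets all containing $(K)$, we may restrict both sides to a common neighbourhood of $(K)$ and the $e_V$-term drops out. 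So the obstacle is not a new difficulty but rather correctly citing and reusing the idempotent-restriction compatibility already established, and ensuring the fixed-point submodule is preserved at each stage; once that is in place, cocompleteness of $\mathbb{Q}\Mod$ finishes the existence claim.
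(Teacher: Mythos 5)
Your proposal is correct and follows essentially the same route as the paper: the transition maps are restriction $R^{N_2K}_{N_1K}$ combined with the idempotent action, with Corollary \ref{burncommute} (and the projection eliminating the $e_V$-term) identifying the image as $e_{O(N_1,N_1K)}R^{N_2K}_{N_1K}(s)$, and the $N_H(N_1K)$-fixedness obtained from Lemma \ref{normaliseinclude} together with the equivariance condition and the Lemma \ref{fixcolimitlemma}-style commutation of the idempotent with conjugation. The only cosmetic difference is that the paper makes explicit the intermediate inclusion of $N_H(N_2K)$-fixed points into $N_H(N_1K)$-fixed points before applying $e_{O(N_1,N_1K)}$ (since that idempotent only acts on the latter), whereas you absorb this into your direct verification of fixedness of the image.
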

\begin{proof}
Let $N_1\leq N_2$. The morphisms of the diagram we are trying to take the colimit of can be seen in the following diagram:
\begin{align*}
\xymatrix{e_{O(N_2,N_2K)}M(G/N_2K)^{N_H(N_2K)}\ar[d]\ar@{-->}[dr]\\e_{O(N_1,N_1K)}M(G/N_2K)^{N_H(N_1K)}\ar[r]_(.45){R^{N_2K}_{N_1K}}&e_{O(N_1,N_1K)}M(G/N_1K)^{N_H(N_1K)}}
\end{align*}
where the vertical arrow is given by inclusion into $e_{O(N_2,N_2K)}M(G/N_2K)^{N_H(N_1K)}$ followed by multiplication by $e_{O(N_1,N_1K)}$. This inclusion is necessary since a section with support $O(N_1,N_1K)$ can only be $N_H(N_1K)$-equivariant as the set is only $N_H(N_1K)$-invariant. The idempotent $e_{O(N_1,N_1K)}$ therefore doesn't act on $N_H(N_2K)$-fixed points as in Lemma \ref{fixcolimitlemma}. However, $e_{O(N_2,N_2K)}$ can act on $N_H(N_1K)$-fixed points since this support is $N_H(N_1K)$-invariant. The proof uses Lemma \ref{normaliseinclude} to deduce that a similar argument to that of Lemma \ref{fixcolimitlemma} is valid.

We now verify that these maps are compatible. If we take $e_{O(N_2,N_2K)}s$ in $e_{O(N_2,N_2K)}M(G/N_2K)^{N_H(N_2K)}$, then using Lemma \ref{fixcolimitlemma} we know that $e_{O(N_2,N_2K)}s$ is $N_H(N_2K)$-fixed. Since $O(N_1,N_1K)$ is $N_H(N_1K)$-invariant, Lemma \ref{normaliseinclude} implies that $e_{O(N_1,N_1K)}s$ is $N_H(N_1K)$-fixed. A similar argument to Lemma \ref{colimitfixed} shows that that the image of $e_{O(N_1,N_1K)}s$ under $R^{N_2K}_{N_1K}$ is $N_H(N_1K)$-fixed. This has image given by:
\begin{align*}
e_{O(N_1,N_1K)}R^{N_2K}_{N_1K}(s)
\end{align*}
as seen in Corollary \ref{burncommute}, where we can project onto $e_{O(N,NK)}$ eliminating $e_V$ as explained in the paragraph after the corollary. If $h\in N_H(N_1K)$ then we have:
\begin{align*}
he_{O(N_1,N_1K)}R^{N_2K}_{N_1K}(s)=e_{O(N_1,N_1K)}R^{N_2K}_{N_1K}(s).
\end{align*}
Therefore we have a well defined colimit diagram since this holds for any $h\in N_H(N_1K)$ and the result follows.
\end{proof}
\begin{remark}\label{opencasestalk}
Notice that in the case where $K$ is open:
\begin{align*}
\tilde{F}_K=\left(M(G/K)^{N_H(K)}\right)_{(K)}.
\end{align*}
This is because $K$ is the cofinal term in the colimit diagram. To see this take any term $NK$ for $N$ open and normal in $G$ and contained in $H$. Then $N\cap K^C$ is an open normal subgroup inside $H$ with $NK$ mapping down to the term $\left(N\cap K^C\right)K=K$.
\end{remark}
\begin{proposition}\label{profinitefixed}
If $G$ is a profinite group then the following holds:
\begin{enumerate}
\item $\left(M(G/NK)^{N_H(NK)}\right)_{(NK)}$ is $N_H(NK)$-fixed.
\item $\left(M(G/NK)^{N_H(NK)}\right)_{(NK)}=\left(M(G/NK)_{(NK)}\right)^{N_H(NK)}$.
\end{enumerate}
\end{proposition}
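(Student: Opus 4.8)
The plan is to reduce the statement to a colimit of the corresponding finite-group facts, namely Proposition \ref{fixcommte}, by using the descriptions of both sides as filtered colimits over the open normal subgroups $\overline{N}$ of $G$ contained in $N$. Fix an open normal subgroup $N\unlhd H$ and a closed subgroup $K$ of $H$. For part $(1)$ I will argue directly: a typical element of $\left(M(G/NK)^{N_H(NK)}\right)_{(NK)}$ is represented by some $e_{O(\overline{N},\overline{N}NK)}s$ with $s\in M(G/NK)^{N_H(NK)}$, and Lemma \ref{fixcolimitlemma} says precisely that such a representative is $N_H(NK)$-fixed; passing to the colimit preserves this, since the colimit morphisms are $N_H(NK)$-equivariant (this equivariance is built into the construction of $\tilde{F}_{NK}$, using Lemma \ref{normaliseinclude} and Corollary \ref{burncommute} as in the preceding propositions). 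So $(1)$ is essentially a repackaging of Lemma \ref{fixcolimitlemma} together with the existence of the colimit diagram just established.

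\textbf{The harder part.} Part $(2)$ is the substantive statement, and its analogue for finite groups is Proposition \ref{fixcommte}$(2)$, asserting $e_D\left(M(G/D)^{N_H(D)}\right)=\left(e_DM(G/D)\right)^{N_H(D)}$ whenever $D$ is a subgroup of the finite quotient. My plan is: apply this finite statement at each finite quotient $G/\overline{N}$, with the group $H/\overline{N}$, the subgroup $NK/\overline{N}$, and the Mackey functor $\overline{M}$ obtained from $M$ by Construction \ref{Mackquot}; then take the filtered colimit over $\overline{N}$ ranging through the open normal subgroups of $G$ inside $N$. On the left side, $\colim_{\overline{N}} e_{O(\overline{N},\overline{N}NK)}\left(M(G/NK)^{N_H(NK)}\right)$ is by definition $\left(M(G/NK)^{N_H(NK)}\right)_{(NK)}$. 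On the right side, I need that filtered colimits commute with taking $N_H(NK)$-fixed points; this is true because $N_H(NK)/N$ is \emph{finite} (as $NK$ is open, being a finite-index subgroup of $H$, so its normaliser is open and contains the open subgroup $N$), and finite-group invariants are a finite limit, which commutes with filtered colimits in $\mathbb{Q}$-modules. Hence $\colim_{\overline{N}}\left(e_{O(\overline{N},\overline{N}NK)}M(G/NK)\right)^{N_H(NK)}=\left(\colim_{\overline{N}} e_{O(\overline{N},\overline{N}NK)}M(G/NK)\right)^{N_H(NK)}=\left(M(G/NK)_{(NK)}\right)^{N_H(NK)}$, which is the right-hand side of $(2)$.

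\textbf{Assembling the argument.} The remaining work is to match up the two colimit systems carefully: I must check that the finite-group equality $e_{\overline{D}}\left(\overline{M}(\overline{G}/\overline{D})^{N_{\overline{H}}(\overline{D})}\right)=\left(e_{\overline{D}}\overline{M}(\overline{G}/\overline{D})\right)^{N_{\overline{H}}(\overline{D})}$ at the quotient $\overline{G}=G/\overline{N}$ translates, under Lemma \ref{non3iso}, Construction \ref{Mackquot}, and the identification $e_{O(\overline{N},\overline{N}NK)}=e_{\overline{N}NK/\overline{N}}\circ P_{\overline{N}}$ from Proposition \ref{idempotentform}, into the statement $e_{O(\overline{N},\overline{N}NK)}\left(M(G/NK)^{N_H(NK)}\right)=\left(e_{O(\overline{N},\overline{N}NK)}M(G/NK)\right)^{N_H(NK)}$; here one uses that conjugation and the normaliser behave well under passing to $G/\overline{N}$, which is exactly the group-theoretic bookkeeping of the two lemmas preceding Proposition \ref{inflatemackey}. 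I expect the main obstacle to be this translation step --- verifying that all the idempotents, restriction/induction maps, and fixed-point operators really do commute with the inflation-to-$G/\overline{N}$ functor in the way required --- together with confirming that the colimit morphisms on the right-hand side are the ones induced by $R^{N_2K}_{N_1K}$ so that the colimit genuinely computes $M(G/NK)_{(NK)}$ rather than some variant. Once these compatibilities are pinned down, $(2)$ follows formally from Proposition \ref{fixcommte}$(2)$ by the commutation of filtered colimits with finite invariants, and the proof concludes.
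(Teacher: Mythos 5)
Your treatment of part (1) matches the paper (it is read off from Lemma \ref{fixcolimitlemma}), but for part (2) you take a genuinely different route. The paper argues at the level of germs: the containment $\bigl(M(G/NK)^{N_H(NK)}\bigr)_{(NK)}\subseteq\bigl(M(G/NK)_{(NK)}\bigr)^{N_H(NK)}$ is read off from Lemma \ref{fixcolimitlemma}, and for the reverse it uses that the action factors through the finite Weyl group $W_H(NK)$: a fixed germ agrees with each of its finitely many translates on some basic neighbourhood, and intersecting those finitely many neighbourhoods gives a single $O(N^{\prime},N^{\prime}NK)$ on which the chosen representative is genuinely equivariant. Your observation that invariants under a group acting through a finite quotient are a finite limit, hence commute with the filtered colimit over $\overline{N}$, is the same finiteness exploited categorically; unwinding the commutation reproduces exactly the paper's finite-intersection argument. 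So that half of your plan is correct, provided you record that the $N_H(NK)$-action on each term $e_{O(\overline{N},NK)}M(G/NK)$ factors through $N_H(NK)/NK$ (because $C_k=\id$ for $k\in NK$), which you essentially do.

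The wrinkle is your appeal to Proposition \ref{fixcommte}(2). In the paper that proposition is stated without proof and is explicitly announced as something that ``will be proven in a more general profinite context in Proposition \ref{profinitefixed}'', i.e.\ by the very statement you are proving, so quoting it here is circular within the paper's logical structure; moreover the inflation bookkeeping through $G/\overline{N}$ that you identify as the main obstacle is unnecessary. The term-wise identity you need, $e_{O(\overline{N},NK)}\bigl(M(G/NK)^{N_H(NK)}\bigr)=\bigl(e_{O(\overline{N},NK)}M(G/NK)\bigr)^{N_H(NK)}$, follows directly: one inclusion is exactly Lemma \ref{fixcolimitlemma} (the idempotent image of the fixed submodule is $N_H(NK)$-fixed), and for the other, if $x=e_{O(\overline{N},NK)}m$ is $N_H(NK)$-fixed then $x=e_{O(\overline{N},NK)}x$ with $x\in M(G/NK)^{N_H(NK)}$ by idempotency. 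Replacing the citation of \ref{fixcommte} by this observation (or by an independent finite-group proof via Proposition \ref{rationcoeff} and the equivariance axiom) removes the circularity; with that repair your colimit argument goes through and is a clean alternative to the paper's germ-level proof.
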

\begin{proof}
The first is immediate from Lemma \ref{fixcolimitlemma}. For the second we begin by proving that the left hand side is contained in the right hand side. Take $s_{(NK)}$ in $\left(M(G/NK)^{N_H(NK)}\right)_{(NK)}$ which has representative $e_{O(\overline{N},NK)}s$ for:
\begin{align*}
s\in M(G/NK)^{N_H(NK)}. 
\end{align*}
We know from Lemma \ref{fixcolimitlemma} that $e_{O(\overline{N},NK)}s$ is $N_H(NK)$-fixed, since $O(\overline{N},NK)$ is $N_H(NK)$-invariant. We therefore know that $s_{(NK)}$ belongs to:
\begin{align*}
\left(M(G/NK)_{(NK)}\right)^{N_H(NK)}.
\end{align*}
To see that the right hand side is contained in the left take a representative $s_{(NK)}\in \left(M(G/NK)_{(NK)}\right)^{N_H(NK)}$. For each $g\in W_H(NK)$ there exists an $N_g$ such that $\left(g*s\right)_{|_{O(N_g,N_gNK)}}=s_{|_{O(N_g,N_gNK)}}$ where $s$ is a representative of $s_{(NK)}$.

Let $N^{\prime}=\underset{g\in W_H(NK)/NK}{\bigcap}N_g$ which is a finite intersection and observe that $O(N^{\prime},N^{\prime}NK)$ is $W_H(NK)$-invariant. By construction we also have: 
\begin{align*}
\left(g*s\right)_{|_{O(N^{\prime},N^{\prime}NK)}}=s_{|_{O(N^{\prime},N^{\prime}NK)}}
\end{align*}
for each $g\in W_H(NK)$. This tells us that $s_{(NK)}$ belongs to:
\begin{align*}
\left(M(G/NK)^{N_H(NK)}\right)_{(NK)}.
\end{align*}
\end{proof}
We seek to extend the fixed point relation given in the finite case to the profinite case and we begin by proving a basic group theoretic lemma.
\begin{lemma}\label{normalisequot}
If $G$ is a profinite group, $H$ an open subgroup in $G$ containing open normal subgroup $N$ of $G$ then $N_{G/N}(H/N)\cong N_G(H)/N$. 
\end{lemma}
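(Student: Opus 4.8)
The statement is a version of the third isomorphism theorem phrased in terms of normalisers. The plan is to construct an explicit isomorphism $N_{G/N}(H/N) \to N_G(H)/N$ and to check it is well defined, bijective, and a group homomorphism; since all groups in sight are profinite and $N$ is open, these checks reduce to elementary coset manipulations, much like the arguments already used for Lemma \ref{non3iso} and the lemma preceding Proposition \ref{inflatemackey}.

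First I would describe the two sides concretely. An element $gN \in G/N$ normalises $H/N$ precisely when $(gN)(H/N)(gN)^{-1} = H/N$, i.e. when $gHg^{-1}N/N = H/N$, which since $N \leq H$ is equivalent to $gHg^{-1}N = H$, and because $N$ is normal this says $gHg^{-1} \subseteq H$ (using finiteness of the index, or simply that $gHg^{-1}$ and $H$ have the same cardinality, one deduces $gHg^{-1} = H$). Hence $gN \in N_{G/N}(H/N)$ if and only if $g \in N_G(H)$. This immediately identifies the underlying set of $N_{G/N}(H/N)$ with $N_G(H)/N$; note $N \leq H \leq N_G(H)$, so $N_G(H)/N$ makes sense as a subgroup of $G/N$.

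Next I would define $\theta : N_{G/N}(H/N) \to N_G(H)/N$ by $gN \mapsto gN$, interpreting the target as the image of $N_G(H)$ in $G/N$. The observation in the previous paragraph shows this is well defined (the representative $g$ can be chosen in $N_G(H)$, and any two such representatives differ by an element of $N$) and injective. Surjectivity is clear: given $gN$ with $g \in N_G(H)$, the same coset lies in $N_{G/N}(H/N)$ by the equivalence above. That $\theta$ is a homomorphism is immediate since it is induced by the quotient map $G \to G/N$ restricted to $N_G(H)$.

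I do not expect any real obstacle here: the only subtlety is the step $gHg^{-1}N = H \Rightarrow gHg^{-1} = H$, which needs the fact that $H$ has finite index (so $|gHg^{-1}| = |H|$ as finite sets, or one works in the finite group $G/N$ directly where $gHg^{-1}N/N$ and $H/N$ are finite subgroups of equal order, hence equal once one contains the other). Since $H$ is open in the profinite group $G$, it has finite index, so this is automatic. The rest is bookkeeping with cosets of the kind carried out repeatedly earlier in the chapter.
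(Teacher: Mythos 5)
Your proposal is correct and follows essentially the same route as the paper: both arguments identify $N_{G/N}(H/N)$ and $N_G(H)/N$ as the same subgroup of $G/N$ by elementary coset manipulation, showing $gN$ normalises $H/N$ if and only if $g\in N_G(H)$. If anything you are slightly more careful than the paper at the step $gHg^{-1}\subseteq H\Rightarrow gHg^{-1}=H$ (use the finite index of $H$, not equality of cardinalities, or apply the same inclusion to $g^{-1}$), which the paper's proof passes over silently.
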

\begin{proof}
Firstly if $gN\in N_{G/N}(H/N)$ then $gN\left(H/N\right)g^{-1}N=H/N$ and hence $gHg^{-1}/N=H/N$. For any $h\in H$, we know that $ghg^{-1}N\in gHg^{-1}/N=H/N$ and so there exists an $h^{\prime}N\in H/N$ with $ghg^{-1}N=h^{\prime}N$. This in particular means that there is an $n\in N$ with $ghg^{-1}=h^{\prime}n$. This belongs to $H$ since $N\subseteq H$, and so $g\in N_G(H)$ and $gN\in N_G(H)/N$. On the other hand if $gN\in N_G(H)/N$ then:
\begin{align*}
H/N=gHg^{-1}/N=gN\left(H/N\right)g^{-1}N,
\end{align*}
and so $gN\in N_{G/N}(H/N)$.
\end{proof}
We now extend Lemma \ref{Fixptfin} to the profinite case using Construction \ref{Mackquot}.
\begin{lemma}\label{fixinflate}
For $N$ an open normal subgroup of $H$ and $K\leq H$ both open, we have that $e_{O(N,K)}M(G/H)$ is isomorphic to $e_{O(N,K)}\left(M(G/K)^{N_H(K)}\right)$.
\end{lemma}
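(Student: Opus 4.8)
\textbf{Proof plan for Lemma \ref{fixinflate}.}

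The plan is to reduce the profinite statement to the finite case established in Lemma \ref{Fixptfin} by passing to the finite quotient group $G/N$. The key observation is that since $N$ is an open normal subgroup of $G$ contained in $K$ (and in $H$), the open subset $O(N,K)$ of $SH$ is a single point of $SH/H$ "spread out" over the fibre, and the Burnside idempotent $e_{O(N,K)}$ is exactly the image under inflation of the Burnside idempotent $e_K \in A(G/N)$ associated to the point $K/N$ of the finite poset of subgroups of $G/N$. Concretely, I would first recall Construction \ref{Mackquot}: the Mackey functor $M$ over $G$ restricts to a Mackey functor $\overline{M}$ over the finite group $G/N$ with $\overline{M}((G/N)/(L/N)) = M(G/L)$ for open $L \supseteq N$, and with the same restriction, induction and conjugation maps. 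Under this identification, Proposition \ref{idempotentform} shows that the action of $e_{O(N,K)}$ on $M(G/H)$ agrees with the action of the finite-group idempotent $e_{K/N} \in A(G/N)$ on $\overline{M}((G/N)/(H/N))$, since both are given by the identical formula $\sum_{NA \leq K} \frac{|NA|}{|N_G(K)|}\mu(NA,K)[-/NA](-)$ with the sum running over the same subgroups.

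Next I would translate the two sides of the desired isomorphism into statements about $\overline{M}$. The left-hand side $e_{O(N,K)}M(G/H)$ becomes $e_{K/N}\,\overline{M}((G/N)/(H/N))$. For the right-hand side, I need $e_{O(N,K)}\left(M(G/K)^{N_H(K)}\right)$ to correspond to $e_{K/N}\left(\overline{M}((G/N)/(K/N))^{N_{H/N}(K/N)}\right)$; this uses Lemma \ref{normalisequot}, which gives $N_{H/N}(K/N) \cong N_H(K)/N$, together with the fact that since $N \leq K$ the group $N$ acts trivially on $M(G/K)$ (as $\overline{M}$ is defined using the same conjugation maps and $C_n = \mathrm{id}$ for $n \in K$ by axiom (1) of Definition \ref{defM1}), so the $N_H(K)$-fixed points of $M(G/K)$ are the same as the $N_H(K)/N$-fixed points of $\overline{M}((G/N)/(K/N))$. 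Having matched both sides with their $G/N$-counterparts, I would apply Lemma \ref{Fixptfin} to the finite group $G/N$ and the subgroups $K/N \leq H/N \leq G/N$, with the Mackey functor $\overline{M}$, to conclude
\begin{align*}
e_{K/N}\,\overline{M}((G/N)/(H/N)) \cong e_{K/N}\left(\overline{M}((G/N)/(K/N))^{N_{H/N}(K/N)}\right),
\end{align*}
and then unwind the identifications to obtain $e_{O(N,K)}M(G/H) \cong e_{O(N,K)}\left(M(G/K)^{N_H(K)}\right)$. As in Lemma \ref{Fixptfin}, the explicit isomorphism is $e_{O(N,K)}R^H_K$ with inverse built from $e_{O(N,K)}I^H_{N_H(K)}$ composed with the section of $R^{N_H(K)}_K$ coming from Lemma \ref{Normrestrict}.

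The main obstacle I anticipate is not any single hard computation but rather the bookkeeping of verifying that the finite-group idempotent $e_{K/N}$ acts on $\overline{M}$ exactly as $e_{O(N,K)}$ acts on $M$, including checking that the relevant indexing sets $\mathfrak{I}^H_K$ and the $G$-subconjugacy conditions from Section \ref{chp3}'s idempotent lemmas behave compatibly under the quotient $G \to G/N$ — in particular that the "extra" $e_V$ terms appearing in Propositions \ref{restrictprop} and \ref{inductprop} genuinely vanish here because $N \leq K$ forces $K$ to be $G$-subconjugate to $H$ and the finite case has no such terms. Once that compatibility is in hand, the lemma follows formally, so I would state the quotient-compatibility carefully as the crux and then invoke Lemma \ref{Fixptfin} as a black box.
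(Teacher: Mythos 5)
Your proposal is correct and follows essentially the same route as the paper's proof: reduce to the finite group $G/N$ via the quotient Mackey functor $\overline{M}$ of Construction \ref{Mackquot}, identify $e_{O(N,K)}$ with $e_{K/N}$ and $N_{H/N}(K/N)$ with $N_H(K)/N$ (Lemma \ref{normalisequot}, triviality of the $N$-action since $N\leq K$), and then apply Lemma \ref{Fixptfin}, with the explicit isomorphism $e_{O(N,K)}R^H_K$. The paper treats the idempotent compatibility you flag as bookkeeping as immediate from the identical defining formulas, which is exactly the justification you give.
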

\begin{proof}
We begin by using Lemma \ref{Fixptfin} on the Mackey functor $\overline{M}$ over $G/N$ from Construction \ref{Mackquot} to deduce that $e_{K/N}\left(\overline{M}\left(\left(G/N\right)\left(K/N\right)\right)^{N_{H/N}(K/N)}\right)$ is isomorphic to $e_{K/N}\overline{M}\left(\left(G/N\right)\left(H/N\right)\right)$ via $e_{K/N}R^{H/N}_{K/N}$. By definition of Construction \ref{Mackquot} and Lemma \ref{normalisequot} this boils down to:
\begin{align*}
e_{K/N}\left(\overline{M}\left(\left(G/N\right)\left(K/N\right)\right)^{N_{H/N}(K/N)}\right)&=e_{O(N,K)}\left(M(G/K)^{N_H(K)/N}\right)\\&=e_{O(N,K)}\left(M(G/K)^{N_H(K)}\right)
\end{align*}
since $N\subseteq K$ implies that the $N$-action on $M(G/K)$ is trivial. This is then isomorphic to
\begin{align*}
e_{K/N}\overline{M}\left(\left(G/N\right)\left(H/N\right)\right)=e_{O(N,K)}M(G/H)
\end{align*}
as required. We finally note that $e_{K/N}R^{H/N}_{K/N}=e_{O(N,K)}R^H_K$.
\end{proof}
For the following proposition we recall that $M(G/H)_{(K)}$ is the stalk of \\$M(G/H)$ at $(K)$. We now prove a result which is analogous to Lemma \ref{Fixptfin}.
\begin{remark}
For a closed subgroup $K$ we can write $\underset{J\geq K}{\colim} M(G/J)_{(K)}$ whose colimit is taken over every open subgroup $J$ containing $K$ as
\begin{align*}
\underset{N\unlhd G}{\colim} M(G/NK)_{(K)}
\end{align*}
whose colimit is taken over every open normal subgroup $N$ of $G$. To see this, observe that each $NK$ is equal to some $J$ in the first diagram, but for each $J$ open and containing $K$ we know that the term $J$ maps down to the term $J^CK$ in the first colimit diagram so we can apply a cofinality argument. 
\end{remark}

\begin{remark}\label{include}
Since $e_{O(N,NK)}M(G/NK)^{N_H(NK)}$ includes into:
\begin{align*}
e_{O(N,NK)}M(G/NK)
\end{align*}
and filtered colimits preserve inclusions we know that $\tilde{F}_K$ includes into:
\begin{align*}
F_K=\underset{N}{\colim} M(G/NK)_{(K)}\cong \underset{J}{\colim} M(G/J)_{(K)}.
\end{align*}  
\end{remark}
\begin{proposition}\label{fixfinal}
For $K\leq H\leq G$, $H$ open in $G$ and $K$ closed, there is an isomorphism of $\mathbb{Q}$-modules:
\begin{align*}
\theta: M(G/H)_{(K)}&\rightarrow \tilde{F}_K\\s(K)&\mapsto \left[e_{O(N,NK)}R^H_{NK}(s)\right]
\end{align*}
where $N=H^C$.
\end{proposition}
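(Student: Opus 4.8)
The plan is to verify that the stated assignment $\theta\colon M(G/H)_{(K)}\to\tilde F_K$, $s(K)\mapsto\bigl[e_{O(N,NK)}R^H_{NK}(s)\bigr]$ with $N=H^C$, is a well-defined isomorphism of $\mathbb{Q}$-modules, and this will be done by exhibiting it as a filtered colimit (over the open normal subgroups $N$ of $G$ contained in $H$) of the isomorphisms from Lemma \ref{fixinflate}. First I would record that $M(G/H)_{(K)}=\underset{N}{\colim}\,e_{O(N,NK)}M(G/H)$ by the remark preceding the proposition (passing from the colimit over open $J\supseteq K$ to the colimit over open normal $N$ via a cofinality argument, using $N=H^C$ appropriately), and that $\tilde F_K=\underset{N}{\colim}\,e_{O(N,NK)}\bigl(M(G/NK)^{N_H(NK)}\bigr)$ by definition. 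Then for each fixed $N$, Lemma \ref{fixinflate} gives an isomorphism $e_{O(N,NK)}M(G/H)\cong e_{O(N,NK)}\bigl(M(G/NK)^{N_H(NK)}\bigr)$ implemented by $e_{O(N,NK)}R^H_{NK}$ --- note that here we use $NK$ in the role played by the open subgroup in the finite statement, and $H^C\leq N$ guarantees the hypotheses of that lemma are met after possibly shrinking $N$.

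The key step is then to check that these level-$N$ isomorphisms are compatible with the morphisms of the two colimit diagrams, so that they assemble to an isomorphism of the colimits. Concretely, for $N_1\leq N_2$ open normal in $G$ and contained in $H$, I would verify that the square
\begin{align*}
\xymatrix{e_{O(N_2,N_2K)}M(G/H)\ar[r]^{e_{O(N_2,N_2K)}R^H_{N_2K}}\ar[d]&e_{O(N_2,N_2K)}\bigl(M(G/N_2K)^{N_H(N_2K)}\bigr)\ar[d]\\e_{O(N_1,N_1K)}M(G/H)\ar[r]^{e_{O(N_1,N_1K)}R^H_{N_1K}}&e_{O(N_1,N_1K)}\bigl(M(G/N_1K)^{N_H(N_1K)}\bigr)}
\end{align*}
commutes, where the right-hand vertical map is the structure map of the colimit defining $\tilde F_K$ (inclusion into $e_{O(N_2,N_2K)}M(G/N_2K)^{N_H(N_1K)}$, multiplication by $e_{O(N_1,N_1K)}$, then $R^{N_2K}_{N_1K}$) and the left-hand vertical is multiplication by $e_{O(N_1,N_1K)}$. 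The commutativity reduces to the transitivity of restriction $R^H_{N_1K}=R^{N_2K}_{N_1K}\circ R^H_{N_2K}$ together with the fact, recorded in Corollary \ref{burncommute} and the discussion after it, that the Burnside idempotents essentially commute with restriction up to the harmless $e_V$ term which is killed by projecting onto the relevant $e_{O(N,NK)}$-summand; Lemma \ref{normaliseinclude} is what lets $e_{O(N_1,N_1K)}$ and the $N_H(N_1K)$-fixing interact correctly. Once this square commutes, well-definedness of $\theta$ (independence of the choice of representative $s$ and of $N$), $\mathbb{Q}$-linearity, injectivity and surjectivity all follow formally from the corresponding properties of the level-$N$ maps, since filtered colimits of isomorphisms are isomorphisms and filtered colimits are exact.

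I expect the main obstacle to be the bookkeeping in the compatibility square: one must be careful that the relevant restriction is really $R^H_{NK}$ (not $R^{N_H(NK)}_{NK}$ or similar), that the $e_V$ correction terms appearing in Propositions \ref{restrictprop}–\ref{inductprop} and Corollary \ref{burncommute} genuinely vanish after the projection used to define the stalk, and that the two descriptions of $\tilde F_K$'s colimit maps (the one in the proposition just before \ref{profinitefixed} and the one implicit in Lemma \ref{fixinflate}) agree. A secondary point needing care is the cofinality argument identifying $\underset{J\supseteq K}{\colim}\,e\,M(G/J)$ with $\underset{N}{\colim}\,e\,M(G/H)$ via $N=H^C$: here one uses that for $J\supseteq K$ open, $J^C\cap H^C$ is open normal in $G$ and contained in $H$, and that $M$ being defined on $G/H$ (rather than on all $G/J$) forces us to restrict along $R^H_{NK}$ rather than along inclusions of arbitrary subgroups, exactly as flagged in the remark following Lemma \ref{stalk}. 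Modulo this bookkeeping the argument is a routine assembly of Lemma \ref{fixinflate} over the diagram of open normal subgroups.
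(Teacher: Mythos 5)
Your proposal is correct and is essentially the paper's argument: the paper also rests everything on Lemma \ref{fixinflate} applied at each level $N$ together with compatibility with the colimit diagram of $\tilde{F}_K$ (including the elimination of the $e_V$ term by projection, via Corollary \ref{burncommute}), only it phrases the conclusion as a direct check of well-definedness, surjectivity and injectivity on representatives rather than as a filtered colimit of levelwise isomorphisms. The only slight inaccuracy is attributing the identification $M(G/H)_{(K)}\cong\underset{N}{\colim}\,e_{O(N,NK)}M(G/H)$ to the remark before the proposition; it is really just the definition of the stalk of the sheaf from Corollary \ref{MHsheaf} with the neighbourhood basis of Proposition \ref{nbhdbasis}, which does not affect the argument.
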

\begin{proof}
The fact this is well defined is immediate from the definition of $\theta$ and the colimit diagram of $\tilde{F}_K$. Namely, if $N_1\leq N_2$ then $e_{O(N_1,N_1K)}s$ has support contained in that of $e_{O(N_2,N_2K)}s$. The images under $\theta$ of these are $e_{O(N_1,N_1K)}R^H_{N_1K}(s)$ and $e_{O(N_2,N_2K)}R^H_{N_2K}(s)$ respectively. These are equal in $\tilde{F}_K$ since the latter maps to the former via the colimit diagram. For surjectivity suppose $\left[e_{O(N,NK)}t\right]$ belongs to $\tilde{F}_K$ and has representative:
\begin{align*}
e_{O(N,NK)}t\in e_{O(N,NK)}\left(M(G/NK)^{N_H(NK)}\right) \,\text{for}\, t\in M(G/NK)^{N_H(NK)}. 
\end{align*}
We can now apply Lemma \ref{fixinflate} to find $s\in e_{O(N,NK)}M(G/H)$ with $R^H_{NK}(s)=e_{O(N,NK)}t$. Therefore $\left[e_{O(N,NK)}R^H_{NK}(s)\right]=\left[e_{O(N,NK)}t\right]$.

For injectivity, if $s(K)$ maps to zero in $\tilde{F}_K$, then by definition there exists some open normal subgroup $N$ such that $e_{O(N,NK)}R^H_{NK}(s)=0$. Since $e_{O(N,NK)}R^H_{NK}(s)$ belongs to $e_{O(N,NK)}M(G/NK)^{H_H(NK)}$ we can apply Lemma \ref{fixinflate} to deduce that $e_{O(N,NK)}s=0$ and hence $s(K)=0$.
\end{proof} 
In the setting where $K$ is open we know from Remark \ref{opencasestalk} that $\tilde{F}_K$ is isomorphic to $\left(M(G/K)^{N_H(K)}\right)_{(K)}$. Therefore Proposition \ref{fixfinal} restricts to give the relation we would expect from the finite case. We now begin to relate these lemmas towards the overall goal of giving an equivalence between $M$ and $\text{Mackey}\circ\text{Weyl}(M)$ for any Mackey functor $M$.
\begin{remark}
We know that any $G$-sheaf of $\mathbb{Q}$-modules is in particular a sheaf of $\mathbb{Q}$-modules. Also any sheaf $F$ satisfies that its sheaf space $LF$ is homeomorphic to the sheaf space of its sheafification $L\Gamma F$. From this, we deduce that since the images of the sections of $F$ form a basis for $L\Gamma F$, these sets are also open in $LF$.
\end{remark}
\begin{proposition}\label{fixtogether}
If $G$ is a profinite group and $H$ an open subgroup of $G$, then $\mackey\circ\weyl(M)(G/H)$ can be written as
\begin{align*}
A=\left\lbrace s:SH\rightarrow \coprod_{L\in SH}\tilde{F}_L\mid \text{s continuous},\,p\circ s=\id\right\rbrace ^H
\end{align*}
where $\underset{L\in SH}{\coprod}\tilde{F}_L$ has the subspace topology from $\underset{L\in SH}{\coprod}F_L$ as defined in Construction \ref{Sheaf_mack}.
\end{proposition}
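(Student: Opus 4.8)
The plan is to unpack both sides of the claimed identity and match them stalk-by-stalk, using the machinery already assembled. Recall that by Construction \ref{mackey_construct} applied to the Weyl-$G$-sheaf $\weyl(M)$ we have $\mackey\circ\weyl(M)(G/H)=\weyl(M)(SH)^H$, which by Theorem \ref{macktosheaf} is the $\mathbb{Q}$-module of continuous $H$-equivariant sections $s\colon SH\to LE$, where $E=\underset{K\in SG}{\coprod}F_K$ is the sheaf space from Construction \ref{Sheaf_mack} with $F_K=\underset{J\geq_O K}{\colim}M(G/J)_{(K)}$. So the content of the proposition is: for a section $s\colon SH\to LE$ to be $H$-equivariant it is necessary and sufficient that, for each $L\in SH$, the germ $s(L)$ lands in the sub-$\mathbb{Q}$-module $\tilde F_L\subseteq F_L$ identified in Remark \ref{include}, and that the resulting map into $\underset{L\in SH}{\coprod}\tilde F_L$ is continuous for the subspace topology.

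First I would show the inclusion $A\subseteq \mackey\circ\weyl(M)(G/H)$, which is the easy direction: a section landing in $\underset{L\in SH}{\coprod}\tilde F_L$ that is $H$-equivariant is in particular a section into $LE$ that is $H$-equivariant, and continuity transfers along the subspace inclusion. For the reverse inclusion, take an $H$-equivariant continuous section $s\colon SH\to LE$ and fix $L\in SH$. Since $L\leq H$ and $s$ is $H$-equivariant, $s$ is in particular $N_H(L)$-equivariant near $L$: by Proposition \ref{restProp}/Corollary \ref{equivariant} there is an open normal $N\unlhd G$ such that $s$ restricts to an $N$-equivariant section on the $N$-invariant neighbourhood $O(N,NL)$, and combining this with $N_H(L)$-equivariance one gets that $s|_{O(N,NL)}$ is $N_H(NL)$-equivariant (using Lemma \ref{normaliseinclude} to absorb the normaliser of $NL$). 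Pushing this germ through the identification $F_L\cong \underset{N}{\colim}M(G/NL)_{(L)}$, the representative $e_{O(N,NL)}t$ with $t\in M(G/NL)$ is then fixed by $N_H(NL)$ by Lemma \ref{fixcolimitlemma}; hence $s(L)\in \tilde F_L$. That the corecursively-defined map $SH\to\underset{L\in SH}{\coprod}\tilde F_L$ is continuous follows because its composite with the inclusion into $LE$ is continuous $s$ and the topology on the codomain is the subspace topology. I would also remark that $H$-equivariance of this corestricted section is automatic, as it is literally the same map of sets.

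The main obstacle I expect is the precise bookkeeping in the forward direction: one must be careful that the open normal subgroup $N$ produced by the local-equivariance lemmas is small enough to simultaneously (i) make $O(N,NL)$ an $N_H(L)$-invariant (indeed $N_H(NL)$-invariant) neighbourhood, (ii) carry a representative of $s(L)$ in $M(G/NL)$, and (iii) fit into the colimit diagram defining $\tilde F_L$ so that the class $[e_{O(N,NL)}t]$ is well-defined and independent of the choices — this is exactly where Lemma \ref{normaliseinclude}, Lemma \ref{fixcolimitlemma} and the colimit-compatibility from the proposition preceding Proposition \ref{profinitefixed} do the work. A secondary subtlety is to check that passing to the subspace topology on $\underset{L\in SH}{\coprod}\tilde F_L$ genuinely records the ``fibrewise fixed-point'' structure and does not lose sections; this is handled by the remark that the images of sections of a sheaf form a basis for its sheaf space, so openness is detected on such images. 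Once $A$ is identified set-theoretically with $\mackey\circ\weyl(M)(G/H)$ and the topologies are matched, the $\mathbb{Q}$-module structures agree since both are computed sectionwise via Lemma \ref{Qmod}, completing the proof.
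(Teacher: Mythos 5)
Your overall strategy is the right one and matches the first half of the paper's proof: the inclusion $A\subseteq\mackey\circ\weyl(M)(G/H)$ is immediate, and the substance is to show that an $H$-fixed continuous section $s\colon SH\to\coprod_{L}F_L$ takes each value $s(L)$ inside $\tilde F_L$. However, the step where you actually establish this is not justified. You argue that, since $s|_{O(N,NL)}$ is $N_H(NL)$-equivariant and $s(L)$ is represented by $e_{O(N,NL)}t$ with $t\in M(G/NL)$, the element $e_{O(N,NL)}t$ ``is then fixed by $N_H(NL)$ by Lemma \ref{fixcolimitlemma}''. Lemma \ref{fixcolimitlemma} states the opposite direction: the idempotent truncation of an element \emph{already} lying in $M(G/NL)^{N_H(NL)}$ stays fixed. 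It does not say that a germ or local section invariant under the normaliser admits a representative in the fixed submodule, and that is precisely the nontrivial content of the proposition. Moreover, equivariance of $s$ only gives, for $h\in N_H(NL)$, the equality $(C_h u)_{(y)}=u_{(y)}$ in the colimit stalks $F_y$, i.e.\ equality after further restriction; to promote this to an equality $C_h u'=u'$ of elements of some $M(G/\tilde N L)$ you need the finite-quotient and common-refinement argument of Proposition \ref{profinitefixed}(2) (only finitely many translates matter by Proposition \ref{equisectweyl}, intersect the corresponding neighbourhoods, and use Lemma \ref{normaliseinclude} to see that the normaliser at the smaller level is contained in the one you started with). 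None of this is supplied by the lemma you cite, so as written the central step fails.

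The paper closes this gap by a route you never invoke: since the open subgroup $NK$ lies in $O(N,NK)$ and $s(NK)$ is $N_H(NK)$-fixed, the germ of the local representative at $NK$ lies in $\left(M(G/NK)_{(NK)}\right)^{N_H(NK)}\cong\tilde F_{NK}\cong M(G/H)_{(NK)}$ by Proposition \ref{profinitefixed} and Proposition \ref{fixfinal}; hence the representative may be replaced by an element $t$ of $M(G/H)$, and the $H$-fixedness of $t$ together with Proposition \ref{fixfinal} applied at each point puts $t_y$, and therefore $s(y)$, in $\tilde F_y$ for every $y$ in the neighbourhood at once, in particular at $y=K$. You should either adopt this detour through Proposition \ref{fixfinal}, or replace the citation of Lemma \ref{fixcolimitlemma} by a genuine Proposition \ref{profinitefixed}(2)-style argument carried out at $L$. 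Two minor remarks: your appeal to Proposition \ref{restProp} and Lemma \ref{normaliseinclude} to get $N_H(NL)$-equivariance of $s|_{O(N,NL)}$ is unnecessary, since $s$ is globally $H$-fixed and $O(N,NL)$ is $N_H(NL)$-invariant; and the paper's concluding gluing argument, which you omit, is not needed for the literal statement but is what Proposition \ref{M(G/H)set} relies on afterwards.
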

\begin{proof}
Clearly we have the following:
\begin{align*}
A&\subseteq \text{Mackey}\circ\text{Weyl}(M)(G/H)\\&=\left\lbrace s:SH\rightarrow \coprod_{L\in SH}F_L\mid\, s\, \text{continuous},\,p\circ s=\id\right\rbrace ^H
\end{align*}
using the sheaf space language.

On the other hand if $s\in \text{Mackey}\circ\text{Weyl}(M)(G/H)$ and $K\leq H$ closed, first note that $s(K)$ belongs to $F_K^{N_H(K)}$ since for any $h\in N_H(K)$:
\begin{align*}
s(K)=(hs)(K)=hs(h^{-1}Kh)=hs(K).
\end{align*}
In particular each $s(y)$ belongs to $F_y^{N_H(y)}$. Since $s$ is a section we have that $s(SH)$ is open in $\underset{L\in SH}{\coprod}F_L$ by the proceeding remark, and $s(K)$ is a point belonging to it. By definition of the basis for the sheaf space choose $N$ open and normal and contained in $H$ with $t\in M(G/NK)$ and: 
\begin{align*}
s(K)\in t(O(N,NK))\subseteq s(SH). 
\end{align*}
By Proposition \ref{fixfinal} we can assume that $t$ belongs to $M(G/H)$, since \\$\left(M(G/NK)_{(NK)}\right)^{N_H(NK)}\cong\tilde{F}_{NK}\cong M(G/H)_{(NK)}$. As $t$ belongs to $M(G/H)$ it is $H$-fixed and it follows that for each $y\in O(N,NK)$, $t_y$ belongs to $\tilde{F}_y$.

Therefore $e_{O(N,NK)}t=s_{|_{O(N,NK)}}$ and it follows:
\begin{align*}
s(K)=(e_{O(N,NK)}t)(K)\in M(G/H)_{(K)}.
\end{align*}
We can find a section $e_{O(N,NK)}t$ for every $K\in SH$, so we have a collection of these in $M(G/H)$ for each $K\in SH$ and these agree on intersections by construction using Proposition \ref{fixfinal}. To see this observe that if $t^1_y=s_y=t^2_y$ in $\tilde{F}_y$ for $y$ in the intersection of the domain subsets, then Proposition \ref{fixfinal} says that they must be determined by the same germ in $M(G/H)_{(y)}$. We can therefore apply the gluing axiom of $M(G/H)$ to get the required section $t$ section over $SH$, by using Construction \ref{Sheaf_mack} to make a section from an element of $M(G/H)$.
\end{proof}
We can apply Proposition \ref{fixfinal} to deduce the following proposition.
\begin{proposition}\label{M(G/H)set}
There is a bijection of sets between:
\begin{align*}
A=\left\lbrace s:SH\rightarrow \coprod_{L\in SH}\tilde{F}_L\mid \text{s continuous},\,p\circ s=\id\right\rbrace ^H
\end{align*}
and $M(G/H)$.
\end{proposition}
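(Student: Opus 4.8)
The plan is to exhibit mutually inverse maps between $M(G/H)$ and the set $A$ of $H$-fixed continuous sections $s\colon SH\to\coprod_{L\in SH}\tilde F_L$. The map from $M(G/H)$ to $A$ is the one implicit in Construction \ref{Sheaf_mack}: to $s\in M(G/H)$ we associate the global section $SH\to\coprod_{L\in SH}F_L$ sending $K\mapsto s(K)$ (the germ of $s$ at the conjugacy class $(K)$ in the sheaf $M(G/H)$ over $SH/H$, pulled back along $SH\to SH/H$ as in Lemma \ref{contruct2}). First I would check this section is continuous with codomain landing inside $\coprod_{L\in SH}\tilde F_L$: continuity is Proposition \ref{pullbacktop}, and for each $K\in SH$ the germ $s(K)$ lies in $\tilde F_K$ because $s$ is $H$-fixed, hence each restriction $e_{O(N,NK)}s$ (with $N=H^C$) is $N_H(NK)$-fixed, so its germ sits in $\bigl(M(G/NK)_{(NK)}\bigr)^{N_H(NK)}$, which by Proposition \ref{profinitefixed}(2) equals $\bigl(M(G/NK)^{N_H(NK)}\bigr)_{(NK)}\cong\tilde F_{NK}$, and $\tilde F_{NK}$ maps into $\tilde F_K$ inside $F_K$ by Remark \ref{include} and the colimit description of $\tilde F_K$. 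Finally the section is $H$-fixed since $s\in M(G/H)$ is itself $H$-invariant and the $G$-action on the sheaf space of $\weyl(M)$ is built from the conjugation maps of $M$.

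Conversely, given $s\in A$ I would recover an element of $M(G/H)$ by the gluing argument already carried out in Proposition \ref{fixtogether}: for each $K\in SH$, openness of $s(SH)$ in $\coprod_{L}F_L$ together with the basis description of the topology (Lemma \ref{basis_sheaf}) produces an open set $O(N,NK)$ (with $N\unlhd G$ open, $N\le H$) and a representative which, via the isomorphism $\theta\colon M(G/H)_{(NK)}\xrightarrow{\sim}\tilde F_{NK}$ of Proposition \ref{fixfinal}, can be taken to be $e_{O(N,NK)}t$ for some $t\in M(G/H)$, with $e_{O(N,NK)}t = s|_{O(N,NK)}$. These local elements of $M(G/H)$ agree on overlaps: if $t^1_y = s_y = t^2_y$ in $\tilde F_y$ for $y$ in the overlap, then by Proposition \ref{fixfinal} (injectivity of $\theta$) they are induced by the same germ in $M(G/H)_{(y)}$, so by Corollary \ref{MHsheaf} and the monopresheaf/gluing axioms for the sheaf $M(G/H)$ over $SH/H$ they glue to a unique $t\in M(G/H)$.

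The two assignments are mutually inverse essentially by construction: starting from $t\in M(G/H)$, forming the section, and re-gluing returns $t$ because the local pieces $e_{O(N,NK)}t$ determine $t$ by the sheaf axiom for $M(G/H)$; starting from $s\in A$, gluing to $t$ and then re-forming the section $K\mapsto t(K)$ returns $s$ because $t(K)=s(K)$ in $\tilde F_K$ for every $K$ (this is exactly the content of $e_{O(N,NK)}t=s|_{O(N,NK)}$) and a section of a sheaf space is determined by its values. I would phrase this last verification as: both composites agree stalkwise on $SH$, hence agree.

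The main obstacle is not the set bijection itself but making rigorous the passage "the local representative of $s(K)$ may be taken of the form $e_{O(N,NK)}t$ with $t\in M(G/H)$" — this is where Proposition \ref{fixfinal} and Proposition \ref{profinitefixed} do the real work, translating a germ in the stalk $\tilde F_K$ (a colimit of $N_H(NK)$-fixed idempotent-truncations of the $M(G/NK)$) back to a genuine section of $M(G/H)$ over $O(N,NK)$, and one must be careful that the open normal subgroup produced is contained in $H$ (using $N=H^C$ or any open normal subgroup of $G$ inside $H$, cf. the remark after Proposition \ref{nbhdbasis}). Everything else — continuity, $H$-equivariance, compatibility on overlaps — is a routine unwinding of Construction \ref{Sheaf_mack} and the topology of Lemma \ref{basis_sheaf}. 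The statement is (at this stage) only a bijection of underlying sets; its upgrade to an isomorphism of $\mathbb{Q}$-modules compatible with the Mackey structure will presumably be handled in the sequel, so I would not address it here.
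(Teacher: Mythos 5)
Your proposal is correct and follows essentially the same route as the paper: the canonical assignment of Construction \ref{Sheaf_mack} in one direction, with the inverse obtained by the gluing argument of Proposition \ref{fixtogether} and the identification $\tilde{F}_y\cong M(G/H)_{(y)}$ of Proposition \ref{fixfinal} doing the real work for both well-definedness and injectivity. Phrasing it as a pair of mutually inverse maps rather than as injectivity plus surjectivity of the canonical map is only a cosmetic difference.
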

\begin{proof}
First observe that $M(G/H)$ can be interpreted as global sections over $SH/H$, with respect to the sheaf space of $M(G/H)$. As seen in Construction \ref{Sheaf_mack}, each $s\in M(G/H)$ determines an element of $A$. If we take any $a\in A$, we can look at the proof of Proposition \ref{fixtogether} to deduce that $a$ is determined by some $s$ in $M(G/H)$, by applying Proposition \ref{fixfinal}. This tells us that the correspondence is surjective. For injectivity, suppose that $s$ and $t$ are elements of $M(G/H)$ whose corresponding global sections in $A$ are equal. Then it follows that for every $y\in SH$ that $\overline{t}_y=\overline{s}_y$ in $\tilde{F}_y$. We use the isomorphism from Proposition \ref{fixfinal}:
\begin{align*}
\tilde{F}_y\cong M(G/H)_{(y)}
\end{align*}
to deduce that $s_y=t_y$ for every class $y$. This means that $s=t$. 
\end{proof}
\begin{corollary}\label{homeo}
If $H$ is an open subgroup of a profinite group $G$ and $M$ a \index{Mackey functor} Mackey functor for $G$, then there is an isomorphism between $M(G/H)$ and $\mackey\circ\weyl(M)(G/H)$.
\end{corollary}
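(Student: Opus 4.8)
The plan is to assemble \Corollary{homeo} from the chain of identifications already established in this chapter, and to verify that these identifications are compatible with the restriction, induction and conjugation maps so that they glue into an isomorphism of Mackey functors (which is really what is wanted, and which \Theorem{equivalencemain} will invoke). First I would fix an open subgroup $H$ of $G$ and spell out the composite
\begin{align*}
\mackey\circ\weyl(M)(G/H)=\weyl(M)(SH)^H=\left\lbrace s:SH\rightarrow \coprod_{L\in SH}F_L\mid s\text{ continuous},\,p\circ s=\id\right\rbrace^H,
\end{align*}
using the definition of $\mackey$ from Construction \ref{mackey_construct} and the explicit sheaf space of $\weyl(M)$ from Construction \ref{Sheaf_mack}. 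The strategy is then to intersect this with the sub-sheaf-space $\coprod_{L\in SH}\tilde F_L$ of fixed germs and to show no information is lost in doing so.

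The key steps, in order, are: (1) show every $H$-fixed global section $s$ of $\weyl(M)$ over $SH$ automatically takes values in $\tilde F_L$ at each $L\in SH$, i.e. $s(L)\in F_L^{N_H(L)}=\tilde F_L$ by \Proposition{profinitefixed} together with $H$-invariance of $s$; this is exactly \Proposition{fixtogether}, which lets me replace $\coprod F_L$ by $\coprod\tilde F_L$ in the description of $\mackey\circ\weyl(M)(G/H)$. (2) Apply \Proposition{M(G/H)set} to obtain a bijection of sets between that space of fixed sections and $M(G/H)$; the forward map sends $s\in M(G/H)$ to the global section it determines via Construction \ref{Sheaf_mack}, and injectivity and surjectivity both come from the stalkwise isomorphism $\tilde F_y\cong M(G/H)_{(y)}$ of \Proposition{fixfinal}. (3) Upgrade this set bijection to a $\mathbb{Q}$-module isomorphism: additivity and $\mathbb{Q}$-linearity are immediate since both the passage $s\mapsto$ (germ at $y$) and the isomorphisms of \Proposition{fixfinal} are $\mathbb{Q}$-linear, and the sheaf operations on $\weyl(M)$ are computed stalkwise. (4) Finally, check naturality in $H$: the bijection commutes with $R^H_J$ (restriction of a section to $SJ$ corresponds under Construction \ref{mackey_construct} and \Proposition{fixfinal} to the Mackey restriction, using the compatibility $\theta\left[s\right]\mapsto\left[e_{O(N,NK)}R^H_{NK}(s)\right]$), with $I^H_J$ (extension by zero and summing over a transversal corresponds to Mackey induction, by the well-definedness arguments in Lemma \ref{well_define}), and with $C_g$ (the $G$-action on the sheaf space induces the conjugation maps by Lemma \ref{equivequi} and Construction \ref{Sheaf_mack}).

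I expect the main obstacle to be step (4), the naturality with respect to the structure maps — in particular compatibility with induction. Restriction is straightforward because it is literally restriction of sections, and conjugation was already handled in Lemma \ref{equivequi}; but to see that $I^H_J$ on the Mackey side (sum over a transversal of $h_i\overline{\beta}$) matches the operation induced on $M(G/-)$ under the identification, one must unwind how extension-by-zero of a section of $\weyl(M)$ over $SJ$, followed by translation and summation, is seen through the stalkwise isomorphisms $\tilde F_y\cong M(G/H)_{(y)}$ and relate it to the Burnside-idempotent formulas of Propositions \ref{restrictprop} and \ref{inductprop}. This is the step that genuinely uses the careful idempotent bookkeeping of Chapter 2 (Corollary \ref{burn0}, Corollary \ref{burncommute}) rather than just formal nonsense, and it is where the $e_V$-terms must be shown to vanish upon passing to stalks. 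Everything else is a matter of citing \Proposition{fixtogether}, \Proposition{M(G/H)set} and \Proposition{fixfinal} and observing that the maps in play are $\mathbb{Q}$-linear and continuous.
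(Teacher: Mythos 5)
Your steps (1)--(3) are exactly the paper's proof: it cites Proposition \ref{fixtogether} to rewrite $\mackey\circ\weyl(M)(G/H)$ as $H$-fixed sections into $\coprod_{L\in SH}\tilde{F}_L$, then Proposition \ref{M(G/H)set} for the bijection with $M(G/H)$, with the $\mathbb{Q}$-module structure coming from Proposition \ref{fixfinal}. Your step (4) on compatibility with restriction, induction and conjugation is not needed for this corollary, which only asserts a level-wise isomorphism; the paper handles that separately in Proposition \ref{morphism} before combining the two in Theorem \ref{mackeyweyl}.
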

\begin{proof}
By Proposition \ref{fixtogether} $\text{Mackey}\circ\text{Weyl}(M)(G/H)$ is equal to:
\begin{align*}
\left\lbrace s:SH\rightarrow \underset{L\in SH}{\coprod}\tilde{F}_L\mid \text{$s$ continuous},\,p\circ s=\id\right\rbrace ^H
\end{align*}
which is equal to $M(G/H)$ by Proposition \ref{M(G/H)set}. The fact that this is an isomorphism of $\mathbb{Q}$-modules is immediate from the fact that the correspondence in Proposition \ref{fixfinal} is an isomorphism of $\mathbb{Q}$-modules.
\end{proof}
We have succeeded in showing that there is a level-wise equivalence between the Mackey functors $M$ and $\text{Mackey}\circ\text{Weyl}(M)$, but to finish we need to show that this is a map of Mackey functors and hence that the correspondence commutes with the three types of structure maps of Mackey functors. We begin by proving some useful Lemmas.
\begin{lemma}\label{Ndef}
If $G$ is a profinite group, $L,K\leq G$ with $L$ closed and $K$ open, then $\left[K^C\diagdown H\diagup K\right]=H/K$, where the core is with respect to $H$. 
\end{lemma}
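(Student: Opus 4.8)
The statement to prove is that for a profinite group $G$, closed subgroups $L,K$ of $G$ with $L$ closed and $K$ open (and an ambient open subgroup $H$ containing both), the double coset space $[K^C \diagdown H \diagup K]$ is a singleton-indexed set in bijection with $H/K$; equivalently $K^C x K = K^C x' K$ iff $xK = x'K$, so the natural surjection $H/K \to K^C\backslash H/K$ is a bijection. Here $K^C = \mathrm{Core}_H(K) = \bigcap_{h\in H} hKh^{-1}$. The first thing I would record is that $K^C$ is normal in $H$ (by definition of the core), open (a finite intersection of open subgroups, since $K$ has finite index in $H$ as $K$ is open), and contained in $K$.

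The plan is a direct double-coset manipulation exploiting normality of $K^C$ in $H$. First I would observe that since $K^C \unlhd H$, for any $x \in H$ we have $K^C x K = x K^C K = x K$, because $x^{-1} K^C x = K^C$ and $K^C \subseteq K$ so $K^C K = K$. This single computation shows that each double coset $K^C x K$ coincides with the single left coset $xK$, and hence the double cosets are exactly the left cosets of $K$ in $H$. Therefore the set of double coset representatives $[K^C \diagdown H \diagup K]$ is in canonical bijection with $H/K$, which is the claimed equality (the paper uses the convention of identifying a transversal with the coset space). I would then note the notational point that $L$ plays no role in the statement as written — it is presumably carried along because the lemma will be applied in a context where $L$ also appears, so I would simply remark that the conclusion is independent of $L$.

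The ``hard part'' here is essentially nonexistent: the only subtlety is making sure all the hypotheses used are legitimately available, namely that $K^C$ is open and normal in $H$ and contained in $K$. Normality and containment are immediate from the definition of the core; openness uses that $K$ is open in $H$, hence of finite index (open subgroups of profinite, or more generally compact, groups have finite index), so $K^C$ is a finite intersection of conjugates of $K$, each open, hence open. I would cite the earlier discussion in the excerpt where exactly this is observed (``Since $H$ is of finite index it therefore follows that $H^C$ is a finite intersection of open subgroups and is therefore open''). With those facts in hand the proof is the two-line coset calculation above. I expect the write-up to be no more than a short paragraph; the main thing to get right is the bookkeeping that ``$[K^C\diagdown H\diagup K]$'' in the paper's notation means a transversal, and that the bijection $K^C xK \leftrightarrow xK$ respects the intended indexing.
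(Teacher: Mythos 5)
Your proof is correct and is essentially the paper's argument: both reduce to the one-line observation that each double coset $K^C h K$ collapses to the single left coset $hK$, the paper phrasing this via the orbit--stabiliser computation $\mathrm{stab}_{K^C}(hK)=K^C\cap hKh^{-1}=K^C$, while you use normality of $K^C$ in $H$ together with $K^C\subseteq K$ to write $K^C h K = hK^CK = hK$ -- two interchangeable packagings of the fact that the core lies in every $H$-conjugate of $K$. Your remarks on openness of the core and on the irrelevance of $L$ are consistent with the paper.
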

\begin{proof}
Let $N$ denote the core of $K$ with respect to $H$. Given $NhK$, then this equals the orbit $\left(N/\text{stab}_N(hK)\right)hK$. Now:
\begin{align*}
\text{stab}_N(hK)&=\left\lbrace n\in N\mid nhK=hK\right\rbrace=\left\lbrace n \in N\mid h^{-1}nh\in K\right\rbrace\\&=\left\lbrace n\in N\mid n\in hKh^{-1}\right\rbrace=N\cap hKh^{-1}=N.
\end{align*}
Therefore $NhK=\left(N/\text{stab}_N(hK)\right)hK=\left(N/N\right)hK=hK$.
\end{proof}
\begin{lemma}
If $N,L,K$ are defined to be the same as in the proceeding lemma, then $\left[NL\diagdown H\diagup K\right]$ is equal to $\left[L\diagdown H\diagup K\right]$.
\end{lemma}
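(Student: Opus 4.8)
The claim is a double-coset identity: for $N$ the core of the open subgroup $K$ inside $H$, $L$ closed and $K$ open in $H$, we have $[NL\diagdown H\diagup K] = [L\diagdown H \diagup K]$. The natural map goes from $[L\diagdown H\diagup K]$ to $[NL\diagdown H\diagup K]$ sending $LhK \mapsto NLhK$; this is obviously surjective since every $NL$-double coset contains an $L$-double coset, so the entire content is injectivity, i.e. showing that $NLhK = NLh'K$ forces $LhK = Lh'K$. The plan is to unravel the hypothesis $NLhK = NLh'K$ into an equation $h' = n\ell h k$ with $n \in N$, $\ell \in L$, $k \in K$, and then absorb the $n$ into the other factors using that $N$ is normal in $H$ and contained in $K$.

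First I would fix representatives and write the assumption as $h' = (n_1\ell_1) h k_1$ for some $n_1 \in N$, $\ell_1 \in L$, $k_1 \in K$; here I am using that $NL$ need not be a subgroup but $NLhK$ is by definition the set of products $n\ell h k$. Since $N \unlhd H$, I can rewrite $n_1 \ell_1 = \ell_1 (\ell_1^{-1} n_1 \ell_1) = \ell_1 n_2$ with $n_2 = \ell_1^{-1} n_1 \ell_1 \in N$ (using $L \leq H$ and normality of $N$ in $H$). Thus $h' = \ell_1 n_2 h k_1$. Now I want to move $n_2$ past $h$: again by normality, $n_2 h = h (h^{-1} n_2 h) = h n_3$ with $n_3 \in N$. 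So $h' = \ell_1 h n_3 k_1$. Finally, since $N = \mathrm{Core}_H(K) \leq K$, we have $n_3 k_1 \in K$, hence $h' = \ell_1 h k_2$ with $k_2 = n_3 k_1 \in K$, which says precisely $Lh'K = LhK$. This establishes injectivity, and combined with the evident surjectivity it gives the bijection, so the two double coset spaces have the same cardinality and the identity of transversals follows.

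The argument is almost entirely formal once one commits to the normality-juggling, so I do not expect a genuine obstacle; the only thing to be careful about is the bookkeeping of which group each transported element lands in, and in particular the repeated use of the two facts $N \unlhd H$ and $N \leq K$ (the latter being exactly what the core buys us). One could alternatively phrase this more slickly: since $N \leq K$, the image of $N$ is already absorbed on the right, and since $N \unlhd H$ one has $NLhK = LhNK = LhK$ directly once the left $N$ is pushed through, but I would write out the one-line chain of equalities above to keep it self-contained. This lemma is the analogue for closed $L$ and profinite $G$ of Lemma \ref{Ndef}'s role, and it will feed into the verification that the equivalence $M \mapsto \mathrm{Mackey}\circ\mathrm{Weyl}(M)$ commutes with the induction maps.
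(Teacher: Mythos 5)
Your proof is correct and follows essentially the same route as the paper: the easy direction comes from $L\leq NL$, and the substantive direction rewrites $h'=n\ell h k$ and absorbs the $N$-factor into $K$. The only difference is cosmetic — where you push $n$ past $\ell$ and $h$ by normality of the core and then use $N\leq K$, the paper performs this absorption in one step by citing Lemma \ref{Ndef} ($ayk=yk'$ for some $k'\in K$), which encodes exactly the same computation.
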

\begin{proof}
If $x,y\in H$ with $x\sim y$ in $\left[L\diagdown H\diagup K\right]$, then the fact that $x\sim y$ in $\left[NL\diagdown H\diagup K\right]$ follows directly from the fact that $L\leq NL$. 

On the other hand if $x\sim y$ in $\left[NL\diagdown H\diagup K\right]$, then $x=layk$ for $l\in L,a\in N,k\in K$. By Lemma \ref{Ndef} $ayk=yk^{\prime}$ for $k^{\prime}\in K$. It follows that $x=lyk^{\prime}$ and that $x\sim y$ in $\left[L\setminus H/K\right]$.
\end{proof}
\begin{lemma}\label{inductcharac}
We can write $(I^H_K(s))(L)$ in terms of conjugation and restriction in $F_L$, where $L,K\leq H$ with $L$ closed and $K$ open.
\end{lemma}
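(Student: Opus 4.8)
The plan is to compute the germ $(I^H_K(s))(L)$ by pushing $I^H_K(s)$ down to a small enough open neighbourhood of $L$, applying the Mackey axiom to rewrite the relevant restriction, and then reading off which summands survive in the stalk $F_L$.

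First I would set $N=\text{Core}_H(K)$, which is an open normal subgroup of $H$ because $K$ is open (hence of finite index) in $H$; then $NL$ is an open subgroup of $H$ containing $L$. By Construction \ref{Sheaf_mack} the element $I^H_K(s)\in M(G/H)$ determines the section of $\weyl(M)$ over $SH$ whose value at $L$ is $(I^H_K(s))(L)\in F_L$, and by Lemma \ref{stalkwelldefined} together with Proposition \ref{fixfinal} this germ is represented by $e_{O(\overline N,\overline N L)}R^H_{NL}(I^H_K(s))$ as $\overline N$ ranges over the open normal subgroups of $G$ contained in $N$. Thus it suffices to analyse $R^H_{NL}(I^H_K(s))$ in the stalk at $L$.

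Next I would apply the Mackey axiom of Definition \ref{defM1} to $R^H_{NL}\circ I^H_K$, giving
\begin{align*}
R^H_{NL}I^H_K(s)=\sum_{x\in[NL\diagdown H\diagup K]}I^{NL}_{NL\cap xKx^{-1}}\,C_x\,R^K_{K\cap x^{-1}NLx}(s),
\end{align*}
and then reindex the sum using the lemma immediately preceding this one (which yields $[NL\diagdown H\diagup K]=[L\diagdown H\diagup K]$ precisely for $N=\text{Core}_H(K)$, via Lemma \ref{Ndef}). Passing to the germ at $L$, Corollary \ref{burn0} applied inside the group $NL$ shows that a summand $I^{NL}_{NL\cap xKx^{-1}}(-)$ contributes $0$ to the stalk at $L$ unless $L$ is $NL$-subconjugate to $NL\cap xKx^{-1}$; for the surviving terms one decomposes each double coset $LxK$ into left $K$-cosets and uses the description of the induction maps of $\mackey(F)$ from Construction \ref{mackey_construct} to collapse the expression to
\begin{align*}
(I^H_K(s))(L)=\sum_{\substack{hK\in H/K\\ h^{-1}Lh\leq K}}C_h\bigl(R^K_{h^{-1}Lh}(s)\bigr),
\end{align*}
where $R^K_{h^{-1}Lh}(s)$ denotes the germ at the point $h^{-1}Lh$ of $s$ regarded as a section over $SK$ (the appropriate colimit map, since $h^{-1}Lh$ need not be open) and $C_h\colon F_{h^{-1}Lh}\to F_L$ is the conjugation isomorphism. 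Finally I would check that this expression is independent of the choice of coset representatives $h$, using the equivariance condition of Definition \ref{defM1} together with $C_k=\id$ for $k\in K$; this is routine.

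The main obstacle is the bookkeeping in the third step: reconciling the double-coset indexing $[L\diagdown H\diagup K]$ produced by the Mackey axiom applied to $R^H_{NL}I^H_K$ with the single-coset indexing $H/K$ appearing in the final formula, while tracking precisely which summands survive the passage to the stalk $F_L$ and confirming that each surviving contribution really does land in $F_L$ after conjugation. The choice $N=\text{Core}_H(K)$ is exactly what makes Lemma \ref{Ndef} and the preceding lemma applicable and thereby keeps this reconciliation manageable; everything else is a direct application of the Mackey relations and the stalk descriptions established earlier in this chapter.
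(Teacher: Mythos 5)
Your proposal follows essentially the same route as the paper: reduce to $R^H_{NL}I^H_K(s)$ with $N=\mathrm{Core}_H(K)$ via the colimit defining $F_L$, apply the Mackey axiom, kill the non-contributing summands with Corollary \ref{burn0}, and observe that for the surviving terms $NL\cap xKx^{-1}=NL$ so each reduces to a conjugation followed by restriction. Your final indexing over $hK\in H/K$ with $h^{-1}Lh\leq K$ is just a cosmetic reindexing of the paper's surviving double cosets (each such double coset is a single left coset), so the argument is correct.
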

\begin{proof}
By definition of the colimit $F_L$, we have that $(I^H_K(s))(L)=(R^H_{NL}I^H_K(s))(L)$ in $F_L$, where $N$ is taken as in the proof of Lemma \ref{Ndef}. Applying the Mackey axiom gives:
\begin{align*}
\underset{NL\diagdown H\diagup K}{\sum}(I^{NL}_{NL\cap xKx^{-1}}C_xR^K_{K\cap x^{-1}NLx}(s))(L)
\end{align*}
which further brings us to:
\begin{align*}
\underset{NL\diagdown H\diagup K}{\sum}(I^{NL}_{NL\cap xKx^{-1}}R^{xKx^{-1}}_{xKx^{-1}\cap NL}C_x(s))(L).
\end{align*}
We know that $NL$ is $NL$-subconjugate to $NL\cap xKx^{-1}$ if and only if it is $NL$-conjugate to $xKx^{-1}$, which happens if and only if $L\leq xKx^{-1}$ by our choice of $N$. Therefore an application of Corollary \ref{burn0} says that $e_{O(N,NL)}I^{NL}_{NL\cap xKx^{-1}}(s)$ is not supported on $L$ if $L$ is not a subgroup of $xKx^{-1}$. Hence in this case the stalk at $L$ is zero for this particular summand. Let $I$ denote the following indexing set:
\begin{align*}
\left\lbrace x\in\left[NL\diagdown H\diagup K\right]\mid L\leq xKx^{-1}\right\rbrace.
\end{align*}
Notice that this choice is independent of the choice of representative in the sense that if $x\sim y$ in the double coset, then $L\leq xKx^{-1}$ if and only if $L\leq yKy^{-1}$. To see this, suppose $x=ayk$ and $L\leq xKx^{-1}$. By substitution we have \\$L\leq ayKy^{-1}a^{-1}$. Since both $N$ and $L$ are subgroups of $ayKy^{-1}a^{-1}$ it follows that $NL$ is and therefore $a$ belongs to $ayKy^{-1}a^{-1}$. This means that: 
\begin{align*}
ayKy^{-1}a^{-1}=yKy^{-1}
\end{align*}
as required.

However if a summand for $x$ satisfies that $L\leq xKx^{-1}$ (and hence $x\in I$) then $NL\leq xKx^{-1}$ and $NL\cap xKx^{-1}=NL$, and hence the corresponding term is 
\begin{align*}
I^{NL}_{NL}R^{xKx^{-1}}_{NL}C_x(s)(L)=R^{xKx^{-1}}_{NL}C_x(s)(L).
\end{align*}
Therefore we have shown that:
\begin{align*}
(I^H_K(s))(L)=\underset{I}{\sum}R^{xKx^{-1}}_{NL}C_x(s)(L),
\end{align*}
in $F_L$.
\end{proof}
\begin{proposition}\label{morphism}
The level-wise equivalence between Mackey functors $M$ and $\mackey\circ\weyl(M)$ is a morphism in the category of Mackey funtors.
\end{proposition}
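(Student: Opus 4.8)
The plan is to show that the collection of isomorphisms $\theta_H\colon M(G/H)\to \mackey\circ\weyl(M)(G/H)$ constructed in Corollary \ref{homeo} commutes with the three types of structure map, namely conjugation, restriction and induction. I would treat the three cases separately, with conjugation being essentially formal, restriction being a short computation, and induction being the genuinely substantial case.

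First I would handle \emph{conjugation}. By construction, the $G$-action on the sheaf space in Construction \ref{Sheaf_mack} is built directly from the conjugation maps $C_g$ of $M$ via the universal property of colimits, and the conjugation maps of $\mackey(-)$ are by definition induced by that very $G$-action (Construction \ref{mackey_construct}). So for $s\in M(G/H)$ and $g\in G$, one checks on stalks that $\theta_{gHg^{-1}}(C_g(s))(L)$ and $C_g(\theta_H(s))(L)$ are both represented by the germ of $C_g$ applied to the appropriate restriction of $s$; the isomorphism $\tilde F_L\cong M(G/H)_{(L)}$ of Proposition \ref{fixfinal} then forces equality. Next, \emph{restriction}: if $L\leq H$ are open and $s\in M(G/H)$, then $\theta_H(s)$ restricted to $SL$ is the section whose germ at each $K\in SL$ is $[e_{O(N,NK)}R^H_{NK}(s)]$, and by transitivity $R^H_{NK}=R^L_{NK}\circ R^H_L$, so this agrees germ-by-germ with $\theta_L(R^H_L(s))$; again Proposition \ref{fixfinal} upgrades the stalkwise equality to equality of sections.

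The hard part will be \emph{induction}. Here one must compare $\theta_H(I^H_K(s))$ with $I^H_K(\theta_K(s))$ for $s\in M(G/K)$, where the right-hand side is the sheaf-theoretic induction of Construction \ref{mackey_construct}, i.e.\ extension by zero followed by summing over a transversal $\{h_iK\}$. The natural strategy is to evaluate both sides at an arbitrary closed subgroup $L\leq H$ and compare germs in $F_L$. For the Mackey-functor side, Lemma \ref{inductcharac} already rewrites $(I^H_K(s))(L)$ as $\sum_{x\in I} R^{xKx^{-1}}_{NL}C_x(s)(L)$ in $F_L$, where $I=\{x\in[NL\backslash H/K]\mid L\leq xKx^{-1}\}$ and $N$ is the core of $K$ in $H$. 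For the sheaf side, one computes $\big(\sum_i h_i\overline{\theta_K(s)}\big)(L)=\sum_i (h_i*\overline{\theta_K(s)})(L)$; the $i$-th summand is $h_i\,\overline{\theta_K(s)}(h_i^{-1}Lh_i)$, which is zero unless $h_i^{-1}Lh_i\in SK$, i.e.\ unless $L\leq h_iKh_i^{-1}$. Thus only the cosets $h_iK$ with $L$ subconjugate into $h_iKh_i^{-1}$ contribute, and the remaining task is a bookkeeping argument matching the indexing set $\{h_iK : L\le h_iKh_i^{-1}\}$ with the double-coset set $I$ from Lemma \ref{inductcharac} — precisely the content of the two group-theoretic lemmas immediately preceding Proposition \ref{morphism} (Lemma \ref{Ndef} and its successor, which identify $[K^C\backslash H/K]$ with $H/K$ and $[NL\backslash H/K]$ with $[L\backslash H/K]$). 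Under this matching, the summand $h_i\,\overline{\theta_K(s)}(h_i^{-1}Lh_i)$ corresponds germ-wise to $R^{x_iKx_i^{-1}}_{NL}C_{x_i}(s)(L)$, using the commutation of Burnside idempotents with restriction and conjugation from Corollary \ref{burncommute} and Corollary \ref{burn0} to discard the unsupported pieces. Summing, the two expressions agree in $F_L$ for every $L$, and invoking Proposition \ref{fixfinal} once more promotes this to equality of the global sections representing $\theta_H(I^H_K(s))$ and $I^H_K(\theta_K(s))$.

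I expect the combinatorial reconciliation of the transversal $\{h_iK\}$ with the double cosets $[NL\backslash H/K]$, and the careful tracking of which extension-by-zero summands vanish at a given stalk, to be the main obstacle; everything else reduces to the stalk-level isomorphism of Proposition \ref{fixfinal} together with the idempotent-commutation results of Chapter 2. Once this proposition is established, combining it with Theorem \ref{weylmack} yields that $\mackey$ and $\weyl$ are inverse equivalences, which is Theorem \ref{equivalencemain}.
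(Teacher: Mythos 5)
Your proposal is correct and follows essentially the same route as the paper: check the conjugation, restriction and induction squares separately, with the first two by direct stalk computations and induction handled by comparing Lemma \ref{inductcharac} on the Mackey side with the transversal sum $\sum_i h_i\overline{\theta_K(s)}$ on the sheaf side, the surviving terms matching the double cosets $x\in[NL\diagdown H\diagup K]$ with $L\leq xKx^{-1}$ exactly as in the paper's proof (where each such double coset collapses to a single left coset since $NL\cap xKx^{-1}=NL$). The only cosmetic difference is your repeated appeal to Proposition \ref{fixfinal} to pass from stalkwise to sectionwise equality, which is harmless but not needed, since two sections of a sheaf space agreeing at every point are already equal.
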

\begin{proof}
We need to show that the correspondence between $M$ and \\$\text{Mackey}\circ\text{Weyl}(M)$ commutes with the three maps; restriction, induction and conjugation. Beginning with conjugation suppose $g\in G,H\leq G$ is open and $s\in M(G/H)$. We need to show that the following square commutes:
\begin{align*}
\xymatrix{M(G/H)\ar[d]^{C_g}\ar[r]_(0.4){\theta_H}&\text{Mackey}\circ\text{Weyl}(M)(G/H)\ar[d]^{\overline{C}_g}\\M(G/gHg^{-1})\ar[r]_(0.4){\theta_{gHg^{-1}}}&\ \text{Mackey}\circ\ \text{Weyl}(M)(G/gHg^{-1})}
\end{align*}
where $\theta$ represents the level wise equivalence and $\overline{C}_g$ is conjugation for \\$\text{Mackey}\circ\text{Weyl}(M)$. On the one hand we have:
\begin{align*}
(\overline{C}_g\theta_H(s))(gKg^{-1})=g\theta_H(s)(K)=g(s(K)).
\end{align*}  
On the other hand we have:
\begin{align*}
(\theta_{gHg^{-1}}C_g(s))(gKg^{-1})=(gs)(gKg^{-1})=g(s(K))
\end{align*}
proving equality. Here we are using that $C_g(s(K))=(C_g(s))(gKg^{-1})$, since from the formula of the idempotents we have that:
\begin{align*}
C_g(e_{O(N,NK)}(s))=e_{O(N,NgKg^{-1})}(C_g(s)). 
\end{align*}
For restriction suppose $K\leq H$ are open subgroups of $G$ and $s\in M(G/H)$, then we need to see if the following square commutes:
\begin{align*}
\xymatrix{M(G/H)\ar[d]^{R^H_K}\ar[r]_(0.3){\theta_H}&\text{Mackey}\circ\text{Weyl}(M)(G/H)\ar[d]^{\overline{R^H_K}}\\M(G/K)\ar[r]_(0.3){\theta_{K}}&\text{Mackey}\circ\text{Weyl}(M)(G/K)}
\end{align*}
where $\overline{R^H_K}$ denotes restriction for $\text{Mackey}\circ\text{Weyl}(M)$. On the one hand we have:
\begin{align*}
\overline{R^H_K}\theta_H(s)(L)=\theta_H(s)_{|_{SK}}(L)=\theta_H(s)(L)=s(L).
\end{align*}
On the other hand we have:
\begin{align*}
\theta_K(R^H_K(s))(L)=R^H_K(s)(L).
\end{align*}
We observe that $s(L)=R^H_K(s)(L)$ with respect to the colimit relation in $F_L$. For induction suppose $K\leq H$ are open and $s\in M(G/K)$. Then we must show that the following square commutes:
\begin{align*}
\xymatrix{M(G/H)\ar[r]_(0.3){\theta_H}&\text{Mackey}\circ\text{Weyl}(M)(G/H)\\M(G/K)\ar[u]^{I^H_K}\ar[r]_(0.3){\theta_{K}}&\text{Mackey}\circ\text{Weyl}(M)(G/K)\ar[u]^{\overline{I^H_K}}}
\end{align*}
where $\overline{I^H_K}$ is the induction map for $\text{Mackey}\circ\text{Weyl}(M)$. We begin by noticing that:
\begin{align*}
(\theta_HI^H_K(s))(L)=I^H_K(s)(L).
\end{align*}
By Proposition \ref{inductcharac} this is equivalent to $\underset{I}{\sum}(R^{xKx^{-1}}_{NL}C_x(s))(L)$, where
\begin{align*}
I=\left\lbrace x\in\left[NL\diagdown H \diagup K\right] \mid L\leq xKx^{-1}\right\rbrace.
\end{align*}
Using the previous arguments established in the proof this is equivalent in $F_L$ to $\underset{I}{\sum}(xs)(L)$. On the other hand:
\begin{align*}
\overline{R^H_{NL}}\overline{I^H_K}(s)(L)&=\left(\underset{H/K}{\sum}h\overline{s}\right)_{|_{S(NL)}}(L)\\&=\underset{x\in NL\diagdown H\diagup K}{\sum}\,\underset{a\in NL/NL\bigcap xKx^{-1}}{\sum}\overline{(ax)s_{|_{S(NL\bigcap xKx^{-1})}}}(L).
\end{align*}
If we call the final sum $\underset{x\in NL\diagdown H\diagup K}{\sum}A_x(L)$, then $A_x(L)=0$ if $L$ is not a subgroup of $xKx^{-1}$. Therefore the only values for $A_x$ which contribute anything are for those $x$ which satisfy that $L\leq xKx^{-1}$. However, if this happens then $NL\cap xKx^{-1}=NL$ and it follows that:
\begin{align*}
\underset{a\in NL/\left(NL\cap xKx^{-1}\right)}{\sum}(axs)(L)=\underset{a\in NL/NL}{\sum}(axs)(L)=(xs)(L).
\end{align*}
Hence summing over $\left[NL\diagdown H\diagup K\right]$ gives us $\underset{I}{\sum}(xs)(L)$ using a similar argument to Lemma \ref{inductcharac}. Therefore we have shown that:
\begin{align*}
\theta_H(I^H_K(s))(L)=\underset{I}{\sum}(xs)(L)=\overline{I^H_K}\theta_K(s)(L)
\end{align*}
as required.
\end{proof}
\begin{theorem}\label{mackeyweyl}
If $M$ is a Mackey functor for $G$ then $\mackey\circ\weyl(M)\cong M$ in the category of Mackey functors.
\end{theorem}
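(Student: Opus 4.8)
The plan is to assemble the theorem from the pieces already in place. The functor $\weyl$ (Theorem \ref{macktosheaf}) and $\mackey$ (Theorems \ref{sheaftomack}, \ref{MackisFunct}) are defined, and Theorem \ref{weylmack} has already given $\weyl\circ\mackey(F)\cong F$ naturally in $F$ for every Weyl-$G$-sheaf $F$. So the remaining content is exactly the composite in the other order, $\mackey\circ\weyl(M)\cong M$, together with functoriality/naturality of all the isomorphisms involved. First I would invoke Corollary \ref{homeo}: for every open subgroup $H\leq G$ it provides an isomorphism of $\mathbb{Q}$-modules $\theta_H\colon M(G/H)\xrightarrow{\ \cong\ }\mackey\circ\weyl(M)(G/H)$, obtained by combining the description of $\mackey\circ\weyl(M)(G/H)$ as $H$-fixed sections into $\coprod_{L\in SH}\tilde F_L$ (Proposition \ref{fixtogether}), the bijection of that set with $M(G/H)$ (Proposition \ref{M(G/H)set}), and the key stalk identification $\tilde F_{NK}\cong M(G/H)_{(NK)}$ of Proposition \ref{fixfinal}. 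Next I would cite Proposition \ref{morphism}, which shows precisely that the collection $(\theta_H)_H$ commutes with the restriction, induction and conjugation maps, hence defines a morphism in $\mackey_{\mathbb{Q}}(G)$. Since each component $\theta_H$ is an isomorphism of $\mathbb{Q}$-modules and a morphism of Mackey functors whose components are invertible is an isomorphism of Mackey functors (the inverse components automatically commute with the structure maps), this establishes $\mackey\circ\weyl(M)\cong M$.

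The one genuine verification I would still spell out is \emph{naturality in $M$}: that the isomorphisms $\theta_H$ assemble into a natural transformation $\id\Rightarrow\mackey\circ\weyl$. Given a morphism $f\colon M\to\overline M$ of Mackey functors, I would check the square
\begin{align*}
\xymatrix{M(G/H)\ar[r]^(.35){\theta_H}\ar[d]_{f_H}&\mackey\circ\weyl(M)(G/H)\ar[d]^{\mackey\circ\weyl(f)_H}\\ \overline M(G/H)\ar[r]_(.35){\theta_H}&\mackey\circ\weyl(\overline M)(G/H)}
\end{align*}
commutes for every open $H$. This reduces, via Proposition \ref{fixfinal} and the construction of $\weyl(f)$ on stalks in Theorem \ref{macktosheaf} (where $\weyl(f)$ is induced stalk-wise by $f_{NK}$, using $f_{NK}(e_{O(N,NK)}s)=e_{O(N,NK)}f_{NK}(s)$), to the tautological compatibility $f_K\circ\theta = \theta\circ f_K$ on each stalk $\tilde F_K$; the section-level statement then follows because sections are determined by their germs. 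Having both composites naturally isomorphic to the identities, I would conclude that $\mackey$ and $\weyl$ are inverse equivalences of categories, which is the statement of Theorem \ref{equivalencemain} and in particular yields Theorem \ref{mackeyweyl}.

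The main obstacle, conceptually, is not in this final assembly — which is essentially bookkeeping — but in the fact that it rests on Proposition \ref{fixfinal}, i.e.\ on the chain of identifications $M(G/H)_{(K)}\cong\tilde F_K\cong\bigl(M(G/NK)_{(NK)}\bigr)^{N_H(NK)}$ and the delicate idempotent computations (Lemmas \ref{fixinflate}, \ref{fixcolimitlemma}, \ref{inductcharac}, Propositions \ref{profinitefixed}, \ref{restrictprop}, \ref{inductprop}) controlling how Burnside-ring idempotents interact with restriction and induction. In writing the proof of Theorem \ref{mackeyweyl} itself I would treat those as established and keep the argument short: state $\theta_H$ from Corollary \ref{homeo}, note it is a Mackey morphism by Proposition \ref{morphism}, note it is a levelwise isomorphism, hence an isomorphism of Mackey functors, and record naturality in $M$ as above. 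The care required is only to make sure the naturality square is genuinely checked rather than assumed, since Corollary \ref{homeo} as stated gives the isomorphism for a fixed $M$ and does not by itself assert coherence across morphisms of $M$.
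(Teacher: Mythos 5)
Your proposal matches the paper's proof: Theorem \ref{mackeyweyl} is obtained there exactly by combining the level-wise isomorphism of Corollary \ref{homeo} with Proposition \ref{morphism}, which shows these components form a morphism of Mackey functors. The extra naturality-in-$M$ verification you include is not needed for this statement (which fixes $M$) and is carried out in the paper only later, in the proof of Theorem \ref{equivalencemain}.
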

\begin{proof}
This is a direct consequence of Propositions \ref{homeo} and \ref{morphism}.
\end{proof}
\begin{remark}\label{NatTrans}
If $t$ is a natural transformation from $F$ to $G$ which are functors from $\mathfrak{C}$ to $\mathfrak{D}$, then functors $H$ from $\mathfrak{D}$ to $\mathfrak{E}$ and $J$ from $\mathfrak{B}$ to $\mathfrak{C}$ give natural transformations:
\begin{align*}
H\circ t: H\circ F&\rightarrow H\circ G\,\text{ given by maps}\\
H(t_X): H\circ F(X)&\mapsto H\circ G(X),
\end{align*} 
and
\begin{align*}
t\circ J:F\circ J&\rightarrow G\circ J\,\text{ given by maps}\\
t_{J(a)}:F\circ J(a)&\mapsto G\circ J(a).
\end{align*}
\end{remark}
\begin{theorem}\label{equivalencemain}
If $G$ is a profinite group then the category of rational $G$-Mackey functors is equivalent to the category of Weyl-$G$-sheaves over $SG$.
\end{theorem}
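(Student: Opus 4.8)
The plan is to assemble the equivalence from the two functors $\mackey(-)$ and $\weyl(-)$ that have already been constructed, together with the two natural isomorphisms established in this chapter. By Theorem \ref{sheaftomack} and Theorem \ref{MackisFunct} we have a functor $\mackey\colon\Weyl_{\mathbb{Q}}(SG)\rightarrow\mackey_{\mathbb{Q}}(G)$, and by Theorem \ref{macktosheaf} we have a functor $\weyl\colon\mackey_{\mathbb{Q}}(G)\rightarrow\Weyl_{\mathbb{Q}}(SG)$. It therefore remains only to promote the object-level isomorphisms $\weyl\circ\mackey(F)\cong F$ (Theorem \ref{weylmack}) and $\mackey\circ\weyl(M)\cong M$ (Theorem \ref{mackeyweyl}) to natural isomorphisms of functors $\weyl\circ\mackey\cong\id_{\Weyl_{\mathbb{Q}}(SG)}$ and $\mackey\circ\weyl\cong\id_{\mackey_{\mathbb{Q}}(G)}$; once this is done the statement follows from the standard characterisation of an equivalence of categories via a pair of functors with natural isomorphisms to the identities.

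First I would fix, for each Weyl-$G$-sheaf $F$, the isomorphism $\eta_F\colon\weyl\circ\mackey(F)\rightarrow F$ built in the proof of Theorem \ref{weylmack}: recall it is induced on stalks by the identity of underlying sets from Lemma \ref{stalk} and is a $G$-equivariant homeomorphism of sheaf spaces by Lemma \ref{equivequi}. Naturality amounts to checking that for a morphism $f\colon F\rightarrow F'$ of Weyl-$G$-sheaves the square relating $\eta_F$, $\eta_{F'}$, $f$ and $\weyl\circ\mackey(f)$ commutes; since both $\mackey$ and $\weyl$ were defined on morphisms stalk-wise (via $f_{SH}$ and the induced maps on colimits of stalks, see the proofs of Theorem \ref{MackisFunct} and Theorem \ref{macktosheaf}), this reduces to the identity $(\weyl\circ\mackey(f))_K=f_K$ on each stalk $F_K$, which is immediate from tracing through Construction \ref{mackey_construct} and Construction \ref{Sheaf_mack}. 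Dually, for each Mackey functor $M$ I would take the isomorphism $\theta_M\colon M\rightarrow\mackey\circ\weyl(M)$ assembled from Proposition \ref{M(G/H)set} and shown to be a morphism of Mackey functors in Proposition \ref{morphism}; its naturality in $M$ again follows because the level-wise maps $\theta_H$ are built from the colimit isomorphisms of Proposition \ref{fixfinal}, which are themselves defined by restriction maps that commute with any morphism of Mackey functors. Finally, if one wishes, one can verify the triangle identities relating $\eta$ and $\theta$, although this is not needed: exhibiting both natural isomorphisms is already enough.

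The main obstacle is the naturality verification on the Mackey-functor side, i.e.\ showing that the collection $\{\theta_H\}_H$ from Corollary \ref{homeo} is natural with respect to \emph{morphisms} of Mackey functors (as opposed to the internal structure maps, which is what Proposition \ref{morphism} handles). Concretely one must check that for $f\colon M\rightarrow M'$ the square $\theta_{M'}\circ f=\mackey\circ\weyl(f)\circ\theta_M$ commutes level-wise, and this requires unwinding how $\weyl(f)$ acts on the stalks $\tilde F_K$ (via the induced maps $f_K$ on colimits $\underset{N}{\colim}\,e_{O(N,NK)}M(G/NK)^{N_H(NK)}$, using that $f$ commutes with the idempotents $e_{O(N,NK)}$ as recorded in the proof of Theorem \ref{macktosheaf}) and then tracking these through the identification of $\mackey\circ\weyl(M)(G/H)$ with $H$-fixed global sections from Proposition \ref{fixtogether}. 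The argument is bookkeeping rather than conceptually hard, but it is the step where the compatibility of $f$ with Burnside-ring idempotents, $N_H(NK)$-fixed points, and the gluing axiom for $M(G/H)$ all have to be combined at once; everything else is a direct appeal to the results already proved in Chapters 5 and 6.
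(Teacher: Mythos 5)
Your proposal is correct and follows essentially the same route as the paper: both build the equivalence from the functors of Theorems \ref{sheaftomack}, \ref{MackisFunct} and \ref{macktosheaf}, invoke the object-level isomorphisms of Theorems \ref{weylmack} and \ref{mackeyweyl}, and reduce the remaining work to compatibility with morphisms (naturality), verified stalk-wise on the sheaf side and via the idempotent/fixed-point identifications on the Mackey side. The only cosmetic difference is that the paper packages part of this as an explicit bijection on hom-sets ($f\mapsto\weyl(f)$, using Propositions \ref{M(G/H)set} and \ref{stalkeq}) before checking naturality, whereas you go directly to natural isomorphisms with the identity functors, which subsumes that step.
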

\begin{proof}
This is an application of Theorems \ref{weylmack}, \ref{mackeyweyl}. We need only prove that these functors induce a bijective correspondence on morphism sets. Let $M$ and $\overline{M}$ be Mackey functors, we will show that the following map is bijective:
\begin{align*}
\text{Mackey}(M,\overline{M})&\rightarrow \Weyl(\weyl(M),\weyl(\overline{M}))\\
f&\mapsto \weyl(f).
\end{align*}
Let $f,h\in\text{Mackey}(M,\overline{M})$ with $\weyl(f)=\weyl(h)$. It follows that \\$\weyl(f)(SH)^H=\weyl(h)(SH)^H$ as morphisms:
\begin{align*}
\weyl(M)(SH)^H\rightarrow \weyl(\overline{M})(SH)^H.
\end{align*}
We now apply Proposition \ref{M(G/H)set} to deduce that $f=h$. Therefore the correspondence is injective.

Suppose we have $f\in \Weyl(\weyl(M),\weyl(\overline{M}))$. We will see that $\weyl(\mackey(f))=f$. We begin by considering the map $f(SH)^H$ which is precisely $\mackey(f)(G/H)$. In order to apply $\weyl(-)$ to this we take the colimit of the diagram of morphisms $e_Uf(SH)^H$ for varying $U$ and $H$ at each stalk $K$. An application of Proposition \ref{stalkeq} shows that this is simply the stalk map $f_K$. Therefore $\weyl(\mackey(f))=f$ as required.

We have shown that the functors $\text{Mackey}\circ\text{Weyl}$ and $\text{Id}_{\text{Mackey}}$ induce an object-wise isomorphism and a bijection on hom sets. We have also shown the same for $\text{Weyl}\circ\text{Mackey}$ and $\text{Id}_{\text{Weyl}}$. We need to prove these are natural to finish. To simplify notation for this proof, we shall denote $\text{Mackey}$ by $M$ and $\text{Weyl}$ by $W$. If we prove that we have defined natural transformation from $\text{Id}_{\text{Mack}}$ to $M\circ W$ this will be sufficient by Remark \ref{NatTrans} and the fact that $W$ and $M$ are inverse.

We will show that the following two squares commute:
\begin{align*}
\xymatrix{\text{Mack}(N,\overline{N})\ar[r]\ar[d]^{f^*}&\text{Mack}(M\circ W(N),M\circ W(\overline{N}))\ar[d]^{M\circ W(f)^*}\text{Mack}(N,\overline{N})\\
\text{Mack}(\overline{M},\overline{N})\ar[r]&\text{Mack}(M\circ W(\overline{M}),M\circ W(\overline{N}))}
\end{align*}
and
\begin{align*}
\xymatrix{\text{Mack}(N,\overline{N})\ar[r]\ar[d]^{h_*}&\text{Mack}(M\circ W(N),M\circ W(\overline{N}))\ar[d]^{M\circ W(h)_*} \\ \text{Mack}(N,\overline{M})\ar[r]&\text{Mack}(M\circ W(\overline{M}),M\circ W(\overline{M}))}
\end{align*}
for $f:\overline{M}\rightarrow N$ and $h:\overline{N}\rightarrow \overline{M}$ in the category of Mackey functors. In order to see that the squares commute we need to prove the equalities:
\begin{enumerate}
\item $M\circ W(\alpha\circ f)= M\circ W(\alpha)\circ M\circ W(f)$,
\item $M\circ W(h\circ\beta)= M\circ W(h)\circ M\circ W(\beta)$,
\end{enumerate}
where $\alpha\in \text{Mack}(N,\overline{N})$ and $\beta \in \text{Mack}(N,\overline{N})$. These equalities are immediate from the functoriality of both $M$ and $W$ as seen in Theorems \ref{macktosheaf} and \ref{MackisFunct}.
\end{proof}
We can apply this equivalence to arrive at the following theorem.
\begin{theorem}
If $G$ is a profinite group, then the category of rational $G$-Spectra is modelled by the category:
\begin{align*}
\ch\left(\Weyl_{\mathbb{Q}}(SG)\right).
\end{align*}
Here we use the projective model structure, where weak equivalences are the homology isomorphisms and the fibrations are the level-wise epimorphisms,.
\end{theorem}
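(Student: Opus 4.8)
The plan is to deduce this theorem formally from the two principal equivalences already established, Theorem \ref{equicohomchar} and Theorem \ref{equivalencemain}, together with the standard fact that an equivalence of abelian categories lifts to an equivalence of the associated categories of chain complexes that is compatible with the projective model structures. In other words, essentially all of the genuine content has already been proved, and what remains is to assemble the pieces and check the model-categorical compatibilities.

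First I would invoke Theorem \ref{equicohomchar}, together with Remark \ref{MackModel}, which tells us that rational $G$-spectra is Quillen equivalent to $\ch\left(\mackey_{\mathbb{Q}}(G)\right)$ with the projective model structure, where the weak equivalences are the homology isomorphisms and the fibrations are the level-wise epimorphisms. It therefore suffices to exhibit a Quillen equivalence between $\ch\left(\mackey_{\mathbb{Q}}(G)\right)$ and $\ch\left(\Weyl_{\mathbb{Q}}(SG)\right)$, each with the projective model structure, after which one composes the two Quillen equivalences.

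Next I would apply Theorem \ref{equivalencemain}, which furnishes mutually inverse functors $\weyl(-)$ and $\mackey(-)$ realising an equivalence of abelian categories $\mackey_{\mathbb{Q}}(G)\cong\Weyl_{\mathbb{Q}}(SG)$. Applying these functors degree-wise to chain complexes and to chain maps yields a pair of mutually inverse functors between $\ch\left(\mackey_{\mathbb{Q}}(G)\right)$ and $\ch\left(\Weyl_{\mathbb{Q}}(SG)\right)$; naturality of the unit and counit passes to complexes degree-wise, so this is again an adjoint equivalence of categories. Since an equivalence of abelian categories is exact, the induced functors on chain complexes preserve and reflect homology isomorphisms and level-wise epimorphisms, hence preserve and reflect exactly the weak equivalences and fibrations of the projective model structure; an adjoint equivalence with these properties is in particular a Quillen equivalence. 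The one point needing a word here is that $\ch\left(\Weyl_{\mathbb{Q}}(SG)\right)$ carries the projective model structure at all: this holds either because the model structure on $\ch\left(\mackey_{\mathbb{Q}}(G)\right)$ of \cite[Theorem 5.1.1]{StableModel} transports along the equivalence of underlying categories, or because $\Weyl_{\mathbb{Q}}(SG)$, being equivalent to a functor category into $\mathbb{Q}\text{-Mod}$, has enough projectives. Composing with the Quillen equivalence of Theorem \ref{equicohomchar} then gives the result, and by construction the weak equivalences and fibrations are as stated.

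The main work in this argument is purely bookkeeping: there is no new homotopy-theoretic or algebraic input beyond Theorems \ref{equicohomchar} and \ref{equivalencemain}. The only place where I would be careful is in confirming that the projective model structure is genuinely stable under the equivalence of categories and that the descriptions of weak equivalences and fibrations match up on the two sides; but since these classes are defined purely in terms of homology and level-wise epimorphisms, both of which are preserved and reflected by an exact equivalence, I expect no real obstacle.
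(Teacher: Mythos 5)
Your proposal is correct and follows essentially the same route as the paper, whose proof of this theorem is simply the citation of Theorems \ref{equivalencemain} and \ref{equicohomchar}; you merely spell out the routine step of lifting the abelian equivalence degree-wise to chain complexes and checking compatibility with the projective model structure. No gap here.
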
 
\begin{proof}
See Theorems \ref{equivalencemain} and \ref{equicohomchar}.
\end{proof}
In combination with Chapter $3$ we also have the following diagram which illustrates the characterisation of rational $G$-Spectra for $G$ profinite.
\begin{align*}
\xymatrix{\text{Rational $G$-spectra}\ar[d]^{\simeq}\\ \text{Ch}\left(\text{Fun}_{\text{Ab}}\left(\pi_0(\mathfrak{O}^{\mathbb{Q}}_G),\mathbb{Q}\text{-Mod}\right)\right)\ar[d]^{\cong}\\\text{Ch}\left(\text{Mackey}_{\mathbb{Q}}(G)\right)\ar[d]^{\cong}\\ \text{Ch}\left(\text{Weyl-}G\text{-Sheaf}_{\mathbb{Q}}(SG)\right)}
\end{align*}
where $\simeq$ represents a Quillen equivalence and $\cong$ is an equivalence of categories.
\section{G-Sheaf to Weyl-G-Sheaf Adjunction}
We will now construct an adjunction between $G$-sheaves and Weyl-$G$-sheaves of $\mathbb{Q}$-modules over $SG$. The following construction will show what the two functors will be.
\begin{construction}  
We know that every Weyl-$G$-sheaf over $SG$ is in particular a $G$-sheaf over $SG$ hence we have an inclusion functor which we denote by $L$. In the other direction if we begin with a $G$-sheaf over $SG$ say $F$ we can define a Weyl-$G$-sheaf over $SG$ to be $\text{Weyl}\circ \text{Mackey}(F)$ and we denote this functor by $R$. 
\end{construction}
We will prove that these functors are an adjoint pair. We begin with the following observations.
\begin{proposition}
If $F$ is a $G$-sheaf over $SG$ then $RF$ is a subsheaf of $F$.
\end{proposition}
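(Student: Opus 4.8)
The plan is to exhibit a natural monomorphism of $G$-sheaves $RF = \weyl\circ\mackey(F) \hookrightarrow F$ by constructing it stalkwise and checking that the resulting map of sheaf spaces is a continuous, $G$-equivariant, open inclusion. Recall from Construction \ref{Sheaf_mack} that the stalk of $RF$ at a closed subgroup $K$ is
\begin{align*}
(RF)_K = \underset{J\geq_O K}{\colim}\,\mackey(F)(G/J)_{(K)} = \underset{J\geq_O K}{\colim}\,F(SJ)^J_{(K)},
\end{align*}
where the subscript $(K)$ denotes the stalk of the sheaf $\mackey(F)(G/J)$ over $SJ/J$ at the conjugacy class of $K$. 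By Proposition \ref{stalkeq} (and the paragraph preceding Lemma \ref{stalk}) this colimit is naturally identified with $\underset{H\geq K}{\colim}\,\underset{U\backepsilon K}{\colim}\,F(q_H^{-1}(U))^H$, and a germ here is represented by an $H$-equivariant section of $F$ over an $H$-invariant open neighbourhood of $K$ for some open $H\geq K$. Restricting such a section further shrinks its domain to a basic open neighbourhood of $K$, so every element of $(RF)_K$ is represented by a section of $F$ over an open neighbourhood of $K$; passing to its germ in $F_K$ gives a well-defined $\mathbb{Q}$-linear map $\iota_K\colon (RF)_K \to F_K$.

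The first key step is to verify $\iota_K$ is injective. If two representatives $s,t$ (equivariant over possibly different domains/subgroups) have the same germ in $F_K$, they agree on a common open neighbourhood of $K$ in $SG$; one then intersects domains and subgroups and uses that $SG$ has a closed-open neighbourhood basis at $K$ to conclude the germs agree already in $(RF)_K$ — this is exactly the argument used for well-definedness and injectivity of $\theta$ in the proof of Lemma \ref{stalk}, and I would cite that. Next, assembling the $\iota_K$ into a map of sheaf spaces $\iota\colon L(RF) \to LF$: by the description of the topology on $L(RF)$ in Construction \ref{Sheaf_mack}, a basic open set is $s(O(N,NK)) = \{\overline s_y \mid y\in O(N,Nx)\}$ for a section $s$ of $F$ over $O(N,NK)$, and $\iota$ sends this onto $\{s_y \mid y \in O(N,NK)\}$, which is open in $LF$ since images of sections form a basis for the sheaf space of the sheafification and hence are open in $LF$ (the remark preceding Proposition \ref{fixtogether}). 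Thus $\iota$ is an open map; being injective on stalks and compatible with the two local homeomorphisms, it is injective as a map of spaces, and by \cite[Lemma 2.3.5]{Tennison} an open bijection onto its image is a homeomorphism onto that image, giving continuity. $G$-equivariance of $\iota$ follows from the fact that the $G$-action on $RF$ (via conjugation maps of $\mackey(F)$, which are the $G$-action on $F$) is literally the restriction of the $G$-action on $F$; compare Lemma \ref{equivequi}. Hence $\iota$ identifies $RF$ with a subobject of $F$ in $\sheaf_{\mathbb{Q}}(SG)$.

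I do not expect a serious obstacle here: all the technical content — that germs in $RF_K$ are represented by honest sections of $F$, that the stalkwise map is injective, that the sheaf-space topology on $RF$ is the subspace topology from $LF$ — is already established, essentially verbatim, in Lemmas \ref{stalk}, \ref{equivequi} and Theorem \ref{weylmack}, where the case $F$ itself a Weyl-$G$-sheaf was treated; the only difference now is that for a general $G$-sheaf $F$ one cannot promote the representing section to be equivariant over a subgroup \emph{containing} $K$, but that promotion is not needed merely to get an injection into $F_K$ (it was needed for surjectivity in Theorem \ref{weylmack}, which is precisely why $RF$ is only a subsheaf and not all of $F$ in general). So the mild subtlety to flag — and the only place care is required — is exactly this: one must not claim $\iota_K$ is surjective, and one should remark that $RF = F$ precisely when $F$ is a Weyl-$G$-sheaf, recovering Theorem \ref{weylmack}. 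The remaining verifications (that the stalkwise maps $\iota_K$ are natural in $K$ under the conjugation structure, so that $\iota$ really is a morphism of $G$-sheaves rather than just a fibrewise family) are routine and can be dispatched by citing the compatibility square in Lemma \ref{equivequi}.
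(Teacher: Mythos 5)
Your proposal is correct and follows essentially the same route as the paper: the paper's (much terser) proof simply observes that any germ in $RF_K$ has a representative in $e_UF(SJ)^J\subseteq F(U)$, giving an inclusion of stalks $RF_K\rightarrow F_K$ and hence of sheaves, which is exactly the stalkwise map $\iota_K$ you construct. Your additional verifications (injectivity via the Lemma \ref{stalk} argument, openness of the induced map of sheaf spaces, and $G$-equivariance) are correct elaborations of details the paper leaves implicit, and your remark that surjectivity fails precisely because a general germ of $F_K$ need not be $K$-fixed matches the remark following the proposition.
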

\begin{proof}
If $s_K\in RF_K$ then it has a representative $s\in e_UF(SJ)^J$ which is contained in $F(U)$ and so we have an inclusion from $RF_K$ into $F_K$, and hence $RF$ includes into $F$.
\end{proof}
\begin{remark}
If $F$ is a $G$-sheaf then we can apply Corollary \ref{RFdefn} to deduce that the underlying set of the sheaf space $RF$ is given by:
\begin{align*}
\underset{K\in SG}{\coprod}F_K^K.
\end{align*} 
\end{remark}
A vital component of the proof of the adjunction is given in the following lemma.
\begin{lemma}\label{factor}
If $A$ is a Weyl-$G$-sheaf of $\mathbb{Q}$-modules over $SG$, $F$ is a $G$-sheaf of $\mathbb{Q}$-modules over $SG$ and $\alpha:A\rightarrow F$ is a morphism of $G$-sheaves, then $\alpha$ factors through $RF$.
\end{lemma}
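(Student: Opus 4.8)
The plan is to exploit the fact that $RF = \weyl \circ \mackey(F)$ is built stalk-wise from $K$-fixed germs of $F$, so that any morphism out of a Weyl-$G$-sheaf automatically lands in the $K$-fixed part of each stalk of $F$. First I would reduce to a statement about sheaf spaces: by Theorem \ref{nonequivariantsheafequivdefn} and the description of morphisms of $G$-sheaf spaces, it suffices to produce a $G$-equivariant map of sheaf spaces $\bar{\alpha}\colon LA \to LRF$ which induces $\mathbb{Q}$-module maps on stalks and satisfies $L(\iota)\circ\bar{\alpha} = \alpha$, where $\iota\colon RF\to F$ is the subsheaf inclusion from the preceding proposition. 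The key point is the preceding remark: the underlying set of $LRF$ is $\underset{K\in SG}{\coprod}F_K^K$, the disjoint union of the $K$-fixed parts of the stalks of $F$.

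The main step is the stalk-wise construction. Fix $K\in SG$ and $s_K\in A_K$. Since $A$ is a Weyl-$G$-sheaf, $A_K$ is $K$-fixed, so $s_K = k\cdot s_K$ for all $k\in K$. Applying $\alpha$ and using that $\alpha$ is $G$-equivariant (hence $\stabgx(K) = N_G(K)$-equivariant on the stalk at $K$, and in particular $K$-equivariant), we get $\alpha_K(s_K) = \alpha_K(k\cdot s_K) = k\cdot \alpha_K(s_K)$ for every $k\in K$. Thus $\alpha_K(s_K)\in F_K^K$, which is exactly the stalk of $RF$ at $K$ by the remark above. So I define $\bar\alpha_K\colon A_K \to F_K^K = (RF)_K$ to be the corestriction of $\alpha_K$; this is manifestly a $\mathbb{Q}$-module map, and composing with the inclusion $F_K^K\hookrightarrow F_K$ recovers $\alpha_K$. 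Assembling over all $K$ gives a map of underlying sets $\bar\alpha\colon LA\to LRF$ with $L(\iota)\circ\bar\alpha=\alpha$ set-theoretically.

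The remaining work is to check that $\bar\alpha$ is continuous and $G$-equivariant. For $G$-equivariance: the $G$-action on $LRF$ is the restriction of the action on $LF$ (since $RF$ is a $G$-subsheaf), so the equation $\bar\alpha(g\cdot s_K) = g\cdot\bar\alpha(s_K)$ follows immediately from the $G$-equivariance of $\alpha$ together with the injectivity of the inclusion $L(\iota)$. For continuity, I would use \cite[Lemma 2.3.5]{Tennison} (continuity of a map of sheaf spaces is equivalent to openness): given a basic open $t(O(N,NL))$ in $LRF$ — which by Corollary \ref{corsec} and Construction \ref{Sheaf_mack} comes from a section $t$ of $F$ restricted to the $L$-fixed part — its preimage is described by the germs $s_y\in A_y$ with $\bar\alpha_y(s_y) = t_y$ in $F_y$; since $\alpha$ is already a continuous map $LA\to LF$ and $L(\iota)$ is an open inclusion onto $\underset{K}{\coprod}F_K^K$, the preimage is open in $LA$. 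The main obstacle I anticipate is making the topological bookkeeping precise: one must be careful that the subspace topology on $LRF$ induced from $LF$ agrees with the topology on the sheaf space of $RF=\weyl\circ\mackey(F)$ coming from Construction \ref{Sheaf_mack}, but this is exactly the content of the proof of Theorem \ref{weylmack} (or the preceding subsheaf proposition), and once that identification is in hand the continuity of $\bar\alpha$ is inherited from that of $\alpha$. Uniqueness of the factorisation is automatic since $L(\iota)$ is a monomorphism.
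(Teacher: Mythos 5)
Your argument is essentially the paper's own proof: the key step in both is to use the $G$-equivariance of $\alpha$ together with the Weyl condition on $A$ to show each germ $\alpha_K(a_K)$ is $K$-fixed, and then to identify $F_K^K$ with $RF_K$ via Corollary \ref{RFdefn} (equivalently the remark following it). The extra continuity and equivariance bookkeeping you supply for the corestriction is sound but is treated as automatic in the paper, since $RF$ is a subsheaf of $F$ and stalk-wise containment of the image suffices for the factorisation.
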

\begin{proof}
We begin by showing that for each $K\in SG$ we know that $\alpha_K$ has image in $RF_K$. If $a_K\in A_K$ for $K\in SG$ then $\alpha_K(a_K)\in F_K$ is $K$-fixed. To see this take $k\in K$ then using the equivariance of the map $\alpha$ we have:
\begin{align*}
k\alpha_K(a_K)=\alpha_K(ka_K)=\alpha_K(a_K).
\end{align*}
Since $\alpha_K(a_K)\in F_K^K$ we can apply Proposition \ref{Weylequi} to deduce that there exists an open subgroup $J$ of $G$ containing $K$ and a neighbourhood $U$ of $K$ so that $\alpha_K(a_K)$ is represented by a section in $e_UF(SJ)^J$, and so $\alpha_K(a_K)$ belongs to $RF_K$.
\end{proof}
\begin{proposition}\label{restr}
If $f:F\rightarrow H$ is a morphism of $G$-sheaves over $SG$, then $R(f)$ is given by restriction to the sub-object $RF$ of $F$.
\end{proposition}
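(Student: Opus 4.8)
The plan is to verify the claimed identity stalkwise, using the explicit descriptions of $\text{Mackey}(-)$ and $\text{Weyl}(-)$ on morphisms together with the identification of $RF$ as a subsheaf of $F$. Recall that $R=\text{Weyl}\circ\text{Mackey}$, and that by the preceding proposition and Corollary \ref{RFdefn} the sheaf space of $RF$ has underlying set $\underset{K\in SG}{\coprod}F_K^K$, with the inclusion $RF\hookrightarrow F$ given stalkwise by the inclusion $F_K^K\hookrightarrow F_K$. By Lemma \ref{factor} applied with $A=RF$ (which is a Weyl-$G$-sheaf), $f$ restricts to a morphism $f|_{RF}\colon RF\to RH$, so the statement makes sense; since a morphism of $G$-sheaves is a morphism of the underlying sheaf spaces, and hence is determined by its effect on stalks, it suffices to prove $R(f)_K=f_K|_{F_K^K}$ for every $K\in SG$.

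Next I would unwind $R(f)$ on the stalk at $K$. By Theorem \ref{MackisFunct}, $\text{Mackey}(f)$ is the morphism of Mackey functors whose component at $G/J$ is $f_{SJ}$ restricted to $J$-fixed sections, $\text{Mackey}(f)(G/J)\colon F(SJ)^J\to H(SJ)^J$; this is well defined because $f$ is $G$-equivariant. Viewing $F(SJ)^J$ and $H(SJ)^J$ as sheaves over $SJ/J$ as in Corollary \ref{MHsheaf}, the construction of $\text{Weyl}$ on morphisms in Theorem \ref{macktosheaf} shows that $R(f)$ induces on the stalk at $K$ the colimit, over open subgroups $J\supseteq K$, of the maps induced on the stalks at the conjugacy class of $K$ by $\text{Mackey}(f)(G/J)$; concretely these are the maps $\bigl(f_{SJ}\bigr)_{(K)}\colon \bigl(F(SJ)^J\bigr)_{(K)}\to\bigl(H(SJ)^J\bigr)_{(K)}$ sending a germ $\bigl(e_U s\bigr)_{(K)}$ to $\bigl(e_U f_{SJ}(s)\bigr)_{(K)}$, using that $f_{SJ}$ commutes with the idempotents $e_U$ as recorded in the proof of Theorem \ref{macktosheaf}.

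The remaining and most delicate step is a naturality check: that the identification $RF_K=\underset{J}{\colim}\,\bigl(F(SJ)^J\bigr)_{(K)}\cong F_K^K$ of Proposition \ref{stalkeq} (combined with Corollary \ref{corsec} and Corollary \ref{RFdefn}) is compatible with $f$. Explicitly, a germ $\bigl(e_{O(N,NK)}s\bigr)_{(K)}$ with $s\in F(SJ)^J$ corresponds in $F_K$ to the germ at $K$ of the $NK$-equivariant section $e_{O(N,NK)}R^J_{NK}(s)$. Since $f$ commutes with restriction maps of $G$-sheaves (so $f$ commutes with $R^J_{NK}$) and with passage to germs (so $\bigl(f(t)\bigr)_x=f_x(t_x)$), the square with horizontal maps the colimit maps $\bigl(F(SJ)^J\bigr)_{(K)}\to F_K$ and $\bigl(H(SJ)^J\bigr)_{(K)}\to H_K$ and vertical maps $\bigl(f_{SJ}\bigr)_{(K)}$ and $f_K$ commutes. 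Passing to the colimit over $J$, this exhibits $R(f)_K$ as $f_K$ restricted to the submodule $F_K^K=RF_K$; as $K$ was arbitrary, $R(f)=f|_{RF}$, i.e. $R(f)$ is the restriction of $f$ to the subobject $RF$. I expect the only real work to be the bookkeeping in this naturality check; everything else follows formally from the functoriality of $\text{Weyl}$ and $\text{Mackey}$ established in Theorems \ref{macktosheaf} and \ref{MackisFunct}.
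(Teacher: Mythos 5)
Your proposal is correct and follows essentially the same route as the paper's own proof: unwind $\mackey(f)$ as $f(SJ)$ on $J$-fixed sections, note that $\weyl$ then produces the colimit of the induced stalk maps, identify this with $f_K$ applied to germs in $RF_K\cong F_K^K$, and invoke Lemma \ref{factor} to land in $RH$. The extra naturality bookkeeping you supply (compatibility of the identification from Proposition \ref{stalkeq} with $f$, using that $f$ commutes with restriction of sections and passage to germs) is exactly the detail the paper leaves implicit.
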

\begin{proof}
First note that the corresponding map of Mackey functors is given by $f(SJ)$ restricted to the $J$-fixed sections for $J$ open, and the map from $RF_K$ is given by taking colimits of these restrictions of $f(SJ)$ to the $J$-fixed sections. But this is just $f_K$ applied to the germ in $RF_K$. The image of this map is contained in $RH$ by Lemma \ref{factor}.
\end{proof} 

\begin{lemma}\label{iso}
If $A$ is a Weyl-$G$-sheaf of $\mathbb{Q}$-modules and $F$ a $G$-sheaf of $\mathbb{Q}$-modules over $SG$ then we have an isomorphism:
\begin{align*}
\phi:\Weyl\,(A,RF)&\rightarrow \gheaf\,(LA,F)\\\alpha&\mapsto \iota\circ\alpha
\end{align*}
where $\iota$ is the inclusion from $RF$ to $F$.
\end{lemma}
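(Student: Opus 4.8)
The plan is to exhibit $\phi$ as a bijection by constructing an explicit inverse, using the fact that $LA = A$ as a $G$-sheaf and that $RF$ is the subsheaf $\underset{K\in SG}{\coprod}F_K^K$ of $F$ given by stalk-wise $K$-fixed germs (Corollary \ref{RFdefn} and the preceding remark). First I would check that $\phi$ is well-defined: given a morphism of Weyl-$G$-sheaves $\alpha\colon A\to RF$, post-composing with the inclusion $\iota\colon RF\to F$ (which is a morphism of $G$-sheaves by the proposition showing $RF$ is a subsheaf of $F$) yields a morphism of $G$-sheaves $\iota\circ\alpha\colon LA = A\to F$, since $LA$ is literally $A$ regarded in the larger category. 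Additivity of $\phi$ is immediate since composition with a fixed morphism is additive on hom-groups in an additive category.

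Next I would construct the inverse map. Given a morphism of $G$-sheaves $\beta\colon LA\to F$, Lemma \ref{factor} says precisely that $\beta$ factors through $RF$: concretely, for each $K\in SG$ and each $a_K\in A_K$, the element $\beta_K(a_K)\in F_K$ is $K$-fixed by $G$-equivariance of $\beta$ and the fact that $A$ is a Weyl-$G$-sheaf (so $a_K$ is $K$-fixed), hence $\beta_K(a_K)\in F_K^K = RF_K$. Thus $\beta$ corestricts to a map $\overline{\beta}\colon A\to RF$ of sheaf spaces; I would note that $\overline{\beta}$ is continuous because $RF$ carries the subspace topology inherited from $F$ (being a subsheaf) and $\beta$ is continuous, and that $\overline{\beta}$ is $G$-equivariant and $\mathbb{Q}$-linear on stalks because $\beta$ is and these are stalk-wise/pointwise conditions preserved under corestriction to a $G$-invariant subobject. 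Hence $\overline{\beta}\in \Weyl(A,RF)$, and I would set $\psi(\beta)=\overline{\beta}$.

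Then I would verify $\phi$ and $\psi$ are mutually inverse. For $\alpha\in\Weyl(A,RF)$, the composite $\psi(\phi(\alpha))$ is the corestriction of $\iota\circ\alpha$ to $RF$, which is $\alpha$ again since $\iota$ is the inclusion and $\alpha$ already has image in $RF$. For $\beta\in\gheaf(LA,F)$, the composite $\phi(\psi(\beta)) = \iota\circ\overline{\beta}$ is $\beta$ since $\overline{\beta}$ is $\beta$ with codomain restricted and $\iota$ re-includes. Both checks reduce to the triviality that corestriction followed by inclusion (and vice versa) is the identity, once we know $\iota$ really is a monomorphism of $G$-sheaves exhibiting $RF$ as a subsheaf of $F$. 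Finally, since Lemma \ref{iso} is stated only as the existence of the isomorphism $\phi$ (not yet its naturality, which is handled separately in the remaining text toward Proposition \ref{Weyladjunct}), this suffices.

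The main obstacle is not conceptual but the careful topological bookkeeping: one must be sure that $RF$, as constructed via $\text{Weyl}\circ\text{Mackey}(F)$, genuinely has underlying sheaf space the subspace $\underset{K}{\coprod}F_K^K\subseteq LF$ with the induced topology, so that corestriction of a continuous map stays continuous. This is exactly what the remark following the first proposition (invoking Corollary \ref{RFdefn}) provides, together with the observation that $R(f)$ is literally restriction to $RF$ (Proposition \ref{restr}); I would lean on these to avoid re-deriving the topology of $RF$ from scratch. With that identification in hand, everything else is formal manipulation in the additive categories $\Weyl_{\mathbb{Q}}(SG)$ and $\gheaf_{\mathbb{Q}}(SG)$.
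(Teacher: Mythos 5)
Your proposal is correct and takes essentially the same route as the paper: the paper also proves surjectivity by invoking Lemma \ref{factor} to factor any $\beta\colon LA\rightarrow F$ through $RF$, and injectivity from $\iota$ being a monomorphism, which is just the two halves of your explicit inverse $\psi$ packaged separately. Your extra care about the topology of $RF$ and continuity of the corestriction is a reasonable elaboration of what Lemma \ref{factor} and Proposition \ref{restr} already supply, not a different argument.
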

\begin{proof}
For surjectivity if we take any $\beta:LA\rightarrow F$, then by Lemma \ref{factor} this factors through $\overline{\beta}:LA\rightarrow RF$, hence $\beta=\phi(\overline{\beta})$.
For injectivity if we take $\alpha_1,\alpha_2:A\rightarrow RF$ which satisfy $\iota\circ \alpha_1=\iota\circ \alpha_2$. Then since $\iota$ is a monomorphism we have $\alpha_1=\alpha_2$.
\end{proof}
\begin{proposition}\label{Weyladjunct}
The pair of functors $L$ and $R$ are an adjoint pair.
\end{proposition}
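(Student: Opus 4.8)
The plan is to verify the adjunction using the hom-set isomorphism already established in Lemma \ref{iso}, together with the naturality of that isomorphism in both variables. Recall that an adjunction $(L,R)$ consists of a natural bijection $\gheaf(LA,F) \cong \Weyl(A,RF)$ for all Weyl-$G$-sheaves $A$ and $G$-sheaves $F$; Lemma \ref{iso} supplies precisely this bijection $\phi$ on the level of sets, sending $\alpha$ to $\iota\circ\alpha$ where $\iota$ is the canonical inclusion $RF \hookrightarrow F$. So the only remaining work is to check that $\phi$ is natural, i.e. that it commutes with post-composition by morphisms $F\to H$ of $G$-sheaves and with pre-composition by morphisms $B\to A$ of Weyl-$G$-sheaves.

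First I would fix a morphism $g\colon F\to H$ of $G$-sheaves over $SG$ and check that the square relating $\phi_{A,F}$ and $\phi_{A,H}$ commutes; concretely, given $\alpha\colon A\to RF$ we must show $\iota_H \circ (R(g)\circ\alpha) = g\circ(\iota_F\circ\alpha)$. By Proposition \ref{restr}, $R(g)$ is simply the restriction of $g$ to the subsheaf $RF$, so $\iota_H\circ R(g) = g\circ\iota_F$ as maps $RF\to H$, and the naturality square for the $F$-variable follows immediately by composing with $\alpha$. Next I would fix a morphism $h\colon B\to A$ of Weyl-$G$-sheaves and check the square relating $\phi_{A,F}$ and $\phi_{B,F}$: here $L(h)=h$ since $L$ is the inclusion functor, so given $\alpha\colon A\to RF$ we have $\phi_{B,F}(\alpha\circ h) = \iota_F\circ\alpha\circ h = \phi_{A,F}(\alpha)\circ L(h)$, which is exactly the required identity. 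Both squares therefore commute for purely formal reasons, using only that $L$ is an inclusion and that $R$ is computed by restriction.

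Since $\phi$ is a bijection for each pair $(A,F)$ by Lemma \ref{iso} and natural in both arguments by the above, the pair $(L,R)$ is an adjoint pair with $L$ left adjoint to $R$. I do not anticipate a genuine obstacle here: the substantive content — that every $G$-sheaf morphism out of a Weyl-$G$-sheaf factors through $RF$, and that $R$ acts by restriction — has already been extracted in Lemma \ref{factor}, Proposition \ref{restr} and Lemma \ref{iso}. The mild point worth being careful about is to state the naturality squares with the functors applied to morphisms in the correct variance (covariant in both $A$ and $F$ as written here, since $\Weyl(A,RF)$ and $\gheaf(LA,F)$ are covariant in $F$ and the inclusion $L$ and the restriction description of $R$ make the $A$-variable behave contravariantly in the hom-set but the natural transformation reads off correctly), and to observe that the unit and counit of the adjunction can, if desired, be read off from $\phi$: the counit $LRF\to F$ at $F$ is the inclusion $\iota_F$, and the unit $A\to RLA$ at $A$ is the identity, since $RLA = \weyl\circ\mackey(A)\cong A$ by Theorem \ref{weylmack}.
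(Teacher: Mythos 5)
Your proposal is correct and follows essentially the same route as the paper: both invoke the hom-set bijection of Lemma \ref{iso} and then verify naturality in the Weyl-variable by associativity (since $L$ is the inclusion) and in the $G$-sheaf variable via Proposition \ref{restr} and Lemma \ref{factor}, i.e.\ the commuting square $\iota_Y\circ\overline{f}=f\circ\iota_X$. The only caveat is your closing aside: the unit $A\to RLA$ is the canonical isomorphism coming from Theorem \ref{weylmack}, not literally the identity, but this does not affect the argument.
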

\begin{proof}
We already have an isomorphism of Hom-sets as provided in Lemma \ref{iso}, so it is left to prove that this isomorphism is natural. Set $\mathfrak{C}=G\text{-Sheaf}(SG)$ and $\mathfrak{D}=\text{Weyl-}G\text{-Sheaf}(SG)$. Take any morphism $f:X\rightarrow Y$ in $\mathfrak{C}$ and $h:A\rightarrow B$ in $\mathfrak{D}$. Then we need to show that the following square commutes:
\begin{align*}
\xymatrix{\mathfrak{D}(B,RX)\ar[r]^{\phi}\ar[d]^{h^*}&\mathfrak{C}(LB,X)\ar[d]^{h^*}\\ \mathfrak{D}(A,RX)\ar[r]^{\phi}&\mathfrak{C}(LA,X)}
\end{align*} 
If $\alpha\in \mathfrak{D}(B,RX)$ then we have $h^*(\phi(\alpha))=(\iota\circ\alpha)\circ h$. On the other hand we have that $\phi(h^*(\alpha))=\phi(\alpha\circ h)=\iota\circ(\alpha\circ h)$. These two are equal by associativity of composition. 

In the other direction we show that the following square commutes:
\begin{align*}
\xymatrix{\mathfrak{D}(A,RX)\ar[r]^{\phi}\ar[d]^{\overline{f}_*}&\mathfrak{C}(LA,X)\ar[d]^{f_*}\\ \mathfrak{D}(A,RY)\ar[r]^{\phi}&\mathfrak{C}(LA,Y)}
\end{align*}
Notice that if we have a morphism $f:X\rightarrow Y$, then by Proposition \ref{restr} and Lemma \ref{factor} $f$ induces a map $\overline{f}:RX\rightarrow RY$ where the following square commutes:
\begin{align*}
\xymatrix{X\ar[r]^f&Y\\RX\ar[u]^{\iota_X}\ar[r]^{\overline{f}}&RY\ar[u]^{\iota_Y}}
\end{align*}
If $\beta\in \mathfrak{D}(A,RX)$ then we have $f_*(\phi(\beta))=f\circ\iota_X\circ\beta$. On the other hand we have $\phi(\overline{f}_*(\beta))=\iota_Y\circ\overline{f}\circ\beta$. These are equal by commutativity of the above square.
\end{proof}
We now observe the following diagram to illustrate the picture we have so far:
\begin{align*}
\xymatrix{G\text{-sheaf}(SG)\ar@{<->}[dd]^{(L,R)}\\ \\ \text{Weyl-}G\text{-sheaf}(SG)\ar@{<->}[rr]^{\cong}&& \text{Mackey}_{\mathbb{Q}}(G)\ar@{<-->}[uull]_{(L^{\prime},R^{\prime})}}
\end{align*} 
The pair of functors $(L^{\prime},R^{\prime})$ is obtained by composing the equivalence between $G$-Mackey functors and Weyl-$G$-sheaves with the adjunction between Weyl-$G$-sheaves and $G$-sheaves. 
\section{Products of Weyl-G-sheaves}
The equivalence between Weyl-$G$-sheaves over $SG$ and rational $G$-Mackey functors can be used to characterise the infinite products in the category of Weyl-$G$-sheaves.
\begin{construction}\label{mackeyprod}
If $M^i$ is a family of rational $G$-Mackey functors then we define the product so that $\left(\underset{i\in I}{\prod}M^i\right)\left(G/H\right)=\underset{i\in I}{\prod}\left(M^i\left(G/H\right)\right)$ for any open subgroup $H$. The restriction, induction and conjugation maps are done component-wise.
\end{construction}
\begin{proposition}\label{mackprod1}
The product of Mackey functors defined in Construction \ref{mackeyprod} is the categorical product.
\end{proposition}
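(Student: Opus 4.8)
The plan is to verify the universal property of the product directly at the level of Mackey functors, exploiting the fact that the structure of a Mackey functor (its values, restriction, induction and conjugation maps, and all the axioms) is defined component-wise in Construction \ref{mackeyprod}. First I would check that $\underset{i\in I}{\prod}M^i$ as described really is a Mackey functor: each $\left(\underset{i\in I}{\prod}M^i\right)(G/H)=\underset{i\in I}{\prod}M^i(G/H)$ is a $\mathbb{Q}$-module since products of $\mathbb{Q}$-modules exist in $\mathbb{Q}\Mod$; the restriction, induction and conjugation maps are defined as the products of the corresponding maps on each factor, so they are again maps of $\mathbb{Q}$-modules; and each of the Mackey axioms from Definition \ref{defM1} (the identity conditions, transitivity, associativity, equivariance and the Mackey double-coset formula) holds because it holds in each $M^i$ and all the operations in sight are computed factor-wise — a tuple-indexed equation holds if and only if it holds in every coordinate. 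This part is routine bookkeeping and I would not grind through it in detail.

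Next I would produce the projection morphisms and verify the universal property. For each $j\in I$ there is a morphism of Mackey functors $p_j:\underset{i\in I}{\prod}M^i\to M^j$ given at each $G/H$ by the $j$th product projection $\underset{i\in I}{\prod}M^i(G/H)\to M^j(G/H)$; this is a morphism of Mackey functors because the product projections commute with the factor-wise restriction, induction and conjugation maps. Given any Mackey functor $N$ together with morphisms $f_j:N\to M^j$, I would define $f:N\to\underset{i\in I}{\prod}M^i$ by setting $f(G/H)$ to be the map $N(G/H)\to\underset{i\in I}{\prod}M^i(G/H)$ induced by the $f_j(G/H)$ via the universal property of the product in $\mathbb{Q}\Mod$. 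Then I must check that $f$ is a morphism of Mackey functors — i.e. that it commutes with all three types of structure map — which again reduces, coordinate by coordinate, to the fact that each $f_j$ does so. Uniqueness of $f$ with $p_j\circ f=f_j$ follows at each level $G/H$ from the uniqueness clause in the universal property of $\underset{i\in I}{\prod}M^i(G/H)$ in $\mathbb{Q}\Mod$, and a morphism of Mackey functors is determined by its underlying level-wise maps.

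There is essentially no serious obstacle here: the only mild subtlety, and the step I would be most careful about, is confirming that the Mackey double-coset axiom (condition 4 of Definition \ref{defM1}) survives the passage to an arbitrary, possibly infinite, product. Since the sum on the right-hand side of the Mackey formula is a \emph{finite} sum over the double coset space $[K\backslash H/L]$ (finite because $H$ is open, hence of finite index over $K$ and $L$), and finite sums commute with arbitrary products of $\mathbb{Q}$-modules, the formula in $\underset{i\in I}{\prod}M^i$ is literally the product over $i$ of the formulas in the $M^i$, and so holds. One should also note, for the analogous statement about Weyl-$G$-sheaves, that transporting this product across the equivalence of Theorem \ref{equivalencemain} then \emph{defines} the product of a family of Weyl-$G$-sheaves — this is exactly the use made of it in Construction \ref{prodweyl} — but the present proposition is purely about $\mackey_{\mathbb{Q}}(G)$ and needs only the argument above.
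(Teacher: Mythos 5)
Your proposal is correct and follows essentially the same route as the paper: both verify that the component-wise construction satisfies the Mackey axioms (the finite double-coset sum passing through the product being the only point needing a remark) and then check the universal property directly, defining the induced morphism level-wise via the product in $\mathbb{Q}\Mod$ and deducing uniqueness coordinate-wise. No gaps.
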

\begin{proof}
The construction provided in Construction \ref{mackeyprod}, which we shall denote by $M$, is a Mackey functor by definition since each $M^i$ is. This is because the restriction, induction and conjugation maps are taken component-wise, as is the $\mathbb{Q}$-module structure. To see that this is the categorical product take any Mackey functor $N$ with morphisms of Mackey functors $\psi_i:N\rightarrow M^i$. If $p_i$ is the projection map from $M$ to $M^i$, we have a map of Mackey functors:
\begin{align*}
\psi:N(G/H)&\rightarrow M(G/H)\\s&\mapsto \left(\psi_i(s)\right)_{i\in I}
\end{align*}
on each open subgroup $H$, which satisfies that $p_i\circ \psi=\psi_i$ for each $i\in I$. Notice that $\psi$ is a morphism of Mackey functors since each $\psi_i$ is and since the Mackey functor compatibility maps are taken component-wise. Therefore we have shown that a morphism exists making the diagram commute. 

For uniqueness suppose that we have another morphism $\phi$ which satisfies $p_i\circ \phi=\psi_i$ for each $i\in I$. If $H$ is any open subgroup then:
\begin{align*}
\phi(s)=\left(p_i\circ\phi(s)\right)_{i\in I}=\left(\psi_i(s)\right)_{i\in I}=\psi(s),
\end{align*}
so $\psi=\phi$.
\end{proof}
Since infinite products are shown to exist in the category of rational $G$-Mackey functors, the equivalence from Theorem \ref{equivalencemain} shows that these exist in the category of Weyl-$G$-sheaves. The following construction describes what these products look like in the category of Weyl-$G$-sheaves.
\begin{construction}\label{prodweyl}
If $M$ is the product of Mackey functors $M^i$ and $U$ is any basic open neighbourhood $U$ of $SG$, then for $s=(s^i)_{i\in I}\in M(G/H)$:
\begin{align*}
e_Us&=\underset{A}{\sum}q_AI^H_A\circ R^H_A(s)=\underset{A}{\sum}q_A\left({I^H_A}^i\circ {R^H_A}^i(s^i)\right)_{i\in I}\\&=\left(\underset{A}{\sum}q_A{I^H_A}^i\circ {R^H_A}^i(s^i)\right)_{i\in I}=\left(e_Us^i\right)_{i\in I}
\end{align*}
as required. We know that $U$ is of the form $O(N,NK)$ and from the constructed equivalence we know that the basic open subsets of the sheaf space of $\text{Weyl}\left(\underset{i\in I}{\prod}M^i\right)$ are of the form:
\begin{align*}
\left\lbrace \left[(s^i)_{i\in I}\right]_y\mid y\in U \right\rbrace
\end{align*}
for $(s^i)_{i\in I}\in \underset{i\in I}{\prod}e_UM^i(G/NK)$. This describes the form of the sections over the product Weyl-$G$-sheaf.
\end{construction}
\begin{proposition}
The description of the product of Weyl-$G$-sheaves\index{product of Weyl-$G$-sheaves} in Construction \ref{prodweyl} coincides with the adjunction description in Proposition \ref{Weyladjunct} where we calculate the product in the category of $G$-sheaves of $\mathbb{Q}$-modules and apply the functor $R$.
\end{proposition}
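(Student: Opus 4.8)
The plan is to show that the two a priori different constructions of $\prod_{i \in I} F^i$ in the category of Weyl-$G$-sheaves agree by exhibiting a natural isomorphism between them, and then appealing to the uniqueness of categorical products. Concretely, write $P_{\mathrm{adj}} = R\left(\prod_{i \in I} LF^i\right)$ for the product obtained by first forming the product $\prod_{i\in I} LF^i$ in $G\text{-Sheaf}_{\mathbb{Q}}(SG)$ via Construction \ref{profinfprod} and then applying $R = \text{Weyl}\circ\text{Mackey}$, and write $P_{\mathrm{Mack}} = \text{Weyl}\left(\prod_{i\in I}\text{Mackey}(F^i)\right)$ for the product obtained through the Mackey functor equivalence as in Construction \ref{prodweyl}. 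Both are genuine categorical products of the family $\{F^i\}$ in $\text{Weyl-}G\text{-Sheaf}_{\mathbb{Q}}(SG)$: that $P_{\mathrm{Mack}}$ is, follows from Proposition \ref{mackprod1} transported along the equivalence of Theorem \ref{equivalencemain}; that $P_{\mathrm{adj}}$ is, follows because $R$ is a right adjoint (Proposition \ref{Weyladjunct}) and hence preserves limits, so $R$ applied to the product $\prod_{i\in I} LF^i$ in $G$-sheaves is the product of the $RLF^i$, and $RLF^i \cong F^i$ since $LF^i$ is already a Weyl-$G$-sheaf (so $R$ restricted to Weyl-$G$-sheaves is the identity, by Proposition \ref{restr} and the fact that a Weyl-$G$-sheaf equals its own stalkwise-fixed subsheaf). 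Since categorical products are unique up to canonical isomorphism, $P_{\mathrm{adj}} \cong P_{\mathrm{Mack}}$, and this isomorphism is the canonical one commuting with the projection maps $p_i$.

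First I would spell out why $R$ acts as the identity on the subcategory of Weyl-$G$-sheaves: given a Weyl-$G$-sheaf $A$, each stalk $A_K$ is already $K$-fixed, so the description of $RA$ via Corollary \ref{RFdefn} as $\coprod_{K\in SG} (LA)_K^K$ recovers $LA$ on underlying sets, and Theorem \ref{weylmack} ($\text{Weyl}\circ\text{Mackey}(A) \cong A$ for $A$ a Weyl-$G$-sheaf) identifies the topologies and $G$-actions. Next I would invoke that $L$ is a left adjoint, so in particular the right adjoint $R$ preserves all limits, giving $R\left(\prod_{i\in I} LF^i\right) \cong \prod_{i\in I} R(LF^i) \cong \prod_{i\in I} F^i$, where the last product is formed in $\text{Weyl-}G\text{-Sheaf}_{\mathbb{Q}}(SG)$ and the isomorphism is compatible with projections. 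Finally I would match this up with $P_{\mathrm{Mack}}$: the equivalence $\text{Mackey}$ of Theorem \ref{equivalencemain} sends the Weyl-$G$-sheaf product to the Mackey functor product of Proposition \ref{mackprod1}, which by Construction \ref{mackeyprod} is computed levelwise as $\prod_{i\in I}\text{Mackey}(F^i)(G/H)$; applying $\text{Weyl}$ back recovers exactly the section description in Construction \ref{prodweyl}, namely sections whose germs at $y$ lie in $\prod_{i\in I} e_U \text{Mackey}(F^i)(G/NK)$. Tracking through the definitions, this is visibly the same data as $P_{\mathrm{adj}}$, since the idempotent $e_U$ acts componentwise on products (the displayed computation at the start of Construction \ref{prodweyl}).

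The main obstacle, and the step requiring the most care, will be verifying that the product in $G\text{-Sheaf}_{\mathbb{Q}}(SG)$ from Construction \ref{profinfprod} — which involves the discretisation functor $\text{disc}(-)$ and the colimit $\underset{U\backepsilon x}{\colim}\, F(U)$ over the functor $F: \mathfrak{B}_{SG}^{\mathrm{op}} \to G\text{-subdisc}$ — interacts correctly with $R$ on stalks. Specifically, one must check that $R$ applied to $\prod_{i\in I} LF^i$ picks out, at each $K \in SG$, precisely the $K$-fixed part $\left(\prod_{i\in I} LF^i\right)_K^K = \left(\underset{U\backepsilon K}{\colim}\, \text{disc}\left(\prod_{i\in I} F^i(U)\right)\right)^K$, and that this agrees with the stalk $\underset{J \geq_O K}{\colim}\, \left(\prod_{i\in I}\text{Mackey}(F^i)\right)(G/J)_{(K)}$ coming from Construction \ref{Sheaf_mack} applied to the product Mackey functor. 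This is essentially a compatibility-of-colimits argument: filtered colimits commute with the finite-fixed-point functor $(-)^K$ up to the discreteness already built into the objects (Proposition \ref{sectionquot}, Proposition \ref{equisectweyl}), and Proposition \ref{stalkeq} together with Lemma \ref{stalk} provides the identification of $R$-stalks with the colimit-of-sections description. Once the stalkwise isomorphism is established and checked to be continuous (using Lemma \cite[2.3.5]{Tennison} and the matching basic open sets $s(O(N,NK))$ on both sides) and $G$-equivariant (Lemma \ref{equivequi}), the proof concludes by the uniqueness of categorical products.
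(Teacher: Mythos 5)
Your proof is correct, but it takes a genuinely different route from the paper. The paper's proof is a direct, hands-on comparison of sheaf spaces: for a basic open $U=O(N,NK)$, a basic open subset of $\weyl\left(\prod_i M^i\right)$ consists of germs of families $(s^i)_{i\in I}$ with $s^i\in e_U M^i(G/NK)$, while a basic open subset of $R$ applied to the $G$-sheaf product consists of germs of families with $s^i\in F^i(U)^{NK}$; since the Mackey--Weyl correspondence identifies $F^i(U)^{NK}$ with $e_U M^i(G/NK)$, the two sheaf spaces have literally the same basic sections, hence equal stalks and equivalent topologies. You instead run an abstract argument: Construction \ref{prodweyl} is the categorical product in Weyl-$G$-sheaves because it is transported along the equivalence of Theorem \ref{equivalencemain} from Proposition \ref{mackprod1}, while $R\left(\prod_i LF^i\right)$ is also a categorical product because $R$ is a right adjoint (Proposition \ref{Weyladjunct}) and so preserves the product of Construction \ref{profinfprod}, with $RLF^i\cong F^i$ by Theorem \ref{weylmack}; uniqueness of products then gives a canonical isomorphism commuting with the projections. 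This is shorter and makes the compatibility with projections automatic, but it only yields the identification up to canonical isomorphism; to see that the two concrete descriptions agree on the nose (same basic sections, same topology), your second and third paragraphs still have to match sections via $F^i(U)^{NK}=e_U M^i(G/NK)$ using Proposition \ref{stalkeq} and Lemma \ref{stalk} -- which is exactly the content of the paper's proof, so the concrete step is not avoided, merely deferred. One small citation point: the fact that $R$ restricts to (a functor naturally isomorphic to) the identity on Weyl-$G$-sheaves is Theorem \ref{weylmack}, rather than Proposition \ref{restr}, which only describes $R$ on morphisms.
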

\begin{proof}
Let $F^i$ be a family of Weyl-$G$-sheaves of $\mathbb{Q}$-modules and $M^i$ be the corresponding family of Mackey functors. If $M$ is the product of Mackey functors, we need to show that $\text{Weyl}(M)$ is equal to $F=R\,\text{disc}\left(\underset{i\in I}{\prod}F^i\right)$.

A basic open subset of the sheaf space of $\text{Weyl}(M)$ is of the form:
\begin{align*}
\left\lbrace \left[(s^i)_{i\in I}\right]_y\mid y\in U \right\rbrace
\end{align*}
where $U=O(N,NK)$ and $(s^i)_{i\in I}\in \underset{i\in I}{\prod}e_UM^i(G/NK)$. On the other hand a basic open subset of the sheaf space of $F$ is of the form:
\begin{align*}
\left\lbrace \left[(s^i)_{i\in I}\right]_y\mid y\in U \right\rbrace
\end{align*}
where $s^i\in F^i(U)^{NK}$ and $U=O(N,NK)$. We know from the definition of $F^i$ as the Mackey functor $M^i$ that $s^i$ is an element of $e_UM^i(G/NK)$. Since both sheaves have the same underlying basic sections it follows that they have equal stalks and that their sheaf spaces have equivalent topologies.
\end{proof}
\chapter{Injective dimension of sheaves of rational vector spaces}\label{chapterIDCB}
\chaptermark{Injective dimension of sheaves}
Given a profinite space $X$, the aim of this chapter is to calculate the injective dimension of the category of sheaves of $\mathbb{Q}$-modules over $X$ in terms of the Cantor-Bendixson rank of $X$, denoted by ${\rank}_{CB}(X)$ (Theorem \ref{Summary}). This work will be completed for general spaces but we are specifically interested in profinite spaces since we will apply this work to the space of closed subgroups of a profinite group. This chapter is similar to \cite{Sugrue}. In Section \ref{Sheafexample} additional examples have been included.
\section{Cantor-Bendixson rank}
We begin by defining and stating known properties of the Cantor-Bendixson rank. We will see in Proposition \ref{prodcant} that the Cantor-Bendixson rank of a product of two spaces can be calculated in terms of the Cantor-Bendixson rank of the individual spaces. Recall that an \textbf{isolated point} of a topological space $X$ is a point $x$ which satisfies that $\left\lbrace x\right\rbrace$ is open in $X$.
\begin{definition}
For a topological space $X$ we define the \index{Cantor-Bendixson process}\textbf{Cantor-Bendixson} \textbf{process} on $X$. Denote by $X^{\prime}$ the set of all isolated points of $X$. We define:
\begin{enumerate}
\item Let $X^{(0)}=X$ and $X^{(1)}=X\setminus X^{\prime}$ have the subspace topology with respect to $X$.
\item For successor ordinals suppose we have $X^{(\alpha)}$ for an ordinal $\alpha$, we define $X^{(\alpha+1)}=X^{(\alpha)}\setminus {X^{(\alpha)}}^{\prime}$.
\item If $\lambda$ is a limit ordinal we define $X^{(\lambda)}=\underset{\alpha<\lambda}{\cap}X^{(\alpha)}$.
\end{enumerate}
\end{definition}
Every Hausdorff topological space $X$ has a minimal ordinal $\alpha$ such that $X^{(\alpha)}=X^{(\lambda)}$ for all $\lambda\geq \alpha$, see \cite[Lemma 2.7]{Gartside}.
\begin{definition}
Let $X$ be a Hausdorff topological space. Then we define the \textbf{Cantor-Bendixson rank}\index{Cantor-Bendixson rank} of $X$ denoted $\rank_{CB}(X)$ to be the minimal ordinal $\alpha$ such that $X^{(\alpha)}=X^{(\lambda)}$ for all $\lambda\geq \alpha$.
\end{definition}
A topological space $X$ is called \textbf{perfect} if it has no isolated points.
\begin{definition}
If $X$ is a Hausdorff space with Cantor-Bendixson rank $\lambda$, then we define the \textbf{perfect hull}\index{perfect hull} of $X$ to be the subspace $X^{(\lambda)}$.
\end{definition}
There are two ways that the Cantor-Bendixson process can stabilise. The first way is where the perfect hull is the empty set and the second is where it is a non-trivial subspace.
\begin{definition}
A compact Hausdorff space $X$ of Cantor-Bendixson rank $\alpha$ is called \textbf{scattered}\index{scattered space} if the space $X^{(\alpha)}$ obtained by the definition above is equal to the empty set. 
\end{definition}
\begin{example}
If $X$ is perfect or if $X=\emptyset$ then $\rank_{CB}(X)=0$.
\end{example}
\begin{example}\label{padic}
Consider $S\left(\mathbb{Z}_p\right)$ which by Proposition \ref{padicspace} is equivalent to $P=\left\lbrace \frac{1}{n}\mid n\in\mathbb{N}\right\rbrace\bigcup\left\lbrace 0\right\rbrace$ with the subspace topology of $\mathbb{R}$. Applying the first stage of the Cantor-Bendixson process to $P$ results in removing the accumulation points of the form $\frac{1}{n}$ leaving only the limit point $0$. A second application of this process leaves us with the empty set since the singleton space consisting only of $0$ is discrete. The process is stable from this point onwards so we therefore know that $\rank_{CB}(P)=2$.
\end{example}
We will see more interesting examples after Proposition \ref{prodcant}. The following proposition and theorem will explain how the perfect hull of a space $X$ relates to $X$ as a subspace. We shall allow $X_H$ to denote the perfect hull of $X$ and $X_S$ to denote its complement, which we call the scattered part of $X$. Both $X_S$ and $X_H$ will be considered with the subspace topology with respect to $X$.

\begin{proposition}
If $X$ is a Hausdorff space, then $X_H$ is always closed and $X_S$ is always open.
\end{proposition}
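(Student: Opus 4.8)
The plan is to unwind the definitions of the Cantor--Bendixson process and show directly that each $X^{(\alpha)}$ is closed in $X$, so that $X_H = X^{(\lambda)}$ (where $\lambda = \rank_{CB}(X)$) is closed, and hence $X_S$ is open as its complement. The key observation is that for any Hausdorff space $Y$, the set $Y'$ of isolated points is open in $Y$: if $y$ is isolated then $\{y\}$ is an open neighbourhood of $y$ consisting entirely of isolated points, so $Y'$ is a union of open singletons. Consequently $Y^{(1)} = Y \setminus Y'$ is closed in $Y$.

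First I would prove by transfinite induction on $\alpha$ that $X^{(\alpha)}$ is closed in $X$. The base case $X^{(0)} = X$ is trivial. For the successor step, suppose $X^{(\alpha)}$ is closed in $X$; then $X^{(\alpha+1)} = X^{(\alpha)} \setminus (X^{(\alpha)})'$ is closed in $X^{(\alpha)}$ by the observation above (applied to $Y = X^{(\alpha)}$), and since $X^{(\alpha)}$ is closed in $X$, a closed subset of it is closed in $X$. For the limit step, if $\lambda$ is a limit ordinal and each $X^{(\alpha)}$ for $\alpha < \lambda$ is closed in $X$, then $X^{(\lambda)} = \bigcap_{\alpha < \lambda} X^{(\alpha)}$ is an intersection of closed sets, hence closed in $X$. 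This completes the induction.

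Applying this with $\alpha = \lambda = \rank_{CB}(X)$, we obtain that $X_H = X^{(\lambda)}$ is closed in $X$, and therefore $X_S = X \setminus X_H$ is open in $X$, which is the statement. I do not anticipate a genuine obstacle here: the only subtlety is making sure the transfinite induction is set up cleanly (in particular, that "closed in $X^{(\alpha)}$ and $X^{(\alpha)}$ closed in $X$" gives "closed in $X$", which is the standard transitivity of closedness), and that the Hausdorff hypothesis is invoked only where needed --- in fact the openness of the set of isolated points requires no separation axiom at all, so the Hausdorff assumption is used only implicitly through the well-definedness of $\rank_{CB}(X)$ cited earlier from \cite[Lemma 2.7]{Gartside}. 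If one wanted to avoid even mentioning $\lambda$, one could equally remark that $X_H$ is by construction a perfect closed subspace and run the same argument; but phrasing it via the stabilising ordinal $\lambda$ is cleanest.
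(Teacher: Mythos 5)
Your proof is correct, but it takes a different route from the paper. The paper argues directly that $X_S$ is open by a pointwise argument: given $x\in X_S$, it is isolated in $X^{(\kappa)}$ for some ordinal $\kappa$, so there is an open $U\subseteq X$ with $U\cap X^{(\kappa)}=\{x\}$; every point of $U$ is then eliminated by the Cantor--Bendixson process at or before stage $\kappa+1$, so $U\subseteq X_S$, and closedness of $X_H$ follows by taking complements. You instead prove the global statement that every derived set $X^{(\alpha)}$ is closed in $X$, by transfinite induction using the observation that the isolated points of any space form an open subset (so each derivative is closed in its predecessor) together with transitivity of closedness and closure of closed sets under intersections at limit stages; then $X_H=X^{(\lambda)}$ is closed and $X_S$ is open. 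The paper's argument is shorter and avoids setting up the induction; yours yields a slightly stronger intermediate fact (closedness of all Cantor--Bendixson derivatives, which is reusable elsewhere) and makes explicit that no separation axiom is needed for the openness of the set of isolated points, the Hausdorff hypothesis entering only through the well-definedness of the rank. Both arguments are complete and valid.
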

\begin{proof}
We shall prove that $X_S$ is an open subset of $X$. If $x$ is any point of $X_S$ we will find an open subset of $X$ containing $x$ and contained in $X_S$. If $x$ belongs to $X_S$, then by definition it is in the complement of $X_H$ and hence there exist some ordinal $\kappa$ such that $x$ is isolated in $X^{(\kappa)}$. This in turn means that there exists some open subset $U$ of $X$ such that $U\cap X^{(\kappa)}=\left\lbrace x\right\rbrace$. This proves that each point belonging to $U$ is eliminated in the Cantor-Bendixson process atleast before the stage of any ordinal strictly larger than $\kappa$. This is another way of saying that $U$ is contained in $X_S$ which proves the result. 
\end{proof}

In general $X_H$ may not be open. The next theorem shows that we can compute the cardinality of $X_S$ in certain cases.
\begin{theorem}[Cantor-Bendixson Theorem]\label{CantThm}\index{Cantor-Bendixson Theorem}
Given a countably based Hausdorff topological space $X$, we can write $X$ as a disjoint union of a countable scattered subset $X_S$ with the perfect hull $X_H$ of $X$. 
\end{theorem}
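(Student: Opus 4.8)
The plan is to run the Cantor--Bendixson process transfinitely and argue that it terminates while removing only countably many points, leaving behind a perfect (possibly empty) closed subspace. First I would fix a countable basis $\mathcal{B}$ for $X$ and recall from the preceding material that the process is well-defined: the chain $X=X^{(0)}\supseteq X^{(1)}\supseteq\cdots$ is a decreasing chain of closed subsets, and by \cite[Lemma 2.7]{Gartside} (quoted above) there is a minimal ordinal $\alpha=\rank_{CB}(X)$ at which it stabilises, i.e. $X^{(\alpha)}=X^{(\alpha+1)}$, which says precisely that $X^{(\alpha)}$ has no isolated points, hence is perfect. So set $X_H=X^{(\alpha)}$ and $X_S=X\setminus X_H$; by the proposition just proved $X_H$ is closed and $X_S$ is open, and they are disjoint with union $X$ by construction. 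This already gives the decomposition; what remains is the cardinality claim.

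The key step is to show $X_S$ is countable. I would use that the process must stabilise at a countable ordinal: since each stage that strictly decreases the space removes a nonempty open set, and a decreasing chain of closed sets indexed by basic open neighbourhoods witnessing the successive removals cannot be strictly decreasing for more than countably many steps when the basis is countable, we get $\alpha<\omega_1$. More concretely, for each point $x\in X_S$ let $\kappa(x)$ be the unique ordinal with $x$ isolated in $X^{(\kappa(x))}$ (so $x\in X^{(\kappa(x))}\setminus X^{(\kappa(x)+1)}$); pick a basic open $U_x\in\mathcal{B}$ with $U_x\cap X^{(\kappa(x))}=\{x\}$. The assignment $x\mapsto U_x$ is injective on each ``level'' $X^{(\kappa)}\setminus X^{(\kappa+1)}$, because if $\kappa(x)=\kappa(y)=\kappa$ and $U_x=U_y$ then $\{x\}=U_x\cap X^{(\kappa)}=U_y\cap X^{(\kappa)}=\{y\}$. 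Since there are only countably many basic opens, each level is countable, and since $\alpha<\omega_1$ there are only countably many nonempty levels, so $X_S=\bigcup_{\kappa<\alpha}\bigl(X^{(\kappa)}\setminus X^{(\kappa+1)}\bigr)$ is a countable union of countable sets, hence countable.

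I would expect the main obstacle to be the bookkeeping that forces $\alpha<\omega_1$ rather than merely $\alpha$ being some ordinal: one must argue that the ordinal height of a countably based space cannot reach $\omega_1$. The cleanest route is to observe that the map $\kappa\mapsto$ (some basic open set that meets $X^{(\kappa)}$ but not $X^{(\kappa+1)}$, which exists exactly when $X^{(\kappa)}\neq X^{(\kappa+1)}$) is injective into the countable set $\mathcal{B}$ on the set of ordinals $\kappa<\alpha$ where the chain strictly decreases; by minimality of $\alpha$ the chain strictly decreases at \emph{every} $\kappa<\alpha$ (otherwise it would have stabilised earlier, using that once $X^{(\kappa)}=X^{(\kappa+1)}$ the chain is constant thereafter, including across limit stages by the intersection definition). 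Hence $\alpha$ injects into $\mathcal{B}$, so $\alpha$ is countable. Everything else---closedness of $X_H$, openness of $X_S$, disjointness, perfectness of $X_H$---is immediate from the definitions and the results already established in this section, so no further calculation is needed.
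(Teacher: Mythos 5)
You should first note that the paper contains no proof of Theorem \ref{CantThm}: it is quoted as the classical Cantor--Bendixson theorem, with only the remark afterwards that the decomposition is not a topological disconnection, so there is no in-text argument to compare against. Your proposal is essentially the standard proof and it is correct: the $X^{(\kappa)}$ form a decreasing chain of closed subsets stabilising at the Cantor--Bendixson rank $\alpha$; the equality $X^{(\alpha)}=X^{(\alpha+1)}$ says exactly that $X_H=X^{(\alpha)}$ is perfect; your injection of the set of ordinals $\kappa<\alpha$ (at each of which the chain strictly drops, by minimality of $\alpha$ together with constancy after any repetition) into the countable basis $\mathcal{B}$ correctly yields $\alpha<\omega_1$; and the level-by-level count of $X_S$ is sound. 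Three small points. First, ``removes a nonempty open set'' should read ``a nonempty relatively open subset of $X^{(\kappa)}$'' (the isolated points of $X^{(\kappa)}$ need not be open in $X$); your later concrete argument does not rely on this phrase, so nothing breaks. Second, you can streamline the countability of $X_S$: the map $x\mapsto U_x$ is injective on all of $X_S$ at once, since if $U_x=U_y$ with $\kappa(x)\leq\kappa(y)$ then $y\in U_y\cap X^{(\kappa(y))}\subseteq U_x\cap X^{(\kappa(x))}=\{x\}$; this avoids both the bound $\alpha<\omega_1$ and the appeal to a countable union of countable sets being countable. Third, the statement also asserts that $X_S$ is scattered, which your list of ``immediate'' facts omits; it follows in one line from your setup: any nonempty $A\subseteq X_S$ contains a point $a$ of minimal height $\kappa_0$, so $A\subseteq X^{(\kappa_0)}$, and a basic open $U$ with $U\cap X^{(\kappa_0)}=\{a\}$ then isolates $a$ in $A$, hence every nonempty subspace of $X_S$ has an isolated point.
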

It is important to note that this theorem does not say that $X$ can be written as the coproduct of $X_H$ and $X_S$ in the category of spaces. That is, this theorem does not claim that $X_S$ and $X_H$ provide a topological disconnection of $X$.
\begin{definition}
If $X$ is a space and $x\in X_S$, we define the \textbf{height}\index{height of a point} of $x$ denoted $\text{ht}(X,x)$, to be the ordinal $\kappa$ such that $x\in X^{(\kappa)}$ but $x\notin X^{(\kappa+1)}$. We sometimes denote this by $\text{ht}(x)$ when the background space $X$ is understood.
\end{definition}

In the following proposition we will see how to calculate the Cantor-Bendixson rank of a product of two spaces. It is important to notice that this only works when both spaces have Cantor-Bendixson rank bigger than zero.

\begin{proposition}\label{prodcant}
Let $X$ be a space with $\rank_{\cb}(X)=n+1$ and $Y$ be a space with $\rank_{\cb}(Y)=m+1$, where $m,n\in\mathbb{N}_0$. Then $\rank_{\cb}(X\prod Y)=m+n+1=\rank_{\cb}(X)+\rank_{\cb}(Y)-1$.
\end{proposition}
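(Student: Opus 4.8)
The plan is to compute $(X\prod Y)^{(k)}$ inductively and show it equals $\bigcup_{i+j=k}\bigl(X^{(i)}\times Y^{(j)}\bigr)$, or something close to this, from which the rank computation will follow. The key structural fact is the description of isolated points in a product: a point $(x,y)\in X\prod Y$ is isolated if and only if $x$ is isolated in $X$ and $y$ is isolated in $Y$, since a basic open set in the product topology is a product of open sets and $\{(x,y)\}=\{x\}\times\{y\}$. Hence $(X\prod Y)' = X'\times Y'$, and more usefully $(X\prod Y)^{(1)} = (X\prod Y)\setminus(X'\times Y')$.

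First I would establish the inductive formula. The base case $k=0$ is trivial. For the inductive step, I would argue that the isolated points of the subspace $Z_k := \bigcup_{i+j=k}(X^{(i)}\times Y^{(j)})$ are exactly the points lying in some $X^{(i)}\times Y^{(j)}$ with $i+j=k$ which are isolated within that piece but not absorbed into a lower-index piece; working this through, an isolated point of $Z_k$ is one of the form $(x,y)$ with $x$ isolated in $X^{(i)}$ and $y$ isolated in $Y^{(j)}$ for the unique relevant pair, so removing all isolated points of $Z_k$ leaves $Z_{k+1}$. This uses that the pieces $X^{(i)}\times Y^{(j)}$ for $i+j=k$ are "stacked" appropriately: each such piece is locally closed in $Z_k$ and its own isolated points push it down to $X^{(i+1)}\times Y^{(j)}$ and $X^{(i)}\times Y^{(j+1)}$, both of which sit inside $Z_{k+1}$. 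Care is needed here because these pieces overlap and one must check the isolated-point computation respects the union; I expect this bookkeeping to be the main obstacle, and I would handle it by computing $\text{ht}(X\prod Y,(x,y))=\text{ht}(X,x)+\text{ht}(Y,y)$ directly for points of the scattered part, using the isolated-points-in-a-product fact together with the neighbourhood basis of products of basic opens.

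Once the formula $(X\prod Y)^{(k)}=\bigcup_{i+j=k}(X^{(i)}\times Y^{(j)})$ is in hand, the conclusion is immediate. Since $\rank_{\cb}(X)=n+1$, we have $X^{(n)}\neq\emptyset$ (it consists of the points of height exactly $n$, which exist) and $X^{(n+1)}=\emptyset$ as $X$ is assumed scattered in the relevant cases; similarly $Y^{(m)}\neq\emptyset$ and $Y^{(m+1)}=\emptyset$. Then $(X\prod Y)^{(m+n)}\supseteq X^{(n)}\times Y^{(m)}\neq\emptyset$, while $(X\prod Y)^{(m+n+1)}=\bigcup_{i+j=m+n+1}(X^{(i)}\times Y^{(j)})$: in every term either $i\geq n+1$ or $j\geq m+1$, so every term is empty, giving $(X\prod Y)^{(m+n+1)}=\emptyset$. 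Therefore $\rank_{\cb}(X\prod Y)=m+n+1=\rank_{\cb}(X)+\rank_{\cb}(Y)-1$, which is the claim.

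I would remark on why the hypothesis $\rank_{\cb}>0$ on both factors is essential: if $Y$ is a single point (rank $1$, but with the process stabilising immediately in the sense $Y=Y'$) the naive formula still works, but if $Y$ is perfect and nonempty then $\rank_{\cb}(Y)=0$, no point of the product is isolated (a product with a perfect space has no isolated points), so $X\prod Y$ is perfect and $\rank_{\cb}(X\prod Y)=0\neq \rank_{\cb}(X)+0-1$. This is consistent with the statement only being asserted for $n,m\in\mathbb{N}_0$, i.e. both ranks at least $1$. The only genuinely delicate point in the whole argument is the induction in the second paragraph, and I would present that via the explicit height formula rather than via iterated derived-set manipulations, as the height formula makes the combinatorics of the overlapping pieces transparent.
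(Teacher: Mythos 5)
Your argument follows essentially the same route as the paper: both proofs rest on the observation that a point of $X\prod Y$ is isolated exactly when both coordinates are, and both then identify, by induction, the isolated points of $\left(X\prod Y\right)^{(k)}$ as the points $(x,y)$ with $\text{ht}(x)+\text{ht}(y)=k$. The paper phrases this as the claim that the isolated points of $\left(X\prod Y\right)^{(i)}$ are $\underset{p+q=i}{\coprod}\left(X_p\prod Y_q\right)$, where $X_p$ denotes the set of isolated points of $X^{(p)}$; your formula $\left(X\prod Y\right)^{(k)}=\underset{i+j=k}{\bigcup}\left(X^{(i)}\times Y^{(j)}\right)$, equivalently the height additivity $\text{ht}(X\prod Y,(x,y))=\text{ht}(X,x)+\text{ht}(Y,y)$ on the scattered part, is an equivalent packaging of the same computation, and your closing remark on why both ranks must be positive matches the paper's subsequent example.

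The one genuine gap is the endgame. The proposition does not assume $X$ and $Y$ are scattered, yet your final step asserts $X^{(n+1)}=\emptyset$ ``as $X$ is assumed scattered in the relevant cases'' and concludes $\left(X\prod Y\right)^{(m+n+1)}=\emptyset$. When either factor has a non-empty perfect hull this is false: your own formula gives $\left(X\prod Y\right)^{(m+n+1)}=\left(X_H\times Y\right)\cup\left(X\times Y_H\right)$, which is non-empty. The conclusion nevertheless survives, and within your framework the repair is short: for every $k\geq m+n+1$ each term $X^{(i)}\times Y^{(j)}$ with $i+j=k$ has $i\geq n+1$ or $j\geq m+1$, so the union stabilises at $\left(X_H\times Y\right)\cup\left(X\times Y_H\right)$, while $\left(X\prod Y\right)^{(m+n)}$ strictly contains this set because it contains the isolated points $X_n\times Y_m$, which are non-empty precisely because the ranks are exactly $n+1$ and $m+1$, i.e.\ $X^{(n)}\neq X^{(n+1)}$ and $Y^{(m)}\neq Y^{(m+1)}$. (Note in this connection that $X^{(n)}$ is not ``the points of height exactly $n$''; it consists of all points of height at least $n$ together with the perfect hull, and what you actually need is the strict inclusion just stated.) The paper disposes of the non-scattered case with a one-sentence remark that the process terminates at the perfect hull rather than at $\emptyset$; you should add the corresponding sentence to make your proof cover the statement as given.
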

\begin{proof}
We first prove this in the case where both $X$ and $Y$ are scattered. Let $X_k$ denote the set of isolated points in $X^{(k)}$, and $Y_k$ denote the set of isolated points in $Y^{(k)}$. First note that the isolated points of $\left(X\prod Y\right)$ are given by $X_0\prod Y_0$, and so:
\begin{align*}
\left(X\prod Y\right)^{(1)}=\left(X\prod Y\right)\setminus \left(X_0\prod Y_0\right).
\end{align*}
To see this, first observe that $X_0\prod Y_0$ consists of isolated points. On the other hand, take any point outside this set, say $(x,y)$, where either $x$ or $y$ has height bigger than or equal to $1$. Assume without loss of generality that $x$ is the point with non-trivial height. Then points of the form $(x^{\prime},y)$, where $x^{\prime}$ represents points which converge to $x$, belong to $X\prod Y$ and converge to $(x,y)$. Therefore $(x,y)$ cannot be isolated in $X\prod Y$.

The set of isolated points of $\left(X\prod Y\right)^{(1)}$ are equal to the set
\begin{align*}
\left(X_0\prod Y_1\right)\coprod\left(X_1\prod Y_0\right).
\end{align*}
To see that this is true, first observe that points in this set are isolated. On the other hand take a point $(x,y)$ such that $x$ has height greater than or equal to $2$ and $y$ is isolated. Then points of the form $(x^{\prime},y)$ would converge to $(x,y)$, where $x^{\prime}$ has height between $1$ and the height of $x$. The points of the form $(x^{\prime},y)$ therefore belong to $\left(X\prod Y\right)^{(1)}$. Similarly if we take $(x,y)$ in $X_1\prod Y_1$ we will have points in $X_0\prod Y_1$ and $X_1\prod Y_0$ accumulating at $(x,y)$, and these points belong to $\left(X\prod Y\right)^{(1)}$. It follows that $(x,y)$ cannot be isolated in $\left(X\prod Y\right)^{(1)}$. We therefore have:
\begin{align*}
\left(X\prod Y\right)^{(2)}=\left(X\prod Y\right)\setminus \left[(X_0\prod Y_0)\coprod\left(X_0\prod Y_1\right)\coprod \left(X_1\prod Y_0\right)\right].
\end{align*}

\textbf{Claim:} The isolated points in $\left(X\prod Y\right)^{(i)}$ are of the form
\begin{align*}
\underset{p+q=i}{\coprod}\left(X_p\prod Y_q\right)
\end{align*}
where $0\leq p\leq n$ and $0\leq q \leq m$.

We have shown this holds for $i=0$ and $i=1$ so let the above claim be our inductive hypothesis, and suppose it holds for $i$ and that $\gamma$ is an isolated point of $\left(X\prod Y\right)^{(i+1)}$. 

Then if $i$ is even and hence $i+1$ is odd, since all of the points accumulating at $\gamma$ were eliminated in the previous stage of the Cantor-Bendixson process, and by our hypothesis each of these accumulation points which were isolated in $\left(X\prod Y\right)^{(i)}$ belong to some $X_p\prod Y_q$ where $p+q=i$, so $\gamma$ must belong to $X_{p+1}\prod Y_q$ or $X_p \prod Y_{q+1}$. The reasoning behind this is similar to the first case. If $\gamma$ belonged to $X_p\prod Y_q$ for $p+q>i+1$, then we can find points of the form $(x,y^{\prime})$ converging to $\gamma$ where $y^{\prime}\in Y_{q-1}$ which belongs to $\left(X\prod Y\right)^{(i+1)}$. This contradicts that $\gamma$ is isolated. If $p+q<i+1$ holds then our inductive hypothesis informs us that $\gamma$ is not an element of $\left(X\prod Y\right)^{(i+1)}$, which is a contradiction.

In the case where $i$ is odd and $i+1$ is even we have the same possibilities plus the additional possibility where $\gamma$ is in $X_{\frac{i+1}{2}}\prod Y_{\frac{i+1}{2}}$. Therefore we have shown by induction that the isolated points are of the form $\underset{p+q=i+1}{\coprod}\left(X_p\prod Y_q\right)$.

From this we can see that:
\begin{align*}
\left(X\prod Y\right)^{(i)}=\left(X\prod Y\right)\setminus \left[\underset{0\leq k\leq i-1}{\coprod}\left(\underset{p+q=k}{\coprod}X_p\prod Y_q\right)\right]. 
\end{align*}
We then have:
\begin{align*}
\left(X\prod Y\right)^{(n+m-1)}&=\left(X_n\prod Y_{m-1}\right)\coprod \left(X_{n-1}\prod Y_m\right)\coprod \left(X_n\prod Y_m\right)\\
\left(X\prod Y\right)^{(n+m)}&=\left(X_n\prod Y_m\right)\\
\left(X\prod Y\right)^{(n+m+1)}&=\emptyset.
\end{align*}
This proves that 
\begin{align*}
\rank_{CB}(X\prod Y)=m+n+1=\rank_{CB}(X)+\rank_{CB}(Y)-1. 
\end{align*}
The case where at least one of $X$ and $Y$ is non-scattered is similar except we observe that we end up with
\begin{align*}
\left(X\prod Y\right)^{(n+m+1)}=\left(X\prod Y\right)_H
\end{align*}
which may not be empty. 
\end{proof}
The following example shows that the condition that both $X$ and $Y$ need to have non-zero Cantor-Bendixson rank in order for Proposition \ref{prodcant} to hold.
\begin{example}
If $X=\emptyset$ and $Y$ is any space with Cantor-Bendixson rank bigger than $1$ then Proposition \ref{prodcant} fails. We know that $X\prod Y=\emptyset$ and therefore has Cantor-Bendixson rank $0$. On the other hand: 
\begin{align*}
\rank_{\cb}(X)+\rank_{\cb}(Y)-1=\rank_{\cb}(Y)-1\neq 0.
\end{align*}
Furthermore if $X$ is perfect this fails. To see this take any point $(x,y)$ in $X\prod Y$ where $y$ is isolated. Since $X$ is perfect we can find a net $x_{\gamma}$ converging to $x$ which is not constant. Therefore $(x_{\gamma},y)$ provides a non-constant net converging to $(x,y)$ in $X\prod Y$. Therefore in this case the Cantor-Bendixson rank of $X\prod Y$ is $0$. We can see that Proposition \ref{prodcant} fails in this case similar to how it failed when $X=\emptyset$. 
\end{example}
We now have the following two examples of Cantor-Bendixson rank calculations. 
\begin{example}
From \cite[Proposition 2.5]{Gartside1} we know that if $q_1,q_2,\ldots,q_n$ are a finite collection of distinct primes then there is an isomorphism:
\begin{align*}
S\left(\underset{1\leq i\leq n}{\prod}\mathbb{Z}_{q_i}\right)\cong \underset{1\leq i\leq n}{\prod}S\left(\mathbb{Z}_{q_i}\right)
\end{align*}
By Example \ref{padic} there is a homeomorphism of spaces:
\begin{align*}
S\left(\underset{1\leq i\leq n}{\prod}\mathbb{Z}_{q_i}\right)\cong P^n.
\end{align*}
An application of Proposition \ref{prodcant} shows that:
\begin{align*}
\rank_{\cb}\left(S\left(\underset{1\leq i\leq n}{\prod}\mathbb{Z}_{q_i}\right)\right)=n+1.
\end{align*}
\end{example}
\begin{example}
The space $\underset{n\in\mathbb{N}}{\coprod}{P^n}$ whilst not being profinite since it is not compact, gives an example of a space which has infinite Cantor-Bendixson rank. This is because we can set $x_n$ to be the point in $P^n$ with height equal to $n+1$ and we therefore have a sequence of points with unbounded height.
\end{example}
\section{Injective Resolutions of Sheaves}
In this section we construct an injective resolution of sheaves of $\mathbb{Q}$-modules over a space $X$. We do this by defining the Godement resolution of a sheaf and outlining why this is injective. In order to achieve this we recall Definition \ref{sheafspacedefn} from \cite[Definition 2.5.4]{Tennison}. 

Every sheaf space $E$ determines a sheaf by considering the assignment \\$U\mapsto E(U)$. On the other hand a sheaf $F$ determines a sheaf space $(LF,\pi)$. This space has underlying set $\underset{x\in X}{\coprod}F_x$ and is topologised as in \cite[Construction 2.3.8]{Tennison}. The map of spaces $\pi$ assigns a germ $s_x$ to $x$.
\begin{definition}\label{serr}
Let $F$ be a sheaf of $\mathbb{Q}$-modules over a topological space $X$. Then we define the sheaf $C^0(F)$ on the open sets $U$ by taking $C^0(F)(U)$ to be the collection of serrations, i.e. the set of not necessarily continuous functions $\left\lbrace f:U\rightarrow LF\,\mid \pi\circ f=\id\right\rbrace$ which equates to $\underset{x \in U}{\prod} F_x$.

For the restriction maps, if $V\subseteq U$ are open subsets of $X$, we restrict a serration over $U$ to a serration over $V$ by restricting the serration as a function to $V$. This means that our restriction maps are projections of the following form:
\begin{align*}
\underset{x\in U}{\prod}F_x&\rightarrow \underset{x\in V}{\prod}F_x\\
(s_x)_{x\in U}&\mapsto (s_x)_{x\in V}.
\end{align*}
\end{definition}
Note that every section is a serration so we have a natural inclusion: 
\begin{align*}
\delta_0:F\rightarrow C^0(F)
\end{align*}
which is a monomorphism.
\begin{remark}\label{sermapdef}
The map from a sheaf $F$ into $C^0(F)$ is given as follows:
\begin{align*}
F(U)&\rightarrow \underset{y\in U}{\prod}F_y\rightarrow \underset{V\backepsilon \,x}{\colim}\underset{y\in V}{\prod}F_y\\s&\mapsto (s_y)_{y\in U}\mapsto \left((s_y)_{y\in U}\right)_x
\end{align*}
where $U$ is an open neighbourhood of a point $x$ in $X$ and $\left(-\right)_x$ is the germ at $x$. This morphism of sheaves induces the following map ${\delta_0}_x$ on stalks:
\begin{align*}
F_x&\rightarrow \underset{V\backepsilon x}{\colim}\underset{y\in V}{\prod}F_y\\s_x&\mapsto \left((s_y)_{y\in U}\right)_x 
\end{align*}
We call ${\delta_0}_x$ the serration map and denote it by $S$ throughout to simplify notation.
\end{remark}
Notice that if a map $f$ belongs to the set of serrations in Definition \ref{serr} then $f$ does not have to be continuous. We can now define the Godemont resolution using Definition \ref{serr} and \cite[pp. 36, 37]{Bredon}.
\begin{definition}\label{Godement}
Let $F$ be a sheaf of $\mathbb{Q}$-modules over a topological space $X$. Then as in Definition \ref{serr} we have $C^0(F)$ and a monomorphism $\delta_0:F\rightarrow C^0(F)$.

Consider $\coker\delta_0$, if we replace $F$ in the construction above with $\coker{\delta_0}$ and set $C^1(F)= C^0(\coker{\delta_0})$ from Definition \ref{serr} we will get the following diagram:
\begin{center}
$\xymatrix{0\ar[r]&F\ar[r]^{\delta_0}&C^0(F)\ar[d]\ar[r]^{\delta_1}&C^1(F)\\
&&\coker{\delta_0}\ar[ur]_{{\delta_1}^{\prime}}}$
\end{center}
where ${\delta_1}^{\prime}$ is the monomorphism from $\coker\delta_0$ into $C^1(F)$. We can then continue to build the resolution inductively using this idea. This resolution which we have constructed is called the \index{Godement resolution}\textbf{Godement resolution}.
\end{definition}
We consider the following example of a sheaf which will give us a more complete understanding of the Godement resolution.
\begin{example}\label{Examplesky}\index{skyscraper sheaf}
If $x$ is any point of $X$ and $M$ any $\mathbb{Q}$-module, then we can define a sheaf $\iota_x(M)$ over $X$. This takes value $M$ at an open subset $U$ of $X$ if $x$ belongs to $U$ and $0$ otherwise. We call this the skyscraper sheaf and the assignment $M\mapsto \iota_x(M)$ defines a functor from the category of $\mathbb{Q}$-modules to the category of sheaves of $\mathbb{Q}$-modules. This functor is right adjoint to the functor from the category of sheaves to the category of $\mathbb{Q}$-modules which assigns $F$ to the stalk $F_x$. We see this by considering the closed subset $\left\lbrace x\right\rbrace$ of $X$ and applying \cite[Theorem 3.7.13]{Tennison}.
\end{example} 
\begin{remark}\label{skyprod}
Each $C^0(F)$ can be written as $\underset{y\in X}{\prod}\iota_y(F_y)$. To see this, if $U\subseteq X$ is open then $C^0(F)(U)=\underset{y\in U}{\prod}F_y$. On the other hand:
\begin{align*}
\left(\underset{y\in X}{\prod}\iota_y(F_y)\right)(U)&=\underset{y\in X}{\prod}\left(\iota_y(F_y)(U)\right)=\underset{y\in U}{\prod}F_y.
\end{align*}
\end{remark}
The following lemma relates the Cantor-Bendixson process to the Godement resolution. It shows that the $k^{\text{th}}$ term of the Godement resolution is concentrated over $X^{(k)}$. This will ultimately provide an upper bound for the injective dimension of sheaves.
\begin{lemma}\label{lem0}
Let $X$ be a topological space and $F$ be a sheaf of $\mathbb{Q}$-modules over $X$. Then for every $k\in\mathbb{N}_0$ we have that $C^k(F)_x=0$ for every $x \in X\setminus X^{(k)}$.
\end{lemma}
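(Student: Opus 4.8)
The claim is that $C^k(F)_x = 0$ whenever $x \notin X^{(k)}$. I would prove this by induction on $k$, using the structural description of each term of the Godement resolution and the key feature that $C^0(-)$ of a sheaf concentrated on a closed subspace is again concentrated there.

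The base case $k=0$ is vacuous: $X^{(0)} = X$, so $X \setminus X^{(0)} = \emptyset$. For the inductive step, suppose $C^k(F)_x = 0$ for all $x \in X \setminus X^{(k)}$. Recall $C^{k+1}(F) = C^0(\coker \delta_k)$, where $\delta_k : C^{k-1}(\coker\delta_{k-2}) \to C^k(F)$ (with the obvious adjustments at the start). The first thing to pin down is the stalk of $\coker\delta_k$. Since cokernels of sheaf morphisms are computed stalkwise (as noted after Construction \ref{nonequivsheafconstruct}), and since $\delta_k$ factors through the monomorphism ${\delta_k'} : \coker\delta_{k-1} \hookrightarrow C^k(F)$, one checks that $(\coker\delta_k)_x$ is a subquotient of $C^k(F)_x$; in particular, by the inductive hypothesis, $(\coker\delta_k)_x = 0$ for every $x \in X \setminus X^{(k)}$. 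So $\coker\delta_k$ is a sheaf supported (in the stalkwise sense) on $X^{(k)}$.

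Now I would use the description from Remark \ref{skyprod}: for any sheaf $G$, we have $C^0(G) = \underset{y \in X}{\prod} \iota_y(G_y)$, hence $C^0(G)_x = \underset{y \ni x \text{ (open)}}{\colim}\ \underset{y \in V}{\prod} G_y$, which is a quotient of $\underset{y \in V}{\prod} G_y$ for small $V$. Applying this with $G = \coker\delta_k$: if $x \notin X^{(k+1)}$, then $x$ is isolated in $X^{(k)}$ (or $x \notin X^{(k)}$ entirely). In the first case, since $\{x\}$ is open in $X^{(k)}$, there is an open $V \subseteq X$ with $V \cap X^{(k)} = \{x\}$; then for every $y \in V \setminus \{x\}$ we have $y \notin X^{(k)}$, so $(\coker\delta_k)_y = 0$, and the product $\underset{y \in V}{\prod}(\coker\delta_k)_y$ reduces to $(\coker\delta_k)_x$. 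But $\delta_k'$ is a monomorphism with the property that the composite $C^{k-1}(\dots) \to C^k(F) \to C^{k+1}(F)$ has kernel exactly $\coker\delta_{k-1}$ sitting inside; chasing through, the serration map $S$ on $(\coker\delta_k)_x$ lands the element of $C^{k+1}(F)_x$ coming from $(\coker\delta_k)_x$ into the image of $\delta_k'$, forcing $C^{k+1}(F)_x$ to receive no new contribution — more cleanly, I would argue directly that since $V \cap X^{(k)} = \{x\}$, the stalk $C^0(\coker\delta_k)_x = \colim_V \prod_{y\in V}(\coker\delta_k)_y = (\coker\delta_k)_x$, and then observe that the germ of a serration at an isolated-in-$X^{(k)}$ point, once all neighbouring stalks vanish, is determined by a section of $\coker\delta_k$ near $x$, which the exactness of the Godement complex kills in the cokernel. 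In the second case $x \notin X^{(k)}$, so $(\coker\delta_k)_x = 0$ and, picking $V$ with $V \cap X^{(k)} = \emptyset$, the product is $0$, giving $C^{k+1}(F)_x = 0$ directly.

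The main obstacle is the isolated-point case: showing that when $x$ is isolated in $X^{(k)}$ and all nearby stalks of $\coker\delta_k$ vanish, the stalk $C^{k+1}(F)_x$ is nonetheless zero rather than just equal to $(\coker\delta_k)_x$. The resolution of this is to track carefully what "$\coker\delta_k$" is: it is the cokernel of the map into $C^k(F)$ whose image is precisely the subsheaf ${\delta_k'}(\coker\delta_{k-1})$, so on the one stalk at $x$ the relevant serrations are already exact, meaning the local-at-$x$ contribution has trivial cokernel; I would make this precise by noting that near such an $x$, the complex $C^\bullet(F)$ restricted to the open set $V$ (with $V \cap X^{(k)} = \{x\}$) is, stalkwise, supported only at $x$, and an injective resolution supported at a single point has all higher terms contractible, forcing $C^{k+1}(F)_x = 0$. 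The bookkeeping of indices (handling $k=0,1$ separately, and the slight asymmetry in the definition of $\delta_1'$) is routine but must be done with care.
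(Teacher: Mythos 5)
Your skeleton matches the paper's (induction on $k$, the split into $x\notin X^{(k)}$ versus $x$ isolated in $X^{(k)}$, and the collapse of the colimit of products), and most of it is sound: the base case, the case $x\notin X^{(k)}$ (using that $X^{(k)}$ is closed), the observation that $(\coker\delta_k)_y$ is a quotient of $C^k(F)_y$ and hence vanishes off $X^{(k)}$, and the identification $C^{k+1}(F)_x=C^0(\coker\delta_k)_x=(\coker\delta_k)_x$ when $x$ is isolated in $X^{(k)}$. The genuine gap is the remaining step: showing $(\coker\delta_k)_x=0$ at a point $x$ of height exactly $k$. Your justification for it is not valid. The claim that "the complex $C^{\bullet}(F)$ restricted to $V$ is, stalkwise, supported only at $x$" is false: the lower terms $C^j(F)$ for $j\leq k$ have nonvanishing stalks at the points of $V\setminus\{x\}$ of intermediate height, so the "injective resolution supported at a single point" shortcut does not apply. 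Likewise "the exactness of the Godement complex kills it in the cokernel" does not address the issue: at this stage $C^{k+1}(F)_x$ \emph{is} $(\coker\delta_k)_x$, and exactness of the resolution says nothing about that group being zero.

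What the step actually requires is vanishing one index lower: $(\coker\delta_{k-1})_y=0$ for every $y\neq x$ in a small neighbourhood of $x$, i.e.\ at all points of height at most $k-1$ --- including height exactly $k-1$, where your quotient argument (applied to $C^{k-1}(F)$) gives nothing, since $C^{k-1}(F)_y$ need not vanish there. Granted that, one gets $C^k(F)_x=C^0(\coker\delta_{k-1})_x=(\coker\delta_{k-1})_x$ and $(\delta_k')_x$ is an isomorphism (it is the serration map, which becomes the identity once the colimit of products collapses to the single stalk), so $(\coker\delta_k)_x=0$ and your collapse finishes the argument; this is exactly how the paper's proof runs, with the induction hypothesis strengthened to carry the statement $(\coker\delta_{k-1})_y=0$ for $y\notin X^{(k)}$ alongside $C^k(F)_y=0$. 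Alternatively, you could keep your weaker hypothesis and observe that $\delta_k'\colon\coker\delta_{k-1}\rightarrow C^k(F)$ is a monomorphism, hence stalkwise injective, so $C^k(F)_y=0$ for $y\notin X^{(k)}$ already forces $(\coker\delta_{k-1})_y=0$ there. Either observation is a short fix, but as written the isolated-point case is not proved.
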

\begin{proof}
We prove this using mathematical induction. For $k=0$ we will first calculate $C^0(F)_x$ for $x$ isolated. Firstly we have ${C^0(F)}_x=\underset{U\backepsilon\, x}{\colim}\underset{y\in U}{\prod} F_y$ where $U$ ranges across all neighbourhoods of $x$. Since $x$ is isolated it is clear that $\left\lbrace x\right\rbrace$ is the minimal neighbourhood of $x$, so we have that $C^0(F)_x=F_x$ as well as the fact that the monomorphism ${\delta_0}_x$ is an isomorphism. In particular this says that $\coker{\delta_0}_x=0$. It therefore follows that $C^1(F)_x=0$ since 
\begin{align*}
C^1(F)_x=C^0(\coker{\delta_0})_x={\coker{\delta_0}}_x=0.
\end{align*}
Suppose ${\coker\delta_{k-1}}_x=0$ and hence $C^k(F)_x=0$ on $X\setminus X^{(k)}$, and take any $x \in X\setminus X^{(k+1)}$ for some $k\in\mathbb{N}$. First observe that $X\setminus X^{(k+1)}\supseteq X\setminus X^{(k)}$. If it happens that $x \in X\setminus X^{(k)}$ and hence has height less than $k$, then by hypothesis ${\coker\delta_{k-1}}_x=0$ and hence $C^k(F)_x=0$. Therefore since ${\coker{\delta_{k}}}_x$ is a quotient of $C^k(F)_x$ which is zero, it follows that ${\coker{\delta_{k}}}_x=0$. Since any point $y$ which accumulates at $x$ has scattered height less than that of $x$, and hence less than $k$, it follows that ${\coker{\delta_{k}}}_y=0$. We can then see immediately that $C^{k+1}(F)_x=\underset{V\backepsilon\, x}{\colim} \underset{y\in V}{\prod}{\coker{\delta_{k}}}_y=0$.

The final case is the one where $x$ is isolated in $X^{(k)}$ and hence the scattered height of $x$ is equal to $k$. This case yields the following diagram:
\begin{center}
$\xymatrix{&C^k(F)\ar[dr]&&C^{k+1}(F)\\\coker{\delta_{k-1}^{\prime}}\ar[ur]^{\delta_{k}^{\prime}}&&\coker{\delta_{k}^{\prime}}\ar[ur]^{\delta_{k+1}^{\prime}}}$
\end{center}
Observe that all of the points $y$ accumulating at $x$ satisfy that the scattered height of $y$ is less than that of $x$ and hence less than $k$. It follows that ${\coker{\delta_{k-1}^{\prime}}}_y=0$ by the inductive hypothesis for every such $y$. Similar to the $k=0$ step above we have: 
\begin{align*}
C^{k}(F)_x=\underset{U\backepsilon\, x}{\colim}\underset{y\in U}{\prod}\,{\coker{\delta_{k-1}}}_y={\coker{\delta_{k-1}}}_x
\end{align*}
and that ${\delta_{k}^{\prime}}_x$ is an isomorphism so ${\coker{\delta_{k}^{\prime}}}_x=0$. All of the points which accumulate at $x$ must belong to $X\setminus X^{(k)}$ and we have already shown that these points $y$ satisfy ${\coker{\delta_{k}^{\prime}}}_y=0$. This information combined proves that $C^{k+1}(F)_x=\underset{U\backepsilon x}{\colim} \underset{y\in U}{\prod}{\coker{\delta_{k}}}_y=0$.
\end{proof}

Recall the following well-known proposition from category theory which will prove useful and is seen in \cite[Proposition 2.3.10]{Weibel}.
\begin{proposition}\label{Adjointinj}
If $\left(F,G\right)$ is an adjoint pair where
\begin{align*}
F\colon\mathfrak{C}\rightarrow \mathfrak{D}\,\, \text{and}\,\, G\colon\mathfrak{D}\rightarrow \mathfrak{C}
\end{align*}
are functors of abelian categories satisfying that $F$ preserves monomorphisms, then $G$ preserves injective objects.
\end{proposition}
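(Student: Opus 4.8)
The plan is to unwind the definition of injectivity through the adjunction isomorphism. Let $\psi$ denote the natural isomorphism
\begin{align*}
\psi_{A,D}\colon \Hom_{\mathfrak{D}}(F(A),D)\xrightarrow{\ \cong\ }\Hom_{\mathfrak{C}}(A,G(D))
\end{align*}
coming from the adjoint pair $(F,G)$, natural in both variables. Fix an injective object $I$ of $\mathfrak{D}$; the goal is to show $G(I)$ is injective in $\mathfrak{C}$, i.e.\ that every diagram consisting of a monomorphism $f\colon A\rightarrowtail B$ in $\mathfrak{C}$ and a morphism $g\colon A\rightarrow G(I)$ admits a filler $h\colon B\rightarrow G(I)$ with $h\circ f=g$.

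First I would transport the data across the adjunction. Set $\tilde g=\psi_{A,I}^{-1}(g)\colon F(A)\rightarrow I$. Next, apply $F$ to the monomorphism $f$; by hypothesis $F$ preserves monomorphisms, so $F(f)\colon F(A)\rightarrow F(B)$ is a monomorphism in $\mathfrak{D}$. Since $I$ is injective in $\mathfrak{D}$, there exists $\tilde h\colon F(B)\rightarrow I$ with $\tilde h\circ F(f)=\tilde g$. Finally set $h=\psi_{B,I}(\tilde h)\colon B\rightarrow G(I)$.

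It remains to check $h\circ f=g$, and this is exactly where naturality of $\psi$ is used. Naturality of $\psi$ in the first variable applied to the morphism $f\colon A\rightarrow B$ gives a commuting square relating $\psi_{B,I}$ and $\psi_{A,I}$, from which $\psi_{A,I}(\tilde h\circ F(f))=\psi_{B,I}(\tilde h)\circ f=h\circ f$. Combining with $\tilde h\circ F(f)=\tilde g$ and $\psi_{A,I}(\tilde g)=g$ yields $h\circ f=g$, as required. I expect the only real point of care to be writing down this naturality square correctly (i.e.\ that $\psi$ is natural in the $\mathfrak{C}$-variable via precomposition with $f$ on one side and with $F(f)$ on the other); everything else is a direct substitution. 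This general fact is recorded in \cite[Proposition 2.3.10]{Weibel}, and in the body of the thesis it is invoked for the adjoint pairs $(\Gamma L(-),U)$ of Theorem \ref{Limsheaf}, $((-)_x,R)$ of Proposition \ref{equiadjunct}, and $(L,R)$ of Proposition \ref{Weyladjunct}.
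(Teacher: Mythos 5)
Your proof is correct and is exactly the standard argument: the paper gives no in-text proof, only the citation to \cite[Proposition 2.3.10]{Weibel}, and that reference proves the statement by the same transport-across-the-adjunction-plus-naturality argument you give. Nothing further is needed.
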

\begin{proposition}\label{Godinj}
If $F$ is a sheaf of $\mathbb{Q}$-modules over $X$ then $C^k(F)$ is injective in the category of sheaves of $\mathbb{Q}$-modules.
\end{proposition}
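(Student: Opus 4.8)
The plan is to exhibit $C^k(F)$ as a product of skyscraper sheaves and then to show that each skyscraper sheaf $\iota_y(M)$ is injective whenever $M$ is a $\mathbb{Q}$-module, using the adjunction of Example \ref{Examplesky} together with Proposition \ref{Adjointinj}. Since arbitrary products of injective objects are injective (noted just after the definition of injective object), this will finish the argument.

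First I would recall from Remark \ref{skyprod} that $C^0(F) = \underset{y\in X}{\prod}\iota_y(F_y)$, and more generally, applying the same identity to the sheaf $\coker\delta_{k-1}$ and the definition $C^k(F) = C^0(\coker\delta_{k-1})$, that each $C^k(F)$ can be written as a product of skyscraper sheaves $\iota_y\big((\coker\delta_{k-1})_y\big)$, each stalk being a $\mathbb{Q}$-module. Second, I would invoke Example \ref{Examplesky}, which records that the skyscraper functor $\iota_y(-)\colon \mathbb{Q}\leftmod \rightarrow \sheaf_{\mathbb{Q}}(X)$ is right adjoint to the stalk functor $(-)_y\colon \sheaf_{\mathbb{Q}}(X)\rightarrow \mathbb{Q}\leftmod$. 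The stalk functor preserves monomorphisms (by the characterisation of monomorphisms of sheaves in terms of injectivity on stalks, stated in the proposition on monomorphisms, epimorphisms and isomorphisms of sheaves), so Proposition \ref{Adjointinj} applies and shows $\iota_y(M)$ is injective in $\sheaf_{\mathbb{Q}}(X)$ for every $\mathbb{Q}$-module $M$. Here I would also use the fact, recorded as an example in the homological algebra section, that every $\mathbb{Q}$-module is injective in $\mathbb{Q}\leftmod$ — though in fact this is not even needed, since Proposition \ref{Adjointinj} delivers injectivity of $\iota_y(M)$ from injectivity of $M$, and a moment's thought (or the observation that the right adjoint of a monomorphism-preserving functor sends \emph{all} objects of an injective-dimension-zero category to injectives) makes this clean: as the category $\mathbb{Q}\leftmod$ has injective dimension zero, every $\iota_y(M)$ is injective.

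Finally I would assemble: $C^k(F)$ is a product, indexed over $y \in X$, of the injective sheaves $\iota_y\big((\coker\delta_{k-1})_y\big)$ (with the convention $\coker\delta_{-1} = F$ for the case $k=0$), and a product of injective objects in an abelian category is injective. Therefore $C^k(F)$ is injective in $\sheaf_{\mathbb{Q}}(X)$, as claimed.

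I do not anticipate a serious obstacle here; the only point requiring care is the bookkeeping that the identification $C^k(F) = \underset{y\in X}{\prod}\iota_y\big((\coker\delta_{k-1})_y\big)$ really is an isomorphism of sheaves (not merely a levelwise agreement on a basis), which follows from Remark \ref{skyprod} applied verbatim to $\coker\delta_{k-1}$ in place of $F$. One should also confirm that the relevant products exist in $\sheaf_{\mathbb{Q}}(X)$, which they do by Construction \ref{nonequivsheafconstruct} since the presheaf product of sheaves is already a sheaf. Combining this proposition with Lemma \ref{lem0} — which shows $C^k(F)$ is supported on $X^{(k)}$ — will then be the input to the injective dimension calculation of Theorem \ref{Summary}.
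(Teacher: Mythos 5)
Your argument is correct and is essentially the paper's own proof: the paper likewise reduces to $C^0$ (since $C^k(F)=C^0(\coker\delta_{k-1})$), uses Remark \ref{skyprod} to write it as a product of skyscraper sheaves, applies Proposition \ref{Adjointinj} to the adjoint pair $(Ev_x,\iota_x)$ of Example \ref{Examplesky} together with the injectivity of all $\mathbb{Q}$-modules, and concludes via products of injectives being injective. No gaps; your extra remarks on the stalk functor preserving monomorphisms and on existence of products are exactly the points the paper also relies on.
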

\begin{proof}
From the inductive way that $C^k(F)$ is defined it is sufficient to prove that $C^0(F)$ is injective. Remark \ref{skyprod} suggests that it is sufficient to prove that each $\iota_x(F_x)$ is injective since products of injective objects are injective. 

This follows from Proposition \ref{Adjointinj} applied to the adjoint pair of functors in Example \ref{Examplesky}, $\left(Ev_x(-),\iota_x(-)\right)$, where $Ev_x(F)=F_x$ for a sheaf $F$.  

Note that the left adjoint preserves monomorphisms since a monomorphism of sheaves is a morphism of sheaves such that the map at each stalk is a monomorphism of $\mathbb{Q}$-modules. Using that each $F_x$ is a $\mathbb{Q}$-module, we apply the fact that every object in the category of $\mathbb{Q}$-modules is injective to deduce that $\iota_x(F_x)$ is an injective sheaf.
\end{proof}

We next look at a lemma which helps us with our injective dimension calculations since it will ultimately enable us to calculate $\text{Ext}$ groups. 

Recall from Definition \ref{Godement} that if $x\in X$ and $k<\text{ht}(X,x)$ then:
\begin{align*}
{\coker\delta_k}_x=\left[\underset{U\backepsilon\,x}{\colim}\underset{y\in U}{\prod}{\coker\delta_{k-1}}_y\right]/\text{Im}S
\end{align*}
where $S$ is the serration map from Remark \ref{sermapdef}. Explicitly if $a\in\coker{\delta_{k-1}}_x$ we define $(a,\underline{0})_x$ to be the element in $\coker{\delta_k}_x$ which is the germ at $x$ of the family which is $a$ in place $x$ and zero elsewhere. 
\begin{lemma}\label{Homologychar}
Suppose $X$ is a space with $\rank_{CB}(X)=n$ for $n\in \mathbb{N}$ such that $X^{(n)}=\emptyset$. Then for $j\leq n-1$, $x\in X^{(j)}$ and $F$ a sheaf over $X$, we have an isomorphism $\hom(\iota_x(\mathbb{Q}),C^k(F))\cong \coker{\delta_{k-1}}_x$ for $k<j$, and the map:
\begin{align*}
{\delta_{k+1}}_*:\hom(\iota_x(\mathbb{Q}),C^k(F))\rightarrow \hom(\iota_x(\mathbb{Q}),C^{k+1}(F))
\end{align*}
is given by the map:
\begin{align*}
\alpha_{k+1}:\coker{\delta_{k-1}}_x&\rightarrow \coker{\delta_{k}}_x\\a&\mapsto \left(a,\underline{0}\right)_x.
\end{align*}
\end{lemma}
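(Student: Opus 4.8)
\textbf{Proof proposal for Lemma \ref{Homologychar}.}

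The plan is to compute $\hom(\iota_x(\mathbb{Q}), C^k(F))$ by exploiting the adjunction of Example \ref{Examplesky}. First I would recall that the skyscraper functor $\iota_x(-)$ is right adjoint to the stalk functor $Ev_x(-)$ sending a sheaf $G$ to $G_x$; however, for this computation I actually want the \emph{other} direction. Since $\iota_x(\mathbb{Q})$ is the extension by zero of the constant sheaf $\mathbb{Q}$ on the closed subset $\{x\}$, I would instead use that $\iota_x(-)$, viewed as extension by zero along the closed inclusion $\{x\} \hookrightarrow X$, is \emph{left} adjoint to the restriction-to-$\{x\}$ functor (i.e. the stalk functor), by \cite[Theorem 3.7.13]{Tennison}. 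Concretely this gives a natural isomorphism
\begin{align*}
\hom_{\sheaf_{\mathbb{Q}}(X)}(\iota_x(\mathbb{Q}), G) \cong \hom_{\mathbb{Q}\leftmod}(\mathbb{Q}, G_x) \cong G_x
\end{align*}
for any sheaf $G$ over $X$. Applying this with $G = C^k(F)$, I obtain $\hom(\iota_x(\mathbb{Q}), C^k(F)) \cong C^k(F)_x$. Then, for $k < j \leq \text{ht}(X,x)$, I would identify $C^k(F)_x$ with $\coker{\delta_{k-1}}_x$ using the same argument that appears in the proof of Lemma \ref{lem0} (the ``$x$ isolated in $X^{(k)}$'' step): when $k = \text{ht}(X,x)$ the point $x$ is isolated in $X^{(k)}$, but more relevant here, for $k < \text{ht}(X,x)$ one still has $C^k(F)_x = \underset{U\backepsilon x}{\colim}\underset{y\in U}{\prod}\coker{\delta_{k-1}}_y$, and I would need the natural projection to the factor indexed by $x$ together with the description of the colimit to pin down the stalk as $\coker{\delta_{k-1}}_x$. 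Actually the cleanest route is: $C^k(F) = C^0(\coker{\delta_{k-1}})$, and $C^0(H)_x = \underset{U\backepsilon x}{\colim}\underset{y\in U}{\prod}H_y$, and the serration map $S \colon H_x \to C^0(H)_x$ is the natural map; so I need that $C^0(H)_x$ receives $H_x$ as a natural summand-like piece, which is exactly the content recorded just before the lemma statement about $(a,\underline{0})_x$.

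The second part is to identify the map induced on $\hom(\iota_x(\mathbb{Q}), -)$ by the differential $\delta_{k+1} \colon C^k(F) \to C^{k+1}(F)$. By naturality of the adjunction isomorphism above, ${\delta_{k+1}}_*$ corresponds to the stalk map $({\delta_{k+1}})_x \colon C^k(F)_x \to C^{k+1}(F)_x$. Unwinding the definition of the Godement differential: $\delta_{k+1}$ factors as $C^k(F) \twoheadrightarrow \coker{\delta_k} \rightarrowtail C^{k+1}(F)$ where the second map is the canonical inclusion $\delta_{k+1}'$ of $\coker{\delta_k}$ into its own $C^0$, i.e. the serration map. Passing to stalks at $x$ and using the identifications $C^k(F)_x \cong \coker{\delta_{k-1}}_x$ and $C^{k+1}(F)_x \cong \coker{\delta_k}_x$, the composite becomes: quotient $\coker{\delta_{k-1}}_x \to \coker{\delta_k}_x$ followed by the stalk of the serration inclusion. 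But the composite of ``project to the cokernel'' and ``include via serration'' is precisely $a \mapsto (a, \underline{0})_x$ by the explicit description of $\coker{\delta_k}_x$ recalled before the lemma (the germ of the family which is $a$ at $x$ and $0$ elsewhere, modulo $\text{Im}\,S$). Hence ${\delta_{k+1}}_* = \alpha_{k+1}$ as claimed.

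I expect the main obstacle to be bookkeeping rather than anything deep: getting the direction of the adjunction right (extension by zero along a \emph{closed} inclusion is left adjoint to restriction, whereas the adjunction in Example \ref{Examplesky} is phrased with $\iota_x$ as \emph{right} adjoint to the stalk functor — one of these must be the correct one to use, and I would double-check that $\iota_x$ here genuinely denotes extension by zero of the constant sheaf on $\{x\}$, which for a point is the same as the skyscraper), and then carefully tracking the natural maps through the chain of identifications $C^k(F)_x \cong C^0(\coker{\delta_{k-1}})_x \cong \coker{\delta_{k-1}}_x$ so that the induced differential genuinely unwinds to $a \mapsto (a,\underline{0})_x$ and not, say, to a sign-twisted or shifted version. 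The hypotheses $\rank_{CB}(X) = n$ with $X^{(n)} = \emptyset$ and $j \leq n-1$, $k < j$ are there precisely to guarantee $x$ has height at least $j > k$ so that $\coker{\delta_{k-1}}_x$ is the relevant non-degenerate object and the stalk computations of Lemma \ref{lem0} apply; I would invoke Lemma \ref{lem0} to control which stalks vanish. No genuinely hard step is anticipated — it is an exercise in chasing the Godement construction through the point-evaluation adjunction.
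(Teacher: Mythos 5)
Your central step fails. You claim that, because $\iota_x(\mathbb{Q})$ is extension by zero along the closed inclusion $\left\lbrace x\right\rbrace\hookrightarrow X$, it is left adjoint to restriction, which would give $\hom(\iota_x(\mathbb{Q}),G)\cong G_x$ for every sheaf $G$. That adjunction direction is wrong: extension by zero is left adjoint to restriction only for \emph{open} inclusions; for a closed point the skyscraper is the pushforward, which is right adjoint to the stalk functor (this is exactly Example \ref{Examplesky}) and left adjoint to the sections-supported-at-$x$ functor, not to the stalk. Concretely the claimed isomorphism is false: take $X=P$, $x$ the limit point and $G=c\mathbb{Q}$; any morphism $\iota_x(\mathbb{Q})\rightarrow c\mathbb{Q}$ must send $1$ to a locally constant function on $U$ vanishing on $U\setminus\left\lbrace x\right\rbrace$ for every $U\backepsilon x$, hence to $0$, so $\hom(\iota_x(\mathbb{Q}),c\mathbb{Q})=0$ while $(c\mathbb{Q})_x=\mathbb{Q}$. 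Moreover, even granting $\hom(\iota_x(\mathbb{Q}),C^k(F))\cong C^k(F)_x$, your second identification $C^k(F)_x\cong\coker{\delta_{k-1}}_x$ is also false at the non-isolated points the lemma is about: $C^k(F)_x=\underset{U\backepsilon x}{\colim}\underset{y\in U}{\prod}\coker{\delta_{k-1}}_y$ contains, for instance, the germ of a family which is $0$ at $x$ and non-zero at infinitely many nearby points, so the projection onto the $x$-factor is far from injective. The gap between these two modules is precisely what makes the $\Ext$ computations of Theorem \ref{ID_CB} non-trivial, so this is not bookkeeping that can be repaired.

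The intended argument keeps the skyscraper in the second variable: write $C^k(F)=C^0(\coker\delta_{k-1})=\underset{y\in X}{\prod}\iota_y(\coker{\delta_{k-1}}_y)$ (Remark \ref{skyprod}), pull the product out of the hom, and apply the adjunction of Example \ref{Examplesky} to each factor, giving $\hom(\iota_x(\mathbb{Q}),C^k(F))\cong\underset{y\in X}{\prod}\hom(\iota_x(\mathbb{Q})_y,\coker{\delta_{k-1}}_y)\cong\coker{\delta_{k-1}}_x$, where Hausdorffness kills the stalks $\iota_x(\mathbb{Q})_y$ for $y\neq x$. A morphism $f$ is then determined by an element $f_x\in\coker{\delta_{k-1}}_x$, realised inside $C^k(F)_x$ as the germ $\left(f_x,\underline{0}\right)_x$, and chasing this element through $\delta_{k+1}$ (again using Hausdorffness to see that all components away from $x$ become zero in the cokernel classes) yields exactly $\alpha_{k+1}(f_x)=\left[\left(f_x,\underline{0}\right)_x\right]^S$. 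Your description of the differential as the quotient to $\coker\delta_k$ followed by the serration inclusion is consistent with this, but it only produces the stated formula once the hom groups are identified correctly as above.
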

\begin{proof}
Firstly notice that $C^k(F)$ is defined to be $C^0(\coker\delta_{k-1})$, so we begin by proving that $\hom(\iota_x(\mathbb{Q}),C^0(F))\cong F_x$. Observe that $C^0(F)=\underset{y\in X}{\prod}\iota_y(F_y)$ so we can write:
\begin{align*}
\hom(\iota_x(\mathbb{Q}),C^0(F))&=\hom(\iota_x(\mathbb{Q}),\underset{y\in X}{\prod}\iota_y(F_y))=\underset{y\in X}{\prod}\hom(\iota_x(\mathbb{Q}),\iota_y(F_y))\\&=\underset{y\in X}{\prod}\hom(\iota_x(\mathbb{Q})_y,F_y)=F_x.
\end{align*}
In particular if $f\in \hom(\iota_x(\mathbb{Q}),C^k(F))$, then this is determined by a map in $\hom(\iota_x(\mathbb{Q}),\iota_x(\coker{\delta_{k-1}}_x))$. This corresponds to a point $f_x\in \coker{\delta_{k-1}}_x$, so $f$ is given by the element: 
\begin{align*}
[f_x,\underline{0}]_x\in \underset{V\backepsilon\, x}{\colim}\left(\underset{y\in V}{\prod}\coker{\delta_{k-1}}_y\right)=C^0(\coker{\delta_{k-1}})_x,
\end{align*}
with this germ at $x$ of the family taking value $f_x$ in position $x$ and $0$ elsewhere. It follows that ${\delta_{k+1}}_*(f)$ corresponds to $\delta_{k+1}([f_x,\underline{0}]_x)$.

But we therefore have:
\begin{align*}
\delta_{k+1}([f_x,\underline{0}])=\left(\left[\left(f_x,\underline{0}\right)_x\right]^S,\left(\left[\left(f_x,\underline{0}\right)_y\right]^S\right)_{y\in U}\right)_x
\end{align*}
in $\underset{V\backepsilon\, x}{\colim}\left(\underset{y\in V}{\prod}\coker{\delta_{k}}_y\right)=C^0(\coker{\delta_{k}})_x$, where $\left[-\right]^S$ represents the class in the cokernel of the map $S$. This follows from the definition of the maps $\delta$ in Definition \ref{Godement}.

However if $x\neq y$, then since $X$ is Hausdorff there is a neighbourhood of $y$ not containing $x$ so $\left[\left(f_x,\underline{0}\right)_y\right]^S=\left[\left(\underline{0}\right)_y\right]^S$. Therefore $\delta_{k+1}\left(\left(f_x,\underline{0}\right)_x\right)$ can be written as $\left(\left[\left(f_x,\underline{0}\right)_x\right]^S,\left[\left(\underline{0}\right)_y\right]^S\right)_{y\in U}$.

We therefore have that ${\delta_{k+1}}_*$ is defined as follows:
\begin{align*}
{\delta_{k+1}}_*:\hom(\iota_x(\mathbb{Q}),C^k(F))&\rightarrow \hom(\iota_x(\mathbb{Q}),C^{k+1}(F))\\ [f_x,\underline{0}]_x &\mapsto \left(\left[\left(f_x,\underline{0}\right)_x\right]^S,\left[\left(\underline{0}\right)_y\right]^S\right)_{y\in U}
\end{align*} 

Since we are interested in what the maps correspond to as maps between the $x$ components of the products of $C^0(\coker\delta_{k-1})$ and $C^0(\coker\delta_k)$, we observe that it sends $f_x$ to $\left[\left(f_x,\underline{0}\right)_x\right]^S$.
\end{proof}
Now we consider the preceeding lemma for points in $X$ which either have infinite height or belong to the hull of $X$.
\begin{lemma}\label{homoloinf}
Suppose $X$ is a space with infinite Cantor-Bendixson rank. Then for $x\in X^{(n)}$ for any $n\in \mathbb{N}$, and $F$ a sheaf over $X$, we have an isomorphism $\hom(\iota_x(\mathbb{Q}),C^k(F))\cong \coker{\delta_{k-1}}_x$ for $k<n$, and the map:
\begin{align*}
{\delta_{k+1}}_*:\hom(\iota_x(\mathbb{Q}),C^k(F))\rightarrow \hom(\iota_x(\mathbb{Q}),C^{k+1}(F))
\end{align*}
is given by the map:
\begin{align*}
\alpha_{k+1}:\coker{\delta_{k-1}}_x&\rightarrow \coker{\delta_{k}}_x\\a&\mapsto \left(a,\underline{0}\right)_x.
\end{align*}
This also holds for points of infinite height and points in the hull.
\end{lemma}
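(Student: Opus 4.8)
The statement is essentially Lemma \ref{Homologychar} again, but with the standing hypothesis ``$\rank_{CB}(X)=n$ with $X^{(n)}=\emptyset$'' removed and replaced by ``$X$ has infinite Cantor-Bendixson rank'' (and the extra clause about points of infinite height or in the perfect hull). The plan is therefore to re-run the proof of Lemma \ref{Homologychar} verbatim and check that the finiteness/scatteredness of $X$ was never actually used in the relevant computation. Concretely, I would first observe that the identification
\begin{align*}
\hom(\iota_x(\mathbb{Q}),C^0(G))\cong G_x
\end{align*}
for any sheaf $G$ uses only Remark \ref{skyprod} (writing $C^0(G)=\underset{y\in X}{\prod}\iota_y(G_y)$), the fact that $\hom$ turns the product in the second variable into a product of hom-groups, and the adjunction $\left(Ev_x,\iota_x\right)$ from Example \ref{Examplesky}. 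None of these ingredients refer to the Cantor-Bendixson rank of $X$ at all, so applying it with $G=\coker\delta_{k-1}$ gives $\hom(\iota_x(\mathbb{Q}),C^k(F))\cong \coker{\delta_{k-1}}_x$ for every $x$ and every $k$ for which $\coker\delta_{k-1}$ is defined.

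Next I would reproduce the description of the connecting map. Given $f\in\hom(\iota_x(\mathbb{Q}),C^k(F))$, it corresponds under the isomorphism above to a point $f_x\in\coker{\delta_{k-1}}_x$, hence to the germ $[f_x,\underline{0}]_x$ of the family that is $f_x$ in position $x$ and $0$ elsewhere. Applying $\delta_{k+1}$ and using that $X$ is Hausdorff — so for $y\neq x$ there is a neighbourhood of $y$ avoiding $x$, whence $\bigl[(f_x,\underline 0)_y\bigr]^S=\bigl[(\underline 0)_y\bigr]^S$ — one gets that on the $x$-component ${\delta_{k+1}}_*$ is the map $f_x\mapsto\bigl[(f_x,\underline 0)_x\bigr]^S=(f_x,\underline 0)_x$, which is exactly $\alpha_{k+1}\colon\coker{\delta_{k-1}}_x\to\coker{\delta_k}_x$. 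The only hypothesis used here is that $X$ is Hausdorff, which is part of the standing assumptions on profinite (indeed all) spaces in this chapter; the range ``$k<n$'' for $x\in X^{(n)}$ is needed only to guarantee that $\coker\delta_{k-1}$ is nonzero/meaningful at $x$ (by Lemma \ref{lem0}, $C^k(F)_x=0$ once $x\notin X^{(k)}$), and the same bookkeeping applies when $x$ has infinite height or lies in the perfect hull, since then $x\in X^{(n)}$ for \emph{all} $n\in\mathbb{N}$ and the statement holds for every $k$.

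The main obstacle — really the only thing to be careful about — is the indexing/well-definedness issue: in the finite-rank case the proof of Lemma \ref{Homologychar} implicitly relied on the Godement complex terminating, so that ``$C^k(F)$'' and ``$\coker\delta_{k-1}$'' are unambiguous for the relevant $k$. Here the complex need not terminate, so I would be explicit that the Godement resolution of Definition \ref{Godement} is defined for all $k\in\mathbb{N}_0$ regardless, that $\coker\delta_{k-1}$ and $C^k(F)$ make sense for all $k$, and that the stalk-vanishing of Lemma \ref{lem0} controls exactly for which $x$ the group $\hom(\iota_x(\mathbb{Q}),C^k(F))$ is nonzero. For a point $x$ of infinite height or in the hull, $x\in X^{(k)}$ for every $k$, so $C^k(F)_x$ is (generically) nonzero at every stage and the isomorphism and the formula for $\alpha_{k+1}$ persist for all $k\in\mathbb{N}_0$. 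With that remark in place the proof is word-for-word that of Lemma \ref{Homologychar}, so I would simply say: ``This is proved exactly as Lemma \ref{Homologychar}; the only properties of $X$ used are that it is Hausdorff and, via Lemma \ref{lem0}, the location of the points at which $C^k(F)$ has nonzero stalk, and these arguments are insensitive to whether $\rank_{CB}(X)$ is finite.''
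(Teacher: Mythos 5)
Your proposal is correct and takes essentially the same route as the paper: the paper's own proof likewise just reduces to the proof of Lemma \ref{Homologychar}, noting that the only role of the finite-rank, scattered hypothesis there was to make $\coker\delta_k$ vanish for large $k$, whereas for points of infinite height or in the perfect hull the same computation persists for every $k$. Your extra care about the Hausdorff step and the well-definedness of the non-terminating Godement resolution is consistent with, and slightly more explicit than, the paper's argument.
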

\begin{proof}
The proof of this result is almost the same as the proof of Lemma \ref{Homologychar}. The only differences arise from the fact that in Lemma \ref{Homologychar} $X$ has finite Cantor-Bendixson rank, say $n$, and so $\coker{\delta_{k}}$ is zero if $k$ is bigger than $n$. Therefore the argument in Lemma \ref{Homologychar} is only interesting provided we are applying it to $\coker{\delta_{k}}$ for $k$ small enough. If $X$ has infinite Cantor-Bendixson rank then we know that for each $n\in \mathbb{N}$ there exists a point $x_n$ with height $n$, as well as points of infinite height. Therefore the argument on $\coker{\delta_{k}}$ doesn't become trivial eventually. The same observation is true if $X$ has any point in the perfect hull of $X$.
\end{proof}
The previous two lemmas indicate how the calculations in this thesis differ from sheaf cohomology. In this setting we are applying the functor $\hom\left(\iota_x\left(\mathbb{Q},-\right)\right)$ to the injective resolution whereas in the case of sheaf cohomology we apply the functor $\hom\left(A,-\right)$ for a constant sheaf $A$.  
\section{Injective dimension calculation}\label{Sheafexample}
We split this section into two parts. Firstly we give an example and show explicitly the calculations in a specific case to illustrate the overall idea. For the second, we seek to generalise the examples to a proof for more general cases.

We now formally give the definition of the injective dimension of sheaves of $\mathbb{Q}$-modules over a space $X$ as seen in \cite[Definition 4.1.1, Definition 10.5.10]{Weibel}.
\begin{definition}\index{injective dimension of a sheaf}
The \textbf{injective dimension} of a sheaf $F$ over $X$ denoted by $\ID(F)$ is the minimum positive integer $n$ (if it exists) such that there is an injective resolution of the form
\begin{center}
$\xymatrix{0\ar[r]&X\ar[r]^{\varepsilon}&I_0\ar[r]^{f_0}&I_1\ar[r]^{f_1}&\ldots\ar[r]^{f_{n-1}} &I_n\ar[r]&0.}$
\end{center}
It is infinite if such a value doesn't exist.
\end{definition}
From \cite[Theorem 4.1.2]{Weibel} we define the injective dimension of the category of sheaves of $\mathbb{Q}$-modules to be:
\begin{align*}
\sup\left\lbrace \ID(F)\mid\,F\,\in\text{Sheaf}_{\mathbb{Q}}(X)\right\rbrace, 
\end{align*}
where $\text{Sheaf}_{\mathbb{Q}}(X)$ denotes the category of sheaves of $\mathbb{Q}$-modules over $X$. We can now verify that the injective dimension of sheaves of $\mathbb{Q}$-modules is bounded above for a particular class of profinite space.
\begin{proposition}
If $X$ is a scattered space with $\rank_{\cb}(X)=n$ for $n\in\mathbb{N}$ then the injective dimension of sheaves of $\mathbb{Q}$-modules over $X$ is bounded above by $n-1$.
\end{proposition}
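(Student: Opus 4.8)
The strategy is to exhibit, for an arbitrary sheaf $F$ of $\mathbb{Q}$-modules over $X$, an injective resolution of length at most $n-1$, namely a truncation of the Godement resolution. First I would write down the Godement resolution of $F$ from Definition \ref{Godement},
\begin{align*}
\xymatrix{0\ar[r]&F\ar[r]^{\delta_0}&C^0(F)\ar[r]^{\delta_1}&C^1(F)\ar[r]^{\delta_2}&C^2(F)\ar[r]&\ldots},
\end{align*}
and recall from Proposition \ref{Godinj} that each $C^k(F)$ is an injective sheaf of $\mathbb{Q}$-modules. This is already an injective resolution, but a priori of infinite length; the point is to show that it terminates after the $(n-1)$-th term when $X$ is scattered of Cantor-Bendixson rank $n$.

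The key step is to use Lemma \ref{lem0}: for every $k$ we have $C^k(F)_x=0$ for all $x\in X\setminus X^{(k)}$. Since $X$ is scattered with $\rank_{\cb}(X)=n$, by definition $X^{(n)}=\emptyset$, so $C^n(F)_x=0$ for every $x\in X$. A sheaf all of whose stalks vanish is the zero sheaf (for instance by the characterisation of isomorphisms via stalks from the proposition on monomorphisms, epimorphisms and isomorphisms of sheaves), hence $C^n(F)=0$ and consequently $C^k(F)=0$ for all $k\geq n$. Therefore the Godement resolution collapses to
\begin{align*}
\xymatrix{0\ar[r]&F\ar[r]^{\delta_0}&C^0(F)\ar[r]^{\delta_1}&\ldots\ar[r]^{\delta_{n-1}}&C^{n-1}(F)\ar[r]&0},
\end{align*}
an exact sequence since the Godement resolution is exact by construction and its tail is zero. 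Here I should be slightly careful about the endpoint: the last nonzero term is $C^{n-1}(F)$ precisely because $X^{(n-1)}\neq\emptyset$ (as $n=\rank_{\cb}(X)$ is minimal), although for the upper bound statement we only need that the resolution has no nonzero terms in degrees $\geq n$.

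This displays an injective resolution of $F$ of length at most $n-1$, so $\ID(F)\leq n-1$, and taking the supremum over all sheaves $F$ of $\mathbb{Q}$-modules over $X$ gives that the injective dimension of the category $\mathrm{Sheaf}_{\mathbb{Q}}(X)$ is bounded above by $n-1$, as claimed. I do not expect a serious obstacle here: the heavy lifting has already been done in Lemma \ref{lem0} and Proposition \ref{Godinj}. The only mild subtlety is the bookkeeping about exactly which degree the resolution stops in, and the (routine) observation that a stalkwise-zero sheaf is zero; the complementary lower bound $\ID \geq n-1$ — which is where the real work of computing $\mathrm{Ext}^{n-1}$ via a skyscraper sheaf at a point of maximal height lives — is handled separately using Lemma \ref{Homologychar}.
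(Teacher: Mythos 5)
Your proposal is correct and follows essentially the same route as the paper: invoke Proposition \ref{Godinj} for injectivity of the Godement terms, then Lemma \ref{lem0} together with $X^{(n)}=\emptyset$ to see that the resolution vanishes in degrees $\geq n$, giving $\ID(F)\leq n-1$ for every sheaf $F$. Your added remarks (stalkwise-zero implies zero, and the bookkeeping about where the resolution stops) are fine elaborations of what the paper leaves implicit.
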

\begin{proof} 
By Proposition \ref{Godinj} the Godement resolution is an injective resolution. An application of Lemma \ref{lem0} shows that the terms of the Godement resolution are zero after term $n-1$. Therefore the injective dimension of each sheaf is less than or equal to $n-1$.
\end{proof}
In order to get equality it is sufficient to find a particular sheaf $F$ for which $\ID(F)\geq \rank_{CB}(X)-1$. To achieve this we look at \cite[Lemma 4.1.8, Exercise 10.7.2]{Weibel} which says $\ID(F)\leq \rank_{CB}(X)-2$ if and only if \\$\text{Ext}^{\rank_{CB}(X)-1}(A,F)=0$ for every sheaf $A$. In particular if we can find sheaves $A$ and $F$ such that $\text{Ext}^{\rank_{CB}(X)-1}(A,F)\neq 0$ then we must have that \\$\ID(F)>\rank_{CB}(X)-2$. This then forces $\ID(F)=\rank_{CB}(X)-1$.

Notice that the concept of an $\text{Ext}$ group in the category of $R$-modules for a ring $R$ stated above in \cite[Chapter 4]{Weibel} holds for general abelian categories by \cite[Corollary 10.7.5]{Weibel}. This is formally stated in Theorem \ref{extcharacter}.
\subsection{Example case}
Consider the space $P=\left\lbrace \frac{1}{n}\mid n\in \mathbb{N}\right\rbrace\bigcup \left\lbrace 0\right\rbrace$, where the topology is given by the subspace topology with respect to $\mathbb{R}$. As seen in Proposition \ref{padicspace} this space is homeomorphic to the space of closed subgroups of $\mathbb{Z}_p$. For convenience we shall denote the point $0$ by $\infty$.
\begin{proposition}\label{godelengthP1}
If $c\mathbb{Q}$ is the constant sheaf of rational numbers over $P$ then the Godement resolution for $c\mathbb{Q}$ is of length $1$.
\end{proposition}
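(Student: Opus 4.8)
The plan is to compute the Godement resolution of $c\mathbb{Q}$ over $P$ directly, using the fact (Lemma \ref{lem0}) that the $k$-th Godement term $C^k(c\mathbb{Q})$ is concentrated over $P^{(k)}$, together with the explicit description of $P^{(0)}=P$, $P^{(1)}=\{\infty\}$ and $P^{(2)}=\emptyset$ from Example \ref{padic}. Since $\rank_{CB}(P)=2$ and $P$ is scattered with $P^{(2)}=\emptyset$, Lemma \ref{lem0} immediately gives $C^2(c\mathbb{Q})_x=0$ for all $x$, so $C^2(c\mathbb{Q})=0$ and the resolution has length at most $1$. The remaining task is to show the resolution does not collapse to length $0$, i.e. that $c\mathbb{Q}$ is not itself injective, equivalently that $C^1(c\mathbb{Q})\neq 0$.

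First I would identify $C^0(c\mathbb{Q})$ explicitly. By Remark \ref{skyprod}, $C^0(c\mathbb{Q})=\prod_{y\in P}\iota_y(\mathbb{Q})$, since every stalk of $c\mathbb{Q}$ is $\mathbb{Q}$. The monomorphism $\delta_0\colon c\mathbb{Q}\to C^0(c\mathbb{Q})$ is the inclusion of continuous sections into serrations. At each isolated point $1/n$ the stalk $C^0(c\mathbb{Q})_{1/n}=\mathbb{Q}$ and $\delta_0$ is an isomorphism there (as in the $k=0$ step of Lemma \ref{lem0}), so $(\coker\delta_0)_{1/n}=0$. At $\infty$, however, $C^0(c\mathbb{Q})_\infty=\colim_{U\ni\infty}\prod_{y\in U}\mathbb{Q}$, which is the colimit of the products over cofinite subsets $\{0\}\cup\{1/n\mid n\geq k\}$; this is strictly larger than $\mathbb{Q}$ (it contains, for instance, the germ of a serration that is nonzero at every $1/n$ but which is not eventually equal to any constant-section germ). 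Hence $(\coker\delta_0)_\infty\neq 0$, so $C^1(c\mathbb{Q})=C^0(\coker\delta_0)$ is a nonzero sheaf (concentrated at $\infty$, consistently with Lemma \ref{lem0}). Therefore $c\mathbb{Q}$ is not injective and the Godement resolution
\begin{center}
$\xymatrix{0\ar[r]&c\mathbb{Q}\ar[r]^{\delta_0}&C^0(c\mathbb{Q})\ar[r]^{\delta_1}&C^1(c\mathbb{Q})\ar[r]&0}$
\end{center}
has length exactly $1$, using Proposition \ref{Godinj} to know $C^0(c\mathbb{Q})$ and $C^1(c\mathbb{Q})$ are injective and Lemma \ref{lem0} to know the sequence terminates there.

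The main obstacle is the computation of the stalk $(\coker\delta_0)_\infty$: one must verify carefully that the colimit $\colim_{U\ni\infty}\prod_{y\in U}\mathbb{Q}$ genuinely strictly contains the image of the serration map $S$ (the germs coming from $c\mathbb{Q}_\infty=\mathbb{Q}$ via $\delta_0$). This is where a concrete witnessing serration is needed — e.g. the serration taking value $1$ at each $1/n$ and $0$ at $\infty$ has a nonzero germ at $\infty$ that is not in $\im S$, since any germ in $\im S$ agrees near $\infty$ with a locally constant section, forcing its values at $1/n$ to stabilise to its value at $\infty$. Once this non-vanishing is established, everything else is bookkeeping with the already-established Lemmas \ref{lem0} and Proposition \ref{Godinj}.
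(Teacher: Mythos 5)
Your proposal is correct and follows essentially the same route as the paper: it computes that $(\coker\delta_0)_x=0$ at the isolated points $1/n$, exhibits the same witness serration ($1$ at each $1/n$, $0$ at $\infty$) to show $(\coker\delta_0)_\infty\neq 0$, and concludes that $C^1(c\mathbb{Q})\neq 0$ while $C^2(c\mathbb{Q})=0$. The only cosmetic difference is that you invoke Lemma \ref{lem0} (with $P^{(2)}=\emptyset$) for the vanishing of $C^2$, where the paper notes directly that $C^1(c\mathbb{Q})\cong\iota_\infty((\coker\delta_0)_\infty)$; both are valid.
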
 
\begin{proof}
Let $F=c\mathbb{Q}$, then for any $x\in X$ we have:
\begin{align*}
\underset{U\backepsilon\, x}{\colim}\, c\mathbb{Q}(U)=\mathbb{Q}_x=\underset{U\backepsilon\, x}{\colim}\,Pc\mathbb{Q}(U),
\end{align*}
where $Pc\mathbb{Q}$ is the presheaf and $\mathbb{Q}_x$ is the stalk of $c\mathbb{Q}$ at $x$. Therefore any $s_x\in F_x$ can be represented by a section in $Pc\mathbb{Q}(U)=\mathbb{Q}$. Given $q_x\in \mathbb{Q}=F_x$ we have a map:
\begin{align*}
\mathbb{Q}&\rightarrow \underset{y\in U}{\prod}\mathbb{Q}\rightarrow \underset{V\backepsilon x}{\colim}\underset{y\in V}{\prod}\mathbb{Q}\\q&\mapsto (q_y)_{y\in U}\mapsto \left((q_y)_{y\in V}\right)_x
\end{align*}
which induces a map:
\begin{align*}
S:F_x&\rightarrow \underset{V\backepsilon x}{\colim}\underset{y\in V}{\prod}\mathbb{Q}\\q_x&\mapsto \left((q_y)_{y\in U}\right)_x
\end{align*}
which we call the serration map and we denote it by $S$. The codomain of this map is $C^0(F)_x$ so the map of sheaves induced in this way is denoted $\delta$. For the isolated points $x=\left\lbrace\frac{1}{n}\right\rbrace$ we have ${\delta}_x$ is the identity so that $\coker{\delta}_x=0$. For the point $\infty$ we have:
\begin{align*}
\coker{\delta}_x=\left(\underset{n}{\colim}\underset{\infty\geq a\geq n}{\prod}\mathbb{Q}\right)/\text{Im}S.
\end{align*}
This is non-zero since for example we can take the element represented by taking $0$ at point $\infty$ and $1$ elsewhere. We next observe that $C^1(F)\cong\iota_{\infty}(\coker\delta_{\infty})$, hence $C^2(F)=0$.
\end{proof}
\begin{proposition}
The injective dimension of $c\mathbb{Q}$ in the category of sheaves over $P$ is $1$. 
\end{proposition}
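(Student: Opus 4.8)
The plan is to combine the upper bound just established with a lower bound obtained by computing an $\Ext$ group. By the preceding proposition, the Godement resolution of $c\mathbb{Q}$ terminates at degree $1$, so $\ID(c\mathbb{Q})\leq 1$. It remains to show $\ID(c\mathbb{Q})\geq 1$, equivalently (by Theorem \ref{extcharacter} and the discussion above) that $\Ext^1(A,c\mathbb{Q})\neq 0$ for some sheaf $A$ over $P$. The natural choice is the skyscraper sheaf $A=\iota_{\infty}(\mathbb{Q})$ at the non-isolated point $\infty$, which is the point of maximal Cantor--Bendixson height in $P$.

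First I would take the Godement resolution $0\to c\mathbb{Q}\to C^0(c\mathbb{Q})\xrightarrow{\delta_1} C^1(c\mathbb{Q})\to 0$, which is an injective resolution by Proposition \ref{Godinj} and has length $1$ by Proposition \ref{godelengthP1}. To compute $\Ext^{*}(\iota_{\infty}(\mathbb{Q}),c\mathbb{Q})=R^{*}\Hom(\iota_{\infty}(\mathbb{Q}),-)(c\mathbb{Q})$ via Construction \ref{Derived}, I apply $\Hom(\iota_{\infty}(\mathbb{Q}),-)$ to the truncated complex $C^0(c\mathbb{Q})\xrightarrow{\delta_1} C^1(c\mathbb{Q})$ and take cohomology. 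By Lemma \ref{Homologychar} (with $x=\infty$, which lies in $P^{(1)}$, and $k=0$), there is an isomorphism $\Hom(\iota_{\infty}(\mathbb{Q}),C^0(c\mathbb{Q}))\cong (c\mathbb{Q})_{\infty}=\mathbb{Q}$, an isomorphism $\Hom(\iota_{\infty}(\mathbb{Q}),C^1(c\mathbb{Q}))\cong \coker{\delta_0}_{\infty}$, and the induced map ${\delta_1}_{*}$ is identified with the map $\alpha_1\colon q\mapsto (q,\underline 0)_{\infty}$ from $\mathbb{Q}$ into $\coker{\delta_0}_{\infty}$. Thus $\Ext^1(\iota_{\infty}(\mathbb{Q}),c\mathbb{Q})\cong \coker{\delta_0}_{\infty}/\im\alpha_1$.

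The remaining step is to check that this cokernel is non-zero, which is where the concrete structure of $P$ at $\infty$ matters; this is the main obstacle, though it is a short computation rather than a deep one. Recall from the proof of Proposition \ref{godelengthP1} that $\coker{\delta_0}_{\infty}=\left(\underset{n}{\colim}\underset{\infty\geq a\geq n}{\prod}\mathbb{Q}\right)/\im S$, the germ at $\infty$ of serrations modulo germs of continuous sections of $c\mathbb{Q}$. The subgroup $\im\alpha_1$ consists precisely of the classes of germs of serrations supported (up to the sheaf relation) at the single point $\infty$, i.e.\ locally of the form ``value $q$ at $\infty$, zero elsewhere''. I would exhibit an explicit serration germ not of this form: for instance the germ at $\infty$ of the serration $s$ with $s(\tfrac1n)=1$ for all $n$ and $s(\infty)=0$. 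This is not continuous near $\infty$ and differs from any germ of a continuous section on every neighbourhood of $\infty$ (a continuous section of $c\mathbb{Q}$ is locally constant, hence eventually equal to its value at $\infty$), so it gives a non-zero class in $\coker{\delta_0}_{\infty}$; and it is not in $\im\alpha_1$ since its support clusters at $\infty$ rather than being concentrated there, as one verifies by passing to a smaller neighbourhood where any element of $\im\alpha_1$ restricts to the zero serration off $\infty$. Hence $\Ext^1(\iota_{\infty}(\mathbb{Q}),c\mathbb{Q})\neq 0$, so $\ID(c\mathbb{Q})>0$, and combined with $\ID(c\mathbb{Q})\leq 1$ we conclude $\ID(c\mathbb{Q})=1$, as claimed. (This also recovers the expected answer $\rank_{CB}(P)-1=2-1=1$, foreshadowing Theorem \ref{Summary}.)
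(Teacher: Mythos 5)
Your route is exactly the paper's: the upper bound from the length-one Godement resolution, and the lower bound by applying $\Hom(\iota_{\infty}(\mathbb{Q}),-)$ and Lemma \ref{Homologychar} to identify $\Ext^1(\iota_{\infty}(\mathbb{Q}),c\mathbb{Q})$ with $\coker{\delta_0}_{\infty}/\im\alpha_1$; even your witness, the germ which is $0$ at $\infty$ and $1$ at the points $\tfrac1n$, is the element $\left[\left(0_{\infty},\underline{1}\right)_{\infty}\right]^S$ used in the paper. The gap is in the last step, the claim that this class is not in $\im\alpha_1$: in fact it is. Indeed $\left(0_{\infty},\underline{1}\right)-\left(-1_{\infty},\underline{0}\right)=\left(1_{\infty},\underline{1}\right)$, and the right-hand side is the germ at $\infty$ of the family of germs of the constant section $1$, hence lies in $\im S$; therefore $\left[\left(0_{\infty},\underline{1}\right)_{\infty}\right]^S=\alpha_1(-1)$ in $\coker{\delta_0}_{\infty}$. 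Your justification (``pass to a smaller neighbourhood where any element of $\im\alpha_1$ restricts to the zero serration off $\infty$'') only rules out the preferred representatives $\left(q_{\infty},\underline{0}\right)$; since classes are taken modulo $\im S$, an element of $\im\alpha_1$ also has representatives which are a non-zero constant off $\infty$, and that is exactly what defeats this witness. (The same witness with the same justification appears in the paper's proof of this proposition, so the objection applies there too.)

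The proposition is nonetheless true and your framework needs only a better witness. Modulo $\im S$, every element of $\im\alpha_1$ is represented on some basic neighbourhood by a family which is constant off $\infty$, so it suffices to take a germ whose off-$\infty$ part is not eventually constant: for example $s(\infty)=0$ and $s(\tfrac1n)=n$, or the alternating choice $0,1,0,1,\ldots$. If $\left[s\right]^S=\alpha_1(q)$, then on some neighbourhood $s-\left(q_{\infty},\underline{0}\right)$ would equal the family of germs of a locally constant section, forcing $s(\tfrac1n)$ to be eventually constant, a contradiction; hence $\Ext^1(\iota_{\infty}(\mathbb{Q}),c\mathbb{Q})\neq 0$ and $\ID(c\mathbb{Q})=1$. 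The remainder of your proposal (the upper bound via Propositions \ref{Godinj} and \ref{godelengthP1}, the use of Theorem \ref{extcharacter}, and the identification of the $\Ext$ group through Lemma \ref{Homologychar}) is correct as written, and your first check, that the witness is non-zero in $\coker{\delta_0}_{\infty}$, is also fine; only the comparison with $\im\alpha_1$ needs the repair above.
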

\begin{proof}
Let $F=c\mathbb{Q}$, from Proposition \ref{godelengthP1} we have an injective resolution for $F$ of the form:
\begin{align*}
\xymatrix{0\ar[r]&F\ar[r]^{\delta_0}&I^0(F)\ar[r]^{\delta_1}&I^1(F)\ar[r]&0}
\end{align*}
We know that $x=\infty$ is the point with maximal height in $P$ so we apply the functor $\hom(\iota_x(\mathbb{Q}),-)$ to the above resolution to get:
\begin{align*}
\xymatrix{0\ar[r]&\hom(\iota_x(\mathbb{Q}),I^0(F))\ar[r]^{{\delta_1}_*}&\hom(\iota_x(\mathbb{Q}),I^1(F))\ar[r]&0}
\end{align*}
where we forget the $F$ term. Applying Lemma \ref{Homologychar} gives:
\begin{align*}
\xymatrix{0\ar[r]&\mathbb{Q}\ar[r]^-{\alpha_1}&\coker{\delta_0}_x\ar[r]&0}
\end{align*}
Therefore the problem boils down to verifying that the following map is not surjective:
\begin{align*}
\alpha_1:\mathbb{Q}_x&\rightarrow \coker{\delta_0}_x\\s_x&\mapsto \left[\left(s_x,\underline{0}\right)_x\right]^S
\end{align*} 
where $\left[-\right]^S$ is the cokernel class of an element in the cokernel of the map $S$. We consider the element in $\coker{\delta_0}_x$: 
\begin{align*}
\left[\left(0_x,\underline{1}\right)_x\right]^S
\end{align*}
This cannot be in the image of $\alpha_1$ and therefore $\text{Ext}^1\left(\iota_x(\mathbb{Q}),c\mathbb{Q}\right)\neq 0$, hence $F$ has injective dimension $1$.
\end{proof}
The following corollary is now immediate.
\begin{corollary}
The injective dimension of the category of sheaves of $\mathbb{Q}$-modules over $P$ is $1$.
\end{corollary}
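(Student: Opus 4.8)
The final statement asserts that the injective dimension of the category of sheaves of $\mathbb{Q}$-modules over $P = \{1/n \mid n\in\mathbb{N}\}\cup\{0\}$ equals $1$. The plan is to obtain this as an immediate consequence of the two preceding results, together with the general framework already set up. The upper bound comes from the fact that $P$ is a scattered space with $\rank_{CB}(P) = 2$ (Example \ref{padic}), so by the proposition bounding injective dimension of sheaves over a scattered space of rank $n$ by $n-1$, every sheaf of $\mathbb{Q}$-modules over $P$ has injective dimension at most $1$. The matching lower bound is furnished by the immediately preceding proposition, which exhibits a specific sheaf, namely the constant sheaf $c\mathbb{Q}$, having injective dimension exactly $1$; equivalently, $\Ext^1(\iota_\infty(\mathbb{Q}), c\mathbb{Q}) \neq 0$.

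Concretely, I would argue as follows. First I would recall the definition of the injective dimension of the category $\text{Sheaf}_{\mathbb{Q}}(P)$ as $\sup\{\ID(F) \mid F \in \text{Sheaf}_{\mathbb{Q}}(P)\}$, citing \cite[Theorem 4.1.2]{Weibel}. Then I would invoke the proposition showing that when $X$ is scattered with $\rank_{CB}(X) = n$, every sheaf of $\mathbb{Q}$-modules has injective dimension at most $n - 1$; applying this with $X = P$ and $n = 2$ gives that the supremum is at most $1$. Next I would invoke the proposition stating that the constant sheaf $c\mathbb{Q}$ over $P$ has injective dimension $1$, so the supremum is at least $1$. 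Combining the two inequalities yields that the injective dimension of the category is exactly $1$.

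There is essentially no obstacle here: the corollary is purely a packaging step, combining an already-established upper bound (via the finiteness of the Godement resolution, Lemma \ref{lem0} and Proposition \ref{Godinj}) with an already-established lower bound (via the nonvanishing $\Ext$ computation using Lemma \ref{Homologychar} and Theorem \ref{extcharacter}). The only thing worth being careful about is making sure the indexing matches: $\rank_{CB}(P) = 2$ so the predicted injective dimension is $\rank_{CB}(P) - 1 = 1$, consistent with both the upper bound proposition and the explicit $c\mathbb{Q}$ computation. If I wanted to phrase the proof in a self-contained way rather than as a one-line citation, I would note that the Godement resolution of any sheaf over $P$ terminates at stage $1$ (since $P^{(2)} = \emptyset$, by Lemma \ref{lem0} all terms $C^k(F)$ vanish for $k \geq 2$), giving the upper bound directly, and then point to the $c\mathbb{Q}$ example for sharpness.
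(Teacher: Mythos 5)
Your proposal is correct and matches the paper's (implicit) argument exactly: the upper bound comes from the scatteredness of $P$ with $\rank_{CB}(P)=2$ via the general bound of $n-1$, and the lower bound from the immediately preceding computation that $c\mathbb{Q}$ has injective dimension $1$. The paper states the corollary as immediate from these same two facts, so there is nothing to add.
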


The calculations of the space $P$ is too simple to get an idea of the general case so we proceed to consider a more difficult example. In this space each of the points $(\frac{1}{m},\frac{1}{n})$ for $m,n< \infty$ are isolated points. The neighbourhoods of the points of the form $(0,\frac{1}{m})$ for $m< \infty$ are of the form $U_{(n,-)}=\left\lbrace \left(\frac{1}{j},\frac{1}{m}\right)\mid \infty\geq j\geq n\right\rbrace$. We include infinity as an index so that $\frac{1}{\infty}$ can denote the limit point $0$. Similarly a neighbourhood of $(\frac{1}{m},0)$ is the corresponding set of the form $U_{(-,n)}$. For $(0,0)$ we consider sets of the form $U_{(n,m)}=\left\lbrace (\frac{1}{a},\frac{1}{b})\mid \infty\geq a\geq n,\infty\geq b\geq m\right\rbrace$. This topology coincides with the subspace topology with respect to $\mathbb{R}^2$.  For the next calculation we are interested in the space $P^2$ and for convenience we will write $\infty$ in place of $0$.
\begin{proposition}\label{Godeg}
If $c\mathbb{Q}$ is the constant sheaf of rational numbers over $P^2$ then the Godement resolution for $c\mathbb{Q}$ is of length $2$.
\end{proposition}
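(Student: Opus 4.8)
The plan is to run the Godement resolution of $c\mathbb{Q}$ over $P^2$ and track, stalk by stalk, exactly which terms $C^k(c\mathbb{Q})$ are nonzero, using Lemma \ref{lem0} as the organising principle. Since $\rank_{CB}(P^2)=3$ by Proposition \ref{prodcant} (or directly: $P^2$ is scattered with $(P^2)^{(3)}=\emptyset$ and $(P^2)^{(2)}=\{(\infty,\infty)\}$), Lemma \ref{lem0} already tells us that $C^k(c\mathbb{Q})_x=0$ for every $x\in P^2\setminus (P^2)^{(k)}$, so $C^k(c\mathbb{Q})=0$ for $k\geq 3$ and the Godement resolution has length at most $2$. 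The substance of the proposition is therefore showing the resolution does not terminate earlier, i.e. that $C^2(c\mathbb{Q})\neq 0$.

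First I would compute the early stalks explicitly. At an isolated point $(\frac1m,\frac1n)$ with $m,n<\infty$, the minimal neighbourhood is the singleton, so $\delta_{0}$ is an isomorphism on stalks and $\coker\delta_0$ vanishes there; consequently $C^1(c\mathbb{Q})$ is concentrated on $(P^2)^{(1)}$, which is the union of the two ``axis'' copies of $P$ together with the corner. At a height-one point such as $(\infty,\frac1m)$ the stalk $C^0(c\mathbb{Q})_x=\colim_{U\ni x}\prod_{y\in U}\mathbb{Q}$ strictly contains $\mathrm{Im}\,S$ (take the serration that is $0$ at $x$ and $1$ on the accumulating points $(\frac1j,\frac1m)$), so $\coker\delta_{0,x}\neq 0$; but at such a point the points accumulating to it are all isolated, so after one more application of the construction the cokernel dies and $C^2(c\mathbb{Q})_x=0$. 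This matches Lemma \ref{lem0}: $C^2$ survives only over $(P^2)^{(2)}=\{(\infty,\infty)\}$.

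The key step is then to show $C^2(c\mathbb{Q})_{(\infty,\infty)}\neq 0$. Here I would argue as in Lemma \ref{Homologychar}: at the corner we have the chain
\[
\coker\delta_{0,(\infty,\infty)}\xrightarrow{\ \delta_1\ }C^1(c\mathbb{Q})_{(\infty,\infty)}\xrightarrow{\ \delta_2\ }C^2(c\mathbb{Q})_{(\infty,\infty)},
\]
and the point is that $\coker\delta_{1}$ at the corner is a further quotient of $\colim_{U\ni(\infty,\infty)}\prod_{y\in U}\coker\delta_{1,y}$ which is genuinely nonzero: the height-one points on the two axes lying in any neighbourhood of the corner contribute nonzero cokernel stalks, and a serration which is $0$ at the corner but picks out a nonzero class at, say, each axis point $(\infty,\frac1m)$ is not in the image of the serration map from $\coker\delta_{1,(\infty,\infty)}$ (such a section would have to be locally constant near the corner in the appropriate sense). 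Producing this explicit nonzero germ, and verifying it is not in $\mathrm{Im}\,S$, is the main obstacle — it requires being careful about what ``neighbourhood of the corner'' forces on a serration, since every neighbourhood $U_{(n,m)}$ still contains infinitely many axis points of height one. Once that element is exhibited, $C^2(c\mathbb{Q})\neq 0$, so the Godement resolution has length exactly $2$, which together with the upper bound from Lemma \ref{lem0} completes the proof.
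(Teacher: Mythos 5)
You are following the paper's own route: compute $\coker\delta_0$ at isolated, axis and corner points, note that the axis contributions die one stage later so everything beyond level one is concentrated at $(\infty,\infty)$, and exhibit a nonzero germ there built from nonzero cokernel classes at the axis points lying in every neighbourhood of the corner. The only organisational difference is harmless: you get the vanishing $C^k(c\mathbb{Q})=0$ for $k\geq 3$ from Lemma \ref{lem0} together with $\rank_{CB}(P^2)=3$ (Proposition \ref{prodcant}), while the paper checks directly that $\delta_2$ is an isomorphism on the corner stalk, so $\coker\delta_2=0$ and the resolution stops.

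Two points need repair before your sketch is a proof. First, an off-by-one slip that matters: you describe $\coker\delta_1$ at the corner as a quotient of $\colim_{U\ni(\infty,\infty)}\prod_{y\in U}\coker\delta_{1,y}$ and speak of the serration map ``from $\coker\delta_{1,(\infty,\infty)}$''. It is a quotient of $\colim_{U\ni(\infty,\infty)}\prod_{y\in U}\coker\delta_{0,y}$ by the image of the serration map out of $\coker\delta_{0,(\infty,\infty)}$. As written the statement is circular: by your own earlier observation $\coker\delta_{1,y}=0$ at the axis points, so your displayed colimit collapses to the very stalk you are trying to compute, whereas it is the product of the stalks $\coker\delta_{0,y}$, nonzero at axis points, that carries the argument. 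Second, you leave the verification that the germ with entry $0$ at the corner and nonzero classes $s^y$ at the axis points is not in $\im S$ as ``the main obstacle'', and the reason you gesture at (that a preimage ``would have to be locally constant near the corner'') is not the actual mechanism. What closes it, exactly as in the paper's treatment of $\coker\delta_0$ and again at this stage, is the characterisation of vanishing germs: if $S(a)$ equalled your class for some $a\in\coker\delta_{0,(\infty,\infty)}$, the two families would agree componentwise on some neighbourhood $V$ of the corner, forcing $a=0$ as a germ, so a representing section of $\coker\delta_0$ vanishes on some neighbourhood $W$; but then $S(a)$ has zero entries throughout $V\cap W$, while $V\cap W$ contains infinitely many axis points $(\infty,\tfrac1m)$ at which your class has nonzero entry, a contradiction. (Choosing the $s^y$ along one axis only is fine, since every neighbourhood of the corner contains such points.) With these corrections your argument coincides with the paper's proof of Proposition \ref{Godeg}.
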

\begin{proof}
As in the case for $P$, let $F=c\mathbb{Q}$. Then for any $x\in X$ we have 
\begin{align*}
\underset{U\backepsilon\, x}{\colim}\, c\mathbb{Q}(U)=\mathbb{Q}_x=\underset{U\backepsilon\, x}{\colim}\,Pc\mathbb{Q}(U), 
\end{align*}
where $Pc\mathbb{Q}$ is the presheaf and $\mathbb{Q}_x$ is the stalk of $c\mathbb{Q}$ at $x$. Therefore any $s_x\in F_x$ can be represented by a section in $Pc\mathbb{Q}(U)=\mathbb{Q}$. Given $q_x\in \mathbb{Q}=F_x$ we have a map:
\begin{align*}
\mathbb{Q}&\rightarrow \underset{y\in U}{\prod}\mathbb{Q}\rightarrow \underset{V\backepsilon \,x}{\colim}\underset{y\in V}{\prod}\mathbb{Q}\\q&\mapsto (q_y)_{y\in U}\mapsto \left((q_y)_{y\in V}\right)_x
\end{align*}
which induces a map:
\begin{align*}
S:F_x&\rightarrow \underset{V\backepsilon\, x}{\colim}\underset{y\in V}{\prod}\mathbb{Q}\\q_x&\mapsto \left((q_y)_{y\in U}\right)_x
\end{align*}
which we call the serration map and we denote it by $S$. Here $\left(-\right)_x$ represents the germ at $x$. The codomain of this map is $C^0(F)_x$ so the map of sheaves induced in this way is denoted $\delta_0$. For the isolated points we have ${\delta_0}_x$ is the identity so that $\coker{\delta_0}_x=0$. For the points of the form $x=(\infty,m)$ we have:
\begin{align*}
\coker{\delta_0}_x=\left(\underset{n\in\mathbb{N}}{\colim}\underset{\infty\geq a\geq n}{\prod}\mathbb{Q}\right)/\text{Im}S.
\end{align*}
We have similar for a point of the form $(m,\infty)$ and for $x=(\infty,\infty)$ we have:
\begin{align*}
\coker{\delta_0}_x=\left(\underset{m,n\in\mathbb{N}}{\colim}\underset{\infty\geq a\geq m, \infty\geq b \geq n}{\prod}\mathbb{Q}\right)/\text{Im}S. 
\end{align*}
These are non-zero for the three types of limit point $x$ since we can consider $\left[\left(0_x,\underline{1}\right)_x\right]^S$, where $0_x$ denotes that $0$ is in component $x$, $\underline{1}$ is the constant $1$ sequence and $\left[-\right]^S$ is the class of an element in the cokernel of $S$. Here we use that a germ over $x$ is zero if and only if there exists a section representative with domain an open neighbourhood of $x$ which is zero. Namely if $S(a)=\left[\left(0_x,\underline{1}\right)_x\right]^S$ then $a_x=0$ and so there is an open neighbourhood $U$ of $x$ such that $a_y=0$ for $y\in U$. However the definition of the serration map shows that $a_y=1$ also for $y\neq x$ which is a contradiction.  

We now construct $C^1(F)=C^0(\coker\delta_0)$ and construct a morphism $\delta_1$ from $\coker\delta_0$ to $C^0(\coker\delta_0)$ in a similar manner. Similar to before
\begin{align*}
\underset{U\backepsilon x}{\colim} \coker\delta_0(U)=\underset{U\backepsilon\,x}{\colim}P\coker\delta_0(U)
\end{align*}
so any $s_x\in \coker{\delta_0}_x$ can be represented by a section in $P\coker{\delta_0}(U)=\underset{y\in U}{\prod}\mathbb{Q}/S$, so as seen previously this gives a map:
\begin{align*}
\coker{\delta_0}_x&\rightarrow C^1(F)_x\\ \left[\left((q_y)_{y\in U}\right)_x\right]^S&\mapsto \left(\left(\left[\left((q_y)_{y\in U}\right)_z\right]^S\right)_{z\in U}\right)_x.
\end{align*}
The map ${\delta_1}_x$ for $x=(\infty,m)$ is the identity by constructing $\delta_1$ using the serration map and from the fact that: 
\begin{align*}
C^1(F)_x=\underset{n\in \mathbb{N}}{\colim}\underset{\infty\geq a\geq n}{\prod}\coker{\delta_0}_{(a,m)}=\coker{\delta_0}_x.
\end{align*}
Similarly for points of the form $(m,\infty)$. This shows in particular that \\$\coker{\delta_1}_x=0$ for points of that form. If $x=(\infty,\infty)$ then: 
\begin{align*}
\coker{\delta_1}_x=\left(\underset{m,n\in\mathbb{N}}{\colim}\underset{\infty\geq a\geq m, \infty\geq b \geq n}{\prod}\coker{\delta_0}_{(a,b)}\right)/\text{Im}S.
\end{align*}
This is non-zero since $0\neq\coker{\delta_0}_y$ for any limit point $y$ in $P^2$ accumulating at $x$ so we can apply the same argument as mentioned above in the case of $\coker{\delta_0}_x$. If $U$ is any neighbourhood of $x$ then $U$ contains infinitely many limit points, so for every such $y\in U$ take $s^y\neq 0$ in $\coker{\delta_0}_y$. Then $\coker{\delta_1}_x$ has a non-zero element of the form:
\begin{align*}
\left[\left((0_x,s^y)_{y\in U\setminus \left\lbrace x\right\rbrace}\right)_x\right]^S
\end{align*}
which is non-zero by the same argument as above involving the characterisation of zero germs. 

At the next stage we see that $C^2(F)$ is only concentrated at $x=(\infty,\infty)$ so ${\delta_2}_x$ is the identity map, hence $\coker\delta_2=0$ and $C^2(F)\neq 0$. 
\end{proof}
\begin{proposition}\label{ID2}
The injective dimension of $c\mathbb{Q}$ in the category of sheaves over $P^2$ is $2$. 
\end{proposition}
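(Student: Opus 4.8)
The plan is to combine the upper bound from Proposition \ref{Godeg} with a nonvanishing Ext computation, exactly as in the $P$ case but now carried one dimension higher. Since $\rank_{\cb}(P^2)=3$ (this follows from Proposition \ref{prodcant} applied to $\rank_{\cb}(P)=2$, or from Example \ref{padic} combined with the product formula), Proposition \ref{Godeg} already shows that $c\mathbb{Q}$ admits an injective resolution of length $2$, so $\ID(c\mathbb{Q})\leq 2$. It therefore suffices to exhibit a sheaf $A$ with $\Ext^2(A,c\mathbb{Q})\neq 0$, and then invoke Theorem \ref{extcharacter} (equivalently \cite[Lemma 4.1.8, Exercise 10.7.2]{Weibel}): if $\ID(c\mathbb{Q})\leq 1$ then $\Ext^2(A,c\mathbb{Q})=0$ for all $A$, so a single nonvanishing $\Ext^2$ forces $\ID(c\mathbb{Q})=2$.

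The natural choice of $A$ is the skyscraper sheaf $\iota_x(\mathbb{Q})$ at the point $x=(\infty,\infty)$ of maximal Cantor-Bendixson height. First I would take the Godement resolution $0\to c\mathbb{Q}\to C^0(c\mathbb{Q})\to C^1(c\mathbb{Q})\to C^2(c\mathbb{Q})\to 0$ from Proposition \ref{Godeg}, which is an injective resolution by Proposition \ref{Godinj}. Applying $\hom(\iota_x(\mathbb{Q}),-)$ and deleting the $c\mathbb{Q}$ term gives a complex computing $\Ext^*(\iota_x(\mathbb{Q}),c\mathbb{Q})$, and by Lemma \ref{Homologychar} this complex is isomorphic to
\begin{align*}
\xymatrix{0\ar[r]&\mathbb{Q}_x\ar[r]^-{\alpha_1}&\coker{\delta_0}_x\ar[r]^-{\alpha_2}&\coker{\delta_1}_x\ar[r]&0}
\end{align*}
where $\alpha_k(a)=(a,\underline{0})_x$. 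So $\Ext^2(\iota_x(\mathbb{Q}),c\mathbb{Q})=\coker{\delta_1}_x/\im\alpha_2$, and the problem reduces to showing this cokernel is nonzero, i.e. producing an element of $\coker{\delta_1}_x$ not of the form $(a,\underline{0})_x$ for any $a\in\coker{\delta_0}_x$.

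The element to use is the one already identified in the proof of Proposition \ref{Godeg}: for each limit point $y\in U\setminus\{x\}$ in a neighbourhood $U$ of $x$, pick $s^y\neq 0$ in $\coker{\delta_0}_y$ (these exist since $\coker{\delta_0}_y\neq 0$ at every limit point of $P^2$, and every neighbourhood of $x$ contains infinitely many such $y$), and consider
\begin{align*}
\xi=\left[\left((0_x,s^y)_{y\in U\setminus\{x\}}\right)_x\right]^S\in\coker{\delta_1}_x.
\end{align*}
I would argue $\xi\neq 0$ by the zero-germ criterion (a germ vanishes iff some open-neighbourhood representative is identically zero, but any shrinking of $U$ still meets infinitely many limit points $y$ where the family is the nonzero $s^y$), and $\xi\notin\im\alpha_2$ because $\alpha_2$ produces germs supported only at $x$: an element $(a,\underline{0})_x$ has a representative that is $0$ at every $y\neq x$, whereas every representative of $\xi$ is nonzero at infinitely many $y$. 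Hence $\Ext^2(\iota_x(\mathbb{Q}),c\mathbb{Q})\neq 0$, giving $\ID(c\mathbb{Q})=2$. The one genuinely delicate point — the main obstacle — is the bookkeeping in checking $\xi\notin\im\alpha_2$: one must be careful that the cokernel-of-$S$ classes $[\cdot]^S$ at the outer level do not accidentally allow a purely-$x$-supported family to represent $\xi$ after changing representatives, so I would spell out explicitly, using the Hausdorff separation of $x$ from each $y$ as in the proof of Lemma \ref{Homologychar}, that restriction of an $\im\alpha_2$ element to a punctured neighbourhood is zero in the relevant colimit while restriction of $\xi$ is not. Everything else is a direct transcription of the $P$-case argument.
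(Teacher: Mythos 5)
Your proposal is correct and follows essentially the same route as the paper: the length-two Godement resolution from Proposition \ref{Godeg} gives the upper bound, and non-vanishing of $\Ext^2(\iota_x(\mathbb{Q}),c\mathbb{Q})$ at $x=(\infty,\infty)$ is obtained via Lemma \ref{Homologychar} with the identical witness $\left[\left((0_x,s^y)_{y\in U\setminus\{x\}}\right)_x\right]^S$ shown not to lie in the image of $\alpha_2$. The extra care you flag about changing representatives modulo the serration map is exactly the zero-germ argument the paper uses, so there is no gap.
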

\begin{proof}
Let $F=c\mathbb{Q}$, from Proposition \ref{Godeg} we have an injective resolution for $F$ of the form:
\begin{align*}
\xymatrix{0\ar[r]&F\ar[r]^{\delta_0}&I^0(F)\ar[r]^{\delta_1}&I^1(F)\ar[r]^{\delta_2}&I^2(F)\ar[r]&0.}
\end{align*}
We know that $x=(\infty,\infty)$ is the point with maximal height in $P^2$ so we apply the functor $\hom(\iota_x(\mathbb{Q}),-)$ to the above resolution to get:
\begin{align*}
\xymatrix{0\ar[r]&\hom(\iota_x(\mathbb{Q}),I^0(F))\ar[r]^{{\delta_1}_*}&\hom(\iota_x(\mathbb{Q}),I^1(F))\ar[d]^{{\delta_2}_*}\\&&\hom(\iota_x(\mathbb{Q}),I^2(F))\ar[r]&0}
\end{align*}
where we forget the $F$ term. Applying Lemma \ref{Homologychar} gives:
\begin{align*}
\xymatrix{0\ar[r]&\mathbb{Q}\ar[r]^-{\alpha_1}&\coker{\delta_0}_x\ar[r]^{\alpha_2}&\coker{\delta_1}_x\ar[r]&0.}
\end{align*}
Therefore the problem boils down to verifying that the following map is not surjective:
\begin{align*}
\alpha_2:\coker{\delta_0}_x&\rightarrow \coker{\delta_1}_x\\s_x&\mapsto \left[\left(s_x,\underline{0}\right)_x\right]^S.
\end{align*} 
We can define a family $s=(s^y)_{y\in U}$ in $\underset{y\in U}{\prod}{\coker\delta_0}_y$ where $s^y\neq 0$ for $y$ a limit point and $U$ a neighbourhood of $x$. We consider the element in $\coker{\delta_1}_x$: 
\begin{align*}
\left[\left(0_x,s\right)_x\right]^S.
\end{align*}
This cannot be in the image of $\alpha_2$ and therefore $\text{Ext}^2\left(\iota_x(\mathbb{Q}),c\mathbb{Q}\right)\neq 0$, hence $F$ has injective dimension $2$.
\end{proof}
\begin{corollary}
The injective dimension of the category of sheaves of $\mathbb{Q}$-modules over $P^2$ is $2$.
\end{corollary}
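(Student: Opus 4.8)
The plan is to obtain the value $2$ by sandwiching: an upper bound coming from the Godement resolution and the scatteredness of $P^2$, and a matching lower bound from the explicit non-vanishing $\mathrm{Ext}$ computation already carried out for the constant sheaf.

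First I would pin down the Cantor--Bendixson data of the base space. By Example \ref{padic} we have $\rank_{\cb}(P)=2$, and since $2 > 0$, Proposition \ref{prodcant} applies with $X=Y=P$ (here $n=m=1$) to give $\rank_{\cb}(P^2)=m+n+1=3$. Moreover $P$ is scattered (its perfect hull is empty, as the Cantor--Bendixson process terminates at $\emptyset$), and the argument of Proposition \ref{prodcant} shows $\left(P^2\right)^{(3)}=\emptyset$ as well, so $P^2$ is a scattered space with $\rank_{\cb}(P^2)=3$. Feeding this into the proposition that bounds the injective dimension of $\mathrm{Sheaf}_{\mathbb{Q}}(X)$ above by $\rank_{\cb}(X)-1$ for $X$ scattered of finite rank, I get that the injective dimension of the category of sheaves of $\mathbb{Q}$-modules over $P^2$ is at most $2$.

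For the lower bound I would simply invoke Proposition \ref{ID2}: the constant sheaf $c\mathbb{Q}$ over $P^2$ has injective dimension exactly $2$, witnessed by $\mathrm{Ext}^2\!\left(\iota_x(\mathbb{Q}),c\mathbb{Q}\right)\neq 0$ for $x=(\infty,\infty)$. Since the injective dimension of the category is $\sup\{\ID(F)\mid F\in\mathrm{Sheaf}_{\mathbb{Q}}(P^2)\}$ by definition, this supremum is at least $2$. Combining the two inequalities yields that the injective dimension of $\mathrm{Sheaf}_{\mathbb{Q}}(P^2)$ equals $2$, via Theorem \ref{extcharacter} if one prefers to phrase the equality purely in $\mathrm{Ext}$ terms.

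There is no real obstacle here; the corollary is a bookkeeping consequence of results established earlier in the section. The only point that deserves a moment's care is the verification that $\rank_{\cb}(P^2)=3$ rather than something smaller: this is exactly where one needs both factors to have Cantor--Bendixson rank strictly greater than $1$, so that Proposition \ref{prodcant} is applicable and the product does not collapse in rank (contrast the failure cases recorded right after that proposition). Once that is in hand, the upper and lower bounds match automatically.
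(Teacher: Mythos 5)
Your proof is correct and follows essentially the same route as the paper: the lower bound is exactly Proposition \ref{ID2} (which is all the paper's own proof cites), and the upper bound you make explicit via $\rank_{\cb}(P^2)=3$ from Proposition \ref{prodcant} together with the scattered bound is implicit in the paper through the length-two Godement resolution of Proposition \ref{Godeg} and Lemma \ref{lem0}. You have merely spelled out the bookkeeping that the paper leaves tacit.
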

\begin{proof}
This follows from Proposition \ref{ID2}.
\end{proof}
\subsection{General case}
We now work towards generalising this argument to abstract spaces. We will look at the following lemma which will illustrate some interesting and useful properties of the Godement resolution which will help us to generalise the calculations seen in the previous subsection. We will look at the following lemma which will illustrate that the Godement resolution is non-zero at term $k$ provided $k$ is less than $\rank_{\cb}(X)$.

\begin{lemma}\label{godementnon0}
Let $X$ be a non-empty scattered space and $k\in \mathbb{N}$ be less than or equal to $\rank_{\cb}(X)$. For each $x \in X^{(k)}$, $\coker{\delta_{k-1}}_x \neq 0$ in the Godement resolution of $c\mathbb{Q}$ the constant sheaf at $\mathbb{Q}$. 

Furthermore if $X$ is any space with a non-empty perfect hull or infinite Cantor-Bendixson rank and $k \in \mathbb{N}$, then for each $x \in X^{(k)}$ we have $\coker{\delta_{k-1}}_x \neq 0$ in the Godement resolution of $c\mathbb{Q}$.
\end{lemma}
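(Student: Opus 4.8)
The plan is to induct on $k$, showing simultaneously that $\coker{\delta_{k-1}}_x \neq 0$ for every $x \in X^{(k)}$ and that the construction behaves well enough at accumulation points to feed the next step of the induction. The base case $k=1$ is the observation already used in the worked examples: for $x$ of height $\geq 1$ (i.e. not isolated), the serration map $S \colon F_x \to C^0(F)_x = \underset{U\backepsilon x}{\colim}\underset{y\in U}{\prod}\mathbb{Q}$ is not surjective, because the class $\left[\left(0_x,\underline{1}\right)_x\right]^S$ — the germ of the family taking value $0$ at $x$ and $1$ elsewhere — is nonzero in the cokernel. The key sub-argument is the \emph{characterisation of zero germs}: a germ $a_x$ of a presheaf at $x$ vanishes in the stalk precisely when $a$ has a representative section, over some open neighbourhood $U\ni x$, which is identically zero. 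If $S(a) = \left[\left(0_x,\underline{1}\right)_x\right]^S$ held on some neighbourhood $U$, then $a_x = 0$ forces $a$ to vanish on a neighbourhood of $x$, yet by definition of $S$ the family $a$ restricted there is constant with value the same rational at \emph{every} point including points $\neq x$, contradicting that it is $1$ away from $x$. This works exactly because $x$ is a non-isolated point, so every neighbourhood of $x$ contains points other than $x$.

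For the inductive step, suppose $k \geq 2$ and $x \in X^{(k)}$. Then $x$ is an accumulation point in $X^{(k-1)}$, so every neighbourhood $U$ of $x$ contains infinitely many points $y$ of height $\geq k-1$ in $X$, and in particular contains points $y \in X^{(k-1)} \setminus \{x\}$; by the inductive hypothesis, $\coker{\delta_{k-2}}_y \neq 0$ for each such $y$. Since $C^{k-1}(F) = C^0(\coker\delta_{k-2})$, the stalk $\coker{\delta_{k-1}}_x$ is the cokernel of the serration map $S \colon \coker{\delta_{k-2}}_x \to \underset{U\backepsilon x}{\colim}\underset{y\in U}{\prod}\coker{\delta_{k-2}}_y$. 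Choosing, for each accumulation point $y \in U \setminus \{x\}$ of height $\geq k-1$, a nonzero element $s^y \in \coker{\delta_{k-2}}_y$ (and $0$ elsewhere, including at $x$), the class $\left[\left(0_x, (s^y)_{y}\right)_x\right]^S$ is a candidate nonzero element of $\coker{\delta_{k-1}}_x$. The verification that it is nonzero is again the zero-germ characterisation: if it were in the image of $S$, a representing family would have to be eventually constant near $x$ with the value determined by its $x$-component, which is $0$, forcing all nearby components to be $0$, contradicting the choice $s^y \neq 0$ for infinitely many $y$ in every neighbourhood. This requires invoking Lemma~\ref{lem0} to know $\coker{\delta_{k-2}}$ is genuinely supported on $X^{(k-1)}$ so that these $y$'s are the relevant ones, and it requires that the accumulation points of $x$ really do populate $X^{(k-1)}$ densely enough — which is precisely the meaning of $x \in X^{(k)}$.

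The case distinctions in the statement are handled as follows. If $X$ is scattered with $\rank_{\cb}(X) = n$, the induction runs for $1 \leq k \leq n$, and at each stage $X^{(k)}$ is nonempty (for $k \leq n$) so the statement has content; the hypothesis $X \neq \emptyset$ is what makes $X^{(0)}$, hence $X^{(1)}$ when $n \geq 1$, nonempty. If instead $X$ has infinite Cantor-Bendixson rank, then $X^{(k)} \neq \emptyset$ for every $k \in \mathbb{N}$ (there are points of every finite height, and the process never stabilises to $\emptyset$), so the same induction runs for all $k$. If $X$ has nonempty perfect hull $X_H = X^{(\lambda)}$ where $\lambda = \rank_{\cb}(X)$, then for every finite $k$ we have $X^{(k)} \supseteq X_H \neq \emptyset$, and every point of $X_H$ is an accumulation point inside $X^{(k-1)}$ (indeed inside $X_H$ itself, as $X_H$ is perfect), so the inductive step applies verbatim. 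The main obstacle I anticipate is making the zero-germ argument airtight at accumulation points in the inductive step: one must be careful that the representing family for an element of $\coker{\delta_{k-1}}_x$ lives in a product $\underset{y\in U}{\prod}\coker{\delta_{k-2}}_y$ over a genuine open neighbourhood, and that passing to the colimit (shrinking $U$) cannot kill the chosen element — which is exactly the content of the zero-germ characterisation, but needs to be stated cleanly once and reused. The earlier results Lemma~\ref{lem0} (support of $C^k(F)$), Lemma~\ref{Homologychar} and Lemma~\ref{homoloinf} (the explicit description of $\hom(\iota_x(\mathbb{Q}), C^k(F))$ and the maps $\alpha_{k+1}$) are available and should make the bookkeeping routine once the core nonvanishing claim is established.
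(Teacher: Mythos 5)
Your proposal is correct and follows essentially the same route as the paper's proof: induction on $k$, the base case via the class of the family $\left(0_x,\underline{1}\right)_x$, and the inductive step via a family that vanishes at $x$ but is nonzero at points of $X^{(k-1)}$ accumulating at $x$, with nonvanishing in the cokernel of the serration map established exactly by the zero-germ characterisation, and the scattered/infinite-rank/perfect-hull cases handled as in the paper. The only difference is organisational: the paper re-derives explicit nonzero representatives $b_y$ at nearby points from the previous levels, whereas you invoke the inductive hypothesis directly at points of $X^{(k-1)}$ — equivalent bookkeeping, slightly cleaner in your version.
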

\begin{proof}
We prove this using an induction argument. Since 
\begin{align*}
\underset{U\backepsilon\, x}{\colim}\,c\mathbb{Q}(U)=c\mathbb{Q}_x=\underset{U\backepsilon\,x}{\colim}\,Pc\mathbb{Q}(U)=\mathbb{Q}
\end{align*}
where $Pc\mathbb{Q}$ represents the presheaf, any $q_x\in \mathbb{Q}_x$ is represented by some $q\in \mathbb{Q}$. We therefore have the following diagram by \cite[pp. 36, 37]{Bredon}:
\begin{align*}
\mathbb{Q}&\rightarrow\underset{y\in U}{\prod}\mathbb{Q}\rightarrow\underset{V\backepsilon\, x}{\colim} \underset{y\in V}{\prod}\mathbb{Q}\\q&\mapsto(q_y)_{y\in U}\mapsto\left((q_y)_{y\in U}\right)_x
\end{align*} 
which induces a map:
\begin{align*}
\mathbb{Q}&\rightarrow \underset{V\backepsilon\, x}{\colim}\underset{y \in V}{\prod}\mathbb{Q}\\q_x&\mapsto \left((q_y)_{y\in U}\right)_x.
\end{align*}
We call this map the serration map and denote it by $S$. This is not surjective since we have a point $\left(0_x,\underline{1}\right)_x$ not in the image of $S$. This point was defined in Proposition \ref{Godeg}. The proof that this is non-zero is similar to the argument given in Proposition \ref{Godeg}. Namely if $S(a)=\left[\left(0_x,\underline{1}\right)_x\right]^S$ then $a_x=0$ and so there is an open neighbourhood $U$ of $x$ such that $a_y=0$ for $y\in U$. However the definition of the serration map shows that $a_y=1$ also for $y\neq x$ which is a contradiction. Therefore $\coker{\delta_0}_x\neq 0$ for $x\in X^{(1)}$.

Suppose this holds up to some $n\in\mathbb{N}$ and for any $x\in X^{(n+1)}$. By assumption we have: 
\begin{align*}
0\neq \coker{\delta_n}_x=\underset{U\backepsilon \,x}{\colim}\left(\prod_{y\in U}\coker{\delta_{n-1}}_y\right)/\text{Im}S.
\end{align*}
Using the fact that sheafification preserves stalks of presheaves we have a map using \cite[pp. 36-37]{Bredon} as follows:
\begin{align*}
\left(\underset{y\in U}{\prod}\coker{\delta_{n-1}}_y\right)/S&\rightarrow \underset{z\in U}{\prod}\coker{\delta_n}_z\rightarrow\underset{V\backepsilon \,x}{\colim}\underset{z\in V}{\prod}\coker{\delta_n}_z\\ \left[(a_y)_{y\in U}\right]^S&\mapsto \left(\left[\left((a_y)_{y\in U}\right]^S\right)_z\right)_{z\in U}\mapsto \left(\left(\left(\left[(a_y)_{y\in U}\right]^S\right)_z\right)_{z\in U}\right)_x 
\end{align*}
which induces a map:
\begin{align*}
\coker{\delta_n}_x&\rightarrow \underset{V\backepsilon\, x}{\colim}\underset{y\in V}{\prod}\coker{\delta_n}_y\\ \left(\left[(a_y)_{y\in U}\right]^S\right)_x&\mapsto \left(\left(\left(\left[(a_y)_{y\in U}\right]^S\right)_z\right)_{z\in U}\right)_x.
\end{align*}
Let $U$ be any open neighbourhood of $x$, which we shall assume to have height $n+2$. Then for each $y\in U$ such that $y\in X^{(n)},X^{(n+1)}$ or $X^{(n+2)}$ we can choose $0\neq a_y\in \coker{\delta_{n-1}}_y$ by the inductive hypothesis. Set $s^y=\left(0_y,a_z\right)_{z\in U\setminus\left\lbrace y\right\rbrace}$, then $\left[\left(s^y\right)_y\right]^S\neq 0$ in $\coker{\delta_n}_y$ for $y\in X^{(n+1)},X^{(n+2)}$ and we denote this by $b_y$. This is shown to be non-zero by following a similar argument to that seen in Proposition \ref{Godeg} and earlier in this proof. We can therefore consider for any $x\in X^{(n+2)}$:
\begin{align*}
\left(\left[\left(0_x,b_y\right)_{y\in U\setminus\left\lbrace x\right\rbrace}\right]^S\right)_x
\end{align*}
which is not in the image of the serration map so $\coker{\delta_{n+1}}_x\neq 0$. This is also seen by referring to the previous argument earlier in this proof and by that in Proposition \ref{Godeg}.

Note if $\rank_{\cb}(X)$ is infinite then for each $k\in \mathbb{N}$ we know that each $X^{(k)}$ has isolated points to remove, so this is true for every $k$. If $\rank_{CB}(X)=n$ and $X^{(n)}=\emptyset$ then $X^{(n-1)}$ is discrete and therefore satisfies that $C^n(F)=0$ by Lemma \ref{lem0}. It follows that the argument therefore only results in non-zero stalks for $k \leq n-1$. If $x$ belongs to the perfect hull of $X$ then this argument also holds for each $k\in\mathbb{N}$ since the hull is contained in each $X^{(k)}$.
\end{proof}

We now use the above calculations to verify the injective dimension of sheaves using the Cantor-Bendixson rank.
\begin{theorem}\label{ID_CB}
Suppose $X$ is a space with $\rank_{\cb}(X)=n$ such that $X^{(n)}=\emptyset$. Then the category of sheaves over $X$ has injective dimension equal to $n-1$.
\end{theorem}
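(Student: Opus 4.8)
The plan is to prove the two inequalities $\ID(\sheaf_{\mathbb{Q}}(X))\leq n-1$ and $\ID(\sheaf_{\mathbb{Q}}(X))\geq n-1$ separately, and then combine them. For the upper bound, I would invoke Proposition \ref{Godinj}, which tells us the Godement resolution of any sheaf $F$ is an injective resolution. Then Lemma \ref{lem0} shows $C^k(F)_x=0$ for $x\in X\setminus X^{(k)}$; since $X$ is scattered with $\rank_{\cb}(X)=n$, we have $X^{(n)}=\emptyset$, hence $C^n(F)=0$. Therefore the Godement resolution of $F$ has length at most $n-1$, giving $\ID(F)\leq n-1$ for every sheaf $F$, and hence $\ID(\sheaf_{\mathbb{Q}}(X))\leq n-1$. (This is essentially the content of the Proposition just before the ``Example case'' subsection, which I would cite or restate.)

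For the lower bound, I would exhibit a single sheaf whose injective dimension is exactly $n-1$; the natural candidate, following the example computations, is the constant sheaf $c\mathbb{Q}$. By Theorem \ref{extcharacter} (the $\Ext$ characterisation of injective dimension), it suffices to produce an object $A$ with $\Ext^{n-1}(A,c\mathbb{Q})\neq 0$. Since $X$ is scattered with rank $n$, there exists a point $x\in X^{(n-1)}$ (a point of maximal height); take $A=\iota_x(\mathbb{Q})$, the skyscraper sheaf from Example \ref{Examplesky}. I would compute $\Ext^{n-1}(\iota_x(\mathbb{Q}),c\mathbb{Q})$ by applying $\hom(\iota_x(\mathbb{Q}),-)$ to the Godement resolution of $c\mathbb{Q}$ and taking cohomology in degree $n-1$. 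By Lemma \ref{Homologychar}, the resulting complex in the relevant degrees is identified with
\begin{align*}
\xymatrix{\cdots\ar[r]&\coker{\delta_{n-3}}_x\ar[r]^-{\alpha_{n-1}}&\coker{\delta_{n-2}}_x\ar[r]&0},
\end{align*}
where $\alpha_{n-1}$ sends $a\mapsto (a,\underline 0)_x$, and the degree $n-1$ term sits at the right end (since $C^n(c\mathbb{Q})=0$). So $\Ext^{n-1}(\iota_x(\mathbb{Q}),c\mathbb{Q})=\coker(\alpha_{n-1})$, and the task reduces to showing $\alpha_{n-1}$ is not surjective.

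The non-surjectivity of $\alpha_{n-1}$ is exactly what Lemma \ref{godementnon0} delivers: its proof constructs, for each $x\in X^{(k)}$, an explicit element of $\coker{\delta_{k-1}}_x$ not in the image of the serration map, of the form $\left[\left(0_x, b_y\right)_{y\in U\setminus\{x\}}\right]^S$ with each $b_y\neq 0$ a nonzero germ at a point $y$ of height one less accumulating at $x$. Taking $k=n-1$, such an element of $\coker{\delta_{n-2}}_x$ is visibly not of the form $(a,\underline 0)_x$ (it is nonzero away from $x$ on every neighbourhood), so it is not in $\im(\alpha_{n-1})$. Hence $\Ext^{n-1}(\iota_x(\mathbb{Q}),c\mathbb{Q})\neq 0$, which by Theorem \ref{extcharacter} forces $\ID(c\mathbb{Q})\geq n-1$, and therefore $\ID(\sheaf_{\mathbb{Q}}(X))\geq n-1$. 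Combining the two bounds gives equality.

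\textbf{Main obstacle.} The routine-but-delicate part is the bookkeeping in identifying the complex obtained by applying $\hom(\iota_x(\mathbb{Q}),-)$ to the Godement resolution with the chain of cokernel stalks and the maps $\alpha_j$ — this is where one must be careful that $x$ has maximal height so that the complex genuinely terminates at degree $n-1$ (rather than continuing), and that the distinguished non-serration element constructed in Lemma \ref{godementnon0} really does land in $\coker{\delta_{n-2}}_x$ and really does evade $\im(\alpha_{n-1})$. I would lean on Lemmas \ref{Homologychar} and \ref{godementnon0} to keep this short; the only genuinely new observation needed is that the element produced by Lemma \ref{godementnon0} is not of the special form $(a,\underline 0)_x$, which is immediate from its construction (it is supported, and nonzero, on points other than $x$).
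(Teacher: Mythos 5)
Your proposal is correct and takes essentially the same route as the paper: the upper bound via Proposition \ref{Godinj} together with Lemma \ref{lem0}, and the lower bound by applying $\hom(\iota_x(\mathbb{Q}),-)$ at a point $x$ of maximal height, identifying the complex via Lemma \ref{Homologychar}, and exhibiting a class of the form $\left[\left(0_x,b_y\right)_x\right]^S$ with infinitely many non-zero $b_y$ that escapes $\im\alpha_{n-1}$ — the paper builds this witness directly in the proof (exactly the construction underlying Lemma \ref{godementnon0}) rather than citing that lemma. The only caveat is that ``visibly not of the form $(a,\underline{0})_x$'' must be read modulo $\im S$: as in the paper, one supposes a preimage $t_x$ and deduces that the class would have to vanish, contradicting the non-vanishing of the $b_y$'s, rather than comparing representatives alone.
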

\begin{proof}
To see this, we need to find an object in the category of sheaves over $X$ whose injective dimension is $n-1$, we will show that $c\mathbb{Q}$ satisfies $\ID(c\mathbb{Q})=n-1$. Firstly by Lemma \ref{godementnon0} we know that $\ID(c\mathbb{Q})\leq n-1$ since the Godement resolution gives an injective resolution of the form:
\begin{align*}
\xymatrix{0\ar[r]&c\mathbb{Q}\ar[r]^{\delta_0}&I^0\ar[r]^{\delta_1}&\ldots\ar[r]^{\delta_{n-2}}&I^{n-2}\ar[r]^{\delta_{n-1}}&I^{n-1}\ar[r]&0.}
\end{align*} 
We will show that the $\text{Ext}^{n-1}\left(\iota_x(\mathbb{Q}),c\mathbb{Q}\right)$ group calculated by the above injective resolution is non-zero. Let $x$ be an element of $X$ with $\text{ht}(X,x)=n-1$ (any point of $X$ with maximal height). 

We apply the functor $\Hom(\iota_x(\mathbb{Q},-)$ and forget the $c\mathbb{Q}$ term to get:
\begin{align*}
\xymatrix{\Hom(\iota_x(\mathbb{Q}),I^0)\ar[r]^-{{\delta_1}_*}&\Hom(\iota_x(\mathbb{Q}),I^1)\ar[r]^-{{\delta_2}_*}&\ldots\ar[r]^-{{\delta_{n-2}}_*}&\Hom(\iota_x(\mathbb{Q}),I^{n-2})\ar[d]^{{\delta_{n-2}}_*}\\&&0&\Hom(\iota_x(\mathbb{Q}),I^{n-1})\ar[l]}
\end{align*}
which we can no longer assume to be exact. This is equal to the following sequence:
\begin{align*}
\xymatrix{\mathbb{Q}\ar[r]^-{\alpha_1}&\coker{\delta_0}_x\ar[r]^-{\alpha_2}&\ldots\ar[r]^-{\alpha_{n-3}}&\coker{\delta_{n-4}}_x\ar[d]^{\alpha_{n-2}}\\&0&\coker{\delta_{n-2}}_x\ar[l]^{\alpha_{n}}&\coker{\delta_{n-3}}_x\ar[l]^{\alpha_{n-1}}}
\end{align*}
We want to show that $\text{Ext}^{(n-1)}\left(\iota_x(\mathbb{Q}),c\mathbb{Q}\right)=\ker\alpha_n/\im\alpha_{n-1}\neq 0$ and\\ $\text{Ext}^n\left(\iota_x(\mathbb{Q}),c\mathbb{Q}\right)=\ker \alpha_{n+1}/\im \alpha_n=0$. It is clear that $\text{Ext}^n\left(\iota_x(\mathbb{Q}),c\mathbb{Q}\right)=0$. For the other we need to prove that the map:
\begin{align*}
\alpha_{n-1}:\coker {\delta_{n-3}}_x&\rightarrow \coker {\delta_{n-2}}_x\\a&\mapsto\left(a,\underline{0}\right)_x
\end{align*}
is not surjective. This is done in a similar fashion to Propositions \ref{Godeg} and \ref{godementnon0}.

For any open neighbourhood $U$ of $x$ there are infinitely many points $z$ of $U$ such that $z\in X^{(n-2)}$ and $\coker{\delta_{n-3}}_z\neq 0$ so we can choose such a point $a_z$ for each $z$. Consider:
\begin{align*}
a=\left[\left((0_x,a_z)_{z\in U\setminus\left\lbrace x\right\rbrace}\right)_x\right]^S\in \coker{\delta_{n-2}}_x. 
\end{align*}
If $t_x\in \coker{\delta_{n-3}}_x$ is in the preimage of $a$ with respect to $\alpha_{n-1}$ we would have:
\begin{align*}
\left[\left(t_x,0\right)_x\right]^S=\alpha_{n-1}(t_x)=\left[\left((0_x,a_z)_{z\in U\setminus\left\lbrace x\right\rbrace}\right)_x\right]^S
\end{align*}
so $t_x=0$ which implies that $\left[\left((0_x,a_z)_{z\in U\setminus\left\lbrace x\right\rbrace}\right)_x\right]^S=0$. But this cannot be the case since there are infinitely many $z$ satisfying that $a_z\neq \underline{0}$ by construction, so we have a contradiction and $\alpha_{n-1}$ cannot be surjective.
\end{proof}
We now deal with the case where the Cantor-Bendixson dimension is infinite.
\begin{theorem}
If $X$ is a space with infinite Cantor-Bendixson rank, then the injective dimension of sheaves of $\mathbb{Q}$-modules over $X$ is infinite.
\end{theorem}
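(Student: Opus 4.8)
The plan is to reduce the infinite-rank case to the finite-rank argument already carried out in Theorem \ref{ID_CB} and Lemma \ref{godementnon0}. Recall that by Theorem \ref{extcharacter}, the injective dimension of the category of sheaves of $\mathbb{Q}$-modules over $X$ is infinite precisely when for every natural number $n$ there exist sheaves $A$ and $F$ with $\Ext^n(A,F)\neq 0$. So it suffices to produce, for each $n$, such a pair. The natural candidates are $F=c\mathbb{Q}$ the constant sheaf and $A=\iota_x(\mathbb{Q})$ a skyscraper sheaf at a carefully chosen point $x$.

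First I would use the hypothesis that $\rank_{\cb}(X)$ is infinite to extract, for each $n\in\mathbb{N}$, a point $x_n\in X$ with $\text{ht}(X,x_n)\geq n$ (or a point in the perfect hull, which lies in every $X^{(k)}$); such points exist exactly because the Cantor–Bendixson process does not stabilise after finitely many steps. Next I would take the Godement resolution of $c\mathbb{Q}$, which by Proposition \ref{Godinj} is an injective resolution, and apply $\Hom(\iota_{x_n}(\mathbb{Q}),-)$ to it. Here the key input is Lemma \ref{homoloinf}, which is precisely the analogue of Lemma \ref{Homologychar} valid at points of arbitrarily large height (and points of infinite height or in the hull): it identifies $\Hom(\iota_{x_n}(\mathbb{Q}),C^k(c\mathbb{Q}))$ with $\coker{\delta_{k-1}}_{x_n}$ and the connecting maps with the maps $\alpha_{k+1}\colon a\mapsto (a,\underline 0)_{x_n}$.

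With this identification in hand, the computation of $\Ext^{n-1}(\iota_{x_n}(\mathbb{Q}),c\mathbb{Q})$ proceeds exactly as in the proof of Theorem \ref{ID_CB}: one shows the relevant map $\alpha_{n-1}\colon\coker{\delta_{n-3}}_{x_n}\to\coker{\delta_{n-2}}_{x_n}$ fails to be surjective. Since $x_n$ has height at least $n$, every open neighbourhood $U$ of $x_n$ contains infinitely many points $z\in X^{(n-2)}$ with $\coker{\delta_{n-3}}_z\neq 0$ by Lemma \ref{godementnon0}; choosing nonzero germs $a_z$ at each such $z$, the element $\left[\left((0_{x_n},a_z)_{z\in U\setminus\{x_n\}}\right)_{x_n}\right]^S$ of $\coker{\delta_{n-2}}_{x_n}$ is not in the image of $\alpha_{n-1}$, by the same zero-germ argument used in Propositions \ref{Godeg} and \ref{godementnon0}. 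Hence $\Ext^{n-1}(\iota_{x_n}(\mathbb{Q}),c\mathbb{Q})\neq 0$.

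Since $n$ was arbitrary, $\Ext^m(\iota_x(\mathbb{Q}),c\mathbb{Q})$ is nonzero for arbitrarily large $m$, so no finite bound on injective dimension can hold, and Theorem \ref{extcharacter} gives that the injective dimension of sheaves of $\mathbb{Q}$-modules over $X$ is infinite. The main obstacle — and the reason Lemmas \ref{homoloinf} and \ref{godementnon0} were stated in the generality they were — is making sure the non-vanishing arguments that worked for scattered spaces of finite rank genuinely survive at points of large or infinite height: one must check that the Godement stalks $\coker{\delta_{k-1}}_{x}$ remain nonzero all the way up to the height of $x$ (not just up to some rank that stabilises), and that the serration maps still fail to be surjective because infinitely many nearby points carry nonzero germs. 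Once those two facts are secured, the rest is a routine repackaging of the finite-rank proof.
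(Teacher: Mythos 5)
Your overall strategy is the paper's: reduce to the finite-rank $\Ext$ computation of Theorem \ref{ID_CB} at points supplied by the failure of the Cantor--Bendixson process to stabilise. But there is a genuine gap in the execution, and it sits exactly where you allow $x_n$ to be a point with $\text{ht}(X,x_n)\geq n$ ``or a point in the perfect hull''. In Theorem \ref{ID_CB} the equality $\Ext^{n-1}=\coker{\delta_{n-2}}_x/\im\alpha_{n-1}$ holds because $x$ has height exactly $n-1$, so $\coker{\delta_{n-1}}_x=0$ and the stalk-wise complex $\Hom(\iota_x(\mathbb{Q}),I^\bullet)$ terminates: every element of $\coker{\delta_{n-2}}_x$ is automatically a cocycle, and non-surjectivity of $\alpha_{n-1}$ suffices. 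At a point of height strictly greater than $n-1$, of infinite height, or in the perfect hull, Lemma \ref{godementnon0} gives $\coker{\delta_{n-1}}_{x_n}\neq 0$, so $\Ext^{n-1}=\ker\alpha_n/\im\alpha_{n-1}$ and your element $\left[\left((0_{x_n},a_z)_{z\in U\setminus\{x_n\}}\right)_{x_n}\right]^S$, built from \emph{arbitrary} nonzero germs $a_z$, need not lie in $\ker\alpha_n$; showing it avoids $\im\alpha_{n-1}$ proves nothing about $\Ext^{n-1}$. This is precisely the obstruction the paper isolates in the discussion preceding Conjecture \ref{Conject}: when the resolved sequence does not vanish after the degree being computed, the family must be chosen in the kernel, which forces it to come from an actual section of $\coker\delta_{n-2}$ with infinitely many nonzero germs near $x_n$ --- a construction the paper cannot carry out in general (hence the conjecture for non-scattered spaces of finite rank). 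Your closing paragraph lists the two facts you checked (non-vanishing of the cokernel stalks and non-surjectivity of the serration maps) but omits this cocycle condition, which is the real issue.

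The repair is immediate and is what the paper does: since the rank is infinite, $X^{(n)}\supsetneq X^{(n+1)}$ for every finite $n$, so for each $n$ there is a point $x_n$ of height \emph{exactly} $n$. At such a point $\coker{\delta_{n}}_{x_n}=0$ (the argument of Lemma \ref{lem0}), so the complex obtained from Lemma \ref{homoloinf} vanishes beyond degree $n$, the kernel of the outgoing map is all of $\coker{\delta_{n-1}}_{x_n}$, and the non-surjectivity argument of Theorem \ref{ID_CB} applies verbatim to give $\Ext^{n}\left(\iota_{x_n}(\mathbb{Q}),c\mathbb{Q}\right)\neq 0$. Since $n$ is arbitrary, Theorem \ref{extcharacter} yields infinite injective dimension. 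With this change of the choice of points (and degree), your argument coincides with the paper's proof; do not invoke hull points or points of excess height, as they are exactly the cases left open.
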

\begin{proof}
Since the Cantor-Bendixson rank of $X$ is infinite there exists a sequence of points $x_n$ each having height $n$. As a consequence of Theorem \ref{ID_CB} for each $x_n$ we know that $\text{Ext}^n\left(\iota_{x_n}(\mathbb{Q}),c\mathbb{Q}\right)\neq 0$, and this happens for each $n$ since we do not have a maximal height. This gives the result. 
\end{proof}
We are left to deal with the case where $X$ is not scattered but has finite Cantor-Bendixson rank. In the above cases when we resolve with respect to $\iota_x\left(\mathbb{Q}\right)$, the resolved sequence becomes zero after term $\text{ht}(x)-1$, so the kernel of the map $\alpha_{\text{ht}(x)+1}$ is $\coker{\delta_{\text{ht}(x)-1}}_x$. This is advantageous since we can choose any point of $\coker{\delta_{\text{ht}(x)-1}}$ not in the image of $\alpha_{\text{ht}(x)}$. This changes when we are working in the case where $X$ has a perfect hull.

We know that the following Godement resolution is infinite:
\begin{align*}
\xymatrix{0\ar[r]&c\mathbb{Q}\ar[r]^{\delta_0}&I^0\ar[r]^{\delta_1}&\ldots\ar[r]^{\delta_{n-1}}&I^{n-1}\ar[r]^{\delta_{n}}&I^n\ar[r]^{\delta_n}&\ldots}
\end{align*} 
Therefore after resolving like above for a point $x$ in the perfect hull we obtain the following infinite sequence:
\begin{align*}
\xymatrix{0\ar[r]&\mathbb{Q}\ar[r]^-{\alpha_1}&\coker{\delta_0}_x\ar[r]^-{\alpha_2}&\ldots\ar[r]^-{\alpha_{n-2}}&\coker{\delta_{n-3}}_x\ar[d]^{\alpha_{n-1}}\\&&\ldots&\coker{\delta_{n-1}}_x\ar[l]^{\alpha_{n+1}}&\coker{\delta_{n-2}}_x\ar[l]^{\alpha_{n}}}
\end{align*}
The important thing to notice is that since this is non-zero at infinitely many places, when calculating the group $\text{Ext}^n$ we can't just chose any representative of $\coker{\delta_{n-1}}_x$ since the kernel is not everything.

In order to choose something in the kernel we need to adjust our argument above, namely instead of choosing a representative $(0_x,s^y)_{y\in U\setminus\left\lbrace x\right\rbrace}$ with $0\neq s^y\in\coker{\delta_{n-2}}_y$ arbitrary, we need $(s^y)_{y\in U\setminus\left\lbrace x\right\rbrace}$ to be determined by a section $s$ over $\coker{\delta_{n-2}}$. That is, we want each $s^y$ to be of the form $s_y$ for that section $s$, and such that each open neighbourhood $U$ of $x$ contains infinitely many $y$ such that $s^y\neq 0$.

Recall that a section $s$ over an open neighbourhood $U$ of $x$ has germ $s_x=0$ if and only if $s$ restricts to some smaller neighbourhood to give the zero section. Also recall that we can build a section in $\coker\delta_{n-2}(U)$ by considering $\underset{y\in U}{\prod}\coker\delta_{n-3}/S$. Using these two facts, if we could construct a family $\left[(a^y)_{y\in U\setminus\left\lbrace x\right\rbrace}\right]_S$ to be an alternating family where infinitely many $a^y$ are non-zero in ${\coker\delta_{n-3}}_y$ and infinitely many do equal zero, then we may have a suitable section $s$ to proceed with the proof. This approach needs the following condition to proceed:

If $a^y=0$, then any neighbourhood $U$ of $y$ contains infinitely many points $z$ such that $a^z\neq 0$.

If we can construct given any net converging to $x$, two term-wise disjoint subnets then we can do the above construction to show that the injective dimension of sheaves in this case is infinite, provided the sequence is set up to satisfy the condition. With this in mind we have the following conjecture.
\begin{conjecture}\label{Conject}
If $X$ has finite Cantor-Bendixson rank and non-empty perfect hull then the injective dimension of sheaves of $\mathbb{Q}$-modules over $X$ is infinite.
\end{conjecture}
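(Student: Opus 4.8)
The plan is to show that the constant sheaf $c\mathbb{Q}$ has infinite injective dimension over $X$; by Theorem \ref{extcharacter} it is enough to produce, for arbitrarily large $n$, a sheaf $A$ with $\Ext^n(A,c\mathbb{Q})\neq 0$, and we take $A=\iota_x(\mathbb{Q})$ for a fixed point $x$ lying in the (non-empty) perfect hull $X_H$. Since $X_H\subseteq X^{(k)}$ for every $k$, Lemma \ref{godementnon0} gives $\coker{\delta_{k-1}}_x\neq 0$ for all $k$, and together with Lemma \ref{lem0} and Proposition \ref{Godinj} this shows that the Godement resolution of $c\mathbb{Q}$ is an injective resolution that is non-zero in every degree. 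Applying $\Hom(\iota_x(\mathbb{Q}),-)$ and deleting the $c\mathbb{Q}$ term, the identification of Lemma \ref{homoloinf} (whose argument applies to points of the perfect hull) presents the resulting complex as
\[
0\longrightarrow \mathbb{Q}\xrightarrow{\ \alpha_1\ }\coker{\delta_0}_x\xrightarrow{\ \alpha_2\ }\coker{\delta_1}_x\xrightarrow{\ \alpha_3\ }\cdots,\qquad \alpha_{k+1}(a)=(a,\underline{0})_x .
\]
So the problem reduces to showing that the cohomology groups $\ker\alpha_{n+1}/\im\alpha_n$ of this infinite complex are non-zero for arbitrarily large $n$.

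The heart of the argument is an inductive construction of non-trivial cohomology classes, one in each sufficiently large degree. The contrast with the scattered situation of Theorem \ref{ID_CB} is that there the complex eventually vanishes, so every element of $\coker{\delta_{n-1}}_x$ is automatically a cocycle and it suffices to find one outside $\im\alpha_n$; here $\alpha_{n+1}$ has a proper kernel, so one must simultaneously arrange that the chosen class lies in $\ker\alpha_{n+1}$. Recalling that $\coker{\delta_{n-1}}_x=\bigl(\underset{U\backepsilon x}{\colim}\ \underset{y\in U}{\prod}\coker{\delta_{n-2}}_y\bigr)/\im S$ and that a section of $\coker{\delta_{n-2}}$ near $x$ arises from an element of $\underset{y\in U}{\prod}\coker{\delta_{n-3}}_y$ modulo the serration map $S$, the aim is to build, recursively in $n$, an \emph{alternating} family $\bigl[(a^y)_{y\in U\setminus\{x\}}\bigr]^S$ in which infinitely many $a^y$ are non-zero and infinitely many are zero, with the property that every neighbourhood of a point $y$ with $a^y=0$ still contains infinitely many $z$ with $a^z\neq 0$, and --- this is the essential extra requirement absent from the scattered case --- such that the family genuinely glues to a section of $\coker{\delta_{n-2}}$. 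The non-zero entries can be supplied at each stage by Lemma \ref{godementnon0}; the alternation, combined with the characterisation of vanishing germs, is precisely what forces the resulting germ $s_x$ to be a non-zero cocycle that is not a coboundary, by arguments of the type used in Propositions \ref{Godeg} and \ref{godementnon0}.

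The main obstacle --- and the reason the statement is left as a conjecture --- is the topological combinatorics underpinning this construction. What is needed, roughly, is that for $x\in X_H$ and any neighbourhood $U$ of $x$ the set of points of $U$ carrying the relevant height data and accumulating towards $x$ can be partitioned into two subsets, each of which again accumulates at $x$ and is ``thick'' near each of its own points in the sense above; equivalently, every net converging to $x$ admits two term-wise disjoint subnets still converging to $x$. For well-behaved perfect hulls --- for instance second countable ones, where $X_H$ contains a Cantor set and the splitting is transparent --- such statements are routine, but one must make the splitting uniform over the neighbourhood filter of $x$, verify that the split families remain honest sections of the successive cokernel sheaves, and propagate all of this through the induction on $n$ so that the classes produced are simultaneously cocycles and non-coboundaries in every large degree. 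A secondary point is that the hypothesis $\rank_{\cb}(X)<\infty$ should play no essential role beyond guaranteeing that the Cantor-Bendixson process stabilises with $X_H\neq\emptyset$: the argument ought to need only a non-empty perfect hull, which is exactly what makes the Godement resolution at $x$ infinite. For the profinite spaces of interest, where $SG$ is an inverse limit of finite discrete spaces (and second countable whenever $G$ is), one expects the net-splitting lemma to be accessible, and with it a proof of Conjecture \ref{Conject} along the lines sketched here.
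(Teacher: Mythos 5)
The statement you are trying to prove is left as a conjecture in the paper (Conjecture \ref{Conject}); the paper offers only the same heuristic discussion you give, not a proof, so there is no complete argument on either side to compare. Your proposal reproduces that discussion faithfully: the reduction via Theorem \ref{extcharacter} to showing $\Ext^n(\iota_x(\mathbb{Q}),c\mathbb{Q})\neq 0$ for $x$ in the perfect hull, the use of the Godement resolution together with Lemmas \ref{lem0}, \ref{godementnon0} and \ref{homoloinf}, and the identification of the complex $\mathbb{Q}\to\coker{\delta_0}_x\to\coker{\delta_1}_x\to\cdots$ with maps $a\mapsto (a,\underline{0})_x$ are all exactly the paper's setup. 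But the heart of the matter is not done, and you say so yourself: one must exhibit, in each large degree, a class that is simultaneously a cocycle (lies in $\ker\alpha_{n+1}$, which is now a proper subgroup since the complex never terminates) and not a coboundary, and this requires constructing an alternating family $\bigl[(a^y)_{y\in U\setminus\{x\}}\bigr]^S$ that genuinely arises from a section of $\coker{\delta_{n-2}}$, with the local thickness condition that every neighbourhood of a zero-labelled point contains infinitely many non-zero-labelled points. That construction is the entire content of the conjecture, and neither you nor the paper carries it out.

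Two specific weak points in your sketch. First, the claim that for second countable perfect hulls the splitting is ``transparent'' is not substantiated, and it is precisely where the paper reports getting stuck: splitting a single net converging to $x$ into two disjoint subnets is easy, but the condition needed is local at \emph{every} zero-labelled point $y$ (each neighbourhood of $y$ must meet the non-zero-labelled set in an infinite set), it must be arranged uniformly over the neighbourhood filter of $x$, and it must be compatible with the inductive passage to higher cokernels; the paper's discussion of $SG$ (both in the perfect and non-perfect cases) shows that even the natural candidate nets $\{NK\}$ or $\{A_N\}$ do not obviously admit such a labelling. Second, even granting the alternating family, you never verify that the resulting germ actually lies in $\ker\alpha_{n+1}$; the alternation is designed to make this plausible, but the verification that the class maps to zero one step further along (rather than merely being non-zero and outside $\im\alpha_n$) is an additional argument that is missing. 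As it stands your proposal is a restatement of the paper's supporting heuristic for the conjecture, not a proof of it.
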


If we consider a point $K$ of $SG$ the space of closed subgroups of a profinite group $G$, we have a net of the form $\left\lbrace NK\right\rbrace$ converging to $K$ where $N$ ranges through the open normal subgroups of $G$. Let us first assume that $SG$ is perfect so that the points $NK$ are not isolated. We are in the situation where as $NK$ gets closer to $K$ the index of $NK$ gets larger, so we could perhaps alternate the net $NK$ by considering the increasing sequence of indexes of subgroups $NK$ and using this sequence to give binary labels to the terms of the net. However it is difficult to see why any neighbourhood of a point $NK$ would contain infinitely many terms of both type of label. 

In the situation where $SG$ is not perfect it is well known by \cite{Gartside} that the open subgroups are isolated and hence get removed in the Cantor-Bendixson process immediately so we can't even use the above net. If we take $\left\lbrace A_N\right\rbrace$ where a closed subgroup $A_N$ is chosen in $O(N,NK)$, this converges to $K$ since sets of this form give a neighbourhood basis for $K$. However the subgroups $A_N$ have infinite index so the previous idea is not applicable and further ideas are needed to construct the alternating sequence.

We summarise what we have established thus far in the following theorem.
\begin{theorem}\label{Summary}
If $X$ is a space which is scattered and of Cantor-Bendixson rank $n$, then the injective dimension of sheaves of $\mathbb{Q}$-modules over $X$ is $n-1$. If $X$ is any space with infinite Cantor-Bendixson rank then the injective dimension is also infinite.

\end{theorem}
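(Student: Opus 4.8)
The statement is the synthesis of the two results established immediately above in this section, namely Theorem \ref{ID_CB} (the finite scattered case) and the theorem asserting infinite injective dimension when $\rank_{\cb}(X)$ is infinite. So the plan is simply to check that the hypotheses of those two theorems cover exactly the two cases in the statement, and then quote them. The only thing to unwind is that a space which is \emph{scattered} of Cantor--Bendixson rank $n$ is precisely a space with $\rank_{\cb}(X)=n$ and $X^{(n)}=\emptyset$, which is immediate from the definitions of scattered and of perfect hull; with that identification Theorem \ref{ID_CB} gives injective dimension $n-1$ in the first case, and the infinite-rank theorem gives the second case directly.

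For completeness I would recall the shape of the argument behind Theorem \ref{ID_CB}, since that is where the content lies. The upper bound $\ID\bigl(\sheaf_{\mathbb{Q}}(X)\bigr)\le n-1$ comes from the Godement resolution: by Proposition \ref{Godinj} each $C^k(F)$ is injective, so the Godement resolution is an injective resolution, and by Lemma \ref{lem0} we have $C^k(F)_x=0$ for $x\notin X^{(k)}$, so when $X^{(n)}=\emptyset$ the resolution terminates after term $n-1$. The lower bound is obtained by exhibiting a single sheaf, the constant sheaf $c\mathbb{Q}$, with $\ID(c\mathbb{Q})\ge n-1$: one picks a point $x$ of maximal height $n-1$, applies $\Hom(\iota_x(\mathbb{Q}),-)$ to the Godement resolution, and uses Lemma \ref{Homologychar} to identify the resulting complex with the maps $\alpha_{k+1}\colon\coker{\delta_{k-1}}_x\to\coker{\delta_{k}}_x$, $a\mapsto(a,\underline{0})_x$. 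The non-vanishing $\Ext^{n-1}(\iota_x(\mathbb{Q}),c\mathbb{Q})\ne 0$ then follows from Lemma \ref{godementnon0} together with the fact that $\alpha_{n-1}$ fails to be surjective (one writes down an element $\bigl[((0_x,a_z)_{z\in U\setminus\{x\}})_x\bigr]^S$ built from nonzero germs $a_z\in\coker{\delta_{n-3}}_z$ for the infinitely many $z\in U\cap X^{(n-2)}$), and Theorem \ref{extcharacter} converts this into the statement $\ID(c\mathbb{Q})=n-1$.

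The infinite-rank half runs the same way but with no top degree: if $\rank_{\cb}(X)$ is infinite there is a sequence of points $x_k$ with $\mathrm{ht}(X,x_k)=k$ for every $k$, and the computation above applied at each $x_k$ yields $\Ext^{k}(\iota_{x_k}(\mathbb{Q}),c\mathbb{Q})\ne 0$ for all $k$, so no finite injective resolution of $c\mathbb{Q}$ can exist and the injective dimension of the category is infinite.

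\textbf{Main obstacle.} For this theorem as stated there is essentially no obstacle beyond bookkeeping, since all the real work has been done in Lemmas \ref{lem0}, \ref{Homologychar}, \ref{godementnon0} and Proposition \ref{Godinj}; the delicate points are the non-vanishing of the cokernel stalks $\coker{\delta_{k-1}}_x$ along $X^{(k)}$ and the non-surjectivity of $\alpha_{n-1}$, both handled by the "serration at a point versus serration on a neighbourhood" trick used repeatedly (a germ vanishes iff it is represented by a locally zero section). The genuinely open issue, which this theorem deliberately omits, is the remaining case of finite Cantor--Bendixson rank with non-empty perfect hull, recorded as Conjecture \ref{Conject}.
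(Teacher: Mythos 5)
Your proposal is correct and matches the paper exactly: Theorem \ref{Summary} is stated there as a summary of Theorem \ref{ID_CB} (scattered, finite rank, using the Godement resolution bound from Proposition \ref{Godinj} and Lemma \ref{lem0} plus the $\Ext$ computation at a point of maximal height via Lemmas \ref{Homologychar} and \ref{godementnon0}) together with the infinite-rank theorem, which is precisely the synthesis you describe. Your identification of "scattered of rank $n$" with "$\rank_{\cb}(X)=n$ and $X^{(n)}=\emptyset$" is exactly the hypothesis of Theorem \ref{ID_CB}, so no further argument is needed.
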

We now look at examples of profinite spaces and the application of the result relating injective dimension of sheaves of $\mathbb{Q}$-modules over $X$ to the Cantor-Bendixson rank of $X$.
\begin{example}
If $G$ is a discrete group then $SG$ is a finite discrete space and hence has Cantor-Bendixson dimension $1$. Therefore Theorem \ref{ID_CB} implies that the injective dimension of sheaves of $\mathbb{Q}$-modules over $SG$ is $0$.
\end{example} 
The above example works equally for any discrete space. Another way of seeing that the injective dimension of sheaves of $\mathbb{Q}$-modules over a discrete space is zero is by observing that such a sheaf is equivalent to a product of $\mathbb{Q}$-modules. We can see this by noticing that since each point $x$ in $X$ is isolated, the stalk of a sheaf $F$ at $x$ is determined by evaluating the sheaf at the open subset $\left\lbrace x\right\rbrace$. This becomes even clearer by considering a sheaf $F$ from the point of view of sheaf spaces. Let $(LF,\pi)$ be the sheaf space for $F$ over a discrete space $X$. Then since every map of sets from $X$ to $LF$ is continuous, it follows that $F(U)=\underset{x\in U}{\prod}F_x$ for any open subset $U$. This is an alternative way of saying that $F$ is equivalent to $C^0(F)$ from Definition \ref{serr}, which we know to be injective by Proposition \ref{Godinj}.
\begin{example}
If $G=\mathbb{Z}_p$ for any prime number $p$, then $S\left(\mathbb{Z}_p\right)$ is homeomorphic to the scattered space $P$ from Proposition \ref{padicspace}. This space has Cantor-Bendixson rank $2$ as seen in Example \ref{padic}. Therefore the category of sheaves of $\mathbb{Q}$-modules over $S\left(\mathbb{Z}_p\right)$ has injective dimension $1$ by Theorem \ref{ID_CB}. 
\end{example}
\begin{example}\label{sumpadic}
Consider distinct primes $p_1,p_2,\ldots,p_n$, then we have a profinite group $\underset{1\leq i\leq n}{\prod}\mathbb{Z}_{p_i}$ with corresponding profinite space $S\left(\underset{1\leq i\leq n}{\prod}\mathbb{Z}_{p_i}\right)$. This space is homeomorphic to $P^n$ by \cite[Proposition 2.5]{Gartside}. Then by Proposition \ref{prodcant} we have that $\dim_{CB}(P^n)=n+1$ and $\left(P^n\right)^{(n+1)}=\emptyset$. We can now apply Theorem \ref{ID_CB} to deduce that the injective dimension of sheaves over $S\left(\underset{1\leq i\leq n}{\prod}\mathbb{Z}_{p_i}\right)$ is exactly $n$. 
\end{example}
Furthermore if we consider Conjecture \ref{Conject} we can see the possible implications.
\begin{example}
The profinite completion of $\mathbb{Z}$ is defined to be:
\begin{align*}
\hat{\mathbb{Z}}=\underset{p}{\prod}\mathbb{Z}_p
\end{align*}
where the product runs over the collection of prime numbers $p$. This is a profinite group under the product topology and we can see that $S\left(\hat{\mathbb{Z}}\right)$ is perfect. If proven to be correct, Conjecture \ref{Conject} would imply that the injective dimension of sheaves over this space is infinite.
\end{example}
Another important example of a space is defined in \cite[Definition 2.8]{Gartside}, and this construction is similar to the Cantor space.
\begin{definition}\label{Pel}
Let $F_0=P_0=[0,1]$, the closed unit interval. We set $F_1=F_0\setminus (\frac{1}{3},\frac{2}{3})$ and $B_1=F_1\bigcup \left\lbrace\frac{1}{2}\right\rbrace$. That is, to form $F_1$ we remove the middle third of the interval of $F_0$ and to form $B_1$ we reinsert the midpoint of the deleted interval to $F_1$.

Given $F_{i-1}$ we define $F_i$ by deleting the middle third intervals of the remaining segments of $F_{i-1}$ and we define $B_i$ by reinserting midpoints of the deleted intervals to $F_i$. We set $F=\underset{n\in\mathbb{N}}{\bigcap}F_n$ and $B=\underset{n\in\mathbb{N}}{\bigcap}B_n$.
\end{definition}
The main focus of \cite{Gartside} is on proving that the algebraic structure of a profinite group $G$ can tell us about $SG$. Specifically, throughout \cite{Gartside} there are many assumptions on the algebraic structure of $G$ which lead to the conclusion that $SG$ is homeomorphic to $B$ from Definition \ref{Pel}. 
\begin{example}
Consider the spaces $B$ and $F$ defined in Definition \ref{Pel}. From \cite[Definition 2.8]{Gartside} we know that the space $B$ has perfect hull given by the Cantor space $F$ and that $\rank_{\cb}(B)=1$. If Conjecture \ref{Conject} were true then it would follow that the injective dimension of sheaves over $B$ is infinite.
\end{example}
\chapter{Injective dimension of G-equivariant sheaves of vector spaces}\label{finalchap}
\chaptermark{Injective dimension of $G$-sheaves}
In this chapter we seek to generalise the relationship of Chapter \ref{chapterIDCB} between the Cantor-Bendixson rank of a profinite space and the injective dimension of rational sheaves on $X$ to the $G$-equivariant setting. Here we consider $G$-sheaves over a $G$-space $X$. We know that if $x\in X$ is an isolated point of a $G$-space then each of the translates of $x$ must also be isolated. Therefore if $x\in X$ has height $n$ then so does $gx$ for any $g\in G$. Another way of saying this is that the Cantor-Bendixson height of a point is invariant under taking orbits. In the first section we will construct an equivariant Godement resolution and show that it is injective. In the final section we will carry out the injective dimension calculations on the equivariant Godement resolution to show that the analogous result to Chapter \ref{chapterIDCB} holds, Theorem \ref{GSummary}.
\section{G-equivariant Godement resolution}
We begin by making explicit the injective resolution we will be using throughout. We will prove that this satisfies the required properties.
\begin{construction}\label{equires}\index{equivariant Godement resolution}
Let $E$ be a $G$-equivariant sheaf over a profinite $G$-space $X$. By Proposition \ref{equivinjshf}, $\underset{A\in \text{orb}(X)}{\prod}\overline{E_{|_{A}}}$ is injective since the product of injective objects is an injective object. Here the product is in the category of $G$-sheaves over $X$. We denote this by $I^0(E)$.

There is also a monomorphism given by:
\begin{align*}
\delta:E\left(U\right)&\rightarrow \underset{A\in \text{orb}(X)}{\prod}\overline{E_{|_{A}}}\left(U\right)\\ s&\mapsto \left(s_{|_{A\cap U}}\right)_{A\in \text{orb}(X)} 
\end{align*}
where $U$ is an open subset of $X$. We calculate the cokernel of $\delta$ and continue this process inductively to obtain an injective resolution for $E$.
\end{construction} 
We now prove that the above map $\delta$ is a morphism of $G$-sheaves of $\mathbb{Q}$-modules over $X$.
\begin{proof}
Let $\iota_A$ be the closed inclusion of the orbit $A$ into $X$. We know that any orbit $A$ is closed by Proposition \ref{orbclose}. Then as in \cite[Theorem 3.7.13]{Tennison} ${\iota_A}^*$ represents the restriction functor and ${\iota_A}_*$ represents extension by zero, which are left and right adjoint to each other respectively.

We first show that we have a morphism:
\begin{align*}
\phi_A:E(U)&\rightarrow \overline{E_{|_{A}}}\left(U\right)\\ s&\mapsto s^{\prime},\,\text{where}\, s^{\prime}(x)=\left(s(\iota_A(x)),x\right),
\end{align*}
$U$ is open in $X$ and $\overline{E_{|_{A}}}\left(U\right)=E_{|_{A}}\left({\iota_A}^{-1}(U)\right)={\iota_A}_*{\iota_A}^*E(U)$. The map $\delta$ is taken to be the map:
\begin{align*}
(\phi_A)_A :E\rightarrow \underset{A\in \text{orb}(X)}{\prod}\overline{E_{|_{A}}},
\end{align*}
where the codomain is the product of non-equivariant sheaves. We will then show that $\delta$ maps into the discretisation of the product of sections over $U$, so that $\delta$ is a morphism whose codomain is the categorical product with respect to the category of $G$-sheaves over $X$.

We shall prove that the map $\phi_A$ is the adjoint of the identity morphism on $E_{|_{A}}$ which equals ${\iota_A}^*E$. By definition, the adjoint of this morphism is given by the following composite:
\begin{align*}
\xymatrix{E\ar[r]^{\nu}&{\iota_A}_*{\iota_A}^*E\ar[r]^{{\iota_A}_*(\id)}&{\iota_A}_*{\iota_A}^*E}
\end{align*}
where $\nu$ is the unit map and ${\iota_A}_*(\id_{{\iota_A}^*E})=\id_{{\iota_A}_*{\iota_A}^*E}$. By \cite[pp. 59]{Tennison} this is precisely $\phi_A$, so this is a map of sheaves. For $G$-equivariance, if $x\in A$ we will show that if $s_x\in E_x$ and $g\in G$ then ${\phi_A}_{gx}(g(s_x))=g({\phi_A}_x(s_x))$.

First note that this by definition reduces to showing that $g(s^{\prime}(x))=(g*s)^{\prime}(gx)$. But by definition of $s^{\prime}$ this holds immediately, since:
\begin{align*}
g(s_x,x)=(g(s_x),gx).
\end{align*}
For the discrete condition notice that if $s\in E(U)$ is $N$-equivariant for an open normal subgroup $N$ of $G$ and $U$ being $N$-invariant, then $\left(s_{|_{U\cap A}}\right)_{A\in orb(X)}$ must be $N$-equivariant. 

To see that $\delta$ is a monomorphism, let $s$ and $t$ be sections over $U$ which satisfy that 
\begin{align*}
\left(s_{|_{A\cap U}}\right)_{A\in \text{orb}(X)}=\left(t_{|_{A\cap U}}\right)_{A\in \text{orb}(X)}.
\end{align*}
We can use the general fact that if $a,b:Y\rightarrow Z$ are maps of sets and $\left\lbrace K_i\mid i\in I\right\rbrace$ is a partition of $Y$ with $a_{|_{K_i}}=b_{|_{K_i}}$ for each $i\in I$, then $a=b$. Therefore by setting $a=s_{|_{U}}$, $b=t_{|_{U}}$ and the partition to be
\begin{align*}
\left\lbrace U\cap A\mid A\in \text{orb}(X)\right\rbrace
\end{align*}
we obtain $s_{|_{U}}=t_{|_{U}}$ as required.
\end{proof}
\begin{corollary}
If $E$ is a Weyl-$G$-sheaf of $\mathbb{Q}$-modules then the $G$-sheaf constructed from $E$ in Construction \ref{equires} is a Weyl-$G$-sheaf. Let $X$ be a $G$-space such that each $x\in X$ has a neighbourhood basis $\mathfrak{B}_x$ satisfying that each $U\in \mathfrak{B}_x$ is $\stabgx$-invariant. Then if $E$ is a stalk-wise fixed sheaf then the $G$-sheaf constructed from $E$ in Construction \ref{equires} is a stalk-wise fixed sheaf. 
\end{corollary}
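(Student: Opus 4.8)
The plan is to induct on the terms of the resolution of Construction \ref{equires}. Writing $C^{-1}=E$ and $C^{j}$ for the $j$-th cokernel in that construction, one has $I^{k}(E)=I^{0}(C^{k-1})$ and $C^{k}=\coker\!\bigl(C^{k-1}\to I^{k}(E)\bigr)$, so it is enough to prove the single implication: if $F$ is a Weyl-$G$-sheaf (respectively a stalk-wise fixed $G$-sheaf over the given $X$) then so is $I^{0}(F)=\prod_{A\in\orb(X)}\overline{F_{|_{A}}}$. Granting that, the inductive step is automatic: the stalk of a cokernel of $G$-sheaves at $K$ is the cokernel of the stalk maps (sheafification preserves stalks and colimits are exact), hence a quotient of $I^{k}(E)_{K}$; and a quotient of a $K$-fixed (respectively $\stabgx(K)$-fixed) module is again $K$-fixed (respectively $\stabgx(K)$-fixed). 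It is worth noting that $I^{0}(F)$ is an \emph{infinite} product of Weyl-$G$-sheaves taken in the larger category of $G$-sheaves, so by the remarks around Construction \ref{prodweyl} there is genuinely something to prove: one must show that for the particular family $\{\overline{F_{|_{A}}}\}$ of $G$-skyscraper sheaves (Example \ref{Gskyscraper}, Lemma \ref{restconst}) the $G$-sheaf product already lands in the smaller category.

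The first real step is to identify the stalk $I^{0}(F)_{K}$. Since each orbit $A$ is closed (Proposition \ref{orbclose}), $\overline{F_{|_{A}}}$ is supported on $A$, and a section of $\prod_{A}\overline{F_{|_{A}}}$ over an open set $U$ amounts to an \emph{equivariant serration}: a function $\sigma$ with $\sigma(B)\in F_{B}$ for $B\in U$, whose restriction to each orbit is a continuous section of $F$ restricted to that orbit, and which — by the discreteness built into the infinite-product construction, Construction \ref{profinfprod} — is fixed by some open normal subgroup of $\stabgx(U)$. Thus $I^{0}(F)_{K}=\underset{U\ni K}{\colim}\{\text{such }\sigma\}$. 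One must be careful here: exactly as for the ordinary Godement sheaf $C^{0}$, this stalk is genuinely larger than $F_{K}$, so it is \emph{not} legitimate to argue that stalks commute with the product.

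The heart of the proof is then showing that $I^{0}(F)_{K}$ is $K$-fixed (respectively $\stabgx(K)$-fixed). The action of $h\in\stabgx(K)$ sends the germ of $\sigma$ to the germ of $h*\sigma$, where $(h*\sigma)(B)=h\,\sigma(h^{-1}Bh)$, so one needs $h*\sigma$ and $\sigma$ to agree on some common neighbourhood of $K$ (for $h\in K$ in the Weyl case; for $h\in\stabgx(K)$ in the stalk-wise fixed case). The ingredients are: (i) the discreteness of $\sigma$ provides a \emph{single} open normal subgroup $N\unlhd G$ with $n*\sigma=\sigma$ for all $n\in N$, working uniformly across all orbits at once; (ii) for $h$ as above one has $h\in N_{G}(N'K)$ for every open normal $N'$ (because $hKh^{-1}=K$ forces $hN'Kh^{-1}=N'K$), so $h$ stabilises the basic neighbourhood $O(N,NK)$ of $K$ (Example \ref{SGbasisstab}; in the general-$X$ case this is where the standing hypothesis that $K$ has a $\stabgx(K)$-invariant neighbourhood basis is used); and (iii) the equivariance-of-sections results (Propositions \ref{restProp}, \ref{equivariant}, \ref{stalkfixchar} and \ref{Weylequi}) applied to $F$, together with the fact that $F$ is stalk-wise fixed (respectively Weyl), are combined to pass from the statement that $h^{-1}Bh$ is, up to an element of $N$, a translate of $B$ by something stabilising $B$, to genuine pointwise equality $(h*\sigma)(B)=\sigma(B)$ on $O(N,NK)$. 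Feeding this back through the induction completes the proof.

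The step I expect to be the main obstacle is exactly (iii): making the comparison of $h*\sigma$ with $\sigma$ hold simultaneously on a \emph{single} neighbourhood of $K$, despite there being infinitely many orbits clustering arbitrarily close to $K$ on which $\sigma$ is a priori unconstrained. A naive orbit-by-orbit argument produces a neighbourhood depending on the orbit; the discreteness condition of Construction \ref{profinfprod} is precisely the device that forces a uniform choice, and the delicate point — sharpest in the Weyl case, where the stalks carry a nontrivial $W_{G}(K)=N_{G}(K)/K$-action rather than being honestly fixed — is to verify that this uniform $N$ really is enough to conclude equality on $O(N,NK)$ rather than merely on the $N$-orbits of individual points.
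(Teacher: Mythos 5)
Your reduction (induct along the resolution, using that stalks of cokernels are quotients of stalks and that quotients of fixed modules are fixed, so that everything comes down to $I^0(F)=\prod_{A\in\orb(X)}\overline{F_{|_{A}}}$) is sound, and your insistence that the stalk of this product at $K$ is a colimit of discrete families and is strictly larger than $F_K$ is more careful than the paper itself: the paper's proof of this corollary is a two-line remark citing Propositions \ref{stalkfixchar} and \ref{Weylequi} and observing that $\delta$ of an $NK$-equivariant (resp.\ $\stabgx(x)$-equivariant) section is again equivariant, i.e.\ it only inspects the image of $\delta$. But your step (iii), the only step with real content, is exactly where you stop: you call the uniform comparison of $h*\sigma$ with $\sigma$ ``the main obstacle'' and do not carry it out, so as written the proposal does not prove the statement. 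The missing verification in the Weyl case over $SG$ is short, and you should record it: represent the germ by $\sigma=(s_A)_A$ over $U=O(N,NK)$ with $n*\sigma=\sigma$ for all $n\in N$, where $N\unlhd G$ is open normal (intersect the subgroup supplied by discreteness with the one defining $U$). For $k\in K$ and any $B\in O(N,NK)$ one has $NB=NK\supseteq K$, so $k=bn$ with $b\in B$, $n\in N$; hence $k^{-1}Bk=n^{-1}Bn$, which again lies in $O(N,NK)$, and $(k*s_A)(B)=k\,s_A(n^{-1}Bn)=kn^{-1}s_A(B)=b\,s_A(B)=s_A(B)$, the last equality because $s_A(B)\in E_B$ is $B$-fixed as $E$ is Weyl. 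This is uniform in $B$ and in the orbit $A$, which is exactly the uniformity you feared; the input is not discreteness alone but the geometry of $SG$, namely that $K\leq NB$ for every $B$ in the basic neighbourhood $O(N,NK)$.

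Your unease is in fact justified for the second clause (stalk-wise fixed sheaves over a general $X$ with $\stabgx(x)$-invariant neighbourhood bases): there the same scheme would need $\stabgx(x)\subseteq\stabgx(y)\cdot N$ for all $y$ in a single neighbourhood of $x$, and the invariant-basis hypothesis does not supply this. Concretely, let $G=\mathbb{Z}_p$ act on the one-point compactification $X$ of $\coprod_{n\geq 1}G/p^nG$, let $E$ be the direct sum of the orbit sheaves $\overline{G\times_{p^nG}\mathbb{Q}}$ (stalk-wise fixed, with stalk $0$ at the fixed point $x_\infty$), and let $\sigma=(f_n)_n$ be the section of $\prod_{A}\overline{E_{|_{A}}}$ near $x_\infty$ obtained by pulling back a nonconstant function $G/pG\to\mathbb{Q}$ to each orbit; $\sigma$ is fixed by $pG$, hence discrete in the sense of Construction \ref{profinfprod}, but its germ at $x_\infty$ is moved by every $h\notin pG$, so the product fails to be stalk-wise fixed at $x_\infty$. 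So for that clause neither your outline nor the paper's one-line argument closes the gap; the case the remainder of the chapter actually relies on is the Weyl case over $SG$, where the computation above completes your plan.
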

\begin{proof}
The reason the proof of Construction \ref{equires} holds for these subcategories can be seen in light of Propositions \ref{stalkfixchar} and \ref{Weylequi}. If a section $s$ is $NK$-equivariant (or $\text{stab}_G(x)$-equivariant) then the corresponding section under the image of $\delta$ is equivariant with respect to this subgroup. This is analogous to the part of the proof in Construction \ref{equires} where we see that a $N$-equivariant section $s$ satisfies that $\delta(s)$ is $N$-equivariant.
\end{proof}
\begin{definition}\label{godeequi}\index{equivariant Godement resolution}
If $E$ is a $G$-sheaf of $\mathbb{Q}$-modules over a profinite $G$-space we define the equivariant Godement resolution inductively as follows:
\begin{align*}
\xymatrix{0\ar[r]&E\ar[r]^{\delta_0}&I^0(E)\ar[d]^p\ar[r]&I^0(\coker\delta_0)=I^1(E)\ar[r]&\ldots\\&&\coker\delta_0\ar[ur]_{\delta_1}}
\end{align*}
This comes from Construction \ref{equires}.
\end{definition}
As a consequence of Proposition \ref{equivinjshf} it follows that for any $G$-sheaf of $\mathbb{Q}$-modules (or Weyl-$G$-sheaf of $\mathbb{Q}$-modules) $E$, $I^0(E)$ is injective, since products of injective objects are injective. This demonstrates that as in the non-equivariant case, the analogous equivariant Godement resolution in Definition \ref{godeequi} is an injective resolution.
\begin{proposition}\label{equidisc}
Let $x$ be an isolated point in a $G$-space $X$, then $G$-sheaves of the form $I^0(E)$ satisfy that $I^0(E)_x=E_x$ for any $G$-sheaf $E$. The same is true in the category of Weyl-$G$-sheaves.
\end{proposition}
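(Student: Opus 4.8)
The plan is to unwind the definition of $I^0(E)$ from Construction \ref{equires} and Definition \ref{godeequi} and compute its stalk at an isolated point $x$. Recall that $I^0(E) = \underset{A \in \orb(X)}{\prod} \overline{E_{|_A}}$, where the product is taken in the category of $G$-sheaves over $X$, and that by Construction \ref{profinfprod} this product is the discretisation of the product $\underset{A}{\prod} \overline{E_{|_A}}$ in the category of non-equivariant sheaves. Since $x$ is isolated, $\{x\}$ is itself a basic open set and is $\stabgx(x)$-invariant, so the discretisation at $\{x\}$ introduces no change: $\stabgx(x)$ acts on the stalk through a discrete action automatically once we restrict to the point. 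Hence it suffices to compute the stalk of the non-equivariant product at $x$.

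First I would observe that for an isolated point $x$, the stalk of any sheaf $F$ at $x$ is simply $F(\{x\})$, since $\{x\}$ is the minimal open neighbourhood of $x$, so $F_x = \underset{U \backepsilon x}{\colim}\, F(U) = F(\{x\})$. Applying this to $F = I^0(E)$, and using that sections of a product of sheaves are the product of sections, gives
\begin{align*}
I^0(E)_x = I^0(E)(\{x\}) = \underset{A \in \orb(X)}{\prod} \overline{E_{|_A}}(\{x\}).
\end{align*}
Next I would note that $\overline{E_{|_A}}(\{x\})$ is the stalk $\overline{E_{|_A}}_x$, which by construction of extension by zero (using Proposition \ref{orbclose} to know each orbit $A$ is closed) equals $E_x$ when $x \in A$ and is $0$ otherwise. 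Since the orbits partition $X$, exactly one orbit $A$ contains $x$, so the product collapses to a single factor $E_x$. This yields $I^0(E)_x \cong E_x$.

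Finally I would check that this isomorphism is the one induced by $\delta_0$, i.e. that $(\delta_0)_x \colon E_x \to I^0(E)_x$ is an isomorphism; this is immediate because the component $\phi_A$ for the unique orbit $A \ni x$ is, on the stalk at $x$, the identity $s_x \mapsto (s_x, x)$, while all other components vanish at $x$. The argument for Weyl-$G$-sheaves is identical: the full subcategory inclusion does not change stalks or the product construction at an isolated point, and the relevant $\stabgx(x)$-fixedness is inherited. I do not expect a serious obstacle here; the only point requiring a little care is making sure that passing to the discretisation in the $G$-sheaf product genuinely does not alter the stalk at an isolated point $x$ — but since $\{x\}$ is open and $\text{stab}_G(x)$-invariant, sections over $\{x\}$ already form a discrete $\text{stab}_G(x)$-module (every element has $\text{stab}_G(x)$-stabiliser containing an open normal subgroup by Proposition \ref{restProp} applied to the one-point domain, or more directly because the domain is already $\text{stab}_G(x)$-invariant and finite orbits are automatic), so the discretisation functor is the identity on this piece. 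The main ``obstacle'', such as it is, amounts to bookkeeping the identification with the $\delta_0$-induced map and confirming the Weyl case, both of which are routine.
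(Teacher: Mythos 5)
Your proof is correct and follows essentially the same route as the paper: since $\{x\}$ is open, the stalk is the value of the sheaf at $\{x\}$, and every extension-by-zero factor $\overline{E_{|_A}}$ vanishes there except the one indexed by the orbit of $x$, which gives $E_x$. The additional care you take with the discretisation in the $G$-sheaf product and with identifying the isomorphism as the one induced by $(\delta_0)_x$ is sound, but goes beyond the paper's one-line argument, which simply evaluates the product at the open singleton.
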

\begin{proof}
Since $\left\lbrace x\right\rbrace$ is open we know that: 
\begin{align*}
\underset{A\in \text{orb}(X)}{\prod}\overline{E_{|_{A}}}\left\lbrace x\right\rbrace&=E_{|_{A}}\left(\left\lbrace x\right\rbrace\right)=E_x
\end{align*}
where $A$ is the orbit of $x$.
\end{proof}
This gives the following useful corollary which is immediate from Proposition \ref{equidisc}.
\begin{corollary}\label{corequidisc}
If $E$ is any Weyl-$G$-sheaf over a profinite $G$-space $X$ and $x$ is an isolated point of $X$, then $\delta_x$ is surjective so that ${\coker\delta}_x=0$.
\end{corollary}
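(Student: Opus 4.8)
The plan is to deduce the statement directly from Proposition \ref{equidisc} by unwinding the formula for $\delta$ at the open set $\{x\}$. First I would observe that, since $x$ is isolated, $\{x\}$ is open, so $E_x = E(\{x\})$ and $I^0(E)_x = I^0(E)(\{x\})$, and the stalk map $\delta_x$ is simply the map $\delta(\{x\})$ written down in Construction \ref{equires}. Then I would evaluate that map explicitly: writing $A_x$ for the $G$-orbit of $x$, the defining formula sends a section $s$ over $\{x\}$ to the tuple $(s|_{A \cap \{x\}})_{A \in \orb(X)}$. For $A = A_x$ we have $A \cap \{x\} = \{x\}$, so the $A_x$-component is $s$ itself; for every other orbit $A$ we have $x \notin A$, so the extension by zero $\overline{E|_A}$ has zero stalk at $x$ and the corresponding component vanishes. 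Hence $I^0(E)(\{x\})$ collapses to $\overline{E|_{A_x}}(\{x\}) = E_x$ — which is exactly the content of Proposition \ref{equidisc} — and under this identification $\delta_x$ is the identity map of $E_x$, in particular surjective.

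From there the conclusion is immediate: taking stalks at a point is exact, so $(\coker \delta_0)_x = \coker(\delta_0)_x = \coker(\delta_x) = 0$. The same argument applies verbatim in the category of Weyl-$G$-sheaves, using the Weyl-$G$-sheaf version of Proposition \ref{equidisc} already recorded there, since the construction of $\delta$ and the identification of $I^0(E)_x$ both restrict to that subcategory.

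The only point needing a little care — and the nearest thing to an obstacle — is making sure that the product defining $I^0(E)$ genuinely restricts to $\overline{E|_{A_x}}(\{x\})$ at an isolated point, i.e. that the discretisation involved in forming the product of $G$-sheaves (Construction \ref{profinfprod}) does not alter sections over $\{x\}$. This is harmless: $\{x\}$ is compact open, its stabiliser is open and acts on sections over $\{x\}$ through a finite quotient by Proposition \ref{sectionquot}, so every section over $\{x\}$ is already discrete and the discretisation is the identity there. Everything else in the argument is a routine unwinding of definitions, so I would present it as a one-line corollary of Proposition \ref{equidisc} as stated.
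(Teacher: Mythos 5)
Your proposal is correct and follows essentially the same route as the paper, which deduces the corollary directly from Proposition \ref{equidisc} by noting that at an isolated point the only non-zero factor of $I^0(E)$ over $\{x\}$ is the one indexed by the orbit of $x$, so that $\delta_x$ becomes the identity on $E_x$ and the cokernel has zero stalk. Your extra check that the discretisation in the $G$-sheaf product is harmless over $\{x\}$ is a detail the paper leaves implicit, but it is the same argument, not a different one.
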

\begin{remark}
This demonstrates an important subtlety, in the definition of $I^0(E)$ we consider a product which is indexed over $\text{orb}(X)$ as opposed to $X$. The previous corollary shows that $\delta_x$ is an isomorphism with $\text{orb}(X)$ indexing. If we index the product over $X$ then $I^0(E)_x=\underset{y\in A}{\prod} E_y$ where $A$ is the orbit of $x$. This is not $E_x$ unless $x$ is $G$-fixed.
\end{remark}
\section{Injective dimension calculation}
This section will complete the chapter by calculating the injective dimension of the category of $G$-sheaves in a way analogous to Chapter \ref{chapterIDCB}. We begin with the following lemma, which will be useful in proving Propositions \ref{spillorb} and \ref{constfill}, which help us to generalise what is known in the non-equivariant case.
\begin{lemma}\label{alternateheight}
Let $X$ be a $G$-space with Cantor-Bendixson rank $n$. If $x$ is any point of height $k$, where $k>0$, then any neighbourhood $U$ of $x$ contains points of height $j$ for every $j\leq k$.
\end{lemma}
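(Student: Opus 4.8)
The plan is to argue by downward induction on $j$, starting from $j = k$ (where the claim is the hypothesis that $x$ itself has height $k$ and lies in $U$) and descending to $j = 0$. The key observation is that the Cantor--Bendixson process removes isolated points one stratum at a time, so a point of height $k$ is, by definition, isolated in $X^{(k)}$ but not in $X^{(k-1)}$; in particular it is an accumulation point of $X^{(k-1)}$, hence every neighbourhood of it meets $X^{(k-1)} \setminus \{x\}$ in infinitely many points. Each such point has height at most $k-1$, but a priori could have height strictly less than $k-1$, so I cannot immediately conclude there is a point of height exactly $k-1$. This is the crux of the argument and I address it below.

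First I would recall the standard fact (valid in any Hausdorff space, e.g.\ as used implicitly in Lemma~\ref{lem0} and Lemma~\ref{godementnon0}) that for each ordinal $\alpha$, the subspace $X^{(\alpha+1)}$ is the set of non-isolated points of $X^{(\alpha)}$, and that $X^{(\alpha)}$ is closed in $X$. Now fix $x$ of height $k > 0$ and an open neighbourhood $U$ of $x$. Since $x \in X^{(k)}$ but $x$ is not isolated in $X^{(k-1)}$, the open set $U \cap X^{(k-1)}$ (open in the subspace $X^{(k-1)}$) contains $x$ together with other points of $X^{(k-1)}$. I claim $U$ contains a point of height exactly $k-1$: consider $V = U \cap X^{(k-1)}$ as a nonempty subspace; it is a (relatively) open subset of $X^{(k-1)}$ containing the non-isolated point $x$, so $V$ is not the one-point set $\{x\}$, and moreover $V \setminus \{x\}$ is nonempty and open in $X^{(k-1)}$. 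If every point of $V \setminus \{x\}$ had height $\geq k$, then $V \setminus \{x\} \subseteq X^{(k)}$, and since $V\setminus\{x\}$ is open in $X^{(k-1)}$ and disjoint from $\{x\}$, one checks that $x$ would be isolated in $X^{(k-1)} \setminus ((V\setminus\{x\})) = $ \dots — more cleanly: the height function restricted to $V$ is at least $k-1$ everywhere (since $V \subseteq X^{(k-1)}$), so it suffices to exhibit one point of $V$ whose height is \emph{not} $\geq k$, equivalently a point of $V$ isolated in $X^{(k-1)}$. Such a point exists because $V$ is a nonempty open subset of the space $X^{(k-1)}$, and a nonempty scattered-at-some-level space — here I use that $\mathrm{rank}_{CB}(X) = n < \infty$ forces $X^{(k-1)}$ to have isolated points unless $X^{(k-1)} = X^{(n)}$, but $k - 1 < k \leq n$ so $X^{(k-1)} \neq X^{(n)}$ when the hull is empty, and in general $X^{(k-1)}$ strictly contains $X^{(k)}$. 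Thus $X^{(k-1)}$ has isolated points, and by the argument in Lemma~\ref{godementnon0} any nonempty open subset of $X^{(k-1)}$ contains one. Hence $V = U \cap X^{(k-1)}$ contains a point $y$ of height exactly $k-1$, and $y \in U$.

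For the inductive step, having found a point $y \in U$ of height exactly $j$ with $0 < j \leq k$, I repeat the same reasoning with $y$ in place of $x$ and $U$ in place of itself: $y$ is non-isolated in $X^{(j-1)}$, so $U \cap X^{(j-1)}$ is a nonempty open subset of $X^{(j-1)}$ properly containing $\{y\}$, and since $j - 1 < k \leq n$ (so $X^{(j-1)} \neq X^{(n)}$, using that we are in the scattered case or at worst $j-1 < n$) the subspace $X^{(j-1)}$ has isolated points, whence $U$ contains a point of height exactly $j-1$. Iterating down to $j = 0$ produces points of every height $j \leq k$ in $U$, which is the assertion. The main obstacle, as flagged, is the step ``a nonempty relatively-open subset of $X^{(j-1)}$ contains a point isolated in $X^{(j-1)}$'': this is exactly where finiteness of $\mathrm{rank}_{CB}(X)$ (equivalently, that $X^{(j-1)}$ is not perfect) is essential, and I would cite the argument already given in the proof of Lemma~\ref{godementnon0} rather than reprove it. One should also note the statement as phrased implicitly assumes $X$ is scattered (or at least that $x$ lies in the scattered part); in the non-scattered case the conclusion still holds for $j$ between $1$ and $k$ by the same induction, since all the strata $X^{(j-1)}$ with $j-1 < n$ still have isolated points.
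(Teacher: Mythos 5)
There is a genuine gap at the crux you yourself flagged. You need a point of height \emph{exactly} $k-1$ in $U$, and the justification you commit to is the claim that ``any nonempty open subset of $X^{(k-1)}$ contains a point isolated in $X^{(k-1)}$,'' deduced from the mere fact that $X^{(k-1)}$ has isolated points somewhere, with a citation to Lemma \ref{godementnon0}. That claim is false as a general topological statement (a space can have isolated points while containing a perfect nonempty open subset --- e.g.\ the disjoint union of a single point with a Cantor set), and Lemma \ref{godementnon0} contains no argument of this kind; indeed the neighbourhood facts of this sort are exactly what Lemma \ref{alternateheight} is supposed to supply to the later injective-dimension arguments, so the citation is effectively circular. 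The same unjustified step is then reused at every stage of your downward induction, so the gap propagates. Your remarks that finiteness of $\mathrm{rank}_{CB}(X)$ or scatteredness are ``essential'' are symptoms of this detour: the statement needs neither.

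The fix is the argument you started and then abandoned, completed using the fact that $x$ is isolated in $X^{(k)}$ --- which is how the paper proceeds. Since $x$ has height $k$, shrink $U$ (harmless for the conclusion) so that $U\cap X^{(k)}=\{x\}$. Because $x$ is \emph{not} isolated in $X^{(k-1)}$, we have $U\cap X^{(k-1)}\neq\{x\}$, and any $y\in (U\cap X^{(k-1)})\setminus\{x\}$ lies in $X^{(k-1)}$ but not in $X^{(k)}$ (the only point of $X^{(k)}$ in $U$ being $x$), hence has height exactly $k-1$ --- no appeal to isolated points of open subsets is needed. One then repeats: choose a neighbourhood $V\subseteq U$ of $y$ with $V\cap X^{(k-1)}=\{y\}$ and finds points of height $k-2$ in $V\subseteq U$, and so on down to height $0$. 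Your induction skeleton is fine; it is the single ``open subset meets the isolated points'' step that must be replaced by this shrinking argument.
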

\begin{proof}
We can assume without loss of generality that $U\cap X^{(k)}=\left\lbrace x\right\rbrace$, since if it does not we can choose $V\subseteq U$ which does satisfy this. Clearly $U$ contains a point of height $k$, namely $x$, so we now show that there is a point of height $k-1$. We know the following two facts:
\begin{itemize}
\item $U\cap X^{(k)}=\left\lbrace x\right\rbrace$,
\item $U\cap X^{(k-1)}\neq\left\lbrace x\right\rbrace$.
\end{itemize}
It therefore follows there exists infinitely many points of the form $y$ in $U$ which are isolated in $X^{(k-1)}$ and hence have height $k-1$. For height $k-2$, we know we can find an open neighbourhood $V\subseteq U$ of a point $y$ of height $k-1$ satisfying the following two properties:
\begin{itemize}
\item $V\cap X^{(k-1)}=\left\lbrace y\right\rbrace$,
\item $V\cap X^{(k-2)}\neq\left\lbrace y\right\rbrace$.
\end{itemize}
This means that there are infinitely many points $z$ in $V\subseteq U$ of height $k-2$. The proof continues inductively.
\end{proof}
\begin{proposition}\label{spillorb}
If $x$ is a non-isolated point belonging to the scattered part of a profinite $G$-space $X$, then any open neighbourhood of $x$ contains infinitely many points in a different orbit. If $X$ is of the form $SG$ for some profinite group $G$ this holds for any non-isolated point.
\end{proposition}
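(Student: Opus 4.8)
The plan is to handle the two assertions by genuinely different routes. For an arbitrary profinite $G$-space and $x$ in the scattered part I would first show that the \emph{whole} orbit $Gx$ is finite, which makes the statement nearly immediate; for $X=SG$ the conjugacy class of $K$ need not be finite, so instead I would exhibit the required points by hand inside a basic neighbourhood $O(N,NK)$.

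First I would dispose of the general case. Since $x$ lies in the scattered part, there is a least ordinal at which $x$ is removed by the Cantor--Bendixson process; this ordinal cannot be $0$ or a limit, so it is a successor $\mu+1$, and $x$ is therefore isolated in $X^{(\mu)}$. Each translation $y\mapsto gy$ is a self-homeomorphism of $X$ and hence preserves the Cantor--Bendixson filtration, so every point of $Gx$ is isolated in $X^{(\mu)}$; since $Gx\subseteq X^{(\mu)}$, this shows $Gx$ is a discrete subspace of $X$. By Proposition \ref{orbclose} the orbit $Gx$ is closed in $X$, and $X$ is compact (being profinite), so $Gx$ is finite. On the other hand a non-isolated point of a Hausdorff space has only infinite neighbourhoods (a finite open neighbourhood is a discrete subspace, which would make $\{x\}$ open in $X$). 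Hence $U\setminus Gx$ is infinite for every open $U\ni x$, which is the claim.

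For $X=SG$, write $K:=x$, so that $Gx$ is the conjugacy class of $K$. By Proposition \ref{nbhdbasis} every open neighbourhood of $K$ contains a basic set $O(N,NK)$ with $N\unlhd G$ open, so it suffices to find infinitely many points of $O(N,NK)$ outside the orbit of $K$. For each open normal subgroup $N'$ of $G$ with $N'\leq N$ we have $N'K\in O(N',N'K)\subseteq O(N,NK)$, the inclusion being the computation in the proof of Proposition \ref{basis} (using that $N$ is normal, so $N'KN=N'NK=NK$). Each $N'K$ is an open subgroup of $G$, hence of finite index; since $K$ is non-isolated in $SG$ it is not open, because open subgroups of $G$ are isolated points of $SG$ by \cite{Gartside}, so $K$ has infinite index and no $N'K$ can be conjugate to $K$, i.e. $N'K\notin Gx$. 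Finally the subgroups $N'K$ (for $N'\unlhd G$ open, $N'\leq N$) take infinitely many distinct values: if only finitely many values $N_1'K,\dots,N_m'K$ occurred, then $M:=N_1'\cap\cdots\cap N_m'$ would satisfy $MK\subseteq N_i'K$ for every $i$, hence $MK\subseteq\bigcap_{N'\leq N}N'K=K$ (this intersection equals $K$ because $K$ is closed: given $g\notin K$, some coset $gM'$ with $M'\unlhd G$ open misses $K$, and then $M'\cap N$ works), forcing $M\subseteq K$ and $K$ open, a contradiction. Thus $O(N,NK)$, and so any open neighbourhood of $K$, contains infinitely many closed subgroups not conjugate to $K$.

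The individual steps are all short, and the main obstacle is really just the last point in the $SG$ case: verifying that infinitely many of the $N'K$ are genuinely distinct, which is exactly where non-openness (equivalently, non-isolatedness) of $K$ is used. In the general case the only thing needing care is the bookkeeping that membership in the scattered part yields a well-defined successor removal-stage for $x$, so that discreteness of $Gx$ is legitimate; beyond that it is only the interplay of Proposition \ref{orbclose} with compactness.
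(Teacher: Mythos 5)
Your first paragraph (the general profinite $G$-space case) is correct, and it takes a genuinely different route from the paper: the paper argues via Cantor--Bendixson heights, using Lemma \ref{alternateheight} together with the invariance of height under the $G$-action, whereas you show directly that the orbit of a scattered point is discrete inside $X^{(\mu)}$, hence (being closed by Proposition \ref{orbclose} and compact) finite, and then use that a non-isolated point of a Hausdorff space has no finite neighbourhood. That is a clean argument and in fact yields the stronger statement that orbits of points in the scattered part are finite.

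The second part, however, has a genuine gap. You reduce to ``$K$ non-isolated $\Rightarrow$ $K$ not open'' by citing the claim that open subgroups are isolated points of $SG$; this claim is not proved anywhere in the paper (it occurs only as an informal remark in the discussion around Conjecture \ref{Conject}) and it is false for general profinite $G$. For an open subgroup $H$, the point $H$ is isolated iff some open normal $N\leq H$ admits no proper closed supplement in $H$ (Proposition \ref{nbhdbasis}), and this fails, for instance, for $K=G=\prod_{n\in\mathbb{N}}\mathbb{Z}/p\mathbb{Z}$: every open normal $N$ contains $N_F=\{x\mid x_i=0,\ i\in F\}$ for some finite $F$, and then $A=\{x\mid x_m=0\}$ with $m\notin F$ satisfies $AN=G$ and $A\neq G$, so every basic neighbourhood $O(N,G)$ of $G$ contains points other than $G$. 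Thus there exist non-isolated points of $SG$ that are open subgroups, and for exactly these your construction collapses: once $N'\leq K^{C}$ one has $N'K=K$, so the family $\{N'K\}$ produces no point outside the orbit, and the ``infinite index'' argument is unavailable. This is precisely the case the paper's proof treats separately: if $K\subsetneq NK$ then $NK\in O(N,NK)$ is not conjugate to $K$ by comparing indices, while if $K=NK$ (so $K$ is open) then non-isolatedness itself supplies some $H\in O(N,K)$ with $HN=K$ and $H\subsetneq K$, which again cannot be conjugate to $K$; infinitude follows by running this over the infinitely many $O(N',N'K)\subseteq O(N,NK)$. Your argument for non-open $K$ (the distinctness of the $N'K$ via $\bigcap_{N'}N'K=K$) is fine, but you must add an argument for open, non-isolated $K$ along the above lines to close the proof.
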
 
\begin{proof}
For the first part, observe that this statement is true since the following facts hold:
\begin{enumerate}
\item every neighbourhood of $x$ contains infinitely many points of \emph{every} Cantor-Bendixson height less than $x$, 
\item points in the same orbit of $x$ have the same height as $x$,
\end{enumerate}
where the first fact follows from Lemma \ref{alternateheight}. For the second part if $K$ is not isolated then take any $O(N,NK)$ for $N$ open and normal in $G$. If $K\neq NK$ then $K$ is a proper subgroup of $NK$ so this does not belong in the orbit of $K$. If they are equal then since $K$ is not isolated there must exist $H\in O(N,NK)$ where $HN=K$ and $H\neq K$, hence $H$ is not in the orbit of $K$ as it is a proper subgroup. 

Note we can assume that there are infinitely many points in this case since each $O(N,NK)$ contains infinitely many $O(N^{\prime},N^{\prime}K)$ and we can apply the argument to each of these individually. 
\end{proof}
This is significant since it proves that if $x$ satisfies the assumptions in the proceeding proposition, then $I^0(E)_x$ is not determined solely by the orbit of the $x$ component.

In the non-equivariant case in Chapter \ref{chapterIDCB}, when constructing an element of the $\text{Ext}^n$ group which was non-zero we considered $(0_x,s^y)_{y\in U\setminus\left\lbrace x\right\rbrace}$ where $s^y$ was a chosen non-zero element of $\coker{\delta_i}_y$. In this setting our resolution is constructed by taking products of sheaves which are not just concentrated over a point but over a closed orbit which is probably infinite. Therefore the elements we construct have to be non-zero sections over each orbit, which explains why the following lemma is analogous to the non-equivariant case despite being somewhat different. 
\begin{proposition}\label{constfill}
Suppose that $X$ is a profinite $G$-space such that each $x\in X$ has a neighbourhood basis $\mathfrak{B}_x$ satisfying that each $U\in \mathfrak{B}_x$ is $\stabgx(x)$-invariant. If $F=c\mathbb{Q}$ then each $\coker\delta_i(U)$ from Definition \ref{godeequi} has a $\stabgx$-equivariant section which is non-zero, where $U\in \mathfrak{B}_x$ and $i$ smaller than the height of $x$.
\end{proposition}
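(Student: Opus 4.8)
The plan is to prove the slightly stronger statement by induction on $i$: for every $i\geq 0$, every $x\in X$ with $\text{ht}(x)>i$, and every $U\in\mathfrak{B}_x$, the sheaf $\coker\delta_i$ of Definition \ref{godeequi} has a section over $U$ which is $\stabgx(x)$-equivariant and whose germ at $x$ is non-zero. Since $X$ is profinite, the members of $\mathfrak{B}_x$ are compact-open, so each $\stabgx(U)$ is open (Lemma \ref{restrictequi}); write $A_0=Gx$, which is closed (Proposition \ref{orbclose}) and hence profinite, and observe that $A_0\subseteq X^{(k)}$ whenever $x\in X^{(k)}$, so every point of $A_0$ has the same height as $x$.

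For the base case $i=0$ (so $x$ is not isolated), recall that $I^0(c\mathbb{Q})=\prod_{A\in\text{orb}(X)}\overline{c\mathbb{Q}|_A}$, formed in the category of $G$-sheaves, and that $\delta_0$ sends $t$ to $(t|_{U\cap A})_A$. Define $\sigma=(\sigma_A)_A$ over $U$ by $\sigma_{A_0}=0$ and $\sigma_A=\underline{1}$, the constant function $1$ on the clopen set $U\cap A$, for $A\neq A_0$; each component is a continuous section of $c\mathbb{Q}|_A$, and each is $G$-equivariant or zero, so $\sigma$ is a $G$-fixed element of $I^0(c\mathbb{Q})(U)$, in particular it lies in the discretised product and is $\stabgx(x)$-equivariant. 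If its image in $\coker\delta_0$ had zero germ at $x$, there would be $V\ni x$ and $t\in c\mathbb{Q}(V)$ with $t|_{V\cap A}=\sigma_A|_{V\cap A}$ for all $A$, that is $t\equiv 0$ on $V\cap A_0$ and $t\equiv 1$ on $V\setminus A_0$; but then $t^{-1}(0)$ is a clopen (in $V$) neighbourhood of $x$ in $X$ disjoint from $X\setminus A_0$, contradicting Proposition \ref{spillorb}. Hence the germ at $x$ is non-zero.

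For the inductive step one applies the same construction with the $G$-sheaf $E=\coker\delta_{i-1}$ in place of $c\mathbb{Q}$, using $I^i(c\mathbb{Q})=I^0(\coker\delta_{i-1})$ and $\coker\delta_i=\coker(\delta_i\colon\coker\delta_{i-1}\to I^0(\coker\delta_{i-1}))$. Fix $x$ with $\text{ht}(x)\geq i+1$ and $U\in\mathfrak{B}_x$. By Lemma \ref{alternateheight}, $U$ contains points of height exactly $i$, and since points of $A_0$ have height $>i$ these lie in orbits $A\neq A_0$. For each such orbit $A$ choose a point $y_A\in U\cap A$ with $\text{ht}(y_A)\geq i$; by the inductive hypothesis at $y_A$ there is a non-zero $\stabgx(y_A)$-equivariant section of $\coker\delta_{i-1}$ near $y_A$, whose germ $m_{y_A}\in(\coker\delta_{i-1})_{y_A}$ is therefore non-zero and $\stabgx(y_A)$-fixed. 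Via the isomorphism $\coker\delta_{i-1}|_{A}\cong G\times_{\stabgx(y_A)}(\coker\delta_{i-1})_{y_A}$ of Lemma \ref{restconst}, $m_{y_A}$ determines a $G$-equivariant section of $\coker\delta_{i-1}|_{A}$ which is non-zero at \emph{every} point of $A$; let $\tau_A$ be its restriction to $U\cap A$, and set $\tau_A=0$ for every other orbit (including $A_0$). As before, $\tau=(\tau_A)_A$ is a $\stabgx(x)$-equivariant element of $I^0(\coker\delta_{i-1})(U)$. If its germ at $x$ in $\coker\delta_i$ vanished, there would be $V\ni x$ and $t\in\coker\delta_{i-1}(V)$ with $t|_{V\cap A}=\tau_A|_{V\cap A}$ for all $A$; from $t|_{V\cap A_0}=0$ we get $t_x=0$, so $t$ vanishes on a clopen neighbourhood $W\subseteq V$ of $x$; but by Lemma \ref{alternateheight}, $W$ contains a point $y'$ of height $i$, lying in an orbit $A'\neq A_0$ for which $\tau_{A'}$ was constructed and is non-zero at $y'$, forcing $t_{y'}=(\tau_{A'})_{y'}\neq 0$, a contradiction. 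This closes the induction, and taking $F=c\mathbb{Q}$ gives the proposition.

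The step I expect to be the main obstacle is the non-vanishing argument, where one must convert the topological inputs — that $x$ accumulates onto orbits other than $Gx$ (Proposition \ref{spillorb}) and that every neighbourhood of $x$ contains points of all smaller heights (Lemma \ref{alternateheight}) — into the statement that $\tau$ is not a local coboundary. It is exactly here that one needs Lemma \ref{restconst} to spread out $m_{y_A}$ to a section that is non-zero at \emph{every} point of $U\cap A$, not merely at $y_A$: a hypothetical local primitive $t$ of $\tau$ vanishes only on some clopen $W$ which need not contain the chosen points $y_A$, so the contradiction has to be extracted at whatever height-$i$ point of $W$ Lemma \ref{alternateheight} supplies. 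One must also keep careful track of which stabiliser a given section is equivariant for, since $\stabgx(x)$, $\stabgx(y_A)$ and $\stabgx(U)$ all intervene, and of the fact that the cokernels are sheafifications of presheaf cokernels, so that ``being zero'' is a statement about germs rather than global sections.
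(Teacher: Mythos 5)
Your proof is correct, but it runs on a different engine than the paper's. The paper builds, for each basepoint orbit, a family that alternates by Cantor--Bendixson height parity: zero on orbits whose height differs from $\text{ht}(x)$ by an even amount (including $Gx$ itself) and equal to the restriction of a globally defined section ($p$ at the first stage, the recursively constructed $t^{A}$ at later stages) on the odd-offset orbits; the induction then propagates the property of being non-zero \emph{across many orbits}, which is exactly why the paper rejects the naive ``zero on $Gx$, constant elsewhere'' family -- its image in $\coker\delta_0$ is supported only on $Gx$ and so cannot feed the paper's next stage. You avoid that issue by changing what the induction carries: your statement only records a non-zero, $\stabgx(x)$-fixed germ at the basepoint, and at stage $i$ you regenerate sections that are non-zero on an entire orbit by spreading the inductively obtained $\stabgx(y_A)$-fixed germ over $A$ via Lemma \ref{restconst} (the section $g y_A\mapsto [g,m_{y_A}]$ of $G\times_{\stabgx(y_A)}(\coker\delta_{i-1})_{y_A}$), placing zeros on $Gx$ and on all orbits not meeting $U$ in height-$i$ points. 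The contradiction is then extracted, as in the paper, from Lemma \ref{alternateheight} applied to the vanishing neighbourhood of a hypothetical local primitive, together with Proposition \ref{spillorb} in the base case. This buys a cleaner induction with no parity bookkeeping, at the cost of leaning harder on the equivariant spreading step; the two points you assert there are indeed fine but deserve a sentence each if written up: continuity of $g\stabgx(y_A)\mapsto[g,m_{y_A}]$ (it factors through the quotient topology on $G/\stabgx(y_A)\cong A$, using Proposition \ref{orbclose} and compactness), and the fact that your families $\sigma$, $\tau$ lie in the discrete part of the $G$-sheaf product of Construction \ref{profinfprod} because they are fixed by the open subgroup $\stabgx(U)\supseteq\stabgx(x)$. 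As in the paper's own proof, the appeal to Proposition \ref{spillorb} restricts the argument to points possessing a height (or to $X=SG$), which matches the scope of the statement.
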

In this proof we shall construct an equivariant section which is non-zero across many orbits.
\begin{proof}
We will prove this result inductively. If we take any such $U$ then we can choose a non-zero rational $p$ belonging to $F(U)$ and define $t^{A_x}=\left(s_A\right)_{A\in orb(X)}$ in $\underset{A\in \text{orb}(X)}{\prod}\overline{F_{|_{A}}}\left(U\right)$ by:
\begin{align*}
s_A=0\,\text{if height$(A)$ equals height$(x)-2k$ for $k\in \mathbb{N}_0$, and}\\
s_A=p_{|_{A}}\,\text{if height$(A)$ equals height$(x)-(2k-1)$ for $k\in \mathbb{N}_0$.}
\end{align*}
To see why this would work, assume that $\delta(a)=\left(s_A\right)_{A\in orb(X)}$ for some $a$ in $F(U)$. Then $a_{|_{A}}=s_A$ which is either $p$ or $0$. There exists some orbit $A$ with $a_{|_{A}}=0$ and so $a_y=0$ for all $y\in A$. This means that there is some open neighbourhood $V$ such that $a_{|_{V}}=0$. However we have also seen in Lemma \ref{alternateheight} that $V$ contains infinitely many points of each height less than $A$. This leads to a contradiction since there must exist orbits $A^{\prime}$ such that $a_{|_{A^{\prime}\bigcap V}}=s_{A^{\prime}}=p\neq 0$. 

For example, the first four examples are given as follows:
\begin{align*}
s_{A_x}=0\,\text{if $A_x$ is the orbit of $x$}\\s_A=p_{|_{A}}\,\text{if $A$ has height $1$ less than that of $x$}\\s_A=0\,\text{if $A$ has height $2$ less than that of $x$}\\s_A=p_{|_{A}} \,\text{if $A$ has height $3$ less than that of $x$.}
\end{align*}
Therefore this family we have constructed cannot be in the image of $\delta$ since orbits have empty interior by Proposition \ref{spillorb}. Inductively, we continue by setting $t$ equal to this family and using $t\in \coker\delta(U)$ in place of $p$ and applying the same alternating argument i.e. we have $r^{A_x}=\left(r_A\right)_{A\in \text{orb}(X)}$:
\begin{align*}
r_{A}=0\,\text{if height$(A)$ equals height$(x)-2k$ for $k\in \mathbb{N}_0$, and}\\r_A={t^A}_{|_{A}}\,\text{if height$(A)$ equals height$(x)-(2k-1)$ for $k\in \mathbb{N}_0$}
\end{align*}
where ${t^A}_{|_{A}}$ represents the section constructed analogously to $t^{A_x}$ by replacing $A_x$ with $A$. The section maps $z\mapsto {t^A}_z$ for $z\in A$, which is non-zero by Proposition \ref{spillorb}. Namely if $z\in A$ and $\delta(a)_z={t^A}_z$, then there is an open neighbourhood $V$ of $z$ such that $(a_{|_{V\cap A}})_A=(s_A)_A$. The proof is similar to above:

Since $s_{{A}}=0$, every neighbourhood of $z$ contains infinitely many orbits $B$ such that $s_{{B}}=p_{|_{B}}$ which is non-zero. Then $a_z=0$ and hence $a$ is zero on some neighbourhood of $z$ contradicting that $a$ restricts to non-zero $p_{|_{B}}$ for some subsets of this neighbourhood. This means that ${t^A}_z$ is not in the image of $\delta$. It is therefore non-zero in $\coker\delta_z$.

We will now see that $r^{A_x}$ is a non-zero section with respect to the next cokernel, which illustrates the inductive step. Namely, there must exit an orbit $B$ such that $r_B=0$. If $y\in B$ with $\delta(a)_y={r^{A_x}}_y$, then there is an open neighbourhood $V$ of $y$ such that $(a_{|_{V\cap A}})_A=(r_A)_A$. 

Since $a_{|_{V\cap B}}=r_{{B}}=0$, every neighbourhood of $y$ contains infinitely many orbits $A$ such that $r_{|_{A}}=t^A_{|_{A}}$ which is non-zero. On the other hand $a_y=0$ so there exists some open neighbourhood which $a$ is zero on. This contradicts the fact that $a$ restricts to non-zero $t^A_{|_{A}}$ for some subsets of this neighbourhood. This means that ${r^{A_x}}_y$ is not in the image of the $a_y$ with respect to $\delta_0$. Here $\delta_0$ is the monomorphism in the next stage of the resolution. Therefore ${\coker\delta_0}_y\neq 0$.

If $x$ has height atleast three bigger than the index of $\coker \delta_i$ then the section $r^{A_x}$ constructed will be zero over all orbits of two less than the height of $x$.

Notice that these sections are $\text{stab}_G(x)$-equivariant since the original section $p$ is and we are taking families of restrictions to entire orbits, therefore to $\text{stab}_G(x)$-invariant subsets. Also, viewing $p$ as a global section we know that it is $G$-equivariant. Since each point has a neighbourhood basis where each element $U$ is $\text{stab}_G(x)$-invariant, it follows that $p_{|_{U}}$ is $\text{stab}_G(x)$-equivariant.
\end{proof}
There is a reason why we consider an alternating sequence in the above proof. If we tried to consider the family which was defined by $s_{A_x}=0$ and $s_A=p$ otherwise, we could use that $A_x$ is closed to deduce that $U\setminus A_x$ is open and the restriction of $U$ to this family is in the image of $p$. This approach could show that $\coker\delta_x\neq 0$, however the section which arrives at this conclusion using this approach would be zero for orbits outside of that of $x$. This is why we try to construct a section which is non-zero across many orbits.

Notice in particular that this holds for $X=SG$ since for each closed subgroup $K$, the neighbourhood basis sets of the form $O(N,NK)$ are $N_G(K)$-invariant. We now have the following consequence to Proposition \ref{constfill}.
\begin{corollary}\label{equi}
Suppose that $X$ is a profinite $G$-space such that each $x\in X$ has a neighbourhood basis $\mathfrak{B}_x$ satisfying that each $U\in \mathfrak{B}_x$ is $\stabgx$-invariant. Then any point $x$ of $X$ and $i$ less than the height of $x$ satisfies ${{\coker\delta_i}_x}^{\stabgx(x)}\neq 0$.
\end{corollary}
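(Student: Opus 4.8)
The plan is to deduce this corollary directly from Proposition \ref{constfill} together with the structural description of $\coker\delta_i$ coming from the equivariant Godement resolution in Definition \ref{godeequi}. Recall that by construction each $\coker\delta_i$ is a $G$-sheaf of $\mathbb{Q}$-modules over $X$, so its stalk ${\coker\delta_i}_x$ carries a $\stabgx(x)$-action (Corollary \ref{Coract}), and the germ at $x$ of any section $s$ defined on an open neighbourhood $U \in \mathfrak{B}_x$ which is $\stabgx(x)$-equivariant is a $\stabgx(x)$-fixed element of the stalk, by Proposition \ref{equifix} (a section is $\stabgx(x)$-equivariant iff $g*s = s$ for all $g \in \stabgx(x)$, and germs of such sections are fixed since the action on the sheaf space is defined from the action on sections via Remark \ref{commuteaction}).

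First I would fix $x \in X$ and $i$ less than the height of $x$, and invoke Proposition \ref{constfill} to produce a non-zero $\stabgx(x)$-equivariant section $r^{A_x}$ of $\coker\delta_i$ over some $U \in \mathfrak{B}_x$. The key remaining point is that this section has non-zero germ at $x$ specifically (Proposition \ref{constfill} is phrased in terms of the section being non-zero, and its proof shows ${\coker\delta_i}_y \ne 0$ at various points $y$); so I would extract from that proof, or re-run the germ-level argument, the statement that ${r^{A_x}}_x \ne 0$ in ${\coker\delta_i}_x$. Concretely, if ${r^{A_x}}_x$ were zero then there would be an open neighbourhood $V \subseteq U$ of $x$ on which $r^{A_x}$ restricts to zero; but by Lemma \ref{alternateheight} every such $V$ meets infinitely many orbits of height strictly below that of $x$, and the alternating construction guarantees that on a positive proportion of these orbits the section restricts to the non-zero value $p_{|_A}$ (or the inductively constructed non-zero section), a contradiction. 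Hence ${r^{A_x}}_x$ is a non-zero element, and by the equivariance of $r^{A_x}$ on the $\stabgx(x)$-invariant set $U$ combined with Proposition \ref{equifix}, it lies in ${{\coker\delta_i}_x}^{\stabgx(x)}$.

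The main obstacle, such as it is, is bookkeeping: making sure the index $i$ is correctly aligned with the height of $x$ so that the alternating sequence in Proposition \ref{constfill} does produce a section whose germ at $x$ is genuinely non-zero (the proof there notes that when $x$ has height at least three more than $i$ the constructed section vanishes on orbits two below the height of $x$, so one must check the non-vanishing is at $x$ itself, not merely somewhere in the neighbourhood). I would handle this by choosing the neighbourhood $U \in \mathfrak{B}_x$ small enough that $U \cap X^{(\mathrm{ht}(x))} = \{x\}$, so that $x$ is the unique point of maximal height in $U$ and the orbit $A_x$ of $x$ is isolated in $U \cap X^{(\mathrm{ht}(x))}$; then the value $s_{A_x} = 0$ assigned to the top orbit is exactly the obstruction to $r^{A_x}$ being a coboundary near $x$, which forces ${r^{A_x}}_x \ne 0$. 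Since $x$ and $i$ were arbitrary subject to $i < \mathrm{ht}(x)$, this gives the claimed non-vanishing of ${{\coker\delta_i}_x}^{\stabgx(x)}$ for all such pairs, completing the proof.
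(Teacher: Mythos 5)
Your proposal follows the paper's own route: Corollary \ref{equi} is obtained directly from Proposition \ref{constfill} by taking the germ at $x$ of the non-zero $\stabgx(x)$-equivariant section of $\coker\delta_i$ over a $\stabgx(x)$-invariant basic neighbourhood, and observing via Remark \ref{commuteaction} and Proposition \ref{equifix} that the germ of an equivariant section at a point it stabilises is $\stabgx(x)$-fixed. Your additional check that the germ is non-zero at $x$ itself (rather than merely the section being non-zero) simply makes explicit the germ-level contradiction argument already carried out inside the proof of Proposition \ref{constfill}, so the argument is correct and essentially identical to the paper's.
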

We now observe that key lemmas proven in the non-equivariant setting now hold even when we consider a $G$-action.
\begin{lemma}
Lemmas \ref{lem0} and \ref{godementnon0} hold in the Weyl-$G$-sheaf setting.
\end{lemma}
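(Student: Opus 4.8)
The claim to be proven is that Lemmas \ref{lem0} and \ref{godementnon0} carry over verbatim to the category of Weyl-$G$-sheaves, once we replace the classical Godement resolution by the equivariant one of Definition \ref{godeequi}. The plan is to mirror the two non-equivariant proofs step by step, feeding in the equivariant substitutes that have already been established earlier in this chapter, and to check that the inductive bookkeeping on Cantor-Bendixson height is unaffected by the presence of the $G$-action.

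First I would treat the analogue of Lemma \ref{lem0}: that $I^k(E)_x = 0$ for every $x \in X \setminus X^{(k)}$, where $I^k(E)$ is the $k$th term of the equivariant Godement resolution of a Weyl-$G$-sheaf $E$. The base case $k=0$ is exactly Proposition \ref{equidisc} together with Corollary \ref{corequidisc}: for $x$ isolated, $I^0(E)_x = E_x$ and $\delta_x$ is an isomorphism, so $(\coker \delta_0)_x = 0$. The inductive step is then formally identical to the proof of Lemma \ref{lem0}: if a point $x$ has height $\le k$ then all points accumulating at $x$ have strictly smaller height, so by the inductive hypothesis $(\coker \delta_{k-1})_y = 0$ for all such $y$ (including $y = x$), and since $I^{k}(E)$ is built as a colimit over neighbourhoods of a product of restrictions of $\coker \delta_{k-1}$, its stalk at $x$ vanishes; the same colimit-of-products argument shows $(\coker\delta_k)_x$ then vanishes. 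The only thing that changes relative to Chapter \ref{chapterIDCB} is that the indexing product runs over $\orb(X)$ rather than $X$, but as the Remark after Corollary \ref{corequidisc} notes this only strengthens the vanishing when $x$ is isolated, and for non-isolated $x$ the argument is about stalks of the cokernel sheaves, which behave identically. So this half is a routine transcription.

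Next I would handle the analogue of Lemma \ref{godementnon0}: that for $F = c\mathbb{Q}$ (the constant Weyl-$G$-sheaf at $\mathbb{Q}$ with trivial stalk actions) and $k \le \rank_{\cb}(X)$ one has $(\coker \delta_{k-1})_x \ne 0$ for every $x \in X^{(k)}$, and likewise for all $k$ when $X$ has non-empty perfect hull or infinite Cantor-Bendixson rank. Here the work has essentially already been done: Proposition \ref{constfill} constructs, for each $U \in \mathfrak{B}_x$ and each $i$ below the height of $x$, a $\stabgx(x)$-equivariant section of $\coker\delta_i(U)$ that is non-zero, and Corollary \ref{equi} packages this as $((\coker\delta_i)_x)^{\stabgx(x)} \ne 0$. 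Since a non-zero fixed element is in particular a non-zero element, this immediately gives $(\coker\delta_{i})_x \ne 0$ for every $i$ less than $\text{ht}(X,x)$, which is exactly the statement of the Weyl-$G$-sheaf analogue of Lemma \ref{godementnon0}. The induction on height inside Proposition \ref{constfill}'s proof already accounts for the scattered/non-scattered and finite/infinite rank dichotomy in precisely the way Lemma \ref{godementnon0} does, using Lemma \ref{alternateheight} and Proposition \ref{spillorb} in place of the bare Hausdorff separation used non-equivariantly.

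I do not expect a genuine obstacle here; the point of the lemma is bookkeeping, asserting that the machinery assembled in Construction \ref{equires}, Proposition \ref{equidisc}, Corollary \ref{corequidisc}, Proposition \ref{constfill} and Corollary \ref{equi} is exactly what is needed to rerun the two non-equivariant stalk computations. The one place that deserves a sentence of care is the hypothesis on neighbourhood bases: Propositions \ref{constfill} and \ref{equi} require that each $x \in X$ has a neighbourhood basis $\mathfrak{B}_x$ of $\stabgx(x)$-invariant sets, whereas Lemmas \ref{lem0} and \ref{godementnon0} as stated non-equivariantly had no such hypothesis. For the intended application $X = SG$ this is automatic, since the sets $O(N,NK)$ form an $N_G(K)$-invariant neighbourhood basis of $K$ by Proposition \ref{nbhdbasis} and Remark \ref{ONJ}; so in the write-up I would state the Weyl-$G$-sheaf versions of both lemmas under that standing hypothesis on $X$, note that $SG$ satisfies it, and then the two proofs are the transcriptions described above.
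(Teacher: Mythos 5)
Your proposal is correct and follows essentially the same route as the paper: the paper's own proof simply cites Proposition \ref{equidisc} (with Corollary \ref{corequidisc}) together with a rerun of the original argument for Lemma \ref{lem0}, and Proposition \ref{constfill} with Corollary \ref{equi} for the analogue of Lemma \ref{godementnon0}. Your extra remark about the $\stabgx(x)$-invariant neighbourhood basis hypothesis (automatic for $X=SG$) is a sensible clarification but does not change the argument.
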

\begin{proof}
Lemma \ref{lem0} follows immediately from Proposition \ref{equidisc} combined with the original proof of Lemma \ref{lem0}. The $G$-equivariant analogue of Lemma \ref{godementnon0} follows from Proposition \ref{constfill} and Corollary \ref{equi}.
\end{proof}
Furthermore we have the following lemma which also holds in the category of Weyl-$G$-sheaves when we introduce a $G$-action.
\begin{lemma}
Lemmas \ref{Homologychar} and \ref{homoloinf} hold in the equivariant setting except we end up with $\stabgx(x)$-fixed points, where we consider $\iota_{A_x}(\mathbb{Q})$, the extension by zero of the constant sheaf at $\mathbb{Q}$ over the orbit $A_x$.
\end{lemma}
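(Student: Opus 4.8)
The plan is to reduce the equivariant statement to the non-equivariant computation of Chapter~\ref{chapterIDCB}, with the role of the skyscraper sheaf $\iota_x(\mathbb{Q})$ now played by $\iota_{A_x}(\mathbb{Q})$, the extension by zero along the closed inclusion $\iota_{A_x}\colon A_x\hookrightarrow X$ of the constant sheaf at $\mathbb{Q}$ over the orbit $A_x$ of $x$. Recall $A_x$ is closed by Proposition~\ref{orbclose}, so $\iota_{A_x}(\mathbb{Q})$ is a well-defined $G$-sheaf, and by Lemma~\ref{restconst} it agrees over $A_x$ with $G\underset{\stabgx(x)}{\times}\mathbb{Q}$. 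The first step is to compute $\hom\left(\iota_{A_x}(\mathbb{Q}),I^k(E)\right)$ for the equivariant Godement resolution of Definition~\ref{godeequi}. Using the adjunction $\left((-)_x,R\right)$ of Proposition~\ref{equiadjunct} (or its Weyl-$G$-sheaf refinement), together with the description $I^0(F)=\underset{A\in\text{orb}(X)}{\prod}\overline{F_{|_A}}$ and the fact that $\hom$ out of $\iota_{A_x}(\mathbb{Q})$ picks out the $A_x$-summand, one gets $\hom\left(\iota_{A_x}(\mathbb{Q}),I^k(E)\right)\cong \left(\coker{\delta_{k-1}}_x\right)^{\stabgx(x)}$ for $k$ below the height of $x$; this is exactly the equivariant analogue of the first assertion of Lemma~\ref{Homologychar}, and the case of points of infinite height or in the perfect hull is handled as in Lemma~\ref{homoloinf}.

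Next I would identify the connecting map. Just as in the non-equivariant argument, $\delta_{k+1}$ is built from the serration map, so the induced map
\begin{align*}
{\delta_{k+1}}_*\colon \hom\left(\iota_{A_x}(\mathbb{Q}),I^k(E)\right)\rightarrow \hom\left(\iota_{A_x}(\mathbb{Q}),I^{k+1}(E)\right)
\end{align*}
is given on the $A_x$-component by the map $\alpha_{k+1}\colon a\mapsto \left(a,\underline{0}\right)_x$, now viewed between $\stabgx(x)$-fixed submodules. The only point to check is that this map really does land in and is defined on the fixed parts; this is immediate because the serration map and the family $\left(a,\underline{0}\right)_x$ are $\stabgx(x)$-equivariant whenever $a$ is, and because taking $\stabgx(x)$-fixed points is functorial. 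Combined with Corollary~\ref{corequidisc} (which gives $\coker\delta_x=0$ at isolated points, so $I^k(E)$ vanishes past height $\rank_{\cb}(X)-1$) and Corollary~\ref{equi} (which gives ${\coker\delta_i}_x^{\stabgx(x)}\neq 0$ for $i$ below the height of $x$), the same non-surjectivity computation as in Theorem~\ref{ID_CB} then shows $\Ext^{n-1}\left(\iota_{A_x}(\mathbb{Q}),c\mathbb{Q}\right)\neq 0$ at a point $x$ of maximal height, yielding the equivariant version of Theorem~\ref{Summary}.

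The main obstacle will be verifying that the non-surjectivity witness in the top $\Ext$ group survives passage to $\stabgx(x)$-fixed points. In the non-equivariant proof of Theorem~\ref{ID_CB} one freely chooses nonzero germs $a_z\in\coker{\delta_{n-3}}_z$ at infinitely many points $z$ of a punctured neighbourhood of $x$; here those choices must assemble into a $\stabgx(x)$-equivariant section that is nonzero across entire (closed, typically infinite) orbits. This is precisely what Proposition~\ref{constfill} is engineered to provide — the alternating-by-height construction of $r^{A_x}$ produces a $\stabgx(x)$-equivariant section of $\coker\delta_i$ that is nonzero over a cofinal family of orbits, using that orbits have empty interior in the scattered part by Proposition~\ref{spillorb}. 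So the real work is to package the output of Proposition~\ref{constfill} as an element of $\ker\alpha_n^{\stabgx(x)}$ not in $\operatorname{im}\alpha_{n-1}^{\stabgx(x)}$, exactly mirroring the final paragraph of the proof of Theorem~\ref{ID_CB}. The infinite-rank case then follows as before by running the argument at a sequence of points of unbounded height, giving Theorem~\ref{GSummary}.
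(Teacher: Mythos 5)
Your proposal is correct and follows essentially the same route as the paper: compute $\hom\left(\iota_{A_x}(\mathbb{Q}),I^k(E)\right)$ by pulling the hom through the product over orbits, restricting to the closed orbit via Lemma \ref{restconst} and the adjunction of Proposition \ref{equiadjunct} to land in $\left(\coker{\delta_{k-1}}_x\right)^{\stabgx(x)}$, and then identify ${\delta_{k+1}}_*$ with the serration-induced map $a\mapsto\left(a,\underline{0}\right)_x$ on fixed points. The only cosmetic difference is that the paper makes explicit that the off-orbit entries of the image germ vanish because the complement of $A_x$ is open (Proposition \ref{orbclose}, replacing the Hausdorff argument of Lemma \ref{Homologychar}), a point your write-up absorbs implicitly; the additional material on the $\Ext$ computation belongs to the subsequent theorems rather than to this lemma.
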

\begin{proof}
We have the following equalities which are similar to those in the proof of the original non-equivariant case. For the following we will consider the $G$-sheaf $E$.
\begin{align*}
\hom_{G\text{-Sh}(X)}\left(\iota_{A_x}(\mathbb{Q}),\underset{A\in \text{orb}(X)}{\prod}\iota_A(E_{|_{A}})\right)&=\underset{A\in \text{orb}(X)}{\prod}\hom_{G\text{-Sh}(X)}\left(\iota_{A_x}(\mathbb{Q}),\iota_A(E_{|_{A}})\right)\\&=\hom_{G\text{-Sh}(A)}\left(G\underset{\text{stab}_G(x)}{\times}\mathbb{Q},G\underset{\text{stab}_G(x)}{\times}E_x\right)\\&={E_x}^{\text{stab}_G(x)}
\end{align*}
where the second last equality follows from Proposition \ref{restconst}, and the final term is non-zero at any stage of the resolution by Corollary \ref{equi} when $E=c\mathbb{Q}$.

Looking at the original non-equivariant proof, for the maps induced by the hom-functor if:
\begin{align*}
f\in \hom_{G\text{-Sh}(A_x)}\left(G\underset{\text{stab}_G(x)}{\times}\mathbb{Q},G\underset{\text{stab}_G(x)}{\times}{\coker\delta_k}_x\right)
\end{align*}
where ${\coker\delta_k}_x\neq 0$, then we have a map of basic open subsets similar to the proof of Proposition \ref{equiadjunct} as follows:
\begin{align*}
f:G\underset{\text{stab}_G(x)}{\times}\mathbb{Q}&\rightarrow G\underset{\text{stab}_G(x)}{\times}{\coker\delta_k}_x\\ [g,q]&\mapsto [g,qf_x(1)]
\end{align*}
where necessarily $f_x(1)$ is $\text{stab}_G(x)$-fixed. As in the proof of the original case, from the definition of $\delta$ the morphism $\delta\circ f$ is induced as follows:
\begin{align*}
\delta\circ f:G\underset{\text{stab}_G(x)}{\times}\mathbb{Q}&\rightarrow G\underset{\text{stab}_G(x)}{\times}{\coker\delta_{k+1}}_x\\ [g,q] &\mapsto [g,\left[qf_x(1),0,0,\ldots\right]_S].
\end{align*}
It follows as in the original case, the homomorphisms in the chain complex that we calculate the homology of are given by:
\begin{align*}
\delta_*:{{\coker\delta_k}_x}^{\text{stab}_G(x)}&\rightarrow {{\coker\delta_{k+1}}_x}^{\text{stab}_G(x)}\\a&\mapsto\left[\left(a_{A_x},0,0,\ldots\right)_x\right]_S
\end{align*}
where instead of using the Hausdorff property we use Proposition \ref{orbclose} to deduce that the complement of $A_x$ is open, which shows that if $y\notin A_x$ then $\left[\left(a_{A_x},0,0,\ldots\right)_y\right]_S=\left[\left(0\right)_y\right]_S$. This is analogous to the non-equivariant case.
\end{proof}
By applying these equivariant alterations to the argument given in the non-equivariant case we have the following theorem.
\begin{theorem}
Suppose that $X$ is a profinite $G$-space such that each $x\in X$ has a neighbourhood basis $\mathfrak{B}_x$ such that each $U\in \mathfrak{B}_x$ is $\stabgx(x)$-invariant. If $\rank_{\cb}(X)=\infty$ then the injective dimension of $G$-sheaves of $\mathbb{Q}$-modules over $X$ is infinite. If $\rank_{\cb}(X)<\infty$ and $X$ is scattered then the injective dimension $G$-sheaves of $\mathbb{Q}$-modules over $X$ is equal to $\rank_{\cb}(X)-1$.
\end{theorem}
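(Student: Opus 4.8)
The plan is to transfer the non-equivariant argument of Chapter \ref{chapterIDCB} (culminating in Theorem \ref{Summary}) to the $G$-equivariant setting, using the equivariant Godement resolution of Definition \ref{godeequi} in place of the ordinary one. First I would establish the upper bound: by Proposition \ref{equivinjshf} each term $I^k(E)$ of the equivariant Godement resolution is an injective object in the category of $G$-sheaves (being a product of injectives $\overline{G\underset{\stabgx(x)}{\times}E_x}$), so Definition \ref{godeequi} furnishes a genuine injective resolution of every $G$-sheaf $E$. By Proposition \ref{equidisc} and the equivariant analogue of Lemma \ref{lem0}, the stalk $I^k(E)_x$ vanishes whenever $\mathrm{ht}(X,x)<k$; hence when $X$ is scattered with $\rank_{\cb}(X)=n$ the resolution terminates at stage $n-1$, giving injective dimension at most $n-1$ (and for $\rank_{\cb}(X)=\infty$ there is a priori no finite bound).

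Next I would prove the matching lower bound by producing, for each point $x$ of height $n-1$ (or for each $n$, in the infinite-rank case, a point of height $n$), a $G$-sheaf with non-vanishing $\Ext^{\mathrm{ht}(x)-1}$. The natural choice is the constant sheaf $c\mathbb{Q}$, and the test object is $\iota_{A_x}(\mathbb{Q})$, the extension by zero of the constant sheaf $\mathbb{Q}$ over the closed orbit $A_x=Gx$ (closed by Proposition \ref{orbclose}). Applying $\Hom_{G\text{-Sh}(X)}(\iota_{A_x}(\mathbb{Q}),-)$ to the equivariant Godement resolution of $c\mathbb{Q}$, I would use the adjunction of Proposition \ref{equiadjunct} together with Lemma \ref{restconst} to identify $\Hom(\iota_{A_x}(\mathbb{Q}),I^k(c\mathbb{Q}))$ with $({\coker\delta_{k-1}}_x)^{\stabgx(x)}$, exactly as in the equivariant reformulations of Lemmas \ref{Homologychar} and \ref{homoloinf}, and identify the induced maps with $a\mapsto[(a_{A_x},0,0,\dots)_x]_S$. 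Computing the homology of the resulting complex of $\stabgx(x)$-fixed points then reduces the problem to showing that the map ${{\coker\delta_{n-3}}_x}^{\stabgx(x)}\to{{\coker\delta_{n-2}}_x}^{\stabgx(x)}$ is not surjective, which is precisely what Proposition \ref{constfill} and Corollary \ref{equi} supply: they construct a non-zero $\stabgx(x)$-equivariant section of $\coker\delta_{n-2}$ over a basic neighbourhood $U\in\mathfrak{B}_x$ which cannot be of the form $[(a_{A_x},\underline 0)_x]_S$ because orbits have empty interior (Proposition \ref{spillorb}). This yields $\Ext^{n-1}(\iota_{A_x}(\mathbb{Q}),c\mathbb{Q})\neq 0$, hence $\ID(c\mathbb{Q})=n-1$ in the scattered case, and in the infinite-rank case non-vanishing $\Ext^n$ for arbitrarily large $n$, so the injective dimension is infinite.

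I expect the main obstacle to be the identification $\Hom_{G\text{-Sh}(X)}(\iota_{A_x}(\mathbb{Q}), I^k(c\mathbb{Q}))\cong({\coker\delta_{k-1}}_x)^{\stabgx(x)}$ and, more delicately, tracking the effect of the connecting maps $\delta_k$ under this identification so that the calculation genuinely collapses to the fixed-point subspaces. The subtlety is that the equivariant resolution is built from products indexed over $\mathrm{orb}(X)$ rather than over $X$ (see the Remark after Corollary \ref{corequidisc}), so the serration-style description of $\coker\delta_k$ at a stalk involves sections over entire orbits, not just germs at points; consequently the "non-zero element" one writes down must be realised as a non-zero section over infinitely many whole orbits simultaneously — which is exactly the role of the alternating-height construction in Proposition \ref{constfill}, and why the naive choice "$0$ on $A_x$, $p$ elsewhere" fails (it would be locally in the image of $\delta$ away from the closed set $A_x$). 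Once the fixed-point identification and the description of the differentials are in place, the remaining verification that the relevant map is not surjective is a formal consequence of Proposition \ref{spillorb} and Corollary \ref{equi}, running parallel to Theorem \ref{ID_CB}.
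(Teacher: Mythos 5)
Your proposal is correct and follows essentially the same route as the paper: the equivariant Godement resolution (injective by Proposition \ref{equivinjshf}, vanishing above the height by Proposition \ref{equidisc} and the equivariant Lemma \ref{lem0}) gives the upper bound, and resolving $\Hom(\iota_{A_x}(\mathbb{Q}),-)$ via Proposition \ref{restconst}/\ref{equiadjunct} to land in $({\coker\delta_{k}}_x)^{\stabgx(x)}$, with non-surjectivity supplied by Propositions \ref{constfill}, \ref{spillorb} and Corollary \ref{equi}, is exactly the paper's lower-bound argument. You also correctly identify the same subtlety the paper highlights, namely that the orbit-indexed product forces the alternating-height construction rather than a germ-at-a-point choice.
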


Since $SG$ satisfies the neighbourhood basis assumptions needed, Theorem \ref{Summary} holds for $G$-equivariant sheaves of $\mathbb{Q}$-modules over $SG$ where $G$ is a profinite group. Since the construction of the $G$-equivariant Godement resolution holds for the category of Weyl-$G$-sheaves over $SG$ and since this is a full subcategory of $G$-equivariant sheaves we also have the following theorem.
\begin{theorem}\label{GSummary}
If $X$ is equal to $SG$ for some profinite group $G$ such that $X$ is scattered and of Cantor-Bendixson rank $n$, then the injective dimension of Weyl-$G$-sheaves of $\mathbb{Q}$-modules over $X$ is $n-1$. If $X$ is a profinite space with infinite Cantor-Bendixson rank then the injective dimension is also infinite.
\end{theorem}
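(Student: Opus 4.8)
The plan is to transport the argument of Chapter~\ref{chapterIDCB} to the $G$-equivariant setting using the equivariant Godement resolution of Definition~\ref{godeequi}, and then to observe that $SG$ is an instance of the spaces covered by the preceding theorem and that every step restricts to the full subcategory $\Weyl_{\mathbb{Q}}(SG)$. The two facts that make the specialisation to $SG$ go through are: first, for each closed subgroup $K$ the open-closed sets $O(N,NK)$ ($N\unlhd_O G$) form a neighbourhood basis of $K$ (Proposition~\ref{nbhdbasis}) consisting of $N_G(K)=\stabgx(K)$-invariant sets (immediate from normality of $N$), so the hypothesis of Proposition~\ref{constfill} and Corollary~\ref{equi} is met; and second, the equivariant Godement construction of Construction~\ref{equires} preserves Weyl-$G$-sheaves (by the corollary to that construction), so it produces injective resolutions inside $\Weyl_{\mathbb{Q}}(SG)$, which is abelian (Proposition~\ref{Gsheafabelian}) with enough injectives. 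Since the $G$-sheaves $\overline{G\underset{\stabgx(x)}{\times}A}$ are injective in $\Weyl_{\mathbb{Q}}(SG)$ by Proposition~\ref{equivinjshf}, the resolution $0\to E\to I^0(E)\to I^1(E)\to\cdots$ of Definition~\ref{godeequi} is an honest injective resolution in $\Weyl_{\mathbb{Q}}(SG)$, and $\Ext$-groups may be computed from it via Theorem~\ref{extcharacter}.

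For the upper bound I would prove the equivariant analogue of Lemma~\ref{lem0}: for every $k$ one has $I^k(E)_x=0$ and $(\coker\delta_{k-1})_x=0$ for all $x\notin X^{(k)}$. The base case is Proposition~\ref{equidisc} together with Corollary~\ref{corequidisc} (at an isolated point $x$, $\delta_x$ is an isomorphism, hence $(\coker\delta)_x=0$), and the inductive step is formally identical to the non-equivariant one, tracking stalks through the colimit defining $I^0(\coker\delta_{k-1})$. When $X=SG$ is scattered with $\rank_{\cb}(X)=n$ we have $X^{(n)}=\emptyset$, so $I^n(E)=0$ and $\ID(E)\le n-1$ for every $G$-sheaf, a fortiori every Weyl-$G$-sheaf, $E$.

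For the lower bound I would take $E=c\mathbb{Q}$, the constant sheaf with trivial $G$-action; its stalks are $K$-fixed, so it is a Weyl-$G$-sheaf. Choose $x\in SG$ of maximal height $n-1$ and apply $\Hom(\iota_{A_x}(\mathbb{Q}),-)$ to the equivariant Godement resolution of $c\mathbb{Q}$, where $\iota_{A_x}(\mathbb{Q})=\overline{G\underset{\stabgx(x)}{\times}\mathbb{Q}}$ is extension by zero of the constant sheaf over the orbit $A_x$. Using $\Hom(G\underset{\stabgx(x)}{\times}\mathbb{Q},\,G\underset{\stabgx(x)}{\times}M)\cong M^{\stabgx(x)}$ for $G$-sheaves over $A_x$ (from Proposition~\ref{restconst}), and the fact that the complement of $A_x$ is open (Proposition~\ref{orbclose}) so that the off-orbit components of the connecting maps vanish, this identifies the resulting cochain complex with
\[
\mathbb{Q}\xrightarrow{\ \alpha_1\ }(\coker\delta_0)_x^{\stabgx(x)}\xrightarrow{\ \alpha_2\ }\cdots\xrightarrow{\ \alpha_{n-1}\ }(\coker\delta_{n-2})_x^{\stabgx(x)}\to 0,
\]
with $\alpha_{k+1}(a)=\bigl[(a_{A_x},\underline{0})_x\bigr]_S$, in complete analogy with Lemma~\ref{Homologychar}. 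It then remains to show $\alpha_{n-1}$ is not surjective. This is exactly the content of Proposition~\ref{constfill} and Corollary~\ref{equi}: there is a $\stabgx(x)$-equivariant section of $\coker\delta_{n-3}$ over some $O(N,Nx)$ that is non-zero on infinitely many orbits of height $n-2$ inside every neighbourhood of $x$, and the corresponding class $\bigl[(0_x,r)_x\bigr]_S\in(\coker\delta_{n-2})_x^{\stabgx(x)}$ cannot lie in the image of $\alpha_{n-1}$. Hence $\Ext^{n-1}(\iota_{A_x}(\mathbb{Q}),c\mathbb{Q})\ne 0$, so $\ID(c\mathbb{Q})=n-1$ by Theorem~\ref{extcharacter}, and combined with the upper bound this gives that the injective dimension of $\Weyl_{\mathbb{Q}}(SG)$ is exactly $n-1$.

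Finally, if $X=SG$ has infinite Cantor-Bendixson rank then for each $m\in\mathbb{N}$ there is a point $x_m$ of height $m$, and the computation above applied with $x=x_m$ (it only needs the index $m$ to be below the height of the chosen point) yields $\Ext^m(\iota_{A_{x_m}}(\mathbb{Q}),c\mathbb{Q})\ne 0$; since $m$ is arbitrary the injective dimension is infinite. I expect the genuine obstacle to be the construction in Proposition~\ref{constfill}: one must build the section by alternating non-zero and zero values across orbit-heights, because the naive choice that is zero on $A_x$ and non-zero elsewhere would, by extension by zero off the closed set $A_x$, already be realised in the image of $\delta$; checking at each inductive stage that the alternating section remains $\stabgx(x)$-equivariant and does not descend to a single orbit (using that orbits have empty interior, Proposition~\ref{spillorb}) is where the work lies. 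The upper bound, by contrast, is a routine transfer of Lemma~\ref{lem0} once Proposition~\ref{equidisc} is in hand.
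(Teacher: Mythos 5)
Your proposal is correct and follows essentially the same route as the paper: the equivariant Godement resolution of Construction \ref{equires} (injective by Proposition \ref{equivinjshf}, Weyl-preserving, and vanishing off $X^{(k)}$ at stage $k$ by Proposition \ref{equidisc}) gives the upper bound, while resolving $c\mathbb{Q}$ and applying $\Hom(\iota_{A_x}(\mathbb{Q}),-)$, identified with $\stabgx(x)$-fixed stalks of the cokernels via Lemma \ref{restconst} and Proposition \ref{orbclose}, together with the alternating-section argument of Proposition \ref{constfill} and Corollary \ref{equi}, gives the lower bound and the infinite-rank case. You also correctly single out the same key point the paper does, namely that the naive section vanishing only on the closed orbit $A_x$ would lie in the image of $\delta$, so the alternating construction across heights is where the real work sits.
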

This theorem has the following useful consequence.
\begin{corollary}
If $G$ is a profinite group, then the category of rational $G$-Mackey functors has infinite injective dimension if the Cantor-Bendixson rank of $SG$ is infinite. It is equal to the Cantor-Bendixson rank $-1$ provided $SG$ is scattered with finite Cantor-Bendixson rank. 
\end{corollary}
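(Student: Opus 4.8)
The plan is to deduce this statement immediately from the equivalence of abelian categories established in Theorem \ref{equivalencemain} together with the injective dimension calculation of Theorem \ref{GSummary}. The key observation is that an equivalence of categories between two abelian categories is automatically additive and exact, and therefore sends injective objects to injective objects, monomorphisms to monomorphisms, and injective resolutions to injective resolutions. Consequently it preserves the injective dimension of each object, and hence the supremum over all objects which defines the injective dimension of the category.

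First I would record that by Theorem \ref{equivalencemain} the functors $\mackey(-)$ and $\weyl(-)$ are mutually inverse equivalences between $\mackey_{\mathbb{Q}}(G)$ and $\Weyl_{\mathbb{Q}}(SG)$. Both categories are abelian: the category of Weyl-$G$-sheaves by Proposition \ref{Gsheafabelian}, and $\mackey_{\mathbb{Q}}(G)$ because, by Corollary \ref{mackeymodelequi}, it is equivalent to the functor category $\fun_{\ab}(\pi_0(\mathfrak{O}^{\mathbb{Q}}_G),\mathbb{Q}\Mod)$. An equivalence between abelian categories is exact, so for a Mackey functor $M$ an injective resolution $0\to M\to I_0\to I_1\to\cdots$ is carried by $\weyl$ to an injective resolution $0\to\weyl(M)\to\weyl(I_0)\to\weyl(I_1)\to\cdots$ of the corresponding Weyl-$G$-sheaf, and conversely via $\mackey$. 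Hence $\ID(M)=\ID(\weyl(M))$ for every $M$, and therefore $\ID\left(\mackey_{\mathbb{Q}}(G)\right)=\ID\left(\Weyl_{\mathbb{Q}}(SG)\right)$. Equivalently, one may phrase this through $\Ext$ groups as in Theorem \ref{extcharacter}: the exact equivalence induces natural isomorphisms $\Ext^i(A,B)\cong\Ext^i(\weyl(A),\weyl(B))$ in the two categories, so the characterisation of injective dimension by vanishing of $\Ext$ groups transports directly.

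Finally I would apply Theorem \ref{GSummary}. If $SG$ is scattered with finite Cantor-Bendixson rank $n$, then $\ID\left(\Weyl_{\mathbb{Q}}(SG)\right)=n-1$, and if $\rank_{\cb}(SG)$ is infinite then $\ID\left(\Weyl_{\mathbb{Q}}(SG)\right)$ is infinite; transporting along the equivalence of the previous paragraph yields the stated values for $\mackey_{\mathbb{Q}}(G)$.

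There is essentially no serious obstacle here, since all the substantive work was done in Theorems \ref{equivalencemain} and \ref{GSummary}; the only point needing an explicit line of justification is the exactness of the equivalence (so that it preserves injectivity and injective resolutions), which is a standard fact about equivalences of abelian categories but is worth stating because the definition of injective dimension is sensitive to the ambient abelian structure.
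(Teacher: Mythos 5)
Your proposal is correct and is essentially the paper's own argument: the paper proves the corollary by combining Theorem \ref{GSummary} with the equivalence of Theorem \ref{equivalencemain}, exactly as you do. The extra justification you supply (that an equivalence of abelian categories is exact and so preserves injectives, injective resolutions, and hence injective dimension) is the standard fact the paper leaves implicit, and it is correctly stated.
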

\begin{proof}
This is a combination of Theorem \ref{GSummary} and \ref{equivalencemain}.
\end{proof}
It follows from this result that examples in the previous chapter give examples of various injective dimension calculations of categories of rational $G$-Mackey functors for various profinite groups $G$. We now consider the following examples.
\begin{example}
Let $G$ be a finite group. Then since $SG$ is a finite discrete $G$-space it follows that this has Cantor-Bendixson rank $1$. An application of Theorem \ref{GSummary} shows that the injective dimension of Weyl-$G$-sheaves over $SG$ is zero. Consequently the algebraic model for rational $G$-spectra has injective dimension $0$. This example coincides with \cite[Appendix A]{Tate}.
\end{example}
The following application coincides with the work in \cite{BarZp}.
\begin{example}
Let $G$ be the $p$-adic integers, $\mathbb{Z}_p$. We know from Example \ref{padic} that the Cantor-Bendixson rank of $S\mathbb{Z}_p$ is $2$. An application of Theorem \ref{GSummary} shows that the injective dimension of Weyl-$G$-sheaves over $SG$ is $1$. Consequently, the algebraic model for rational $\mathbb{Z}_p$-spectra has injective dimension $1$. 
\end{example}
\begin{example}
Consider distinct primes $p_1,p_2,\ldots,p_n$, then we have a profinite group $\underset{1\leq i\leq n}{\prod}\mathbb{Z}_{p_i}$ with corresponding profinite space $S\left(\underset{1\leq i\leq n}{\prod}\mathbb{Z}_{p_i}\right)$. We have seen in Example \ref{sumpadic} that this space has Cantor-Bendixson rank $n+1$. An application of Theorem \ref{GSummary} shows that in this case the injective dimension of Weyl-$G$-sheaves over $SG$ is $n$. Consequently, the algebraic model for $G$-spectra has injective dimension $n$.
\end{example}
\bibliography{ref2}

\begin{thebibliography}{LMSM86}

\bibitem[Bar11]{BarZp}
David Barnes.
\newblock Rational {$\mathbb{Z}_p$-equivariant spectra}.
\newblock {\em Algebr. Geom. Topol.}, 11(4):2107--2135, 2011.

\bibitem[Bar17]{BarnesO2}
David Barnes.
\newblock Rational {$O(2)$}-equivariant spectra.
\newblock {\em Homology Homotopy Appl.}, 19(1):225--252, 2017.

\bibitem[BR14]{BousfieldBarnes}
David Barnes and Constanze Roitzheim.
\newblock Stable left and right {B}ousfield localisations.
\newblock {\em Glasg. Math. J.}, 56(1):13--42, 2014.

\bibitem[Bre97]{Bredon}
Glen~E. Bredon.
\newblock {\em Sheaf theory}, volume 170 of {\em Graduate Texts in
  Mathematics}.
\newblock Springer-Verlag, New York, second edition, 1997.

\bibitem[CW16]{Castellano}
I.~Castellano and Th. Weigel.
\newblock Rational discrete cohomology for totally disconnected locally compact
  groups.
\newblock {\em J. Algebra}, 453:101--159, 2016.

\bibitem[Dre71]{Dress}
Andreas W.~M. Dress.
\newblock {\em Notes on the theory of representations of finite groups. {P}art
  {I}: {T}he {B}urnside ring of a finite group and some {AGN}-applications}.
\newblock Universit\"at Bielefeld, Fakult\"at f\"ur Mathematik, Bielefeld,
  1971.
\newblock With the aid of lecture notes, taken by Manfred K\"uchler.

\bibitem[Fau08]{Fausk}
Halvard Fausk.
\newblock Equivariant homotopy theory for pro-spectra.
\newblock {\em Geom. Topol.}, 12(1):103--176, 2008.

\bibitem[Glu81]{Gluck}
David Gluck.
\newblock Idempotent formula for the {B}urnside algebra with applications to
  the {$p$}-subgroup simplicial complex.
\newblock {\em Illinois J. Math.}, 25(1):63--67, 1981.

\bibitem[GM92]{Structure}
J.~P.~C. Greenlees and J.~P. May.
\newblock Some remarks on the structure of {M}ackey functors.
\newblock {\em Proc. Amer. Math. Soc.}, 115(1):237--243, 1992.

\bibitem[GM95]{Tate}
J.~P.~C. Greenlees and J.~P. May.
\newblock Generalized {T}ate cohomology.
\newblock {\em Mem. Amer. Math. Soc.}, 113(543):viii+178, 1995.

\bibitem[Gre98]{Gre98}
J.~P.~C. Greenlees.
\newblock Rational {M}ackey functors for compact {L}ie groups. {I}.
\newblock {\em Proc. London Math. Soc. (3)}, 76(3):549--578, 1998.

\bibitem[Gre08]{conjecture}
J.~P.~C. Greenlees.
\newblock Rational torus-equivariant stable homotopy. {I}. {C}alculating groups
  of stable maps.
\newblock {\em J. Pure Appl. Algebra}, 212(1):72--98, 2008.

\bibitem[GS10a]{Gartside}
Paul Gartside and Michael Smith.
\newblock Classifying spaces of subgroups of profinite groups.
\newblock {\em J. Group Theory}, 13(3):315--336, 2010.

\bibitem[GS10b]{Gartside1}
Paul Gartside and Michael Smith.
\newblock Counting the closed subgroups of profinite groups.
\newblock {\em J. Group Theory}, 13(1):41--61, 2010.

\bibitem[Hir03]{Hirschhorn}
Philip~S. Hirschhorn.
\newblock {\em Model categories and their localizations}, volume~99 of {\em
  Mathematical Surveys and Monographs}.
\newblock American Mathematical Society, Providence, RI, 2003.

\bibitem[Hov99]{Hovey}
Mark Hovey.
\newblock {\em Model categories}, volume~63 of {\em Mathematical Surveys and
  Monographs}.
\newblock American Mathematical Society, Providence, RI, 1999.

\bibitem[K\k17]{MagSO3}
Magdalena K\k{e}dziorek.
\newblock An algebraic model for rational {${\rm SO}(3)$}-spectra.
\newblock {\em Algebr. Geom. Topol.}, 17(5):3095--3136, 2017.

\bibitem[Lin76]{Linder}
Harald Lindner.
\newblock A remark on {M}ackey-functors.
\newblock {\em Manuscripta Math.}, 18(3):273--278, 1976.

\bibitem[LMSM86]{LMSMcC}
L.~G. Lewis, Jr., J.~P. May, M.~Steinberger, and J.~E. McClure.
\newblock {\em Equivariant stable homotopy theory}, volume 1213 of {\em Lecture
  Notes in Mathematics}.
\newblock Springer-Verlag, Berlin, 1986.
\newblock With contributions by J. E. McClure.

\bibitem[May96]{alaska}
J.~P. May.
\newblock {\em Equivariant homotopy and cohomology theory}, volume~91 of {\em
  CBMS Regional Conference Series in Mathematics}.
\newblock Published for the Conference Board of the Mathematical Sciences,
  Washington, DC; by the American Mathematical Society, Providence, RI, 1996.
\newblock With contributions by M. Cole, G. Comeza\~na, S. Costenoble, A. D.
  Elmendorf, J. P. C. Greenlees, L. G. Lewis, Jr., R. J. Piacenza, G.
  Triantafillou, and S. Waner.

\bibitem[MM02]{MandelMayequivspectra}
M.~A. Mandell and J.~P. May.
\newblock Equivariant orthogonal spectra and {$S$}-modules.
\newblock {\em Mem. Amer. Math. Soc.}, 159(755):x+108, 2002.

\bibitem[MMSS01]{SchwedeMayShipleyMandell}
M.~A. Mandell, J.~P. May, S.~Schwede, and B.~Shipley.
\newblock Model categories of diagram spectra.
\newblock {\em Proc. London Math. Soc. (3)}, 82(2):441--512, 2001.

\bibitem[Rot64]{Rota}
Gian-Carlo Rota.
\newblock On the foundations of combinatorial theory. {I}. {T}heory of
  {M}\"obius functions.
\newblock {\em Z. Wahrscheinlichkeitstheorie und Verw. Gebiete}, 2:340--368
  (1964), 1964.

\bibitem[RZ00]{Ribes}
Luis Ribes and Pavel Zalesskii.
\newblock {\em Profinite groups}, volume~40 of {\em Ergebnisse der Mathematik
  und ihrer Grenzgebiete. 3. Folge. A Series of Modern Surveys in Mathematics
  [Results in Mathematics and Related Areas. 3rd Series. A Series of Modern
  Surveys in Mathematics]}.
\newblock Springer-Verlag, Berlin, 2000.

\bibitem[SS03]{StableModel}
Stefan Schwede and Brooke Shipley.
\newblock Stable model categories are categories of modules.
\newblock {\em Topology}, 42(1):103--153, 2003.

\bibitem[{Sug}19]{Sugrue}
D.~{Sugrue}.
\newblock {Injective dimension of sheaves of rational vector spaces}.
\newblock {\em J. Pure Appl. Algebra}, 223(7):3112--3128, 2019.

\bibitem[Ten75]{Tennison}
B.~R. Tennison.
\newblock {\em Sheaf theory}.
\newblock Cambridge University Press, Cambridge, England-New York-Melbourne,
  1975.
\newblock London Mathematical Society Lecture Note Series, No. 20.

\bibitem[{Thi}11]{Thiel}
U.~{Thiel}.
\newblock {Mackey functors and abelian class field theories}.
\newblock {\em ArXiv e-prints}, November 2011.

\bibitem[TW95]{Webb}
Jacques Th\'evenaz and Peter Webb.
\newblock The structure of {M}ackey functors.
\newblock {\em Trans. Amer. Math. Soc.}, 347(6):1865--1961, 1995.

\bibitem[Wei94]{Weibel}
Charles~A. Weibel.
\newblock {\em An introduction to homological algebra}, volume~38 of {\em
  Cambridge Studies in Advanced Mathematics}.
\newblock Cambridge University Press, Cambridge, 1994.

\bibitem[Yos83]{Yosh}
Tomoyuki Yoshida.
\newblock Idempotents of {B}urnside rings and {D}ress induction theorem.
\newblock {\em J. Algebra}, 80(1):90--105, 1983.

\end{thebibliography}
\printindex
\end{document}